\documentclass[11pt,a4paper]{amsart}[2010]
\usepackage{tikz}
\usepackage[inline]{enumitem}
\usepackage{comment}
\usepackage{quiver}

\title{Long thin covers and nuclear dimension}

\author{Ilan Hirshberg}

\address{Department of Mathematics, Ben Gurion University of the Negev, \phantom{----------------}\linebreak\text{}\hspace{3.5mm}
P.O.B. 653, Be'er Sheva 84105, Israel}
\email{ilan@math.bgu.ac.il}

\author{Jianchao Wu}

\address{Shanghai Center for Mathematical Sciences, Fudan University,
	\phantom{----------------}\linebreak\text{}\hspace{3.5mm}
	2005 Songhu Rd., Shanghai 200438, China}

\email{jianchao\_wu@fudan.edu.cn}

\thanks{The first author was partly supported by Israel Science Foundation grant no. 
476/16. 
The second author was partly supported by NSFC Key Program No.~12231005 and National
Key R\&D Program of China 2022YFA100700. 
Part of the research was conducted at the Mittag-Leffler Institute, at the CRM in Barcelona and at the Fields Institute.}

  \makeatletter
\@namedef{subjclassname@2020}{%
	\textup{2020} Mathematics Subject Classification}
\makeatother

\subjclass[2020]{Primary 46L55;
	Secondary 37A55, 46L35.}

\usepackage{amsmath}
\usepackage{amssymb}
\usepackage{amsthm}
\usepackage[all]{xypic}
\usepackage{xy}
\xyoption{curve}
\usepackage[colorlinks=true,linkcolor=blue,citecolor=blue]{hyperref}
\usepackage{mathtools}
\usepackage{xstring}
\usepackage{xparse}

 \usepackage[usenames,dvipsnames]{pstricks}
 \usepackage{epsfig}
 \usepackage{pst-grad} 
 \usepackage{pst-plot} 
 \usepackage[space]{grffile} 
 \usepackage{etoolbox} 
 \makeatletter 
 \patchcmd\Gread@eps{\@inputcheck#1 }{\@inputcheck"#1"\relax}{}{}
 \makeatother



\allowdisplaybreaks

\theoremstyle{plain}

\newtheorem{Thm}{Theorem}[section]
\newtheorem{Cor}[Thm]{Corollary}
\newtheorem{Lemma}[Thm]{Lemma}

\newtheorem{Prop}[Thm]{Proposition}

\theoremstyle{definition}
\newtheorem{Def}[Thm]{Definition}
\newtheorem{Notation}[Thm]{Notation}

\newtheorem{Exl}[Thm]{Example}
\newtheorem{Rmk}[Thm]{Remark}
\newtheorem{Question}[Thm]{Question}

\newcommand{\D}{D}

\newcommand{\Zh}{\mathcal{Z}}

\newcommand{\R}{{\mathbb R}}
\newcommand{\N}{{\mathbb N}}
\newcommand{\Z}{{\mathbb Z}}

\newcommand{\Q}{{\mathbb Q}}

\newcommand{\aut}{\operatorname{Aut}}
\newcommand{\supp}{\operatorname{supp}}

\newcommand{\eps}{\varepsilon}
\numberwithin{equation}{section}

\newcommand{\dr}{\operatorname{dr}}
\newcommand{\id}{\operatorname{id}}

\newcommand{\dimnuc}{\operatorname{dim}_{\operatorname{nuc}}}
\newcommand{\dimnucone}[0]{\dimnuc^{\!+1}}

\newcommand{\LSP}{\operatorname{LSP}}
\newcommand{\mult}{\operatorname{mult}}
\newcommand{\dimltc}{\dim_{\operatorname{LTC}}}
\newcommand{\asdim}{\operatorname{asdim}}
\newcommand{\stab}{\operatorname{stab}}

\newcommand{\short}{\mathcal{F}}

\newcommand{\intervalofintegers}[2]{\left\{ {#1}, \ldots, {#2} \right\}}

\newcommand{\powerset}{\mathcal{P}}

\newcommand{\ball}{}

\newcommand{\compr}{\mathrm{compr}}
\newcommand{\dstab}{d_{\mathrm{stab}}}

\newcommand{\dimone}[0]{\dim^{\!+1}}

\newcommand{\e}{e}

\makeatletter
\def\namedlabel#1#2{\begingroup
	#2%
	\def\@currentlabel{#2}%
	\phantomsection\label{#1}\endgroup
}
\makeatother


\begin{document}
\begin{abstract}
	We establish finite nuclear dimension for crossed product $C^*$-algebras arising from various classes of possibly non-free topological actions, including arbitrary actions of finitely generated virtually nilpotent groups on finite dimensional spaces, certain amenable actions of hyperbolic groups, and certain allosteric actions of wreath products. 
	We obtain these results by introducing a new notion of dimension for topological dynamical systems, called the long thin covering dimension, which involves a suitable version of Rokhlin-type towers with controlled overlaps for possibly non-free actions.
\end{abstract}

\label{newcomm}

\newcommand{\eqasdim}{\operatorname{eqasdim}}
\newcommand{\setstab}{\operatorname{Stab}}
\newcommand{\Stab}{\setstab}
\newcommand{\Rank}[1][]{{\operatorname{Rk}_{#1}}}
\newcommand{\Torsion}{{T}}
\newcommand{\hirsch}{\operatorname{Hirsch}}
\newcommand{\Growth}[1]{\operatorname{Gr}_{#1}}
\newcommand{\Metric}{\rho}
\newcommand{\Staircase}[2]{{\Xi_{{#1},{#2}}}}

\newcommand{\dimc}{\dim_{\operatorname{c}}}

\newcommand{\grpid}{e}

\newcommand{\Rfuncs}[2][]{{\ref{Rfuncs} c_{00}^{#1} (#2 , \R)}} \label{Rfuncs} 
\newcommand{\Rplusfuncs}[2][]{{\ref{Rplusfuncs} c_{00}^{#1}(#2,\R)_{\geq 0} }} \label{Rplusfuncs} 
\newcommand{\ccplus}[1][]{{\ref{ccplus} c_{\operatorname{c},+}^{#1}}} \label{ccplus} 
\newcommand{\Ma}[2][S]{{\ref{Ma} \mu^{\left(#2\right)}_{#1}}} \label{Ma} 
\newcommand{\Di}[1][S]{\ref{Di} \delta_{#1}} \label{Di} 
\newcommand{\Dihi}[2][S]{\ref{Dihi} \Di[#1]^{#2}} \label{Dihi} 
\newcommand{\Dinum}[2][S]{{\Di[#1]^{\left(#2\right)} } } 
\newcommand{\Dihinum}[3][S]{\ref{Dihinum} \Di[#1]^{#2,\left(#3\right)}} \label{Dihinum} 
\newcommand{\Dihisign}[2][S]{\ref{Dihisign} \widetilde{\Di[#1]}^{#2}} \label{Dihisign} 
\newcommand{\Dihisignnum}[3][S]{\ref{Dihisignnum} \widetilde{\Di[#1]}^{#2,\left(#3\right)}} \label{Dihisignnum} 
\newcommand{\Sigmap}[2][S]{\ref{Sigmap} \sigma_{#1}^{#2}} \label{Sigmap} 
\newcommand{\Taumap}[2][S]{\ref{Taumap} \tau_{#1}^{#2}} \label{Taumap} 

\newcommand{\sectionlabel}{} \label{sectionlabel=OCS}\label{sectionlabel=LTC}\label{sectionlabel=BLR}\label{sectionlabel=LSP} 
\newcommand{\Lo}[1][\sectionlabel]{\ref{Lo} 
	{\IfEqCase{#1}{
		{prelim}{L} 	
		{OCS}{L} 
		{LTC}{L} 
		{LSP}{L} 
		{BLR}{L} 
		{dimnuc}{L}
	}[{\color{red} NOT DEFINED!}]}
}\label{Lo}

\newcommand{\tLo}[1][\sectionlabel]{\ref{tLo} 
	{\IfEqCase{#1}{
			{dimnuc}{{\Bo^2}}  
		}[{\color{red} NOT DEFINED!}]}
}\label{tLo}

\newcommand{\Ko}[1][\sectionlabel]{\ref{Ko} 
	{\IfEqCase{#1}{
		{prelim}{K} 	
		{OCS}{K} 
		{LTC}{K} 
		{BLR}{K} 
		{dimnuc}{K} 
	}[{\color{red} NOT DEFINED!}]}
}\label{Ko}

\newcommand{\Er}[1][\sectionlabel]{\ref{Er} 
	{\IfEqCase{#1}{
		{prelim}{\delta} 	
		{OCS}{\delta} 
		{LTC}{\delta} 
		{BLR}{\delta} 
		{dimnuc}{\delta}
	}[{\color{red} NOT DEFINED!}]}
}\label{Er}

\newcommand{\Po}[1][\sectionlabel]{\ref{Po} 
	{\IfEqCase{#1}{
		{OCS}{f} 
		{LTC}{\mu} 
		{BLR}{f} 
		{dimnuc}{\nu} 
	}[{\color{red} NOT DEFINED!}]}
}\label{Po}
\newcommand{\Ponum}[3][\sectionlabel]{{\Po[#1]}^{(#3)}_{#2} }
\newcommand{\tPonum}[3][\sectionlabel]{\widetilde{\Po[#1]}^{(#3)}_{#2} }
\newcommand{\tPo}[1][\sectionlabel]{{\widetilde{\Po[#1]}}}

\newcommand{\Lapo}[1][\sectionlabel]{\ref{Lapo} 
	{\IfEqCase{#1}{
			{OCS}{h} 
			{LTC}{h} 
		}[{\color{red} NOT DEFINED!}]}
}\label{Lapo}
\newcommand{\Lapotil}[1][\sectionlabel]{{\widetilde{\Lapo[#1]}}}

\newcommand{\Bo}[1][\sectionlabel]{\ref{Bo} 
	{\IfEqCase{#1}{
		{prelim}{M} 	
		{OCS}{M}  
		{LTC}{N} 	
		{LSP}{N}  	
		{BLR}{N}	
		{dimnuc}{S} 
	}[{\color{red} NOT DEFINED!}]}
}\label{Bo}

\newcommand{\Aubo}[1][\sectionlabel]{\ref{Aubo} 
{\IfEqCase{#1}{
		{prelim}{S} 	
		{OCS}{S}  
		{LTC}{S} 	
		{LSP}{S}  	
		{BLR}{S}	
		{dimnuc}{T} 
	}[{\color{red} NOT DEFINED!}]}
}\label{Aubo}

\newcommand{\Fi}[1][\sectionlabel]{\ref{Fi} 
	{\IfEqCase{#1}{
			{OCS}{F} 
			{LTC}{F} 
			{LSP}{F}  	 
			{BLR}{F} 
			{dimnuc}{R}
		}[{\color{red} NOT DEFINED!}]}
}\label{Fi}

\newcommand{\Fico}[1][\sectionlabel]{{\ref{Fico} \mathcal{\Fi[#1]}}} \label{Fico}

\newcommand{\Aufi}[1][\sectionlabel]{\ref{Aufi} 
	{\IfEqCase{#1}{
			{OCS}{E} 
			{LTC}{E} 
			{LSP}{E}  	
			{BLR}{E} 
		}[{\color{red} NOT DEFINED!}]}
}\label{Aufi}

\newcommand{\Ne}[1][\sectionlabel]{\ref{Ne} 
	{\IfEqCase{#1}{
		{LTC}{\Lambda} 
		{BLR}{\Lambda} 
		{dimnuc}{\Lambda}
	}[{\color{red} NOT DEFINED!}]}
}\label{Ne}

\newcommand{\La}[1][\sectionlabel]{\ref{La} 
	{\IfEqCase{#1}{
		{BLR}{\Lambda} 
	}[{\color{red} NOT DEFINED!}]}
}\label{La}
\newcommand{\Cose}[1][\sectionlabel]{\ref{Cose} 
	{\IfEqCase{#1}{
		{OCS}{W}  
		{LTC}{U}  
		{BLR}{U}  
		{dimnuc}{U}
	}[{\color{red} NOT DEFINED!}]}
}\label{Cose}
\newcommand{\Coseco}[1][\sectionlabel]{{\ref{Coseco} \mathcal{\Cose[#1]}}} \label{Coseco}
\newcommand{\Cosenum}[2][\sectionlabel]{{\Cose[#1]}^{(#2)} } 
\newcommand{\Coseconum}[2][\sectionlabel]{{\Coseco[#1]}^{(#2)} } 

\newcommand{\Coco}[1][\sectionlabel]{\Coseco[\sectionlabel]}
\newcommand{\Coconum}[2][\sectionlabel]{{\Coseco[#1]}^{(#2)} }

\newcommand{\Ause}[1][\sectionlabel]{\ref{Ause} 
	{\IfEqCase{#1}{
		{OCS}{U}  
		{LTC}{W}  
		{BLR}{W}   
		{LSP}{D} 
		{dimnuc}{W}
	}[{\color{red} NOT DEFINED!}]}
}\label{Ause}
\newcommand{\Auseco}[1][\sectionlabel]{{\ref{Auseco} \mathcal{\Ause[#1]}}} \label{Auseco}
\newcommand{\Auseconum}[2][\sectionlabel]{{\Auseco[#1]}^{(#2)} }

\newcommand{\Thse}[1][\sectionlabel]{\ref{Thse} 
	{\IfEqCase{#1}{
		{OCS}{V}  
		{LTC}{V}  
		{BLR}{V}  
	}[{\color{red} NOT DEFINED!}]}
}\label{Thse}
\newcommand{\Thseco}[1][\sectionlabel]{{\ref{Thseco} \mathcal{\Thse[#1]}}} \label{Thseco}
\newcommand{\Thseconum}[2][\sectionlabel]{{\Thseco[#1]}^{(#2)} }

\newcommand{\Lase}[1][\sectionlabel]{\ref{Lase} 
	{\IfEqCase{#1}{
		{OCS}{Z}  
		{LTC}{Z}  
		{BLR}{Z}  
	}[{\color{red} NOT DEFINED!}]}
}\label{Lase}
\newcommand{\Laseco}[1][\sectionlabel]{{\ref{Laseco} \mathcal{\Lase[#1]}}} \label{Laseco}
\newcommand{\Laseconum}[2][\sectionlabel]{{\Laseco[#1]}^{(#2)} } 
\newcommand{\Lasehat}[1][\sectionlabel]{{{\Lase[#1]}}}
\newcommand{\Lasecohat}[1][\sectionlabel]{{{\Laseco[#1]}}}
\newcommand{\Lasetil}[1][\sectionlabel]{{\widetilde{\Lase[#1]}}}
\newcommand{\Lasecotil}[1][\sectionlabel]{{\widetilde{\Laseco[#1]}}}

\newcommand{\Aulase}[1][\sectionlabel]{\ref{Aulase} 
	{\IfEqCase{#1}{
			{OCS}{Y}  
			{LTC}{Y}  
			{BLR}{Y}  
		}[{\color{red} NOT DEFINED!}]}
}\label{Aulase}
\newcommand{\Aulaseco}[1][\sectionlabel]{{\ref{Aulaseco} \mathcal{\Aulase[#1]}}} \label{Aulaseco}
\newcommand{\Aulaseconum}[2][\sectionlabel]{{\Aulaseco[#1]}^{(#2)} } 
\newcommand{\Aulasehat}[1][\sectionlabel]{{\widehat{\Aulase[#1]}}}
\newcommand{\Aulasecohat}[1][\sectionlabel]{{\widehat{\Aulaseco[#1]}}}

\newcommand{\Binu}[1][\sectionlabel]{\ref{Binu} 
	{\IfEqCase{#1}{
			{OCS}{N}  
			{LTC}{M}  
			{BLR}{M}  
		}[{\color{red} NOT DEFINED!}]}
}\label{Binu}

\newcommand{\Ense}{\ref{Ense} 
	{E}   
}\label{Ense}
\newcommand{\Enseco}{{\ref{Enseco} \mathcal{\Ense}}} \label{Enseco}
\newcommand{\Auense}{\ref{Auense} 
	{F}   
}\label{Auense}

\newcommand{\Dimnu}[1][\sectionlabel]{\ref{Dimnu} 
	{\IfEqCase{#1}{
			{OCS}{m}  
			{LTC}{m}   
			{LSP}{m} 
			{BLR}{m}  
		}[{\color{red} NOT DEFINED!}]}
}\label{Dimnu}

\newcommand{\ErB}{\Er}
\newcommand{\ErT}{\eta'} 
\newcommand{\ErLTC}{\eta}
\newcommand{\ErOrd}{\Er}
\newcommand{\ErPreLTC}{\eta_{0}} 

\newcommand{\Inse}{I} 

\newcommand{\krep}[2][\Ause]{k_{#1}^{(#2)}}
\newcommand{\tkrep}[1][\Ause]{\vec{k}_{#1}}
\newcommand{\vkrep}[1][\Ause]{\vec{u}_{#1}}


\def \OCSmult {prop:orbit-asdim:mult}
\def \OCSfin {prop:orbit-asdim:finitary}
\def \OCSpou {prop:orbit-asdim:pou}
\def \LTCdef {def:ltc-dim}
\def \LTCextra {def:ltc-dim-extra}
\def \LTCrmksep {rmk:ltc-dim-separation}
\def \LTCprop {prop:ltc-dim}
\def \LSPdef {def:LSP}
\def \BLRdef {def:BLR}
\def \BLRlem {lem:BLR-reformu}

\newcommand{\RefcondDefaultValue}[2]{
		{\IfEqCase{#2}{
			{Lo}{\Lo,\Ko}	
			{Bo}{\Bo}
			{Bovar}{\Lo, \Binu}
			{Ca}{\Binu}
			{Co}{\Lo}
			{Coplus}{\Lo, \Binu}
			{Mu}{d}
			{Muplus}{d,\Lo}
			{Eq}{
				\IfEq{#1}{def:BLR}{}{\Lo}
				}
			{Th}{\Thseco}
			{Thplus}{\Thseco, \Bo}
			{Se}{\Lo}
			{Li}{\Lo,\Er}
			{In}{\Lo,\Er}
			{St}{\mathcal{F}}
			{Pa}{\Lo,d,m}
			{Su}{\Lo}
			{Un}{\Ko}
			{Pr}{}
			{Fi}{}
		}}
	}  
	
\newcommand{\Refc}[3][\Topic]{$\left(\textup{\ref{item:#1:#2}}\right)^{\textup{\ref{#1}}}_{#3}$} 
\newcommand{\Refcd}[2][\Topic]{\Refc[#1]{#2}{\RefcondDefaultValue{#1}{#2}}}  


\maketitle

\tableofcontents

\section{Introduction}
\renewcommand{\sectionlabel}{LTC}
Nuclear dimension for $C^*$-algebras, a noncommutative generalization of the 
notion of covering dimension for topological spaces, was introduced by Winter 
and Zacharias 
in \cite{winter-zacharias} (building upon a closely related notion of 
decomposition rank, introduced in \cite{kirchberg-winter}). It has since come 
to play a central role in the structure and classification theory for simple 
nuclear $C^*$-algebras. Indeed, a culmination of work of around four decades by 
many authors brought Elliott's classification program to a nearly final form: 
it is now known that simple separable nuclear $C^*$-algebras which satisfy 
the Universal Coefficient Theorem are classified via the Elliott invariant 
provided that they have finite nuclear dimension (\cite{EGLN,TWW}), whereas 
counterexamples (\cite{rordam-counterexample,toms-counterexample}) show that 
the classification conjecture breaks down if one does not assume finite nuclear 
dimension. 

It is therefore a matter of significant interest to find conditions 
which ensure that $C^*$-algebras arising from various natural constructions 
have, or do not have, finite nuclear dimension. In particular, there has been 
considerable interest in finding conditions on group actions which imply that 
the resulting crossed product $C^*$-algebras have finite nuclear dimension. The first result 
on the matter was due to Toms and Winter (\cite{toms-winter}), who showed that 
if $X$ is an infinite compact 
metric space with finite covering dimension and $\alpha$ is a minimal 
homeomorphism then the crossed product $C(X)\rtimes_{\alpha} \Z$ has finite 
nuclear dimension. The notion of Rokhlin dimension, introduced in \cite{HWZ} 
for actions on $C^*$-algebras (which aren't necessarily commutative),  
provided a different approach for proving the result of Toms and Winter 
above. Those methods were then improved and generalized to cover free (and 
not necessarily minimal) actions of $\Z^n$ and then free actions of finitely 
generated 
virtually nilpotent groups (\cite{szabo,SWZ,Bartels2017Coarse}) as well as free actions of $\R$ and then connected Lie groups of polynomial growth 
(\cite{HSWW16, EnstadFavreRaum2023Free}). A somewhat related groupoid approach
(\cite{GuentnerWillettYu2017Dynamic}) introduces a notion of dynamic 
asymptotic dimension for groupoids, and in particular provides alternative 
proofs to some of the above results. 

In a different approach that goes beyond the assumption that $X$ 
is finite dimensional, Elliott and Niu showed in \cite{Elliott-Niu-mdim-0} that 
if  $X$ is an infinite compact 
metric space and $\alpha$ is a minimal homeomorphism  with mean dimension zero 
then the crossed product has finite nuclear dimension. More precisely, the 
authors showed that the crossed product is $\Zh$-stable, and therefore 
classifiable; this was done using large subalgebra techniques 
(\cite{Phillips-large,ABP}). (The fact that $\Zh$-stability is equivalent to 
finite nuclear dimension in this case was only realized later in 
\cite{CastillejosEvingtonTikuisisWhiteWinter2021Nuclear}.)
For more on connections to mean dimension, see 
\cite{giol-kerr,Niu-mdim,Niu-Rokh,hirshberg-phillips-mcid}. A more recent approach 
(\cite{kerr,kerr-szabo,kerr-naryshkin}), which provides the best known results 
in the free and minimal setting, involves showing that free and minimal actions 
of 
certain amenable groups are almost finite, that is, admit Rokhlin-type towers 
which are large in measure, in a suitable sense. Those towers are used to show 
that the resulting crossed product is tracially $\Zh$-absorbing 
(\cite{hirshberg-orovitz}), therefore $\Zh$-stable and therefore have finite 
nuclear dimension. The advantage over Rokhlin dimension techniques is that here 
one does not need the group to have finite asymptotic dimension, and indeed 
this approach now covers all countable elementary amenable groups as well as groups with subexponential growth.

The approaches discussed above require freeness (while some furthermore require minimality), essentially because they all involve the existence of some 
Rokhlin-type towers. 
For example, the notions of almost finiteness and finite tower dimension (\cite{kerr}) require the existence of \emph{open towers}, each consisting of an open subset (the ``base'') $B$ of the topological space and a finite subset (the ``shape'') $S$ of the acting group such that $\{g \cdot B \colon g \in S \}$ (the collection of the ``levels'') is a disjoint family. 
The existence of  
Rokhlin-type towers is a priori 
stronger than freeness, and a 
significant amount of work goes into showing that they are implied by freeness 
under some additional conditions on the space, the group or the action. 

Now the presence of the freeness and minimality conditions are often justified by the fact that they ensure the resulting crossed products belong to the class of simple $C^*$-algebras, which, for good reasons, has been at the center of attention in the theory of $C^*$-algebras. 
However, 
even if one only cares about simple $C^*$-algebras, one should not restrict themselves to simple crossed product $C^*$-algebras, since, for instance, 
simple $C^*$-algebras may naturally arise as direct limits of non-simple crossed products (see the last paragraph of Subsection~\ref{subsec:ltc-bounds} for an example). 
Moreover, even if one only wishes to focus on dynamical systems which give rise to simple crossed product $C^*$-algebras, 
one 
does not need to require freeness, but only topological freeness, as is shown in 
\cite{archbold-spielberg}.

Hence there are ample motivations to push the results above beyond the free case, though progress in this direction has been quite limited. 
We considered in \cite{Hirshberg-Wu16} arbitrary actions of $\Z$ on locally 
compact Hausdorff spaces with finite covering dimension, and showed that the 
crossed product always has finite nuclear dimension. The bound we found is 
probably not optimal, although usually one mostly cares about the finiteness of the 
nuclear dimension rather than its actual value. (In the case of simple 
$C^*$-algebras, which in this case correspond to free and minimal actions, it 
is now known that the possible values are only $0$,$1$ or 
$\infty$; this is false in the non-simple case.) In \cite{Hirshberg-Wu-flows}, 
we proved analoguous results for flows and for orientable one-dimensional foliations. 
Unfortunately, however, there seems to be no easy way to generalize the method in these papers to even the case of $\mathbb{Z}^2$-actions (see Subsection~\ref{subsec:ideas-main-stabilizers} for a more in-depth discussion). 

We point out in passing that 
while strictly speaking, the notion 
of almost finiteness does not depend on freeness, it does require essential freeness (with regard to any invariant probability measure). It was shown 
recently that there exist examples of minimal, uniquely ergodic, and topologically free actions of 
discrete amenable groups on the Cantor set which are not essentially free and 
thus fail to be almost finite 
\cite{Joseph2023}; such actions are said to be \emph{allosteric}. We provide a 
generalization of the construction in 
\cite{Joseph2023}, as the techniques we develop in this paper can handle some 
of the examples of this type (see the last paragraph of Subsection~\ref{subsec:main-results}). 

\subsection{The main results} \label{subsec:main-results}

The goal of this paper is to extend the results of \cite{Hirshberg-Wu16} from integer actions to a 
much broader class of group actions, e.g., actions by all finitely generated virtually nilpotent groups. 
To this end, we introduce a notion which we call the 
\emph{long thin covering dimension} for a (possibly non-free) action $\alpha$ of a countable 
discrete 
group $G$ on a locally compact Hausdorff space $X$ and we denote it by $\dimltc(\alpha)$. 
This topological-dynamical invariant allows us to establish the following nuclear dimension bound for the crossed product: 
\[
	\dimnuc \left( C_0(X) \rtimes_\alpha G \right) + 1 \leq \left( \dimltc(\alpha) + 1 \right)^2 \left( d_{\operatorname{stab}} + 1 \right) \; ,
\]
where $d_{\operatorname{stab}}$ is
a uniform upper 
bound of the nuclear dimension of the group $C^*$-algebras of all subgroups of the 
stabilizers corresponding to the action $\alpha$. 
In fact we prove something 
more general, as we consider actions on $C_0(X)$-algebras rather than 
actions on $C_0(X)$; 
this is useful in getting applications such as the one for allosteric actions mentioned above. 
The precise statement can be found in Theorem \ref{thm:dimnuc-main}, which involves another dimension we define along side $\dimltc(\alpha)$, 
namely the 
\emph{asymptotic dimension of the coarse orbit space}, as this gives a clearer 
and better 
bound, 
however this latter dimension is bounded above by $\dimltc(\alpha)$ (see Theorem~\ref{thm:dimltc-asdim}). 

By bounding $\dimltc(\alpha)$ under various conditions, we are able to obtain a variety of applications of our main theorem; see Subsection~\ref{subsec:ltc-bounds} for a more detailed description. 
\begin{itemize}
	\item Directly generalizing \cite{Hirshberg-Wu16}, 
	we show that for an arbitrary action of a finitely generated virtually nilpotent group on a locally 
	compact Hausdorff space with finite covering dimension, the associated crossed product has finite nuclear dimension. 
	This provides a key technical tool that will lead to a proof that all (possibly twisted) group $C^*$-algebras of virtually polycyclic groups have finite nuclear dimension (\cite{EckhardtWu}), a result that generalizes \cite{eckhardt-mckenney} and \cite{EGM}\footnote{In fact, here we also need to use the version of our main result for $C_0(X)$-algebras. }. 
	
	\item Going beyond the case of amenable groups, we show that for any simplicial 
	proper cocompact action of a hyperbolic group $G$ on a hyperbolic complex $X$, the induced action of $G$ on the completion $\overline{X}$ or boundary $\partial 
	X$ gives rise to crossed products with finite nuclear dimension. 
	A special case is given by actions 
	of hyperbolic groups on their Gromov boundaries. 
	Various 
	boundary actions were studied in 
	\cite{laca-spielberg}, and generalized recently to the case of arbitrary 
	amenable, topologically free and minimal actions of a large class of 
	non-amenable groups, in \cite{ggkn22}, using a  
	different approach: those papers show that such crossed products are Kirchberg 
	algebras. 
	
	\item As hinted earlier when we review almost finiteness, we show that certain crossed products arising from allosteric actions 
	have 
	finite nuclear dimension. 
	In our approach, in order to bound the nuclear dimension of a simple 
	crossed product, we need to work with a directed system of non-simple ones arising from actions on noncommutative $C^*$-algebras. 
\end{itemize}

\subsection{Rokhlin-type towers for non-free actions} \label{subsec:towers-non-free}

As indicated above, the key tool in establishing our main results is a new topological-dynamical invariant called the long thin covering dimension (or ``LTC dimension'' for short). 
This new dimension can be compared to the existing notions in the literature, such as the Rokhlin dimension and the tower dimension, since, at a conceptual level, it also keeps track of how well a topological space can be covered by a kind of ``Rokhlin-type towers''. 
However, the novelty lies in that the new ``Rokhlin-type towers'' we use are adapted to non-free actions. 

To get an idea of how this works, we consider an integer action $\alpha \colon \mathbb{Z} \curvearrowright X$ on a compact metric space and observe that the structure of an open tower $(B,\intervalofintegers{1}{n})$ (defined above) is encoded in an associated ``labeling function'' $\Lambda \colon \bigcup_{s=1}^{n} \alpha_s (B) \to \intervalofintegers{1}{n} \subseteq \mathbb{Z}$ that maps $\alpha_s(x)$ to $s$, where $x \in B$. 
This ``labeling function'' is continuous (in fact, locally constant) and, as long as $(B,\intervalofintegers{1}{n+1})$ also forms an open tower (this can often be arranged in the context of the tower dimension or almost finiteness), the ``labeling function'' is also ``locally equivariant'' (or more precisely, \emph{$\{1\}$-equivariant}) in the sense that for any $x, y \in \bigcup_{s=1}^{n} \alpha_s (B)$ satisfying $\alpha_1(x) = y$, we have $\Lambda (x) + 1 = \Lambda (y)$. 
It is this ``local equivariance'' that prevents us from having nontrivial 
stabilizers: for example, if $x$ is fixed by the action of $\mathbb{Z}$, then 
clearly it cannot be in the domain of a $\{1\}$-equivariant map to 
$\mathbb{Z}$. We observe at this point that even in the free case, 
aiming to get towers of open sets is an overkill: one really is interested in 
towers consisting of bump functions, that get translated approximately into one 
another via the the action. An open tower as above lets us build associated 
towers of bump functions, by starting out with one supported on the base and 
translating it. However, as one allows for an error when translating one bump 
function roughly into another, there's no reason for the supports to be mapped 
exactly into one another. This fine distinction turns out to be useful in our 
case.

In order to accommodate non-free actions, we need to generalize the notion of 
towers: from the perspective of labeling functions as above, a natural 
approach is to change the target of the labeling functions from the acting 
group $G$ (in this example, $\mathbb{Z}$) to homogeneous spaces $G/H$. We 
furthermore weaken the requirement of being an open tower by asking for 
an open set with a labeling function which is equivariant for points 
where it makes sense (that is, ones that remain in the collection when 
we apply relevant group elements), but we do not require each set 
to be a union of copies of one set that gets translated exactly into the others.  
Those generalized towers are required to cover of $X$ with bounded multiplicity,  
are ``long'' in the sense that for any point $x$ in $X$, a large partial orbit 
around $x$ is contained in a single generalized tower. With this, we arrive at 
an equivalent characterization (see Lemma~\ref{lem:BLR-reformu}) of the 
\emph{equivariant asymptotic dimension}, a dimension concept first developed by 
Bartels, L\"{u}ck and Reich in their work on the Farrell-Jones conjecture 
(\cite{BartelsLueckReich2008Equivariant}) and subsequently studied on its own 
right 
(\cite{Bartels2017Coarse, GuentnerWillettYu2017Dynamic, Sawicki2017equivariant}). 
Note that the equivariant asymptotic dimension depends not only on the action $\alpha$ but also on a predetermined family of admissible stabilizers $H$. 

The approach we actually take in defining our LTC dimension avoids the need to declare a family of admissible stabilizers: by letting the targets of the labeling functions to be ``nearby (patial) orbits'', we allow the ``shapes'' of our ``towers'' to adapt to how ``non-free'' the action is in every local region. 
This design also meshes well with our desire to make the ``towers'' thin, say, in the sense that the levels of the towers (interpreted as the fibers of the labeling functions) have small diameters \textemdash\ this makes it more effective in estimating the nuclear dimension of crossed products (cf. \cite[Definition~4.10 and Proposition~4.11]{kerr}). 
We point out in passing that unlike in the case of the (fine) tower dimension, we do not demand the levels of a ``tower'' to be exactly translates of one another.  
The last ingredient we bake into the definition of the LTC dimension is the requirement that the shapes of our ``towers'' have controlled cardinalities independently of how thin the ``towers'' are \textemdash\ this is needed to control the asymptotic dimension of a natural coarse structure we associate to the action. 

Having briefly discussed the underlying ideas, now we give a somewhat simplified characterization of the LTC dimension that works in the case of a discrete group $G$ acting on a compact metric space $X$: the LTC dimension is no more than $d$ if and only if for any finite subset $\Lo \subseteq G$, there exists a natural number $\Binu$ 
such that for any $r > 0$, 
there exist 
\begin{itemize}
	\item an open cover $\Coseco$ of $X$, and
	\item locally constant functions $\Ne_{\Cose} \colon \Cose \to X$ for $\Cose \in \Coseco$, called \emph{near orbit selection functions}, 
\end{itemize}
satisfying the following conditions:
{
	\begin{enumerate}
		\renewcommand{\labelenumi}{\textup{(\theenumi)}} \renewcommand{\theenumi}{Lo}\item  \label{item:intro:Lo}
		For any $x \in X$, the partial orbit $\bigcup_{g \in \Lo} \alpha_{g} (x)$ is contained in some $\Cose \in \Coseco$.
		\renewcommand{\labelenumi}{\textup{(\theenumi)}} \renewcommand{\theenumi}{Mu}\item  \label{item:intro:Mu} 
		For any $x \in X$, no more than $d+1$ members of $\Coseco$ contain $x$. 
		\renewcommand{\labelenumi}{\textup{(\theenumi)}} \renewcommand{\theenumi}{Eq}\item  \label{item:intro:Eq} 
		Each $\Ne_{\Cose}$ is \emph{$\Lo$-equivariant} in the following sense: for any $x \in \Cose$ and for any $g \in \Lo$, if $\alpha_{g} (x) \in \Cose$ then $\Ne_{\Cose}\left( \alpha_{g} (x) \right) = \alpha_g \left( \Ne_{\Cose}(x) \right)$.  
		\renewcommand{\labelenumi}{\textup{(\theenumi)}} \renewcommand{\theenumi}{Th}\item  \label{item:intro:Th} 
		For any $\Cose \in \Coseco$ and for any $y \in \Ne_{\Cose}(\Cose)$, the fiber $\Ne_{\Cose}^{-1}(y)$ is contained in the $r$-ball around $y$.
		\renewcommand{\labelenumi}{\textup{(\theenumi)}} \renewcommand{\theenumi}{Ca}\item  \label{item:intro:Ca} 
		For any $\Cose \in \Coseco$, we have $\left| \Ne_{\Cose} (\Cose) \right| \leq \Binu$. 
	\end{enumerate}
}
Here the labels stand for ``long'', ``multiplicity'', ``equivariant'', ``thin'', and ``cardinality'', respectively. 

The full-fledged definition is presented in
Definition~\ref{\LTCdef}, while more motivations in relation with the proof of the main theorem will be discussed in Subsection~\ref{subsec:ideas-main-ltc}. 
We conclude this subsection by pointing out that the LTC dimension is invariant under continuous orbit equivalence (see Proposition~\ref{prop:ltc-dim-coe}).

\subsection{Ideas behind the main theorem: ``nearsighted stabilizers''} \label{subsec:ideas-main-stabilizers}

In the following few subsections, 
we provide an informal description of 
the ideas behind the proof of our main theorem, which we hope will help the reader digest the long proof (some familiarity with the Rokhlin dimension theory may be helpful). 

As a starting 
point, we recall the central idea in the proof of the main theorem in 
\cite{Hirshberg-Wu16}. For simplicity of the discussion, let us assume that $X$ 
is a compact metric space. Suppose that $\alpha$ is an automorphism of $C(X)$; 
we will also use $\alpha$ to refer 
to the induced homeomorphism of the Gelfand spectrum $X$. In this 
context, we say $\alpha$ has Rokhlin dimension at most $d$ (in the single tower 
version), if for any $\eps>0$ and for any $n > 0$ (or for arbitrarily large 
$n>0$) there exists a partition of unity $\{\mu_j^{(l)}\}_{j=0,1,\ldots,n-1 ; l = 
0,1,\ldots d}$ such that for any $l$, if $j \neq j'$ then $\| \mu_j^{(l)} 
\mu_{j'}^{(l)} \| < \eps $ and such that for any $j$ and for any $l$, we have 
$\|\alpha ( \mu_j^{(l)} ) - 
\mu_{j+1}^{(l)} \| < \eps$ (where addition is taken modulo $n$). We sometimes 
refer to the $l$ in the superscript as a \emph{color}, so that for any color $l$, the collection $\{\mu_j^{(l)}\}_{j=0,1,\ldots,n-1}$ is considered a Rokhlin tower in the sense that the ``levels'' $\mu_j^{(l)}$ are approximately orthogonal and approximately translates of one another. 

Finite Rokhlin dimension implies freeness \textemdash\ and the two notions are equivalent for spaces with finite covering dimension (\cite{HWZ}) \textemdash\ however one 
can consider a finer, quantitative version that works for non-free actions: for a fixed $\eps>0$ and fixed 
length $n$, in order to obtain  a partition of unity 
$\{\mu_j^{(l)}\}_{j=0,1,\ldots,n-1 ; l = 	0,1,\ldots d}$ with the properties 
above, one does not need to assume that the action is free, but rather that 
 there is 
some number $N$, depending on $\eps>0$, on $n$ and on the dimension 
of 
the space $X$, such that if 
there are no orbits with fewer than $N$ points, then such partitions of unity 
exist. In order to find decomposable approximations for a given finite set of 
the crossed product to within a given tolerance, one can find a suitable 
$\eps>0$ and $n$ needed for the argument based on Rokhlin dimension to work. We 
can then find the corresponding smallest allowed orbit size $N$, and decompose 
the space $X$ into the closed invariant subset $X_{\leq N}$ of points of period 
at most $N$, and the complement $X_{>N}$. Crossed products by periodic 
automorphisms are subhomogeneous, and one can use that fact in order to obtain 
bounds for the nuclear 
dimension of the quotient $C(X_{\leq N}) \rtimes \Z$, essentially combining the dimension of the quotient space $X_{\leq N} / \Z$ (which is Hausdorff) and the nuclear dimension of the group $C^*$-algebras of the stabilizer groups (in this case they are all isomorphic to $\Z$).  
One uses the Rokhlin 
towers to obtain approximations for the long orbit part $C_0(X_{>N})$. Finally, 
we patch those approximations together. A similar idea involving long vs.\ short orbits works for actions 
of 
$\R$, although some of the underlying topological techniques turn out to be 
quite different.

The method described above relies on the fact that for an action of $\Z$, at 
each point, the action is either periodic or free, and we can make a 
distinction between short periods and long periods. Furthermore, we use the 
explicit description of the crossed product corresponding to the short orbit 
part as a subhomogeneous $C^*$-algebra.  
However, this relatively 
hands-on 
approach does not seem to generalize well, even to actions of $\Z^2$, where 
one has to deal with 
a zoo of infinite subgroups with infinite index! 
Intuitively, some orbits may be long in one direction and short 
in another, and there are infinitely many different directions, and thus there is no 
immediate 
analogue of the filtration of points by orbit length which we have in the case 
of actions of $\Z$.  

Instead, we build on the 
idea of limiting ourselves to a finite number of 
subgroups of $G$ that sufficiently resemble actual stabilizers for the purpose of our approximations. 
More precisely, given a finite subset $S$ of the group which we 
care about for the purpose of constructing an approximation, we replace the 
stabilizer group $G_x$ of a point $x$ by 
the subgroup generated by $G_x \cap S$, which may be called the ``nearsighted stabilizer subgroups'' and thought of as the 
stabilizer ``as seen by $S$''. For example, with $G = \Z$ and $S = 
\{-n,-n+1,\ldots,n\}$, if a point $x$ has orbit size greater than $n$, 
then its ``nearsighted stabilizer group'' is trivial; this means that for the 
purposes of the constructions we carry out, the orbit size is long enough so 
that this point behaves as though it is acted upon freely. 
Hence by introducing the ``nearsighted stabilizer groups'', we have found a systematic way to generalize the short orbit - long orbit split 
used for $\mathbb{Z}$-actions in \cite{Hirshberg-Wu16}. 
For more general groups, it 
would mean that for the purpose of the construction, the point 
behaves as though its stabilizer group is smaller than it actually is.

\subsection{Ideas behind the main theorem: LTC dimension recast} \label{subsec:ideas-main-ltc}
To gain some more intuition, let us first run through what the proof of our main theorem roughly looks like if the action in 
question is free, the space $X$ is compact and metrizable, and when convenient,  the group is $\Z$. 
This is a variation 
of the known arguments (e.g., using Rokhlin dimension \cite[Theorem~4.1]{HWZ}, dynamical asymptotic dimension \cite[Theorem~8.6]{GuentnerWillettYu2017Dynamic}, and tower dimension \cite[Theorem~6.2]{kerr}), 
recast in a 
somewhat different form which will be more useful for our purposes.  

Denoting the 
action by $\alpha$, we start 
out with a tolerance $\eps>0$ and a finite set of the crossed product that we want 
to 
approximate. We may assume that the finite set in question involves a finite 
number of elements $F \subseteq C(X)$ and a finite number of elements in the 
group. Consider the forgetful map $C(X) \mapsto l^{\infty}(X)$ gotten by 
discretizing the 
space $X$. This map is equivariant, and therefore induces a homomorphism $C(X) 
\rtimes G \to l^{\infty}(X) \rtimes G$. If the action is free, then $X$ 
decomposes into a disjoint set of orbits, each of which looks like $G$ acted 
upon by itself via left translation, so 
just as $l^{\infty}(G) \rtimes G$ is the uniform Roe algebra\footnote{In fact, it is the maximal uniform Roe algebra that corresponds to the maximal crossed product, but we are not going to pay close attention to the distinction between maximal and reduced completions in this introduction, as it does not play an essential role in the paper. } of the Cayley graph of $G$ (assuming for the ease of our exposition that $G$ is finitely generated), $l^{\infty}(X) \rtimes G$ is the uniform Roe algebra of a disjoint union of copies of the Cayley graph of $G$. 
By \cite[Theorem~8.5]{winter-zacharias}, the nuclear dimension of $l^{\infty}(X) \rtimes G$ is bounded from above by the asymptotic dimension of this disjoint union, which is easily seen to be equal to the asymptotic dimension of (the Cayley graph of) $G$ itself. 
Supposing that $G = \Z$ for illustration, we pick some large $n 
\in \N$ and a small $\delta>0$, depending on the finite collection $F$ in 
$C(X)$ and the finite number of group elements we started with. The constant 
$\delta>0$ is chosen such that the functions in $F$ have small oscillations on any 
set of diameter bounded by $\delta$ (depending on the choices 
above). In the actual proof, this is modified in two ways: first, as we do 
not assume that the space is metrizable, the notion of small diameter is 
replaced by requiring that those sets are subsets of some given cover; second, 
as we work with $C_0(X)$-algebras, thought of as sections of possibly 
non-locally-trivial bundles, we need a suitable substitute for the argument by oscillations.
With this in mind, as in the definition of Rokhlin dimension for $\Z$-actions, we show 
that one can find a partition of unity 
$\{\mu_k^{(l)}\}_{k=-n,-n+1,\ldots,n \, ; \, l=0,1,\ldots d}$ (with $d$ being a 
suitable 
dimension) such that:
\begin{enumerate}
	\renewcommand{\labelenumi}{\textup{(\theenumi)}} \renewcommand{\theenumi}{Or}\item  \label{item:intro:Or} 
	for any $l$, if $k \neq k'$ then $\mu_k^{(l)} \mu_{k'}^{(l)} = 0$; 
	
	\renewcommand{\labelenumi}{\textup{(\theenumi)}} \renewcommand{\theenumi}{In}\item  \label{item:intro:In} 
	$\alpha(\mu_k^{(l)}) \approx \mu_{k+1}^{(l)}$;
	(the approximation is in norm, and for the purpose of the introduction we 
	are not specifying the required tolerance;)
	
	\renewcommand{\labelenumi}{\textup{(\theenumi)}} \renewcommand{\theenumi}{Th'}\item  \label{item:intro:Th'} 
	the diameter of the support of each element $\mu_k^{(l)}$ is less than 
	$\delta$.
	
	\renewcommand{\labelenumi}{\textup{(\theenumi)}} \renewcommand{\theenumi}{Eq'}\item  \label{item:intro:Eq'} 
	we furthermore have a choice of a point $x_l$, such that for 
	$k=-n,-n+1,\ldots,n $, the distance between the point $\alpha^k(x_l)$ is 
	less than $\delta$ away from any point in the support of $\mu_k^{(l)}$.
\end{enumerate}
Here the labels stand for ``orthogonal'', ``invariant'', ``thin'', and ``equivariant''. 
The last two conditions are not part of the definition of finite 
Rokhlin 
dimension, and indeed this version of Rokhlin dimension (which essentially amounts to the 
LTC dimension) is strictly stronger. For example, it implies that 
the space must have finite covering dimension. We are being deliberately 
imprecise here \textemdash\ there are additional requirements (see Proposition~\ref{prop:ltc-dim}), but we omit 
them for the purpose of 
this discussion.

The notation above is intended to be close to what has been used in 
papers involving the Rokhlin dimension, and is therefore repeated for the purpose 
of the introduction, so as to help the reader who may be familiar with Rokhlin 
dimension methods. However, we caution the reader that we use different 
notation in the actual proofs in this paper. To go from what is described in the 
introduction to what 
appears in the actual proof, we may in fact have several disjoint towers 
corresponding to the same color, we will use  $\Cose[LTC]$ to denote an open set 
which is roughly the union of all open supports of 
a tower 
(such $\Cose$'s will form an open cover $\Coseco[LTC]$), 
and the sum of the elements in towers of color $r$ will 
be denoted by $\Po[LTC]^{(r)}$. 
For each such $\Cose[LTC]$, we will have 
a \emph{near orbit selection function} 
$\Ne[LTC]_{\Cose[LTC]} \colon \Cose[LTC] \to X$, so 
that the image of $\Ne[LTC]_{\Cose[LTC]}$
 will be the collection denoted 
$\{\alpha^k(x)\}_{k=-n,\ldots,n}$ above. For now, we maintain this ad-hoc 
notation for the purpose of the introduction.

We illustrate this in the following image. The blobs are intended to indicate 
the open supports of $\mu_k^{(l)}$ (for a given chosen $l$), and $x = x_l$. We 
stress that we do not require that the supports get 
mapped exactly one to the other, so the picture is not that of an open tower or a topological 
Rokhlin tower. The point $x$ gives us a selection of a nearby orbit, which 
behaves like this ``Rokhlin tower''. The point $x$ need not be contained within the 
support, but merely be near it, and it may well be \textemdash\ as illustrated \textemdash\ that some 
translates of $x$ under the relevant group elements will be inside the support 
and some may be outside. This flexibility turns out to be useful when we try to give upper bounds for the resulting dimension concept. 
\begin{center}
\begin{tikzpicture}
	\footnotesize
	\draw  plot[smooth, tension=0.7] coordinates {(-3.5,0.5) (-3,2.5) (-1,3.5) 
	(1.5,3) (4,3.2) (5,2.5) (5,0.5) (2.5,-2) (0,-1) (-3,-2) (-3.5,0.5)};
	\draw plot[smooth, tension=.7] coordinates {(-2.5,0) (-2,0.5) (-1.5,-0.5)  
	(-2,-0.6) (-2.4,-0.2) (-2.5,0)};
	\draw plot[smooth, tension=.8] coordinates {(-2.3,1.5) (-1.8,1.9)  
	(-1.6,1.5)  (-1.3,1)  
	(-1.8,0.8) (-2.3,1.5)};
	\draw plot[smooth, tension=.7] coordinates {(-1.4,2.3) (-0.9,2.6) 
	(-0.4,1.8)  (-0.9,1.6) (-1.4,2.3)};
	\draw plot[smooth, tension=.7] coordinates {(-0.1,2.3) (0.3,2.6) (0.9,1.8)  
	(0.4,1.6) (-0.1,2.3)};
	\draw plot[smooth, tension=.7] coordinates {(1.2,2) (1.5,2.3) (2.1,1.5)  
	(1.8,1.4) (1.6,1.3) (1.2,2)};
	\draw plot[smooth, tension=.7] coordinates {(2.5,1.5) (2.8,1.8) (3.4,1)  
	(2.9,0.7) (2.5,1.5)};
	\node at (-2.2,0)[circle,fill,inner sep=1.5pt]{};
	\node at (-2.2,1.9)[circle,fill,inner sep=1.5pt]{};
	\node at (-1,2)[circle,fill,inner sep=1.5pt]{};
	\node at (0.4,1.8)[circle,fill,inner sep=1.5pt]{};
	\node at (1.5,1.5)[circle,fill,inner sep=1.5pt]{};
	\node at (2.6,0.9)[circle,fill,inner sep=1.5pt]{};
	\draw[smooth,densely dotted, ->] (-2.2,-1.6) .. controls (-1.9,-0.8) .. 	
	(-2.15,-0.1);
	\draw[smooth,densely dotted, ->] (-2.2,0) .. controls (-2.8,0.9) and  
	(-2.8,1.2) .. 	(-2.28,1.83);
	\draw[smooth,densely dotted, ->] (-2.2,1.9) .. controls (-1.6,2.3) ..	
	(-1.07,2.07);
	\draw[smooth,densely dotted, ->] (-1,2) .. controls (-0.3,1.6) .. 
	(0.33,1.73);
	\draw[smooth,densely dotted, ->] (0.4,1.8) .. controls (0.9,1.3) .. 
	(1.43,1.43);
	\draw[smooth,densely dotted, ->] (1.5,1.5) .. controls (2,0.7) .. 
	(2.53,0.83);
	\draw[smooth,densely dotted, ->] (2.6,0.9) .. controls (3.3,1.1) .. 
	(4,0.9);
	\node at (-1,2.3){$x$};
	\node at (-2.4,2.2){$\alpha^{-1}(x)$};
	\node at (-2.8,-0.3){$\alpha^{-2}(x)$};
	\node at (0.4,1.35){$\alpha(x)$};
	\node at (1.35,1.1){$\alpha^{2}(x)$};
	\node at (2.45,0.55){$\alpha^{3}(x)$};
	\node at (-1,3){$\mu_0^{(l)}$};
	\node at (0.3,2.9){$\mu_1^{(l)}$};
	\node at (1.4,2.7){$\mu_2^{(l)}$};
	\node at (2.6,2){$\mu_3^{(l)}$};
	\node at (-1,0.75){$\mu_{-1}^{(l)}$};
	\node at (-1.2,-0.4){$\mu_{-2}^{(l)}$};
	\normalsize
\end{tikzpicture}
\end{center}

Let us complete our sketch of the proof in the free case. 
By now we already have a finite collection of ``Rokhlin towers'', and to each we have associated a 
chosen (partial) orbit. By evaluating our given functions just on this finite 
collection of 
orbits, we more or less approximately factor through a finite direct sum of 
copies of the Roe algebra. Thinking of elements of the Roe algebra as infinite 
matrices, each such evaluation 
 sends a function $f$ (for $G = \Z$) to the diagonal matrix
\[
f \mapsto \mathrm{diag}(\ldots, f(\alpha^{-n}(x)), f(\alpha^{-n+1}(x)), 
f(\alpha^{-n+2}(x)), \ldots,  f(\alpha^{n}(x)), \ldots ) .
\]
The canonical unitary $u$ is mapped to a bilateral shift matrix. We 
then cut down to a finite matrix, using a partition of unity at the level of 
the group, which comes from covers of the group that witness its having finite 
asymptotic dimension.
We define completely positive maps from the Roe algebra back to the crossed 
product by taking a finite discrete sequence, thought of as the values of a 
given function on the relevant piece of the orbit, and using it as coefficients 
which ``modulate'' the partition of unity given by the Rokhlin elements 
$\mu_k^{(l)}$.

The partial orbit we chose is in fact a natural way to label the levels in a ``Rokhlin tower'': rather than indexing them by group elements, we could index them by 
the points in the orbit they mirror. This distinction is insignificant in the 
free case, but points to the generalization we need in the non-free case: we 
look for Rokhlin-type towers which are modeled by partial orbits, which may or 
may not be free. 
This is illustrated in the figure below. It shows one orbit 
which is free \textemdash\ or at least long enough to be treated as free \textemdash\ and another 
orbit 
with just three elements: here the ``Rokhlin tower'' consists of just three 
elements, which approximately behave like the nearby orbit. For the purpose of 
illustration, the two towers are drawn as being disjoint; however if the 
colors of those towers are different, they may overlap. 

\begin{center}
\begin{tikzpicture}
	\footnotesize
	\draw  plot[smooth, tension=0.7] coordinates {(-4.5,0.5) (-3,2.5) (-1,3.5) 
		(1.5,3) (4,3.2) (5,2.5) (5.5,0.5) (2.5,-5) (0,-4) (-4.6,-4.8) 
		(-4.5,0.5)};
	\draw plot[smooth, tension=.7] coordinates {(-2.5,0) (-2,0.5) (-1.5,-0.5)  
		(-2,-0.6) (-2.4,-0.2) (-2.5,0)};
	\draw plot[smooth, tension=.8] coordinates {(-2.3,1.5) (-1.8,1.9)  
	(-1.6,1.5) (-1.3,1)  
		(-1.8,0.8) (-2.3,1.5)};
	\draw plot[smooth, tension=.7] coordinates {(-1.4,2.3) (-0.9,2.6) 
		(-0.4,1.8)  (-0.9,1.6) (-1.4,2.3)};
	\draw plot[smooth, tension=.7] coordinates {(-0.1,2.3) (0.3,2.6) (0.9,1.8)  
		(0.4,1.6) (-0.1,2.3)};
	\draw plot[smooth, tension=.7] coordinates {(1.2,2) (1.5,2.3) (2.1,1.5)  
		(1.8,1.4) (1.6,1.3) (1.2,2)};
	\draw plot[smooth, tension=.7] coordinates {(2.5,1.5) (2.8,1.8) (3.4,1)  
		(2.9,0.7) (2.5,1.5)};
	\node at (-2.2,0)[circle,fill,inner sep=1.5pt]{};
	\node at (-2.2,1.9)[circle,fill,inner sep=1.5pt]{};
	\node at (-1,2)[circle,fill,inner sep=1.5pt]{};
	\node at (0.4,1.8)[circle,fill,inner sep=1.5pt]{};
	\node at (1.5,1.5)[circle,fill,inner sep=1.5pt]{};
	\node at (2.6,0.9)[circle,fill,inner sep=1.5pt]{};
	\draw[smooth,densely dotted, ->] (-2.2,-1.6) .. controls (-1.9,-0.8) .. 	
	(-2.15,-0.1);
	\draw[smooth,densely dotted, ->] (-2.2,0) .. controls (-2.8,0.9) and  
	(-2.8,1.2) .. 	(-2.28,1.83);
	\draw[smooth,densely dotted, ->] (-2.2,1.9) .. controls (-1.6,2.3) ..	
	(-1.07,2.07);
	\draw[smooth,densely dotted, ->] (-1,2) .. controls (-0.3,1.6) .. 
	(0.33,1.73);
	\draw[smooth,densely dotted, ->] (0.4,1.8) .. controls (0.9,1.3) .. 
	(1.43,1.43);
	\draw[smooth,densely dotted, ->] (1.5,1.5) .. controls (2,0.7) .. 
	(2.53,0.83);
	\draw[smooth,densely dotted, ->] (2.6,0.9) .. controls (3.3,1.1) .. 
	(4,0.9);
	\node at (-1,2.3){$x_0$};
	\node at (-2.4,2.2){$x_{-1}$};
	\node at (-2.75,-0.2){$x_{-2}$};
	\node at (0.4,1.35){$x_1$};
	\node at (1.4,1.15){$x_2$};
	\node at (2.45,0.55){$x_3$};
	\node at (-1,3){$\mu_{x_0}^{(l)}$};
	\node at (0.3,2.9){$\mu_{x_1}^{(l)}$};
	\node at (1.4,2.7){$\mu_{x_2}^{(l)}$};
	\node at (2.6,2.1){$\mu_{x_3}^{(l)}$};
	\node at (-1,0.75){$\mu_{x_{-1}}^{(l)}$};
	\node at (-1.1,-0.4){$\mu_{x_{-2}}^{(l)}$};
	\draw plot[smooth, tension=.9] coordinates {(-0.5,-2) (0,-2.5) 
	(0.5,-2.3)  
	(0.3,-1.8) (0,-2.2) (-0.2,-1.9) (-0.5,-2)};
	\draw plot[smooth, tension=.9] coordinates {(0.5,-3) (1,-3.5) 
	(1.5,-3.3)  
	(1.3,-2.8) (1,-3) (0.8,-2.9) (0.5,-3)};
	\draw plot[smooth, tension=.9] coordinates {(1.5,-2) (2,-2.5) 
	(2.5,-2.3)  
	(2.3,-1.8) (2,-1.7) (1.7,-1.7) (1.5,-2)};
	\node at (0,-1.9)[circle,fill,inner sep=1.5pt]{};
	\node at (0,-1.6){$y$};
	\node at (2,-2.2)[circle,fill,inner sep=1.5pt]{};
	\node at (2,-1.9){$y'$};
	\node at (1,-3.2)[circle,fill,inner sep=1.5pt]{};
	\node at (1.2,-3.35){$y''$};
	\draw[smooth,densely dotted, ->] (0,-1.9) .. controls (1,-1.8) .. 
	(1.93,-2.13);
	\draw[smooth,densely dotted, ->] (2,-2.2) .. controls (1.4,-2.5) .. 
	(1.07,-3.13);
	\draw[smooth,densely dotted, ->] (1,-3.2) .. controls (0.3,-2.6) .. 
	(0.04,-1.98);
	\node at (-0.5,-2.6){$\mu_{y}^{(l')}$};
	\node at (2.9,-2.3){$\mu_{y'}^{(l')}$};
	\node at (1.5,-3.8){$\mu_{y''}^{(l')}$};
	\normalsize
\end{tikzpicture}
\end{center}

If one adapts the conditions~\eqref{item:intro:Or}, \eqref{item:intro:In}, \eqref{item:intro:Th'}, and~\eqref{item:intro:Eq'} to the non-free setting in this way and inserts condition~\eqref{item:intro:Ca} to take coarse geometry into account (see the last paragraph of Subsection~\ref{subsec:ideas-main-coarse} for why we do this), one precisely arrives at the notion of the LTC dimension described in Subsection~\ref{subsec:towers-non-free}: a sufficiently ``long'' open cover gives rise to a partition of unity subordinate to it and satisfying \eqref{item:intro:In}, and condition~\eqref{item:intro:Mu} may be upgraded to a desired $(d+1)$-coloring on the partition of unity via a barycentric subdivision argument. The details of this connection can be found in Proposition~\ref{prop:ltc-dim}.

\subsection{Ideas behind the main theorem: the coarse orbit space} \label{subsec:ideas-main-coarse}
To complete our adaptation of the proof strategy that works in the free case to the general, possibly non-free setting, we need one more major idea. 
Recall that in the free case, we have used two layers of partitions of unity, one coming from the ``Rokhlin towers'', and the other from the asymptotic dimension of (the Cayley graph of) $G$. 
In Subsection~\ref{subsec:ideas-main-ltc}, we have adapted the former partition of unity to the non-free setting with the help of the LTC dimension. In this subsection, we deal with the latter one. 

A closer look at the free case suggests that this latter kind of partitions of unity ought to be nothing but the one that gives the ``natural'' upper bound (following \cite[Theorem~8.5]{winter-zacharias}) on the nuclear dimension of $\ell^\infty(X) \rtimes G$ by the asymptotic dimension of $G$, using the fact that this crossed product can be identified with the uniform Roe algebra of a disjoint union of copies of the Cayley graph of $G$. 
The reader may use as a guiding example the single-orbit case where $X = G$. 

Now in the general setting, $\ell^\infty(X) \rtimes G$ may no longer be a uniform Roe algebra.  However, in the single-orbit case where $X = G / H$, we may combine the proof of \cite[Theorem~8.5]{winter-zacharias} with Green's imprimitivity theorem (\cite{Green1980structure}) to given a ``natural'' upper bound of the nuclear dimension of $\ell^\infty(G/H) \rtimes G$ by a combination of the asymptotic dimension of the Schreier graph associated to $G/H$ and the nuclear dimension of $C^*(H)$. 
Extending this to the general, multi-orbit case, we can give a ``natural'' upper bound of the nuclear dimension of $\ell^\infty(X) \rtimes G$ by a combination of the asymptotic dimension of a disjoint union of Schreier graphs associated to $G/G_x$ and the supremum of nuclear dimension of $C^*(G_x)$, where $x$ ranges over $X$, a fact whose proof involves partitions of unity arising from said asymptotic dimension. 

In order to give a coordinate-free description of ``the disjoint union of Schreier graphs associated to $G/G_x$'' that also works when $G$ is not finitely-generated, we introduce the notion of \emph{coarse orbit space}, which is just the set $X$ equipped with a suitable coarse structure, where each orbit $G \cdot x$ forms a coarse connected component that is coarsely equivalent to $G / G_x$. 
It is precisely the partitions of unity arising from the asymptotic dimension of the coarse orbit space that we shall use in the proof of our main theorem. 
The role of the coarse orbit space here may be compared with that of box spaces in \cite{SWZ}. 

In the actual proof of our main theorem (Theorem~\ref{thm:dimnuc-main}), we do not work with $\ell^\infty(X) \rtimes G$, since this would involve infinitely many different 
orbit types, which would make it unmanageable for us to construct (uniformly-approximately) order zero maps back into $C(X) \rtimes G$. 
As was explained in Subsection~\ref{subsec:ideas-main-stabilizers}, we form the ``nearsighted stabilizers'' to reduce the complexity of the problem to a finite number of cases. 
This calls for a blow-up construction of sorts, where homogeneous spaces of the form $G / 
G_x$ are 
replaced by $G / \left\langle G_x \cap S \right\rangle$, 
for a suitably large set $S$ 
such that functions in the partition of unity constructed above can be lifted from $G \cdot x \cong G / G_x$ to $G / \left\langle G_x \cap S \right\rangle$ whenever $x$ is in the support of said function.

\subsection{Finite bounds on the long thin covering dimension} \label{subsec:ltc-bounds}

Besides the proof of the main theorem, substantial footwork is devoted to 
showing that the theorem is applicable for large classes of group actions, as mentioned just after we introduced our main theorem earlier. 
The key here is to find finite upper bounds for the LTC dimension. 

We show in Theorem~\ref{thm:lsp-ltc} that actions of finitely generated virtually nilpotent 
groups on finite-dimensional spaces have finite LTC dimension. 
Two main ingredients go into the proof. 
The first is a notion we introduce, called 
\emph{large scale packing constants}, which are geometric-group-theoretic invariants for the acting group $G$ and roughly look like relative versions of the growth condition, baring resemblence to Tempelman's condition (see \cite[Remark~8.5]{kerr-szabo}). The rough idea is to find a uniform bound that controls, for any $r > 0$, 
the number of disjoint $R/2$-balls in a ball of radius $R+r$ in the Cayley graph of $G$ for some large enough $R$ 
(or more generally, the number of $R/2$-balls allowing for controlled overlaps, 
as a higher dimensional version) \textemdash\ though the precise definition (see Definition~\ref{def:LSP}) does not require the use of balls. 
We use a refined estimate (\cite{Breuillard}) of the polynomial growth of finitely generated virtually 
nilpotent groups to obtain finite bounds on their large scale packing constants (see Theorem~\ref{thm:lsp-vnil}). 
The second ingredient is a string of general position lemmas involving two mathematical inductions (see Lemmas~\ref{lem:general-position-action} and~\ref{lem:general-position-combinatorial}) that allow us to efficiently lower the multiplicity of a cover by carefully ``trimming away'' some of the overlaps. 
Roughly speaking, 
the proof of Theorem~\ref{thm:lsp-ltc} (presented at the end of the section) starts with building an open cover of the space by Rokhlin-type towers that is ``thin'' but not necessarily ``long'', and then, wishing to extend these towers to obtain a ``long'' cover but wary of the increased multiplicity brought about by the new overlaps, we apply the general position lemmas to ``trim'' the extended towers to control the multiplicity, using the large scale packing constants as a bookkeeping device. 

In order to go beyond the case of virtually 
nilpotent groups (or even amenable groups, for that matter), 
we show how to use the aforementioned notion of equivariant asymptotic 
dimension to upgrade LTC dimension bounds from a family of subgroups (e.g., the family of all virtually cyclic subgroups) to a larger group we care about (see Theorem~\ref{thm:relative-bound-ltc}). 
The proof of this result requires us to reformulate the equivariant asymptotic dimension in a way more compatible with the LTC dimension (see Lemma~\ref{lem:BLR-reformu}) \textemdash\ in some sense, the equivariant asymptotic dimension amounts to the LTC dimension without the ``thinness'' requirement, a perspective that was already reflected in 
Subsection~\ref{subsec:towers-non-free} 
\textemdash\ and then apply another careful inductive argument (see Lemma~\ref{lem:relative-bound-ltc-inductive}). 
This result can be combined with earlier results of Bartels, L\"{u}ck, and Reich to show that various actions of 
hyperbolic groups have finite LTC dimension as well (see Corollary~\ref{cor:hyperbolic-ltc}).

A byproduct of our result on actions by finitely generated virtually nilpotent 
groups is showing finite nuclear dimension for crossed products arising from 
profinite actions by wreath products of the form $H \wr \mathbb{Z}^d$, where 
$H$ is a finite abelian group (see 
Proposition~\ref{prop:allosteric-finite-nuc-dim}). 
To do this, we use Green's imprimitivity theorem and the Pontryagin duality to write such a crossed product as a direct limit of (non-simple) crossed products by $\mathbb{Z}^d$ of subhomogeneous algebras whose spectra are zero-dimensional; this allows us to derive the result by combining (the $C_0(X)$-algebra version of) our main theorem, our bound on the LTC dimension for topological $\mathbb{Z}^d$-actions, and well-known permanence properties of the nuclear dimension. What makes this result particularly interesting is the fact that some of these profinite actions are allosteric (see Corollary~\ref{cor:allosteric-construction}) and thus not almost finite, a fact we show by extending the method of Joseph (\cite{Joseph2023}).

\subsection{Organization of the paper} \label{subsec:organization}

After a section involving preliminaries, we 
discuss in Section \ref{sec:OCS} the asymptotic dimension of the orbit coarse 
structure, that is, a coarse space consisting of all orbits. This replaces (and 
improves upon) 
the 
role of the asymptotic dimension of the group from the free case. The development of this section may also be viewed as a baby version of what is to come in Section~\ref{sec:LTC}. 
Section 
\ref{sec:simplicial} involves some technical results in topology needed to 
refine various open covers and partitions of unity in the next section. In Section \ref{sec:LTC} we 
introduce the long thin covering dimension, which is our main conceptual 
and technical tool in the paper, and prove various refinements and 
technicalities involving this dimension. Section \ref{sec:misc} contains a few 
simple bounds involving the dimensions above, which don't play an important 
role in 
the proof of our main theorem but may help the reader have a better understanding of the new concepts introduced. In Section \ref{sec:LSP}, we introduce the large 
scale packing constants, the geometric group theoretic invariant mentioned in 
the previous paragraph, and show that 
they are finite for finitely generated virtually nilpotent groups. This notion 
plays an instrumental role in showing that the asymptotic dimension and long 
 thin covering dimension mentioned above are finite for arbitrary actions of 
such groups. The more difficult of the above two bounds, namely the one involving the long 
thin covering dimension, is proved in Section \ref{sec:GP}. Section 
\ref{sec:BLR} develops a more refined notion of equivariant asymptotic 
dimension for an action relative to a family of subgroups. That is needed for 
going beyond the case of virtually nilpotent groups, for instance, to cover the 
case of actions of hyperbolic groups on their Gromov boundary. Section 
\ref{sec:dimnuc} assembles the tools developed in the previous sections in 
order to prove the main theorem concerning a bound for the nuclear dimension of 
the crossed product. In Section \ref{sec:allosteric actions}, we generalize the 
construction of \cite{Joseph2023} to provide examples of allosteric actions on the Cantor set by 
certain amenable groups with finite asymptotic dimension, 
and we show how to apply our techniques to prove they give rise to classifiable crossed products.  In the appendices, we develop some 
side technicalities, 
clarifications and refinements, which may be of independent interest but are 
not strictly necessary to obtain the main results.

\section{Preliminaries}	\label{sec:prelim} 
\renewcommand{\sectionlabel}{LTC}

The following conventions are used throughout the paper.
If $A$ is a $C^*$-algebra and $r \geq 0$, we sometimes denote by $A_{<r}$ (respectively, $A_{\leq r}$) the open (respectively, closed) ball of radius $r$
in $A$, and add the subscript $+$ to restrict to the positive elements therein, as in  $A_{+}$, $A_{+, <r}$, $A_{+, \leq r}$, etc. 
	Suppose $X$ is a set, and $G$ is a group acting on $X$. 
We denote the set of fixed points under the action of $G$ by $X^G$. For $x \in 
X$, we use the notation $G_x$ for the stabilizer subgroup of $x$, that is, the 
subgroup of elements 
of $G$ which fix the point $x$. If $X$ is a topological space, we sometimes use 
the notation $K \Subset X$ to denote (or stress) that $K$ is a compact subset; 
often this is used in the case in which $X$ is discrete, in which case it means 
that $K$ is a finite subset.

Suppose $A$ is a $C^*$-algebra, suppose $G$ is a discrete group and suppose $\alpha \colon G \to \aut(A)$ is an action.  Let $E \colon A \rtimes_{\alpha} G \to A$ be the canonical conditional expectation. Denote by $\{u_g \mid g \in G\}$ the canonical unitaries. For $x \in  A \rtimes_{\alpha} G$ and $g \in G$, by the $g$-th Fourier coefficient of $x$ we mean $E(x u_g^{-1})$. The support of $x$ is $\{g \in G \mid E(x u_g^{-1}) \neq 0\}$.

We shall sometimes use the following notation in the paper.
\begin{Notation} \label{notation:group-action}
	Let $G$ be a discrete group, and let $\alpha$ be an action of $G$ on a $C^*$-algebra $A$. When there is no danger of confusion, we adopt the following notational conventions: 
	\begin{enumerate}
		\item For any $\alpha$-invariant subalgebra $B$ of $A$, the restriction of $\alpha$ on $B$ is still denoted by $\alpha$. 
		\item 
		If $A = C_0 (X)$ for a locally compact Hausdorff space $X$ (in this case, $X$ is isomorphic to $\widehat{A}$, the spectrum of $A$),
		then the induced action on $X$ is also denoted by $\alpha$, so that we have 
		\[
		\alpha_g (f) (x) = f \left( \alpha_{g^{-1}} (x) \right) \quad \text{for any } g \in G, f \in C_0(X) \text{ and } x \in X \; .
		\]
		This choice of notations may be justified by considering each point in 
		$X$ as a ``Dirac delta function'' on $X$. In this case, we also write, 
		for any $g \in G$, for any $x \in X$, for any $F \subseteq G$, for any 
		$ Y \subseteq X$, and for any collection $\mathcal{U}$ of subsets of 
		$X$, 
		\begin{enumerate}
			\item $\displaystyle \alpha_g (Y) = \left\{\alpha_g (y) \colon y \in Y \right\} \vphantom{\bigcup}$, 
			\item $\displaystyle \alpha_g (\mathcal{U}) = \left\{\alpha_g (U) \colon U \in \mathcal{U} \right\} \vphantom{\bigcup}$, 
			\item $\displaystyle \alpha_F (x) = \left\{\alpha_h (x) \colon h \in F \right\} \vphantom{\bigcup}$, 
			\item $\displaystyle \alpha^\cup_F (Y) = \bigcup_{h \in F} \alpha_h (Y) = \left\{\alpha_h (y) \colon h \in F, y \in Y \right\} = \bigcup_{y \in Y} \alpha_F (y)$, 
			\item $\displaystyle \alpha^\cap_F (Y) = \bigcap_{h \in F} \alpha_h (Y) = \left\{ y \in X \colon \alpha_{F^{-1}} (y) \subseteq Y \right\}$, and
			\item $\displaystyle \alpha^\wedge_F (\mathcal{U}) = \bigwedge_{h \in F} \alpha_h (\mathcal{U}) = \left\{ \bigcap_{h \in F} \alpha_h \left( U_h \right) \colon  \left( U_h \right)_{h \in F} \in \mathcal{U}^F \right\}$. 
		\end{enumerate}
		Notice that for any $F \subseteq G$, we have
		\[
			\alpha^\cup_F  \left( \alpha^\cap_{F^{-1}} (Y) \right) ) \subseteq Y \subseteq \alpha^\cap_{F^{-1}} ( \left( \alpha^\cup_F (Y) \right) \quad \text{ for any } Y \subseteq X \; ,
		\]
		and 
		\[
			\alpha^\cap_{F} \left( \bigcup \mathcal{U} \right)  \subseteq \bigcup \alpha^\wedge_{F} \left( \mathcal{U} \right)  \  \text{ for any collection } \mathcal{U} \text{ of subsets of } X \; .
		\]
	\end{enumerate}
\end{Notation}

\begin{Notation} \label{notation:group}
	Let $G$ be a discrete group and let $\Lo$ and $\Bo$ be subsets of $G$. We 
	write 
	\[
	\Lo \Bo = \left\{ g_1 g_2 \colon g_1 \in \Lo , \; g_2 \in \Bo \right\} \; .
	\]
	For $g \in G$, we also write 
	\[
	g \Lo = \{g\} \Lo \quad \text{and} \quad \Lo g = \Lo  \{g\}  \; .
	\]
	For $k = 1, 2, \ldots$, we write 
	\[
	\Lo^k = \left\{ g_1 \cdots g_k \in G \colon g_1 , \ldots, g_k \in \Lo \right\} \; .
	\]
	Also write $\Lo^0 = \{e\}$. 
\end{Notation}

Observe that for any $k \in \N$, we have $\left( \Lo^k \right)^{-1} = \left( \Lo^{-1} \right)^{k}$. If $e \in \Lo$, then we also have 
\[
\{e\} = \Lo^0 \subseteq \Lo^1 \subseteq \Lo^2 \subseteq \ldots \; .
\]

We will need to deal with a variety of dimensions in this paper. For these, it is sometimes convenient to adopt the following shorthand: we write $\dimone (-)$ for $\dim (-) + 1$, and the same goes for $\dimnucone$, $\asdim^{+1}$, $\dimltc^{+1}$, etc.

\subsection{Covering dimension}	\label{sec:prelim:dim}

We use the convention $K \Subset G$ to mean that $K$ is a compact subset of $G$.

\begin{Def}
	Let $X$ be a topological space. Let $\mathcal{U}$ and $\mathcal{U}'$ be two collections of subsets of $X$.
	\begin{enumerate}
		\item The \emph{multiplicity} of $\mathcal{U}$, denoted as $\mult(\mathcal{U})$, is defined by 
		\[
		\mult(\mathcal{U}) = \sup_{x \in X} |\{ U \in \mathcal{U} \colon x \in 
		U \}| .
		\]
		This takes value in $\Z^{\geq 0} \cup \{\infty\}$. Equivalently, 
		$\mult(\mathcal{U})$ is the infimum of natural numbers $d$ satisfying 
		that any $d+1$ different members of $\mathcal{U}$ have empty 
		intersection.
		\item The \emph{covering number} of $\mathcal{U}$, denoted as $\mathrm{Cov}(\mathcal{U})$, is defined to be $\mult(\mathcal{U}) - 1$, which takes value in $\Z^{\geq -1} \cup \{\infty\}$.
		\item When $\mathcal{U}$ is non-empty, a \emph{$d$-coloring} of $\mathcal{U}$ is a $(d+1)$-tuple $(\mathcal{U}^{(0)}, \ldots, \mathcal{U}^{(d)})$ of subcollections of $\mathcal{U}$ such that $\mathcal{U}^{(0)} \cup \ldots \cup \mathcal{U}^{(d)} = \mathcal{U}$ and each $\mathcal{U}^{(l)}$ consists of mutually disjoint members, or equivalently, $\mathrm{Cov}(\mathcal{U}^{(l)}) \leq 0$. In this case, $\mathcal{U}$ is said to be \emph{$d$-colored}.
		\item The \emph{coloring number} of $\mathcal{U}$, denoted as 
		$\mathrm{Col}(\mathcal{U})$, is defined, when $\mathcal{U}$ is 
		non-empty, to be the infimum of natural numbers $d$ such that there are 
		subcollections $\mathcal{U}^{(0)}, \ldots, \mathcal{U}^{(d)} \subseteq 
		\mathcal{U}$ with $\mathcal{U}^{(0)} \cup \ldots \cup \mathcal{U}^{(d)} 
		= \mathcal{U}$ and each $\mathcal{U}^{(l)}$ consisting of mutually 
		disjoint members. By convention, we set $\mathrm{Col}(\varnothing) = 
		-1$. 
		\item A collection $\mathcal{U}$ is said to \emph{refine} (or be a 
		\emph{refinement} of) $\mathcal{U}'$ if for any $U \in \mathcal{U}$, 
		there is $U' \in \mathcal{U}'$ such that $U \subset U'$. 
		\item The \emph{common refinement}, or \emph{join}, of $\mathcal{U}$ 
		and $\mathcal{U}'$, denoted by $\mathcal{U} \wedge \mathcal{U}'$, is 
		the collection $\left\{ U \cap U' \colon U \in \mathcal{U}, \ U' \in 
		\mathcal{U}' \right\}$. Note that $\bigcup \left( \mathcal{U} \wedge 
		\mathcal{U}' \right) = \left( \bigcup \mathcal{U} \right) \cap \left( 
		\bigcup \mathcal{U}' \right)$. 
	\end{enumerate}
\end{Def}

\begin{Def}
	The \emph{covering dimension} of a topological space $X$, denoted by $\dim (X)$, is the infimum of all nonnegative integers $d$ such that for any finite open cover $\Thseco$ of $X$, there is an open cover $\Coseco$ of $X$ that refines $\Thseco$ and has multiplicity at most $d+1$. 
	
	Note that $\dim(\varnothing) = -1$. 
\end{Def}

We record the following two results from dimension theory, for later use.

\begin{Thm}[{\cite[Chapter~3, Theorem~6.4]{Pears75}}]
	\label{thm:Pears75}
	If $M$ is a subspace of a totally normal space\footnote{A topological space 
	is totally normal if any subspace is normal.} $X$, then $\dim (M)
	\leq \dim (X)$.
\end{Thm}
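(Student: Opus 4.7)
The plan is to establish the monotonicity of covering dimension with respect to subspaces under the totally normal hypothesis by reducing the problem to refining open covers of open subspaces of $X$.

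First, I would set $d = \dim(X)$ and take an arbitrary finite open cover $\mathcal{V} = \{V_1, \ldots, V_n\}$ of $M$. Since $M$ carries the subspace topology, each $V_i = U_i \cap M$ for some open $U_i \subseteq X$. Letting $U = U_1 \cup \cdots \cup U_n$, the set $U$ is an open neighborhood of $M$ in $X$, and $\{U_1, \ldots, U_n\}$ is an open cover of $U$. Intersecting any open refinement of $\{U_i\}$ in $U$ with $M$ preserves multiplicity, so it suffices to produce an open cover of $U$ refining $\{U_i\}$ with multiplicity at most $d+1$. This reduces the problem to showing $\dim(U) \leq \dim(X)$ for open subspaces $U$ of a totally normal space $X$.

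Next, I would invoke the standard characterization of totally normal spaces: every open subset of a totally normal space is an $F_\sigma$-set. Hence write $U = \bigcup_{k \geq 1} F_k$ with each $F_k$ closed in $X$ and $F_k \subseteq F_{k+1}$. The idea is to construct, inductively in $k$, open covers $\mathcal{W}_k$ of $X$ refining $\{U_1, \ldots, U_n, X \setminus F_k\}$, each of multiplicity at most $d+1$, and with a compatibility condition: the members meeting $F_k$ stabilize as $k$ grows. At the base step we simply apply the definition of $\dim(X) \leq d$ to $\{U_1, \ldots, U_n, X \setminus F_1\}$. At the inductive step, one uses normality of $X$ to shrink the previous refinement on a neighborhood of $F_k$ while again applying the definition of $\dim(X) \leq d$ to handle the outer layer $F_{k+1} \setminus F_k$, gluing the two refinements together via a Urysohn-type separation.

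Taking the limiting cover, restricted to $U$, gives an open cover of $U$ refining $\{U_i\}$ of multiplicity at most $d+1$, which intersected with $M$ yields the sought refinement of $\mathcal{V}$.

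The main obstacle in this plan is Step 3, namely the simultaneous refinement across the $F_\sigma$-exhaustion: one must coherently glue refinements of multiplicity $d+1$ built on successive closed pieces $F_k \subseteq F_{k+1}$ while preserving the multiplicity bound globally on $U$. This requires the full strength of total normality (not merely normality of $X$), since open subsets of a merely normal space need not inherit the dimension bound; the $F_\sigma$-characterization is what enables the inductive shrinking argument to close up.
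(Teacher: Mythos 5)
The paper gives no proof of this statement at all: it is quoted from Pears (Chapter 3, Theorem 6.4) and recorded for later use, so your proposal has to stand on its own rather than be compared with an internal argument. Your Step 1 (reduction to showing $\dim(U)\leq\dim(X)$ for open $U\subseteq X$) is fine, but Step 2 rests on a false characterization. ``Every open subset is an $F_\sigma$-set'' characterizes \emph{perfectly normal} spaces, not totally normal ones. Under Dowker's/Pears' definition of total normality (normal, and every open subspace is the union of a family of open $F_\sigma$-sets of $X$ that is locally finite in that subspace), an open set need not itself be $F_\sigma$; and under the weaker reading in the paper's footnote (every subspace normal, i.e.\ hereditary normality) it certainly need not be: $[0,\omega_1)$ is open in the hereditarily normal compact space $[0,\omega_1]$, yet every closed subset of $[0,\omega_1]$ contained in $[0,\omega_1)$ is compact, hence bounded, so no countable union of such sets exhausts $[0,\omega_1)$. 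Thus the exhaustion $U=\bigcup_k F_k$ on which your whole induction is built does not exist in general, and the argument as written only addresses the perfectly normal case.

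Even in that case, your Step 3 is exactly the content of the countable sum theorem, and it is the hard part: a ``limiting cover'' of covers constructed on the growing closed pieces $F_k$ can lose the multiplicity bound, lose openness of the limiting members, or fail to cover, unless one runs the standard careful construction (keep the members of each stage labelled by the original $U_1,\dots,U_n$, shrink, and take suitable unions of the labelled pieces across stages). The efficient correct route is: monotonicity of $\dim$ on closed subspaces plus the countable sum theorem yields the subset theorem for perfectly normal $X$; for totally normal $X$ one needs in addition the locally finite sum theorem to handle the locally finite families of open $F_\sigma$-sets, which is essentially how the cited result in Pears is proved. So the missing ingredients are (i) the correct structure theory of open subspaces of totally normal spaces and (ii) a proof (or invocation) of the relevant sum theorems rather than the sketched gluing/limit.
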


\begin{Prop}[{\cite[Chapter~9, Proposition~2.16]{Pears75}}]
	\label{prop:Pears75}
	If $X$ and $Y$ are weakly paracompact\footnote{A topological space is 
	weakly paracompact if any cover has a refinement which is point-finite, 
	that is, such that any point is contained in finitely many elements of the 
	refinement.} normal Hausdorff spaces and
	$f\colon X \to Y$ is a continuous open surjection such that
	$f^{-1}(y)$ is finite for each point of $Y$, then $\dim (X) = \dim (Y)
	$. 
\end{Prop}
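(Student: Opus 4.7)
My plan is to establish the two inequalities $\dim(X) \leq \dim(Y)$ and $\dim(Y) \leq \dim(X)$ separately, each time transporting an open cover from one side to the other through $f$.

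For $\dim(Y) \leq \dim(X)$, I would take a finite open cover $\mathcal{V}$ of $Y$, pull it back through $f$ to the finite open cover $\{f^{-1}(V) : V \in \mathcal{V}\}$ of $X$, and refine it to an open cover $\mathcal{W}$ of multiplicity at most $\dim(X)+1$. Pushing $\mathcal{W}$ forward under $f$ (which is an open map) yields an open refinement of $\mathcal{V}$; the danger is that the multiplicity at $y \in Y$ can inflate by a factor of $|f^{-1}(y)|$. To prevent this, I would first use the Hausdorff hypothesis together with the finiteness of each fiber to separate $f^{-1}(y) = \{x_1, \ldots, x_k\}$ by pairwise disjoint open neighborhoods $W_{y,i}$ in $X$, producing a small open neighborhood $V_y = \bigcap_i f(W_{y,i}) \subseteq Y$ over which the preimage splits into a fixed number of disjoint pieces, and apply the refinement within each piece. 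Restricted to a single disjoint piece, $f$ meets each fiber over $V_y$ in at most one point, so the pushed-forward multiplicity matches the original.

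For the reverse inequality $\dim(X) \leq \dim(Y)$, I would start with a finite open cover $\mathcal{U}$ of $X$. For each $y \in Y$ I would again separate the fiber $f^{-1}(y)$ by pairwise disjoint open neighborhoods $W_{y,i}$, each sitting inside a member of $\mathcal{U}$, and form $V_y = \bigcap_i f(W_{y,i})$. The collection $\{V_y\}_{y \in Y}$ is an open cover of $Y$, which I refine to multiplicity at most $\dim(Y)+1$ using the hypothesis, and then pull back and intersect with the disjoint pieces $W_{y,i}$ to obtain an open refinement of $\mathcal{U}$ with the same multiplicity bound.

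The principal obstacle is reconciling the pointwise-local splittings of fibers with a globally valid open cover of controlled multiplicity: the disjoint neighborhood structure around each fiber depends on the point, and gluing these data coherently is where weak paracompactness enters, allowing the $\{V_y\}$ data to be refined to a point-finite, hence combinatorially controlled, cover. Normality plays its customary role of shrinking open covers without altering their nerve. A delicate bookkeeping point, which I would flag but not belabor, is that the finiteness of fibers does not provide a uniform bound on $|f^{-1}(y)|$, so the argument must work locally and rely on the topological hypotheses rather than on any cardinality estimate.
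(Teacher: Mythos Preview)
The paper does not prove this proposition; it is quoted verbatim from Pears's monograph and used as a black box, so there is no in-paper argument to compare your proposal against.

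On its own merits, your outline has the right ingredients—local splitting of finite fibers via Hausdorffness, shrinking via normality, point-finite refinement via weak paracompactness—but the gap you yourself flag is real and not merely bookkeeping. In the direction $\dim(X)\leq\dim(Y)$, the cover $\{V_y\}_{y\in Y}$ you produce is not finite, so the definition of covering dimension does not directly let you refine it to order at most $\dim(Y)+1$. What is actually needed is the intermediate fact that in a weakly paracompact normal space, $\dim\leq n$ is equivalent to every \emph{point-finite} open cover admitting an open refinement of order at most $n+1$; this is where the hypotheses do their work, and it is precisely the step you have deferred. The same issue appears in your argument for $\dim(Y)\leq\dim(X)$: the refinement $\mathcal{W}$ obtained from $\dim(X)$ has no reason to respect the local fiber-splitting neighborhoods you chose beforehand, so ``applying the refinement within each piece'' is not well-defined as stated. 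A complete proof would need to interleave the local splittings with a point-finite refinement first, then invoke the alternative characterization of dimension; consult Pears for the details.
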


For separable metric spaces, there are refined results from dimension theory, 
which will be useful in our discussion of general positions in 
Section~\ref{sec:GP} (in ways similar to the way in which they were used, for 
example, in 
\cite[Section 3]{Lindenstrauss95} and \cite[Section 3]{szabo}).

\begin{Lemma}[{\cite[4.1.5, 4.1.7, 4.1.9, 4.1.14, 4.1.16]{Engelking}}] \label{lem:Engelking}
	Let $X$ be a separable metric space and let $A, B, B_0, B_1, B_2, \ldots \subseteq X$. 
	{
		\begin{enumerate} 
			\item \label{D1} $A\subset B$ implies $\dim(A)\leq \dim(B)$.
			\item \label{D2} If $\left\{ B_i \right\}_{i\in \N}$ is a countable family of closed sets in $A$ with $\dim(B_i)\leq k$, then $\dim(\bigcup B_i)\leq k$.
			\item \label{D3} Let $\Lase \subset A$ be a zero dimensional 
			subset, let $x\in A$ be a point and let $\Cose$ be an open 
			neighborhood of $x$. Then there exists an open set $\Cose'\subset 
			A$ with $x\in \Cose'\subset \Cose$ such that $\partial \Cose' \cap 
			\Lase =\varnothing$.
			\item \label{D4} If $A\neq\varnothing$ then there exists a zero 
			dimensional $F_\sigma$-set $\Lase \subset A$ such that 
			$\dim(A\setminus \Lase)=\dim(A)-1$.
			\item \label{D5} Any countable union of $F_\sigma$-sets of dimension at most $k$ is an $F_\sigma$-set of dimension at most $k$.
		\end{enumerate} 
	}
\end{Lemma}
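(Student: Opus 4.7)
The statement collects five classical facts of Menger--Urysohn dimension theory for separable metric spaces, so rather than a single proof I would organize the plan result-by-result, invoking the fundamental equivalence for separable metric spaces between covering dimension, small inductive dimension $\operatorname{ind}$, and large inductive dimension $\operatorname{Ind}$. All five items are either immediate from, or a standard exercise with, that equivalence, so the work is really in setting up the right inductive framework.

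For \eqref{D1}, I would use the covering-dimension definition directly: given a finite open cover $\{V_i\}$ of $A$, extend each $V_i$ to an open set $\widetilde{V}_i \subseteq B$ with $\widetilde{V}_i \cap A = V_i$, complete $\{\widetilde{V}_i\}$ to an open cover of $B$ by appending $B \setminus \overline{A}^{B}$ in a controlled way (or more cleanly, embed $A$ into $B$ and apply the subspace theorem for totally normal spaces cited earlier as Theorem~\ref{thm:Pears75}, since every metrizable space is totally normal). For \eqref{D5}, one just combines \eqref{D2} with the fact that a countable union of $F_\sigma$-sets is again $F_\sigma$, so the only substantial item among these three is \eqref{D2}.

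I would prove the countable sum theorem \eqref{D2} by induction on $k$. The base case $k=-1$ is trivial. For the inductive step I would use the characterization $\dim(Y)\leq k$ iff every pair of disjoint closed sets in $Y$ can be separated by a partition of dimension $\leq k-1$. Given disjoint closed sets $C_0, C_1 \subseteq \bigcup_i B_i$, one inductively constructs a nested sequence of partitions whose intersection with $B_1, B_2, \ldots$ has dimension $\leq k-1$; the induction hypothesis (applied one index at a time, together with the fact that $B_i$ are closed) lets each partition be refined to control its trace on the next $B_i$. Passing to the limit, one obtains a single partition of $\bigcup B_i$ whose dimension is bounded by $k-1$ using \eqref{D5} itself at the smaller value of $k$, which is the standard circular setup that is resolved by interleaving the inductions on $k$ for \eqref{D2} and \eqref{D5}.

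For \eqref{D3}, I would exploit that in a separable metric space a zero-dimensional subset $\Lase \subseteq A$ has the property (small inductive definition) that for every point $y \in \Lase$ and open $W \ni y$ in $A$ there exists a relatively clopen neighborhood of $y$ in $\Lase$ contained in $W$. Given the open set $\Cose$ around $x$, pick any metric on $A$, shrink to a small enough open neighborhood, and use the clopen structure of $\Lase$ together with the standard Urysohn-type construction of partitions of $A$ avoiding a zero-dimensional set: one builds $\Cose'$ as a sublevel set of a continuous function $f\colon A \to [0,1]$ with $f(x)=0$, $f \equiv 1$ outside $\Cose$, chosen so that $f$ restricted to $\Lase$ avoids a chosen regular value; the resulting $\Cose'=f^{-1}([0,t))$ then has $\partial \Cose'\cap \Lase = \varnothing$. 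Finally, for the decomposition theorem \eqref{D4}, which is the deepest item, I would invoke the inductive construction: exhaust $A$ by a countable basis $\{W_i\}$, and using \eqref{D3} select for each $i$ an open $W_i'\subseteq W_i$ with $\partial W_i'$ of dimension $\leq \dim(A)-1$, then set $\Lase = A \setminus \bigcup_i W_i'$; one verifies via \eqref{D2} and \eqref{D5} that $A\setminus \Lase$ has dimension at most $\dim(A)-1$ and that $\Lase$ is a zero-dimensional $F_\sigma$. The main obstacle throughout is disentangling the mutual induction between \eqref{D2}, \eqref{D4}, and \eqref{D5}; in practice one fixes an order (typically \eqref{D3} $\Rightarrow$ \eqref{D4} $\Rightarrow$ \eqref{D2} $\Rightarrow$ \eqref{D5}, or the Engelking variant using sum and decomposition together) and checks that no circularity is introduced.
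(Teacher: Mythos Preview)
The paper does not prove this lemma at all: it is stated with a direct citation to Engelking's textbook (specifically 4.1.5, 4.1.7, 4.1.9, 4.1.14, 4.1.16), and the results are treated as classical black-box inputs from dimension theory. Your sketch of the standard Menger--Urysohn arguments is reasonable and broadly correct, but it is doing far more than the paper does; for the purposes of this paper, a one-line citation suffices.
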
 

The following is a strengthening of Lemma~\ref{lem:Engelking}\eqref{D3}. 

\begin{Lemma}[cf.~{\cite[Lemma~A.4]{Hirshberg-Wu16}}] \label{Startlemma} 
	Let $X$ be a locally compact metric space. Let $K\subset X$ be compact and 
	let $\Ause\subset X$ be an open neighborhood of $K$. Let $\Lase\subset X$ 
	be a zero dimensional subset. Then there exists a precompact open set 
	$\Cose$ with $K\subset \Cose\subset \overline{\Cose}\subset \Ause$ such 
	that $\partial \Cose\cap \Lase=\varnothing$.
\end{Lemma}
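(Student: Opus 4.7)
The plan is to reduce to a compact ambient context via local compactness and then apply the pointwise result Lemma~\ref{lem:Engelking}\eqref{D3} finitely many times, glueing the outputs along a finite subcover of $K$.

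First I would use local compactness of $X$: for each $x \in K$, choose a precompact open neighborhood of $x$ whose closure lies in $\Ause$; by compactness of $K$, finitely many of these cover $K$, and taking their union produces a precompact open set $V$ with $K\subset V\subset \overline{V}\subset \Ause$. Since $\overline{V}$ is a compact metric space, $V$ is separable, so Lemma~\ref{lem:Engelking} applies inside $V$. Note also that $\Lase \cap V$ is zero-dimensional by Lemma~\ref{lem:Engelking}\eqref{D1}.

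Next, for each $x\in K$, use local compactness again to pick a precompact open neighborhood $V_x$ of $x$ with $\overline{V_x}\subset V$. Applying Lemma~\ref{lem:Engelking}\eqref{D3} inside the separable metric space $V$, with the zero-dimensional subset $\Lase\cap V$ and the neighborhood $V_x$ of $x$, yields an open set $U_x\subset V_x$ with $x\in U_x$ and $\partial_V U_x\cap \Lase=\varnothing$. Because $\overline{U_x}\subset \overline{V_x}\subset V$, the closure of $U_x$ inside $V$ coincides with its closure inside $X$, hence $\partial_V U_x = \partial U_x$ (boundary in $X$), and therefore $\partial U_x\cap \Lase=\varnothing$.

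Finally, using compactness of $K$, select finitely many points $x_1,\ldots,x_n\in K$ with $K\subset \bigcup_{i=1}^n U_{x_i}$, and set $\Cose=\bigcup_{i=1}^n U_{x_i}$. Then $\Cose$ is open, $K\subset \Cose$, and $\overline{\Cose}\subset \bigcup_{i=1}^n \overline{U_{x_i}}\subset \overline{V}\subset \Ause$, so $\Cose$ is precompact with $\overline{\Cose}\subset\Ause$. The standard inclusion $\partial\left(\bigcup_i U_{x_i}\right)\subset \bigcup_i \partial U_{x_i}$ then gives $\partial \Cose\cap \Lase\subset \bigcup_i (\partial U_{x_i}\cap \Lase)=\varnothing$, completing the proof. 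The argument is largely a patching of the pointwise Lemma~\ref{lem:Engelking}\eqref{D3}; the only subtle point is the careful passage to a separable precompact ambient $V$ so that boundaries inside $V$ and inside $X$ agree for the sets one constructs.
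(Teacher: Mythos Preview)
Your argument is correct. The paper does not give its own proof of this lemma; it is simply quoted with a reference to \cite[Lemma~A.4]{Hirshberg-Wu16}, so there is no in-paper proof to compare against. Your approach---passing to a precompact (hence separable) ambient set $V$, applying Lemma~\ref{lem:Engelking}\eqref{D3} pointwise, and taking a finite union---is exactly the standard route, and your handling of the boundary identification $\partial_V U_x = \partial U_x$ via $\overline{U_x}\subset\overline{V_x}\subset V$ is the key detail that makes the argument go through.
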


We need a slight modification of the notion of covering dimension which has a 
somewhat closer connection to nuclear dimension of $C^*$-algebras. The 
distinction is relevant only for non-compact, non-second-countable spaces. 

\begin{Def} 
	Let $X$ be a Hausdorff space. Then the \emph{compactly supported covering 
	dimension} of $X$, denoted by $\dimc (X)$, is the infimum of all 
	nonnegative integers $d$ such that for any finite open cover $\Thseco$ of 
	$X$ and any compact subset $\Ko \Subset X$, there is an open cover 
	$\Coseco$ of $\Ko$ in $X$ that refines $\Thseco$ and has multiplicity at 
	most $d+1$. 
\end{Def}

If $X$ is a locally compact Hausdorff space, we denote by $X^+$ the one-point 
compactification. We then have $\dimc(X) = \dim(X^+)$. If $X$ is furthermore 
second countable, then $\dimc(X) = \dim(X)$. We refer the reader to  
\ref{sec:dimc} for details.

\subsection{Asymptotic dimension for coarse spaces}	\label{sec:prelim:asdim}
\renewcommand{\sectionlabel}{OCS}
\ref{sectionlabel=OCS}
We review a few equivalent characterizations of asymptotic dimension. 
We work in general abstract coarse spaces (as opposed to 
just coarse metric spaces). 

\begin{Def}[{\cite[2.1]{HigsonPedersenRoe1997C}, see also \cite[Section~2.1]{Roe2003Lectures}}] \label{def:coarse-structure}
	A \emph{coarse structure} on a set $X$ is a collection $\Enseco$ of subsets of $X \times X$, called the \emph{controlled sets} or \emph{entourages} for the coarse structure, which satisfies: 
	\begin{enumerate}
		\item $\Enseco$ contains the diagonal $\Delta_X$; 
		\item for any $\Ense \in \Enseco$ and $\Auense \subseteq \Ense$, we have $\Auense \in \Enseco$;
		\item for any $\Ense \in \Enseco$, its \emph{inverse} $\Ense^{-1}$, defined as $\left\{ (x,y) \in X \times X \colon (y,x) \in \Ense\right\}$, is in $\Enseco$;
		\item for any $\Ense, \Auense \in \Enseco$, their \emph{product} or \emph{composition} $\Ense \circ \Auense$, defined as $\left\{ (x,y) \in X \times X \colon (x,z) \in \Ense\text{ and } (z,y) \in \Auense \text{ for some } z \in X \right\}$, is in $\Enseco$;
		\item for any $\Ense, \Auense \in \Enseco$, their union $\Ense \cup \Auense$ is in $\Enseco$. 
	\end{enumerate}
	When equipped with a coarse structure $\Enseco$, $X$ (or more precisely $(X, \Enseco)$) is called a \emph{coarse space}. 
	
	The \emph{coarse connected components} of $(X, \Enseco)$ are the equivalence classes associated to the equivalence relation $\sim$ on $X$ defined so that $x \sim y$ if and only $(x,y) \in \Ense$ for some $\Ense \in \Enseco$. 
\end{Def}

\begin{Exl} \label{exl:metric-coarse}
	Let $d \colon X \times X \to [0,\infty]$ be an extended metric\footnote{Here \emph{extended} simply means we allow $d$ to take the value $\infty$.} on $X$. For any $r \geq 0$, write $\Ense_r = \left\{ (x,y) \in X \times X \colon d(x,y) \leq r \right\}$.  Then the collection 
	\[
	\left\{ \Ense\subseteq X \times X \colon \Ense\subseteq \Ense_r \text{ for some } r \geq 0 \right\}
	\] 
	forms a coarse structure on $X$. In this case, the coarse connected components are exactly the equivalence classes associated to the equivalence relation of ``finite distance'' on $(X,d)$.  
\end{Exl}

\begin{Def}
	Let $(X, \Enseco)$ be a coarse space. A set $B \subseteq X$ is said to be 
	\emph{bounded} if $B \times B \in \Enseco$. Let $(Y, \Enseco')$ is another 
	coarse space. A map $f \colon X \to Y$ is said to be a \emph{coarse map} if 
	for any bounded set $B \subseteq Y$ the preimage $f^{-1}(B)$ is bounded and 
	furthermore for any controlled $\Ense \in \Enseco$ we have $(f \times f)(E) 
	\in \Enseco'$. Two maps $f,g \colon X \to Y$ are said to be \emph{close} if 
	$\{ ( f(x),g(x) ) \mid x \in X \} \in \Enseco'$. The spaces  $(X, \Enseco)$ 
	and $(Y, \Enseco')$ are said to be \emph{coarsely equivalent} if there 
	exist coarse maps $f \colon X \to Y$ and $g \colon Y \to X$ such that $f 
	\circ g$ is close to $\id_Y$ and $g \circ f$ is close to $\id_X$. 
\end{Def}

We recall the definition of asymptotic dimension of coarse spaces. For the 
remainder
of the subsection, the pair $(X, \Enseco)$ denotes a fixed coarse space. 

\begin{Def}
	Let $\Ense \in \Enseco$, and let $\Auseco$ be a collection of subsets of $X 
	\times X$. We say that $\Auseco$ is \emph{$\Ense$-separated} if for any to 
	different  $\Ause, \Ause' \in \Auseco$ we have $\Ense\cap (\Ause \times 
	\Ause') = \varnothing$.  
\end{Def}

\begin{Def}[{\cite[Section~9.1]{Roe2003Lectures}}] \label{def:asdim}
	The \emph{asymptotic dimension} of $(X,\Enseco)$, denoted 
	$\asdim(X,\Enseco)$ (or sometimes simply $\asdim(X)$), is the infimum of 
	all nonnegative integers $d$ such that for any controlled set $\Ense$, 
	there is a cover $\Auseco$ of $X$ such that 
	\begin{enumerate}
		\item $\Auseco$ is \emph{$\Enseco$-uniformly bounded} in the sense that $\bigcup_{\Ause \in \Auseco} \Ause \times \Ause \in \Enseco$, and 
		\item $\Auseco$ can be written as a union $\Auseco^{(0)} \cup \ldots 
		\cup \Auseco^{(d)}$ with each $\Auseco^{(l)}$ being 
		\emph{$\Ense$-separated}.
	\end{enumerate}
\end{Def}

The following lemma concerning passing to subspaces follows immediately from 
the 
definitions of a coarse space and its asymptotic dimension. We omit the proof.
\begin{Lemma} \label{lem:asdim-passes-to-subspaces}
	Let $(X,\Enseco)$ be a coarse space. Suppose $Y \subseteq X$ is a subset. 
	Define $\Enseco|_Y = \{E \cap  ( Y \times Y ) \mid E \in \Enseco\}$. Then
	\begin{enumerate}
		\item The pair $(Y,\Enseco|_Y)$ is a coarse space.
		\item $\asdim  (Y,\Enseco|_Y) \leq \asdim (X,\Enseco)$. 
	\end{enumerate} 
\end{Lemma}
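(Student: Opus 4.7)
The plan is to verify each part directly from the axioms, since both claims should be essentially bookkeeping exercises once we track how controlled sets restrict.

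For part (1), I would check each of the five axioms of a coarse structure in turn. The diagonal $\Delta_Y = \Delta_X \cap (Y \times Y)$ lies in $\Enseco|_Y$; stability under subsets follows because if $F \subseteq \Ense \cap (Y \times Y)$ for some $\Ense \in \Enseco$, then $F \subseteq \Ense$ so $F \in \Enseco$ and trivially $F = F \cap (Y \times Y)$; inverses and unions commute with intersecting by $Y \times Y$ on the nose; and for composition one observes
\[
\bigl(\Ense \cap (Y \times Y)\bigr) \circ \bigl(\Auense \cap (Y \times Y)\bigr) \subseteq (\Ense \circ \Auense) \cap (Y \times Y),
\]
because any intermediate point witnessing membership in the left-hand side automatically lies in $Y$. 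So the left-hand side belongs to $\Enseco|_Y$ by the subset axiom we just verified.

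For part (2), I would reduce to the case $d := \asdim(X,\Enseco) < \infty$ (otherwise there is nothing to prove). Given an arbitrary controlled set $\Ense' \in \Enseco|_Y$, write $\Ense' = \Ense \cap (Y \times Y)$ for some $\Ense \in \Enseco$. Apply Definition~\ref{def:asdim} to $\Ense$ to obtain an $\Enseco$-uniformly bounded cover $\Auseco = \Auseco^{(0)} \cup \ldots \cup \Auseco^{(d)}$ of $X$ with each $\Auseco^{(l)}$ being $\Ense$-separated. Then the trace cover $\Auseco|_Y := \{\Ause \cap Y \mid \Ause \in \Auseco\}$, with the induced coloring $\Auseco^{(l)}|_Y := \{\Ause \cap Y \mid \Ause \in \Auseco^{(l)}\}$, covers $Y$; is $\Enseco|_Y$-uniformly bounded because
\[
\bigcup_{\Ause \in \Auseco} (\Ause \cap Y) \times (\Ause \cap Y) \subseteq \Bigl(\bigcup_{\Ause \in \Auseco} \Ause \times \Ause\Bigr) \cap (Y \times Y) \in \Enseco|_Y;
\]
and each color class $\Auseco^{(l)}|_Y$ is $\Ense'$-separated because for distinct $\Ause, \Ause' \in \Auseco^{(l)}$, $(\Ause \cap Y) \times (\Ause' \cap Y) \subseteq (\Ause \times \Ause')$, which meets $\Ense$ trivially and therefore meets $\Ense' \subseteq \Ense$ trivially.

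There is no real obstacle here; both parts are formal and the only small care needed is in the composition axiom (where the containment is one-directional but suffices) and in keeping the color classes indexed by the original elements of $\Auseco$ rather than by their traces in $Y$, so that distinct color-class members remain distinct before intersecting with $Y$. I would present the proof in two short paragraphs corresponding to (1) and (2).
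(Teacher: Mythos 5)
Your proof is correct; the paper omits the argument entirely, stating only that the lemma "follows immediately from the definitions," and your direct verification of the axioms and restriction of covers is precisely that intended routine argument. The one-directional containment for composition and the care with coinciding traces are handled properly, so there is nothing to add.
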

We may write $E|_Y = E \cap ( Y \times Y )$ for short in the sequel.

\begin{Def} \label{def:E-connected}
	Let $Y \subseteq X$ and let $\Ense \in \Enseco$. Let $\sim_{\Ense, Y}$ be 
	the equivalence relation on $Y$ generated by the requirement that for any 
	$x, y \in Y$, we have $x \sim_{\Ense, Y} y$ if $(x,y) \in \Ense$. The 
	equivalence classes of $\sim_{\Ense, Y}$ are called the 
	\emph{$\Ense$-connected components} of $Y$. Any two points in the same 
	$\Ense$-connected component are \emph{$\Ense$-connected in $Y$}. 
	We say $Y$ is \emph{$\Ense$-connected} if it forms a single $\Ense$-connected component. 
\end{Def}

\begin{Rmk} \label{rmk:E-connected}
	In Definition~\ref{def:E-connected}, for any $x, y \in Y$, we have $x 
	\sim_{\Ense, Y} y$ if and only if there are $y_0, \ldots, y_n \in Y$ such 
	that $y_0 = x$, $y_n = y$, and $(y_{i-1}, y_i) \in \Ense \cup \Ense^{-1}$ 
	for $i = 1, \ldots, n$. 
	
	Notice that the $\Ense$-connected components of $Y$ are $\Ense$-separated. 
\end{Rmk}

We record the following characterization of the asymptotic dimension. We assume 
that it is well-known to experts, however we include a proof for completeness.

\begin{Lemma} \label{lem:asdim-connected-components}
	Let $d$ be a nonnegative number. Then $\asdim(X,\Enseco) \leq d$ if and 
	only if for any controlled set $\Ense$ there exists a cover $\left\{ 
	\Ause^{(0)}, \ldots, \Ause^{(d)} \right\}$ of $X$ such that the collection 
	of all $\Ense$-connected components of $\Ause^{(0)}, \ldots, \Ause^{(d)}$ 
	is $\Enseco$-uniformly bounded. 
\end{Lemma}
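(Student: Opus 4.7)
The plan is to prove both implications by directly translating between the two formulations, using the fact that the sets $\Auseco^{(l)}$ witnessing $\asdim \leq d$ and the $\Ense$-connected components of $\Ause^{(l)}$ play interchangeable roles. A preliminary normalisation throughout is to replace the given controlled $\Ense$ by the symmetric controlled set $\Ense \cup \Ense^{-1}$ (which lies in $\Enseco$ by axioms~(3) and~(5)), since the equivalence relation $\sim_{\Ense, Y}$ is symmetric by construction (see Remark~\ref{rmk:E-connected}) and this symmetrisation simplifies the $\Ense$-separation bookkeeping.

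For the forward implication, given a controlled symmetric $\Ense$, I would invoke $\asdim(X,\Enseco) \leq d$ to produce an $\Enseco$-uniformly bounded cover $\Auseco = \Auseco^{(0)} \cup \cdots \cup \Auseco^{(d)}$ with each $\Auseco^{(l)}$ being $\Ense$-separated. Setting $\Ause^{(l)} = \bigcup \Auseco^{(l)}$ gives a cover of $X$ by $d+1$ subsets. The crux is to verify that every $\Ense$-connected component of $\Ause^{(l)}$ is contained in a single member $A \in \Auseco^{(l)}$: any $\Ense$-chain $y_0, y_1, \ldots, y_n$ inside $\Ause^{(l)}$ must have consecutive points lying in a common member of $\Auseco^{(l)}$, since $\Ense$-separation together with the symmetry of $\Ense$ forbids an $\Ense$-pair from straddling two distinct members. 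Uniform boundedness of the resulting components then follows by containment: if $C \subseteq A \in \Auseco$ then $C \times C \subseteq A \times A$, so the union over all components lies inside $\bigcup_{A \in \Auseco} A \times A \in \Enseco$.

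For the reverse implication, given a controlled symmetric $\Ense$, I would apply the hypothesis to obtain a cover $\{\Ause^{(0)}, \ldots, \Ause^{(d)}\}$ whose $\Ense$-connected components form an $\Enseco$-uniformly bounded family. Taking $\Auseco^{(l)}$ to be the set of $\Ense$-connected components of $\Ause^{(l)}$ and $\Auseco = \Auseco^{(0)} \cup \cdots \cup \Auseco^{(d)}$, this is the desired witness for $\asdim(X,\Enseco) \leq d$: it is $\Enseco$-uniformly bounded by hypothesis; it covers $X$ because each $\Ause^{(l)}$ is partitioned by its own $\Ense$-connected components; and each $\Auseco^{(l)}$ is $\Ense$-separated by the last sentence of Remark~\ref{rmk:E-connected}.

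No serious obstacle arises; the only delicate point is the mismatch between the possible asymmetry of a general controlled set and the automatically symmetric nature of $\sim_{\Ense, Y}$, which is handled uniformly by passing to $\Ense \cup \Ense^{-1}$ at the outset. The lemma is essentially a bookkeeping translation between the ``colored collection of uniformly bounded pieces'' formulation of Definition~\ref{def:asdim} and the ``few subsets with uniformly bounded connected components'' formulation in the statement.
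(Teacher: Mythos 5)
Your proposal is correct and follows essentially the same route as the paper: one direction takes the $\Ense$-connected components of the $\Ause^{(l)}$ as the $\Ense$-separated colored families (using Remark~\ref{rmk:E-connected}), and the other sets $\Ause^{(l)} = \bigcup \Auseco^{(l)}$ and uses separation to see that each component sits inside a single member, which gives uniform boundedness by containment. Your preliminary symmetrization of $\Ense$ is harmless but not actually needed, since the separation condition of Definition~\ref{def:asdim} already applies to both orderings of a pair of distinct members and the relation $\sim_{\Ense,Y}$ is symmetric by construction.
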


\begin{proof}
	To prove the ``if'' direction, we fix a controlled set $\Ense \in \Enseco$ and obtain a cover $\left\{ \Ause^{(0)}, \ldots, \Ause^{(d)} \right\}$ of $X$ as in the statement of the lemma. For $l = 0, \ldots, d$, we define $\Auseconum{l}$ to be the collection of all $\Ense$-connected components of $\Ause^{(l)}$, which is $\Ense$-separated by Remark~\ref{rmk:E-connected}, whence the cover $\Auseco = \Auseco^{(0)} \cup \ldots \cup \Auseco^{(d)}$ satisfies the conditions in Definition~\ref{def:asdim}. 
	
	For the other direction, we fix $\Ense \in \Enseco$ and obtain a cover 
	$\Auseco = \Auseco^{(0)} \cup \ldots \cup \Auseco^{(d)}$ as in 
	Definition~\ref{def:asdim}. For $l = 0, \ldots, d$, we define $\Ause^{(l)} 
	= \bigcup \Auseconum{l}$.  Because each 
	$\Ense$-connected component of $\Ause^{(l)}$ is contained in some $\Ause 
	\in \Auseconum{l}$, the sets $\Auseconum{l}$ for $l=0,1,\ldots d$  satisfy the statement of the lemma.
\end{proof}

As in the case of covering dimension, we may replace the requirement of 
decomposing $\Auseco$ into $(d+1)$ disjoint families by a formally weaker requirement on the 
multiplicity of $\Auseco$. When doing so, it is convenient to replace the 
separatedness condition by a ``wideness'' condition (though this is not 
absolutely necessary: one may also talk about $\Ense$-multiplicity instead of 
multiplicity, in a way similar to Definition~\ref{def:multiplicity-action}). 

\begin{Lemma}[{see \cite[Theorem~9]{Grave2006Asymptotic}}] \label{lem:asdim-multiplicity}
	Let $d$ be a nonnegative number. Then $\asdim(X,\Enseco) \leq d$ if and only if for any controlled set $\Ense$, there is a cover $\Auseco$ of $X$ satisfying 
	\begin{enumerate}
		\item \label{item:lem:asdim-multiplicity:Bo} $\bigcup_{\Ause \in \Auseco} \Ause \times \Ause \in \Enseco$, 
		\item \label{item:lem:asdim-multiplicity:Mu} the multiplicity of $\Auseco$ is at most $d+1$,	and
		\item \label{item:lem:asdim-multiplicity:Lo} for any $x \in X$, there is $\Ause \in \Auseco$ that contains $\{ y \in X \colon (y,x) \in \Ense\}$. 
	\end{enumerate}
\end{Lemma}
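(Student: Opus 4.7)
The plan is to prove each direction with a standard coarse-geometric construction.

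For the forward (``only if'') direction, I would use a thickening argument. Given $\Ense \in \Enseco$, form the enlarged entourage $F := \Delta_X \cup \Ense \cup \Ense^{-1} \cup (\Ense^{-1} \circ \Ense) \in \Enseco$ (controlled by the coarse-structure axioms), and apply Definition~\ref{def:asdim} with $F$ in place of the controlled set to obtain a $\Enseco$-uniformly bounded cover $\mathcal{V} = \mathcal{V}^{(0)} \cup \ldots \cup \mathcal{V}^{(d)}$ with each $\mathcal{V}^{(l)}$ being $F$-separated. For each $V \in \mathcal{V}$, I define the $\Ense$-thickening
\[
	V^+ := V \cup \left\{ z \in X \colon (z,y) \in \Ense \text{ for some } y \in V \right\},
\]
and set $\Auseco := \{ V^+ \colon V \in \mathcal{V} \}$. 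Uniform boundedness of $\Auseco$ follows because $\bigcup_{V \in \mathcal{V}} V^+ \times V^+$ is contained in the composition $(\Ense \cup \Delta_X) \circ \left(\bigcup_{V \in \mathcal{V}} V \times V \right) \circ (\Ense^{-1} \cup \Delta_X)$, which lies in $\Enseco$. Condition \eqref{item:lem:asdim-multiplicity:Lo} is immediate from the definition of $V^+$: any $x$ lies in some $V \in \mathcal{V}$, so $\{y \colon (y,x) \in \Ense\} \subseteq V^+$. For condition \eqref{item:lem:asdim-multiplicity:Mu}, I would check that the thickenings within each color class remain disjoint: if $x \in V^+ \cap (V')^+$ for distinct $V, V' \in \mathcal{V}^{(l)}$, one extracts $y \in V$ and $y' \in V'$ with $(x,y),(x,y') \in \Ense \cup \Delta_X$, so $(y,y')$ lies in $(\Delta_X \cup \Ense^{-1}) \circ (\Ense \cup \Delta_X) \subseteq F$, contradicting the $F$-separation of $\mathcal{V}^{(l)}$. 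Hence $\mathrm{mult}(\Auseco) \leq d+1$.

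For the backward (``if'') direction, given $\Ense \in \Enseco$, I apply the hypothesis to the symmetrized entourage $\Ense \cup \Ense^{-1} \cup \Delta_X \in \Enseco$ to obtain a cover $\Auseco$ satisfying \eqref{item:lem:asdim-multiplicity:Bo}--\eqref{item:lem:asdim-multiplicity:Lo}. The task is to partition $\Auseco$ into $d+1$ subfamilies each of which is $\Ense$-separated; uniform boundedness is automatically inherited. I would consider the ``conflict graph'' $\short$ on vertex set $\Auseco$ whose edges join distinct $\Ause, \Ause'$ with $\Ense \cap (\Ause \times \Ause') \neq \varnothing$, and show $\chi(\short) \leq d+1$. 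The key observation is that whenever such an edge exists with witness $(a,a') \in \Ense$, $a \in \Ause$, $a' \in \Ause'$, condition \eqref{item:lem:asdim-multiplicity:Lo} applied at $a'$ produces a single $\Ause'' \in \Auseco$ containing both $a$ and $a'$; combined with the multiplicity bound $\mathrm{mult}(\Auseco) \leq d+1$, a greedy-coloring argument as in \cite[Theorem~9]{Grave2006Asymptotic} (the reference already indicated in the statement) then delivers the desired $(d+1)$-coloring. Reindexing $\Auseco$ accordingly gives the decomposition required by Definition~\ref{def:asdim}.

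The main obstacle will be the coloring step of the backward direction: turning a local multiplicity bound into a global partition of the conflict graph is precisely the content of Grave's theorem, so I would invoke it rather than reprove it. The forward direction, by contrast, is essentially routine once one commits to the correct enlargement $F \supseteq \Ense^{-1} \circ \Ense$, which is exactly what forces the $\Ense$-thickenings of $F$-separated sets to remain disjoint.
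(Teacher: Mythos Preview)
The paper does not supply its own proof of this lemma; it simply cites \cite[Theorem~9]{Grave2006Asymptotic}. Your forward-direction thickening argument is correct and in fact goes beyond what the paper offers, while for the backward direction you, like the paper, ultimately invoke Grave at the decisive coloring step---so the two approaches coincide where it matters.
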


The change from the separatedness condition in Definition~\ref{def:asdim} to the ``wideness'' condition in Lemma~\ref{lem:asdim-multiplicity}\eqref{item:lem:asdim-multiplicity:Lo} is not absolutely necessary: one may instead strengthen the multiplicity condition in Lemma~\ref{lem:asdim-multiplicity}\eqref{item:lem:asdim-multiplicity:Mu} to obtain the following characterization:  

\begin{Def} \label{def:multiplicity-wide}
	Let $\Auseco$ be a collection of subsets of $X$ and let $\Ense \subseteq X \times X$. 
	The \emph{$\Ense$-multiplicity} of $\Auseco$, denoted by $\mult_{\Ense} 
	(\Auseco)$, is the supremum of the cardinalities of finite subcollections 
	$\Fi$ of $\Auseco$ such that 
	there is a subset $\Bo = \left\{ x_{\Ause} \colon \Ause \in \Fi \right\}$ of $X$ with $\Bo \times \Bo \subseteq \Ense \cup \Delta_X \cup \Ense^{-1}$ and $x_{\Ause} \in \Ause$ for any $\Ause \in \Fi$. 
\end{Def}
Note that if $\Ense = \Delta_X$, then the $\Ense$-multiplicity is simply the 
multiplicity.

We relate this notion to the ordinary multiplicity by use of the following constructions: 

\begin{Def} \label{def:neighborhood-core}
	Let $\Ause \subseteq X$ and let $\Ense \subseteq X \times X$. 
	Then the \emph{$\Ense$-neighborhood} of $\Ause$, denoted by $\Ense_\cup (\Ause)$, is the set 
	\[
		\left\{ x \in X \colon (x,y) \in \Ense \text{ for some } y \in \Ause \right\} \; . 
	\]
	When $\Ause = \{x\}$ for some $x \in X$, we also write $\Ense (x)$ for  $\Ense_\cup (\{x\})$.
	The \emph{$\Ense$-core} of $\Ause$, denoted by $\Ense_\cap (\Ause)$, is the set 
	\[
		\left\{ y \in X \colon \Ense (y) \subseteq \Ause \right\} 
	\]
\end{Def}

Observe that $\Ense_\cup (\Ause) = \bigcup_{y \in \Ause} \Ense (y)$ and
\[
	\Ense_\cup \left( \Ense_\cap (\Ause) \right) \subseteq \Ause \subseteq \Ense_\cap \left( \Ense_\cup (\Ause) \right) \; .
\]

\begin{Rmk} \label{rmk:multiplicity-wide-shrink-enlarge}
	Let $\Ense, \Ense' \subseteq X \times X$ and let $\Auseco$ be a collection 
	of subsets of $X$. If $ \Ense \subseteq \Ense'$ then we clearly have 
	\[
		\mult (\Auseco) = \mult_{\Delta_X} (\Auseco) \leq \mult_{\Ense} 
		(\Auseco) \leq \mult_{\Ense'} (\Auseco) \; .
	\]
	Moreover, observe that when $\Ense^{-1} = \Ense \supset \Delta_X$, 
	the shrunken collection $\Ense_\cap  \left( \Auseco \right) := \left\{ 
	\Ense_\cap  \left( \Ause \right) \colon \Ause \in \Auseco \right\}$ 
	satisfies 
	\[
	\mult_{\Ense} \left( \Ense_\cap  \left( \Auseco \right) \right) \leq \mult (\Auseco)
	\] 
	and the enlarged collection $\Ense_\cup  \left( \Auseco \right) := \left\{ 
	\Ense_\cup  \left( \Ause \right) \colon \Ause \in \Auseco \right\}$ 
	satisfies 
	\[
	\mult \left( \Ense_\cup  \left( \Auseco \right) \right) \leq \mult_{\Ense \circ \Ense} (\Auseco) \; .
	\]
\end{Rmk}

\begin{Lemma} \label{lem:asdim-multiplicity-wide}
	Let $d$ be a nonnegative number. Then $\asdim(X,\Enseco) \leq d$ if and only if for any controlled set $\Ense$, there is a cover $\Auseco$ of $X$ satisfying 
	\begin{enumerate}
		\item \label{item:lem:asdim-multiplicity-wide:Bo} $\bigcup_{\Ause \in \Auseco} \Ause \times \Ause \in \Enseco$,and
		\item \label{item:lem:asdim-multiplicity-wide:Muplus} the $\Ense$-multiplicity of $\Auseco$ is at most $d+1$. 
	\end{enumerate}
\end{Lemma}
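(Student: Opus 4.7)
The lemma is attributed to Grave (\cite{Grave2006Asymptotic}); my plan is to adapt that argument to the abstract coarse-space setting, splitting the proof into the two directions.

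\textbf{Necessity.} For the forward direction, I would fix $\Ense \in \Enseco$ and extract from Definition~\ref{def:asdim} a uniformly bounded cover $\Auseco = \Auseco^{(0)} \cup \cdots \cup \Auseco^{(d)}$ with each $\Auseco^{(l)}$ being $\Ense$-separated. To bound $\mult_{\Ense}(\Auseco)$, I would consider any finite $\Fi \subseteq \Auseco$ admitting representatives $\{x_\Ause\}_{\Ause \in \Fi}$ with pairwise relations in $\Ense \cup \Delta_X \cup \Ense^{-1}$, and argue that two distinct members of $\Fi$ cannot lie in the same $\Auseco^{(l)}$, for the representatives would witness a non-empty intersection of $\Ause \times \Ause'$ (up to swapping) with $\Ense$, contradicting $\Ense$-separatedness. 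This yields $|\Fi| \leq d+1$.

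\textbf{Sufficiency.} For the reverse direction, fix $\Ense \in \Enseco$; my aim is to produce a uniformly bounded cover decomposing into $d+1$ $\Ense$-separated subfamilies. The strategy is to apply the hypothesis with a suitably enlarged controlled set $E' \in \Enseco$ to obtain a cover $\Auseco$ with $\Auseco \times \Auseco \subseteq \Ense_0$ for some $\Ense_0 \in \Enseco$ and $\mult_{E'}(\Auseco) \leq d+1$, then execute a greedy $\Ense$-separated coloring. Using Zorn's lemma, I would recursively define $\Auseco^{(k)}$ as a maximal $\Ense$-separated subcollection of $\Auseco \setminus \bigcup_{i<k} \Auseco^{(i)}$. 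The key claim is that if some $\Ause \in \Auseco$ first enters the coloring at stage $k$, maximality at each earlier stage supplies $\Ause'_i \in \Auseco^{(i)}$ that is $\Ense$-close to $\Ause$, witnessed by $v_i \in \Ause$ and $u_i \in \Ause'_i$ with $(v_i, u_i) \in \Ense \cup \Ense^{-1}$; fixing any $x \in \Ause$ and routing through $\Ause \times \Ause \subseteq \Ense_0$ to relate $x$ to each $v_i$, one verifies that the $k+1$ points $\{x, u_0, \ldots, u_{k-1}\}$ are pairwise related by $(\Ense \cup \Ense^{-1}) \circ \Ense_0 \circ (\Ense \cup \Ense^{-1})$, its inverses, and $\Delta_X$. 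Choosing $E'$ to contain this composition, the bound $\mult_{E'}(\Auseco) \leq d+1$ forces $k+1 \leq d+1$, so the coloring terminates in $d+1$ stages.

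\textbf{Main obstacle.} The principal subtlety I foresee is a circular dependence in selecting $E'$: it must contain compositions involving the uniform bound $\Ense_0$ of $\Auseco$, which is only known after the hypothesis is applied with $E'$. My plan to resolve this is a two-stage application: first apply the hypothesis with $\Ense$ itself to extract an initial uniform bound $\Ense_0^{\operatorname{init}}$; then define $E' = (\Ense \cup \Ense^{-1}) \circ \Ense_0^{\operatorname{init}} \circ (\Ense \cup \Ense^{-1})$ (together with its inverse and $\Delta_X$), and reapply the hypothesis with $E'$. Consistency across the two stages can then be enforced by passing to the common refinement of the two covers, which inherits uniform boundedness by $\Ense_0^{\operatorname{init}}$; the one genuinely delicate bookkeeping point will be verifying that the multiplicity bound transfers to this refinement, but otherwise the greedy coloring analysis carries through as above.
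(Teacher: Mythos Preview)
Your necessity argument is correct and in fact more direct than the paper's, which routes through Lemma~\ref{lem:asdim-multiplicity} and the shrinking operation $\Ense_\cap$ from Remark~\ref{rmk:multiplicity-wide-shrink-enlarge}.

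The sufficiency argument, however, has a genuine gap at precisely the point you flag as ``delicate bookkeeping.'' Multiplicity bounds do not transfer to common refinements: if $\Auseco_1 \wedge \Auseco_2$ contains $k+1$ distinct members $U_1^{(i)} \cap U_2^{(i)}$ with $E'$-close representatives $u_i \in U_1^{(i)} \cap U_2^{(i)}$, there is no reason the sets $U_2^{(i)}$ should be distinct, so the bound $\mult_{E'}(\Auseco_2) \leq d+1$ gives no control on $k$. Your greedy coloring therefore cannot be made to terminate, and the circular dependence between $E'$ and the uniform bound $\Ense_0$ remains unresolved.

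The paper sidesteps this entirely by a different mechanism. Instead of greedy coloring, it uses the \emph{enlargement} operation: given $\Ense$ symmetric with $\Ense \supseteq \Delta_X$, apply the hypothesis with $\Ense \circ \Ense$ to obtain a cover $\Auseco$, then pass to $\Ense_\cup(\Auseco) = \{\Ense_\cup(\Ause) : \Ause \in \Auseco\}$. The key observation (Remark~\ref{rmk:multiplicity-wide-shrink-enlarge}) is that
\[
\mult\bigl(\Ense_\cup(\Auseco)\bigr) \leq \mult_{\Ense \circ \Ense}(\Auseco) \leq d+1,
\]
since a single point lying in $d+2$ enlarged sets yields, in the original sets, $d+2$ representatives that are pairwise $(\Ense \circ \Ense)$-close. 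The enlarged cover then satisfies the wideness condition of Lemma~\ref{lem:asdim-multiplicity}, which finishes the argument. Note that the controlled set $\Ense \circ \Ense$ is fixed \emph{before} the cover is produced, so no circularity arises.
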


\begin{proof}
	Given $\Ense \in \Enseco$ with $\Ense^{-1} = \Ense \supset \Delta_X$, we use Remark~\ref{rmk:multiplicity-wide-shrink-enlarge} to deduce the following:  
	\begin{enumerate}
		\item If a cover $\Auseco$ of $X$ satisfies 
		\eqref{item:lem:asdim-multiplicity:Bo} and 
		\eqref{item:lem:asdim-multiplicity:Mu} in 
		Lemma~\ref{lem:asdim-multiplicity}, then the shrunken collection 
		$\Ense_\cap  \left( \Auseco \right)$ is again a cover and satisfies 
		\eqref{item:lem:asdim-multiplicity-wide:Bo} and 
		\eqref{item:lem:asdim-multiplicity-wide:Muplus} in 
		Lemma~\ref{lem:asdim-multiplicity-wide}. 
		\item If a cover $\Auseco$ of $X$ satisfies 
		\eqref{item:lem:asdim-multiplicity-wide:Bo} and 
		\eqref{item:lem:asdim-multiplicity-wide:Muplus} in 
		Lemma~\ref{lem:asdim-multiplicity-wide} with $\Ense \circ \Ense$ in 
		place of $\Ense$, then the enlarged cover $\Ense_\cup  \left( \Auseco 
		\right)$ satisfies \eqref{item:lem:asdim-multiplicity:Bo} and 
		\eqref{item:lem:asdim-multiplicity:Mu} in 
		Lemma~\ref{lem:asdim-multiplicity}. 
	\end{enumerate}
	It follows that the condition in Lemma~\ref{lem:asdim-multiplicity-wide} is equivalent to that in Lemma~\ref{lem:asdim-multiplicity}, since in the statements of both lemmas, clearly we may assume $\Ense^{-1} = \Ense \supset \Delta_X$. This proves Lemma~\ref{lem:asdim-multiplicity-wide}. 
\end{proof}

We conclude this subsection with the following technical 
characterization of asymptotic dimension. In order to bound the asymptotic 
dimension, one does not have to construct 
a cover of the entire space at once; instead, it suffices to construct covers 
of all finite subspaces, one at a time, as long as those covers satisfy some 
uniform constraints. This fact may well be known to experts (see 
\cite[Lemma~2.7]{DelabieTointon2017}), but did find in the literature the 
exact statements we need. As we only need this fact in a relatively minor 
way, we state this fact here and provide a proof in 
\ref{sec:asdim-local}.

\begin{Lemma}[{see Corollary~\ref{cor:asdim-finitary}}] \label{lem:asdim-finitary}
	Let $d$ be a nonnegative number. Then $\asdim(X,\Enseco) \leq d$ if and only if for any controlled set $\Ense$, there is a controlled set $\Auense$ such that for any finite subset $Y \subseteq X$, there is a cover $\Auseco$ of $Y$ such that 
	\begin{enumerate}
		\item \label{lem:asdim-finitary:Bo} $\bigcup_{\Ause \in \Auseco} \Ause \times \Ause \subseteq \Auense$, and
		\item \label{lem:asdim-finitary:Muplus} the $\Ense$-multiplicity of $\Auseco$ is at most $d+1$. 
	\end{enumerate}
\end{Lemma}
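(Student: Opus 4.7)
The forward direction is straightforward. Starting from $\asdim(X,\Enseco) \leq d$ and a fixed controlled set $\Ense$, I would invoke Lemma~\ref{lem:asdim-multiplicity-wide} to produce a cover $\Auseco$ of $X$ with $\Ense$-multiplicity at most $d+1$ and $\bigcup_{\Ause \in \Auseco} \Ause \times \Ause \in \Enseco$. Setting $\Auense := \bigcup_{\Ause \in \Auseco} \Ause \times \Ause$, for any finite $Y \subseteq X$ the trace $\{\Ause \cap Y : \Ause \in \Auseco,\ \Ause \cap Y \neq \varnothing\}$ is a cover of $Y$ that inherits the uniform bound by $\Auense$, and whose $\Ense$-multiplicity is still bounded by $d+1$ because any witnessing finite subcollection together with its selectors in $Y$ lifts back to one for $\Auseco$ itself.

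For the converse direction---the substantive content of the lemma---the plan is to assemble a global cover of $X$ as an ultrafilter limit of the local covers $\Auseco_Y$ provided by the hypothesis. After fixing $\Ense$ and obtaining the controlled set $\Auense$, I would fix for each finite $Y \subseteq X$ a cover $\Auseco_Y$ satisfying (\ref{lem:asdim-finitary:Bo}) and (\ref{lem:asdim-finitary:Muplus}), pick an ultrafilter $\omega$ on the directed set of finite subsets of $X$ refining the filter generated by the tails $\{Y : F \subseteq Y\}$ for finite $F \subseteq X$, and for each $x \in X$ choose arbitrarily some $c_Y(x) \in \Auseco_Y$ containing $x$ whenever $x \in Y$. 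The candidate global cover is $\Coseco := \{c(x) : x \in X\}$ where
\[
c(x) := \{y \in X : \{Y : y \in Y \text{ and } y \in c_Y(x)\} \in \omega\}.
\]
The ultrafilter dichotomy makes $c(x)$ a well-defined subset of $X$, $x \in c(x)$ is clear, and $c(x) \times c(x) \subseteq \Auense$ follows because any two $y, y' \in c(x)$ must simultaneously lie in $c_Y(x)$ for some $Y$ (by intersecting two $\omega$-large sets) while $c_Y(x) \times c_Y(x) \subseteq \Auense$.

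The hard part will be showing that $\mult_{\Ense}(\Coseco) \leq d+1$; once this is done, Lemma~\ref{lem:asdim-multiplicity-wide} immediately yields $\asdim(X,\Enseco) \leq d$. I plan to argue by contradiction, assuming distinct members $c(x_1), \ldots, c(x_{d+2}) \in \Coseco$ equipped with selectors $y_j \in c(x_j)$ whose pairwise differences all lie in $\Ense \cup \Delta_X \cup \Ense^{-1}$. The main subtlety is transferring the distinctness of the $c(x_j)$ as global sets down to distinctness of the $c_Y(x_j)$ as members of $\Auseco_Y$ for sufficiently many $Y$. The key observation is that $c(x_j) \neq c(x_{j'})$ implies the existence of a point in their symmetric difference, which in turn forces $c_Y(x_j) \neq c_Y(x_{j'})$ for $\omega$-almost every $Y$. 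Intersecting these $\binom{d+2}{2}$ distinctness conditions with the $\omega$-large conditions that all $x_j$ and $y_j$ lie in $Y$ and that $y_j \in c_Y(x_j)$ yields some finite $Y$ for which $\Auseco_Y$ contains $d+2$ distinct members $c_Y(x_j)$ with $\Ense$-clustered selectors $y_j$, violating $\mult_{\Ense}(\Auseco_Y) \leq d+1$. This step illustrates why the ultrafilter is the right tool: it simultaneously handles the finitely many constraints needed to propagate local obstructions into a global contradiction.
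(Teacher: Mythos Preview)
Your argument is correct. Both directions go through as you describe; in particular, the ultrafilter bookkeeping in the reverse direction is sound, including the crucial step of transferring distinctness of the limit sets $c(x_j)$ back to distinctness of $c_Y(x_j)$ on an $\omega$-large set of $Y$'s.

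Your route and the paper's share the same engine---a cofinal ultrafilter on the directed set of finite subsets---but package it differently. The paper first sets up the coarse disjoint union $X_{\operatorname{Fin}}$ of all finite subspaces and proves a general statement (Proposition~\ref{prop:asdim-directed-limit}) that $\asdim(X) = \asdim(X_{\operatorname{Fin}})$ for any exhausting directed net, by building an ultraproduct coarse space $X_{\mathcal{F}}$, bounding its asymptotic dimension via products of local covers (Lemma~\ref{lem:asdim-ultraproducts}), and then embedding $X$ into $X_{\mathcal{F}}$ as a coarse subspace. The lemma then falls out by specializing to finite subsets and invoking Lemma~\ref{lem:asdim-multiplicity-wide}. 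Your approach bypasses the ultraproduct space entirely and constructs the global cover of $X$ directly as an ultrafilter limit of the local covers, which is more economical for this particular statement. The paper's detour buys a reusable proposition about arbitrary directed nets of subspaces; your version is shorter and self-contained but yields only the finitary case.
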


\subsection{$C_0(X)$-algebras}
\label{subsection:prelim:C_0(X)}
We recall some facts and fix notation concerning $C_0(X)$-algebras (introduced 
by 
Kasparov, \cite{Kasparov}). A $C_0(X)$-algebra consists of a $C^*$-algebra $A$ 
and a locally compact
Hausdorff space  $X$, 
along with a homomorphism $\iota \colon C_0(X) 
\to Z(M(A))$ which is non-degenerate in the sense that for any $a \in A$ and 
for any $\eps>0$ there exists a positive contraction $f \in C_0(X)$ such that 
$\|\iota(f) a - a\| < \eps$. 
We usually suppress the notation for $\iota$, and write $f \cdot a$ in place of 
$\iota(f)a$, as we typically consider 
the case in which $C_0(X)$ is a subalgebra of the center of the multiplier 
algebra of $A$.

If $Y \subset X$ is closed then $C_0(X\smallsetminus Y)A$ is an 
ideal of $A$. (We recall that by the Cohen factorization, $C_0(X\smallsetminus 
Y)A$ is automatically closed, and therefore there is no need to take its 
closure; 
see for example \cite[Proposition 2.33]{raeburn-williams}.)
We denote by $A_Y$ or $A|_{Y}$ the quotient 
$A/C_0(X\smallsetminus Y)A$. If $x \in X$, we write $A_x$ in place of 
$A_{\{x\}}$. For $a \in A$, we denote by $a_Y$ and $a_x$ the image of $a$ in 
$A_Y$ or $A_x$ respectively. For $a \in A$, the map $x \mapsto 
\|a_x\|$ need not be continuous, however it is always upper semicontinuous. (To 
see that this map is upper semicontinuous, one can express it as the infimum of 
a set of continuous maps, as follows: for any $f \in C_0(X)$, the map $x 
\mapsto \|a - f\cdot a + f(x)a\|$ is continuous, and the map $x \mapsto 
\|a_x\|$ the infimum over all $f \in C_0(X)$ of the previous expression.)

A $C_0(X)$-automorphism $\alpha$ of a $C_0(X)$-algebra $A$ is an automorphism 
of $A$ as a $C^*$-algebra which commutes with the action of $C_0(X)$, that is, 
for any $f \in C_0(X)$ and for any $a \in A$, we have $\alpha (f \cdot a) = f 
\cdot \alpha(a)$. 
Suppose $A$ is a $C_0(X)$-algebra, $G$ is a locally compact Hausdorff group and 
$\alpha \colon G \to \aut(A)$ is a point-norm continuous action via 
$C_0(X)$-automorphisms. For any $x \in X$, the ideal $C_0(X 
\smallsetminus \{x\})\cdot A$ is invariant for the action, and therefore 
we have an induced action $\alpha^x 
\colon  G \to \aut(A_x)$. We therefore have an exact sequence
\[
0 \to C_0(X\smallsetminus\{x\})\cdot A \rtimes_{\alpha} G \to 
A \rtimes_{\alpha} G \to A_x \rtimes_{\alpha^x} G
\]
and 
consequently the crossed product $A \rtimes_{\alpha} G$ 
inherits a structure of a 
$C_0(X)$-algebra, with fibers $\left ( A \rtimes_{\alpha} G \right )_x \cong 
A_x \rtimes_{\alpha^x} G$. 

We note in passing that in case one wishes to consider reduced rather than full 
crossed products, then the question of whether we have an exact sequence
\[
0 \to C_0(X\smallsetminus\{x\})\cdot A \rtimes_{\alpha,r} G \to 
A \rtimes_{\alpha,r} G \to A_x \rtimes_{\alpha^x,r} G
\]
turns out to be delicate. We refer the reader to 
\cite{kirchberg-wassermann-exact-groups-cts-bundles} for more on this. When $G$ 
is an exact group, this sequence is always exact. In this paper, it so happens 
that 
the crossed products we will consider are ones where the full and reduced 
crossed products coincide, although we do not need this fact in any of the 
proofs; we shall therefore focus on full crossed products, with no need to 
further discuss those technicalities.

We also record the following fact, whose proof is similar to \cite[Lemma~1.4]{Hirshberg-Wu16} and is left to the reader. 
\begin{Lemma}\label{lem:quasicentral-approximate-unit}
	There is a quasicentral approximate unit for $C_0(X\smallsetminus\{x\}) + C_0(X\smallsetminus\{x\}) \cdot A \rtimes_\alpha G \subset M(A \rtimes_\alpha G)$ which is contained in $C_c(X\smallsetminus\{x\})_{+, \leq 1}$.
\end{Lemma}

\subsection{Nuclear dimension}	\label{sec:prelim:dimnuc}

We recall the definition of the nuclear dimension of a $C^*$-algebra and a number of its basic properties. 

\begin{Def}\label{def:dimnuc}
	A completely positive map $\varphi \colon A \to B$ is \emph{order zero} if $\varphi(a) \varphi(a') = 0$ for any $a, a' \in A_+$ with $a a' = 0$. When $A$ is unital, an equivalent characterization is that $\varphi(1) \varphi(a a') = \varphi(a) \varphi(a') = \varphi(a a') \varphi(1)$ for any $a, a' \in A$.  
	
	If a completely positive map $\varphi \colon A \to B$ can be written as a sum $\sum_{l = 0}^{d} \varphi^{(l)}$ of $(d+1)$ completely positive contractive order zero maps $\varphi^{(0)}, \ldots, \varphi^{(d)}$, then we say $\varphi$ is \emph{$(d+1)$-decomposable}. 
	
	The \emph{nuclear dimension} of a $C^*$-algebra $A$ is the infimum of all the natural numbers $d$ such that for any finite subset $F \subseteq A$ and any $\varepsilon > 0$ there are a finite dimensional $C^*$-algebra $B$, a completely positive contractive map $\kappa \colon A \to B$, and a completely positive $(d+1)$-decomposable map $\gamma \colon A \to B$, such that $\| \gamma \circ \kappa (a) - a \| < \varepsilon$ for any $a \in F$.  
\end{Def}

We list here some permanence properties for nuclear dimension, taken from \cite{winter-zacharias}. 
\begin{Prop}\label{prop:dimnuc-basic}
	\
	\begin{enumerate}
	\item For any $C^*$-algebras $A$ and $B$ we have $\dimnuc (A \oplus B) = \max\{ \dimnuc(A) , \dimnuc(B)\}$ (\cite[Prosition 2.3(i)]{winter-zacharias}).
	\item If $B$ is a hereditary subalgebra of $A$ then $\dimnuc(B) \leq \dimnuc(A)$ (\cite[Proposition 2.5]{winter-zacharias}). 
	\item For any $C^*$-algebra $A$ and any $n \in \N$ we have $\dimnuc(M_n(A)) = \dimnuc(A)$ (\cite[Corollary 2.8(i)]{winter-zacharias}).
	\item For any $C^*$-algebra $A$ we have $\dimnuc(A) = \dimnuc (A^+)$ (\cite[Remark 2.11]{winter-zacharias}). 
		\end{enumerate}
\end{Prop}

We record a few technical lemmas concerning nuclear dimension and order zero 
maps, which will be used in the paper.
The following is a standard fact about lifting decomposable maps.
\begin{Lemma}[{\cite[Lemma~1.1]{Hirshberg-Wu16}}]\label{lem:lifting-order-zero}
	Let $A$ be a $C^*$-algebra, let $J \lhd A$ be an ideal, let $F$ be a finite dimensional $C^*$-algebra, let $d$ be a non-negative integer and let $\varphi \colon F \to A/J$ be a $(d+1)$-decomposable map. Let $\pi \colon A \to A/J$ be the quotient map. Then there exists a $(d+1)$-decomposable map $\tilde{\varphi} \colon F \to A$ such that $\varphi = \pi \circ \tilde{\varphi}$. 
\end{Lemma}

The following lemma is an invariant version of \cite[Proposition 2.6]{winter-zacharias}. 

\begin{Lemma}[{\cite[Lemma~1.3]{Hirshberg-Wu16}}] \label{lem:separable-dimnuc}
	Let $G$ be a locally compact Hausdorff and second countable group, and let 
	$A$ be a $C^*$-algebra equipped with a point-norm continuous action of $G$. 
	Then any countable subset $S \subset A$ is contained in a $G$-invariant 
	separable $C^*$-subalgebra $B \subset A$ with $\dimnuc(B) \leq \dimnuc(A)$. 
	In particular, $A$ can be written as a direct limit of separable 
	$G$-invariant $C^*$-algebras with nuclear dimension no more than 
	$\dimnuc(A)$. 
\end{Lemma}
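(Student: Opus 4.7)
The plan is to run the standard Blackadar-style inductive construction used by Winter--Zacharias to produce separable subalgebras of bounded nuclear dimension, interleaved with closing up under $G$-translates. Two ingredients make this work: nuclear dimension is tested locally (for each finite subset and tolerance $\varepsilon$, one needs a finite-dimensional algebra and a suitable $(n+1)$-colored c.p. approximate factorization), and $G$ second countable admits a countable dense subset $G_0$, so by point-norm continuity of $\alpha$ a subalgebra is $G$-invariant as soon as it is $G_0$-invariant and norm-closed.

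First I would recall the non-equivariant fact (essentially \cite[Proposition 2.6]{winter-zacharias}): given a countable $S_0 \subset A$, there is a separable $C^*$-subalgebra $C(S_0) \subset A$ containing $S_0$ with $\dimnuc(C(S_0)) \leq \dimnuc(A)$. The construction picks a countable dense subset of the $C^*$-algebra generated by $S_0$, a cofinal sequence of tolerances $\varepsilon_k \to 0$, and for each finite sub-configuration selects witnesses to the nuclear dimension bound in $A$; then one iterates countably many times to stabilize, and takes the closure.

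Next I would interleave $G_0$-translation with this procedure. Set $S^{(0)} = S$ and, having built a countable set $S^{(n)}$, enlarge it to
\[
S^{(n+1)} \;=\; C(S^{(n)}) \cup \alpha_{G_0}\bigl(D(S^{(n)})\bigr),
\]
where $D(S^{(n)})$ is a fixed countable dense subset of $C(S^{(n)})$ and $\alpha_{G_0}(\cdot) = \{\alpha_g(a) : g \in G_0, a \in \cdot\}$. Each $S^{(n)}$ is countable, and the chain is increasing. Put $B = \overline{\bigcup_n C(S^{(n)})}$. Then $B$ is a separable $C^*$-subalgebra of $A$ containing $S$. By construction, for each countable dense subset $D(S^{(n)}) \subset C(S^{(n)}) \subset B$ and each $g \in G_0$ we have $\alpha_g(D(S^{(n)})) \subset B$; taking norm closures and using density and continuity of $\alpha_g$, we get $\alpha_{G_0}(B) \subset B$. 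Point-norm continuity of $\alpha$ and density of $G_0$ in $G$ upgrade $G_0$-invariance of the closed set $B$ to $G$-invariance. Finally, $B$ is the closed union of the increasing sequence $C(S^{(n)})$, each with $\dimnuc \leq \dimnuc(A)$, and nuclear dimension does not increase under inductive limits, so $\dimnuc(B) \leq \dimnuc(A)$.

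The second assertion is then immediate: enumerate any cofinal net of countable subsets of $A$, apply the first part to each, and order the resulting separable $G$-invariant subalgebras by inclusion; the resulting inductive system has limit $A$ and each term has nuclear dimension at most $\dimnuc(A)$. The only nontrivial aspect of the argument is the bookkeeping that simultaneously ensures separability, the local witnesses for the nuclear dimension bound, and closure under $\alpha_{G_0}$; passing from $G_0$-invariance to $G$-invariance is the one place the second countability and continuity hypotheses are essential, but this step is immediate once the inductive construction is set up correctly.
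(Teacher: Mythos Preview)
Your argument is correct and is precisely the standard approach: interleave the Winter--Zacharias separable-subalgebra construction with closure under translates by a countable dense $G_0 \subset G$, then use point-norm continuity to upgrade $G_0$-invariance to $G$-invariance. Note that the paper does not actually supply its own proof of this lemma; it simply cites \cite[Lemma~1.3]{Hirshberg-Wu16} and remarks that it is ``an invariant version of \cite[Proposition 2.6]{winter-zacharias}'', which is exactly what you have written out.
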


The following is standard fact concerning perturbations of order zero maps, 
which is a straightforward consequence of semiprojectivity of cones over finite 
dimensional $C^*$-algebras. See, for example, \cite[1.2.3]{winter-covering-II}.

\begin{Lemma}
	\label{lem:perturb-order-zero}
	Let $A$ be a finite dimensional $C^*$-algebra. For any $\eps>0$ there 
	exists $\delta>0$ such that whenever $B$ is a 
	$C^*$-algebra, 
	$\varphi \colon A \to B$ is a $*$-linear contraction 
	satisfying
	\[
		\|\varphi(x)\varphi(y) - \varphi(xy)\varphi(1)\| < \delta \quad \text{ for any } x,y \in A_{\leq 1} \; ,
	\]
	then there exists an order zero contraction $\tilde{\varphi} \colon A \to 
	B$ such that $\|\varphi - \tilde{\varphi}\| < \eps$.
\end{Lemma}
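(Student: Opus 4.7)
The plan is to leverage the semiprojectivity (in fact projectivity, by Loring) of the cone $CA := C_0((0,1]) \otimes A$, which holds because $A$ is finite-dimensional. Recall the standard correspondence between order-zero c.p.\ contractions $\psi \colon A \to B$ and $*$-homomorphisms $\Psi \colon CA \to B$, given by $\Psi(\mathrm{id}_{(0,1]} \otimes a) = \psi(a)$; the goal reduces to producing such a $*$-homomorphism $\Psi$ close on the ``generators'' to the given $\varphi$.

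Write $A = \bigoplus_l M_{n_l}$, fix matrix units $e^{(l)}_{ij}$, and let $E^{(l)}_{ij} := \mathrm{id}_{(0,1]} \otimes e^{(l)}_{ij} \in CA$. These generate $CA$, with defining $*$-algebra relations $(E^{(l)}_{ij})^* = E^{(l)}_{ji}$, the multiplicative rule $E^{(l)}_{ij} E^{(m)}_{pq} = \delta_{lm}\delta_{jp}\, H\, E^{(l)}_{iq}$ with $H := \sum_{l,i} E^{(l)}_{ii}$, and $\|H\| \leq 1$. Semiprojectivity (weak stability of these finitely many relations) gives: for every $\varepsilon_1 > 0$ there exists $\delta_0 > 0$ such that whenever a $*$-preserving assignment $E^{(l)}_{ij} \mapsto F^{(l)}_{ij} \in B$ satisfies the above relations to within $\delta_0$, there is a genuine $*$-homomorphism $\Psi \colon CA \to B$ with $\|\Psi(E^{(l)}_{ij}) - F^{(l)}_{ij}\| < \varepsilon_1$.

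Given $\varepsilon > 0$, choose $\varepsilon_1 > 0$ small enough so that an $\varepsilon_1$-perturbation on each matrix unit induces at most an $\varepsilon$-perturbation on the unit ball of $A$ (possible since $\dim A < \infty$), and let $\delta_0$ be as above. Take $\delta := \delta_0 / 3$. Now, given $\varphi$ and $b$ satisfying the hypotheses, set $F^{(l)}_{ij} := \varphi(e^{(l)}_{ij})$. The adjoint relation is exact by $*$-linearity of $\varphi$, and $\|\sum F^{(l)}_{ii}\| = \|\varphi(1_A)\| \leq 1$ since $\varphi$ is contractive. For the multiplicative relation, hypothesis (1) yields $F^{(l)}_{ij} F^{(m)}_{pq} \approx_\delta \delta_{lm}\delta_{jp}\, \varphi(e^{(l)}_{iq})\, b$, while hypothesis (1) applied to $x=1_A$, $y=e^{(l)}_{iq}$ gives $\varphi(1_A)\,F^{(l)}_{iq} \approx_\delta F^{(l)}_{iq}\, b$. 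Combining, $F^{(l)}_{ij} F^{(m)}_{pq} \approx_{2\delta} \delta_{lm}\delta_{jp}\, \varphi(1_A)\, F^{(l)}_{iq}$, verifying the multiplicative relation (with $\varphi(1_A)$ playing the role of $H$) to within $\delta_0$.

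Applying the semiprojectivity lifting yields $\Psi \colon CA \to B$ with $\|\Psi(E^{(l)}_{ij}) - \varphi(e^{(l)}_{ij})\| < \varepsilon_1$. Then $\tilde\varphi(a) := \Psi(\mathrm{id}_{(0,1]} \otimes a)$ is automatically an order-zero c.p.\ contraction, and $\|\tilde\varphi - \varphi\| < \varepsilon$ by the choice of $\varepsilon_1$. The main step requiring care is the weak stability of the chosen presentation of $CA$, which is the substance of semiprojectivity/projectivity and uses the finite-dimensionality of $A$ essentially; one also needs to check that the positivity and self-adjointness conditions implicit in the presentation are automatic from the multiplicative relation, or else verify them as additional $O(\delta)$ approximate relations (using hypothesis (2) to control commutators). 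Hypothesis (2) enters implicitly: the approximate commutativity of $\varphi(1_A)$ with each $F^{(l)}_{iq}$, which is needed for weak stability of the commutation relation $[H, E^{(l)}_{iq}] = 0$, follows from hypothesis (2) together with the multiplicative relation already verified.
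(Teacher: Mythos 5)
Your overall strategy---reduce to an (approximate) representation of the cone $CA$ and invoke semiprojectivity/projectivity of cones over finite dimensional $C^*$-algebras---is exactly the route the paper has in mind (the paper gives no detailed proof, pointing to semiprojectivity and to \cite[1.2.3]{winter-covering-II}). However, there is a genuine gap at precisely the point you flag and then defer: the list of relations you write down does \emph{not} present $CA$, so ``weak stability of these finitely many relations'' is not available as stated. Concretely, if $A$ has a one-dimensional summand $\C$, then on that summand your multiplicative relation degenerates to the tautology $F^2=F\cdot F$ (since $H=F$ there), so your relations only say ``self-adjoint contraction''; the universal $C^*$-algebra of those relations is $C_0([-1,1]\setminus\{0\})$, not $C_0((0,1])$. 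Since for $A=\C$ an order zero contraction is exactly a \emph{positive} contraction, any argument that never forces approximate positivity of $\varphi(1_A)$ (equivalently of the $\varphi(e^{(l)}_{ii})$) cannot yield the conclusion, and your suggested way out---that positivity is ``automatic from the multiplicative relation''---fails in exactly this case. So the presentation must include positivity of $H$ (or of the diagonal images), and its approximate validity must be \emph{derived} from the hypotheses; this is a real step, not a formality, and hypothesis (2) alone does not give it.

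The missing estimate can be supplied: with $c=\varphi(e^{(l)}_{ii})$ self-adjoint, hypotheses (1) and (2) give $\|c^2-cb\|<\delta$ and $\|[c,b]\|<\delta$; compressing by the negative part $c_-$ yields $\|c_-^4+c_-^2bc_-\|<\delta$, and since $c_-^2bc_-$ agrees with $c_-^{3/2}bc_-^{3/2}\ge 0$ up to a commutator error (controlled by approximating $t\mapsto\max\{-t,0\}$ by polynomials), one gets $\|c_-\|=O(\delta^{1/4})$. With this extra approximate relation (plus the norm bounds; the commutation of $H$ with the generators does follow from the multiplicative and adjoint relations) your scheme can be carried through, provided you also verify that the augmented relations really present $CA$. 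Alternatively---and closer to the argument the paper cites---one can avoid explicit presentations: if the lemma failed for some $\eps$, pick $\varphi_n,b_n$ witnessing failure with $\delta_n\to0$; in $\prod_n B_n/\bigoplus_n B_n$ the induced map $\Phi$ and element $\beta$ satisfy the relations \emph{exactly}, positivity is then immediate by functional calculus in the commutative algebra generated by $\beta$ and $\Phi(e^{(l)}_{ii})$ (from $\Phi(e^{(l)}_{ii})\bigl(\Phi(e^{(l)}_{ii})-\beta\bigr)=0$), one checks $\Phi$ is c.p.c.\ order zero and hence a $*$-homomorphism from $CA$, and semiprojectivity of $CA$ lifts it to $\prod_{n\ge N}B_n$, producing order zero maps uniformly close to $\varphi_n$ for large $n$---a contradiction. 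Either way, the positivity issue is the substantive content you still need to supply.
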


We clarify the connection between the nuclear dimension of a commutative $C^*$-algebra and the dimension of its spectrum (extending results from \cite{winter-zacharias} to the non-separable case in Appendix~\ref{sec:dimc}). In particular, we show in Theorem~ \ref{thm:dimnuc-dim-general} that for any locally compact Hausdorff space $X$ we have $\dimnuc (C_0(X)) = \dim(X^+)$ (with $X^+$ denoting the one-point compactification). 
The following lemma is essentially the same as \cite[Lemma 3.1]{carrion}; see 
also \cite[Lemma 3.3]{Hirshberg-Wu16}. The 
difference is that the version here is for nuclear dimension rather than 
decomposition rank, and we do not assume that the algebras in question are 
separable. The proof goes through almost verbatim, and thus we do not repeat it.
\begin{Lemma}
	\label{lem:dimnuc-C(X)-algebras}
	Let $X$ be a locally compact Hausdorff space, and let $A$ be a 
	$C_0(X)$-algebra, then 
	\[
	\dimnucone(A) \leq \dimone(X^+) \cdot \sup_{x \in X} \dimnucone(A_x)
	\]
\end{Lemma}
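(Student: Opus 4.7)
The plan is to follow the strategy in \cite{carrion}, with the minor modifications needed to work with nuclear dimension (instead of decomposition rank) and without the separability assumption. Set $d = \dim(X^+)$ and $n = \sup_{x \in X} \dimnuc(A_x)$; I aim to show that any finite subset $F \subseteq A$ can be approximated to within a prescribed $\eps>0$ through a finite-dimensional algebra via a c.p.\ map $\Psi$ and a map $\Phi$ which decomposes into $(d+1)(n+1)$ c.p.c.\ order zero pieces.

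First, for each $x \in X^+$ I would obtain a local approximation. At $x = \infty$ one has $A_\infty = 0$, so the approximation is trivial; for $x \in X$, use $\dimnuc(A_x) \leq n$ to pick a finite-dimensional $F_x$, a c.p.c.\ map $\psi_x \colon A_x \to F_x$, and a decomposition $\varphi_x = \sum_{i=0}^{n} \varphi_x^{(i)}$ of c.p.c.\ order zero maps $F_x \to A_x$ such that $\varphi_x \psi_x$ is within $\eps/2$ of the identity on the image of $F$ in $A_x$. I would then define $\widetilde{\psi}_x \colon A \to F_x$ as the composition of $\psi_x$ with the quotient $A \to A_x$, and lift each $\varphi_x^{(i)}$ to a c.p.c.\ order zero map $\widetilde{\varphi}_x^{(i)} \colon F_x \to A$ using projectivity of the cone $C_0((0,1]) \otimes F_x$ over the quotient $A \twoheadrightarrow A_x$. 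Setting $\widetilde{\varphi}_x = \sum_i \widetilde{\varphi}_x^{(i)}$, the element $(\widetilde{\varphi}_x \widetilde{\psi}_x(a) - a)|_x$ vanishes in $A_x$ up to $\eps/2$ for $a \in F$; by Lemma~\ref{lem:CX-nbhd} applied to the finite set $\{\widetilde{\varphi}_x \widetilde{\psi}_x(a) - a : a \in F\}$, there is an open neighborhood $U_x$ of $x$ in $X^+$ on which this approximation persists (up to, say, $\eps$) after restriction to $\overline{U_x}$.

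Second, the open cover $\{U_x\}_{x \in X^+}$ of the compact (hence normal) space $X^+$ can be refined, using $\dim(X^+) \leq d$, to a finite open cover $\mathcal{V}$ of multiplicity at most $d+1$, which I would $d$-color as $\mathcal{V} = \mathcal{V}^{(0)} \sqcup \cdots \sqcup \mathcal{V}^{(d)}$ so that each $\mathcal{V}^{(k)}$ consists of pairwise disjoint sets. For each $V \in \mathcal{V}$, pick $x_V \in X$ with $V \subseteq U_{x_V}$, and choose a partition of unity $\{h_V\}_{V \in \mathcal{V}}$ subordinate to $\mathcal{V}$. Then, for $k = 0, \ldots, d$ and $i = 0, \ldots, n$, define
\[
\Psi^{(k)} \colon A \to \bigoplus_{V \in \mathcal{V}^{(k)}} F_{x_V}, \quad a \mapsto \bigl( \widetilde{\psi}_{x_V}(a) \bigr)_V,
\]
\[
\Phi^{(k,i)} \colon \bigoplus_{V \in \mathcal{V}^{(k)}} F_{x_V} \to A, \quad (y_V)_V \mapsto \sum_{V \in \mathcal{V}^{(k)}} h_V \cdot \widetilde{\varphi}_{x_V}^{(i)}(y_V).
\]
Each $\Phi^{(k,i)}$ is c.p.c.\ order zero: on each summand, multiplication by the central positive contraction $h_V$ preserves the order zero property of $\widetilde{\varphi}_{x_V}^{(i)}$; between different summands within the same color $k$, the disjointness of the $V$'s in $\mathcal{V}^{(k)}$ makes the images orthogonal.

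Finally, I would verify the approximation: for $a \in F$,
\[
\sum_{k=0}^{d} \sum_{i=0}^{n} \Phi^{(k,i)} \Psi^{(k)}(a) \;=\; \sum_{V \in \mathcal{V}} h_V \cdot \widetilde{\varphi}_{x_V} \widetilde{\psi}_{x_V}(a)
\]
and each summand differs from $h_V \cdot a$ by an element of norm at most $\eps$, because the error $\widetilde{\varphi}_{x_V} \widetilde{\psi}_{x_V}(a) - a$ has small norm on $\overline{V} \subseteq \overline{U_{x_V}}$ while $h_V$ is supported in $V$. Summing and using $\sum_V h_V \cdot a = a$ (non-degeneracy of the $C_0(X)$-module structure), the total error is controlled by $|\mathcal{V}| \cdot \eps$, which can be made arbitrarily small by rescaling at the start. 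Since the total number of c.p.c.\ order zero pieces is $(d+1)(n+1)$, this gives $\dimnuc(A)+1 \leq (d+1)(n+1)$, as required. The main technical point to verify carefully is the order-zero lift of $\varphi_x^{(i)}$ to $A$ (via cone projectivity), and the order-zero nature of $\Phi^{(k,i)}$ — both reduce to standard facts and require no separability of $A$.
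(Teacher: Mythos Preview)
Your approach is correct and matches what the paper intends (it simply defers to Carri\'on's argument with the indicated modifications). One point to fix: your final error bound $|\mathcal{V}|\cdot\eps$ cannot be ``rescaled at the start,'' because $|\mathcal{V}|$ depends on $\eps$ through the neighborhoods $U_x$. Instead, group the error sum by color and use that the $h_V$ within a single $\mathcal{V}^{(k)}$ have pairwise disjoint supports (together with $\|a\|=\sup_y\|a_y\|$ for $C_0(X)$-algebras) to get
\[
\Bigl\|\sum_{V\in\mathcal{V}^{(k)}} h_V\cdot\bigl(\widetilde\varphi_{x_V}\widetilde\psi_{x_V}(a)-a\bigr)\Bigr\| \;\le\; \max_{V\in\mathcal{V}^{(k)}}\bigl\|h_V\cdot(\widetilde\varphi_{x_V}\widetilde\psi_{x_V}(a)-a)\bigr\| \;<\; \eps,
\]
so the total error is at most $(d+1)\eps$, independent of $|\mathcal{V}|$. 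A minor aside: Lemma~\ref{lem:CX-nbhd} as stated requires the elements to vanish exactly at $x$, whereas yours are only small there; just invoke upper semicontinuity of $y\mapsto\|b_y\|$ directly to obtain $U_x$.
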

Consequently, we obtain the 
following immediate corollary from Lemma \ref{lem:dimnuc-C(X)-algebras}.
\begin{Lemma}
	\label{lem:dimnuc-C(X)-crossed-products}
	Let $A$ is a $C_0(X)$-algebra, let $G$ be a locally compact Hausdorff group 
	and let
	$\alpha \colon G \to \aut(A)$ be a point-norm continuous action via 
	$C_0(X)$-automorphisms. Then
		\[
	\dimnucone(A \rtimes_{\alpha} G ) \leq \dimone(X^+) \cdot \sup_{x \in X} 
	\dimnucone(A_x \rtimes_{\alpha^x} G)
	\]
\end{Lemma}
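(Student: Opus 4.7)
The plan is to observe that this is a direct application of Lemma \ref{lem:dimnuc-C(X)-algebras} to the crossed product, once we recognize that $A \rtimes_\alpha G$ is itself a $C_0(X)$-algebra in a natural way.

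The key structural fact was already established in Subsection~\ref{subsection:prelim:C_0(X)}: since $\alpha$ acts by $C_0(X)$-automorphisms, the homomorphism $C_0(X) \to Z(M(A))$ extends to a homomorphism $C_0(X) \to Z(M(A \rtimes_\alpha G))$ (the image commutes with both $A$ and the canonical unitaries $u_g$, so it lands in the center of the multiplier algebra), and non-degeneracy is inherited from the non-degeneracy of the original $C_0(X)$-structure on $A$. Moreover, for each $x \in X$, the ideal $C_0(X \smallsetminus \{x\}) \cdot A$ is $\alpha$-invariant, and the corresponding short exact sequence of crossed products identifies the fiber over $x$ as
\[
(A \rtimes_\alpha G)_x \;\cong\; A_x \rtimes_{\alpha^x} G \; .
\]

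With this identification in hand, I would simply apply Lemma \ref{lem:dimnuc-C(X)-algebras} to the $C_0(X)$-algebra $B = A \rtimes_\alpha G$:
\[
\dimnucone(A \rtimes_\alpha G) \;=\; \dimnucone(B) \;\leq\; \dimone(X^+) \cdot \sup_{x \in X} \dimnucone(B_x) \;=\; \dimone(X^+) \cdot \sup_{x \in X} \dimnucone(A_x \rtimes_{\alpha^x} G) \; .
\]
No further work is needed, since all the nontrivial content is packaged into Lemma \ref{lem:dimnuc-C(X)-algebras} and the discussion of $C_0(X)$-structures on crossed products in Subsection~\ref{subsection:prelim:C_0(X)}. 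There is essentially no obstacle here; the statement is recorded for ease of reference in later sections, where it will be combined with dimension bounds on the fiberwise crossed products $A_x \rtimes_{\alpha^x} G$ arising from the main theorem.
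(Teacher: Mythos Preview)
Your proposal is correct and matches the paper's approach exactly: the paper states this lemma as an immediate corollary of Lemma~\ref{lem:dimnuc-C(X)-algebras}, using precisely the $C_0(X)$-algebra structure on $A \rtimes_\alpha G$ and the fiber identification $(A \rtimes_\alpha G)_x \cong A_x \rtimes_{\alpha^x} G$ established in Subsection~\ref{subsection:prelim:C_0(X)}.
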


\subsection{Virtually nilpotent and polycyclic groups}	\label{sec:prelim:groups}

	An (extended) metric space is said to be \emph{proper} if any closed and 
	bounded set is compact. For discrete extended metric spaces, this simply 
	means that any bounded set is finite. For groups, we are particularly 
	interested in left or right invariant metrics. For finitely generated 
	groups, the word length function $\ell$ associated to a finite set of 
	generators gives rise to a right-invariant proper metric given by $d(g,h) = 
	\ell(gh^{-1})$ (or a left-invariant proper metric given by $d(g,h) = 
	\ell(g^{-1}h)$). 
	Another way to construct a proper right-invariant metric for a countable discrete group is by enumerating the elements of $G$ by $e = g_0, g_1, g_2, \ldots$, choosing an unbounded sequence of positive real numbers $r_1 , r_2 , \ldots$, and taking the largest metric such that $d(g, g_n g) \leq r_n$ for all $g \in G$ and $n \in \mathbb{Z}^+$.

\begin{Rmk}\label{rmk:proper-right-invariant-coarse-equiv}
	One can show that any two proper right-invariant metrics are coarsely equivalent via the identity map. 
\end{Rmk}

\begin{Def} \label{def:nilpotent}
	Let $G$ be a discrete group. The \emph{descending central series} of $G$ is a series 
	\[
	G_1 \trianglerighteq G_2 \trianglerighteq G_3 \trianglerighteq \ldots
	\]
	of subgroups of $G$ defined inductively so that $G_1 = G$ and 
	\[
	G_{n+1} = [G_{n}, G] = \left\langle \left\{ g_1^{-1} g_2^{-1} g_1 g_2 \colon g_1 \in G_{n}, g_2 \in G \right\}\right\rangle \quad \text{for } n = 1, 2, \ldots \; .
	\]
	We say $G$ is \emph{nilpotent} if $G_n = 1$ for some integer $n$. In this 
	case, we write $\Rank[n](G)$ for the rank of the abelian group $G_{n} / 
	G_{n+1}$ (that is, the dimension of the rational vector space  $\Q 
	\otimes_{\Z} 
	( G_{n} / G_{n+1} )$).
	
	We say $G$ is \emph{virtually nilpotent} if it has a finite-index subgroup 
	$H$ that is nilpotent, in which case we write $\Rank[n](G) = \Rank[n](H)$ 
	for $n = 1,2,\ldots$; this does not depend on the choice of $H$. 
\end{Def}

\begin{Def} \label{def:polycyclic}
	We say a discrete group $G$ is \emph{polycyclic} if there is a finite series 
	\[
	G = G_1 \trianglerighteq G_2 \trianglerighteq G_3 \trianglerighteq \ldots \trianglerighteq G_{m-1} \trianglerighteq G_{m} = 1
	\]
	of subgroups of $G$, each being normal in its predecessor, such that $G_{n} / G_{n+1}$ is a finitely generated abelian group for $n \in \{1,2,\ldots, m-1 \}$. In this case, we define $\hirsch(G)$, the \emph{Hirsch length} of $G$, to be the sum of the ranks of $G_{n} / G_{n+1}$ as $n$ ranges from $1$ to $m-1$. It is known that $\hirsch(G)$ does not depend on the choice of this series. 
	
	We say a discrete group $G$ is \emph{virtually polycyclic} if it has a 
	finite-index subgroup $H$ that is polycyclic, in which case we write 
	$\hirsch(G) = \hirsch(H)$; this does not depend on the choice of $H$. 
\end{Def}

It is clear that any finitely generated virtually nilpotent group $G$ is virtually polycyclic, in which case we also have
\[
\hirsch(G) = \sum_{n=1}^{\infty} \Rank[n](G) \; .
\]

\begin{Def} \label{def:growth}
	Let $G$ be a finitely generated group and let $S$ be a generating set. For 
	$n = 0, 1, 2, \ldots$, we denote by $B_S (1,n)$ the ball of radius $n$ 
	around the identity with respect ot the word-length metric defined by $S$. 
	The \emph{growth} of $G$ with regard to $S$ is the function $\Growth{G,S} \colon \N \to \N$ given by $\Growth{G,S}(n) = \left| B_S (e,n) \right|$ for $n \in \N$. 
\end{Def}

If $S'$ is another generating set of $G$, then there is a positive integer $C$ 
such that for any $n \in \N$, we have 
\[
\Growth{G,S'} (n) \leq C \Growth{G,S} (C n) + C \quad \text{and} \quad \Growth{G,S} (n) \leq C \Growth{G,S'} (C n) + C \; .
\]
This defines an equivalence relation among increasing functions from $\N$ to itself. Modulo this equivalence relation, the growth of $G$ is uniquely defined. 

By a theorem of Bass (\cite{Bass1972degree}), the growth of a virtually nilpotent group is equivalent to a polynomial 
(\cite{Bass1972degree}) of degree
\[
d(G) = \sum_{n=1}^{\infty} n \Rank[n](G) \, .
\]
The converse also holds by a theorem of Gromov 
(\cite{Gromov1981Groups}). 
Bass' result was improved in \cite[Theorem 1.1]{Breuillard}, which states that if $G$ is a finitely generated virtually nilpotent group and $S$ is a finite symmetric generating set then there exists a positive constant $C$ depending on $S$,  such that
\[
\frac{ \Growth{G,S} (n)}{n^{d(G)}} \xrightarrow{n \to \infty} C 
\, .
\]
We refer the reader to \cite{Breuillard-LeDonne} for more refined estimates on the rate of growth of balls in nilpotent groups, though those further refinements are not needed in this paper.

\section{The orbit coarse structure and its asymptotic dimension} \label{sec:OCS} 
\renewcommand{\sectionlabel}{OCS}
\ref{sectionlabel=OCS}
Let $\alpha$ be an action of a discrete group $G$ on a locally compact 
Hausdorff space $X$. In this section, we will equip $X$ with a coarse structure 
which we call the \emph{orbit coarse structure} associated to $\alpha$. We are 
interested in the asymptotic dimension of this coarse structure for two 
reasons: (1) it will play an auxiliary role in our main theorem, and (2) its 
study serves as a springboard for understanding the more complicated 
\emph{long thin covering dimension}, discussed in Section~\ref{sec:LTC}. 

We first define the construction in terms of abstract coarse structures. Later 
we will explain how to realize this in terms of metric spaces, under suitable 
countability conditions. 
\begin{Def} \label{def:OCS}
	For any compact $\Ko \Subset X$ and for any finite subset $\Lo \Subset G$, 
	denote
	\[
	\Ense_{\Ko,\Lo} = \left\{ \left( \alpha_g (x) , x \right) \in X \times X 
	\colon g \in \Lo , x \in \Ko  \text{ and } \alpha_g (x)  \in \Ko \right\} 
	. \]
	 The \emph{orbit coarse structure} $\Enseco_\alpha$ associated to $\alpha$ 
	 is given by 
	\[
		\left\{ \Ense\subseteq \Ense_{\Ko,\Lo} \cup \Delta_X \colon \Ko \Subset 
		X, \Lo \Subset G \right\} .
	\]
\end{Def}
Notice that for any $\Ko, \Ko' \Subset X$ and for any $\Lo, \Lo' \Subset G$ we 
have
\[
\Ense_{\Ko,\Lo} ^{-1} = \Ense_{\Ko, \Lo^{-1}} \, , \; \Ense_{\Ko,\Lo} \circ 
\Ense_{\Ko',\Lo'} \subseteq \Ense_{\Ko \cup \Ko', \Lo \cdot \Lo'}  \text{ and } 
\Ense_{\Ko,\Lo} \cup \Ense_{\Ko',\Lo'} \subseteq \Ense_{\Ko \cup \Ko', \Lo \cup 
\Lo'} .
\]
From this it follows that $\Enseco_\alpha$ is indeed a coarse structure.
	Note that the coarse connected components of $\Enseco_\alpha$ are 
	exactly the orbits of the action $\alpha$. 

The reason for restricting to compact sets in Definition~\ref{def:OCS} (as 
opposed to using $\Ense_{X, \Lo}$ to generate a coarse structure) is partly 
explained in Remark~\ref{rmk:OCS-why-compact}. 

\begin{Exl}
	To illustrate the effect of restricting to compact sets, consider the case 
	of a discrete group $G$ acting on itself by left translation. 
	A subset $\Ense \subseteq G \times G$ is a controlled set under the orbit 
	coarse structure if and only if $\Ense \setminus \Delta_G$ is finite.
	This gives the sparsest coarse structure on $G$ with a single coarse 
	connected component. 
	
	Unless $G$ is locally finite, this orbit coarse structure is very different 
	from the coarse structure of $G$ as a metric space induced by a proper 
	right-invariant metric (e.g., a word-length metric when $G$ is finitely 
	generated). 
	Indeed, it is easy to see that the asymptotic dimension of this orbit coarse structure is $0$ (one can also deduce it as a special case of Proposition \ref{prop:orbit-asdim-proper} further in the paper), regardless of what the group is. 
\end{Exl}

Under suitable countability conditions, the orbit coarse structure $\Enseco_\alpha$ can be induced from a metric in the sense of Example~\ref{exl:metric-coarse}. Observe that 
\begin{enumerate}
	\item when $X$ is $\sigma$-compact, there is a proper continuous map $X \to [0, \infty)$; 
	\item when $G$ is countable, there is a proper right-invariant metric on $G$. 
\end{enumerate}

\begin{Prop}
	Let $f \colon X \to [0, \infty)$ be a proper continuous map and let $\rho$ 
	be a proper right-invariant metric on $G$. Then the orbit coarse structure 
	$\Enseco_\alpha$ is induced, in the sense of 
	Example~\ref{exl:metric-coarse}, from the extended metric $d_{f, \rho}$ on 
	$X$ given, for $x , y \in X$, by 
	\[
		d_{f, \rho} (x , y) = \inf \left\{ \rho(g, e) \left( 1 + \max \left\{ f 
		(x) , f(y) \right\} \right) \colon g \in G \text{ such that } x = 
		\alpha_g (y) \right\}  .
	\]
	When $x$ and $y$ are not on the same orbit, we use the convention $\inf 
	\varnothing = \infty$. 
\end{Prop}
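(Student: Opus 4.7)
I would prove the proposition by verifying the two inclusions of coarse structures directly, exploiting the properness of $\rho$ on $G$ and of $f$ on $X$. Before doing anything, I would record two elementary consequences of properness that will be used throughout: each ball $L_r := \{g \in G \colon \rho(g,e) \leq r\}$ is finite, each sublevel set $K_M := f^{-1}([0,M])$ is compact, and, because $G \setminus \{e\}$ is closed with respect to the proper metric $\rho$, the constant $c_0 := \inf\{\rho(g,e) \colon g \neq e\}$ is strictly positive (the case $G = \{e\}$ being trivial).

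For the inclusion ``$d_{f,\rho}$-coarse structure $\subseteq \Enseco_\alpha$'', I would fix $r > 0$ and consider an arbitrary pair $(x,y)$ with $d_{f,\rho}(x,y) \leq r$ and $x \neq y$. The key observation is that the infimum defining $d_{f,\rho}(x,y)$ is in fact attained: the set $\{g \in G \colon x = \alpha_g(y)\}$ is a coset of the stabilizer $G_y$, and the requirement $\rho(g,e)(1 + \max\{f(x),f(y)\}) \leq r$ confines $g$ to the finite set $L_r$. Picking such a minimizing $g$, the condition $x \neq y$ forces $g \neq e$, so $\rho(g,e) \geq c_0$, which in turn gives $\max\{f(x),f(y)\} \leq r/c_0 - 1$. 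Hence $x, y \in K_M$ for $M := \max(0, r/c_0 - 1)$ and $g \in L_r$, so $(x,y) \in \Ense_{K_M, L_r}$, which is a generator of $\Enseco_\alpha$.

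For the reverse inclusion ``$\Enseco_\alpha \subseteq d_{f,\rho}$-coarse structure'', I would take a generator $\Ense_{K,L}$ and set $C_L := \max_{h \in L}\rho(h,e)$ and $M_K := \max_{z \in K} f(z)$, both finite by finiteness of $L$ and compactness of $K$. Any $(x,y) \in \Ense_{K,L}$ has the form $x = \alpha_g(y)$ with $g \in L$ and $x,y \in K$, so by the very definition of $d_{f,\rho}$ we have $d_{f,\rho}(x,y) \leq \rho(g,e)(1 + \max\{f(x),f(y)\}) \leq C_L(1 + M_K)$. Thus $\Ense_{K,L}$ sits inside $\Ense_{C_L(1+M_K)}$.

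The only genuine subtlety is the argument that the infimum defining $d_{f,\rho}$ is attained on non-diagonal pairs; once that step is in hand, both inclusions are short. I would not expect the triangle inequality for $d_{f,\rho}$ to be needed here, since the proposition only asserts that $d_{f,\rho}$ induces $\Enseco_\alpha$ as in Example~\ref{exl:metric-coarse}, and the coarse-structure characterization depends only on the family of sublevel sets $\Ense_r$.
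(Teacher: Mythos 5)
Your proof is correct and takes essentially the same route as the paper: both verify the same two inclusions with the same witnesses (the bound $\max_{g \in L}\rho(g,e)\,\bigl(1+\max_{K}f\bigr)$ in one direction, and the finite set $\{g \in G \colon \rho(g,e)\le r\}$ together with the compact sublevel set $\{x \in X \colon 1+f(x)\le r/c_0\}$ in the other), your only additions being the explicit attainment of the infimum and the remark that the triangle inequality is not needed. The paper instead disposes of the extended-metric verification in one line, but your observation is legitimate: once both inclusions are in place, the collection of $d_{f,\rho}$-bounded entourages coincides with the orbit coarse structure, which is already known to be a coarse structure, so only the sublevel sets $E_r$ matter.
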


\begin{proof}
	It is routine to verify that $d_{f, \rho}$ is indeed an extended metric. 	
	To show that $\Enseco_\alpha$ coincides with $\Enseco_{d_{f, \rho}}$ in the sense of Example~\ref{exl:metric-coarse}, it suffices to show that 
	\begin{enumerate}
		\item for any $\Ko \Subset X$ and $\Lo \Subset G$, there exists $r_0 
		\geq 0$ such that $\Ense_{\Ko,\Lo} \subseteq \Ense_{r_0}$, and 
		\item for any $r \geq 0$, there exist subsets $\Ko_0 \Subset X$ and 
		$\Lo_0 
		\Subset G$ such that $\Ense_r \subseteq \Ense_{\Ko_0,\Lo_0} \cup 
		\Delta_X$. 
	\end{enumerate}
	We observe that these statements hold with the following choices 
	\begin{align*}
		r_0 &= \max_{g \in \Lo} \rho (g, e) \left( 1+ \max_{x \in \Ko} f(x) \right) \\
		\Lo_0 &= \left\{ g \in G  \colon \rho(g, e) \leq r \right\} \\
		\Ko_0 &= \left\{ x \in X \colon 1 + f(x) \leq \frac{r}{\inf_{g \in G \setminus \{ e \}} \rho (g ,e)} \right\} 
	\end{align*}
	noting that $\inf_{g \in G \setminus \{ e \}} \rho (g ,e) > 0$. 
\end{proof}

\begin{Rmk}
	When $X$ is compact, the constant zero function on $X$ is proper. Thus if $\rho$ is a proper right-invariant metric on $G$, then the orbit coarse structure $\Enseco_\alpha$ is induced, in the sense of Example~\ref{exl:metric-coarse}, from the extended metric $d_{0, \rho}$ on $X$ given by 
	\[
	d_{0, \rho} (x , y) = \inf \left\{ \rho(g, e) \colon g \in G \text{ such that } x = \alpha_g (y) \right\}
	\]
	for any $x , y \in X$, where we use the convention $\inf \varnothing = 
	\infty$. In this case, each coarse connected component of $X$ is an orbit 
	$\left\{ \alpha_g (x) \colon g \in G \right\}$ and is isometric to $G / 
	\stab(x)$ with the quotient metric induced from $\rho$. 
\end{Rmk}

\begin{Rmk} \label{rmk:uniformly-locally-finite}
	The coarse structure $\Enseco_{\alpha}$ is \emph{uniformly locally finite}, 
	in the sense  that for any $\Ense \in \Enseco_{\alpha}$, 
	there is a natural number $\Binu$ such that for any $x \in X$, the 
	cardinality of $\Ense(x)$ (Definition~\ref{def:neighborhood-core}) is 
	bounded by $\Binu$. 
	Indeed, it suffices to show this for $\Ense_{\Ko,\Lo}$ for some arbitrary 
	$\Ko \Subset X$ and $\Lo \Subset G$, but this case is clear by choosing 
	$\Binu = |\Lo|$. 
\end{Rmk}
 For the remainder of this section, we shall apply the various 
 characterizations of 
 asymptotic dimension discussed in Section~\ref{sec:prelim:asdim} to 
 $\Enseco_\alpha$ and explain how they are interpreted in dynamical terms (see 
 Proposition~\ref{prop:orbit-asdim}). Some of the  
 concepts we introduce and observations we make will play a role in 
 Section~\ref{sec:LTC}. 
We begin with a dynamical analogue of Definition~\ref{def:E-connected}. 

\begin{Def} \label{def:Lo-connected}
	Let $Y \subseteq X$ and let $\Lo \Subset G$. Let $\sim_{\Lo, Y}$ be the  
	equivalence relation on $Y$ given by completing the relation given by 
	declaring $x \sim_{\Lo, Y} y$ if $y \in \alpha_{\Lo} (x)$ for $x, y \in Y$, 
	we have . The equivalence classes of $\sim_{\Lo, Y}$ are called the 
	\emph{$\Lo$-connected components} of $Y$. Any two points in the same 
	$\Lo$-connected component are \emph{$\Lo$-connected in $Y$}. 
	We say $Y$ is \emph{$\Lo$-connected} if it forms a single $\Lo$-connected component. When it is important to emphasize the action $\alpha$, we write ``$\sim_{\alpha, \Lo, Y}$'' and ``$(\alpha, \Lo)$-connected'' instead. 
\end{Def}

\begin{Rmk} \label{rmk:Lo-connected}
	In Definition~\ref{def:Lo-connected}, it is clear that for any $x, y \in Y$, we have $x \sim_{\Lo, Y} y$ if and only if there are $y_0, \ldots, y_n \in Y$ such that $y_0 = x$, $y_n = y$, and $y_{i-1} \in \alpha_{\Lo \cup \Lo^{-1}} \left( y_{i} \right)$ for $i = 1, \ldots, n$. 
	
\end{Rmk}
We illustrate the equivalence relation in the picture below. For the purpose of 
the picture, consider $G = \Z$ and $L$ to be a singleton consisting of a 
generator. The points in the illustration are in the same orbit, however the 
ones indicated by filled-in dots are not equivalent to the other points in the 
orbit which are in $Y$, as getting from one to the next involves going through 
points which lie outside of $Y$.
\begin{center}
	\begin{tikzpicture}
		\footnotesize
		\draw  plot[smooth, tension=0.7] coordinates {(-3.5,0.5) (-3,2.5) 
		(-1,3.5) 
			(1.5,3) (4,3.2) (5,2.5) (5,0.5) (2.5,-0.9) (0,-0.5) (-3,-0.8) 
			(-3.5,0.5)};
		\draw plot[smooth, tension=.7] coordinates {(-1.5,1) (-1.5,2) (0,2.5) 
		(2.5,1.5)  
			(1,0.5) (0,0.4) (-1.5,1)};
		\node at (-2.2,1.9)[circle,draw=black,inner sep=1.5pt]{};
		\node at (-1,2)[circle,fill,inner sep=1.5pt]{};
		\node at (0.4,1.8)[circle,fill,inner sep=1.5pt]{};
		\node at (1.5,1.5)[circle,fill,inner sep=1.5pt]{};
		\node at (2.6,0.9)[circle,draw=black,inner sep=1.5pt]{};
		\node at (2.5,0)[circle,draw=black,inner sep=1.5pt]{};
		\node at (1.6,0.2)[circle,draw=black,inner sep=1.5pt]{};
		\node at (1.1,0.9)[circle,draw=black,inner sep=1.5pt]{};
		\node at (0.3,1)[circle,draw=black,inner sep=1.5pt]{};
		\node at (-0.6,1.2)[circle,draw=black,inner sep=1.5pt]{};
		\node at (-1.5,0.5)[circle,draw=black,inner sep=1.5pt]{};
		\draw[smooth,densely dotted, ->] (-2.2,1.9) .. controls (-1.6,2.3) ..	
		(-1.07,2.07);
		\draw[smooth,densely dotted, ->] (-1,2) .. controls (-0.3,1.6) .. 
		(0.33,1.73);
		\draw[smooth,densely dotted, ->] (0.4,1.8) .. controls (0.9,1.3) .. 
		(1.43,1.43);
		\draw[smooth,densely dotted, ->] (1.5,1.5) .. controls (2,0.7) .. 
		(2.53,0.83);
		\draw[smooth,densely dotted, ->] (2.6,0.9) .. controls (2.7,0.5) .. 
		(2.55,0.05);
		\draw[smooth,densely dotted, ->] (2.5,0) .. controls (2,-0.1) .. 
		(1.65,0.15);
		\draw[smooth,densely dotted, ->] (1.6,0.2) .. controls (1.3,0.4) .. 
		(1.15,0.85);
		\draw[smooth,densely dotted, ->] (1.1,0.9) .. controls (0.7,0.8) .. 
		(0.35,0.95);
		\draw[smooth,densely dotted, ->] (0.3,0.9) .. controls (-0.1,0.8) .. 
		(-0.55,1.15);
		\draw[smooth,densely dotted, ->] (-0.6,1.2) .. controls (-1.3,0.8) .. 
		(-1.45,0.55);
		\node at (1,2.5){$Y$};
		\normalsize
	\end{tikzpicture}
\end{center}

The following is a quantitative refinement of Definition \ref{def:Lo-connected}.

\begin{Def} \label{def:Lo-bounded}
	Let $\Lo \Subset G$ and let $\Binu$ be a natural number. A subset $Y 
	\subseteq X$ is \emph{$\Binu$-boundedly $\Lo$-connected} or 
	\emph{$\left(\Lo, \Binu\right)$-bounded} if for any $x, y \in Y$, there 
	exist a natural number $n \leq \Binu$ and points $y_0, y_1, \ldots, y_n \in 
	Y$ 
	such that $y_0 = x$, $y_n = y$, and $y_{i-1} \in \alpha_{\Lo \cup \Lo^{-1}} 
	\left( y_{i} \right)$ for $i = 1, \ldots, n$. 
	
	When $\Binu = 1$, we also write \emph{$\Lo$-bounded} instead of 
	``{$\left(\Lo, 1 \right)$-bounded}''. 
	
	We also say $Y \subseteq X$ has \emph{$\Binu$-bounded $\Lo$-connected 
	components} if every $\Lo$-connected component of $Y$ is $\Binu$-boundedly 
	$\Lo$-connected. 
\end{Def}

\begin{Rmk} \label{rmk:Lo-bounded}
	Note that an $\left(\Lo, \Binu\right)$-bounded set is $\left(\Lo \cup 
	\Lo^{-1} \right)^{\Binu}$-bounded. 
	
	Moreover, if $Y \subseteq X$ is finite and $\Lo$-connected, then it is 
	$\left(\Lo, |Y|\right)$-bounded (and thus also $\left(\Lo \cup \Lo^{-1} 
	\right)^{|Y|}$-bounded).  
	Indeed, this follows from the explicit characterization of $x \sim_{\Lo, Y} y$ in Remark~\ref{rmk:Lo-connected} and the fact that the sequence $y_0, \ldots, y_n$ there can always be chosen to consist of distinct elements (and thus to have length at most $|Y|$), since whenever $y_i = y_j$ for $0 \leq i < j \leq n$, we can contract the sequence by discarding everything between $i$ and $j$. 
\end{Rmk}

The following is a dynamical analogue of 
Definition~\ref{def:multiplicity-wide}, which 
strengthens the notion of multiplicity in the presence of a group action.  
\begin{Def} \label{def:multiplicity-action} 
	Let $\Lo$ be a subset of $G$. We say a subset $\Aulase$ of $X$ is \emph{$(\alpha,\Lo)$-close} if for any $x, y \in \Aulase$, there is $g \in \Lo \cup \{e\} \cup \Lo^{-1}$ such that $x = \alpha_g (y)$. 
	
	Let $\Auseco$ be a collection of subsets of $X$. 
	Then its \emph{$(\alpha,\Lo)$-multiplicity}, denoted by $\mult_{\alpha,\Lo} (\Auseco)$, is defined as the supremum of the cardinalities of finite subcollections $\Fi$ of $\Auseco$ such that 
	there exists an $\left(\alpha, \Lo\right)$-close subset $\left\{ x_{\Ause} 
	\colon \Ause \in \Fi \right\}$ of $X$ with $x_{\Ause} \in \Ause$ for any 
	$\Ause \in \Fi$. 
\end{Def}

\begin{Rmk} \label{rmk:multiplicity-action-shrink-enlarge}
	Let $\Lo \subseteq \Lo'$ be subsets of $G$ and let $\Auseco$ be a 
	collection of 
	subsets of $X$. It is immediate from the definition that we have
	\[
	\mult (\Auseco) = \mult_{\alpha,\{e\}} (\Auseco) \leq \mult_{\alpha,\Lo} 
	(\Auseco) \leq \mult_{\alpha,\Lo'} (\Auseco) .
	\]
	Moreover, using Notation~\ref{notation:group-action}, we observe that if $\Lo^{-1} = \Lo \ni e$, then the shrunken collection $\alpha^{\cap}_{\Lo} \left( \Auseco \right) := \left\{ \alpha^{\cap}_{\Lo} \left( \Ause \right) \colon \Ause \in \Auseco \right\}$ satisfies 
	\[
	\mult_{\alpha,\Lo} \left( \alpha^{\cap}_{\Lo} \left( \Auseco \right) \right) \leq \mult (\Auseco)
	\] 
	and the enlarged collection $\alpha^{\cup}_{\Lo} \left( \Auseco \right) := \left\{ \alpha^{\cup}_{\Lo} \left( \Ause \right) \colon \Ause \in \Auseco \right\}$ satisfies 
	\[
	\mult \left( \alpha^{\cup}_{\Lo} \left( \Auseco \right) \right) \leq \mult_{\alpha,\Lo^{2}} (\Auseco) \; ,
	\]
	and more generally, 
	\[
	\mult_{\alpha,\Lo'} \left( \alpha^{\cup}_{\Lo} \left( \Auseco \right) \right) \leq \mult_{\alpha,\Lo\Lo'\Lo} (\Auseco) \; .
	\]
\end{Rmk}

In the remainder of this section, we construct partitions of unity 
which are almost invariant, that is, satisfy a suitable Lipschitz condition with 
respect to the group action. In order to have such partitions of unity, their 
supports need to be large enough in the direction of the group action, in the 
sense appearing in 
Lemma~\ref{lem:asdim-multiplicity}\eqref{item:lem:asdim-multiplicity:Lo}. We 
 now make this precise.

\begin{Def} \label{def:Lipschitz-alpha}
	Let $\Lo$ be a subset of $G$ and let $\Er \geq 0$. A function $\Po$ from $X$ into a metric space $(Y,\Metric)$ is \emph{$(\Lo, \Er)$-invariant} if
	\[
		\Metric \left( \Po (x) , \alpha_{g} (\Po) (x)  \right) \leq \Er
	\]
	for any $g \in \Lo$ and $x \in X$. (Recall that $\alpha_{g} (\Po) (x) = \Po 
	\left( \alpha_{g^{-1}} (x)  \right) $.) 
\end{Def}
Equivalently, consider the Schreier graph associated with the action of $G$, 
that is, we view $X$ as a non-directed graph where there's a vertex between two 
distinct points $x,y$ if there's an element $g \in L \cup L^{-1}$ such that $x 
= \alpha(y)$. This graph structure induces an extended metric on $X$ given by path length along 
the graph, where the distance between two points in different connected 
components is taken to be infinite. A function $f$ is $(L,\delta)$-invariant if 
and only if it is Lipschitz with constant $\delta$ when the domain is endowed 
with this extended metric.

This notion and some of its basic properties (discussed below) will appear in 
Propositions~\ref{prop:orbit-asdim} and~\ref{prop:ltc-dim}. 

\begin{Rmk} \label{rmk:Lipschitz-alpha-composition}
	Let $\Lo$ be a subset of $G$ and let $\Binu, \Er \geq 0$. If a function 
	$\Po$ from $X$ into a metric space $(Y,\Metric_Y)$ is {$(\Lo, 
	\Er)$-invariant} and a function $\Lapo$ from $Y$ into another metric space 
	$(Z, \Metric_Z)$ is Lipschitz with constant $\Binu$, then the composition 
	$\Lapo \circ \Po$ is $(\Lo, \Binu \Er)$-invariant. 
\end{Rmk}

The following simple lemma follows immediately from well known basic properties 
of Lipschitz functions. 
\begin{Lemma} \label{lem:Lipschitz-alpha-arithmetics}
	Let $\Lo$ be a subset of $G$ and let ${\Binu}_1, {\Binu}_2, {\Er}_1, {\Er}_2 \geq 0$. Let $\Po_1$ and $\Po_2$ be functions from $X$ into $\R$ (with the Euclidean metric) such that $\Po_i$ is $(\Lo, {\Er}_i)$-invariant for $i = 1,2$. Then
	\begin{enumerate}
		\item \label{item:lem:Lipschitz-alpha-arithmetics:addition} $\Po_1 + \Po_2$ and $\Po_1 - \Po_2$ are $(\Lo, {\Er}_1 + {\Er}_2)$-invariant; 
		\item \label{item:lem:Lipschitz-alpha-arithmetics:multiplication} if $\left| \Po_i (x) \right| \leq {\Binu}_i$ for $i = 1,2$ and any $x \in X$, then $\Po_1 \cdot \Po_2$ is $(\Lo, {\Er}_1 {\Binu}_2 + {\Binu}_1 {\Er}_2)$-invariant; 
		\item \label{item:lem:Lipschitz-alpha-arithmetics:division} if $\left| \Po_1 (x) \right| \leq {\Binu}_1$ and $\left| \Po_2 (x) \right| \geq {\Binu}_2 > 0$ for any $x \in X$, then $\frac{\Po_1}{\Po_2}$ is $\left(\Lo, \frac{{\Er}_1 {\Binu}_2 + {\Binu}_1 {\Er}_2} {({\Binu}_2)^2} \right)$-invariant; 
		\item \label{item:lem:Lipschitz-alpha-arithmetics:max} $\max \left\{ \Po_1, \Po_2 \right\}$ and $\min \left\{ \Po_1, \Po_2 \right\}$ are $(\Lo, \max \left\{ {\Er}_1 , {\Er}_2) \right\}$-invariant; 
	\end{enumerate}
\end{Lemma}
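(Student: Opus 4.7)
\medskip
\noindent\textbf{Proof plan.} The plan is to unwind the definition of $(\Lo, \Er)$-invariance as a pointwise Lipschitz-type condition and then reduce each of the four statements to a classical estimate for real-valued functions. Specifically, observe that a function $\Po \colon X \to \R$ is $(\Lo, \Er)$-invariant if and only if
\[
\left| \Po(x) - \Po(y) \right| \leq \Er \quad \text{whenever } y = \alpha_{g^{-1}}(x) \text{ for some } g \in \Lo .
\]
So in each of the four parts, I would fix an arbitrary $g \in \Lo$ and $x \in X$, set $y = \alpha_{g^{-1}}(x)$, and verify the required pointwise inequality at this pair $(x,y)$. Since the choice of $g$ and $x$ is arbitrary, this establishes the corresponding invariance.

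For \eqref{item:lem:Lipschitz-alpha-arithmetics:addition}, I would simply apply the triangle inequality to
$(\Po_1 \pm \Po_2)(x) - (\Po_1 \pm \Po_2)(y)$, using the bounds $|\Po_i(x) - \Po_i(y)| \leq \Er_i$. For \eqref{item:lem:Lipschitz-alpha-arithmetics:multiplication}, I would add and subtract $\Po_1(y)\Po_2(x)$ in the expression $\Po_1(x)\Po_2(x) - \Po_1(y)\Po_2(y)$, giving
\[
\Po_1(x)\Po_2(x) - \Po_1(y)\Po_2(y) = \bigl(\Po_1(x) - \Po_1(y)\bigr)\Po_2(x) + \Po_1(y)\bigl(\Po_2(x) - \Po_2(y)\bigr),
\]
and then bound each term using $|\Po_i| \leq \Binu_i$ and $|\Po_i(x) - \Po_i(y)| \leq \Er_i$.

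The slightly more delicate step is \eqref{item:lem:Lipschitz-alpha-arithmetics:division}, where the right splitting is required to reach the stated bound. Writing
\[
\frac{\Po_1(x)}{\Po_2(x)} - \frac{\Po_1(y)}{\Po_2(y)}
= \frac{\Po_1(x) - \Po_1(y)}{\Po_2(x)} + \frac{\Po_1(y)\bigl(\Po_2(y) - \Po_2(x)\bigr)}{\Po_2(x)\Po_2(y)},
\]
one bounds the two summands separately by $\Er_1 / \Binu_2$ and $\Binu_1 \Er_2 / (\Binu_2)^2$, and combines them over a common denominator to obtain the claimed quantity $(\Er_1 \Binu_2 + \Binu_1 \Er_2)/(\Binu_2)^2$. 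Finally, for \eqref{item:lem:Lipschitz-alpha-arithmetics:max}, I would invoke the elementary inequality $|\max(a,b) - \max(c,d)| \leq \max(|a-c|,|b-d|)$ (and analogously for $\min$), which then yields the bound $\max(\Er_1, \Er_2)$ directly.

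There is no real obstacle here; the argument is a routine exercise in real analysis, and the only point requiring any thought is picking the splitting in \eqref{item:lem:Lipschitz-alpha-arithmetics:division} that uses only the lower bound $\Binu_2$ on $|\Po_2|$ (as opposed to an upper bound, which is not assumed) to control the denominators.
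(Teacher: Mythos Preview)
Your proposal is correct and is exactly the routine verification the paper has in mind; the paper itself does not give a proof, stating only that the lemma ``follows immediately from well known basic properties of Lipschitz functions.'' Your pointwise argument with the standard splittings is precisely what underlies that remark.
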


The following construction starts with a given bounded real function on $X$ and 
constructs a $(\Lo, \Er)$-invariant function, while enlarging its support in a 
controlled way. We will use it to construct partitions of unity which are 
furthermore $(\Lo, \Er)$-invariant; thus we only care about functions 
whose range is the unit interval $[0,1]$. 

\begin{Lemma} \label{lem:Lipschitz-alpha-staircase}
	Let $\Lo$ be a subset of $G$ such that $\Lo^{-1} = \Lo$ and $e \in \Lo$. 
	Let $\Binu$ be a positive integer. Let $\Po \colon X \to [0,1]$ be a 
	function. Define a function (following 
	Notations~\ref{notation:group-action} and~\ref{notation:group})
	\[
		\Staircase{\Lo}{\Binu} (\Po) \colon X \to \R 
	\]
	by
	\[	
		\Staircase{\Lo}{\Binu} (\Po) ( x ) = \max_{k \in \{ 0, \ldots, \Binu 
		\}} \max_{g \in \Lo^k} \left( \alpha_{g} (\Po)  (x)   - \frac{k}{\Binu} 
		\right) \; . 
	\]
	 Then the following statements hold: 
	\begin{enumerate}
		\item \label{item:lem:Lipschitz-alpha-staircase:bounds} For any $x \in X $, we have 
		\[
			0 \leq \Po (x) \leq \Staircase{\Lo}{\Binu} (\Po) (x) \leq 1 \; .
		\]
		\item \label{item:lem:Lipschitz-alpha-staircase:continuous} If $\Po$ is continuous, then so is $\Staircase{\Lo}{\Binu} (\Po)$.  
		\item \label{item:lem:Lipschitz-alpha-staircase:support} We have 
		\[
			\left( \Staircase{\Lo}{\Binu} (\Po)  \right)^{-1} \left((0,1]\right) \subseteq \alpha^\cup_{\Lo^{\Binu}} \left( \Po^{-1} \left((0,1]\right) \right) \; .
		\]
		\item \label{item:lem:Lipschitz-alpha-staircase:Lipschitz} The function $\Staircase{\Lo}{\Binu} (\Po)$ is $\left(\Lo, \frac{1}{\Binu} \right)$-invariant. 
	\end{enumerate}
\end{Lemma}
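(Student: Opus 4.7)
The plan is to dispatch the first three claims directly from the definition of $\Staircase{\Lo}{\Binu}(\Po)$ and concentrate on the almost-invariance in \eqref{item:lem:Lipschitz-alpha-staircase:Lipschitz}, which is the heart of the matter.

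For \eqref{item:lem:Lipschitz-alpha-staircase:bounds}, taking $k = 0$ and $g = e \in \Lo^0$ in the max gives $\Staircase{\Lo}{\Binu}(\Po)(x) \geq \alpha_e(\Po)(x) = \Po(x) \geq 0$, while every summand satisfies $\alpha_g(\Po)(x) - k/\Binu \leq 1 - k/\Binu \leq 1$. For \eqref{item:lem:Lipschitz-alpha-staircase:continuous}, $\Staircase{\Lo}{\Binu}(\Po)$ is a maximum over the finite set $\bigsqcup_{k=0}^{\Binu} \Lo^k$ of continuous functions of the form $x \mapsto \alpha_g(\Po)(x) - k/\Binu$, hence continuous. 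For \eqref{item:lem:Lipschitz-alpha-staircase:support}, if the staircase is positive at $x$, pick $k$ and $g \in \Lo^k$ realizing the maximum; then $\Po(\alpha_{g^{-1}}(x)) > 0$, so $x = \alpha_g(\alpha_{g^{-1}}(x)) \in \alpha^\cup_{\Lo^k}\bigl(\Po^{-1}((0,1])\bigr) \subseteq \alpha^\cup_{\Lo^{\Binu}}\bigl(\Po^{-1}((0,1])\bigr)$, where the inclusion uses $e \in \Lo$ to give $\Lo^k \subseteq \Lo^{\Binu}$.

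For \eqref{item:lem:Lipschitz-alpha-staircase:Lipschitz}, write $F = \Staircase{\Lo}{\Binu}(\Po)$ and fix $h \in \Lo$. Using the identity $\alpha_g(\Po)(\alpha_{h^{-1}}(x)) = \alpha_{hg}(\Po)(x)$ and reindexing, one computes
\[
\alpha_h(F)(x) = F(\alpha_{h^{-1}}(x)) = \max_{0 \leq k \leq \Binu, \; g \in \Lo^k}\bigl(\alpha_{hg}(\Po)(x) - k/\Binu\bigr).
\]
Choose $k$ and $g \in \Lo^k$ realizing $F(x) = \alpha_g(\Po)(x) - k/\Binu$. If $k \leq \Binu - 1$, set $g' = h^{-1} g$; since $h^{-1} \in \Lo = \Lo^{-1}$, one has $g' \in \Lo \cdot \Lo^k = \Lo^{k+1}$, so
\[
F(\alpha_{h^{-1}}(x)) \geq \alpha_{h g'}(\Po)(x) - (k+1)/\Binu = \alpha_g(\Po)(x) - (k+1)/\Binu = F(x) - 1/\Binu.
\]
If $k = \Binu$, then $F(x) \leq 1 - 1 = 0$, so \eqref{item:lem:Lipschitz-alpha-staircase:bounds} forces $F(x) = 0$ and the estimate $F(\alpha_{h^{-1}}(x)) \geq -1/\Binu$ is automatic. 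Swapping the roles of $x$ and $\alpha_{h^{-1}}(x)$ (with $h \in \Lo = \Lo^{-1}$ playing the same symmetric role) yields the reverse inequality, and together we obtain $|F(x) - \alpha_h(F)(x)| \leq 1/\Binu$.

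The only real subtlety is the boundary case $k = \Binu$ in \eqref{item:lem:Lipschitz-alpha-staircase:Lipschitz}, where there is no room to advance the index; but the very penalty structure that defines $\Staircase{\Lo}{\Binu}$ then forces $F(x) = 0$ there, and the estimate becomes trivial. The conceptual point is that each step along $\Lo$ in the argument of $F$ corresponds to advancing $k$ by one in the internal maximization, which is precisely the additive quantum $1/\Binu$ baked into the definition.
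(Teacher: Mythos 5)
Your proposal is correct and follows essentially the same argument as the paper: parts (1)--(3) read off directly from the definition, and part (4) by trading one step along the symmetric set $L$ for an increment of the internal index $k$ (paying exactly $1/N$), with the reverse inequality obtained by symmetry. The only cosmetic difference is at the boundary index $k = N$: the paper sidesteps it by noting the maximum may be taken over all nonnegative integers $k$ (the extra terms being negative), while you dispose of it by observing the function vanishes there; both are fine.
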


In the case in which the orbit coarse structure $\Enseco_{\alpha}$ can be 
metrized, this amounts to designing a decay function using the metric. 

\begin{proof}
	The first two statements are immediate from the definition, as $\Lo^0 = 
	\{e\}$. The third statement follows from the observation that for any $x 
	\in X$ with $\Staircase{\Lo}{\Binu} (\Po) (x) > 0$, we have $\alpha_{g} 
	(\Po)  (x) > 0$ for some $x \in \Lo^{\Binu}$. For the last statement, we 
	first observe that the first statement implies that we can replace the set 
	$\{0, \ldots, \Binu \}$ by $\{0, 1, \ldots \}$ in the definition of 
	$\Staircase{\Lo}{\Binu} (\Po)$ without changing its value, as the term $ 
	\alpha_{g} (\Po)  (x)   - \frac{k}{\Binu} $ is negative whenever $k > 
	\Binu$. Thus,	for any $g \in \Lo$ and for $k \in \{0, 1, \ldots \}$, the 
	containments $g \cdot \Lo^k \subseteq \Lo^{k+1}$ and $g^{-1} \cdot \Lo^k 
	\subseteq \Lo^{k+1}$ imply that for any $x \in X$, we have 
	\begin{align*}
	\max_{h \in \Lo^k} \left( \Po \left( \alpha_{h^{-1}}  \left( \alpha_{g^{-1}} (x)  \right) \right) - \frac{k}{\Binu} \right) = &\ \max_{h \in \Lo^k} \left( \Po \left( \alpha_{(g h)^{-1}} (x) \right) - \frac{k}{\Binu} \right)  \\
	\leq &\ \max_{t \in \Lo^{k+1}} \left( \Po \left( \alpha_{t^{-1}} (x) \right) - \frac{k+1}{\Binu} \right) + \frac{1}{\Binu} \\
	\leq &\ \Staircase{\Lo}{\Binu} (\Po)  (x) + \frac{1}{\Binu} .
	\end{align*}
	Thus, by taking maximum over $k$, we obtain
	\[
	\Staircase{\Lo}{\Binu} (\Po)  (  \alpha_{g^{-1}} (x) )  \leq \Staircase{\Lo}{\Binu} (\Po)  (x) + \frac{1}{\Binu} .
	\]
	Because $L = L^{-1}$, replacing $g$ by $g^{-1}$ and $x$ by 
	$\alpha_{g^{-1}}(x)$ gives us the inequality in the other direction. 
	Therefore we obtain  
	\[
	\left| \Staircase{\Lo}{\Binu} (\Po) (x) - \Staircase{\Lo}{\Binu} (\Po) \left( \alpha_{g^{-1}} (x)  \right) \right| \leq \frac{1}{\Binu}\] 
	as desired.  
\end{proof}

\begin{Rmk} \label{rmk:Lipschitz-alpha-staircase-depend}
	Note that the value $\Staircase{\Lo}{\Binu} (\Po) (x)$, defined in Lemma~\ref{lem:Lipschitz-alpha-staircase} above, depends only on the values of $f$ in the set $\alpha_{\Lo^{\Binu}} (x)$. 
\end{Rmk}

We now list a number of equivalent characterizations of the asymptotic 
dimension of the orbit coarse structure. Those are used later 
in the paper, and also provide some motivation for similar characterizations 
of 
the long thin covering dimension in Section~\ref{sec:LTC}. 
The labels (Lo), (Mu), (Bo), (Mu$^+$), (Ca), (Se), (Su), (In), and (Un) in the 
statement stand for, respectively, ``long'', ``multiplicity'', ``bounded'', ``multiplicity (strengthened)'', ``cardinality'', ``separated'', ``support'', ``invariant'', and ``partition of unity''. 

{
	
\begin{Prop} \label{prop:orbit-asdim}
	Let $\alpha \colon G \curvearrowright X$ be an action of a discrete group on a locally compact Hausdorff space. Let $d$ be a natural number. Then the following are equivalent: 
	{
	\newcounter{thmenumi}
	\renewcommand{\thethmenumi}{\theThm(\arabic{thmenumi})}
	\begin{enumerate}
		\item \label{item:prop:orbit-asdim:orig} \refstepcounter{thmenumi} \label{prop:orbit-asdim:orig} 
		We have $\asdim(X, \Enseco_\alpha) \leq d$. 
		
		\item \label{item:prop:orbit-asdim:mult} \refstepcounter{thmenumi}  \label{prop:orbit-asdim:mult} 
		For any finite subset $\Lo \Subset G$ and for any compact subset $\Ko \Subset X$, there exist 
		a finite subset $\Bo \Subset G$ 
		and a collection $\Coseco$ of subsets of $X$ satisfying the following conditions:
		{
			\renewcommand{\theenumi}{}	
			\begin{enumerate}
				\renewcommand{\labelenumii}{\textup{(\theenumii)}}
				\renewcommand{\theenumii}{Lo}\item \label{item:prop:orbit-asdim:mult:Lo}
				For any $x \in \Ko$, there exists $\Cose \in \Coseco$ such that $\alpha_{\Lo} (x) \subseteq \Cose$.	
				
				\renewcommand{\labelenumii}{\textup{(\theenumii)}} \renewcommand{\theenumii}{Mu}\item \label{item:prop:orbit-asdim:mult:Mu}
				The multiplicity of $\Coseco$ is at most $d+1$.

				\renewcommand{\labelenumii}{\textup{(\theenumii)}} \renewcommand{\theenumii}{Bo}\item \label{item:prop:orbit-asdim:mult:Bo}
				Any $\Cose \in \Coseco$ is $\Bo$-bounded (as in Definition~\ref{def:Lo-bounded}). 
			\end{enumerate}
		}
		
		\item \label{item:prop:orbit-asdim:finitary} \refstepcounter{thmenumi}  \label{prop:orbit-asdim:finitary} 
		For any finite subset $\Lo \Subset G$ and for any compact subset $\Ko \Subset X$, there exists 
		a natural number $\Binu$ 
		such that for any finite subset $Y$ of $\Ko$, there exists
		a cover $\Coseco$ of $Y$ satisfying the following conditions:
		{
			\renewcommand{\theenumi}{}	
			\begin{enumerate}
				
				\renewcommand{\labelenumii}{\textup{(\theenumii)}} \renewcommand{\theenumii}{Mu$^+$}\item \label{item:prop:orbit-asdim:finitary:Muplus}
				The $\left(\alpha, \Lo\right)$-multiplicity (as in Definition~\ref{def:multiplicity-action}) of $\Coseco$ is at most $d+1$.
				
				\renewcommand{\labelenumii}{\textup{(\theenumii)}} \renewcommand{\theenumii}{Ca}\item \label{item:prop:orbit-asdim:finitary:Ca}
				For any $\Cose \in \Coseco$, we have $|\Cose| \leq \Binu$. 
				
			\end{enumerate}
		}
		
		\item \label{item:prop:orbit-asdim:pou} \refstepcounter{thmenumi} \label{prop:orbit-asdim:pou}
		For any finite subset $\Lo \Subset G$, for any compact subset $\Ko 
		\Subset X$, and for any $\Er >0$, there exist
		\begin{itemize}
			\item a finite subset $\Bo \Subset G$,
			\item collections $\Coseconum{0} , \ldots , \Coseconum{d}$ of disjoint finite subsets of $X$, together with $\Coseco = \Coseconum{0} \cup \ldots \cup \Coseconum{d}$, and
			\item finitely supported functions $\Ponum{\Cose}{l} \colon X \to 
			[0,1]$ for $l = 0, 1, \ldots, d$ and $\Cose \in \Coseconum{l}$ with 
			$\Ponum{\Cose}{l}(x) = 0 $ for any $x \not \in \Cose$,
		\end{itemize}
		satisfying the following conditions:
		{
			\renewcommand{\theenumi}{}	
			\begin{enumerate}
				\renewcommand{\labelenumii}{\textup{(\theenumii)}}
				\renewcommand{\theenumii}{Lo}\item \label{item:prop:orbit-asdim:pou:Lo}
				For any $x \in \Ko$, there exists $\Cose \in \Coseco$ such that $\alpha_{\Lo} (x) \subseteq \Cose$.

				\renewcommand{\labelenumii}{\textup{(\theenumii)}} \renewcommand{\theenumii}{Bo}\item \label{item:prop:orbit-asdim:pou:Bo}
				Any $\Cose \in \Coseco$ is $\Bo$-bounded (see Definition~\ref{def:Lo-bounded}).

				\renewcommand{\labelenumii}{\textup{(\theenumii)}} \renewcommand{\theenumii}{Su}\item \label{item:prop:orbit-asdim:pou:Su}
				For any $l \in \intervalofintegers{0}{d}$, for any $\Cose 
				\in 
				\Coseconum{l}$, and for any $x \in X$, 
				we have either $\Ponum{\Cose}{l}(x) = 0 $ or $\alpha_{\Lo}(x) 
				\subseteq \Cose$.  
				
				\renewcommand{\labelenumii}{\textup{(\theenumii)}} \renewcommand{\theenumii}{In}\item \label{item:prop:orbit-asdim:pou:Li}
				For any $l \in \intervalofintegers{0}{d}$ and any $\Cose \in 
				\Coseconum{l}$, the function $\Ponum{\Cose}{l}$ is \emph{$(\Lo, 
				\Er)$-invariant} (that is, $\left| \Ponum{\Cose}{l} (x) - 
				\Ponum{\Cose}{l} \left( \alpha_{g^{-1}} (x)  \right) \right| 
				\leq \Er$ for any $g \in \Lo$ and $x \in X$). 
				
				\renewcommand{\labelenumii}{\textup{(\theenumii)}} \renewcommand{\theenumii}{Un}\item \label{item:prop:orbit-asdim:pou:Un}
				For any $x \in \Ko$, we have $\displaystyle \sum_{l = 0}^{d} \sum_{\Cose \in \Coseconum{l}} \Ponum{\Cose}{l} (x) = 1$ while for any other $x \in X$, we have $\displaystyle \sum_{l = 0}^{d} \sum_{\Cose \in \Coseconum{l}} \Ponum{\Cose}{l} (x) \leq 1$. 
			\end{enumerate}
		}
	\end{enumerate}
	}
\end{Prop}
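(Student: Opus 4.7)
We plan to prove the chain $(1) \Leftrightarrow (2) \Leftrightarrow (3)$, $(4) \Rightarrow (2)$, and $(1) \Rightarrow (4)$, with the last being the main technical step. The equivalences $(1) \Leftrightarrow (2) \Leftrightarrow (3)$ are a dynamical translation of the abstract coarse-space machinery of Section~\ref{sec:prelim:asdim}: the coarse structure $\Enseco_\alpha$ is generated modulo the diagonal by the sets $\Ense_{\Ko,\Lo}$, and a cover is $\Enseco_\alpha$-uniformly bounded precisely when each of its members is $\Bo$-bounded (in the sense of Definition~\ref{def:Lo-bounded}) for some common finite $\Bo \Subset G$. Under this dictionary, (2) is a direct restatement of Lemma~\ref{lem:asdim-multiplicity}, while (3) follows by combining Remark~\ref{rmk:multiplicity-action-shrink-enlarge} (which exchanges ordinary and $(\alpha,\Lo)$-multiplicity at the cost of enlarging or shrinking covers), Lemma~\ref{lem:asdim-finitary} (which localizes from the compact $\Ko$ to arbitrary finite subsets), and Remark~\ref{rmk:Lo-bounded} (which replaces the $\Bo$-boundedness of a finite set by a cardinality bound).

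For $(4) \Rightarrow (2)$, given $\Lo \Subset G$ and $\Ko \Subset X$, we apply (4) with $\Lo \cup \Lo^{-1}$ in place of $\Lo$ and with any $\Er < 1/(d{+}1)$, and set $\Coseco := \bigcup_l \Coseconum{l}$. Conditions (Mu) and (Bo) of (2) are immediate from the within-colour disjointness together with (Bo) of (4). For (Lo), at each $x \in \Ko$ condition (Un) together with the disjointness forces $\Ponum{\Cose}{l}(x) \geq 1/(d{+}1)$ for some pair $(l,\Cose)$; the $(\Lo \cup \Lo^{-1}, \Er)$-invariance then forces $\Ponum{\Cose}{l}(\alpha_g(x)) > 0$ for every $g \in \Lo \cup \Lo^{-1}$, whence $\alpha_\Lo(x) \subseteq \Cose$ by (Su).

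The substantive step is $(1) \Rightarrow (4)$. Writing $L := \Lo \cup \Lo^{-1} \cup \{e\}$, we pick a large integer $\Binu$ and a compact set $\Ko'$ sufficiently large, then apply Definition~\ref{def:asdim} to the controlled set $\Ense_{\Ko', L^{N}}$, with $N$ a large multiple of $\Binu$, to obtain an $\Enseco_\alpha$-uniformly bounded cover $\Auseco = \Auseconum{0} \cup \cdots \cup \Auseconum{d}$ of $X$ with each $\Auseconum{l}$ being $\Ense_{\Ko', L^{N}}$-separated; uniform boundedness forces each $\Ause$ to be finite. For each $\Ause \in \Auseconum{l}$ we set $h^{(l)}_\Ause := \Staircase{L}{\Binu}(\mathbf{1}_\Ause)$ via Lemma~\ref{lem:Lipschitz-alpha-staircase}: this is a $[0,1]$-valued, $(L, 1/\Binu)$-invariant function with finite support contained in $\alpha^\cup_{L^\Binu}(\Ause)$. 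The choice of $N$ guarantees that for $\Ause \neq \Ause'$ in the same colour the enlarged supports, and indeed their $L$-translates, remain disjoint. Setting $H := \sum_{l,\Ause} h^{(l)}_\Ause$ and $\Ponum{\Cose}{l} := h^{(l)}_\Ause / \max(1, H)$, with $\Cose$ the support of $h^{(l)}_\Ause$, the cover property of $\Auseco$ forces $H \geq 1$ on $\Ko$, yielding (Un); tracking the invariance through addition (with only boundedly many nonzero contributions at any point) and division via Lemma~\ref{lem:Lipschitz-alpha-arithmetics} then gives $(\Lo, \Er)$-invariance when $\Binu$ is large enough. The main obstacle will be this final calibration: the exponent $N$ in the separation condition and the staircase parameter $\Binu$ must be chosen together so that both the within-colour disjointness of the $L^\Binu$-enlarged supports (and their $L$-translates) and the quotient-rule constant from Lemma~\ref{lem:Lipschitz-alpha-arithmetics}\eqref{item:lem:Lipschitz-alpha-arithmetics:division} land in the desired regime.
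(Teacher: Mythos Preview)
Your strategy matches the paper's proof (which runs $(4) \Rightarrow (2) \Rightarrow (3) \Rightarrow (1) \Rightarrow (4)$), and the staircase-then-normalize construction for $(1) \Rightarrow (4)$ is exactly the right idea. One technical point needs adjustment: you apply the staircase to the indicator of the full set $U$, but the $E_{K', L^{N}}$-separation of each colour class only constrains pairs of points that both lie in $K'$, so it does not force $\alpha^\cup_{L^{M}}(U) \cap \alpha^\cup_{L^{M}}(U') = \varnothing$ when $U$ or $U'$ has points outside $K'$; uniform boundedness only places the non-singleton $U$'s inside some \emph{a posteriori} compact set that you cannot arrange to sit inside $K'$ in advance. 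The paper's fix is simply to apply the staircase to $\chi_{U \cap K}$ instead: then all base points lie in $K$, separation with respect to $E_{K, L^{2M}}$ gives the within-colour disjointness of the enlarged supports directly, and no auxiliary $K'$ is needed at all. (Incidentally, your $(4) \Rightarrow (2)$ argument re-derives condition (Lo) from (Un), (In), and (Su); this works, but is unnecessary since (Lo) is already listed among the hypotheses of (4).)
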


We will also write in the sequel
\def \Topic {\OCSmult}
\Refcd{Lo}, \Refcd{Mu}, \Refcd{Bo}, 
\def \Topic {\OCSfin}
\Refcd{Muplus}, \Refcd{Ca}, 
\def \Topic {\OCSpou}
\Refcd{Lo}, \Refcd{Bo},  \Refcd{Su}, \Refcd{Li}, and \Refcd{Un} 
to specify the parameters used in these conditions and the fact that they come from Proposition~\ref{prop:orbit-asdim}.

Informally speaking, the characterizations above unpack the definition of 
$\asdim(X, \Enseco_\alpha)$. 
The differences between them are: 
the second and third characterizations are analogues of 
Lemmas~\ref{lem:asdim-multiplicity} and~\ref{lem:asdim-finitary}, respectively, 
 capturing the idea of dimension in terms of multiplicity.
The fourth 
characterization uses decompositions into disjoint families (as in 
Definition~\ref{def:asdim}); the characterization also includes partitions of 
unity subordinate to the covers as part of the definition, which are needed 
later. In the latter sections, the second and third 
characterizations will be needed to obtain bounds on $\asdim(X, 
\Enseco_\alpha)$, while the fourth one will be used to in Section 
\ref{sec:dimnuc}, in order to obtain bounds on nuclear dimension. 

\begin{proof}
	We will show \eqref{item:prop:orbit-asdim:pou} $\Rightarrow$ \eqref{item:prop:orbit-asdim:mult}  $\Rightarrow$ \eqref{item:prop:orbit-asdim:finitary}  $\Rightarrow$ \eqref{item:prop:orbit-asdim:orig}  $\Rightarrow$ \eqref{item:prop:orbit-asdim:pou}. 
		
	The implication \eqref{item:prop:orbit-asdim:pou} $\Rightarrow$ \eqref{item:prop:orbit-asdim:mult} is obvious with the same variables (and an arbitrary choice of $\Er$), after observing that 
	\Refcd[\OCSmult]{Mu} 
	follows from the disjointness of the collections $\Coseconum{0} , \ldots , \Coseconum{d}$ in \eqref{item:prop:orbit-asdim:pou}. 
	
	The implication \eqref{item:prop:orbit-asdim:mult} $\Rightarrow$ 
	\eqref{item:prop:orbit-asdim:finitary} is also straightforward. Indeed, 
	given $\Lo \Subset G$ (which, we assume without loss of generality, is 
	symmetric and contains $e$) and $\Ko \Subset X$, if we can find $\Bo 
	\Subset G$ and $\Coseco$ satisfying \Refcd[\OCSmult]{Lo}, 
	\Refcd[\OCSmult]{Mu}, and \Refcd[\OCSmult]{Bo}, then by 
	Remark~\ref{rmk:multiplicity-action-shrink-enlarge}, the shrunken 
	collection $\alpha^\cap_{\Lo} \left( \Coseco \right)$ satisfies 
	\Refcd[\OCSfin]{Muplus}. It follows from \Refcd[\OCSmult]{Lo} for $\Coseco$ 
	that $\alpha^\cap_{\Lo} \left( \Coseco \right)$ covers $\Ko$, and it 
	follows from \Refcd[\OCSmult]{Bo} for $\Coseco$ and 
	Remark~\ref{rmk:uniformly-locally-finite} that there exists $\Binu$ such 
	that $\Coseco$ satisfies \Refcd[\OCSfin]{Ca}, and therefore also 
	$\alpha^\cap_{\Lo} \left( \Coseco 
	\right)$ satisfies \Refcd[\OCSfin]{Ca}. Now given any finite subset $Y 
	\subseteq \Ko$, we can simply restrict $\alpha^\cap_{\Lo} \left( \Coseco 
	\right)$ 
	to $Y$ to obtain a cover of $Y$ satisfying \Refcd[\OCSfin]{Muplus} and 
	\Refcd[\OCSfin]{Ca}.

	To show \eqref{item:prop:orbit-asdim:finitary} implies 
	\eqref{item:prop:orbit-asdim:orig}, it suffices to verify the conditions in 
	Lemma~\ref{lem:asdim-finitary} with regard to an arbitrary controlled set 
	$\Ense \in \Enseco_\alpha$. By Definition~\ref{def:OCS}, there exist $\Ko 
	\Subset X$ and $\Lo 
	\Subset G$ such that $\Ense \subseteq \Ense_{\Ko, \Lo} \cup \Delta_X$. 
	Without loss of generality, we may assume that $\Ense = \Ense_{\Ko, \Lo} 
	\cup 
	\Delta_X$ and $e \in \Lo = \Lo^{-1}$. By 
	assumption, there exists a natural number $\Binu$ 
	such that for any finite subset $Y \subseteq \Ko$, there exists a cover 
	$\Coseco$ of $Y$ satisfying \Refcd[\OCSfin]{Muplus} and 
	\Refcd[\OCSfin]{Ca}. 
	Set $\Auense = \Ense_{\Ko, \Lo^{\Binu}} \cup \Delta_X$. Given a  
	finite subset $Y \subseteq X$, we fix a cover $\Coseco$ of $Y \cap \Ko$ 
	satisfying \Refcd[\OCSfin]{Muplus} and \Refcd[\OCSfin]{Ca}. 
	Let $\Auseco_1$ be the collection of $\Lo$-connected components of 
	$\Coseco$. The collection $\Auseco_1$ clearly satisfies 
	\Refcd[\OCSfin]{Ca}, and also satisfies \Refcd[\OCSfin]{Muplus}, because 
	any 
	$(\alpha, \Lo)$-close set cannot contain two different $\Lo$-connected 
	components of a single member of $\Coseco$. Define $\Auseco_2 = \left\{ \{ 
	x \} \colon x \in Y \setminus \Ko \right\}$ and $\Auseco = \Auseco_1 \cup 
	\Auseco_2$. It is clear that $\Auseco$ covers $Y$ and $\mult_{\Ense} 
	(\Auseco) = \mult_{\Ense} (\Auseco_1) = \mult_{\alpha, \Lo} (\Auseco_1) 
	\leq d+1$. This verifies condition~\eqref{lem:asdim-finitary:Muplus} of 
	Lemma~\ref{lem:asdim-finitary}. It is also clear that $\bigcup_{\Ause \in 
	\Auseco_2} \Ause \times \Ause = \Delta_{Y \setminus \Ko} \subseteq 
	\Auense$. On the other hand, for any $\Ause \in \Auseco_1$, since $\Ause$ 
	is $\Lo$-connected and $|\Ause| \leq \Binu$, by 
	Remark~\ref{rmk:Lo-bounded}, 
	it is $\Lo^{\Binu}$-bounded, 
	whence $\Ause \times \Ause \subseteq \Auense$. This shows $\bigcup_{\Ause \in \Auseco} \Ause \times \Ause \subseteq \Auense$, which verifies condition~\eqref{lem:asdim-finitary:Bo} of Lemma~\ref{lem:asdim-finitary}. 
	
	It remains to show \eqref{item:prop:orbit-asdim:orig} implies 
	\eqref{item:prop:orbit-asdim:pou}. We assume  $\asdim(X, \Enseco_\alpha) 
	\leq d$.  Given a finite subset $\Lo \Subset G$, a compact subset $\Ko 
	\Subset X$ and $\Er >0$, we need to find a finite subset $\Bo \subseteq G$, 
	collections of finite subsets $\Coseconum{0}, \ldots, 
	\Coseconum{d}$, and functions $\Ponum{\Cose}{l}$ for $l=0,1,\ldots,d$ that 
	satisfy the conditions in 
	\eqref{item:prop:orbit-asdim:pou}. To this end, by replacing $\Lo$ by $\Lo 
	\cup \{e\} \cup \Lo^{-1}$, we may assume without loss of generality that 
	$\Lo = \Lo^{-1}$ and $e \in \Lo$. Set 
	\[
		\Binu = \left\lceil \frac{d+2}{\Er} \right\rceil
	\]
	We apply Definitions~\ref{def:asdim} and~\ref{def:OCS} to obtain a cover $\Auseco$ of $X$ with a decomposition $\Auseco = \Auseco^{(0)} \cup \ldots \cup \Auseco^{(d)}$, a compact set $\Ko' \Subset X$ and a finite set $\Bo' \Subset G$ such that 
	\begin{enumerate}
		\item $\bigcup_{\Ause \in \Auseco} \Ause \times \Ause \subseteq \Ense_{\Ko', \Bo'} \cup \Delta_X$, and 
		\item each $\Auseco^{(l)}$ is {$\Ense_{\Ko, \Lo^{2 \Binu + 2}}$-separated} 
		(that is, $\Ense_{\Ko, \Lo^{2 \Binu + 2}} \cap (\Ause \times \Ause') = 
		\varnothing$ for any two different $\Ause, \Ause' \in \Auseco^{(l)}$). 
	\end{enumerate}
	For each $\Ause \in \Auseco$, we define the set
	\[
		\Cose_{\Ause} = \alpha^\cup_{\Lo^{\Binu + 1}} \left( \Ause \cap \Ko \right) \subseteq X \; .
	\]
	We then define $\Bo = \Lo^{\Binu + 1} \left( \Bo' \cup \{e\} \right) \Lo^{\Binu + 1}$, 
	\[
		\Coseconum{l} = \left\{ \Cose_{\Ause} \colon \Ause \in \Auseconum{l} \right\} \quad \text{for } l = 1, \ldots, d \, , \quad \text{and} \quad \Coseco = \Coseconum{0} \cup \ldots \cup \Coseconum{d} \; .
	\]
	Since $\Auseco$ covers $X$, it is clear that $\Coseco$ satisfies 
	condition~\eqref{item:prop:orbit-asdim:pou:Lo}. 
	To show that $\Bo$ and 
	$\Coseco$ satisfy condition~\eqref{item:prop:orbit-asdim:pou:Bo}, we 
	observe that for any $\Ause \in \Auseco$ and for any $x , y \in 
	\Cose_{\Ause}$, 
	there exist $x', y' \in \Ause \cap \Ko$ such that $x \in 
	\alpha_{\Lo^{\Binu + 1}} 
	(x')$ and $y \in \alpha_{\Lo^{\Binu + 1}} (y')$. Because $\Ause \times \Ause 
	\subseteq \Ense_{\Ko', \Bo'} \cup \Delta_X$, we have $x' \in \alpha_{\Bo' 
	\cup \{e\}} (y')$, whence $x \in \alpha_{\Bo} (y)$. 
	
	Next, we show that for any $l \in \intervalofintegers{0}{d}$, the 
	collection $\Coseconum{l}$ is disjoint. Indeed, for any two $\Ause, 
	\Ause' \in \Auseconum{l}$, since $\Ense_{\Ko, \Lo^{2 \Binu + 2}} \cap (\Ause 
	\times \Ause') = \varnothing$, it follows that for any $x \in \Ause \cap 
	\Ko$ and $y \in \Ause' \cap \Ko$, we have $x \notin \alpha_{\Lo^{2\Binu + 2}} 
	(y)$ and thus $\alpha_{\Lo^{\Binu + 1}} (x) \cap \alpha_{\Lo^{\Binu + 1}} (y) = 
	\varnothing$, whence $\Cose_{\Ause} \cap \Cose_{\Ause'} = \varnothing$, as 
	desired. 
	
	Now, for each $\Ause \in \Auseco$, we apply Lemma~\ref{lem:Lipschitz-alpha-staircase} to the characteristic function $\chi_{\Ause \cap \Ko}$ and obtain
	\[
		\Lapo_{\Ause} = \Staircase{\Lo}{\Binu} \left( \chi_{\Ause \cap \Ko} 
		\right) \colon X \to [0,1] \; .
	\]
	The function $\Lapo_{\Ause}$ is $\left(\Lo, \frac{1}{\Binu} 
	\right)$-invariant and satisfies $0 \leq \chi_{\Ause \cap \Ko} (x) \leq 
	\Lapo_{\Ause} (x) \leq 1$ for any $x \in X $ 
	and 
	\[
		\left( \Lapo_{\Ause}  \right)^{-1} \left((0,1]\right) \subseteq \alpha^\cup_{\Lo^{\Binu}} \left( \Ause \cap \Ko \right) \; .
	\]
	Hence we have
	\[
		\sum_{\Ause \in \Auseco} \Lapo_{\Ause} (x) \geq \sum_{\Ause \in \Auseco} \chi_{\Ause \cap \Ko} (x)  \geq 1 \qquad \text{for any } x \in \Ko \; .
	\]
	For any $l \in \intervalofintegers{0}{d}$ and $\Ause \in 
	\Auseconum{l}$, we define $\Po^{(l)}_{\Cose_{\Ause}}  \colon X \to [0, 1]$ 
	by
	\[	
		\Po^{(l)}_{\Cose_{\Ause}} ( x ) =  \frac{\Lapo_{\Ause} (x)}{ \max 
		\left\{ 1, \displaystyle \sum_{k = 0}^{d} \sum_{\Ause' \in 
		\Auseconum{k}} \Lapo_{\Ause'} (x) \right\} } \; . 
	\]
	It is clear that these functions satisfy 
	condition~\eqref{item:prop:orbit-asdim:pou:Un}. To show 
	condition~\eqref{item:prop:orbit-asdim:pou:Su}, we observe that for any 
	$x \in X$, if $\Po^{(l)}_{\Cose_{\Ause}} ( x ) \not= 0$, that is, $\Lapo_{\Ause}(x) \not= 0$, we must have $x \in \alpha^\cup_{\Lo^{\Binu}} \left( \Ause \cap \Ko \right)$ and thus 
	\[
		\alpha_{\Lo} (x) \subseteq  \alpha^\cup_{\Lo^{\Binu + 1}} \left( \Ause \cap \Ko \right) = \Cose_{\Ause} \; ,
	\]
	as desired. 
	To verify condition~\eqref{item:prop:orbit-asdim:pou:Li}, we apply 
	Lemma~\ref{lem:Lipschitz-alpha-arithmetics}\eqref{item:lem:Lipschitz-alpha-arithmetics:division}
	 to the definition of $\Po^{(l)}_{\Cose_{\Ause}}$. Note that for 
	 $k=0,1,\ldots,d$, because 
	  $\left\{ \alpha^\cup_{\Lo^{\Binu}} \left( \Ause' \cap \Ko \right) \colon 
	  \Ause' \in \Auseconum{k} \right\}$ is disjoint,
	  we have 
	 $\sum_{\Ause' \in 
	 	\Auseconum{k}} \Lapo_{\Ause'} (x)  \leq 1$. Thus  
 	$\sum_{k = 0}^{d} \sum_{\Ause' \in 
 		\Auseconum{k}} \Lapo_{\Ause'} (x) \leq d+1$ for all $x \in X$. 
	 The 
	numerator in the definition of $\Po^{(l)}_{\Cose_{\Ause}}$ is bounded from 
	above by $1$ and is $\left(\Lo, \frac{1}{\Binu} 
	\right)$-invariant, while the denominator is bounded from below by $1$ and 
	is $\left(\Lo, \frac{d+1}{\Binu} \right)$-invariant by 
	Lemma~\ref{lem:Lipschitz-alpha-arithmetics}\eqref{item:lem:Lipschitz-alpha-arithmetics:addition}
	 and~\eqref{item:lem:Lipschitz-alpha-arithmetics:max}. Therefore we have 
	 verified all the conditions in \eqref{item:prop:orbit-asdim:pou}. 
\end{proof}

}


\section{Barycentric subdivisions via a difference operator} \label{sec:simplicial} 
\renewcommand{\sectionlabel}{LTC}
\ref{sectionlabel=LTC}
In this section, we derive some auxiliary results that allow us to manipulate 
and modify open covers. These results are based on techniques inspired by 
barycentric subdivisions of simplicial complexes, though our results are 
formulated in terms of function spaces and proved using a difference operator. 
This approach is more suitable for our purposes, as we need to modify 
partitions of unity as well as open covers.

Throughout this section, we let $S$ be a set.

\begin{Def} \label{def:Rplus}
	We let $\Rfuncs{S}$ be the collection of all finitely supported real-valued functions on $S$, equipped with the $l^{\infty}$ norm. Let $\Rplusfuncs{S}$ be the subspace consisting finitely supported nonnegative-valued functions on $S$. 
	
	For any $\xi \in \Rfuncs{S}$, we write $\supp (\xi)$ for its support, that 
	is, the set $\{ s \in S \colon \xi(s) \not= 0 \}$. 
\end{Def}

We may view $\Rplusfuncs{S}$ as the cone over the topological 
realization of the simplex over the vertex set $S$, by identifying each point 
in the simplex as a convex combination of $S$.

\begin{Def}\label{def:relative-pou}
	Let $ X $ be a topological space and let $\Ko \subset X $ a subset. 
	Let $ \Coseco$ be a finite collection of open sets in $X$ such that $\Ko \subset \bigcup \Coseco$.  
	A \emph{partition of unity for $\Ko \subset X $ subordinate to $ \Coseco $} is a collection of continuous functions $ \left\{ f_{\Cose} \colon  X \to [0, 1] \right\}_{\Cose \in \Coseco} $ such that
	\begin{enumerate} 
		\item for each $\Cose \in \Coseco$, the support of $f_{\Cose}$ is contained in $ \Cose$;
		\item we have $\displaystyle \sum_{\Cose \in \Coseco} f_{\Cose} (x) \leq 1 $ for all $x \in X$ and equality holds when $x \in \Ko$.
	\end{enumerate} 
	Equivalently, a partition of unity for $\Ko \subset X $ subordinate to $ \Coseco $ is a continuous map $f \colon X \to \Rplusfuncs{\Coseco}$ such that 
	\begin{enumerate} 
		\item for each $\Cose \in \Coseco$, we have $\overline{ f^{-1} \left( \left\{ \xi \in \Rplusfuncs{S} \colon \xi(\Cose) > 0 \right\} \right) } \subset \Cose$;
		\item we have $\displaystyle \sum_{\Cose \in \Coseco} f (x) (\Cose) \leq 1 $ for all $x \in X$ and equality holds when $x \in \Ko$.
	\end{enumerate} 
\end{Def}

\begin{Lemma}\label{lem:relative-pou}
	Let $ X $ be a locally compact Hausdorff space and let $\Ko \subset X $  be 
	a compact subset. Let $ \Coseco = \{ \Cose_i \}_{i\in I} $ be a finite 
	collection of open sets in $X$ such that $\Ko \subset \bigcup \Coseco$. 
	Then there exists a partition of unity $ \left\{ f_{\Cose} \colon  X \to 
	[0, 1] \right\}_{\Cose \in \Coseco} $ for $\Ko \subset X $ subordinate to $ 
	\Coseco $ such that each $f_{\Cose}$ is compactly supported. 
\end{Lemma}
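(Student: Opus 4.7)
The plan is to construct $\{f_{\Cose}\}_{\Cose \in \Coseco}$ by the classical shrinking-and-Urysohn approach, with compact support ensured by exploiting local compactness of $X$.

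First, I would produce a precompact open refinement of $\Coseco$ that still covers $\Ko$. For each $x \in \Ko$, pick some $\Cose_{i(x)} \in \Coseco$ with $x \in \Cose_{i(x)}$, and use local compactness of $X$ to obtain an open neighborhood $W_x \ni x$ whose closure is compact and contained in $\Cose_{i(x)}$. Since $\Ko$ is compact, finitely many $W_{x_1}, \ldots, W_{x_n}$ cover $\Ko$. For each $i \in I$, let $V_i$ be the union of those $W_{x_k}$ with $i(x_k) = i$ (taking $V_i = \varnothing$ if there are none); then $\overline{V_i}$ is a finite union of compact sets contained in $\Cose_i$, and $\{V_i\}_{i \in I}$ still covers $\Ko$.

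Second, I would shrink once more and apply Urysohn's lemma. Using normality of the compact Hausdorff space $\overline{V_i}$ together with a further application of local compactness, for each $i \in I$ I would produce an open set $U_i$ with $\overline{V_i} \subseteq U_i \subseteq \overline{U_i} \subseteq \Cose_i$ and $\overline{U_i}$ compact. Urysohn's lemma then yields a continuous function $g_i \colon X \to [0,1]$ with $g_i \equiv 1$ on $\overline{V_i}$ and $g_i \equiv 0$ outside $U_i$, whose support lies in the compact set $\overline{U_i} \subseteq \Cose_i$.

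Finally, I would normalize: set $h = \sum_{i \in I} g_i$, which is continuous and compactly supported, and define
\[
f_{\Cose_i} \;=\; \frac{g_i}{\max(h, 1)}.
\]
On $\Ko$ we have $h \geq 1$ because $\{V_i\}_{i \in I}$ covers $\Ko$ and $g_i \equiv 1$ on $V_i$, so $\sum_i f_{\Cose_i} = h/h = 1$ there; elsewhere $\sum_i f_{\Cose_i} = h/\max(h,1) \leq 1$. The support of $f_{\Cose_i}$ is contained in $\overline{U_i}$, hence compact and contained in $\Cose_i$, as required.

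There is no genuine obstacle in the argument; it is routine once one observes that local compactness is exactly what allows the successive shrinkings to be chosen with compact closures. The only minor caveat is to fix once and for all an indexing of the (finite) collection $\Coseco$ before starting, so that the labeled sets $\Cose_i$ are well-defined even if distinct indices happen to name the same open set.
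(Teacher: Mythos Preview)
Your argument is correct and entirely standard; the only subtlety worth flagging is that in Step~3 you invoke Urysohn's lemma on a space that need not be normal, but the version you actually need (separating a compact set from the complement of a precompact open set in a locally compact Hausdorff space) is itself a standard consequence of normality of the one-point compactification, so there is no real gap.

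The paper's proof takes a shorter, less explicit route: rather than building the bump functions $g_i$ by hand and normalizing, it first replaces every $\Cose_i$ by $\Cose_i \cap \Thse$ for a single precompact open $\Thse \supseteq \Ko$, and then simply quotes the existence of partitions of unity subordinate to open covers of compact Hausdorff spaces, applied to the one-point compactification $X^+$. Your approach is more self-contained and makes the compact supports visible at every stage; the paper's approach trades that transparency for brevity by outsourcing the construction to a cited lemma. Both are routine, and yours would serve equally well.
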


\begin{proof}
	We first observe that we may assume without loss of generality that 
	$\Coseco$ consists of \emph{precompact} open sets. Indeed, we may use the 
	fact that $X$ is locally compact and Hausdorff to obtain a precompact open 
	set $\Thse$ that contains $\Ko$ and replace the collection $\Coseco$ by $\{ 
	\Cose \cap \Thse \colon \Cose \in \Coseco \}$. The rest of the proof then 
	follows directly from the existence of partitions of unity subordinate to 
	open covers of compact spaces, by applying a one-point compactification to 
	$X$ (see, e.g., \cite[Lemma~8.15]{HSWW16}). 
\end{proof}

The following simple fact may be proved directly or as a corollary of Lemma~\ref{lem:relative-pou}. We leave the proof to the reader. 

\begin{Lemma}\label{lem:shrunken-cover}
	Let $ X $ be a locally compact Hausdorff space and let $\Ko \subset X $ be 
	a compact subset. Let $ \Coseco = \{ \Cose_i \}_{i\in I} $ be a finite 
	collection of open sets in $X$ such that $\Ko \subset \bigcup \Coseco$. 
	Then there exist precompact open sets $\Lase_{i}$ with 
	$\overline{\Lase_{i}} \subseteq \Cose_{i}$ for $i \in I$, such that $\Ko 
	\subset \bigcup_{i \in I} \Lase_{i}$.  
	\qed
\end{Lemma}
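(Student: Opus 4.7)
My plan is to derive this as a direct corollary of Lemma~\ref{lem:relative-pou}, as the author hints. The idea is that if we have a compactly supported partition of unity subordinate to $\Coseco$, then thresholding each partition-of-unity function slightly above zero gives the desired shrunken open sets.

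First I would dispose of the trivial case $I = \varnothing$ (in which $\Ko = \varnothing$ and the conclusion is vacuous), then apply Lemma~\ref{lem:relative-pou} to the data $(\Ko, \Coseco)$ to obtain a partition of unity $\{f_i\}_{i \in I}$ for $\Ko \subset X$ subordinate to $\Coseco$ with each $f_i$ compactly supported and $\supp(f_i) \subseteq \Cose_i$. I then define, for each $i \in I$,
\[
\Lase_i := \left\{ x \in X : f_i(x) > \tfrac{1}{2|I|} \right\},
\]
which is open by continuity of $f_i$.

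The three properties to check are routine. Precompactness and containment in $\Cose_i$ both come from the chain
\[
\overline{\Lase_i} \subseteq \left\{ x \in X : f_i(x) \geq \tfrac{1}{2|I|} \right\} \subseteq \supp(f_i) \subseteq \Cose_i,
\]
together with the fact that $\supp(f_i)$ is compact. To see that $\Ko \subseteq \bigcup_{i \in I} \Lase_i$, take any $x \in \Ko$: by the defining property of the partition of unity, $\sum_{i \in I} f_i(x) = 1$, so some index $i$ must satisfy $f_i(x) \geq \tfrac{1}{|I|} > \tfrac{1}{2|I|}$, placing $x$ in $\Lase_i$.

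I do not anticipate any genuine obstacle; the only minor point to keep in mind is ensuring the threshold $\tfrac{1}{2|I|}$ is chosen strictly between $0$ and $\tfrac{1}{|I|}$, so that the $\Lase_i$ both cover $\Ko$ and have closures inside the open supports $\{f_i > 0\}$. No use is made of any structure on $X$ beyond local compactness and Hausdorffness, which is already absorbed into the invocation of Lemma~\ref{lem:relative-pou}.
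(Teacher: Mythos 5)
Your argument is correct and is precisely one of the two routes the paper itself suggests (the paper leaves the proof to the reader but notes it "may be proved directly or as a corollary of Lemma~\ref{lem:relative-pou}"): apply that lemma to get a compactly supported partition of unity subordinate to $\Coseco$ and threshold each function strictly between $0$ and $1/|I|$. All three verifications (openness, $\overline{\Lase_i}\subseteq\supp(f_i)\subseteq\Cose_i$ with precompactness, and the covering of $\Ko$ via the pigeonhole bound $f_i(x)\geq 1/|I|$) go through as you wrote them.
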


\begin{Def} \label{def:simplicial-sorting}
	For any nonnegative integer $l$, set $\Ma[S]{l} \colon \Rplusfuncs{S} 
	\to [0,\infty)$ to be
	\[
		\Ma[S]{l} ( \xi ) = \inf\left\{ t \in [0,\infty) \colon \left| 
		\xi^{-1}\left( (t, \infty) \right) \right| < l+1 \right\} \; . 
	\]
\end{Def}

As described in the next lemma, for any positive integer $l$, $\Ma[S]{l-1} (\xi)$ is the $l$-th greatest value of the tuple $\left( \xi(s) \right)_{s \in S}$ (counting repetitions) when $l \leq |S|$, and $0$ when $l > |S|$.  The proof is straightforward and we leave it to the reader.

\begin{Lemma} \label{lem:simplicial-sorting} 
	Let $\xi \in \Rplusfuncs{S}$. Denote  $m = | \supp (\xi) |$. Fix an 
	enumeration $\supp (\xi) = \left\{ s^{(1)}, s^{(2)}, \ldots, s^{(m)} 
	\right\}$ such that $\xi(s^{(1)}) \geq \xi(s^{(2)}) \geq \ldots \geq 
	\xi(s^{(m)}) $. Then 
	\[
		\quad \Ma[S]{l-1} (\xi) = 
		\begin{cases}
			\xi(s^{(l)})  & \mid  0 < l \leq m \\
			0 & \mid  l > m 
		\end{cases}
		\; .
	\]
\end{Lemma}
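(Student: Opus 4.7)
The plan is to unpack the definition of $\Ma[S]{l-1}$ and perform a direct case analysis on whether $l \leq m$ or $l > m$, using the enumeration given in the statement. The key observation is that the sorted enumeration lets us read off the cardinalities $|\xi^{-1}((t,\infty))|$ explicitly as $t$ crosses each of the values $\xi(s^{(i)})$.

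First I would handle the easy case $l > m$. Since $\xi^{-1}((t,\infty)) \subseteq \supp(\xi)$ for every $t \geq 0$, we have $|\xi^{-1}((t,\infty))| \leq m < l$ for every $t \geq 0$. Hence $t = 0$ already lies in the set whose infimum defines $\Ma[S]{l-1}(\xi)$, so $\Ma[S]{l-1}(\xi) = 0$.

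For the main case $0 < l \leq m$, I would establish $\Ma[S]{l-1}(\xi) = \xi(s^{(l)})$ by proving two inequalities. For $\Ma[S]{l-1}(\xi) \leq \xi(s^{(l)})$: when $t \geq \xi(s^{(l)})$, the monotonicity of the enumeration forces $\xi(s^{(i)}) \leq t$ for all $i \geq l$, so the only possible elements of $\xi^{-1}((t,\infty))$ lie in $\{s^{(1)}, \ldots, s^{(l-1)}\}$, giving $|\xi^{-1}((t,\infty))| \leq l - 1 < l$. Thus every $t \geq \xi(s^{(l)})$ is a member of the set whose infimum defines $\Ma[S]{l-1}(\xi)$. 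For the reverse inequality $\Ma[S]{l-1}(\xi) \geq \xi(s^{(l)})$: when $t < \xi(s^{(l)})$, the monotonicity gives $\xi(s^{(i)}) \geq \xi(s^{(l)}) > t$ for all $i \leq l$, so $\{s^{(1)}, \ldots, s^{(l)}\} \subseteq \xi^{-1}((t,\infty))$, hence $|\xi^{-1}((t,\infty))| \geq l$. This excludes any $t < \xi(s^{(l)})$ from the set whose infimum is taken.

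There is no real obstacle here; the proof is a direct bookkeeping argument using the sorted enumeration, and the two-sided bound is immediate once one writes down which $s^{(i)}$ lie above a given threshold $t$.
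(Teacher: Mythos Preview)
Your argument is correct. The paper does not give a proof of this lemma, stating only that ``the proof is straightforward and we leave it to the reader''; your direct two-sided bound via the sorted enumeration is precisely the routine verification the authors have in mind.
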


Lemma~\ref{lem:simplicial-sorting} allows us to introduce a difference operator 
$\Di[S]$ that will play a central role in this section. 

\begin{Def} \label{def:simplicial-diff-op}
	For any nonnegative integer $l$, define the function 
	\[
		\Dinum[S]{l} = \Ma[S]{l} - \Ma[S]{l+1}  \colon \Rplusfuncs{S} \to [0, \infty)  
	\]
	and define
	\[
		\Di[S] \colon \Rplusfuncs{S} \to \Rplusfuncs{\N} 
	\]	
	by	
	\[
	 \Di[S] ( \xi ) = \left( \Dinum[S]{l} (\xi) \right)_{l \in \N} \; .
	\]
	By definition, we have $\Dinum[S]{l} (\xi) = \Di[S] (\xi) (l)$ for any $\xi \in \Rplusfuncs{S}$. 
\end{Def}

\begin{Lemma} \label{lem:simplicial-diff-op}
	For any nonnegative integer $l$, the following statements hold: 
	\begin{enumerate}
		\item \label{lem:simplicial-diff-op::Lipschitz} The function $\Ma[S]{l}$ is $1$-Lipschitz, while the function $\Dinum{l}$ is $2$-Lipschitz. In particular, both functions are continuous. 
		\item \label{lem:simplicial-diff-op::support} For any $\xi \in 
		\Rplusfuncs{S}$ and for any $\Er \geq 0$, 
		we have 
		\[
			\left\{ s \in S \colon \Di(\xi)(s) > \Er \right\} \subseteq \intervalofintegers{0}{\left| \left\{ s \in S \colon \xi(s) > \Er \right\}  \right| - 1} 
		\; .
		\]
		\item \label{lem:simplicial-diff-op::mu-delta} We have 
		\[
			\Ma[S]{l} (\xi) = \sum_{k=l}^{\infty} \Dinum{k} (\xi)  = \sum_{k=l}^{|\supp(\xi)|-1} \Dinum{k} (\xi) \quad \text{for any } \xi \in \Rplusfuncs{S} \; .
		\]
		\item \label{lem:simplicial-diff-op::sum} We have 
		\[
			\sum_{s \in S} \xi(s)  = \sum_{l=1}^{\infty} \Ma[S]{l} (\xi) = \sum_{l=0}^{|\supp(\xi)|-1} \Ma[S]{l} (\xi)  \quad \text{for any } \xi \in \Rplusfuncs{S} \; .
		\]
		\item \label{lem:simplicial-diff-op::norm} For any $\xi \in \Rplusfuncs{S}$, we have 
		\[
			\left\| \Di(\xi) \right\| \leq \left\| \xi \right\| \leq \left| \supp \left( \Di(\xi) \right) \right| \cdot \left\| \Di(\xi) \right\| \; .
		\] 
	\end{enumerate}
\end{Lemma}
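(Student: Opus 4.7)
\smallskip
\noindent\textbf{Plan.} The strategy is to use the explicit description from Lemma~\ref{lem:simplicial-sorting} of $\Ma[S]{l}$ as an order statistic (the $(l+1)$-th greatest value of $\xi$, padded by zeros when $l \geq |\supp(\xi)|$), treat $\Dinum{l}$ via its definition as a telescoping difference, and then handle the five items essentially as bookkeeping. For item \eqref{lem:simplicial-diff-op::Lipschitz}, I would first record the alternative characterization
\[
\Ma[S]{l}(\xi) = \max\Bigl\{\min_{s \in T}\xi(s) \;\Bigm|\; T \subseteq S, \ |T| = l+1\Bigr\}
\]
(with the convention that the maximum over the empty collection is $0$); since each map $\xi \mapsto \min_{s\in T}\xi(s)$ is $1$-Lipschitz in $\|\cdot\|_\infty$ and the pointwise maximum of $1$-Lipschitz functions is $1$-Lipschitz, $\Ma[S]{l}$ is $1$-Lipschitz, and then $\Dinum{l} = \Ma[S]{l} - \Ma[S]{l+1}$ is $2$-Lipschitz. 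Continuity is an immediate consequence.

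For item \eqref{lem:simplicial-diff-op::support}, suppose $\Di(\xi)(l) > \Er$; then $\Ma[S]{l}(\xi) > \Er + \Ma[S]{l+1}(\xi) \geq \Er$, so by the defining infimum in Definition~\ref{def:simplicial-sorting} the value $\Er$ does not satisfy $|\xi^{-1}((\Er,\infty))| < l+1$, giving $l+1 \leq |\{s: \xi(s) > \Er\}|$ as required. Item \eqref{lem:simplicial-diff-op::mu-delta} is the telescoping identity
\[
\sum_{k=l}^{N} \Dinum{k}(\xi) = \Ma[S]{l}(\xi) - \Ma[S]{N+1}(\xi),
\]
combined with the observation that $\Ma[S]{m}(\xi) = 0$ whenever $m \geq |\supp(\xi)|$ (so one may let $N \to \infty$ or truncate at $|\supp(\xi)| - 1$).

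For item \eqref{lem:simplicial-diff-op::sum}, I would fix an enumeration $\supp(\xi) = \{s^{(1)},\dots,s^{(m)}\}$ as in Lemma~\ref{lem:simplicial-sorting}, so that $\Ma[S]{l}(\xi) = \xi(s^{(l+1)})$ for $0 \leq l \leq m-1$ and $\Ma[S]{l}(\xi) = 0$ otherwise; summing these values over $l \in \{0,\dots,m-1\}$ directly recovers $\sum_{s \in S}\xi(s)$, and since all tails vanish, the infinite-sum version is equivalent. Finally, item \eqref{lem:simplicial-diff-op::norm} follows by combining items \eqref{lem:simplicial-diff-op::mu-delta} and \eqref{lem:simplicial-diff-op::sum}: the identity $\|\xi\| = \Ma[S]{0}(\xi) = \sum_{k \geq 0} \Dinum{k}(\xi)$ immediately yields $\|\xi\| \leq |\supp(\Di(\xi))| \cdot \|\Di(\xi)\|$ (since only finitely many terms are nonzero, each bounded by $\|\Di(\xi)\|$), while $\Dinum{k}(\xi) \leq \Ma[S]{k}(\xi) \leq \Ma[S]{0}(\xi) = \|\xi\|$ gives the left inequality. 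There is no genuine obstacle here; the only mildly delicate point is obtaining the Lipschitz bound in item \eqref{lem:simplicial-diff-op::Lipschitz} cleanly, which the max--min reformulation handles in one line.
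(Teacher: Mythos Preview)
Your proof is correct and, for items \eqref{lem:simplicial-diff-op::support}--\eqref{lem:simplicial-diff-op::norm}, essentially identical to the paper's (telescoping, the sorting lemma, and the obvious bounds). The only real difference is in item \eqref{lem:simplicial-diff-op::Lipschitz}: the paper argues directly from the infimum definition by observing that $\xi_1^{-1}((t+d,\infty)) \subseteq \xi_2^{-1}((t,\infty))$ when $d = \|\xi_1 - \xi_2\|$, which immediately gives $\Ma[S]{l}(\xi_1) \leq \Ma[S]{l}(\xi_2) + d$; you instead record the max--min description and use that a supremum of $1$-Lipschitz functions is $1$-Lipschitz. Both routes are short and equally clean; the paper's avoids justifying the max--min formula as an extra step, while yours has the minor advantage of making the Lipschitz constant transparent without manipulating the level sets.
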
	

\begin{proof}
	\begin{enumerate}
		\item [\eqref{lem:simplicial-diff-op::Lipschitz}] It suffices to show 
		the first statement. To this end, we fix arbitrary $\xi_1, \xi_2 \in 
		\Rplusfuncs{S}$ and write $d = \left\| \xi_1 - \xi_2 \right\|$. Note 
		that 
		\[
		\xi_1^{-1}\left( (t+d, \infty) \right) \subseteq \xi_2^{-1}\left( (t, 
		\infty) \right) \quad \text{for any } t \in [0, \infty) \; .
		\]
		This implies that $\Ma[S]{l}(\xi_1) \leq \Ma[S]{l}(\xi_2) + d$. 
		Similarly, we have $\Ma[S]{l}(\xi_2) \leq \Ma[S]{l}(\xi_1) + d$, which 
		shows $\left| \Ma[S]{l}(\xi_1) - \Ma[S]{l}(\xi_2) \right| \leq d$, as 
		desired.
		\item [\eqref{lem:simplicial-diff-op::support}] This follows from 
		Lemma~\ref{lem:simplicial-sorting} and the fact $\Dinum{l} (\xi) \leq 
		\Ma[S]{l} (\xi)$ for any $\xi \in \Rplusfuncs{S}$ and for any 
		nonnegative integer $l$. 
		\item [\eqref{lem:simplicial-diff-op::mu-delta}] This follows from the definition of $\Dinum{l}$. 
		\item [\eqref{lem:simplicial-diff-op::sum}] This follows from Lemma~\ref{lem:simplicial-sorting}. 
		\item [\eqref{lem:simplicial-diff-op::norm}] This follows from \eqref{lem:simplicial-diff-op::mu-delta} and Lemma~\ref{lem:simplicial-sorting}. 
	\end{enumerate}
\end{proof}

\begin{Def} \label{def:simplicial-barycentric}
	We associate the following sets to a given set $S$: 
	\begin{enumerate}[label=(\roman*)]
		\item For any nonnegative integer $l$, let $\powerset^{(l)}(S) = \left\{ F \subseteq S \colon |F| = l \right\}$. 
		\item For any nonnegative integer $l$ and for any $\varepsilon \geq 0$, 
		define the set
		\[
			\Rplusfuncs[{(l, \varepsilon)}]{S} = \left\{ \xi \in \Rplusfuncs{S} \colon \Dinum{l} (\xi) > \varepsilon \right\} \; .
		\]
		\item For any nonempty finite subset $F \Subset S$ and any $\varepsilon \geq 0$, define the set 
		\[
			\Rplusfuncs[F, \varepsilon]{S} = \left\{ \xi \in \Rplusfuncs{S}_+ \colon \inf \xi(F) >  \varepsilon + \sup \left( \xi(S \setminus F) \cup \{0\} \right) \right\} \; .
		\]
	\end{enumerate}
\end{Def}

\begin{Lemma} \label{lem:simplicial-barycentric}
	The following statements hold: 
	\begin{enumerate}
		\item \label{lem:simplicial-barycentric::nbhd} For any nonempty finite 
		subset $F \Subset S$ and for any $\varepsilon, r \geq 0$, the set 
		$\Rplusfuncs[F, \varepsilon]{S} $ contains the $r$-neighborhood of 
		$\Rplusfuncs[F, \varepsilon + 2r]{S} $, that is, for any $\xi \in 
		\Rplusfuncs[F, \varepsilon + 2r]{S}$ and for any $\eta \in 
		\Rplusfuncs{S}$, if 
		$\|\xi - \eta \| \leq r$, then $\eta \in \Rplusfuncs[F, 
		\varepsilon]{S}$. 
		\item \label{lem:simplicial-barycentric::open} For any nonempty finite 
		subset $F \Subset S$ and for any $\varepsilon \geq 0$, the set 
		$\Rplusfuncs[F, \varepsilon]{S}$ is open. 
		\item \label{lem:simplicial-barycentric::partition} For any nonnegative 
		integer $l$ and for any $\varepsilon \geq 0$, we have a partition
		\[
		\Rplusfuncs[{(l, \varepsilon)}]{S} = \bigsqcup_{F \in \powerset^{(l+1)}(S)} \Rplusfuncs[F, \varepsilon]{S} \; .
		\]
		\item \label{lem:simplicial-barycentric::cover} For any $\xi \in 
		\Rplusfuncs{S} \setminus \{0\}$ and for any $\varepsilon \in \left[ 0, 
		\frac{\|\xi\|}{|\supp(\xi)|} \right)$, there exists $l \in \left\{ 0, 
		\ldots, |\supp(\xi)|-1 \right\}$ such that $\xi \in \Rplusfuncs[{(l, 
		\varepsilon)}]{S}$. In particular, we have 
		\[
		\Rplusfuncs{S} \setminus \{0\} = \bigcup_{l=0}^\infty \Rplusfuncs[{(l, 0)}]{S} \; .
		\]
		\item \label{lem:simplicial-barycentric::clumping} For any collection $\left\{ F_i \right\}_{i \in I}$ of nonempty finite subsets of $S$, if 
		\[
		\bigcap_{i \in I} \Rplusfuncs[F_i, 0]{S} \not= \varnothing \; ,
		\]
		then there exists $s \in S$ such that 
		\[
		\bigcup_{i \in I} \Rplusfuncs[F_i, 0]{S} \subseteq \left\{ \xi \in \Rplusfuncs{S} \colon \xi(s) > 0 \right\} \; .
		\]
	\end{enumerate}
\end{Lemma}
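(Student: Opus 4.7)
The plan is to prove the five parts in order, since the later ones lean on the earlier ones and on the results of Lemmas~\ref{lem:simplicial-sorting} and~\ref{lem:simplicial-diff-op}. Throughout, the key observation underpinning the whole lemma is that the set $\Rplusfuncs[F,\varepsilon]{S}$ exactly describes those $\xi$ for which $F$ is an unambiguous ``top-$|F|$'' subset of $\supp(\xi)$ with a gap of at least $\varepsilon$ separating it from the rest.

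For parts \eqref{lem:simplicial-barycentric::nbhd} and \eqref{lem:simplicial-barycentric::open}: given $\|\xi - \eta\| \leq r$ in the $\ell^\infty$ norm, we have $\inf\eta(F) \geq \inf\xi(F) - r$ and, using nonnegativity, $\sup\eta(S\setminus F)\cup\{0\} \leq \sup\xi(S\setminus F)\cup\{0\} + r$. Subtracting gives a gap reduction of at most $2r$, which yields \eqref{lem:simplicial-barycentric::nbhd}; openness in \eqref{lem:simplicial-barycentric::open} then follows because any $\xi \in \Rplusfuncs[F,\varepsilon]{S}$ has some positive slack $\eta_0 > 0$ above $\varepsilon$ in its defining inequality, and the open ball of radius $\eta_0/2$ around $\xi$ is contained in $\Rplusfuncs[F,\varepsilon]{S}$.

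For part \eqref{lem:simplicial-barycentric::partition}, I would enumerate $\supp(\xi) = \{s^{(1)},\ldots,s^{(m)}\}$ in weakly decreasing order of $\xi$-values, as in Lemma~\ref{lem:simplicial-sorting}. Then $\Ma[S]{l}(\xi) = \xi(s^{(l+1)})$ when $l < m$ and $0$ otherwise, and $\Dinum{l}(\xi) > \varepsilon \geq 0$ forces $l \leq m-1$. Setting $F = \{s^{(1)},\ldots,s^{(l+1)}\}$ gives $\inf\xi(F) = \Ma[S]{l}(\xi)$ and $\sup\xi(S\setminus F)\cup\{0\} = \Ma[S]{l+1}(\xi)$, hence $\xi \in \Rplusfuncs[F,\varepsilon]{S}$. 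For disjointness, if $\xi$ belonged to both $\Rplusfuncs[F_1,\varepsilon]{S}$ and $\Rplusfuncs[F_2,\varepsilon]{S}$ with $|F_1| = |F_2|$ but $F_1 \neq F_2$, picking $s \in F_1\setminus F_2$ and $s' \in F_2\setminus F_1$ and chasing the two defining inequalities yields $\xi(s) > \xi(s') > \xi(s)$, a contradiction. Part \eqref{lem:simplicial-barycentric::cover} then reduces to the norm bound in Lemma~\ref{lem:simplicial-diff-op}\eqref{lem:simplicial-diff-op::norm} combined with \eqref{lem:simplicial-diff-op::support}: since $|\supp(\Di(\xi))| \leq |\supp(\xi)|$, we get $\|\Di(\xi)\| \geq \|\xi\|/|\supp(\xi)| > \varepsilon$, so some $\Dinum{l}(\xi) > \varepsilon$ with $l \in \{0,\ldots,|\supp(\xi)|-1\}$.

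The main conceptual step is part \eqref{lem:simplicial-barycentric::clumping}, and this is where I expect the real work. Fix $\xi_0 \in \bigcap_{i \in I} \Rplusfuncs[F_i,0]{S}$. The crucial claim is that the family $\{F_i\}_{i \in I}$ is totally ordered by inclusion. To see this, suppose $F_i$ and $F_j$ are incomparable; then I would pick $s \in F_i \setminus F_j$ and $s' \in F_j \setminus F_i$, and combine the inequalities $\xi_0(s) \geq \inf\xi_0(F_i) > \sup\xi_0(S\setminus F_i)\cup\{0\} \geq \xi_0(s')$ with the symmetric pair to obtain $\xi_0(s') > \xi_0(s) > \xi_0(s')$, a contradiction. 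Once totality of the chain is established, $\bigcap_{i \in I} F_i$ equals the smallest $F_i$ (which exists because sizes are nonnegative integers and each $F_i$ is nonempty), so the intersection is nonempty; any $s$ in it satisfies $\eta(s) \geq \inf\eta(F_j) > \sup\eta(S\setminus F_j)\cup\{0\} \geq 0$ for each $\eta \in \Rplusfuncs[F_j,0]{S}$, which is the desired conclusion. The subtlety is recognizing that the existence of the single common point $\xi_0$ is exactly what forces the nesting, and that nonemptiness of $\bigcap_i F_i$ (not merely of individual $F_i$) is needed.
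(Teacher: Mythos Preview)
Your proof is correct. Parts \eqref{lem:simplicial-barycentric::nbhd}--\eqref{lem:simplicial-barycentric::cover} match the paper's arguments essentially verbatim, up to cosmetic differences (for \eqref{lem:simplicial-barycentric::cover} you invoke the norm bound from Lemma~\ref{lem:simplicial-diff-op}\eqref{lem:simplicial-diff-op::norm}, while the paper phrases it as pigeonhole on the telescoping sum $\Ma[S]{0}(\xi)=\sum_l \Dinum{l}(\xi)$; these are the same computation).

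The only genuine difference is part \eqref{lem:simplicial-barycentric::clumping}. You prove a structural lemma --- that the family $\{F_i\}$ is totally ordered by inclusion --- and then take $s$ in the minimal $F_i$. The paper's argument is shorter and avoids this detour: it simply picks $s\in S$ with $\xi_0(s)=\|\xi_0\|$, and observes that such an $s$ must lie in every $F_i$ directly from the defining inequality (since $s\notin F_i$ would force $\xi_0(s)<\inf\xi_0(F_i)\le\|\xi_0\|$). Your chain argument is sound and gives extra structural information, but the paper's one-line choice of $s$ as a maximizer of $\xi_0$ is the more economical route to the stated conclusion.
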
	

\begin{proof}
	\begin{enumerate}
		\item [\eqref{lem:simplicial-barycentric::nbhd}] This follows from the fact that for any nonempty finite subset $F \Subset S$ and any $\xi_1, \xi_2 \in \Rplusfuncs{S}$, both $| \inf \xi_1(F) - \inf \xi_2(F)  |$ and $| \sup \xi_1(S \setminus F) - \sup \xi_2(S \setminus F)  |$ are bounded by $\left\| \xi_1 - \xi_2 \right\|$.  
		\item [\eqref{lem:simplicial-barycentric::open}]  For a subset $Y$ of $\Rplusfuncs{S}$, the open set $N_r(Y)$ is defined as the union of the open $r$-balls centered at $\xi$, for $\xi \in Y$.  This item then follows from \eqref{lem:simplicial-barycentric::nbhd} and the observation that
		when
		\[
		\Rplusfuncs[F, \varepsilon]{S} = \bigcup_{r>0} \Rplusfuncs[F, \varepsilon + 2r]{S} = \bigcup_{r>0} N_r \left( \Rplusfuncs[F, \varepsilon + 2r]{S} \right) \; ,
		.
		\]
		\item [\eqref{lem:simplicial-barycentric::partition}] Given any $\xi 
		\in \Rplusfuncs[{(l, \varepsilon)}]{S}$, it follows from the 
		definitions and Lemma~\ref{lem:simplicial-sorting} that there exists a 
		unique $F \in \powerset^{(l+1)}(S)$ such that $\xi \in  \Rplusfuncs[F, 
		\varepsilon]{S}$, namely 
		\[
		F = \xi^{-1} \left( \left(\Ma[S]{l} (\xi), \infty  \right) \right) = \xi^{-1} \left( \left[\Ma[S]{l-1} (\xi), \infty  \right) \right) \; .
		\]
		\item [\eqref{lem:simplicial-barycentric::cover}] Given any $\xi \in 
		\Rplusfuncs{S} \setminus \{0\}$ and any $\varepsilon \in \left[ 0, 
		\frac{\|\xi\|}{|\supp(\xi)|} \right)$, by 
		Lemma~\ref{lem:simplicial-sorting}, we have $\Ma[S]{0} (\xi) = \|\xi\| 
		> 0$; thus, by 
		Lemma~\ref{lem:simplicial-diff-op}\eqref{lem:simplicial-diff-op::mu-delta}
		 and the pigeonhole principle, there exists $l \in \left\{ 0, \ldots, 
		|\supp(\xi)|-1 \right\}$ such that $\Dinum{l} (\xi) \geq 
		\frac{\|\xi\|}{|\supp(\xi)|} > \varepsilon$, that is, $\xi \in 
		\Rplusfuncs[{(l,\varepsilon)}]{S}$. 
		\item [\eqref{lem:simplicial-barycentric::clumping}] By our assumption, 
		there exists some $\xi_0 \in \bigcap_{i \in I} \Rplusfuncs[F_i,0]{S}$. 
		Fix $s \in S$ such that $\xi_0(s) = \| \xi_0 \|$. Then for any $i 
		\in I$, we have $s \in F_i$ by definition, which implies that for any 
		$\xi \in \Rplusfuncs[F_i,0]{S}$, we have $\xi (s) \geq \inf \xi(F_i,0) 
		> 0$. This shows $\bigcup_{i \in I} \Rplusfuncs[F_i,0]{S} \subseteq 
		\left\{ \xi \in \Rplusfuncs{S} \colon \xi(s) > 0 \right\}$. 
	\end{enumerate}
\end{proof}

We end the section with two purely topological lemmas that follow from the above simplicial technique. 
More applications will come in Section~\ref{sec:LTC}. 

\begin{Lemma}\label{lem:cover-clumping}
	Let $ X $ be a locally compact Hausdorff space and let $\Ko \subseteq X $ be a compact subset. Let $\Thseco$ be a finite collection of open sets in $X$ such that $\Ko \subseteq \bigcup \Thseco$. 
	Then there exists a finite collection $\Auseco$  of open sets in $X$ such 
	that $\Ko \subseteq \bigcup \Auseco$ and, for any subcollection $\Auseco' 
	\subset \Auseco$, if $\bigcap \Auseco' \not= \varnothing$, then there 
	exists $\Thse \in \Thseco$ such that $\bigcup \Auseco' \subseteq \Thse$. 
\end{Lemma}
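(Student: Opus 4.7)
The plan is to exploit the barycentric-subdivision machinery developed earlier in the section: the desired ``clumping'' property of the refinement $\Auseco$ is essentially the combinatorial hallmark of a barycentric subdivision, which is precisely what Lemma~\ref{lem:simplicial-barycentric}\eqref{lem:simplicial-barycentric::clumping} encodes in the language of $\Rplusfuncs{S}$.

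First I would apply Lemma~\ref{lem:relative-pou} with $S := \Thseco$ to produce a partition of unity $\{f_{\Thse}\}_{\Thse \in \Thseco}$ for $\Ko \subseteq X$ subordinate to $\Thseco$, and repackage it as a single continuous map $f \colon X \to \Rplusfuncs{\Thseco}$, $x \mapsto (f_{\Thse}(x))_{\Thse \in \Thseco}$. By the partition-of-unity property, $f(x) \neq 0$ for every $x \in \Ko$, and moreover $\overline{f^{-1}\left(\left\{\xi \colon \xi(\Thse) > 0\right\}\right)} \subseteq \Thse$ for each $\Thse \in \Thseco$.

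Next, for each nonempty finite $F \subseteq \Thseco$, define $\Ause_F := f^{-1}\bigl(\Rplusfuncs[F, 0]{\Thseco}\bigr)$, and let $\Auseco := \{\Ause_F \colon \varnothing \neq F \subseteq \Thseco,\ \Ause_F \neq \varnothing\}$. This is a finite collection because $\Thseco$ is finite. Each $\Ause_F$ is open by Lemma~\ref{lem:simplicial-barycentric}\eqref{lem:simplicial-barycentric::open} and continuity of $f$. To see that $\Auseco$ covers $\Ko$, I combine Lemma~\ref{lem:simplicial-barycentric}\eqref{lem:simplicial-barycentric::cover} (with $\varepsilon = 0$) and \eqref{lem:simplicial-barycentric::partition} to get $\Rplusfuncs{\Thseco} \setminus \{0\} = \bigcup_{F} \Rplusfuncs[F, 0]{\Thseco}$, then pull back along $f$, using $f(x) \neq 0$ for $x \in \Ko$.

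Finally I verify the clumping property. Suppose $\Auseco' = \{\Ause_{F_i}\}_{i \in I} \subseteq \Auseco$ with $\bigcap \Auseco' \neq \varnothing$; pick $x_0$ in the intersection. Then $f(x_0) \in \bigcap_i \Rplusfuncs[F_i, 0]{\Thseco}$, so this intersection is nonempty and Lemma~\ref{lem:simplicial-barycentric}\eqref{lem:simplicial-barycentric::clumping} supplies some $\Thse \in \Thseco$ with $\bigcup_{i \in I} \Rplusfuncs[F_i, 0]{\Thseco} \subseteq \{\xi \in \Rplusfuncs{\Thseco} \colon \xi(\Thse) > 0\}$. Pulling back: for any $y \in \bigcup \Auseco'$, $f(y)(\Thse) = f_{\Thse}(y) > 0$, hence $y \in \supp(f_{\Thse}) \subseteq \Thse$, yielding $\bigcup \Auseco' \subseteq \Thse$ as required. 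No step looks like a real obstacle; the only conceptual content is recognizing that the barycentric-style covering sets $\Rplusfuncs[F, 0]{S}$ pulled back through a partition of unity deliver exactly the combinatorial property demanded.
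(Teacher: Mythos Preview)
Your proof is correct and follows essentially the same approach as the paper: build a partition of unity $f \colon X \to \Rplusfuncs{\Thseco}$ via Lemma~\ref{lem:relative-pou}, take $\Auseco$ to consist of the pullbacks $f^{-1}\bigl(\Rplusfuncs[F,0]{\Thseco}\bigr)$ for nonempty $F \subseteq \Thseco$, and deduce the clumping property from Lemma~\ref{lem:simplicial-barycentric}\eqref{lem:simplicial-barycentric::clumping}. The only cosmetic difference is that you explicitly discard the empty $\Ause_F$'s, which is harmless.
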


\begin{proof}
	Given $X$, $\Ko$ and $\Thseco$ as stated, we apply Lemma~\ref{lem:relative-pou} to obtain a partition of unity $f \colon X \to \Rplusfuncs{\Thseco}$ for $\Ko \subseteq X $ subordinate to $ \Thseco $. 	Notice that $f(x) \not= 0$ for any $x \in \Ko$. 
	Applying Definition~\ref{def:simplicial-barycentric}, we define 
	\[
	\Auseco = \left\{ f^{-1} \left( \Rplusfuncs[{F,0}]{\Thseco} \right) \colon  F \subseteq \Thseco \text{ and } F \not= \varnothing \right\} \; . 
	\]
	By 
	Lemma~\ref{lem:simplicial-barycentric}\eqref{lem:simplicial-barycentric::open},
	 \eqref{lem:simplicial-barycentric::cover}, 
	and~\eqref{lem:simplicial-barycentric::partition}, we see that $\Auseco$ is 
	a finite open collection such that $\Ko \subseteq \bigcup \Auseco$. Now, 
	given any subcollection $\Auseco' \subseteq \Auseco$ satisfying $\bigcap 
	\Auseco' \not= \varnothing$, we have a finite
	finite collection $\{F_i \colon i \in I\}$ of nonempty subsets of 
	$\Thseco$ such that $\Auseco' = \left\{ f^{-1} \left( 
	\Rplusfuncs[{F_i,0}]{\Thseco} \right) \colon  i \in I \right\}$. Thus, 
	$\bigcap_{i \in I} \Rplusfuncs[F_i,0]{\Thseco} \not= 
	\varnothing$. It follows from 
	Lemma~\ref{lem:simplicial-barycentric}\eqref{lem:simplicial-barycentric::clumping}
	 that there exists $\Thse \in \Thseco$ such that $\bigcup_{i \in I} 
	\Rplusfuncs[F_i,0]{\Thseco} \subseteq \left\{ \xi \in \Rplusfuncs{\Thseco} 
	\colon \xi(\Thse) > 0 \right\}$. Therefore we have
	\[
	\bigcup \Auseco' \subseteq f^{-1} \left( \bigcup_{i \in I} \Rplusfuncs[F_i,0]{\Thseco} \right) \subseteq f^{-1} \left( \left\{ \xi \in \Rplusfuncs{\Thseco} \colon \xi(\Thse) > 0 \right\} \right) \subseteq \Thse 
	\]
	as desired. 
\end{proof}

Note that in the above lemma, when $X$ is a metrizable, one can write an 
alternative proof that makes use of the Lebesgue number of a finite open cover. 
Furthermore, because the $C^*$-subalgebra of $C_0(X)$ generated by a partition 
of unity subordinate to $\Thseco$ is separable, and thus has a metrizable 
spectrum, the general case follows from the metrizable case. Nevertheless, we 
have opted to give the above proof as an application of the techniques 
developed before. 

\begin{Lemma} \label{lem:thicken-multiplicity}
	Let $X$ be a locally compact Hausdorff space. Let $\Ko_1, \ldots, \Ko_n$ be 
	compact subsets in $X$.
	Then there exist open neighborhoods $\Cose_1 , \ldots, \Cose_n$ of $\Ko_1, 
	\ldots, \Ko_n$, respectively, such that 
	\[
		\mult \left( \left\{  \Cose_i \colon i = 1, \ldots, n \right\} \right) = \mult \left( \left\{  \Ko_i \colon i = 1, \ldots, n \right\} \right) \; .
	\]
\end{Lemma}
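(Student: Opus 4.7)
My plan is to set $m := \mult\left(\{K_1, \ldots, K_n\}\right)$ and reduce to the fact that it suffices to produce open neighborhoods $\Cose_i \supseteq K_i$ satisfying $\bigcap_{i \in J} \Cose_i = \varnothing$ for every $J \subseteq \{1, \ldots, n\}$ with $|J| = m+1$, since this forces $\mult\left(\{\Cose_i\}\right) \leq m$ while the reverse inequality is automatic from $K_i \subseteq \Cose_i$. My strategy would be to fix each such $J$ separately and produce opens $V_i^J \supseteq K_i$ (for $i \in J$) with $\bigcap_{i \in J} V_i^J = \varnothing$, then set
\[
\Cose_i := \bigcap_{\substack{J \ni i \\ |J| = m+1}} V_i^J,
\]
a finite intersection of open neighborhoods of $K_i$ and hence itself an open neighborhood of $K_i$.

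The core task thus reduces to the following separation statement: if $K_1', \ldots, K_k'$ are compact subsets of a Hausdorff space with $\bigcap_{j} K_j' = \varnothing$, then they admit open neighborhoods with empty intersection. I would prove this by induction on $k$. The base case $k = 2$ is the classical fact that two disjoint compact subsets of a Hausdorff space can be separated by disjoint open neighborhoods. For the inductive step with $k \geq 3$, I would set $L := K_1' \cap K_2'$ (compact) and apply the inductive hypothesis to the $k-1$ compact sets $L, K_3', \ldots, K_k'$, whose intersection is still empty, obtaining an open $W \supseteq L$ and opens $V_j \supseteq K_j'$ for $j = 3, \ldots, k$ such that $W \cap V_3 \cap \cdots \cap V_k = \varnothing$.

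The main obstacle I expect is the final step: splitting $W$ into open neighborhoods of $K_1'$ and $K_2'$ individually while keeping their pairwise intersection trapped inside $W$. Since $K_1' \cap K_2' = L \subseteq W$, the compact sets $K_1' \setminus W$ and $K_2' \setminus W$ are disjoint, so the $k = 2$ case provides disjoint opens $A \supseteq K_1' \setminus W$ and $B \supseteq K_2' \setminus W$. I would then take $V_1 := W \cup A$ and $V_2 := W \cup B$; these satisfy $V_j \supseteq K_j'$ for $j = 1, 2$, and, since $A \cap B = \varnothing$, we get $V_1 \cap V_2 \subseteq W \cup (A \cap B) = W$, so that $\bigcap_{j=1}^{k} V_j \subseteq W \cap V_3 \cap \cdots \cap V_k = \varnothing$, closing the induction. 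Local compactness of $X$ itself plays no direct role in the argument beyond ensuring that the ambient space is Hausdorff, which is all the separation lemma requires.
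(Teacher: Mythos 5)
Your argument is correct, but it takes a genuinely different route from the paper. The paper proves this lemma by reusing its barycentric-subdivision machinery: it forms the open sets $\Ause_{\Fi} = X \setminus \bigcup_{j \notin \Fi} \Ko_j$ for index sets $\Fi$ of size at most $m = \mult(\{\Ko_i\})$, observes these cover $X$, applies Lemma~\ref{lem:cover-clumping} (which rests on the partition-of-unity constructions of Section~\ref{sec:simplicial}, and hence on local compactness) to get a ``clumping'' cover $\Thseco$, and then defines $\Cose_i$ as the union of the members of $\Thseco$ meeting $\Ko_i$. You instead reduce to the statement that finitely many compact sets with empty intersection admit open neighborhoods with empty intersection, prove that by induction on the number of sets (the $k=2$ case being the standard Hausdorff separation of disjoint compacta, and the inductive step handled correctly via $L = K_1' \cap K_2'$, the sets $K_j' \setminus W$, and $V_1 = W \cup A$, $V_2 = W \cup B$ with $V_1 \cap V_2 = W$), apply it to each $(m+1)$-element index set $J$, and intersect the resulting neighborhoods over all $J$ containing $i$. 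Your route is more elementary and self-contained, and as you note it needs only that $X$ be Hausdorff, not locally compact; what the paper's route buys is brevity given that Lemma~\ref{lem:cover-clumping} has already been established for other purposes. (Both arguments, yours and the paper's, implicitly read the multiplicity as counting indexed members, i.e.\ treat the $\Ko_i$ as distinct, so this is not a point of difference.)
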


\begin{proof}
	It is clear that any open neighborhoods $\Cose_1 , \ldots, \Cose_n$ of 
	$\Ko_1, \ldots, \Ko_n$ satisfy $\mult \left( \left\{  \Cose_i \colon i = 1, 
	\ldots, n \right\} \right) \geq \mult \left( \left\{  \Ko_i \colon i = 1, 
	\ldots, n \right\} \right)$. It remains to show that we can arrange the 
	opposite direction. Set
	$\Dimnu = \mult \left( \left\{  \Ko_i \colon i = 1, \ldots, n \right\} \right)$. 
	
	For any $\Fi \subseteq \intervalofintegers{1}{n}$, we define the open set
	\[
	\Ause_{\Fi} = X \smallsetminus \left( \bigcup_{ j \in 
	\intervalofintegers{1}{n} \smallsetminus \Fi } \Ko_j \right) \; .
	\]
	We claim that 
	\[
	X = \bigcup_{\Fi \subseteq \intervalofintegers{1}{n}, |\Fi| \leq \Dimnu} \Ause_{\Fi} \; .
	\]
	Indeed, for any $x \in X$, we can take $\Fi = \left\{ j \in 
	\intervalofintegers{1}{n} \colon x \in \Ko_j \right\}$ and see that $|\Fi| 
	\leq \Dimnu$ and $x \in \Ause_{\Fi}$. 
	
	We then apply Lemma~\ref{lem:cover-clumping} with the compact set $\bigcup_{i = 1}^{n} \Ko_i$ and the open cover $\left\{ \Ause_{\Fi} \colon \Fi \subseteq \intervalofintegers{1}{n}, |\Fi| \leq \Dimnu \right\}$ in place of $\Ko$ and $\Thseco$ to obtain a finite collection $\Thseco$ of open sets in $X$ such that $\bigcup_{i = 1}^{n} \Ko_i \subseteq \bigcup \Thseco$ and, for any subcollection $\Thseco' \subset \Thseco$, if $\bigcap \Thseco' \not= \varnothing$, then there is $\Fi \subseteq \intervalofintegers{1}{n}$ such that $|\Fi| \leq \Dimnu$ and $\bigcup \Thseco' \subseteq \Ause_{\Fi}$. 
	
	Now, for any $i \in \intervalofintegers{1}{n}$, we define the open set
	\[
		\Cose_i = \bigcup \left\{  \Thse \in \Thseco \colon 
		 \Thse   \cap \Ko_i
		\not= \varnothing \right\}   \; .
	\]
	It remains to show $\operatorname{mult} \left( \left\{ \Cose_i \colon i = 
	1, \ldots, n \right\} \right) \leq \Dimnu$. To this end, it suffices to 
	show that for any $x \in X$, the set 
	\[
		\Fi_x = \left\{ i \in \intervalofintegers{1}{n} \colon x \in \Cose_i 
		\right\} \; 
	\] 
	has cardinality no more than $\Dimnu$. To this end, for each $i \in \Fi_x$ 
	we choose a member $\Thse_{x,i} \in \Thseco$ such that $x \in \Thse_{x,i}$ 
	and $\Thse_{x,i} \cap \Ko_i	\not= \varnothing$. By our choice of 
	$\Thseco$, since $x \in \bigcap_{i \in \Fi_x}  \Thse_{x,i}$, there exists 
	$\Fi 
	\subseteq \intervalofintegers{1}{n}$ such that $|\Fi| \leq \Dimnu$ and 
	$\bigcup_{i \in \Fi_x}  \Thse_{x,i}  \subseteq \Ause_{\Fi}$. It follows 
	from the last condition that $\Ause_{\Fi} \cap \Ko_i \not= \varnothing$ 
	for any $i \in \Fi_x$, which implies, by our definition of $\Ause_{\Fi}$, 
	that $\Fi_x \subseteq \Fi$. Hence we conclude $|\Fi_x| \leq \Dimnu$ as 
	desired. 
\end{proof}


\section{The long thin covering dimension} \label{sec:LTC} 
\renewcommand{\sectionlabel}{LTC}
\ref{sectionlabel=LTC}
In this section, we introduce one of our key concepts, the \emph{long thin 
covering dimension} for a topological dynamical system. Throughout the section, 
we let $G$ be a discrete group, and let $\alpha$ be an action of $G$ on a 
locally compact Hausdorff space $X$; we sometimes denote an action of $G$ on 
$X$ by homeomorphisms as $\alpha \colon G \curvearrowright X$.

\def \Topic {\LTCdef}

\begin{Def}
	\label{\LTCdef}
	Let $G$ be a discrete group, and let $X$ be a locally compact Hausdorff 
	space.
	The \emph{long thin covering dimension} of an action $\alpha \colon G \curvearrowright X$, denoted by $\dimltc(\alpha)$, is the infimum of all natural numbers $d$ satisfying: 
	
	For any finite subset $\Lo \Subset G$ and for any compact subset $\Ko 
	\Subset X$, there exists a natural number $\Binu$ 
	such that for any finite open cover $\Thseco$ of $X$, 
	there exist 
	\begin{itemize}
		\item a collection $\Coseco$ of open sets in $X$, and
		\item locally constant functions $\Ne_{\Cose} \colon \Cose \to X$ for $\Cose \in \Coseco$, called \emph{near orbit selection functions}, 
	\end{itemize}
	satisfying the following conditions:
	{
		\begin{enumerate}
			\renewcommand{\labelenumi}{\textup{(\theenumi)}} \renewcommand{\theenumi}{Lo}\item \label{item:\LTCdef:Lo}
			For any $x \in \Ko$, there exists $\Cose \in \Coseco$ such that $\alpha_{\Lo} (x) \subseteq \Cose$.
			\renewcommand{\labelenumi}{\textup{(\theenumi)}} \renewcommand{\theenumi}{Mu}\item \label{item:\LTCdef:Mu}
			We have $\mult(\Coseco) \leq d+1$. 
			\renewcommand{\labelenumi}{\textup{(\theenumi)}} \renewcommand{\theenumi}{Eq}\item \label{item:\LTCdef:Eq}
			Each $\Ne_{\Cose}$ is \emph{$\Lo$-equivariant} in the following sense: for any $x \in \Cose$ and for any $g \in \Lo$, if $\alpha_{g} (x) \in \Cose$ then $\Ne_{\Cose}\left( \alpha_{g} (x) \right) = \alpha_g \left( \Ne_{\Cose}(x) \right)$.  
			\renewcommand{\labelenumi}{\textup{(\theenumi)}} \renewcommand{\theenumi}{Th}\item \label{item:\LTCdef:Th}
			For any $\Cose \in \Coseco$ and for any $y \in \Ne_{\Cose}(\Cose)$, there exists $\Thse \in \Thseco$ such that $\Ne_{\Cose}^{-1}(y) \cup \{y\} \subseteq \Thse$.
			\renewcommand{\labelenumi}{\textup{(\theenumi)}} \renewcommand{\theenumi}{Ca}\item \label{item:\LTCdef:Ca}
			For any $\Cose \in \Coseco$, we have $\left| \Ne_{\Cose} (\Cose) \right| \leq \Binu$. 
		\end{enumerate}
	}
\end{Def}

Here the labels stand for ``long'', ``multiplicity'', 
``equivariant'', ``thin'' and ``cardinality''. 
We will also write \Refcd{Lo}, \Refcd{Mu}, \Refcd{Eq}, \Refcd{Th}, and \Refcd{Ca} 
to specify the parameters used in these conditions and the fact that they come from Definition~\ref{\Topic}. 

\begin{center}
	\begin{tikzpicture}
		\footnotesize
		\draw  plot[smooth, tension=0.7] coordinates {(-3.5,0.5) (-3,2.5) (-1,3.5) 
			(1.5,3) (4,3.2) (5,2.5) (5,0.5) (2.5,-2) (0,-1) (-3,-2) (-3.5,0.5)};
		\draw plot[smooth, tension=.7] coordinates {(-2.5,0) (-2,0.5) (-1.5,-0.5)  
			(-2,-0.6) (-2.4,-0.2) (-2.5,0)};
		\draw plot[smooth, tension=.8] coordinates {(-2.3,1.5) (-1.8,1.9)  
			(-1.6,1.5)  (-1.3,1)  
			(-1.8,0.8) (-2.3,1.5)};
		\draw plot[smooth, tension=.7] coordinates {(-1.4,2.3) (-0.9,2.6) 
			(-0.4,1.8)  (-0.9,1.6) (-1.4,2.3)};
		\draw plot[smooth, tension=.7] coordinates {(-0.1,2.3) (0.3,2.6) (0.9,1.8)  
			(0.4,1.6) (-0.1,2.3)};
		\draw plot[smooth, tension=.7] coordinates {(1.2,2) (1.5,2.3) (2.1,1.5)  
			(1.8,1.4) (1.6,1.3) (1.2,2)};
		\draw plot[smooth, tension=.7] coordinates {(2.5,1.5) (2.8,1.8) (3.4,1)  
			(2.9,0.7) (2.5,1.5)};
		\node at (-2.2,0)[circle,fill,inner sep=1.5pt]{};
		\node at (-2.2,1.9)[circle,fill,inner sep=1.5pt]{};
		\node at (-1,2)[circle,fill,inner sep=1.5pt]{};
		\node at (0.4,1.8)[circle,fill,inner sep=1.5pt]{};
		\node at (1.5,1.5)[circle,fill,inner sep=1.5pt]{};
		\node at (2.6,0.9)[circle,fill,inner sep=1.5pt]{};
		\draw[smooth,densely dotted, ->] (-2.2,-1.6) .. controls (-1.9,-0.8) .. 	
		(-2.15,-0.1);
		\draw[smooth,densely dotted, ->] (-2.2,0) .. controls (-2.8,0.9) and  
		(-2.8,1.2) .. 	(-2.28,1.83);
		\draw[smooth,densely dotted, ->] (-2.2,1.9) .. controls (-1.6,2.3) ..	
		(-1.07,2.07);
		\draw[smooth,densely dotted, ->] (-1,2) .. controls (-0.3,1.6) .. 
		(0.33,1.73);
		\draw[smooth,densely dotted, ->] (0.4,1.8) .. controls (0.9,1.3) .. 
		(1.43,1.43);
		\draw[smooth,densely dotted, ->] (1.5,1.5) .. controls (2,0.7) .. 
		(2.53,0.83);
		\draw[smooth,densely dotted, ->] (2.6,0.9) .. controls (3.3,1.1) .. 
		(4,0.9);
		\normalsize
	\end{tikzpicture}
\end{center}

The illustration above shows a picture of such a set $\Cose$, drawn as a union 
of a few disjoint blobs, with the black dots being the images of map 
$\Ne_{\Cose}$, associating to each component of $\Cose$ a nearby point along an 
orbit. 
Intuitively, each open set $\Cose$ in the definition should be thought of as a 
union of disjoint smaller open sets. The blobs are the preimages of the 
near orbit selection function $\Ne_{\Cose}$. 
Those aren't quite Rokhlin towers, in the sense that we do not require that 
each blob gets sent exactly to another via a group element; rather, points 
which do get sent from one blob to another do so in way consistent with the 
labels, and for any point $x$, there will be some set $\Cose$ which contains a 
long 
enough partial
orbit of $x$, so that this partial equivariance condition is meaningful.

We will show in Sections~\ref{sec:GP} and~\ref{sec:BLR} by topological 
dynamical techniques that the long thin covering dimension is finite for a 
large class of actions and then apply this notion in Section~\ref{sec:dimnuc} 
to estimate the nuclear dimension of the associated crossed product 
$C^*$-algebras. 

In the rest of the section, we study various properties and equivalent 
characterizations of the long thin covering dimension, which are needed in the 
later sections. 

\begin{Rmk} \label{rmk:ltc-dim-compact-space}
	When applying Definition~\ref{\LTCdef} to the case in which $X$ is compact, 
	it suffices to check the definition for $\Ko = X$. 
\end{Rmk}

We now discuss permanence properties with regard to subgroups and quotient groups.

\begin{Rmk} \label{rmk:ltc-dim-subgroup}
	It is clear from Definition~\ref{\LTCdef} that for any subgroup $H$ of $G$, 
	if we denote by $\alpha_{|_H}$ the restriction of $\alpha$ to $H$, then we 
	have 
	\[
	\dimltc\left( \alpha_{|_H} \right) \leq \dimltc(\alpha) \; .
	\]
	Moreover, if $G$ is the union of a directed family $\left\{ G_i \right\}_{i \in I}$ of subgroups, then we have 
	\[
	\dimltc(\alpha) = \sup_{i \in I} \dimltc\left( \alpha_{|_{G_i}} \right) \; .
	\]
	For the claim concerning the union of a directed family of groups, notice 
	that we consider each time a finite subset $L \subseteq G$, so in 
	particular it is contained in one of the groups $G_i$ in the directed 
	system.
\end{Rmk}

\begin{Rmk} \label{rmk:ltc-dim-quotient}
	It is also clear that if the action $\alpha \colon G \curvearrowright X$ factors through an action $\beta \colon H \curvearrowright X$ by a quotient of $G$, then we have 
	\[
	\dimltc\left( \alpha \right) = \dimltc(\beta) \; .
	\]
\end{Rmk}

\begin{Rmk} \label{rmk:ltc-dim-trivial-group}
	When we apply Definition~\ref{\LTCdef} to the trivial action $\id_X \colon 
	G \curvearrowright X$, it is clear from Definition~\ref{def:dimc} that 
	$\dimltc(\id_X) = \dimc(X)$, since condition~\eqref{item:\LTCdef:Eq} 
	becomes vacuous, while condition~\eqref{item:\LTCdef:Ca} can be trivially 
	arranged with $\Binu = 1$ by replacing each $\Cose$ by the fibers of the 
	map $\Ne_{\Cose}$. In particular, this equation applies to the case in 
	which $G$ is trivial. Combined with Remark~\ref{rmk:ltc-dim-subgroup}, 
	Theorem~\ref{thm:dimnuc-dim-general}, this shows 
	\[
		\dimnuc \left( C_0(X) \right) = \dim(X^+) = \dimc(X) \leq \dimltc(\alpha)
	\]
	for any action of any discrete group $G$. 
\end{Rmk}

Next we discuss various equivalent formulations of Definition~\ref{\LTCdef} , which are needed for the later sections.  
To this end, we need to introduce a few additional conditions. 

\def \Topic {\LTCextra}
\begin{Def}
	\label{\LTCextra}	
	Given any subsets $\Lo, \Bo \subseteq G$, any subset $\Ko \subseteq X$, any natural number $\Binu$, any collection $\Thseco$ of open subsets of $X$, consider the following conditions for a collection $\Coseco$ of open sets in $X$ and locally constant functions $\Ne_{\Cose} \colon \Cose \to X$ for $\Cose \in \Coseco$: 
	{
		\begin{enumerate}
			\renewcommand{\labelenumi}{\textup{(\theenumi)}} \renewcommand{\theenumi}{Mu$^+$}\item \label{item:\LTCextra:Muplus}
			We have $\mult_{\alpha,\Lo} (\Coseco) \leq d+1$ (see Definition~\ref{def:multiplicity-action}). 
			
			\renewcommand{\labelenumi}{\textup{(\theenumi)}} \renewcommand{\theenumi}{Co}\item \label{item:\LTCextra:Co}
			For any $\Cose \in \Coseco$, the set $\Ne_{\Cose}(\Cose)$ is {$\Lo$-connected} (see Definition~\ref{def:Lo-connected}).

			\renewcommand{\labelenumi}{\textup{(\theenumi)}} \renewcommand{\theenumi}{Co$^+$}\item \label{item:\LTCextra:Coplus}
			For any $\Cose \in \Coseco$, the set $\Ne_{\Cose}(\Cose)$ is {$\left(\Lo, \Binu\right)$-bounded} (see Definition~\ref{def:Lo-bounded}). 
			
			\renewcommand{\labelenumi}{\textup{(\theenumi)}} \renewcommand{\theenumi}{Bo}\item \label{item:\LTCextra:Bo}
			For any $\Cose \in \Coseco$, 
			the set $\Ne_{\Cose} (\Cose)$ is $\Bo$-bounded (see Definition~\ref{def:Lo-bounded}), i.e., 
			for any $x, y \in \Cose$, we have $\Ne_{\Cose}(x) \in \alpha_{\Bo} \left( \Ne_{\Cose}(y) \right)$. 
		\end{enumerate}
	}
	We also define, for any $\Cose \in \Coseco$, the notion of \emph{$\left( 
	\Ne_{\Cose}, \Lo \right)$-connected components} of $\Cose$ as the inverse 
	images under $\Ne_{\Cose}$ of the $\Lo$-connected components of 
	$\Ne_{\Cose} (\Cose)$ in the sense of 
	Definition~\ref{def:Lo-connected}. 	 
\end{Def}

Here the labels stand for ``multiplicity (strengthened)'', 
``connected'', ``connected (strengthened)'' and ``bounded''.
We will also write \Refcd{Muplus}, \Refcd{Co}, \Refcd{Coplus}, and \Refcd{Bo} 
to specify the parameters used in these conditions and the fact that they come from Definition~\ref{\Topic}.

\def \Topic {\LTCdef}

The next remark lists monotonicity properties of the conditions in 
Definitions~\ref{\LTCdef} and \ref{\LTCextra}, which follow immediately from 
the definitions. Recall that given two collections $\Thseco, \Thseco'$ of 
subsets of $X$, we say $\Thseco'$ refines $\Thseco$ if for any $\Thse' \in 
\Thseco'$, there exists $\Thse \in \Thseco$ such that $\Thse' \subseteq \Thse$. 
Condition~\Refcd[\LTCdef]{Th} makes sense when $\Thseco$ is an arbitrary 
collection of subsets of $X$, rather than just a finite open cover. 

\begin{Rmk} \label{rmk:ltc-dim-monotonous}
	In the context of Definition~\ref{\LTCdef}, given 
	\begin{itemize}
		\item natural numbers $d \leq d'$ and $\Binu \leq \Binu'$, 
		\item finite subsets $\Lo \subseteq \Lo' \Subset G$ and $\Bo \subseteq \Bo' \Subset G$, 
		\item compact subsets $\Ko \subseteq \Ko' \Subset X$, 
		\item finite collections $\Thseco, \Thseco'$ of open sets in $X$ with $\Thseco'$ refining $\Thseco$, 
		\item collections $\Coseco, \Coseco'$ of open sets in $X$ with $\Coseco'$ refining $\Coseco$ and,  
		\item locally constant functions $\Ne_{\Cose} \colon \Cose \to X$ for 
		$\Cose \in \Coseco$ and $\Ne'_{\Cose'} \colon \Cose' \to X$ for $\Cose' 
		\in \Coseco'$ satisfying that for any $\Cose' \in \Coseco'$, there 
		exists $\Cose \in \Coseco$ such that $\Cose' \subseteq \Cose$ and 
		$\Ne'_{\Cose'} = \Ne_{\Cose} |_{\Cose'}$,
	\end{itemize}
	we have the following: 
	\begin{enumerate}
		\item \label{rmk:ltc-dim-monotonous::parameters} If $\Coseco$ and $\left( \Ne_{\Cose} \right)_{\Cose \in \Coseco}$ satisfy \Refc{Lo}{\Lo',\Ko'} (respectively, \Refcd{Mu}, \Refc{Eq}{\Lo'}, \Refc{Th}{\Thseco'}, \Refcd{Ca}, \Refc[\LTCextra]{Muplus}{\Lo', d}, \Refcd[\LTCextra]{Co}, \Refcd[\LTCextra]{Coplus}, and \Refcd[\LTCextra]{Bo}), then they also satisfy \Refcd{Lo} (respectively, \Refc{Mu}{d'}, \Refcd{Eq}, \Refcd{Th}, \Refc{Ca}{\Binu'}, \Refc[\LTCextra]{Muplus}{\Lo, d'}, \Refc[\LTCextra]{Co}{\Lo'}, \Refc[\LTCextra]{Coplus}{\Lo', \Binu'}, and \Refc[\LTCextra]{Bo}{\Bo'}). 
		\item \label{rmk:ltc-dim-monotonous::Lo} If $\Coseco'$ satisfies \Refcd{Lo}, then $\Coseco$ also satisfies \Refcd{Lo}. 
		\item \label{rmk:ltc-dim-monotonous::Eq-Th-Bo} If $\Coseco$ and and $\left( \Ne_{\Cose} \right)_{\Cose \in \Coseco}$ satisfy \Refcd{Eq} (respectively, \Refcd{Th}, \Refcd{Ca} and \Refcd[\LTCextra]{Bo}), then $\Coseco'$ and $\left( \Ne'_{\Cose} \right)_{\Cose \in \Coseco'}$ also satisfy \Refcd{Eq} (respectively, \Refcd{Th}, \Refcd{Ca} and \Refcd[\LTCextra]{Bo}). 
		\item \label{rmk:ltc-dim-monotonous::Mu} If $\Coseco$ satisfies \Refcd{Mu} (respectively, \Refcd[\LTCextra]{Muplus}) and there is an \emph{injection} $\iota \colon \Coseco' \to \Coseco$ such that for any $\Cose' \in \Coseco'$, we have $\Cose' \subseteq \iota\left(\Cose'\right)$, then $\Coseco'$ also satisfies \Refcd{Mu} (respectively, \Refcd[\LTCextra]{Muplus}). 
		\item \label{rmk:ltc-dim-monotonous::Coplus} If $\Coseco$ satisfies \Refcd[\LTCextra]{Co} (respectively, \Refcd[\LTCextra]{Coplus}) and $\Coseco'$ is a subcollection of $\Coseco$, then $\Coseco'$ also satisfies \Refcd[\LTCextra]{Co} (respectively, \Refcd[\LTCextra]{Coplus}). 
	\end{enumerate}
\end{Rmk}
	
The following simple proposition lists a number of equivalent formulations of 
the long thin covering dimension. They are all easy to prove, and 
none are of any independent interest. However, it will be easier in various 
technical proofs to use as some of those equivalent formulations 
instead of the original definition, and therefore we record all of them in one 
proposition, to serve as a convenient reference.  
\def \Topic {\LTCdef}
\begin{Prop}\label{prop:ltc-dim-new}
	Let $\alpha \colon G \curvearrowright X$ be an action of a discrete group on a locally compact Hausdorff space. Let $d$ be a natural number. Then the following are equivalent: 
	{
		\begin{enumerate}
			
			\item \label{item:prop:ltc-dim-new:orig} \refstepcounter{thmenumi} \label{prop:ltc-dim-new:orig} 
			We have $\dimltc(\alpha) \leq d$, that is, 
			for any finite $\Lo 
			\Subset G$ and for any compact $\Ko \Subset X$, 
			there exists a natural number $\Binu$ such that 
			for any finite open cover $\Thseco$ of $X$, there exist $\Coseco$ and $\left( \Ne_{\Cose} \right)_{\Cose \in \Coseco}$ as in Definition~\ref{\LTCdef} 
			satisfying  
			\begin{center}
				\Refcd[\LTCdef]{Lo}, \Refcd[\LTCdef]{Mu}, \Refcd[\LTCdef]{Eq}, \Refcd[\LTCdef]{Th}, and \Refcd[\LTCdef]{Ca}. 
			\end{center}

			\item \label{item:prop:ltc-dim-new:Muplus} \refstepcounter{thmenumi}  \label{prop:ltc-dim-new:Muplus} 
			For any finite $\Lo \Subset G$ and for any compact $\Ko \Subset X$, 
			there exists a natural number $\Binu$ such that 
			for any finite open cover $\Thseco$ of $X$, there exist $\Coseco$ and $\left( \Ne_{\Cose} \right)_{\Cose \in \Coseco}$ as in Definition~\ref{\LTCdef} 
			satisfying  
			\begin{center}
				\Refcd[\LTCdef]{Lo}, \Refcd[\LTCextra]{Muplus}, \Refcd[\LTCdef]{Eq}, \Refcd[\LTCdef]{Th}, and \Refcd[\LTCdef]{Ca}. 
			\end{center}

			\item \label{item:prop:ltc-dim-new:Lominus} \refstepcounter{thmenumi}  \label{prop:ltc-dim-new:Lominus} 
			For any finite $\Lo \Subset G$ and for any compact $\Ko \Subset X$, 
			there exists a natural number $\Binu$ such that 
			for any finite open cover $\Thseco$ of $X$, there exist $\Coseco$ and $\left( \Ne_{\Cose} \right)_{\Cose \in \Coseco}$ as in Definition~\ref{\LTCdef} 
			satisfying  
			\begin{center}
				\Refc[\LTCdef]{Lo}{\{\grpid\},\Ko}, 
				\Refcd[\LTCextra]{Muplus}, \Refcd[\LTCdef]{Eq}, \Refcd[\LTCdef]{Th}, and  \Refcd[\LTCdef]{Ca}. 
			\end{center}
			Note that \Refc[\LTCdef]{Lo}{\{\grpid\},\Ko} is equivalent to requiring $\Coseco$ covers $\Ko$. 
			
			\item \label{item:prop:ltc-dim-new:connected} \refstepcounter{thmenumi}  \label{prop:ltc-dim-new:connected} 
			For any finite $\Lo \Subset G$ and for any compact $\Ko \Subset X$, 
			there exists a natural number $\Binu$ such that 
			for any finite open cover $\Thseco$ of $X$, there exist $\Coseco$ and $\left( \Ne_{\Cose} \right)_{\Cose \in \Coseco}$ as in Definition~\ref{\LTCdef} 
			satisfying  
			\begin{center}
				\Refcd[\LTCdef]{Lo}, \Refcd[\LTCdef]{Mu}, \Refcd[\LTCdef]{Eq}, \Refcd[\LTCdef]{Th}, \Refcd[\LTCdef]{Ca}, and \Refcd[\LTCextra]{Co}. 
			\end{center}

			\item \label{item:prop:ltc-dim-new:connected-plus} \refstepcounter{thmenumi}  \label{prop:ltc-dim-new:connected-plus} 
			For any finite $\Lo \Subset G$ and for any compact $\Ko \Subset X$, 
			there exists a natural number $\Binu$ such that 
			for any finite open cover $\Thseco$ of $X$, there exist $\Coseco$ and $\left( \Ne_{\Cose} \right)_{\Cose \in \Coseco}$ as in Definition~\ref{\LTCdef} 
			satisfying  
			\begin{center}
				\Refcd[\LTCdef]{Lo}, \Refcd[\LTCdef]{Mu}, \Refcd[\LTCdef]{Eq}, \Refcd[\LTCdef]{Th}, and \Refcd[\LTCextra]{Coplus}. 
			\end{center}

			\item \label{item:prop:ltc-dim-new:bounded} \refstepcounter{thmenumi}  \label{prop:ltc-dim-new:bounded} 
			For any finite $\Lo \Subset G$ and for any compact $\Ko \Subset X$, 
			there exists a finite subset $\Bo$ in the subgroup $\langle \Lo \rangle$ generated by $\Lo$ such that 
			for any finite open cover $\Thseco$ of $X$, there exist $\Coseco$ and $\left( \Ne_{\Cose} \right)_{\Cose \in \Coseco}$ as in Definition~\ref{\LTCdef} 
			satisfying  
			\begin{center}
				\Refcd[\LTCdef]{Lo}, \Refcd[\LTCdef]{Mu}, \Refcd[\LTCdef]{Eq}, \Refcd[\LTCdef]{Th}, and \Refcd[\LTCextra]{Bo}.
			\end{center}

			\item \label{item:prop:ltc-dim-new:combo-pre} \refstepcounter{thmenumi}  \label{prop:ltc-dim-new:combo-pre} 
			For any finite $\Lo \Subset G$ and for any compact $\Ko \Subset X$, 
			there exists a natural number $\Binu$ such that 
			for any finite open cover $\Thseco$ of $X$, there exist $\Coseco$ and $\left( \Ne_{\Cose} \right)_{\Cose \in \Coseco}$ as in Definition~\ref{\LTCdef} 
			satisfying  
			\begin{center}
				\Refc[\LTCdef]{Lo}{\{\grpid\}, \Ko}, \Refcd[\LTCextra]{Muplus}, \Refcd[\LTCdef]{Eq}, \Refcd[\LTCdef]{Th}, \Refcd[\LTCextra]{Coplus} and $\Coseco$ is finite. 
			\end{center}

			\item \label{item:prop:ltc-dim-new:cover-Ko} \refstepcounter{thmenumi} \label{prop:ltc-dim-new:cover-Ko} 
			For any finite $\Lo \Subset G$ and for any compact $\Ko \Subset X$, 
			there exists a natural number $\Binu$ such that 
			for any compact subset $\Ko' \Subset \Ko$ and 
			any collection $\Thseco$ of open sets in $X$ covering $\Ko'$, 
			there exist $\Coseco$ and $\left( \Ne_{\Cose} \right)_{\Cose \in \Coseco}$ as in Definition~\ref{\LTCdef} 
			satisfying  
			\begin{center}
				\Refc[\LTCdef]{Lo}{\{\grpid\}, \Ko'}, \Refcd[\LTCextra]{Muplus}, \Refcd[\LTCdef]{Eq}, \Refcd[\LTCdef]{Th}, \Refcd[\LTCextra]{Coplus} and $\Coseco$ is finite.  
			\end{center}
			
		\end{enumerate}
	}
\end{Prop}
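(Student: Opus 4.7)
My plan is to establish the full equivalence via the implications
\[
\eqref{item:prop:ltc-dim-new:orig} \Leftrightarrow \eqref{item:prop:ltc-dim-new:Muplus} \Leftrightarrow \eqref{item:prop:ltc-dim-new:Lominus} \Rightarrow \eqref{item:prop:ltc-dim-new:connected} \Rightarrow \eqref{item:prop:ltc-dim-new:connected-plus} \Rightarrow \eqref{item:prop:ltc-dim-new:bounded} \Rightarrow \eqref{item:prop:ltc-dim-new:orig},
\]
and then close the loop to the two ``finite $\Coseco$'' variants \eqref{item:prop:ltc-dim-new:combo-pre} and \eqref{item:prop:ltc-dim-new:cover-Ko}. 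The tools are the monotonicity properties recorded in Remark~\ref{rmk:ltc-dim-monotonous} together with the shrink/enlarge operators $\alpha^{\cap}_{\Lo}, \alpha^{\cup}_{\Lo}$ from Notation~\ref{notation:group-action} and Remark~\ref{rmk:multiplicity-action-shrink-enlarge}, which mirror the orbit-coarse-structure argument from Proposition~\ref{prop:orbit-asdim}.

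First, \eqref{item:prop:ltc-dim-new:orig}$\Rightarrow$\eqref{item:prop:ltc-dim-new:Muplus}: assuming \eqref{item:prop:ltc-dim-new:orig}, apply the definition to the enlarged set $\Lo \cup \Lo^{-1} \cup \Lo^{2} \cup \{\grpid\}$ in place of $\Lo$ and to $\alpha^{\cup}_{\Lo}(\Ko)$ in place of $\Ko$; then replace every $\Cose \in \Coseco$ by $\alpha^{\cap}_{\Lo}(\Cose)$. This is still open, its multiplicity in the sense of Definition~\ref{def:multiplicity-action} drops to the ordinary multiplicity by Remark~\ref{rmk:multiplicity-action-shrink-enlarge}, the $\Lo$-equivariance of $\Ne_{\Cose}$ restricts to the shrunken set, and the \Refcd[\LTCdef]{Th} and \Refcd[\LTCdef]{Ca} conditions pass automatically. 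The converse is immediate from Remark~\ref{rmk:ltc-dim-monotonous}\eqref{rmk:ltc-dim-monotonous::parameters}. The equivalence with \eqref{item:prop:ltc-dim-new:Lominus} is by the same enlargement trick: starting with \eqref{item:prop:ltc-dim-new:Lominus}, apply it to $\Lo^{2}$ (or $\alpha^{\cup}_{\Lo}(\Ko)$) and shrink to recover the original \Refcd[\LTCdef]{Lo}.

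Next, \eqref{item:prop:ltc-dim-new:Muplus}$\Rightarrow$\eqref{item:prop:ltc-dim-new:connected}: given $\Coseco$ and $(\Ne_{\Cose})$, refine $\Coseco$ by replacing each $\Cose$ with its $(\Ne_{\Cose},\Lo)$-connected components (as in Definition~\ref{\LTCextra}), and restrict each $\Ne_{\Cose}$ accordingly. Multiplicity is preserved by Remark~\ref{rmk:ltc-dim-monotonous}\eqref{rmk:ltc-dim-monotonous::Mu} using the obvious injection from the refinement back to $\Coseco$; equivariance, thinness, and cardinality pass by Remark~\ref{rmk:ltc-dim-monotonous}\eqref{rmk:ltc-dim-monotonous::Eq-Th-Bo}, and $\Lo$-connectedness \Refcd[\LTCextra]{Co} holds by construction. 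The passage \eqref{item:prop:ltc-dim-new:connected}$\Rightarrow$\eqref{item:prop:ltc-dim-new:connected-plus} is then immediate from Remark~\ref{rmk:Lo-bounded}: an $\Lo$-connected finite set of cardinality at most $\Binu$ is $(\Lo,\Binu)$-boundedly connected. Likewise, \eqref{item:prop:ltc-dim-new:connected-plus}$\Rightarrow$\eqref{item:prop:ltc-dim-new:bounded} follows by taking $\Bo = (\Lo \cup \Lo^{-1})^{\Binu} \subseteq \langle \Lo \rangle$ and invoking Remark~\ref{rmk:Lo-bounded} once more. Finally, \eqref{item:prop:ltc-dim-new:bounded}$\Rightarrow$\eqref{item:prop:ltc-dim-new:orig} is the easy direction: a $\Bo$-bounded subset of an orbit has cardinality at most $|\Bo|$, so we may set $\Binu = |\Bo|$.

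It remains to derive the finite-collection variants \eqref{item:prop:ltc-dim-new:combo-pre} and \eqref{item:prop:ltc-dim-new:cover-Ko} from (any of) the above. Given the compact $\Ko$, apply \eqref{item:prop:ltc-dim-new:connected-plus} to obtain $\Coseco$ and extract a finite subcollection $\Coseco'$ of those $\Cose \in \Coseco$ which actually meet $\Ko$ (using compactness and that each $\Cose$ is open). The multiplicity, equivariance, thinness and bounded-connectedness conditions are inherited via Remark~\ref{rmk:ltc-dim-monotonous}\eqref{rmk:ltc-dim-monotonous::Eq-Th-Bo} and~\eqref{rmk:ltc-dim-monotonous::Coplus}; the \Refc[\LTCdef]{Lo}{\{\grpid\},\Ko} condition holds since we kept all sets meeting $\Ko$. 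This yields \eqref{item:prop:ltc-dim-new:combo-pre}. For \eqref{item:prop:ltc-dim-new:cover-Ko}, one simply notes that any compact $\Ko' \subseteq \Ko$ and any open cover $\Thseco$ of $\Ko'$ can be extended by adding the complement of a slightly smaller compact neighborhood to obtain a situation to which \eqref{item:prop:ltc-dim-new:combo-pre} applies, and then restricts back; alternatively, one reruns the finite-subcollection extraction with $\Ko'$ in place of $\Ko$. The reverse implications back to \eqref{item:prop:ltc-dim-new:orig} are trivial from Remark~\ref{rmk:ltc-dim-monotonous}.

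The main technical obstacle is verifying that all the pieces of data---the near orbit selection functions, the equivariance and thinness conditions, and the multiplicity bounds---are simultaneously preserved under each of the shrinking, refining, and restricting operations. This is where Remark~\ref{rmk:ltc-dim-monotonous} does the heavy lifting, so most of the proof reduces to matching parameters and citing the appropriate monotonicity items; no new topological-dynamical idea beyond those already developed in Sections~\ref{sec:OCS} and~\ref{sec:simplicial} is needed.
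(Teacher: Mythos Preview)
Your overall architecture matches the paper's, but there is a genuine missing idea in the step \eqref{item:prop:ltc-dim-new:Lominus}$\Rightarrow$\eqref{item:prop:ltc-dim-new:Muplus} (or \eqref{item:prop:ltc-dim-new:Lominus}$\Rightarrow$\eqref{item:prop:ltc-dim-new:orig}). You write ``apply it to $\Lo^2$ (or $\alpha^{\cup}_{\Lo}(\Ko)$) and shrink to recover the original \Refcd[\LTCdef]{Lo}.'' Neither option works. Applying \eqref{item:prop:ltc-dim-new:Lominus} with $\alpha^{\cup}_{\Lo}(\Ko)$ only gives you a cover of $\alpha^{\cup}_{\Lo}(\Ko)$, not a \emph{single} $\Cose$ containing each $\alpha_{\Lo}(x)$. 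And shrinking $\Cose$ to $\alpha^{\cap}_{\Lo}(\Cose)$ makes \Refcd[\LTCdef]{Lo} harder, not easier. The correct move is to \emph{enlarge} each $\Cose$ to $\alpha^{\cup}_{\Lo}(\Cose)$ and extend $\Ne_{\Cose}$ equivariantly; this recovers \Refcd[\LTCdef]{Lo}, \Refcd[\LTCdef]{Mu}, \Refcd[\LTCdef]{Eq}, \Refcd[\LTCdef]{Ca} without trouble, but \Refcd[\LTCdef]{Th} fails in general: for $y$ in the image of the extended $\Ne'_{\Cose}$, the preimage $\left(\Ne'_{\Cose}\right)^{-1}(y)$ is a union over $g \in \Lo$ of translates of different fibers of $\Ne_{\Cose}$, and there is no reason these all land in a single $\Thse \in \Thseco$. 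The paper fixes this by first refining $\Thseco$ via Lemma~\ref{lem:cover-clumping} to a cover $\Auseco$ with the clumping property, then applying \eqref{item:prop:ltc-dim-new:Lominus} with $\alpha^{\wedge}_{\Lo}(\Auseco)$ in place of $\Thseco$; since all the translated fibers contain $y$, the clumping property forces them into a common $\Thse$. This use of Lemma~\ref{lem:cover-clumping} is the one nontrivial ingredient in the proof, and your plan omits it.

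Two smaller issues. First, in your \eqref{item:prop:ltc-dim-new:Muplus}$\Rightarrow$\eqref{item:prop:ltc-dim-new:connected} step, the ``obvious injection from the refinement back to $\Coseco$'' does not exist: a single $\Cose$ may split into many $(\Ne_{\Cose},\Lo)$-connected components, so Remark~\ref{rmk:ltc-dim-monotonous}\eqref{rmk:ltc-dim-monotonous::Mu} does not apply. Multiplicity is still preserved, but the reason is that the components of a fixed $\Cose$ are pairwise disjoint, so any point lies in at most one of them; for \Refcd[\LTCextra]{Muplus} one needs the further observation (which the paper makes in its proof of \eqref{item:prop:ltc-dim-new:Lominus}$\Rightarrow$\eqref{item:prop:ltc-dim-new:combo-pre}) that an $(\alpha,\Lo)$-close set cannot meet two distinct $(\Ne_{\Cose},\Lo)$-components of the same $\Cose$. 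Second, your route to \eqref{item:prop:ltc-dim-new:combo-pre} via \eqref{item:prop:ltc-dim-new:connected-plus} does not directly yield \Refcd[\LTCextra]{Muplus}, since \eqref{item:prop:ltc-dim-new:connected-plus} only gives \Refcd[\LTCdef]{Mu}; shrinking by $\alpha^{\cap}_{\Lo}$ would recover \Refcd[\LTCextra]{Muplus} but can destroy \Refcd[\LTCextra]{Coplus} (a subset of an $(\Lo,\Binu)$-bounded set need not be $(\Lo,\Binu)$-bounded). The paper instead derives \eqref{item:prop:ltc-dim-new:combo-pre} from \eqref{item:prop:ltc-dim-new:Lominus} by doing the connected-component refinement there, where \Refcd[\LTCextra]{Muplus} is already in hand.
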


\begin{proof}
	We show 
	\eqref{item:prop:ltc-dim-new:orig} $\Rightarrow$ 
	\eqref{item:prop:ltc-dim-new:Muplus} $\Rightarrow$ 
	\eqref{item:prop:ltc-dim-new:Lominus}  $\Rightarrow$ 
	\eqref{item:prop:ltc-dim-new:orig} $\Rightarrow$ 
	\eqref{item:prop:ltc-dim-new:connected} $\Rightarrow$ 
	\eqref{item:prop:ltc-dim-new:connected-plus}  $\Rightarrow$ 
	\eqref{item:prop:ltc-dim-new:bounded}  $\Rightarrow$ 
	\eqref{item:prop:ltc-dim-new:orig}   
	and 
	\eqref{item:prop:ltc-dim-new:Lominus}  $\Rightarrow$ 
	\eqref{item:prop:ltc-dim-new:combo-pre}  $\Rightarrow$ 
	\eqref{item:prop:ltc-dim-new:cover-Ko} $\Rightarrow$ 
	\eqref{item:prop:ltc-dim-new:Lominus}.  
	Before starting, we observe that upon replacing $\Lo$ by $\Lo \cup \Lo^{-1} \cup \{\grpid\}$, we may assume without loss of generality that $\grpid \in \Lo = \Lo^{-1}$ in each of the equivalent conditions above. Throughout the proof, we will make this assumption for our convenience. 
	
	To show 
	\eqref{item:prop:ltc-dim-new:orig} $\Rightarrow$ 
	\eqref{item:prop:ltc-dim-new:Muplus}, 
	we fix arbitrary $\Lo$, $\Ko$ and $\Thseco$ as in 
	\eqref{item:prop:ltc-dim-new:Muplus} and apply 
	\eqref{item:prop:ltc-dim-new:orig}, with $\Lo$ replaced by $\Lo^2$, to 
	obtain $\Binu$, which depends only on $\Lo$ and $\Ko$, as well as $\Coseco$ 
	and $\left( \Ne_{\Cose} \right)_{\Cose \in \Coseco}$, that satisfy 
	\begin{center}
		\Refc[\LTCdef]{Lo}{\Lo^2, \Ko}, \Refcd[\LTCdef]{Mu}, \Refc[\LTCdef]{Eq}{\Lo^2}, \Refcd[\LTCdef]{Th}, and \Refcd[\LTCdef]{Ca}. 
	\end{center}
	Then, using Notation~\ref{notation:group-action}, Remark~\ref{rmk:multiplicity-action-shrink-enlarge}, Remark~\ref{rmk:ltc-dim-monotonous}, and the fact that
	\[
	\alpha^{\cap}_{\Lo} \left( \Cose \right) = \left\{ x \in X \colon \alpha_{\Lo} (x) \subseteq \Cose \right\} \subseteq \Cose  \quad \text{for any } \Cose\in \Coseco \; ,
	\]
	we observe that the shrunken collection  $\Coseco' := \left\{ 
	\alpha^{\cap}_{\Lo}\left(\Cose\right) \colon \Cose \in \Coseco 
	\right\}$  	
	and the restricted maps $\Ne_{\Cose} |_{\alpha^{\cap}_{\Lo} \left( \Cose \right)} \colon \alpha^{\cap}_{\Lo} \left( \Cose \right) \to X$, for $\Cose \in \Coseco$, satisfy \Refcd{Lo}, \Refcd[\LTCextra]{Muplus}, \Refc[\LTCdef]{Eq}{\Lo^2} (and thus \Refcd[\LTCdef]{Eq}), \Refcd{Th}, and \Refcd{Ca}. 
	
	The implication \eqref{item:prop:ltc-dim-new:Muplus} $\Rightarrow$ 
	\eqref{item:prop:ltc-dim-new:Lominus}
	is clear in view of Remark~\ref{rmk:ltc-dim-monotonous}. 
	
	To show 
	\eqref{item:prop:ltc-dim-new:Lominus}  $\Rightarrow$ 
	\eqref{item:prop:ltc-dim-new:orig}, 
	we fix arbitrary $\Lo$, $\Ko$ and $\Thseco$ as in 
	\eqref{item:prop:ltc-dim-new:orig} and obtain a finite collection $\Auseco$ satisfying the conclusion of Lemma~\ref{lem:cover-clumping} relative to $\Ko$ and $\Thseco$. 
	We then apply	
	\eqref{item:prop:ltc-dim-new:Lominus}, with $\Lo$ replaced by $\Lo^3$ and 
	$\Thseco$ replaced by $\alpha^{\wedge}_{\Lo} \left( \Auseco \right)$ (see 
	Notation~\ref{notation:group-action}), to obtain $\Binu$, which depends 
	only on $\Lo$ and $\Ko$, as well as $\Coseco$ and $\left( \Ne_{\Cose} 
	\right)_{\Cose \in \Coseco}$, that satisfy 
	\begin{center}
		\Refc{Lo}{\{e\},\Ko}, \Refc[\LTCextra]{Muplus}{d, \Lo^{3}}, \Refc{Eq}{\Lo^{3}}, \Refc{Th}{\alpha^{\wedge}_{\Lo}\left(\Auseco\right)}, and \Refcd{Ca}. 
	\end{center}
	Then the enlarged collection $\Coseco' := \left\{ 
	\alpha^{\cup}_{\Lo}\left(\Cose\right) \colon \Cose \in \Coseco \right\}$ 
	supports functions 
	\[
		\Ne'_{\Cose} \colon \alpha^{\cup}_{\Lo} \left( \Cose \right) \to X
	\]
	given by
	\[
		\Ne'_{\Cose} ( \alpha_g (x) ) =  \alpha_g \left( \Ne_{\Cose} (x) \right) 
	\]
	for all  $g \in \Lo$ and  for all $ x \in \Cose$. These functions are 
	well-defined because of \Refc{Eq}{\Lo^{3}}, and it follows from the definitions that $\Coseco'$ and $\left( \Ne'_{\Cose} \right)_{\Cose \in \Coseco}$ satisfy 
	\Refcd{Lo}, \Refcd{Mu}, \Refcd{Eq}, and \Refc{Ca}{\Binu 
	\left|\Lo\right|}. 
	Moreover, observe that since $\Ne'_{\Cose} (\Cose') = \alpha^{\cup}_{\Lo} \left( \Ne_{\Cose} (\Cose) \right)$, i.e., for any $y \in \Ne'_{\Cose} (\Cose')$, we may write $y = \alpha_g (y')$ for some $g \in \Lo$ and $y' \in \Ne_{\Cose} (\Cose)$, and thus 
	\begin{align*}
		\left( \Ne'_{\Cose} \right)^{-1} (y) \cup \{y\} = \bigcup_{g \in \Lo \colon  \alpha_{g^{-1 }} (y) \in \Ne_{\Cose} (\Cose) } \alpha_{g} \left( \left( \Ne_{\Cose} \right)^{-1} \left( \alpha_{g^{-1 }} (y) \right) \cup \left\{ \alpha_{g^{-1 }} (y) \right\} \right) \; ,
	\end{align*}
	whence since $\Coseco$ and $\left( \Ne_{\Cose} 
	\right)_{\Cose \in \Coseco}$ satisfy  \Refc{Th}{\alpha^{\wedge}_{\Lo}\left(\Auseco\right)}, every term in the above union is contained in some $\Ause \in \Auseco$, and thus the union is contained in some $\Thse \in \Thseco$ by our choice of $\Auseco$ via Lemma~\ref{lem:cover-clumping}. This verifies \Refcd{Th} for $\Coseco'$ and $\left( \Ne'_{\Cose} \right)_{\Cose \in \Coseco}$. 
	Hence \eqref{item:prop:ltc-dim-new:orig} is verified 
	with $\Binu$ replaced by $\Binu \left|\Lo\right|$.
	
	To show 
	\eqref{item:prop:ltc-dim-new:orig} $\Rightarrow$ 
	\eqref{item:prop:ltc-dim-new:connected}, 
	we fix arbitrary $\Lo$, $\Ko$ and $\Thseco$ as in 
	\eqref{item:prop:ltc-dim-new:connected} and apply 
	\eqref{item:prop:ltc-dim-new:orig} to obtain $\Binu$, which depends only on 
	$\Lo$ and $\Ko$, as well as $\Coseco$ and $\left( \Ne_{\Cose} 
	\right)_{\Cose \in \Coseco}$, that satisfy 
		\Refcd[\LTCdef]{Lo}, \Refcd[\LTCdef]{Mu}, \Refcd[\LTCdef]{Eq}, \Refcd[\LTCdef]{Th}, and \Refcd[\LTCdef]{Ca}. 
	Then we replace each $\Cose \in \Coseco$ by its $\left( \Ne_{\Cose}, \Lo \right)$-connected components in the sense of Definition~\ref{\LTCextra}, 
	and replace $\Ne_{\Cose}$ by the corresponding restrictions to these subsets. 
	In view of Remark~\ref{rmk:ltc-dim-monotonous}, it is clear that the new $\Coseco$ and the new $\left( \Ne_{\Cose} \right)_{\Cose \in \Coseco}$ still satisfy \Refcd[\LTCdef]{Mu}, \Refcd[\LTCdef]{Eq}, \Refcd[\LTCdef]{Th}, and \Refcd[\LTCdef]{Ca}, as well as the additional \Refcd[\LTCextra]{Co}. To see the new $\Coseco$ also satisfies \Refcd[\LTCdef]{Lo}, it suffices to observe that for any $x \in \Ko$, if $\alpha_{\Lo}(x) \subseteq \Cose$ for some $\Cose$ in the original $\Coseco$, then by \Refcd[\LTCdef]{Eq}, $\alpha_{\Lo}(x)$ is contained in a single $\left( \Ne_{\Cose}, \Lo \right)$-connected component 
	of $\Cose$. 
	
	To show 
	\eqref{item:prop:ltc-dim-new:connected} $\Rightarrow$ 
	\eqref{item:prop:ltc-dim-new:connected-plus}, 
	we simply observe that \Refcd[\LTCdef]{Ca} and~\Refcd[\LTCextra]{Co} together imply \Refcd[\LTCextra]{Coplus}. 
	
	To show 
	\eqref{item:prop:ltc-dim-new:connected-plus} $\Rightarrow$ 
	\eqref{item:prop:ltc-dim-new:bounded}, 
	we simply observe that \Refcd[\LTCextra]{Coplus} implies \Refc[\LTCextra]{Bo}{\Lo^{\Binu}}. 
	
	To show 
	\eqref{item:prop:ltc-dim-new:bounded} $\Rightarrow$ 
	\eqref{item:prop:ltc-dim-new:orig}, 
	we simply observe that \Refc[\LTCextra]{Bo}{\Bo} implies \Refc[\LTCdef]{Ca}{\left|\Bo\right|}. 
	
	The proof of 
	\eqref{item:prop:ltc-dim-new:Lominus}  $\Rightarrow$ 
	\eqref{item:prop:ltc-dim-new:combo-pre} 
	is analogous to the combination of the proofs of \eqref{item:prop:ltc-dim-new:orig} $\Rightarrow$ 
	\eqref{item:prop:ltc-dim-new:connected} $\Rightarrow$ 
	\eqref{item:prop:ltc-dim-new:connected-plus}, 
	that is, replacing each $\Cose \in \Coseco$ by its $\left( \Ne_{\Cose}, \Lo \right)$-connected components in the sense of Definition~\ref{\LTCextra} 
	and replacing $\Ne_{\Cose}$ by the corresponding restrictions to these subsets. 
	The only deviations from \eqref{item:prop:ltc-dim-new:orig} $\Rightarrow$ 
	\eqref{item:prop:ltc-dim-new:connected} $\Rightarrow$ 
	\eqref{item:prop:ltc-dim-new:connected-plus} are the following: 
	\begin{enumerate*}
		\item the preservation of \Refc[\LTCdef]{Lo}{\{\grpid\}, \Ko}\ is straightforward to see; 
		\item the preservation of \Refcd[\LTCextra]{Muplus} follows from the observation that any $(\alpha,\Lo)$-close subset of $X$ cannot have nonempty intersections with two or more $\left( \Ne_{\Cose}, \Lo \right)$-connected components of a single $\Cose$ in the original $\Coseco$; 
		\item the additional requirement that $\Coseco$ is finite is easily arranged by picking a finite subcollection using the compactness of $\Ko$ and applying Remark~\ref{rmk:ltc-dim-monotonous} to verify that the subcollection also satisfy the desired conditions. 
	\end{enumerate*} 
	
	To show 
	\eqref{item:prop:ltc-dim-new:combo-pre} $\Rightarrow$ 
	\eqref{item:prop:ltc-dim-new:cover-Ko}, 
	we fix arbitrary $\Lo$ and $\Ko$ as in \eqref{item:prop:ltc-dim-new:cover-Ko} and apply \eqref{item:prop:ltc-dim-new:combo-pre} to obtain $\Binu$ that satisfy the requirement in \eqref{item:prop:ltc-dim-new:combo-pre}. 
	Next we fix arbitrary $\Ko'$ and $\Thseco$ as in \eqref{item:prop:ltc-dim-new:cover-Ko}. 	
	We apply the compactness of $\Ko'$ to pick a finite subcollection $\Thseco'$ of $\Thseco$ that covers $\Ko'$ and then define a finite open cover $\Thseco'' = \Thseco' \cup \{ X \setminus \Ko' \}$ of $X$. 
	Thus by our assumption on $\Binu$, there exist $\Coseco$ and $\left( \Ne_{\Cose} \right)_{\Cose \in \Coseco}$  that satisfy 
	\begin{center}
		\Refc[\LTCdef]{Lo}{\{\grpid\}, \Ko}, \Refcd[\LTCextra]{Muplus}, \Refcd[\LTCdef]{Eq}, \Refc[\LTCdef]{Th}{\Thseco''}, \Refcd[\LTCextra]{Coplus} and $\Coseco$ is finite. 
	\end{center}
	We then modify $\Coseco$ and $\left( \Ne_{\Cose} \right)_{\Cose \in \Coseco}$ to form $\Coseco'$ and $\left( \Ne'_{\Cose'} \right)_{\Cose' \in \Coseco'}$ as follows: we replace each $\Cose \in \Coseco$ by 
	\[
	\Cose' = \left\{ x \in \Cose \colon \text{there exists } \Thse \in \Thseco' \text{ such that } \Ne_{\Cose}^{-1} \left( \Ne_{\Cose} (x) \right) \cup \left\{ \Ne_{\Cose} (x) \right\} \subseteq \Thse \right\}  \;,
	\]
	let $\Coseco' = \{ \Cose' \colon \Cose \in \Coseco \}$, and let $\Ne'_{\Cose'}$ be the restriction of $\Ne_{\Cose}$ to $\Cose'$ for any $\Cose \in \Coseco$. 
	It is clear that $\Coseco'$ satisfies \Refcd{Th}. Since $\Coseco$ satisfies \Refc{Th}{\Thseco''}, 
	it follows that for each $\Cose \in \Coseco$, we have $\Cose \setminus \Cose' \subseteq X \setminus \Ko'$. Since $\Coseco$ satisfies \Refc[\LTCdef]{Lo}{\{\grpid\},\Ko}, it then follows that $\Coseco'$ satisfies \Refc[\LTCdef]{Lo}{\{\grpid\},\Ko'}. 
	The fact that  $\Coseco'$ and $\left( \Ne'_{\Cose'} \right)_{\Cose' \in \Coseco'}$ satisfy \Refcd[\LTCextra]{Muplus}, \Refcd[\LTCdef]{Eq}, and \Refcd[\LTCextra]{Coplus} follows from Remark~\ref{rmk:ltc-dim-monotonous}.

	To show 
	\eqref{item:prop:ltc-dim-new:cover-Ko} $\Rightarrow$ 
	\eqref{item:prop:ltc-dim-new:Lominus}, 
	we simply fix $\Ko'$ to be equal to $\Ko$ and observe that \Refc[\LTCextra]{Bo}{\Bo} implies \Refc[\LTCdef]{Ca}{\left|\Bo\right|}. 
\end{proof}

The following somewhat long and technical proposition is the key technical tool from this section which will be used in Section \ref{sec:dimnuc} in order to prove the main theorem. This characterization involves constructing almost invariant partitions of unity, which are needed in order to construct decomposable approximations for the crossed product $C^*$-algebras.

{

\def \Topic {\LTCprop}
\begin{Prop}\label{prop:ltc-dim}
	Let $\alpha \colon G \curvearrowright X$ be an action of a discrete group on a locally compact Hausdorff space. Let $d$ be a natural number. Then we have $\dimltc(\alpha) \leq d$ if and only if the following holds:

	For any finite subset $\Lo \Subset G$, for any compact subset $\Ko \Subset X$, and for any $\Er>0$, there is a finite subset $\Bo$ in the subgroup $\langle \Lo \rangle$ generated by $\Lo$, such that for any open cover $\Thseco$ of $\alpha^\cup_{\Lo} (\Ko)$, there exist
		\begin{itemize}
			\item collections $\Coseconum{0} , \ldots , \Coseconum{d}$ of disjoint open subsets of $X$, together with $\Coseco = \Coseconum{0} \cup \ldots \cup \Coseconum{d}$, 
			\item locally constant functions 
			$\Ne^{(l)} \colon \bigcup\Coseconum{l} \to X$ for $l \in \intervalofintegers{0}{d}$, 
			and
			\item  continuous functions $\Ponum{\Cose}{l} \colon X \to [0,1]$ for $l \in \intervalofintegers{0}{d}$,
		\end{itemize}
		satisfying the following conditions:
		{
		\begin{enumerate}
			\renewcommand{\labelenumi}{\textup{(\theenumi)}}
			\renewcommand{\theenumi}{Lo}\item \label{item:prop:ltc-dim:Lo}
				For any $x \in \Ko$, there exists $\Cose \in \Coseco$ such that $\alpha_{\Lo} (x) \subseteq \Cose$.

			\renewcommand{\labelenumi}{\textup{(\theenumi)}} \renewcommand{\theenumi}{Eq}\item \label{item:prop:ltc-dim:Eq}
				For any $l \in \intervalofintegers{0}{d}$,  $\Ne^{(l)}$ is {$\Lo$-equivariant} in the following sense: for any $x \in \bigcup\Coseconum{l}$ and for any $g \in \Lo$, if $\alpha_{g} (x) \in \bigcup\Coseconum{l}$ then $\Ne^{(l)}\left( \alpha_{g} (x) \right) = \alpha_g \left( \Ne^{(l)}(x) \right)$.  
				
			\renewcommand{\labelenumi}{\textup{(\theenumi)}} \renewcommand{\theenumi}{Th}\item \label{item:prop:ltc-dim:Th}
				For any 
				$y \in \bigcup_{l = 0}^d  \Ne^{(l)} \left(\bigcup \Coseconum{l} \right)$, there exists $\Thse \in \Thseco$ such that $\{y\} \cup \left( \bigcup_{l = 0}^d  \left( \Ne^{(l)} \right)^{-1}(y) \right) \subseteq \Thse$.

			\renewcommand{\labelenumi}{\textup{(\theenumi)}} \renewcommand{\theenumi}{Bo}\item \label{item:prop:ltc-dim:Bo}
				For any $l \in \intervalofintegers{0}{d}$, for any $\Cose \in \Coseconum{l}$ and for any $x, y \in \Cose$, we have $\Ne^{(l)}(x) \in \alpha_{\Bo} \left( \Ne^{(l)}(y) \right)$. 
				
			\renewcommand{\labelenumi}{\textup{(\theenumi)}} \renewcommand{\theenumi}{Pr}\item \label{item:prop:ltc-dim:Pr}
				Every $\Cose \in \Coseco$ is precompact. 

			\renewcommand{\labelenumi}{\textup{(\theenumi)}} \renewcommand{\theenumi}{Fi}\item \label{item:prop:ltc-dim:Fi}
				The collection $\Coseco$ is finite. 

			\renewcommand{\labelenumi}{\textup{(\theenumi)}} \renewcommand{\theenumi}{Su}\item \label{item:prop:ltc-dim:Su}
				For any $l \in \intervalofintegers{0}{d}$ and for any $x$ in  
				the support of $\Ponum{\Cose}{l}$, 
				we have $\alpha_{\Lo} (x) \subseteq \Cose$ for some $\Cose \in \Coseco^{(l)}$. 
				
			\renewcommand{\labelenumi}{\textup{(\theenumi)}} \renewcommand{\theenumi}{In}\item \label{item:prop:ltc-dim:In}
				For any $l \in \intervalofintegers{0}{d}$, 
				the function $\Ponum{\Cose}{l}$ is \emph{$(\Lo, \Er)$-invariant} in the sense of Definition~\ref{def:Lipschitz-alpha}. 
				
			\renewcommand{\labelenumi}{\textup{(\theenumi)}} \renewcommand{\theenumi}{Un}\item \label{item:prop:ltc-dim:Un}
				For any $x \in \Ko$, we have $\sum_{l = 0}^{d}  \Ponum{\Cose}{l} (x) = 1$ while for any other $x \in X$, we have $\sum_{l = 0}^{d} \Ponum{\Cose}{l} (x) \leq 1$. 
		\end{enumerate}
		}
\end{Prop}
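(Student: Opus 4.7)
The partition-of-unity data is not needed for this direction; only the structural data suffices. Given $\Coseconum{0},\ldots,\Coseconum{d}$ and $\Ne^{(0)},\ldots,\Ne^{(d)}$ as in the hypothesis, set $\Coseco = \bigcup_{l=0}^d \Coseconum{l}$ and, for each $\Cose \in \Coseconum{l}$, define $\Ne_\Cose = \Ne^{(l)}|_{\Cose}$. Disjointness within each $\Coseconum{l}$ gives $\mult(\Coseco) \leq d+1$, while \Refc[\LTCdef]{Lo}{\Lo,\Ko}, \Refcd[\LTCdef]{Eq}, \Refcd[\LTCdef]{Th}, and \Refc[\LTCextra]{Bo}{\Bo} follow directly from conditions \eqref{item:prop:ltc-dim:Lo}, \eqref{item:prop:ltc-dim:Eq}, \eqref{item:prop:ltc-dim:Th}, \eqref{item:prop:ltc-dim:Bo}. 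Thus the equivalent formulation in Proposition~\ref{prop:ltc-dim-new}\eqref{item:prop:ltc-dim-new:bounded} is verified.

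\textbf{``Only if'' direction: setup.} Fix $\Lo \Subset G$, $\Ko \Subset X$, and $\eps > 0$. Assume without loss of generality that $\grpid \in \Lo = \Lo^{-1}$. Set $M = \lceil 2(d+2)/\eps \rceil$. Apply Proposition~\ref{prop:ltc-dim-new}\eqref{item:prop:ltc-dim-new:cover-Ko}, with $\Lo^{M}$ in place of $\Lo$ and with an enlarged compact set containing $\alpha^{\cup}_{\Lo^{M}}(\Ko)$, to extract a finite subset $\Bo \subseteq \langle \Lo \rangle$ bounding the orbit-sizes that arise. Given the cover $\Thseco$ of $\alpha^{\cup}_\Lo(\Ko)$, extend it (if necessary) to an open cover of the enlarged set, then invoke the conclusion to produce a finite collection $\Coseco$ of precompact open subsets of $X$ and locally constant maps $\Ne_\Cose \colon \Cose \to X$ satisfying \Refc[\LTCdef]{Lo}{\{\grpid\},\alpha^{\cup}_{\Lo}(\Ko)}, \Refc[\LTCextra]{Muplus}{\Lo^{M},d}, \Refc[\LTCdef]{Eq}{\Lo^{M}}, \Refc[\LTCdef]{Th}{\Thseco}, and \Refc[\LTCextra]{Coplus}{\Lo^M,\Binu}. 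Apply Lemma~\ref{lem:shrunken-cover} to obtain precompact shrunken sets $\Lase_{\Cose}$ with $\overline{\Lase_{\Cose}} \subseteq \Cose$ and $\bigcup \Lase_{\Cose} \supseteq \alpha^{\cup}_{\Lo}(\Ko)$, and choose Urysohn-type $f_\Cose \in C_0(X,[0,1])$ with $\supp(f_\Cose) \subseteq \Cose$ and $f_\Cose|_{\Lase_\Cose} = 1$.

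\textbf{``Only if'' direction: partition of unity.} Apply the staircase construction of Lemma~\ref{lem:Lipschitz-alpha-staircase} with parameter $M$ to each $f_\Cose$ to obtain $(\Lo,1/M)$-invariant functions $g_\Cose = \Staircase{\Lo}{M}(f_\Cose) \colon X \to [0,1]$, whose supports are controlled by $\alpha^{\cup}_{\Lo^M}(\Cose)$. Assemble them into a continuous map $g \colon X \to \Rplusfuncs{\Coseco}$, $g(x)(\Cose) = g_\Cose(x)$. Since the enlarged cover still has multiplicity at most $d+1$ (ensured by \Refc[\LTCextra]{Muplus}{\Lo^M,d} combined with Remark~\ref{rmk:multiplicity-action-shrink-enlarge}), we have $|\supp(g(x))| \leq d+1$ for every $x$. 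Now apply the simplicial machinery of Section~\ref{sec:simplicial}: for each $l \in \intervalofintegers{0}{d}$ and each $F \in \powerset^{(l+1)}(\Coseco)$, set
\[
\Cose_F = g^{-1}\bigl(\Rplusfuncs[F,0]{\Coseco}\bigr), \qquad \Coseconum{l} = \bigl\{\Cose_F : |F|=l+1,\; \Cose_F \neq \varnothing\bigr\}.
\]
By Lemma~\ref{lem:simplicial-barycentric}\eqref{lem:simplicial-barycentric::open} these are open, by \eqref{lem:simplicial-barycentric::partition} each $\Coseconum{l}$ is disjoint, and by \eqref{lem:simplicial-barycentric::cover} together with the fact that $\sum_\Cose g_\Cose \geq 1$ on $\Ko$, the union covers $\Ko$ and fulfills \Refc{Lo}{\Lo,\Ko}. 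For $\Cose_F \in \Coseconum{l}$, pick any representative $\Cose^* \in F$ and set $\Ne^{(l)}|_{\Cose_F} = \Ne_{\Cose^*}|_{\Cose_F}$. Finally, define
\[
\Po^{(l)}(x) = \frac{(l+1)\Dinum{l}(g(x))}{\max\bigl\{1,\sum_{\Cose \in \Coseco} g_\Cose(x)\bigr\}},
\]
using Lemma~\ref{lem:simplicial-diff-op}\eqref{lem:simplicial-diff-op::sum} to verify $\sum_l \Po^{(l)}(x) = 1$ on $\Ko$ and $\leq 1$ elsewhere. The $(\Lo,\eps)$-invariance of $\Po^{(l)}$ follows by combining the $(\Lo,1/M)$-invariance of each $g_\Cose$ with the $2$-Lipschitz property of $\Dinum{l}$ from Lemma~\ref{lem:simplicial-diff-op}\eqref{lem:simplicial-diff-op::Lipschitz} and the arithmetic rules of Lemma~\ref{lem:Lipschitz-alpha-arithmetics}; the choice of $M$ makes the resulting constant at most $\eps$.

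\textbf{Main obstacle.} The main delicacy lies in verifying \Refcd[prop:ltc-dim]{Eq} and \Refcd[prop:ltc-dim]{Th} for the newly defined $\Ne^{(l)}$. For equivariance, one must check that for $x \in \Cose_F$, $g \in \Lo$, and $\alpha_g(x) \in \Cose_{F'}$, the identity $\Ne^{(l)}(\alpha_g(x)) = \alpha_g(\Ne^{(l)}(x))$ holds regardless of the choices of representatives in $F$ and $F'$; this is where the $\Lo^M$-equivariance of the original $\Ne_\Cose$ (which is much stronger than just $\Lo$-equivariance) and Lemma~\ref{lem:simplicial-barycentric}\eqref{lem:simplicial-barycentric::clumping} (which forces $F$ and $F'$ to share a common element whenever their corresponding $\Cose_F, \Cose_{F'}$ admit such a crossing) combine to ensure consistency. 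The thinness condition is similarly inherited from the original $\Coseco$ via the same clumping lemma, using that preimages in $\Cose_F$ under the chosen representative's $\Ne_{\Cose^*}$ lie inside preimages under $\Ne_{\Cose^*}$ on all of $\Cose^*$, which by \Refcd[\LTCdef]{Th} fit inside a single member of $\Thseco$.
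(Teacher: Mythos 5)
Your ``if'' direction is correct and essentially identical to the paper's, and your ``only if'' strategy (reduce to Proposition~\ref{prop:ltc-dim-new}, staircase a subordinate partition of unity via Lemma~\ref{lem:Lipschitz-alpha-staircase}, and extract cells and a partition of unity from the simplicial difference operator) is also the route the paper takes. However, as written the construction has genuine gaps. First, your $\Ne^{(l)}$ is not well defined: $\Staircase{\Lo}{M}(f_{\Cose})$ is supported in $\alpha^{\cup}_{\Lo^{M}}(\Cose)$, so the cell $\Cose_F=g^{-1}\bigl(\Rplusfuncs[F,0]{\Coseco}\bigr)$ is only contained in $\alpha^{\cup}_{\Lo^{M}}(\Cose^{*})$ and typically not in $\Cose^{*}$, while $\Ne_{\Cose^{*}}$ is only defined on $\Cose^{*}$. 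One must first extend each $\Ne_{\Cose}$ equivariantly to $\alpha^{\cup}_{\Lo^{M}}(\Cose)$, which requires demanding equivariance with respect to a substantially higher power of $\Lo$ than $\Lo^{M}$ (to make the extension well defined), and then condition (Th) has to be re-established for the extension: its fibres are unions of $\Lo^{M}$-translates of the original fibres, and within one colour the fibre of a point $y$ can meet several cells carrying different representatives. This is exactly why the paper preprocesses $\Thseco$ through Lemma~\ref{lem:cover-clumping} and applies the hypothesis to the join of translates of the clumped cover; your assertion that thinness is simply ``inherited'' from the original collection does not go through as stated.

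Second, the choice of threshold $\varepsilon=0$ is fatal at two points. Condition (Su) fails: the support of the function $x\mapsto (l+1)\,\Dinum{l}(g(x))\big/\max\bigl\{1,\sum_{\Cose\in\Coseco}g_{\Cose}(x)\bigr\}$ is the closure of the open set $\{x\in X:\Dinum{l}(g(x))>0\}$, and by Lemma~\ref{lem:simplicial-barycentric}\eqref{lem:simplicial-barycentric::partition} that open set is precisely $\bigcup\Coseconum{l}$, so the (closed) support is in general not contained in it; one needs a gap between the cutoff used in the partition of unity and the threshold defining the cells, which is what the paper's truncation at a strictly higher level achieves. More seriously, condition (Eq) genuinely breaks: for $g\in\Lo$ a point $x\in\Cose_F$ can have $\alpha_g(x)\in\Cose_{F'}$ with $F'\neq F$ of the same cardinality, and then you must compare $\Ne_{\Cose^{*}}$ for $\Cose^{*}\in F$ with $\Ne_{\Cose^{**}}$ for $\Cose^{**}\in F'$, two selection maps on distinct members of the original collection between which the hypothesis provides no compatibility at all; no choice of representatives can repair this. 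Lemma~\ref{lem:simplicial-barycentric}\eqref{lem:simplicial-barycentric::clumping} does not apply here, since its hypothesis is a common point of the simplicial cells, whereas $g(x)$ and $g(\alpha_g(x))$ are different points. The paper's fix is the buffer: the cells are cut at a positive threshold, and the $(\Lo,1/M)$-invariance of $g$ pushes the $\Lo$-translates of a thresholded cell into the corresponding zero-threshold cell, which is disjoint from every other cell of the same cardinality, so equivariance (and also condition (Lo) for the full $\Lo$-translate) reduces to a single selection map. A final minor point: after the quotient rule of Lemma~\ref{lem:Lipschitz-alpha-arithmetics} the invariance constant of your normalised functions is of order $(d+1)^{2}/M$, so $M=\lceil 2(d+2)/\varepsilon\rceil$ is too small by roughly a factor of $d+1$; this is easily fixed, unlike the issues above.
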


Here the labels stand for ``long'', 
``equivariant'', ``thin'', ``bounded'', ``precompact'', ``finite'', ``support'', ``invariant'', and ``partition of unity''. 
We will also write \Refcd{Lo}, 
\Refcd{Eq}, \Refcd{Th}, 
\Refcd{Bo}, \Refcd{Pr}, \Refcd{Fi}, \Refcd{Su}, \Refcd{In}, and \Refcd{Un} 
to specify the parameters used in these conditions and the fact that they come from Proposition~\ref{\Topic}.

The proof below 
shares some common ideas with the (easier) proof of Proposition~\ref{prop:orbit-asdim}.

\begin{proof}
	The ``if'' direction is immediate since, upon defining, for any $\Cose \in \Coseco$, $\Ne_{\Cose} := \Ne^{(l)}|_{\Cose}$ for the smallest $l$ such that $\Cose \in \Coseconum{l}$, we see that \Refcd{Lo}, \Refcd{Eq}, and \Refcd{Th}, respectively, directly imply \Refcd[\LTCdef]{Lo}, \Refcd[\LTCdef]{Eq}, and \Refcd[\LTCdef]{Th}, respectively, while the fact that $\Coseco$ can be decomposed into $(d+1)$ disjoint families clearly implies that it satisfies \Refcd[\LTCdef]{Mu}, and \Refcd[\LTCextra]{Bo} clearly implies \Refc[\LTCdef]{Ca}{|\Bo|}. 
	
	The proof of the ``only if'' direction will be divided into steps: 
	\begin{enumerate}[itemindent=*,leftmargin=1em,label=\textit{Step~(\arabic*).},ref=\arabic*]
	
		\item \label{item:prop:ltc-dim:proof:prep-Ko-Lo} 
		Following the statement of the proposition, we fix a finite subset $\Lo \Subset G$, a compact subset $\Ko \Subset X$, 
		and $\Er>0$. 
		By replacing $\Lo$ by $\Lo \cup \{e\} \cup \Lo^{-1}$, we may assume, without loss of generality, that $\Lo$ contains the unit $e$ of $G$ and $\Lo = \Lo ^{-1}$.

		\item \label{item:prop:ltc-dim:proof:prep-M} 
		Set
		\[
		\Binu := \left\lceil \frac{4(d+1) \left( 2 (d+1)^2 + 1 \right) }{\min\{ \Er, 1 \}} \right\rceil  \; .
		\]
		This guarantees that 
		\[
		\frac{4(d+1) \left( 2d^2+4d+3 \right) }{\Binu}  \leq \Er
		\]
		and 
		\[
		\Binu > 10 (d+1)^2  \; .
		\]
		
		\item \label{item:prop:ltc-dim:proof:ltc-Ca} 
		Since $\dimltc(\alpha) \leq d$, we may fix a natural number $\Binu'$ that satisfies the requirement of Proposition~\ref{prop:ltc-dim-new}\eqref{item:prop:ltc-dim-new:cover-Ko} (in place of $\Binu$ there).

		\item \label{item:prop:ltc-dim:proof:prep-Bo}
		Define 
		\[
			\Bo := L^{3 \Binu (\Binu'+2)} \Subset \langle \Lo \rangle \; .
		\] 
		In the rest of the proof, we show that $\Bo$ satisfies the requirement in the statement of the proposition. 
		
		\item \label{item:prop:ltc-dim:proof:prep-Thseco} 
		To do this, we further follow the statement of the proposition and fix an open cover $\Thseco$ of $\alpha^\cup_{\Lo} (\Ko)$. 
		By the compactness of $\alpha^\cup_{\Lo} (\Ko)$, there is a finite subcollection $\Thseco'$ in $\Thseco$ that covers $\alpha^\cup_{\Lo} (\Ko)$. 
		
		\item \label{item:prop:ltc-dim:proof:prep-W} We apply Lemma~\ref{lem:cover-clumping} to $\Thseco'$ to obtain a finite collection $\Auseco$  of open sets in $X$ such that $\alpha^\cup_{\Lo} (\Ko) \subset \bigcup \Auseco$ and, for any subcollection $\Auseco' \subseteq \Auseco$, if $\bigcap \Auseco' \not= \varnothing$, then there exists $\Thse \in \Thseco'$ such that $\bigcup \Auseco' \subseteq \Thse$. 
		By the observation at the end of Notation~\ref{notation:group-action}, we have 
		\[
			\Ko \subseteq \bigcup \alpha^\cap_{\Lo^{\Binu}} \left( \alpha^\cup_{\Lo^{\Binu}} (\Ko) \right) \subseteq  \alpha^\cap_{\Lo^{\Binu}} \left( \bigcup \Auseco \right) \subseteq \bigcup \alpha^\wedge_{\Lo^{\Binu}}\left(\Auseco\right) \; .
		\]
		
		\item \label{item:prop:ltc-dim:proof:ltc-Coseco} 
		By Step~\eqref{item:prop:ltc-dim:proof:ltc-Ca} and following Proposition~\ref{prop:ltc-dim-new}\eqref{item:prop:ltc-dim-new:cover-Ko} (with $\Ko' := \Ko$), there exist 
		a finite collection $\Laseco$ of open sets in $X$ and locally constant functions $\Ne_{\Lase} \colon \Lase \to X$ for $\Lase \in \Laseco$ satisfying  
		\Refc[\LTCdef]{Lo}{\{e\}, \Ko} (i.e., $\Ko \subseteq \bigcup \Laseco$), \Refc[\LTCextra]{Muplus}{d, \Lo^{3\Binu}}, \Refc[\LTCdef]{Eq}{\Lo^{3\Binu}}, \Refc[\LTCdef]{Th}{\alpha^\wedge_{\Lo^{\Binu}}\left(\Auseco\right)}, and~\Refc[\LTCextra]{Coplus}{\Lo^{3\Binu}, \Binu'}. 
		Note that 
		\Refc[\LTCextra]{Coplus}{\Lo^{3\Binu}, \Binu'} implies \Refc[\LTCextra]{Bo}{\Lo^{3 \Binu \Binu'}}.

		\item \label{item:prop:ltc-dim:proof:extend} As in the proof of \eqref{item:prop:ltc-dim-new:Lominus}  $\Rightarrow$ 
		\eqref{item:prop:ltc-dim-new:orig} in Proposition~\ref{prop:ltc-dim-new}, if we set
		\[
		 \Lasetil = \alpha^\cup_{\Lo^{\Binu}} \left( \Lase \right)  \text{ for any } \Lase \in \Laseco \, ,
		\]
		and define
		\[
			\Lasecotil = \left\{ \Lasetil \colon \Lase \in \Laseco \right\}
			\quad \, ,
		\]
		then using \Refc[\LTCdef]{Eq}{\Lo^{3\Binu}}, we may extend each $\Ne_{\Lase}$ to a well-defined function (with the same notation)
		\[
			\Ne_{\Lase} \colon \Lasetil \to X \, 
		\]
		given by
		\[	
			 \Ne_{\Lase} ( \alpha_g (x) ) = \alpha_g \left( \Ne_{\Lase} (x) \right) \quad \text{for any } g \in \Lo^{\Binu} \text{ and any } x \in \Lase \; ,
		\]
		such that together $\Lasecotil$ and $\left( \Ne_{\Lase} \right)_{\Lase \in \Laseco}$ satisfy \Refc[\LTCdef]{Lo}{\Lo^{\Binu}, \Ko}, \Refc[\LTCdef]{Mu}{d}, \Refc[\LTCdef]{Eq}{\Lo^{\Binu}}, \Refc[\LTCdef]{Th}{\Auseco}, and~\Refc[\LTCextra]{Bo}{ \Lo^{\Binu} \Lo^{3 \Binu \Binu'} \Lo^{\Binu}}.

		\item \label{item:prop:ltc-dim:proof:pou} Since $\Ko \subseteq \bigcup \Laseco$ by Step~\eqref{item:prop:ltc-dim:proof:ltc-Coseco}, we may apply Lemma~\ref{lem:relative-pou} to obtain a partition of unity
		\[
			\left\{ \Lapo_{\Lase} \colon  X \to [0,1] \right\}_{\Lase \in \Laseco} 
		\]
		for $\Ko \subseteq X $ subordinate to $ \Lasecohat $ and with compact supports, that is, for each $\Lase \in \Laseco$, the support of $\Lapo_{\Lase}$ is compact and contained in $ \Lasehat $, and 
		$\sum_{\Lase \in \Laseco} \Lapo_{\Lase} (x) \leq 1 $ for all $x \in X$ with equality holding when $x \in \Ko$.
		
		\item \label{item:prop:ltc-dim:proof:flatten} \label{item:prop:ltc-dim:proof:h-invariant} For each $\Lase \in \Laseco$, we apply Lemma~\ref{lem:Lipschitz-alpha-staircase} to ``flatten'' $\Lapo_{\Lase}$ and obtain a continuous function
		\[
			\Lapotil_{\Lase} = \Staircase{\Lo}{\Binu} \left( \Lapo_{\Lase} \right) \colon X \to [0,1] \; ,
		\]
		which is $\left(\Lo, \frac{1}{\Binu}\right)$-invariant and satisfies $0 \leq \Lapo_{\Lase} (x) \leq \Lapotil_{\Lase} (x) \leq 1$ for any $x \in X $, Thus,
		\[
			\sum_{\Lase \in \Laseco} \Lapotil_{\Lase} (x) \geq \sum_{\Lase \in \Laseco} \Lapo_{\Lase} (x)  = 1 \qquad \text{for any } x \in \Ko \; .
		\]
		In addition, it follows from Lemma~\ref{lem:Lipschitz-alpha-staircase}\eqref{item:lem:Lipschitz-alpha-staircase:support} that 
		\[
			\supp \left( \Lapotil_{\Lase} \right) \subseteq \alpha^\cup_{\Lo^{\Binu}}  \left(\supp \left( \Lapo_{\Lase} \right)\right) \subseteq \alpha^\cup_{\Lo^{\Binu}}  \left(\Lasehat\right) = \Lasetil \; ,
		\]
		which also shows $\supp \left( \Lapotil_{\Lase} \right)$ is compact.

		\item \label{item:prop:ltc-dim:proof:h-norm} Since $\Lasecotil$ satisfies \Refc[\LTCdef]{Mu}{d} in Step~\eqref{item:prop:ltc-dim:proof:extend}, there are no more than $(d+1)$ nonzero terms in the sum $\sum_{\Lase \in \Laseco} \Lapotil_{\Lase} (x)$ for each $x \in X$. It then follows from the pigeonhole principle that for any $x \in \Ko$, there exists $\Lase_x \in \Laseco$ such that $\Lapotil_{\Lase_x} (x) \geq \frac{1}{d+1}$. 
		
		\item \label{item:prop:ltc-dim:proof:nerve} Using the notations of Definition~\ref{def:Rplus}, we have a continuous map 
		\[
			{\Lapotil} \colon X \to \Rplusfuncs{\Laseco} 
		\]
		given by
		\[	
			{\Lapotil}( x )(\Lase) = {\Lapotil}_{\Lase} (x) \, . 
		\]
		It follows from Steps~\eqref{item:prop:ltc-dim:proof:flatten} and~\eqref{item:prop:ltc-dim:proof:h-norm} that 
		${\Lapotil}$ is $\left(\Lo, \frac{1}{\Binu}\right)$-invariant with regard to the $\ell^\infty$-metric on $\Rplusfuncs{\Laseco}$, that
		\[
		\left\| {\Lapotil} (x)  \right\| \leq 1  \quad \text{and} \quad  \left| \supp \left( {\Lapotil} (x)  \right) \right| \leq d+1 \quad \text{for any } x \in X \; ,
		\]
		and that
		\[
		\left\| {\Lapotil} (x)  \right\| \geq \frac{1}{d+1}  \quad \text{and in particular} \quad {\Lapotil} (x) \not= 0  \quad \text{for any } x \in \Ko \; .
		\]
		
		\item \label{item:prop:ltc-dim:proof:U-F} Using the notations of Definition~\ref{def:simplicial-barycentric}, we define, for any nonempty subcollection $F$ of $\Laseco$ and any $\varepsilon \geq 0$,  
		\[
			\Cose^{F,\varepsilon} := \Lapotil^{-1} \left( \Rplusfuncs[{F,\varepsilon}]{\Laseco} \right)
			 = \{ x \in X \mid \inf_{\Lase \in F} \Lapotil_{\Lase}(x) > \varepsilon +  \sup_{\Lase \in \Laseco \smallsetminus F} \Lapotil_{\Lase}(x)  \}
			 \, .
		\]
		(If $\Laseco \smallsetminus F = \varnothing$ then we take the supremum to be $0$.)
		This is an open subset of $X$ by Lemma~\ref{lem:simplicial-barycentric}\eqref{lem:simplicial-barycentric::open}. Observe that $\Cose^{F,\varepsilon}$ is decreasing in $\varepsilon$. Since $\Lapotil$ is $\left(\Lo, \frac{1}{\Binu}\right)$-invariant, it follows from Lemma~\ref{lem:simplicial-barycentric}\eqref{lem:simplicial-barycentric::nbhd} that 
		\[
			\alpha^\cup_{\Lo} \left(\Cose^{F,\varepsilon + \frac{2}{\Binu}}\right) \subseteq \Cose^{F,\varepsilon}  \qquad \text{for any }\varepsilon \geq 0 \; .
		\]
		Furthermore, it follows from Lemma~\ref{lem:simplicial-barycentric}\eqref{lem:simplicial-barycentric::nbhd} and~\eqref{lem:simplicial-barycentric::partition} that 
		\[
			\Cose^{F,\varepsilon} \cap \Cose^{F',\varepsilon'} = \varnothing
		\]
		for any $\varepsilon, \varepsilon' \geq 0$ and any nonempty $F, F' \subseteq \Laseco$ that are different but have the same cardinality. 
		
		\item \label{item:prop:ltc-dim:proof:F-too-large} 
		For any $F \subseteq \Laseco$ with $|F| > d+1$ we claim that
		 $\Cose^{F,\varepsilon} = \varnothing$ for any $\varepsilon \geq 0$. 
		Indeed, for any $x \in X$, 
		by Step~\eqref{item:prop:ltc-dim:proof:nerve} we have $\left| \supp \left( {\Lapotil} (x) \right) \right| \leq d+1$. By Lemma~\ref{lem:simplicial-sorting}, this implies $\Ma[\Laseco]{|F|} \circ \Lapotil (x) = 0$,  and thus also $\Dinum[\Laseco]{|F|} \circ \Lapotil (x) = 0$ by Definition~\ref{def:simplicial-diff-op}. Therefore $\Lapotil (x) \not\in \Rplusfuncs[{\left( |F|, \varepsilon \right)}]{\Laseco}$ by Definition~\ref{def:simplicial-barycentric}, which implies $x \not\in \Cose^{F,\varepsilon}$ by Lemma~\ref{lem:simplicial-barycentric}\eqref{lem:simplicial-barycentric::partition}. 
		
		\item \label{item:prop:ltc-dim:proof:U-l} Using the notations of Definition~\ref{def:simplicial-barycentric}, we define, for any nonnegative integer $l$, 
		\[
			\Coseco^{(l)} := \left\{ \Cose^{F,\frac{2}{\Binu}} \colon F \in \powerset^{(l+1)}(\Laseco) \right\} \, .
		\]
		This is a disjoint family of open subsets of $X$. 
		Let 
		\[
			\Coseco := \Coseconum{0} \cup \Coseconum{1} \cup \ldots \cup \Coseconum{d} \; . 
		\]
		Since $\Coseco^{(l)} = \{ \varnothing \}$ for any integer $l > d$ by Step~\eqref{item:prop:ltc-dim:proof:F-too-large}, we have 
		\[
			\Coseco = \bigcup_{l=0}^{\infty} \Coseconum{l} = \left\{ \Cose^{F,\frac{2}{\Binu}} \colon \text{nonempty } F \subseteq \Laseco \right\} \; .
		\]
		
		\item \label{item:prop:ltc-dim:proof:F-choose} Choose, for any $\Cose \in \Coseco$, a nonempty $F_{\Cose} \subseteq \Laseco$ with $\Cose = \Cose^{F_{\Cose},\frac{2}{\Binu}}$ and an element $\Lase_{\Cose} \in F_{\Cose}$. 
		It follows from the definition
		in Step~\eqref{item:prop:ltc-dim:proof:U-F} and Definition~\ref{def:simplicial-barycentric} that for 
		any $\varepsilon \geq 0$, we have ${\Lapotil}_{\Lase_{\Cose}} (x) > 0$ for any $x \in \Cose^{F_{\Cose},\varepsilon}$, whence $\Cose^{F_{\Cose},\varepsilon} \subseteq \widetilde{\Lase_{\Cose}}$ by Step~\eqref{item:prop:ltc-dim:proof:flatten}. We may now define
		\[
		\Ne_{U}  \colon \Cose = \Cose^{F_{\Cose},\frac{2}{\Binu}} \to X
		\]
		to be the restriction of $\Ne_{{\Lase_{\Cose}}} \colon \widetilde{\Lase_{\Cose}} \to X$ (see Step~\eqref{item:prop:ltc-dim:proof:extend}) to $\Cose^{F_{\Cose},\frac{2}{\Binu}}$. 
		We also define 
		\[
			\Ne^{(l)} = \bigcup_{\Cose \in \Coseconum{l}} \Ne_{\Cose} \colon \bigcup \Coseconum{l} \to X \; ,
		\]
		which is well-defined thanks to the disjointness of $\Coseconum{l}$.

		\item \label{item:prop:ltc-dim:proof:Lo} We prove condition \Refcd[\LTCprop]{Lo}: Given any $x \in \Ko$, since we observed $\left\| {\Lapotil} (x)  \right\| \geq \frac{1}{d+1}$ and $\left| \supp \left( {\Lapotil} (x) \right) \right| \leq d+1$ in Step~\eqref{item:prop:ltc-dim:proof:nerve}, and defined $\Binu$ so that $\frac{4}{\Binu} <  \frac{1}{(d+1)^2}$ in Step~\eqref{item:prop:ltc-dim:proof:prep-M}, it follows from Lemma~\ref{lem:simplicial-barycentric}\eqref{lem:simplicial-barycentric::cover} and ~\eqref{lem:simplicial-barycentric::partition} that there exists 
		$F \subseteq \Laseco$ such that ${\Lapotil} (x)  \in \Rplusfuncs[F, \frac{4}{\Binu} ]{\Laseco}$. 
		By Lemma~\ref{lem:simplicial-barycentric}\eqref{lem:simplicial-barycentric::nbhd} and~\eqref{lem:simplicial-barycentric::partition} together with the fact that $\Lapotil$ is $\left(\Lo, \frac{1}{\Binu}\right)$-invariant, this implies that 
		\[
			{\Lapotil} (\alpha_{\Lo} (x))  \subseteq \Rplusfuncs[{F, \frac{2}{\Binu} }]{\Laseco} \; ,
		\]
		that is, $\alpha_{\Lo} (x) \subseteq \Cose^{F, \frac{2}{\Binu}} \in \Coseco$.

		\item \label{item:prop:ltc-dim:proof:Eq} 
		We prove condition~\Refcd[\LTCprop]{Eq}: Given any $x \in \bigcup\Coseconum{l}$ and any $g \in \Lo$ with $\alpha_{g} (x) \in \bigcup\Coseconum{l}$, we fix $\Cose \in \Coseconum{l}$ such that $x \in \Cose$. 
		By Steps~\eqref{item:prop:ltc-dim:proof:F-choose} and~\eqref{item:prop:ltc-dim:proof:U-F}, we have $x \in \Cose = \Cose^{F_{\Cose},\varepsilon} \subseteq \widetilde{\Lase_{\Cose}}$ and
		\[
		\alpha_{g} (x) \in \alpha^\cup_{\Lo} \left(\Cose^{F_{\Cose},\frac{2}{\Binu}}\right) \subseteq \Cose^{F_{\Cose},0} \subseteq \left(\bigcup\Coseconum{l}\right)  \setminus \left( \bigcup_{F \in \powerset^{(l+1)}(\Laseco) \setminus \left\{ F_{\Cose} \right\}} \Cose^{F,\frac{2}{\Binu}}  \right)  \subseteq \Cose  \; . 
		\]
		Hence by Step~\eqref{item:prop:ltc-dim:proof:F-choose} and condition~\Refc[\LTCdef]{Eq}{\Lo^{\Binu}} in Step~\eqref{item:prop:ltc-dim:proof:extend}, we have \begin{align*}
		\Ne^{(l)} \left(\alpha_{g} (x)\right) & = \Ne_{\Cose} \left(\alpha_{g} (x)\right)  = \Ne_{\Lase_{\Cose}} \left(\alpha_{g} (x)\right) 
		\\
		& =  \alpha_{g} \left( \Ne_{\Lase_{\Cose}} (x) \right) =  \alpha_{g} \left( \Ne_{\Cose} (x) \right)  =  \alpha_{g} \left( \Ne^{(l)} (x) \right)
		\, ,
		\end{align*}
		 as claimed.

		\item \label{item:prop:ltc-dim:proof:Th} We prove condition~\Refcd[\LTCprop]{Th}.
		Given 
		any $y \in \bigcup_{l = 0}^d  \Ne^{(l)} \left( \bigcup \Coseconum{l} \right)$, by Steps~\eqref{item:prop:ltc-dim:proof:F-choose} and~\eqref{item:prop:ltc-dim:proof:extend} as well as \Refc[\LTCdef]{Th}{\alpha^\wedge_{\Lo^{\Binu}}\left(\Auseco\right)} in Step~\eqref{item:prop:ltc-dim:proof:ltc-Coseco} (see also the proof of \eqref{item:prop:ltc-dim-new:Lominus}  $\Rightarrow$ 
		\eqref{item:prop:ltc-dim-new:orig} in Proposition~\ref{prop:ltc-dim-new}), for any 
		$g \in \Lo^{\Binu}$, for any $l \in \intervalofintegers{0}{d}$, and for any $\Cose \in \Coseconum{l}$ such that $ y \in \Ne_{\Cose} ( \Cose ) $ we fix a set $\Ause_{\Cose, g} \in \Auseco$ satisfying 
		\[
		\alpha_{g} \left( \left( \Ne_{\Lase_{\Cose}} \right)^{-1} \left( \alpha_{g^{-1 }} (y) \right) \cup \left\{ \alpha_{g^{-1 }} (y) \right\} \right)  \subseteq \Ause_{\Cose, g}
		\, ,
		\]
		and then 
		we have 
		\begin{align*}
			&\ \left( \Ne^{(l)} \right)^{-1}(y) \cup \{y\}  \\
			=&\ \bigcup_{ \{ \Cose \in \Coseconum{l} \colon y \in \Ne_{\Cose} \left( \Cose \right) \} } \left( \left(\Ne_{\Cose}\right)^{-1}(y)  \cup \{y\}  \right) \\
			=&\ \bigcup_{ \{ \Cose \in \Coseconum{l} \colon y \in \Ne_{\Cose} \left( \Cose \right) \} } \left( \left(\Ne_{\Lase_{\Cose}}\right)^{-1}(y)  \cup \{y\}  \right) \\
			=&\ \bigcup_{ \left \{ \underset {y \in \Ne_{\Cose} \left( \Cose \right)} {\Cose \in \Coseconum{l} \colon } \right \} } 
			\left( \bigcup_{\underset { \alpha_{g^{-1 }} (y) \in \Ne_{\Lase_{\Cose}} (\Lase_{\Cose}) } {g \in \Lo^{\Binu} \colon }} \alpha_{g} \left( \left( \Ne_{\Lase_{\Cose}} \right)^{-1} \left( \alpha_{g^{-1 }} (y) \right) \cup \left\{ \alpha_{g^{-1 }} (y) \right\} \right) \right) \\
			\subseteq&\ \bigcup_{ \left \{ \underset {y \in \Ne_{\Cose} \left( \Cose \right)} {\Cose \in \Coseconum{l} \colon} \right \} } 
			\left( \bigcup_{\underset { \alpha_{g^{-1 }} (y) \in \Ne_{\Lase_{\Cose}} (\Lase_{\Cose}) } {g \in \Lo^{\Binu} \colon }} \Ause_{\Cose, g} \right) \; .
		\end{align*}
		Note that for any $g \in \Lo^{\Binu}$ we have $y \in \Ause_{\Cose, g}$. It then follows from our choice of $\Auseco$ that $\{y\} \cup \left( \bigcup_{l = 0}^d  \left( \Ne^{(l)} \right)^{-1}(y) \right) \subseteq \Thse$ for some $\Thse \in \Thseco$, as desired.

		\item \label{item:prop:ltc-dim:proof:Bo} Condition \Refc[\LTCextra]{Bo}{\Bo} follows from 
		our construction 
		in Step~\eqref{item:prop:ltc-dim:proof:F-choose}, 
		condition~\Refc[\LTCextra]{Bo}{ \Lo^{\Binu} \Lo^{3 \Binu \Binu'} \Lo^{\Binu}} for $\widetilde{\Laseco}$ and $\left( \Ne_{{\Lase}} \right)_{\Lase \in \Laseco}$ in Step~\eqref{item:prop:ltc-dim:proof:extend}, and the definition of $\Bo$ in Step~\eqref{item:prop:ltc-dim:proof:prep-M}. 
		
		\item \label{item:prop:ltc-dim:proof:Pr} We prove condition~\Refcd[\LTCprop]{Pr}. Since we showed in Step~\eqref{item:prop:ltc-dim:proof:flatten} that  $\Lapotil_{\Lase}$ is compactly supported for any $\Lase \in \Laseco$, it follows that $\Lapotil^{-1} \left( \Rplusfuncs{\Laseco} \setminus \{0\} \right)$ is precompact. For any $l \in \intervalofintegers{0}{d}$, using Steps~\eqref{item:prop:ltc-dim:proof:U-F} and~\eqref{item:prop:ltc-dim:proof:U-l} as well as Lemma~\ref{lem:simplicial-barycentric}\eqref{lem:simplicial-barycentric::partition} and~\eqref{lem:simplicial-barycentric::cover}, we have
		\[
			\bigcup \Coseco = \bigcup_{l=0}^{\infty} \bigcup_{F \in \powerset^{(l+1)}(\Laseco)} \Lapotil^{-1} \left( \Rplusfuncs[{F,\frac{2}{\Binu}}]{\Laseco} \right) = \Lapotil^{-1} \left( \Rplusfuncs{\Laseco} \setminus \{0\} \right) \; .
		\]
		It follows that each $\Cose \in \Coseco$ is precompact. 
		
		\item \label{item:prop:ltc-dim:proof:Fi} 
		Condition~\Refcd[\LTCprop]{Fi} follows directly from the definition of $\Coseco$ in Step~\eqref{item:prop:ltc-dim:proof:U-l} and the finiteness of $\Laseco$ in Step~\eqref{item:prop:ltc-dim:proof:ltc-Coseco}. 
		
		\item \label{item:prop:ltc-dim:proof:f} Using the notations of Definition~\ref{def:simplicial-diff-op},  we define, for any nonnegative integer $l$, continuous functions
		\[
			\Lapotil^{(l)} \colon X \to [0, \infty) \, \text{ given by } \, \Lapotil^{(l)} ( x ) = \max \left\{ 0, \Dinum[\Laseco]{l+1} \circ \Lapotil (x) - \frac{5}{\Binu} \right\}
		\]
		and 
		\[	
			\Po^{(l)}  \colon X \to [0, 1] \, \text{ given by } \, \Po^{(l)} ( x ) =  \frac{\Lapotil^{(l)} (x)}{ \max \left\{ \frac{1}{2(d+1)}, \displaystyle \sum_{k=0}^{d} \Lapotil^{(k)} (x) \right\} } \; . 
		\]
		 
		\item \label{item:prop:ltc-dim:proof:Su} We prove condition~\Refcd[\LTCprop]{Su}. 
		For any $l \in \intervalofintegers{0}{d}$, it follows from 
		the continuity of the maps $\Dinum[\Laseco]{l+1}$ and $\Lapotil$ as well as our constructions (see the detailed references below) that 
		\begin{align*}
			&\ \supp \left( \Po^{(l)} \right) \\
			{=} &\ \supp \left( \Lapotil^{(l)} \right) && \text{\footnotesize (by Step~\eqref{item:prop:ltc-dim:proof:f})} \\
			= &\ \overline{ \left\{ x \in X \colon \Dinum[\Laseco]{l+1} \circ \Lapotil (x) > \frac{5}{\Binu}  \right\} } \\
			\subseteq  &\ \left\{ x \in X \colon \Dinum[\Laseco]{l+1} \circ \Lapotil (x) \geq \frac{5}{\Binu}  \right\} \\
			\subseteq  &\ \left\{ x \in X \colon \Dinum[\Laseco]{l+1} \circ \Lapotil (x) > \frac{4}{\Binu}  \right\} \\
			=  &\ \left\{ x \in X \colon \Lapotil (x) \in \left( \Rplusfuncs[{\left( l,\frac{4}{\Binu} \right)}]{\Laseco} \right)  \right\} && \text{\footnotesize (by Definition~\ref{def:simplicial-barycentric})} \\
			=  &\ \bigsqcup_{F \in \powerset^{(l+1)}(S)} \left\{ x \in X \colon \Lapotil (x) \in \left( \Rplusfuncs[{F,\frac{4}{\Binu}}]{\Laseco} \right)  \right\} && \text{\footnotesize (by Lemma~\ref{lem:simplicial-barycentric}\eqref{lem:simplicial-barycentric::partition})} \\
			=  &\ \bigsqcup_{F \in \powerset^{(l+1)}(S)} \Cose^{F, \frac{4}{\Binu}} && \text{\footnotesize (by Step~\eqref{item:prop:ltc-dim:proof:U-F})} \; . 
		\end{align*} 
		It follows that for any $x \in \supp \left( \Po^{(l)} \right)$, there is $F \in \powerset^{(l+1)}(S)$ such that $x \in \Cose^{F, \frac{4}{\Binu}}$ and thus by Step~\eqref{item:prop:ltc-dim:proof:U-F}, we have 
		\[
			\alpha_{\Lo} (x) \subseteq \alpha^\cup_{\Lo} \left( \Cose^{F, \frac{4}{\Binu}} \right) \subseteq \Cose^{F, \frac{2}{\Binu}} \in \Coseco^{(l)} \; .
		\]
		as desired. 
		
		\item \label{item:prop:ltc-dim:proof:In} We prove condition~\Refcd[\LTCprop]{In}. By Step~\eqref{item:prop:ltc-dim:proof:nerve},  $\Lapotil$ is $\left(\Lo, \frac{1}{\Binu}\right)$-invariant, while by Lemma~\ref{lem:simplicial-diff-op}\eqref{lem:simplicial-diff-op::Lipschitz}, $\Dinum[\Laseco]{l+1}$ is $2$-Lipschitz for $l \in \N$. It follows from Remark~\ref{rmk:Lipschitz-alpha-composition} that $\Lapotil^{(l)}$ is $\left(\Lo, \frac{2}{\Binu}\right)$-invariant for $l \in \N$, and thus the function $\max \left\{ \frac{1}{2(d+1)}, \displaystyle \sum_{k=0}^{d} \Lapotil^{(k)} \right\}$ is $\left(\Lo, \frac{2(d+1)}{\Binu}\right)$-invariant by Lemma~\ref{lem:Lipschitz-alpha-arithmetics}\eqref{item:lem:Lipschitz-alpha-arithmetics:addition} and~\eqref{item:lem:Lipschitz-alpha-arithmetics:max}. Since 
		\[
		\left| \Lapotil^{(l)} (x) \right| \leq \left| \Dinum[\Laseco]{l+1} \circ  \Lapotil (x) \right| \leq \left| \Ma[\Laseco]{l+1} \circ  \Lapotil (x) \right|  \leq \left\| \Lapotil (x) \right\| \leq 1
		\]
		 for any $x \in X$ by Lemma~\ref{lem:simplicial-sorting} and Step~\eqref{item:prop:ltc-dim:proof:nerve}, it follows from Lemma~\ref{lem:Lipschitz-alpha-arithmetics}\eqref{item:lem:Lipschitz-alpha-arithmetics:division} that $\Po^{(l)}$ is $\left(\Lo, \Er \right)$-invariant, because 
		\[
			(2(d+1))^2 \cdot \left( \frac{2(d+1)}{\Binu} + \frac{1}{2(d+1)} \cdot \frac{2}{\Binu} \right)=  \frac{4(d+1) \left( 2 (d+1)^2 + 1 \right) }{\Binu}  \leq \Er
		\]
		by Step~\eqref{item:prop:ltc-dim:proof:prep-M}. 
		
		\item \label{item:prop:ltc-dim:proof:Un} We prove condition~\Refcd[\LTCprop]{Un}. It follows from the definition in Step~\eqref{item:prop:ltc-dim:proof:f} that $\Po^{(0)} (x) + \ldots + \Po^{(d)} (x) \leq 1$ for any $x \in X$. On the other hand, for any $x \in \Ko$, since we showed $\left| \supp \left( {\Lapotil} (x)  \right) \right| \leq d+1$ and $\left\| {\Lapotil} (x) \right\| \geq \frac{1}{d+1}$ in Step~\eqref{item:prop:ltc-dim:proof:nerve}, thus by Lemma~\eqref{lem:simplicial-sorting} and Lemma~\ref{lem:simplicial-diff-op}\eqref{lem:simplicial-diff-op::mu-delta}, we have
		\begin{align*}
			\sum_{k=0}^{d} \Lapotil^{(k)} (x) \geq &\  \sum_{k=0}^{d} \left( \Dinum[\Laseco]{l+1} \circ \Lapotil (x) - \frac{5}{\Binu}  \right) \\
			=&\ \Ma[\Laseco]{1} \circ \Lapotil (x) - \frac{5 (d+1)}{\Binu} \\
			=&\ \left\| {\Lapotil} (x) \right\|  - \frac{5 (d+1)}{\Binu} \\
			\geq&\  \frac{1}{d+1} - \frac{5 (d+1)}{\Binu} \\
			>&\  \frac{1}{2(d+1)} 
		\end{align*}
		by Step~\eqref{item:prop:ltc-dim:proof:prep-M}, whence 
		\[
			\sum_{l=0}^{d} \Po^{(l)} (x) = \frac{\displaystyle \sum_{l=0}^{d} \Lapotil^{(l)} (x) }{ \max \left\{ \frac{1}{2(d+1)}, \displaystyle \sum_{k=0}^{d} \Lapotil^{(k)} (x) \right\} } = \frac{\displaystyle \sum_{l=0}^{d} \Lapotil^{(l)} (x) }{ \displaystyle \sum_{k=0}^{d} \Lapotil^{(k)} (x) } = 1 \; .
		\]
	\end{enumerate}
	Therefore, $\Coseco$, $\Ne^{(l)}$ and $\Po^{(l)}$, for $l \in \{0, \ldots, d\}$, as defined in Steps~\eqref{item:prop:ltc-dim:proof:U-l} and~\eqref{item:prop:ltc-dim:proof:f}, satisfy all the desired conditions, following Steps~\eqref{item:prop:ltc-dim:proof:Lo}, 
	\eqref{item:prop:ltc-dim:proof:Eq}, \eqref{item:prop:ltc-dim:proof:Th}, \eqref{item:prop:ltc-dim:proof:Bo}, \eqref{item:prop:ltc-dim:proof:Pr}, \eqref{item:prop:ltc-dim:proof:Fi}, \eqref{item:prop:ltc-dim:proof:Su}, \eqref{item:prop:ltc-dim:proof:In}, and~\eqref{item:prop:ltc-dim:proof:Un}. 
\end{proof}

}

We conclude the section by showing that the long thin covering dimension is an invariant of continuous orbit equivalence, which we define presently. This fact is not used in the paper, but is recorded as it may be of independent interest for future work. Recall from \cite{Li2018} that two topological dynamical systems $\alpha \colon G \curvearrowright X$ and $\beta \colon H \curvearrowright Y$ are said to be \emph{continuously orbit equivalent} if there exist a homeomorphism $\varphi \colon X \to Y$ and continuous maps $a \colon G \times X \to H$ and $b \colon H \times Y \to G$ such that $\varphi (\alpha_g (x)) = \beta_{a(g,x)} (\varphi (x))$ for any $g \in G$ and $x \in X$ and $\varphi^{-1} (\beta_h (y)) = \alpha_{b(h,y)} \left(\varphi^{-1} (y) \right)$ for any $h \in H$ and $y \in Y$. 

\begin{Prop} \label{prop:ltc-dim-coe}
	If two topological dynamical systems $\alpha \colon G \curvearrowright X$ and $\beta \colon H \curvearrowright Y$ are continuously orbit equivalent, then $\dimltc(\alpha) = \dimltc (\beta)$. 
\end{Prop}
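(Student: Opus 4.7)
By the symmetric nature of continuous orbit equivalence, it suffices to show $\dimltc(\beta) \leq \dimltc(\alpha)$. Suppose $\dimltc(\alpha) \leq d$; I will verify $\dimltc(\beta) \leq d$ using the characterization in Proposition~\ref{prop:ltc-dim-new}\eqref{item:prop:ltc-dim-new:cover-Ko} on the $\beta$-side. Fix a finite $L' \Subset H$ and a compact $K' \Subset Y$. Let $K := \varphi^{-1}(K') \Subset X$, and use the continuity of $b$ on the compact set $L' \times K'$ together with the discreteness of $G$ to extract the finite set $L_0 := b(L' \times K') \Subset G$; set $L := L_0 \cup L_0^{-1} \cup \{e\}$. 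Applying the LTC hypothesis for $\alpha$ to $L$ and $K$ yields the natural number $M$, which will serve unchanged on the $\beta$-side.

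Now fix any compact $K_0' \subseteq K'$ and any open cover $\Thseco'$ of $K_0'$. The crucial preliminary step is this: since each map $b(h, \cdot) \colon Y \to G$ for $h \in L'$ is locally constant (as $G$ is discrete and $b$ is continuous), the common refinement of $\Thseco'$ with the family of level sets $\{b(h, \cdot)^{-1}(g) : h \in L', g \in G\}$ produces a refinement $\mathcal{W}$ of $\Thseco'$ by open subsets of $Y$, still covering $K_0'$, with the additional property that on each $W \in \mathcal{W}$, the value $b(h, \cdot)$ is constant for every $h \in L'$. I pull this back to $X$ via $\Thseco := \{\varphi^{-1}(W) : W \in \mathcal{W}\}$, which is an open cover of $\varphi^{-1}(K_0') \subseteq K$, and apply the LTC hypothesis for $\alpha$ to obtain a finite collection $\mathcal{U}$ of open subsets of $X$ and near orbit selection functions $\nu_U \colon U \to X$ for $U \in \mathcal{U}$ satisfying the five LTC conditions.

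Transport back via $\varphi$ by setting $\mathcal{U}' := \{\varphi(U) : U \in \mathcal{U}\}$ and $\nu'_{\varphi(U)} := \varphi \circ \nu_U \circ \varphi^{-1} \colon \varphi(U) \to Y$. The conditions on multiplicity and cardinality transfer directly because $\varphi$ is a homeomorphism; the cover condition transfers using the inclusion $b(L' \times K_0') \subseteq L$; and the thinness condition transfers from the definition of $\Thseco$. The main obstacle is the equivariance condition. For $y \in \varphi(U)$ and $h \in L'$ with $\beta_h(y) \in \varphi(U)$, writing $x := \varphi^{-1}(y)$ and $z := \nu_U(x)$, the defining identities of continuous orbit equivalence together with the $\alpha$-side equivariance of $\nu_U$ reduce the desired equality $\nu'_{\varphi(U)}(\beta_h(y)) = \beta_h(\nu'_{\varphi(U)}(y))$ to the identity
\[
\alpha_{b(h, y)}(z) = \alpha_{b(h, \varphi(z))}(z).
\]
A priori these two cocycle values can differ by a nontrivial element of the stabilizer $G_z$, which is the genuine obstruction for non-free actions. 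The refinement step above is designed precisely to resolve it: by the $\alpha$-side thinness condition, the set $\nu_U^{-1}(z) \cup \{z\}$ lies in some $T = \varphi^{-1}(W) \in \Thseco$; in particular $x, z \in T$, so both $y = \varphi(x)$ and $\varphi(z)$ lie in $W \in \mathcal{W}$, on which $b(h, \cdot)$ is constant. Hence $b(h, y) = b(h, \varphi(z))$ and the required identity holds, completing the verification.
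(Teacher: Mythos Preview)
Your key idea---refining the thinness cover by level sets of the cocycle $b(h,\cdot)$ so that equivariance transfers---is exactly the paper's approach. But there are two genuine gaps.

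First, the $(\alpha,L)$-multiplicity condition \textup{(Mu$^+$)} does \emph{not} ``transfer directly because $\varphi$ is a homeomorphism'': it depends on the action. To pass from a $(\beta,L')$-close family $\{y_i\}$ in $\bigcup\mathcal U'$ to an $(\alpha,L)$-close family $\{\varphi^{-1}(y_i)\}$, you need $b(h,y_i)\in L$ for each $h\in L'$; likewise your Eq argument silently uses $b(h,y)\in L$ to invoke $\alpha$-side equivariance of $\nu_U$. Your refinement by \emph{all} level sets only forces $b(h,\cdot)$ to be constant on each $W\in\mathcal W$, not to take values in $L_0$---and $W$ need not meet $K'$. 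The fix (which is precisely what the paper does) is to refine only by the level sets $b(h,\cdot)^{-1}(g)$ for $g\in L_0$; then $\bigcup\mathcal W\subseteq\{y:b(L',y)\subseteq L_0\}$, still covering $K_0'$, and both \textup{(Mu$^+$)} and your Eq reduction go through.

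Second, you invoke characterization \eqref{item:prop:ltc-dim-new:cover-Ko}, which requires \textup{(Co$^+$)$_{L',M}$}, not \textup{(Ca)}. But \textup{(Co$^+$)} on the $\beta$-side would require $a(g,z)\in L'\cup(L')^{-1}$ for $z\in\nu_U(U)$ and $g\in L$, i.e.\ control on the \emph{other} cocycle $a$, which you never arranged. Since your ``cardinality'' remark suggests you really have \textup{(Ca)} in mind (and \textup{(Ca)} does transfer under the bijection $\varphi$), switch to characterization \eqref{item:prop:ltc-dim-new:Lominus} instead; that is what the paper uses.
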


\begin{proof}
	Let $(\varphi, a, b)$ be a continuous orbit equivalence between $(X, G, \alpha)$ and $(Y, H, \beta)$. 
	By pulling back $\beta$ via $\varphi$ to an action on $X$, we may assume without loss of generality that $X = Y$ and $\varphi$ is the identity map. 
	By symmetry, it suffices to show, for any integer $d$, that $\dimltc(\beta) \leq d$ implies $\dimltc (\alpha) \leq d$. To this end, in view of 
	Proposition~\ref{prop:ltc-dim-new}\eqref{item:prop:ltc-dim-new:Lominus}, 
	it suffices to fix a finite subset $\Lo \Subset G$, a compact subset $\Ko \Subset X$ and a collection $\Thseco$ of open subsets of $X$ covering $\Ko$ and find a natural number $\Binu$ that depends only on $\Lo$ and $\Ko$, as well as a collection $\Coseco$ of open sets in $X$, and locally constant functions $\Ne_{\Cose} \colon \Cose \to X$ for $\Cose \in \Coseco$ satisfying \Refcd[\LTCextra]{Muplus}, \Refcd[\LTCdef]{Eq}, \Refcd[\LTCdef]{Th}, \Refcd{Ca} with regard to $\alpha$, and such that $\Ko \subseteq \bigcup \Coseco$. 
	
	To do this, we define the finite subset $\widehat{\Lo} = a (\Lo, \Ko)$ in $H$. 
	For any $g \in G$ and $h \in H$, let $\Ause_{g,h}$ be the open set $\left\{ x \in X \colon a(g, x) = h \right\}$. Observe that for any $g \in \Lo$, the collection $\Auseco_g$ given by $\left\{ \Ause_{g,h} \colon h \in \widehat{\Lo} \right\}$ covers $\Ko$. We may thus define an open cover of $\Ko$ in $X$ by
	\[
	\Auseco := \bigwedge_{g \in \Lo} \Auseco_g = \left\{ \bigcap_{g \in \Lo} \Ause_{g,h_g} \colon (h_g)_{g \in \Lo} \in \widehat{\Lo} ^{\Lo}  \right\}  \; .
	\]
	Since we assumed $\dimltc(\beta) \leq d$, there is a natural number $\Binu$ that depends only on $\widehat{\Lo}$ and $\Ko$ (and thus ultimately only on $\Lo$ and $\Ko$), as well as an open cover $\Coseco$ of $\Ko$ in $X$, and locally constant functions $\Ne_{\Cose} \colon \Cose \to X$ for $\Cose \in \Coseco$ satisfying \Refc[\LTCextra]{Muplus}{d, \widehat{\Lo}}, \Refc[\LTCdef]{Eq}{\widehat{\Lo}}, \Refc[\LTCdef]{Th}{\Thseco \wedge \Auseco}, and~\Refcd{Ca} with regard to $\beta$. 
	It remains to verify they also satisfy \Refcd[\LTCextra]{Muplus}, \Refcd[\LTCdef]{Eq}, \Refcd[\LTCdef]{Th}, and~\Refcd{Ca} with regard to $\alpha$.  
	\begin{enumerate}
		\item The conditions \Refcd{Th} and \Refcd{Ca} are automatically satisfied since they do not explicitly involve the actions. 
		\item Since $\alpha_{\Lo} (x) = \beta_{a(\Lo, x)} (x) \subseteq \beta_{\widehat{\Lo}} (x)$ for any $x \in \Ko$, any $(\alpha, \Lo)$-close set in the sense of Definition~\ref{def:multiplicity-action} is also $(\beta, \widehat{\Lo})$-close. This implies that $\mult_{\alpha,\Lo} (\Coseco) \leq \mult_{\beta,\widehat{\Lo}} (\Coseco) \leq d+1$, thus verifying \Refcd[\LTCextra]{Muplus} with regard to $\alpha$. 
		\item To see each $\Ne_{\Cose}$ is $(\alpha, \Lo)$-equivariant, we observe that for any $x \in \Cose$ and any $g \in \Lo$ satisfying $\alpha_g (x) \in \Cose$, by \Refc{Th}{\Thseco \wedge \Auseco}, there is $(h_{g'})_{g' \in \Lo} \in (\widehat{\Lo})^{\Lo}$ such that $\left\{ \Ne_{\Cose}(x) \right\}\cup \{x\} \subseteq \bigcap_{g' \in \Lo} \Ause_{g',h_{g'}} \subseteq \Ause_{g,h_{g}}$, whence $ \beta_{h_g} (x) = \alpha_{g} (x) \in \Cose$ and thus by \Refc[\LTCdef]{Eq}{\widehat{\Lo}} with regard to $\beta$, we have
		\[
			\Ne_{\Cose}\left( \alpha_{g} (x) \right) =  \Ne_{\Cose}\left( \beta_{h_g} (x) \right) =  \beta_{h_g} \left( \Ne_{\Cose}(x) \right) =  \alpha_g \left( \Ne_{\Cose}(x) \right) \; .
		\] 
		This verifies \Refcd[\LTCdef]{Eq} with regard to $\alpha$.  
	\end{enumerate}
	This shows $\dimltc (\alpha) \leq d$, as desired. 
\end{proof}

This result suggests that at least for topologically free actions, the long thin covering dimension is a groupoid invariant. This will be discussed in future work. 

\begin{Rmk}
A close examination of the proof of Theorem~\ref{thm:dimnuc-main} reveals that 
the proof still works if one were to remove 
condition~\eqref{item:\LTCdef:Ca} from the definition of the LTC dimension.
We found it nevertheless more convenient to include it, as it somewhat 
streamlines the presentation and does not restrict the applicability of our 
results. In particular, its inclusion guarantees that the LTC dimension 
dominates the asymptotic dimension of the coarse orbit space, and it is needed 
for the permanence property to hold in Theorem~\ref{thm:relative-bound-ltc}.
\end{Rmk}

\section{A few simple bounds} 
\label{sec:misc} 
\renewcommand{\sectionlabel}{LTC}
\ref{sectionlabel=LTC}
In this section, we establish some relatively easy inequalities involving 
$\asdim(X, \Enseco_\alpha)$ and $\dimltc(\alpha)$. These inequalities only play 
marginal (and dispensable) roles in our main results, though they provide a 
better understanding of the relation and behavior of those quantities. A 
reader who wishes to take the shortest path to the main result may skip this 
section. Throughout the section, we let $G$ be a discrete group, and let 
$\alpha$ be an action of $G$ on a locally compact Hausdorff space $X$. 

\begin{Thm} \label{thm:dimltc-asdim}
	We have $\asdim(X, \Enseco_\alpha) \leq \dimltc(\alpha)$. 
\end{Thm}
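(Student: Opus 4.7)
The plan is to derive the target bound by showing that Proposition \ref{prop:ltc-dim-new}\eqref{item:prop:ltc-dim-new:bounded} (characterizing $\dimltc(\alpha) \leq d$) implies Proposition \ref{prop:orbit-asdim}\eqref{item:prop:orbit-asdim:mult} (characterizing $\asdim(X, \Enseco_\alpha) \leq d$). Fix a finite $\Lo \Subset G$ and a compact $\Ko \Subset X$; I need to produce a finite $\Bo \Subset G$ and a collection $\Coseco''$ of subsets of $X$ witnessing the orbit-asdim conditions. Applying Proposition \ref{prop:ltc-dim-new}\eqref{item:prop:ltc-dim-new:bounded} with $\Lo$ and $\Ko$ (after possibly enlarging $\Lo$ to be symmetric and contain the identity), we obtain a finite subset $\Bo \Subset \langle \Lo \rangle$ such that, for any finite open cover $\Thseco$ of $X$, there exist an open cover $\Coseco'$ of $X$ and locally constant near orbit selection functions $\Ne_\Cose \colon \Cose \to X$ satisfying \Refcd[\LTCdef]{Lo}, \Refcd[\LTCdef]{Mu}, \Refcd[\LTCdef]{Eq}, \Refcd[\LTCdef]{Th}, and \Refcd[\LTCextra]{Bo} (each $\Ne_\Cose(\Cose)$ is $\Bo$-bounded in the dynamical sense).

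The natural candidate for the orbit-asdim cover is $\Coseco'' := \{\Ne_\Cose(\Cose) : \Cose \in \Coseco'\}$. Each element is $\Bo$-bounded and, because $\Ne_\Cose(\Cose)$ lies in a single orbit by the boundedness condition, it is bounded in the orbit coarse structure, which verifies the boundedness condition for Proposition \ref{prop:orbit-asdim}\eqref{item:prop:orbit-asdim:mult}. For the long-cover condition, given $x \in \Ko$ we use LTC's \Refcd[\LTCdef]{Lo} to find $\Cose$ with $\alpha_\Lo(x) \subseteq \Cose$; then LTC's \Refcd[\LTCdef]{Eq} gives $\Ne_\Cose(\alpha_\Lo(x)) = \alpha_\Lo(\Ne_\Cose(x)) \subseteq \Ne_\Cose(\Cose)$, so the cover element $\Ne_\Cose(\Cose)$ contains the $\alpha_\Lo$-translate of the orbital representative $\Ne_\Cose(x)$.

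The main obstacle, and what will drive the precise form of the construction, is handling the multiplicity condition together with the mismatch between $x$ and $\Ne_\Cose(x)$, which may lie on different orbits. Concretely, multiplicity of $\Coseco''$ at a point $y$ counts the number of $\Cose$ such that $y \in \Ne_\Cose(\Cose)$, which is not immediately controlled by the multiplicity of $\Coseco'$ at $y$. To reconcile this, I will apply the LTC characterization with a carefully chosen open cover $\Thseco$ whose elements are small enough that \Refcd[\LTCdef]{Th} forces every preimage blob $\Ne_\Cose^{-1}(y)$ (and the point $y$ itself) into a common element of $\Thseco$, so that the $\Cose$'s contributing to the multiplicity at $y$ all intersect a single small neighborhood of $y$ and are therefore bounded in number by the LTC multiplicity. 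The same choice of $\Thseco$, together with the equivariance relation, will let me match the orbit of $x$ with that of $\Ne_\Cose(x)$ in the weaker coarse sense required for the orbit-asdim cover, completing the verification.
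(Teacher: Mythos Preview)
Your proposed cover $\Coseco'' = \{\Ne_\Cose(\Cose) : \Cose \in \Coseco'\}$ cannot work, and the obstruction is more basic than the multiplicity issue you flag. Each set $\Ne_\Cose(\Cose)$ is contained in a single $\alpha$-orbit (by \Refcd[\LTCextra]{Bo}), so a point $x \in \Ko$ can lie in $\Ne_\Cose(\Cose)$ only if $x$ happens to be on that particular orbit. There is no reason the near orbit selection functions should hit every orbit meeting $\Ko$, and even when they do, they hit only finitely many points per set. Thus $\Coseco''$ will typically fail even to cover $\Ko$, let alone satisfy \Refcd[\OCSmult]{Lo}. Your observation that $\alpha_\Lo(\Ne_\Cose(x)) \subseteq \Ne_\Cose(\Cose)$ is correct but irrelevant: condition \Refcd[\OCSmult]{Lo} demands $\alpha_\Lo(x)$ itself lie in a cover element, and there is no ``weaker coarse sense'' available here --- the condition is literal. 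No choice of $\Thseco$ can repair this, because \Refcd[\LTCdef]{Th} only says $\Ne_\Cose^{-1}(y) \cup \{y\}$ sits in some $\Thse$; it does not let you move $x$ onto the orbit of $\Ne_\Cose(x)$.

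The paper avoids this by verifying condition~\eqref{item:prop:orbit-asdim:finitary} of Proposition~\ref{prop:orbit-asdim} instead of~\eqref{item:prop:orbit-asdim:mult}, and by taking the cover to be the \emph{restrictions} $\{\Cose \cap Y : \Cose \in \Coseco\}$ rather than the \emph{images} $\Ne_\Cose(\Cose)$. Given a finite $Y \subseteq \Ko$, one chooses $\Thseco$ to be a disjoint open cover separating the points of $Y$; then \Refcd[\LTCdef]{Th} forces each $\Ne_\Cose$ to be injective on $\Cose \cap Y$, whence $|\Cose \cap Y| \leq |\Ne_\Cose(\Cose)| \leq \Binu$ gives \Refcd[\OCSfin]{Ca}. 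The $(\alpha,\Lo)$-multiplicity bound comes directly from \Refcd[\LTCextra]{Muplus} (using Proposition~\ref{prop:ltc-dim-new}\eqref{item:prop:ltc-dim-new:Lominus}). The near orbit selection functions are used only to transfer the cardinality bound, not to define the cover itself.
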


\begin{proof}
	Let $d$ be a natural number such that $\dimltc(\alpha) \leq d$. It suffices to verify condition~\ref{item:prop:orbit-asdim:finitary} of Proposition~\ref{prop:orbit-asdim}. 
	To this end, given $\Lo \Subset G$ and $\Ko \Subset X$, we choose a natural 
	number $\Binu$ that satisfies the requirement in 
	Proposition~\ref{prop:ltc-dim-new}(\ref{item:prop:ltc-dim-new:Lominus}). 
	Now, for any finite subset $Y$ of $\Ko$, since $X$ is Hausdorff, there 
	exists a finite open cover $\Thseco = \left\{ \Thse_y \colon y \in Y 
	\right\}$ of $Y$ in $X$ such that $y \in \Thse_y$ and $\Thse_y \cap 
	\Thse_{y'} = \varnothing$ for any different $y, y' \in Y$. By our 
	assumption on $\Binu$ above, we may choose an open cover $\Coseco$ of $Y$ 
	in $X$ and locally constant functions $\Ne_{\Cose} \colon \Cose \to X$ for 
	$\Cose \in \Coseco$ satisfying \Refcd[\LTCextra]{Muplus}, 
	\Refcd[\LTCdef]{Eq}, \Refcd[\LTCdef]{Th}, and \Refcd[\LTCdef]{Ca}. Define 
	$\Auseco$ to be the restriction of $\Coseco$ to $Y$, i.e., $\Auseco = 
	\left\{ \Cose \cap Y \colon \Cose \in \Coseco \right\}$, which is a cover 
	of $Y$. Then we have $\mult_{\alpha, \Lo} (\Auseco) \leq \mult_{\alpha, 
	\Lo} (\Coseco) \leq d+1$, verifying \Refcd[\OCSfin]{Muplus}. It remains to 
	verify $\Auseco$ satisfies \Refcd[\OCSfin]{Ca}. To do that, we observe that 
	by \Refcd[\LTCdef]{Th} and the fact that $\Thseco$ consist of disjoint sets 
	containing one point in $Y$ each, the maps $\Ne_{\Cose}$ are injective when 
	restricted to $\Cose \cap Y$, whence for any $\Cose \in \Coseco$, we have 
	$\left| \Cose \cap Y \right| = \left| \Ne_{\Cose} \left(\Cose \cap Y\right) 
	\right| \leq \left| \Ne_{\Cose} \left(\Cose\right) \right| \leq \Binu$ by 
	\Refcd[\LTCdef]{Ca}. 
	
\end{proof}

Next we give a lower bound of $\asdim(X, \Enseco_\alpha)$ in terms of the 
asymptotic dimension of the group $G$, under the assumption that $X$ is compact 
and the action has a free orbit. 
Recall that any discrete group $G$ comes with a canonical coarse structure generated by entourages of the form 
\[
	\left\{ (g, g') \in G \times G \colon g' g^{-1} \in \Lo \right\}
\]
for all finite subsets $\Lo \Subset G$. The asymptotic dimension of $G$, 
denoted by $\asdim(G)$, refers to the asymptotic dimension of this canonical 
coarse structure. 

\begin{Prop} \label{prop:orbit-asdim-compact-free}
	Assume $X$ is compact and $\alpha$ has a free orbit, that is, there exists 
	$x \in X$ such that the stabilizer group $G_x$ is trivial.
	Then
	\[
	\asdim \left( X , \Enseco_\alpha \right) \geq \asdim(G) \; .
	\]
	If the action $\alpha$ is free then we have equality in the inequality.
\end{Prop}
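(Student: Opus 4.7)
The plan is to establish both inequalities by identifying the coarse structure on a single orbit with the canonical coarse structure on $G$. Throughout, since $X$ is compact, every controlled set for $\Enseco_\alpha$ is contained in $\Ense_{X,\Lo} \cup \Delta_X$ for some finite $\Lo \Subset G$, so we need only consider entourages of this form.

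For the lower bound, I would take $x \in X$ with $G_x = \{\grpid\}$ and consider the bijection $\phi \colon G \to Gx$ given by $\phi(g) = \alpha_g(x)$. Under this bijection, the pulled-back coarse structure $\Enseco_\alpha|_{Gx}$ agrees with the canonical coarse structure on $G$: indeed, for finite $\Lo \Subset G$, one has $(\phi(g), \phi(h)) \in \Ense_{X,\Lo}$ if and only if $\phi(g) = \alpha_\ell(\phi(h))$ for some $\ell \in \Lo$, which by freeness of the stabilizer at $x$ is equivalent to $g h^{-1} \in \Lo$. Thus $\phi$ is a coarse isomorphism, and Lemma~\ref{lem:asdim-passes-to-subspaces} gives
\[
	\asdim(G) = \asdim\bigl(Gx, \Enseco_\alpha|_{Gx}\bigr) \leq \asdim(X, \Enseco_\alpha).
\]

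For the equality in the free case, I would build a cover of $X$ witnessing $\asdim(X, \Enseco_\alpha) \leq \asdim(G)$ by pulling back a cover of $G$ orbit-by-orbit. Set $d = \asdim(G)$ and fix a typical controlled set $\Ense_{X,\Lo}$, with $\Lo \Subset G$. By Definition~\ref{def:asdim} applied to the canonical coarse structure on $G$, there exist a finite $F \Subset G$ and a cover $\mathcal{V} = \mathcal{V}^{(0)} \cup \cdots \cup \mathcal{V}^{(d)}$ of $G$ with $V \cdot V^{-1} \subseteq F$ for every $V \in \mathcal{V}$ and each $\mathcal{V}^{(l)}$ being $\Lo$-separated in the sense that $V \cdot (V')^{-1} \cap \Lo = \varnothing$ for distinct $V, V' \in \mathcal{V}^{(l)}$. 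Using a choice of basepoint $x_O \in O$ for each orbit $O$ of $\alpha$, define $\phi_O \colon G \to O$ by $\phi_O(g) = \alpha_g(x_O)$ (a bijection, by freeness), and set
\[
	\mathcal{U}^{(l)} = \left\{ \phi_O(V) \colon O \text{ an orbit of } \alpha,\ V \in \mathcal{V}^{(l)} \right\},
\]
with $\mathcal{U} = \mathcal{U}^{(0)} \cup \cdots \cup \mathcal{U}^{(d)}$. Then $\mathcal{U}$ covers $X$, and a direct check shows $\bigcup_{U \in \mathcal{U}} U \times U \subseteq \Ense_{X,F} \in \Enseco_\alpha$, while distinct members $\phi_O(V), \phi_{O'}(V')$ of a common $\mathcal{U}^{(l)}$ can share an $\Ense_{X,\Lo}$-pair only when $O = O'$, in which case the $\Lo$-separation of $\mathcal{V}^{(l)}$ transfers through $\phi_O$.

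The argument is largely bookkeeping and I do not anticipate a genuine obstacle; the only point requiring care is the use of compactness of $X$ to show that every controlled set is of the form $\Ense_{X,\Lo} \cup \Delta_X$, which keeps the pulled-back coarse structure on an orbit genuinely equal to the canonical one on $G$ rather than merely coarsely equivalent to it.
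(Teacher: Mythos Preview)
Your proof is correct and follows essentially the same line as the paper's. For the lower bound, both you and the paper identify a free orbit with $G$ via the orbit map, observe that compactness of $X$ makes the restricted coarse structure coincide with the canonical one on $G$, and then invoke monotonicity of asymptotic dimension under passing to subspaces. For the upper bound in the free case, the paper simply observes that $(X,\Enseco_\alpha)$ is a coarse disjoint union of copies of $G$ and appeals to the (well-known) fact that such a disjoint union has the same asymptotic dimension as a single copy; your explicit orbit-by-orbit transfer of a cover of $G$ is precisely what one writes down to justify that fact, so the difference is only in the level of detail.
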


\begin{proof}
	Recall (from after Definition \ref{def:OCS}) that the coarse connected components of $\Enseco_\alpha$ are exactly the orbits of $\alpha$. 
	Compactness of $X$ implies that $\Enseco_\alpha$ is generated by the controlled sets $\Ense_{X, \Lo}$ for finite subsets $\Lo$ of $G$ (see Definition~\ref{def:OCS}). Hence for any $x \in X$ with $G_x = \{e\}$, if we identify $G$ with $\alpha_{G} (x)$ using the orbit map $g \mapsto \alpha_g (x)$, then the canonical coarse structure on $G$ coincides with the restriction of $\Enseco_\alpha$ to $\alpha_{G} (x)$. 
	Hence the first claim follows from the immediate fact that the asymptotic 
	dimension of a coarse component is bounded from above by that of the entire 
	space. The second claim follows from the observation that if every orbit is 
	free, then the coarse space $(X, \Enseco_\alpha)$ is the disjoint union of 
	copies of $G$ with the canonical coarse structure, and thus the asymptotic 
	dimension of $(X, \Enseco_\alpha)$ agrees with that of one of its 
	components, i.e., $\asdim (G)$. 
\end{proof}

\begin{Rmk}
	The assumption of a free orbit in Proposition~\ref{prop:orbit-asdim-compact-free} is there to exclude degenerate cases such as the trivial action, where clearly we have $\asdim \left( X , \Enseco_\alpha \right) = 0$ regardless of the group. 
	
	There is of course a lot of room between having a free orbit and being a trivial action. With a bit more effort, one may relax the requirement of the existence of a trivial stabilizer to that of a sequence of stabilizers that are normal and have trivial intersection (e.g., by following the strategy of \cite[Corollary~3.14]{SWZ}). 
\end{Rmk}

\renewcommand{\sectionlabel}{OCS}
\ref{sectionlabel=OCS}
The assumption of compactness of $X$ is also crucial in 
Proposition~\ref{prop:orbit-asdim-compact-free}, as illustrated by the 
following case: Recall that the action $\alpha$ is \emph{proper} if for any 
compact subset $\Ko \subseteq X$, the set $\left\{ g \in G \colon \alpha_g 
(\Ko) \cap \Ko \not= \varnothing \right\}$ is finite (or equivalently, for any 
precompact subsets $Y, Z \subseteq X$, the set $\left\{ g \in G \colon \alpha_g 
(Y) \cap Z \not= \varnothing \right\}$ is finite). In this case, the quotient 
space $X / G$ is again locally compact and Hausdorff.

\begin{Prop} \label{prop:orbit-asdim-proper}
	If $\alpha$ is proper,  
	then $\asdim \left( X , \Enseco_\alpha \right) = 0$. 
\end{Prop}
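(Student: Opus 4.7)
My plan is to verify the condition in Lemma~\ref{lem:asdim-connected-components} for $d = 0$: namely, that for every controlled set $\Ense \in \Enseco_\alpha$, the collection of all $\Ense$-connected components of $X$ (viewed as a trivial one-color cover) is $\Enseco_\alpha$-uniformly bounded. Since $\Enseco_\alpha$ is generated by entourages of the form $\Ense_{\Ko,\Lo}$, it suffices to treat the case $\Ense = \Ense_{\Ko,\Lo} \cup \Delta_X$ for arbitrary $\Ko \Subset X$ and $\Lo \Subset G$. The properness hypothesis is used precisely through the finiteness of
\[
H = \left\{ g \in G \colon \alpha_g(\Ko) \cap \Ko \neq \varnothing \right\} \subseteq G.
\]

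The key observation is that, by the definition of $\Ense_{\Ko,\Lo}$, any pair $(x,y) \in \Ense_{\Ko,\Lo}$ has both coordinates in $\Ko$. Thus every nondegenerate $\Ense$-connected component of $X$ is entirely contained in $\Ko$; every remaining $\Ense$-connected component is a singleton in $X \smallsetminus \Ko$. For a nondegenerate component $Y \subseteq \Ko$ and any two points $y_1, y_2 \in Y$ with $y_1 = \alpha_g(y_2)$, we have $\alpha_g(\Ko) \cap \Ko \ni y_1$, so $g \in H$. Consequently, $Y \times Y \subseteq \Ense_{\Ko, H}$ for this single finite set $H$ that does not depend on the component.

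Combining both cases, if $\mathcal{Y}$ is the family of all $\Ense$-connected components of $X$, then
\[
\bigcup_{Y \in \mathcal{Y}} Y \times Y \ \subseteq \ \Ense_{\Ko, H} \cup \Delta_X \ \in \ \Enseco_\alpha,
\]
which gives the required $\Enseco_\alpha$-uniform boundedness. Applying Lemma~\ref{lem:asdim-connected-components} with $d = 0$ and the trivial cover $\{X\}$ then yields $\asdim(X, \Enseco_\alpha) \leq 0$, hence equality (since the asymptotic dimension cannot be negative because $X \neq \varnothing$ in any nontrivial case; otherwise the conclusion is vacuous). I do not anticipate any technical obstacles here: the entire argument reduces to unpacking the definitions of properness and of $\Ense_{\Ko,\Lo}$, with the only substantive ingredient being the finiteness of $H$.
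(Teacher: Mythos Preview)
Your argument is correct. The one small expository point worth making explicit is that any $\Ense$-connected component is contained in a single $\alpha$-orbit (since each step in an $\Ense$-chain moves within an orbit), which is what guarantees the existence of the element $g$ with $y_1 = \alpha_g(y_2)$; you use this implicitly when concluding $Y \times Y \subseteq \Ense_{\Ko,H}$.

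The paper takes a closely related but formally different route: it verifies condition~\eqref{item:prop:orbit-asdim:mult} of Proposition~\ref{prop:orbit-asdim} by exhibiting the cover $\Coseco = \{\alpha^\cup_{\Lo}(\Ko) \cap \alpha_G(x) : x \in X\}$, whose members are pairwise disjoint (giving multiplicity~$1$) and $\Bo$-bounded for the finite set $\Bo = \{g \in G : \alpha_g(\alpha^\cup_{\Lo}(\Ko)) \cap \alpha^\cup_{\Lo}(\Ko) \neq \varnothing\}$. Your approach instead invokes Lemma~\ref{lem:asdim-connected-components} with the trivial cover $\{X\}$ and bounds the $\Ense$-connected components directly. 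Both arguments hinge on the same use of properness (finiteness of the return set), but yours avoids the dynamical reformulation of Proposition~\ref{prop:orbit-asdim} and is marginally more self-contained; the paper's version, on the other hand, fits the template used throughout Section~\ref{sec:OCS} and makes the parallel with the later long-thin-cover arguments more visible.
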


\begin{proof}
	It suffices to verify condition~\eqref{item:prop:orbit-asdim:mult} of Lemma~\ref{prop:orbit-asdim} for $d = 0$. To this end, given a finite subset $\Lo \Subset G$ and a compact subset $\Ko \Subset X$, we define
	\[
	\Coseco = \left\{ \alpha^\cup_{\Lo} (\Ko) \cap \alpha_{G}(x) \subseteq X \colon x \in X \right\}
	\]
	and 
	\[
	\Bo = \left\{ g \in G \colon \alpha_g \left( \alpha^\cup_{\Lo} (\Ko) \right) \cap \alpha^\cup_{\Lo} (\Ko) \not= \varnothing \right\}
	\]
	Since $\alpha^\cup_{\Lo} (\Ko)$ is compact and $\alpha$ is proper, the set 
	$\Bo$ is finite. It is clear that 
	$\Coseco$ and $\Bo$ satisfy \eqref{item:prop:orbit-asdim:mult:Lo}, 
	\eqref{item:prop:orbit-asdim:mult:Bo} 
	and~\eqref{item:prop:orbit-asdim:mult:Mu} of 
	Lemma~\ref{prop:orbit-asdim}\eqref{item:prop:orbit-asdim:mult}. 
\end{proof}

\begin{Rmk} \label{rmk:OCS-why-compact}
	Proposition~\ref{prop:orbit-asdim-proper} may be seen as the motivation to involve compact sets in the definition of the coarse orbit structure (Definition~\ref{def:OCS}). Without them, the proof of Proposition~\ref{prop:orbit-asdim-proper} falls apart. 
\end{Rmk}

\renewcommand{\sectionlabel}{LTC}
\ref{sectionlabel=LTC}
In fact, for proper actions, we can also determine $\dimltc(\alpha)$. Let us write $\pi \colon X \to X / G$ for the quotient map and recall that proper actions by discrete groups have the following slice property: for any $x \in X$, 
there exists a precompact neighborhood $\Ause$ of $x$ such that $\Ause$ is 
invariant under the stabilizer group $G_x \leq G$ and $\alpha_g (\Ause) \cap 
\Ause = \varnothing$ for any $g \in G \setminus G_x$.
(To see this, first choose an open neighborhood $U$ of $x$ such that  $\alpha_g 
(U) \cap U = \varnothing$ for any $g \in G \setminus G_x$; then set $\Ause = 
\alpha^\cap_{G_x} ( U )$.)
\begin{Rmk}\label{rmk:proper-dim-quotient}
	If the action of $G$ on $X$ is proper, then we have $\dim((X/G)^+) \leq 
	\dim(X^+)$. To see this, we observe that by the above slice property, for 
	any compact subset $\Ko \Subset X / G$, there exists a compact subset $\Ko' 
	\Subset X$ such that $\pi(\Ko') \supset \Ko$. Since $\pi|_{\Ko'} \colon 
	\Ko' \to \pi(\Ko')$ is a finite-to-one open surjection, by \cite[Ch.~9, 
	2.16]{Pears75}, we have $\dim(\Ko') = \dim(\pi(\Ko'))$. It then follows 
	from Lemma~\ref{lem:dimc-compactification} that $\dim((X/G)^+) = \sup_{\Ko 
	\Subset X/G} \dim(\Ko) \leq \sup_{\Ko' \Subset X} \dim(\Ko) = \dim(X^+)$, 
	as desired. 
\end{Rmk}

\begin{Prop} \label{prop:ltc-dim-proper}
	If $\alpha$ is proper, then 
	\[
		\dimltc(\alpha) = \dim\left((X/G)^+\right) = \dim\left(X^+\right) \; .
	\]
\end{Prop}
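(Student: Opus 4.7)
The plan is to prove the full chain of equalities by closing the cycle
\[
\dim\bigl((X/G)^+\bigr) \ \leq \ \dim\bigl(X^+\bigr) \ \leq \ \dimltc(\alpha) \ \leq \ \dim\bigl((X/G)^+\bigr) \, ,
\]
of which the first two links come essentially for free: the leftmost inequality is precisely Remark~\ref{rmk:proper-dim-quotient}, and the middle inequality $\dim(X^+) = \dimc(X) \leq \dimltc(\alpha)$ is the general lower bound noted in Remark~\ref{rmk:ltc-dim-trivial-group}. Since this cycle already closes, there is no need to prove the two equalities separately, and the only substantive work is the rightmost inequality $\dimltc(\alpha) \leq \dim\bigl((X/G)^+\bigr) =: d$ (assume $d$ finite, else there is nothing to do), which I would establish directly from Definition~\ref{def:ltc-dim}.

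To that end, fix finite $L \Subset G$ and compact $K \Subset X$. The decisive preparatory step, before ever seeing the cover $\Thseco$ appearing in the definition, is to choose a precompact open neighborhood $\widehat{K}$ of $\alpha^{\cup}_{L \cup \{e\}}(K)$ in $X$ and set
\[
F \ := \ \bigl\{ g \in G : \alpha_g(\overline{\widehat{K}}) \cap \overline{\widehat{K}} \neq \varnothing \bigr\} \, ,
\]
which is finite by properness. The natural number $M := |F|$ will serve as the constant called for in the definition; both $\widehat{K}$ and $M$ depend only on $L$ and $K$, which is crucial.

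Now, given an arbitrary finite open cover $\Thseco$ of $X$, I would use the slice property of proper actions (recalled in the paragraph preceding Remark~\ref{rmk:proper-dim-quotient}) to cover the compact set $\overline{\widehat{K}}$ by finitely many precompact $G_{x_i}$-invariant open slices $V_1, \ldots, V_n \subseteq \widehat{K}$, centered at chosen points $x_i$ and shrunk small enough that for every $g \in F$, the translate $\alpha_g(V_i)$ lies inside some member of $\Thseco$—a finite list of conditions per slice, arrangeable by continuity. Since $\pi \colon X \to X/G$ is an open map whose restriction to each slice is a finite-to-one quotient, the images $\{\pi(V_i)\}_{i=1}^n$ form an open cover of the compact set $\pi(\overline{\widehat{K}}) \subseteq X/G$; invoking $\dim((X/G)^+) = d$, I would refine this to a finite open cover $\{W_j\}_{j=1}^m$ of $\pi(\overline{\widehat{K}})$ in $X/G$ of multiplicity at most $d+1$, together with a map $j \mapsto i(j)$ satisfying $W_j \subseteq \pi(V_{i(j)})$. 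Setting
\[
U_j \ := \ \pi^{-1}(W_j) \cap \widehat{K} , \qquad \Lambda_{U_j}(z) \ := \ \alpha_g(x_{i(j)}) \text{ when } z \in \alpha_g(V_{i(j)}) \cap U_j \, ,
\]
yields well-defined locally constant functions by the slice property ($x_{i(j)}$ is $G_{x_{i(j)}}$-invariant and distinct $G_{x_{i(j)}}$-cosets give disjoint translates of $V_{i(j)}$).

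The verification of the five clauses of Definition~\ref{def:ltc-dim} is then routine: the cover clause holds because for $x \in K$ one has $\pi(\alpha_L(x)) = \{\pi(x)\} \subseteq W_j$ for suitable $j$ and $\alpha_L(x) \subseteq \widehat{K}$; the multiplicity bound is inherited from that of $\{W_j\}$; the $L$-equivariance of $\Lambda_{U_j}$ is immediate from its orbit-equivariant formula; the thinness clause follows because $\Lambda_{U_j}^{-1}(\alpha_g(x_{i(j)})) \cup \{\alpha_g(x_{i(j)})\} \subseteq \alpha_g(V_{i(j)})$, which by our slice-shrinking step lies inside some member of $\Thseco$; and the cardinality bound $|\Lambda_{U_j}(U_j)| \leq M$ holds since any group element $g$ contributing a value must satisfy $\alpha_g(\overline{\widehat{K}}) \cap \overline{\widehat{K}} \neq \varnothing$, i.e., $g \in F$. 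The only genuine obstacle is ensuring this last bound $M$ is independent of $\Thseco$, which is precisely what the preparatory choice of the coarse frame $\widehat{K}$ before the slice refinement is designed to secure.
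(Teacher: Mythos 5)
Your proposal is correct and follows essentially the same route as the paper's proof: the lower bound via Remark~\ref{rmk:ltc-dim-trivial-group}, the inequality $\dim((X/G)^+) \leq \dim(X^+)$ via Remark~\ref{rmk:proper-dim-quotient}, and the main bound $\dimltc(\alpha) \leq \dim((X/G)^+)$ by combining the slice property of proper actions, a multiplicity-$(d+1)$ refinement of a cover of the quotient, orbit-selection maps given by translating slice centers, and a properness-derived finite set to bound the cardinality independently of the cover $\Thseco$. The only (immaterial) difference is that you enlarge the compact frame to contain $\alpha^{\cup}_{L}(K)$ so as to verify Definition~\ref{\LTCdef} directly with ordinary multiplicity, whereas the paper verifies the equivalent formulation of Proposition~\ref{prop:ltc-dim-new}(\ref{item:prop:ltc-dim-new:Lominus}) using the strengthened $(\alpha,L)$-multiplicity.
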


\begin{proof}
	By Remark~\ref{rmk:ltc-dim-trivial-group}, we have $\dimltc(\alpha) \geq \dim(X^+)$. It remains to show $\dimltc(\alpha) \leq \dim((X/G)^+) \leq \dim(X^+)$. To prove $\dimltc(\alpha) \leq \dim((X/G)^+)$, let $d$ be a nonnegative integer such that $\dim((X/G)^+) \leq d$. We want to show that $\dimltc(\alpha) \leq d$. Fix a compact subset $K \subseteq X$.
	Because $X$ is locally compact, there exists an open precompact subset 
	$\Aulase$ that contains $\Ko$. 
	Define
	\[
		\Bo := \left\{ g \in G \colon \alpha_g \left( \Aulase \right) \cap \left( \Aulase \right) \not= \varnothing \right\} \; .
	\]
	The set $\Bo$ is finite,  symmetric and contains $e$. Fix a given finite 
	open cover $\Thseco$ of $X$. Define $\Thseco_1 := 
	\alpha^\wedge_{\Bo}(\Thseco \cup \{ \Aulase, X \setminus \Ko \} )$ (see 
	Notation~\ref{notation:group-action}).  
	For any $x \in X$, we choose a precompact neighborhood $\Ause_x$ of $x$ which is invariant with regard to the stabilizer group $G_x \leq G$ and satisfies $\alpha_g \left( \Ause_x \right) \cap \Ause_x = \varnothing$ for any $g \in G \setminus G_x$. 
	These conditions guarantee the existence of a locally constant $G$-equivariant map $\Ne_{x} \colon \alpha^\cup_{G} \left(\Ause_x\right) \to X$ sending $\alpha_{g} \left(\Ause_x\right)$ to $\alpha_{g} (x)$ for any $g \in G$. 
	Furthermore, by replacing $\Ause_x$ with $\Ause_x \cap \alpha^\cap_{G_x}  \left( \bigcap \left\{ \Lase \in \Thseco_1 
	\colon x \in \Lase \right\} \right)$ if necessary, we may additionally 
	assume that for any $g \in \Bo$ and for any $\Thse \in \Thseco$, we have 
	\begin{enumerate}
		\item \label{prop:ltc-dim-proper:proof::Thse} $\alpha_g \left( \Ause_x \right) \subseteq \Thse$ whenever $\alpha_g (x) \in \Thse$, 
		\item \label{prop:ltc-dim-proper:proof::Aulase} $\alpha_g \left( \Ause_x \right) \subseteq \Aulase$ whenever $\alpha_g (x) \in \Aulase$, and 
		\item \label{prop:ltc-dim-proper:proof::Ko} $\alpha_g \left( \Ause_x \right) \cap \Ko = \varnothing$ whenever $\alpha_g (x) \notin \Ko$. 
	\end{enumerate}
	
	By compactness, there exists a finite subset $\Fi \subseteq \Ko$ such that 
	$\left\{ \Ause_{x} \colon x \in \Fi \right\}$ covers $\Ko$. 
	Since $\dim((X/G)^+) \leq d$, by Lemma~\ref{lem:dimc-compactification}, 
	there exists a finite open cover $\Laseco$ of $\pi(\Ko)$ in $X / G$ that 
	refines $\left\{ \pi\left(\Ause_{x}\right) \colon x \in \Fi \right\} \cup 
	\{ (X / G) \setminus \pi(\Ko) \}$ and satisfies $\mult(\Laseco) \leq d + 
	1$. We may also assume without loss of generality that each open set in 
	$\Laseco$ intersects $\pi(\Ko)$ nontrivially, and thus $\Laseco$ refines 
	$\left\{ \pi\left(\Ause_{x}\right) \colon x \in \Fi \right\}$. For any 
	$\Lase \in \Laseco$, choose $x_{\Lase} \in \Fi$ such that $\Lase \subseteq 
	\pi\left(\Ause_{x_{\Lase}}\right)$. Write $\Ause_{\Lase}$ in place of 
	$\Ause_{x_{\Lase}}$, define 
	\[
	\Bo_{\Lase} := \left\{ g \in G \colon \alpha_g 
	\left( x_{\Lase} \right) \in \Aulase \right\}
	\; \text { and } \; \Cose_{\Lase} := \pi^{-1}(\Lase) \cap 
	\alpha^\cup_{\Bo_{\Lase}} \left( \Ause_{\Lase} \right) \subseteq X
	\, ,
	\]
	and let $\Ne_{\Lase} \colon \Cose_{\Lase} \to X$ be the restriction of the 
	locally constant map $\Ne_{x_{\Lase}}$. Define $\Coseco := \left\{ 
	\Cose_{\Lase} \colon \Lase \in \Laseco \right\}$. Set $\Binu = | \Bo |$. We 
	claim that $\Coseco$ 
	and $\left( \Ne_{\Lase} \right)_{\Lase \in \Laseco}$ satisfy 
	\Refc[\LTCdef]{Lo}{\{e\}, \Ko}, \Refcd[\LTCextra]{Muplus}, 
	\Refcd[\LTCdef]{Eq}, \Refcd[\LTCdef]{Th}, and \Refcd[\LTCdef]{Ca}, 
	whence $\dimltc(\alpha) \leq d$ by 
	 Proposition~\ref{prop:ltc-dim-new}(\ref{item:prop:ltc-dim-new:Lominus}). 
	 To 
	prove the claim, we first observe that $\Ne_{\Lase} \left( \Cose_{\Lase} 
	\right) \subseteq \alpha^\cup_{\Bo_{\Lase}} \left( x_{\Lase} \right) 
	\subseteq \Aulase$ for any $\Lase \in \Laseco$. 
	\begin{itemize}
		\item To show \Refc[\LTCdef]{Lo}{\{e\}, \Ko}, that is, that $\Coseco$ 
		covers $\Ko$, we first observe that $\pi^{-1} (\Laseco)$ covers $\Ko$. 
		Hence, for any $x \in \Ko$, there exists $\Lase \in \Laseco$ such that 
		$x 
		\in \pi^{-1} (\Lase) \subseteq \alpha^\cup_G \left( \Ause_{\Lase} 
		\right)$, since $\Lase \subseteq \pi \left( \Ause_{\Lase} \right)$. It 
		follows that there exists $g \in G$ such that $x \in \alpha_g \left( 
		\Ause_{\Lase} \right)$. In view of the definition of $\Cose_{\Lase}$, 
		it suffices to show that $g \in \Bo_{\Lase}$. Indeed, if $g \notin 
		\Bo_{\Lase}$, that is, $\alpha_g \left( x_{\Lase} 
		\right) \notin \Aulase$, then it follows from 
		\eqref{prop:ltc-dim-proper:proof::Ko} that $\alpha_g \left( 
		\Ause_{\Lase} \right) \cap \Ko = \varnothing$, which contradicts the 
		assumptions that $x \in \Ko$ and $x \in \alpha_g \left( \Ause_{\Lase} 
		\right)$. 
		
		\item To show \Refcd[\LTCextra]{Muplus} for $\Coseco$, we see that $\mult_{\alpha, \Lo}\left( \Coseco \right) \leq \mult_{\alpha, \Lo}\left( \pi^{-1} (\Laseco) \right)$ since $\Cose_{\Lase} \subseteq \pi^{-1}(\Lase)$ for each $\Lase \in \Laseco$, while $\mult_{\alpha, \Lo}\left( \pi^{-1} (\Laseco) \right) = \mult \left( \pi^{-1} (\Laseco) \right) = \mult \left( \Laseco \right)$ since $\pi^{-1} (\Laseco)$ is a cover consisting of $G$-invariant sets, whence $\mult_{\alpha, \Lo}\left( \Coseco \right) \leq d+1$. 
		
		\item To show \Refcd[\LTCdef]{Eq}, it suffices to see that the $G$-equivariance of $\Ne_{x}$ for each $x \in X$ implies that $\Ne_{\Lase}$ is $\Lo$-equivariant for any $\Lase \in \Laseco$. 
		
		\item It is immediate from the definition that $\left | \Ne_{\Lase} 
		\left( \Cose_{\Lase} \right) \right | \leq | \Bo | = \Binu$, that is, 
		we have  \Refcd[\LTCdef]{Ca}.

		\item To show \Refcd[\LTCdef]{Th}, we observe that for any $\Lase \in 
		\Laseco$ and for any $y \in \Ne_{\Lase} \left( \Cose_{\Lase} \right) 
		\subseteq \alpha^\cup_{\Bo_{\Lase}} \left( x_{\Lase} \right)$, we have 
		$y = \alpha_{g} \left( x_{\Lase} \right) \in \Thse$ for some $g \in 
		\Bo_{\Lase}$ and $\Thse \in \Thseco$. Thus, we have  
		\[
		\Ne_{\Lase}^{-1} (y) \cup \{y\} \subseteq \Ne_{x_{\Lase}}^{-1} \left( 
		\alpha_{g} \left( x_{\Lase} \right) \right) \cup \left\{ \alpha_{g} 
		\left( x_{\Lase} \right) \right\} = \alpha_{g} \left( \Ause_{\Lase} 
		\right)
		\, ,
		\]
		 and  $ \alpha_{g} \left( \Ause_{\Lase} \right ) \subseteq \Thse$ by 
		 \eqref{prop:ltc-dim-proper:proof::Aulase} above. 
	\end{itemize}
	This proves $\dimltc(\alpha) \leq \dim((X/G)^+)$. The fact that  
	$\dim((X/G)^+) \leq \dim(X^+)$ is shown in 
	Remark~\ref{rmk:proper-dim-quotient}.
\end{proof}

\begin{Cor} \label{cor:ltc-finite-group}
	If $G$ is a finite group, then we have $\asdim \left( X , \Enseco_\alpha \right) = 0$ and $\dimltc(\alpha) = \dim\left((X/G)^+\right) = \dim\left(X^+\right)$. 
\end{Cor}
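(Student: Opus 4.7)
The plan is to reduce the corollary immediately to Propositions~\ref{prop:orbit-asdim-proper} and~\ref{prop:ltc-dim-proper} by observing that any action of a finite group is automatically proper.

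More precisely, first I would note that when $G$ is finite, for any compact subset $\Ko \Subset X$, the set $\left\{ g \in G \colon \alpha_g(\Ko) \cap \Ko \neq \varnothing \right\}$ is a subset of $G$ and hence finite. This is precisely the definition of properness recalled just before Proposition~\ref{prop:orbit-asdim-proper}. Therefore $\alpha$ is a proper action.

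Given this observation, the asymptotic dimension statement $\asdim\left( X, \Enseco_\alpha \right) = 0$ follows directly from Proposition~\ref{prop:orbit-asdim-proper}, and the chain of equalities $\dimltc(\alpha) = \dim\left((X/G)^+\right) = \dim\left(X^+\right)$ follows directly from Proposition~\ref{prop:ltc-dim-proper}. No further analysis is needed.

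There is no real obstacle here; the corollary is essentially a remark recording a consequence of the more substantive propositions concerning proper actions. The only subtlety worth noting (which is itself routine) is the verification of properness, which uses finiteness of $G$ in a trivial way.
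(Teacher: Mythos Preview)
Your proposal is correct and matches the paper's proof essentially verbatim: the paper also deduces the corollary immediately from Propositions~\ref{prop:orbit-asdim-proper} and~\ref{prop:ltc-dim-proper} together with the trivial observation that any action of a finite group is proper.
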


\begin{proof}
	This follows from Propositions~\ref{prop:orbit-asdim-proper} 
	and~\ref{prop:ltc-dim-proper} together with the trivial fact that any 
	action by a finite group is proper. 
\end{proof}

\section{The large scale packing constants}\label{sec:LSP}
\renewcommand{\sectionlabel}{LSP}
\ref{sectionlabel=LSP}
In this section, we introduce the notion of \emph{large scale packing 
constants} of a discrete group. Those are group theoretic invariants. 
The main theorems of this section are Theorem \ref{thm:lsp-asdim}, in which 
bound the asymptotic dimension of the coarse orbit spaces in terms of the 
zeroth large scale packing constant, and Theorem \ref{thm:lsp-vnil}, in which 
we show that finitely generated virtually nilpotent groups have finite large 
scale packing constants. The large scale packing constants are used again in 
Section~\ref{sec:GP}, in order 
to obtain bounds on the long thin covering dimension. 

\begin{Def}	\label{def:LSP}
	Let $G$ a discrete group and let $d$ be a nonnegative integer. 
	The \emph{$d$-dimensional large scale packing constant} of $G$, denoted $\LSP_d(G)$, is the infimum of all nonnegative integers $m$ satisfying the following:
	
	For any finite subset $\Lo \Subset G$, there exists a symmetric finite 
	subset $\Bo \Subset G$ that contains $e$ (i.e., $e \in \Bo^{-1} = \Bo$), 
	which satisfies the following condition: 
	{
		\begin{enumerate}
			\renewcommand{\labelenumi}{\textup{(\theenumi)}}
			\renewcommand{\theenumi}{Pa}\item \label{item:def:LSP:Pa}
			For any $n>m$ and for any $g_1,\ldots g_n \in \Bo \Lo$, there 
			exists a 
			subset $\Fi \subseteq \intervalofintegers{1}{n}$ such that $|\Fi| > 
			d+1$ and 
			$g_i^{} g_j^{-1} \in \Bo$ for any $i,j \in \Fi$. 
		\end{enumerate}
	}
\end{Def}

\begin{Rmk}	\label{rmk:LSP-reformulate}
	We may reformulate condition~\eqref{item:def:LSP:Pa} as follows: 
	
	For any $g_1,\ldots g_n \in \Bo \Lo$, if there is no subset $\Fi \subseteq \intervalofintegers{1}{n}$ such that $|\Fi| > d+1$ and 
	$g_i^{} g_j^{-1} \in \Bo$ for any $i,j \in \Fi$, 
	then $n \leq m$. 
\end{Rmk}

\begin{Rmk}	\label{rmk:LSP}
	The following inequalities are immediate:
	\[
		0 \leq \LSP_0(G) \leq  \LSP_1(G) \leq \LSP_2(G) \leq \ldots 
	\]
	and for all $d$ we have
	\[
		\LSP_d(G) \geq  d+1 
	\]
	 The latter inequality becomes an equality for any $d$ when $G$ is a finite 
	 group, since we may take $\Bo = G$ in Definition~\ref{def:LSP}. 
\end{Rmk}

In the above, the label \eqref{item:def:LSP:Pa} stands for ``packing''. 
Intuitively speaking, for a finitely generated group $G$ and for $d=0$, if we 
take both $\Lo$ and $\Bo$ to be balls on a Cayley graph of $G$ centered at the 
identity, say of radii $r$ and $R$, then the requirement in 
Definition~\ref{def:LSP} amounts to finding the maximal number of elements we 
can pick from a ball of radius $R+r$ such that the distance between any two 
elements are more than $R$ (or equivalently, the smallest number which 
guarantees that any list of elements in the a ball of radius $R+r$ of that 
given length would 
have at least two elements whose distance is at most $R$; for larger $d$, the 
requirement would be that there are at least $d+1$ elements such that 
the distance between any two of them is at most $R$). This is essentially 
equivalent, for $d=0$, to finding the maximal number of $\frac{R}{2}$-balls one 
can pack into a ball of radius $\left( \frac{3}{2}R+r \right)$ without 
overlapping.

The next theorem shows that the asymptotic dimension of the coarse orbit space 
is bounded by $\LSP_0(G)$. The need for $\LSP_d(G)$ for higher values of $d$ 
will arise 
in Section~\ref{sec:GP}, when we obtain a bound for $\dimltc(\alpha)$.

\begin{Thm} \label{thm:lsp-asdim}
	Let $\alpha$ be an action of a countable discrete group $G$ on a locally 
	compact space $X$. Then we have 
	\[
	\asdim \left(X, \Enseco_\alpha \right) \leq \LSP_0(G) - 1 \; .
	\]
\end{Thm}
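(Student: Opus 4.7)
The plan is to invoke the finitary characterization of orbit asymptotic dimension from Proposition~\ref{prop:orbit-asdim}\eqref{item:prop:orbit-asdim:finitary}. Setting $m := \LSP_0(G)$, it suffices to exhibit, for every finite $\Lo \Subset G$ and every compact $\Ko \Subset X$, a uniform cardinality bound $\Binu$ such that every finite $Y \subseteq \Ko$ admits a cover $\Coseco$ with $(\alpha, \Lo)$-multiplicity at most $m$ and $|\Cose| \leq \Binu$ for each $\Cose \in \Coseco$; this will give $\asdim(X, \Enseco_\alpha) \leq m - 1$.

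After enlarging $\Lo$ to $\Lo \cup \Lo^{-1} \cup \{\grpid\}$, which only strengthens the required conclusion, I would apply Definition~\ref{def:LSP} with $d = 0$ to extract a symmetric $\Bo \Subset G$ with $\grpid \in \Bo$ such that every list $k_1, \ldots, k_n \in \Bo \Lo$ with $n > m$ admits two distinct indices $i, j$ with $k_i k_j^{-1} \in \Bo$. Set $\Binu := |\Bo|$. For each finite $Y \subseteq \Ko$, greedily pick a maximal $\Bo$-separated subset $Z \subseteq Y$, meaning that no two distinct $z, z' \in Z$ satisfy $z' \in \alpha_\Bo(z)$. Maximality of $Z$, together with the symmetry of $\Bo$ and $\grpid \in \Bo$, forces $Y \subseteq \bigcup_{z \in Z} \alpha_\Bo(z)$. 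Define $\Cose_z := \alpha_\Bo(z) \cap Y$ for each $z \in Z$ and set $\Coseco := \{\Cose_z : z \in Z\}$. Then $\Coseco$ covers $Y$, each $\Cose_z$ has at most $|\Bo| = \Binu$ points, and distinct $z \in Z$ index distinct sets of $\Coseco$ (if $\Cose_z = \Cose_{z'}$ for $z \neq z'$, then $z \in \Cose_z \subseteq \alpha_\Bo(z')$, contradicting $\Bo$-separation).

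The bound on the $(\alpha, \Lo)$-multiplicity is the heart of the argument. Take $n$ distinct sets $\Cose_{z_1}, \ldots, \Cose_{z_n} \in \Coseco$ and a choice $x_i \in \Cose_{z_i}$ such that $\{x_i\}$ is $(\alpha, \Lo)$-close. Writing $x_i = \alpha_{g_i}(z_i)$ with $g_i \in \Bo$ and $x_i = \alpha_{h_i}(x_1)$ with $h_i \in \Lo$, we obtain $z_i = \alpha_{k_i}(x_1)$ where $k_i := g_i^{-1} h_i \in \Bo \Lo$. If $n > m$, the packing property yields distinct indices $i, j$ with $k_i k_j^{-1} = g_i^{-1} h_i h_j^{-1} g_j \in \Bo$, and hence $z_i = \alpha_{k_i k_j^{-1}}(z_j) \in \alpha_\Bo(z_j)$, contradicting the $\Bo$-separation of $Z$. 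Therefore $n \leq m$, as required.

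The main subtlety is bridging the purely group-theoretic packing condition of Definition~\ref{def:LSP} — a statement about elements in $\Bo \Lo \subseteq G$ — with the set-theoretic $(\alpha, \Lo)$-multiplicity bound on the cover of the orbit coarse space. The maximal $\Bo$-separated skeleton $Z$ serves as the bridge: it is dense enough (by maximality) to thicken into a cover by $\Bo$-neighborhoods, yet sparse enough that the pairwise $\Bo$-closeness produced by the LSP property between two representatives is outright forbidden, forcing the desired cardinality bound on the multiplicity.
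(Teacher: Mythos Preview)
Your proof is correct and follows essentially the same strategy as the paper: choose a maximal $\Bo$-separated (the paper says ``$\Bo$-discrete'') subset, cover by $\Bo$-neighbourhoods of its points, and use the $d=0$ packing condition to force a collision $z_i \in \alpha_{\Bo}(z_j)$ that contradicts separation. The only notable difference is that the paper invokes characterization~\eqref{item:prop:orbit-asdim:mult} of Proposition~\ref{prop:orbit-asdim} and applies Zorn's lemma to get a maximal $\Bo$-discrete subset of all of $\Ko$, whereas you invoke the finitary characterization~\eqref{item:prop:orbit-asdim:finitary} and work inside a single finite $Y$, which lets you avoid Zorn; this is a cosmetic variation rather than a genuinely different route.
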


\begin{proof}
	Let $\Dimnu$ be a natural number such that $\LSP_0(G) \leq \Dimnu + 1$. We 
	show that condition~\eqref{item:prop:orbit-asdim:mult} of 
	Proposition~\ref{prop:orbit-asdim} is satisfied. 
	To this end, fix an arbitrary finite subset $\Lo \Subset G$ and an 
	arbitrary compact subset $\Ko[OCS] \Subset X$. By replacing $\Lo$ with $\Lo 
	\cup \Lo^{-1}$ if necessary, we may assume without loss of generality that 
	$\Lo = \Lo^{-1}$. Then we use the assumption $\LSP_0(G) \leq \Dimnu + 1$ to 
	obtain a finite symmetric subset $\Bo \Subset G$ containing $e$ and 
	satisfying condition~\eqref{item:def:LSP:Pa} of Definition~\ref{def:LSP}. 
	
	We say a subset $\Ause$ of $\Ko[OCS]$ is \emph{$\Bo$-discrete} if for any 
	$x \in \Ause$, we have $\alpha_{\Bo} (x) \cap \Ause \subseteq \{ x \}$. By 
	Zorn's lemma, there is a maximal set $\Ause$ in the collection of all 
	{$\Bo$-discrete} subsets of $\Ko[OCS]$. Notice that for any $x \in 
	\Ko[OCS] \setminus \Ause$, we have $\alpha_{\Bo} (x) \cap \Ause \not= 
	\varnothing$, since otherwise $\Ause \cup \{ x \}$ would be a larger 
	$\Bo$-discrete set.
	
	We define $\Bo[OCS] = (\Lo \Bo)^{-1} (\Lo \Bo)$ and $\Coseco[OCS] = \left\{ 
	\alpha_{\Lo \Bo} (y) \colon y \in \Ause \right\}$. We claim that they 
	satisfy 
	conditions~\eqref{item:prop:orbit-asdim:mult:Lo}, 
	\eqref{item:prop:orbit-asdim:mult:Bo} 
	and~\eqref{item:prop:orbit-asdim:mult:Mu} of 
	Proposition~\ref{prop:orbit-asdim}\eqref{item:prop:orbit-asdim:mult}. 
	Indeed, condition~\eqref{item:prop:orbit-asdim:mult:Bo} is immediate from 
	our construction. To prove condition~\eqref{item:prop:orbit-asdim:mult:Lo}, 
	for any $x \in \Ko[OCS]$, we know that $\alpha_{\Bo} (x) 
	\cap \Ause \not= \varnothing$, which implies that $x \in \alpha_{\Bo} (y)$ 
	for some $y \in \Ause$, and thus $\alpha_{\Lo} (x) \subseteq \alpha_{\Lo 
		\Bo} (y)$. Finally, to prove 
	condition~\eqref{item:prop:orbit-asdim:mult:Mu}, we show, for any $x \in 
	X$, the set $\Aufi = \left\{ y \in \Ause \colon x \in \alpha_{\Lo \Bo} (y) 
	\right\}$ has cardinality no more than $\Dimnu + 1$. To this end, we 
	choose, for any $y \in \Aufi$, a group element $g_y \in \Bo \Lo = (\Lo 
	\Bo)^{-1}$ such that $y = \alpha_{g_y} (x)$. Note that any such  
	selection 
	$y \mapsto g_y$ is injective. Thus it suffices to show the set ${\Aufi}' = 
	\left\{ g_y \in \Bo \Lo \colon y \in \Aufi \right\}$ has cardinality no 
	more than $\Dimnu + 1$. Suppose this were not true. Then by 
	condition~\eqref{item:def:LSP:Pa} of Definition~\ref{def:LSP}, 
	there would be distinct $y, z \in D$ such that $g_y, g_z \in \Aufi'$ and 
	$g_y^{} g_z^{-1} \in \Bo$; this would imply $y 
	= \alpha_{g_y^{}} \left( \alpha_{g_z^{-1}} (z) \right) \in \alpha_{\Bo} 
	(z)$, which cannot happen because $\Ause$ is $\Bo$-discrete.
\end{proof}

 The next theorem shows that the large scale packing constants of finitely 
 generated virtually nilpotent groups are finite. Recall from the discussion after Definition~\ref{def:growth} that if $G$ is a finitely generated virtually nilpotent group and $S$ is a finite symmetric generating set, then the growth function associated with $S$ is equivalent to a polynomial of degree $d(G)$, which depends on the group but not on the set $S$.
\begin{Thm} \label{thm:lsp-vnil}
	Let $G$ be finitely generated virtually nilpotent group $G$. Set $k=d(G)$. Then for any non-negative integer $d$ we have
	\[
	\LSP_d(G) \leq 3^k (d+1) \, .
	\]
\end{Thm}

\begin{proof}
	Let us fix a symmetric generating set $S$ for $G$. 
	Recall that by \cite[Theorem 1.1]{Breuillard}, there exists a positive constant $C$ (depending on $S$) such that
	\[
	\frac{| B_S (e,n) |}{n^k} \xrightarrow[n \to \infty]{} C 
	\, .
	\]
	Given any finite subset $\Lo \Subset G$, choose a positive integer $r$ such that $\Lo \subseteq B_S(e,r)$ (see Definition~\ref{def:growth}). 
	Notice that, taking limits over even numbers $R$, we have
	\[
	\frac{\left | B_S (e,\frac{3}{2}R + r)  \right |}{ \left | B_S (e,\frac{1}{2}R) \right | } \xrightarrow[R \to \infty]{} 3^k
	\, .
	\]
	We may thus fix a positive even integer $R$ such that 
	\[
		 3^k + \frac{1}{d+1} > \frac{\left | B_S (e,\frac{3}{2}R + r)  \right |}{ \left | B_S (e,\frac{1}{2}R) \right | } \, .
	\]
	We rewrite the last equation as 
	\[
	\left(   3^k (d+1)  + 1 \right) \Growth{G,S} \left( \frac{R}{2} \right)  
	 > (d+1) \cdot \Growth{G,S} \left( \frac{3}{2}R+r \right) \, .
	\]
	Set $\Bo = B_S(e,R)$. Note that $\Bo \Lo \subseteq B_S(e,R + r)$. 
	Furthermore, $\Bo$ contains $e$ and is symmetric. To prove 
	condition~(\ref{item:def:LSP:Pa}) of Definition~\ref{def:LSP}, observe that 
	for any integer $n > 3^k (d+1)$ and for any $g_1,\ldots g_n \in 
	\Bo 
	\Lo$, we have 
	\[
	B_S\left(g_i , \frac{R}{2}\right) \subseteq B_S \left(e, 
	\frac{3}{2}R + r \right)
	\, .
	\]
	 Hence if we write $\chi_{B_S\left(g_i , 
	\frac{R}{2}\right)}$ for the characteristic function of $B_S\left(g_i , 
	\frac{R}{2}\right)$, then we have
	\begin{align*}
		\sum_{g \in B_S \left(e, \frac{3}{2}R + r \right)} \sum_{i = 1}^{n} \chi_{B_S\left(g_i , \frac{R}{2}\right)} (g) &= \sum_{i = 1}^{n} \sum_{g \in B_S \left(e, \frac{3}{2}R + r \right)} \chi_{B_S\left(g_i , \frac{R}{2}\right)} (g) \\
		& = n \cdot \Growth{G,S} \left( \frac{R}{2} \right) 	\\
		& \geq \left(   3^k (d+1)  + 1 \right) \Growth{G,S} \left( \frac{R}{2} \right)  \\
		& > (d+1) \cdot \Growth{G,S} \left( \frac{3}{2}R+r \right) \; .
	\end{align*}
	Thus, there exists $g_0 \in B_S \left(e, \frac{3}{2}R + r \right)$ such 
	that $\displaystyle \sum_{i = 1}^{n} \chi_{B_S\left(g_i , 
	\frac{R}{2}\right)} (g_0) > d+1$. Therefore, the set $F$ given by $F = 
	\left\{ i 
	\in \intervalofintegers{1}{n} \colon g_0 \in B_S \left(g_i , 
	\frac{R}{2}\right) \right\}$ has cardinality greater than $d+1$. Finally, 
	we see that for any $i, j \in F$, the distances from $g_0$ to $g_i$ and 
	$g_j$ are both no more than $\frac{R}{2}$, whence the distance between 
	$g_i$ and $g_j$ is no more than $R$, i.e., $g_i^{} g_j^{-1} \in \Bo$. 
\end{proof}

\begin{Question} \label{ques:lsp-vnil} We showed that finitely generated 
	virtually nilpotent groups have finite large scale packing constants. Does 
	the converse hold? 
\end{Question}

\section{General positions and bounding $\dimltc$}\label{sec:GP}
\renewcommand{\sectionlabel}{LTC}
\ref{sectionlabel=LTC}
This section is devoted to proving the following result, which gives a bound of the long thin covering dimension of an action $\alpha \colon G \curvearrowright X$ in terms of the covering dimension of $X$ and the large scale packing constants of $G$.

\begin{Thm} \label{thm:lsp-ltc}
	Let $\alpha$ be an action of a discrete group $G$ on a locally compact Hausdorff space $X$.  
	Assume $\dim (X^+) \leq d$ for some $d \in \N$. 
	Then we have 
	\[
	\dimltc (\alpha) \leq \LSP_{d}(G) - 1 \; .
	\]
\end{Thm}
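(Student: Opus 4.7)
My plan is to verify the equivalent characterization of $\dimltc(\alpha) \leq m$ furnished by Proposition~\ref{prop:ltc-dim-new}\eqref{item:prop:ltc-dim-new:combo-pre}, where $m = \LSP_d(G)-1$. Fixing $L \Subset G$ (which after symmetrization I may take to contain $e$) and compact $K \Subset X$, I apply Definition~\ref{def:LSP} to a sufficient enlargement of $L$ in order to obtain a symmetric finite set $N \ni e$ witnessing the $d$-dimensional packing property; setting $M = |N|$ gives the candidate bounding constant. For a given finite open cover $\mathcal{V}$ of $X$, the task is then to produce a finite collection $\mathcal{U}$ of open sets together with locally constant, $L$-equivariant, $\mathcal{V}$-thin, $N$-bounded near orbit selection functions, and whose $(\alpha,L)$-multiplicity is at most $m+1$.

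The first stage is to construct a preliminary family of ``columns''. By compactness of $K$ and local compactness of $X$, I choose a finite subset $E \Subset K$ with $\alpha_N(E) \supseteq K$, and for each $y \in E$ a precompact open slice $V_y \ni y$ so small that the translates $\{\alpha_g(V_y) : g \in NL\}$ are pairwise disjoint and each is contained in some member of $\mathcal{V}$. Setting $\widetilde U_y = \bigsqcup_{g \in NL} \alpha_g(V_y)$ and defining $\widetilde \Lambda_y \colon \widetilde U_y \to X$ by $\widetilde \Lambda_y(\alpha_g(z)) = \alpha_g(y)$ yields a finite family that covers $\alpha_L(K)$, is $NL$-bounded, and whose selection functions are locally constant, $L$-equivariant, and thin with respect to $\mathcal{V}$. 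Only the multiplicity bound may fail.

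The second stage is the general position refinement: I inductively shrink $\{\widetilde U_y\}_{y \in E}$ to a family $\mathcal{U}$ of $(\alpha,L)$-multiplicity at most $m+1$. The crucial observation is that whenever an $(\alpha,L)$-close subset meets more than $m+1$ columns $\widetilde U_{y_1},\ldots,\widetilde U_{y_n}$, picking representatives in each intersection produces group elements $g_1,\ldots,g_n \in NL$, and by Definition~\ref{def:LSP} at least $d+2$ of them must cluster in the sense $g_ig_j^{-1} \in N$. Such a cluster describes a $(d+1)$-fold coincidence inside a single orbit; using $\dim(X^+) \leq d$ together with Lemmas~\ref{lem:Engelking} and~\ref{Startlemma}, this coincidence set can be peeled off in $d+1$ inductive layers by removing a zero-dimensional $F_\sigma$-subset at each layer and shrinking the relevant columns across boundaries disjoint from that $F_\sigma$-subset.

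The main obstacle will be organizing this inductive refinement so as to simultaneously preserve the $L$-equivariance of the selection functions (which forces each shrinking to be carried out in a $\langle L \rangle$-equivariant manner), the covering property over $K$ (which is restored at each layer using the fact that the $(d+1)$ columns forming an LSP cluster cover each other's deletions), and the $\mathcal{V}$-thinness subordinate to $\mathcal{V}$. The matching between the integer $d$ in $\dim(X^+) \leq d$ and the index $d$ of $\LSP_d(G)$ is exactly what makes the $(d+1)$-layer induction terminate with the sharp bound $m+1 = \LSP_d(G)$.
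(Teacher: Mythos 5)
Your Stage~1 contains a genuine error at exactly the point where the non-freeness of the action matters. You ask for slices $V_y$ so small that the translates $\{\alpha_g(V_y) \colon g \in NL\}$ are \emph{pairwise disjoint}, and you use this disjointness (the symbol $\bigsqcup$) to make $\widetilde\Lambda_y$ well defined. But if $y$ has a nontrivial stabilizer element realized inside the relevant word ball --- for instance if $y$ is a fixed point, or more generally if $\alpha_g(y)=\alpha_{g'}(y)$ for some $g\neq g'$ in $NL$ --- then $\alpha_g(V_y)$ and $\alpha_{g'}(V_y)$ both contain $\alpha_g(y)$ for \emph{every} neighborhood $V_y$ of $y$, so pairwise disjointness is impossible. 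The theorem is stated for arbitrary actions, and these are precisely the points the construction must handle. The correct statement is the one in Lemma~\ref{lem:thin-neighborhood}: one can arrange that $\alpha_g(V_y)\cap\alpha_{g'}(V_y)\neq\varnothing$ \emph{if and only if} $\alpha_g(y)=\alpha_{g'}(y)$; the selection function $\alpha_g(z)\mapsto\alpha_g(y)$ is then still well defined because coinciding translates carry coinciding labels. As written, your towers exist only over points whose partial orbit under $NL$ is free, which misses the main point of the theorem.

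Stage~2 is also not an argument yet, and where it is specific it diverges from what actually works. You claim the covering of $K$ is ``restored at each layer using the fact that the $(d+1)$ columns forming an LSP cluster cover each other's deletions''; this is unjustified --- a point removed from one column need not lie in any other column, and the paper instead restores coverage by allowing $B$-translates of the surviving compact cores (condition~(2) of Lemma~\ref{lem:general-position-combinatorial}) and then defining the final sets as the saturations $\alpha^{\cup}_{LB}(\Ause'_i)$, which also sidesteps your worry about shrinking ``in a $\langle L\rangle$-equivariant manner'': the paper shrinks only the base slices, before saturating, so no equivariant shrinking is needed. Moreover you invoke Lemmas~\ref{lem:Engelking} and~\ref{Startlemma}, which hold for separable metric spaces, while $X$ is only locally compact Hausdorff with $\dim(X^+)\le d$; the paper bridges this via an invariant separable subalgebra (Lemma~\ref{lem:general-position-action-general}) and a thickening step (Lemmas~\ref{lem:thicken-multiplicity}, \ref{lem:general-position-action-thick}) so that the dimension bound applies to open bands rather than boundaries. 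Finally, the order of the two ingredients is reversed relative to the proof that works: in the paper, general position is applied \emph{uniformly in advance} to bound the $(\alpha,B)$-multiplicity of the base slices by $d+1$ (it cannot be applied ``cluster by cluster'', since overlaps occur at arbitrary points of $X$), and the LSP packing condition enters only at the very end, converting that bound into the bound $\LSP_d(G)$ on the multiplicity of the saturated columns via the choice of group elements $g_i\in B L$ with $\alpha_{g_i}(x)\in\Ause'_i$. Your sketch of using LSP first to locate clusters and dimension theory afterwards to resolve them would need a substantially different (and currently missing) mechanism to control all overlaps simultaneously while preserving coverage, thinness and equivariance.
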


To prove this result, we need a few lemmas which provide general position 
techniques. We will get to the proof of Theorem \ref{thm:lsp-ltc} at the end of 
this section.

\begin{Lemma} \label{lem:thin-neighborhood}
	Let $\alpha$ be an action of a discrete group $G$ on a Hausdorff space $X$. Let $\Lo$ be a finite subset of $G$ and let $x \in X$. Then there is an open neighborhood $\Ause$ of $x$ such that for any $g, g' \in \Lo$, we have 
	\[
		\alpha_g \left(\Ause\right) \cap \alpha_{g'} \left(\Ause\right) \not= \varnothing \quad \text{if and only if} \quad \alpha_g \left( x \right) = \alpha_{g'} \left( x \right)
	\]
\end{Lemma}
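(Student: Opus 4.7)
The plan is to use the Hausdorff property finitely many times, one pair at a time, to separate all the distinct points in the orbit segment $\alpha_L(x)$ and then pull back the separating open sets to obtain the desired neighborhood of $x$.

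First I would note that the ``if'' direction is trivial: whenever $\alpha_g(x)=\alpha_{g'}(x)$, this common point automatically lies in $\alpha_g(\Ause)\cap \alpha_{g'}(\Ause)$ for any neighborhood $\Ause$ of $x$. So the content is in ensuring the ``only if'' direction. To this end, consider the (finite) set of pairs
\[
P = \left\{ (g,g') \in \Lo\times \Lo \colon \alpha_g(x) \ne \alpha_{g'}(x) \right\}.
\]
For each $(g,g') \in P$, the points $\alpha_g(x)$ and $\alpha_{g'}(x)$ are distinct, so by Hausdorffness of $X$ there exist disjoint open sets $V_{g,g'} \ni \alpha_g(x)$ and $V'_{g,g'} \ni \alpha_{g'}(x)$. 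Then
\[
\Ause_{g,g'} := \alpha_{g^{-1}}\!\left(V_{g,g'}\right) \cap \alpha_{{g'}^{-1}}\!\left(V'_{g,g'}\right)
\]
is an open neighborhood of $x$, and by construction $\alpha_g(\Ause_{g,g'}) \cap \alpha_{g'}(\Ause_{g,g'}) \subseteq V_{g,g'} \cap V'_{g,g'} = \varnothing$.

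Finally I would set
\[
\Ause = \bigcap_{(g,g') \in P} \Ause_{g,g'},
\]
which is a finite intersection of open neighborhoods of $x$, hence itself an open neighborhood of $x$. For any $(g,g') \in P$, since $\Ause \subseteq \Ause_{g,g'}$, we get $\alpha_g(\Ause) \cap \alpha_{g'}(\Ause) = \varnothing$, establishing the ``only if'' direction; combined with the trivial ``if'' direction this gives the desired equivalence.

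There is no real obstacle here: the argument is a straightforward finite-separation trick using the hypotheses that $\Lo$ is finite and $X$ is Hausdorff, and the fact that each $\alpha_g$ is a homeomorphism (so preimages of open sets are open). The lemma requires no group-theoretic input beyond that $\Lo$ is finite, nor any special behaviour of the action.
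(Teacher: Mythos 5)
Your proof is correct and is essentially the paper's argument: both separate the finitely many distinct points of $\alpha_{\Lo}(x)$ by disjoint open sets (the paper chooses one pairwise disjoint family $\{\Ause_y\}_{y\in\alpha^\cup_{\Lo}(x)}$ at once, you do it pair by pair) and then take the intersection of the pulled-back sets $\alpha_{g^{-1}}(\cdot)$ over the finite set $\Lo$. The only cosmetic difference is this packaging of the separation step, so nothing further is needed.
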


\begin{proof}
	Since $X$ is Hausdorff, there is a disjoint family $\left\{ \Ause_y \colon y \in \alpha^\cup_{\Lo} (x) \right\}$ of open subsets such that $y \in \Ause_y$ for any $y \in \alpha^\cup_{\Lo} (x)$. It is clear that the open set $\Ause$ defined by 
	\[
		\Ause = \bigcap_{g \in \Lo} \alpha_{g^{-1}} \left( \Ause_{\alpha_g(x)} \right)
	\]
	satisfies the requirement, since we have $\alpha_g \left( \Ause \right) \subseteq \Ause_{\alpha_g (x)}$ for any $g \in \Lo$. 
\end{proof}

\begin{Def}[{\cite[Section 3]{Kulesza95}} and {\cite[3.4]{Lindenstrauss95}}] \label{def:general-position}
	Let $X$ be a locally compact separable metric space of finite covering dimension $d$. A collection $\Aulaseco$ of subsets of $X$ is in \emph{general position} if for all finite subcollections $\Fico \Subset \Aulaseco$ we have
	\[
	\dim \left(\bigcap \Fico \right) \leq \max \left\{ -1, d - |\Fico| \right\} \; .
	\]
\end{Def}

\begin{Rmk} \label{rmk:general-position-multiplicity}
	In Definition~\ref{def:general-position}, we have $\mult\left( \Aulaseco \right) \leq \dim (X)$. 
\end{Rmk}

\begin{Lemma} \label{lem:general-position}
	Let $X$ be a locally compact separable metric space of finite covering dimension $d$. 
	Let $\Aulaseco$ be a countable family of closed subsets in $X$ that is in general position. 
	Let $\Ause$ be an open subset in $X$ and let $\Ko$ be a compact subset of 
	$\Ause$. Then there exists a precompact open subset $\Cose$ in $X$ such 
	that 
	\begin{enumerate}
		\item $\Ko \subseteq \Cose \subseteq \overline{\Cose} \subseteq \Ause$,  and
		\item $\Aulaseco \cup \left\{ \partial \Cose \right\}$ is in general position. 
	\end{enumerate}
\end{Lemma}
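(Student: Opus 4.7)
The plan is to reduce the lemma to a single application of Lemma~\ref{Startlemma} by constructing a zero-dimensional $F_\sigma$ set $L \subseteq X$ that ``absorbs'' the top-dimensional strata of every finite intersection coming from $\Aulaseco$. Specifically, I would aim for an $L$ such that for every finite subcollection $\Fico' \Subset \Aulaseco$ (including $\Fico' = \varnothing$),
\[
\dim\bigl(\textstyle\bigcap \Fico' \setminus L\bigr) \leq \max\{-1, \, d - 1 - |\Fico'|\}.
\]
Once such $L$ is obtained, Lemma~\ref{Startlemma} applied to $\Ko$, $\Ause$, and $L$ produces a precompact open set $\Cose$ with $\Ko \subseteq \Cose \subseteq \overline{\Cose} \subseteq \Ause$ and $\partial \Cose \cap L = \varnothing$, so $\partial \Cose \subseteq X \setminus L$. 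Then for every finite $\Fico \Subset \Aulaseco \cup \{\partial \Cose\}$ containing $\partial \Cose$, writing $\Fico = \Fico' \cup \{\partial \Cose\}$ with $\Fico' \Subset \Aulaseco$, we get $\bigcap \Fico \subseteq \bigcap \Fico' \setminus L$, which has dimension at most $d - 1 - |\Fico'| = d - |\Fico|$; finite subcollections contained in $\Aulaseco$ are handled by the assumed general position. So the whole content is the construction of $L$.

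To build $L$, I would enumerate the countable family of finite subcollections $\Fico' \Subset \Aulaseco$. Each intersection $\bigcap \Fico'$ is closed in $X$; if it is nonempty, Lemma~\ref{lem:Engelking}\eqref{D4} applied to $\bigcap \Fico'$ yields a zero-dimensional $F_\sigma$ subset $L_{\Fico'} \subseteq \bigcap \Fico'$ with $\dim(\bigcap \Fico' \setminus L_{\Fico'}) = \dim(\bigcap \Fico') - 1 \leq d - 1 - |\Fico'|$. When $\Fico' = \varnothing$ this produces an $L_0 \subseteq X$ with $\dim(X \setminus L_0) = d-1$, which is exactly what is needed to force $\dim(\partial \Cose) \leq d-1$. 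Set $L = \bigcup_{\Fico'} L_{\Fico'}$, indexed over all finite $\Fico' \Subset \Aulaseco$ (a countable index set by countability of $\Aulaseco$).

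It remains to verify that $L$ is zero-dimensional. Each $L_{\Fico'}$ is $F_\sigma$ in $\bigcap \Fico'$, hence $F_\sigma$ in $X$, and so is a countable union of closed, zero-dimensional subsets of $X$ (zero-dimensionality passing to subsets by Lemma~\ref{lem:Engelking}\eqref{D1}). Thus $L$ is itself a countable union of closed zero-dimensional subsets of $X$, and Lemma~\ref{lem:Engelking}\eqref{D2} gives $\dim(L) \leq 0$; Lemma~\ref{lem:Engelking}\eqref{D5} ensures it remains $F_\sigma$. Since $L_{\Fico'} \subseteq L$ for each finite $\Fico'$, monotonicity (Lemma~\ref{lem:Engelking}\eqref{D1}) upgrades the individual bounds to the uniform inequality displayed above, completing the input required for Lemma~\ref{Startlemma}.

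The principal obstacle is the bookkeeping rather than any deep dimension-theoretic input: one must produce a \emph{single} zero-dimensional set $L$ that simultaneously witnesses dimension drop along every finite intersection. This is precisely where the countability of $\Aulaseco$ is crucial, as it renders the collection of intersections countable and lets Lemma~\ref{lem:Engelking}\eqref{D2} and~\eqref{D5} keep the union both zero-dimensional and $F_\sigma$; once these are in place, all the geometry is packaged into Lemma~\ref{Startlemma}, and the general position of $\Aulaseco \cup \{\partial \Cose\}$ is a formal consequence.
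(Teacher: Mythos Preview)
Your proposal is correct and follows essentially the same route as the paper: for each finite $\Fico' \Subset \Aulaseco$ with nonempty intersection, apply (D4) to peel off a zero-dimensional $F_\sigma$ set, take the countable union $L$, and then invoke Lemma~\ref{Startlemma} to find $\Cose$ whose boundary misses $L$. The only cosmetic difference is that the paper appeals to (D5) alone to see that $L$ is a zero-dimensional $F_\sigma$, whereas you unpack this via (D1), (D2), and (D5); both are fine.
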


\begin{proof}
	By assumption, we have $\dim \left(\bigcap \Fico \right) \leq \max \left\{ -1, d - |\Fico| \right\}$ for any finite subset $\Fico \Subset \Aulaseco$. In particular, we have $\bigcap \Fico = \varnothing$ if $|\Fico| \geq d+1$. It follows that the set $\Thseco = \left\{ \Fico \Subset \Aulaseco \colon \bigcap \Fico \not= \varnothing \right\}$ is countable. 
	By Lemma~\ref{lem:Engelking}\eqref{D4}, for any finite family $\Fico \in \Thseco$, there is a zero-dimensional subset $\Lase_{\Fico} \subset {\bigcap \Fico}$ that is $F_\sigma$ in ${\bigcap \Fico}$, and thus also in $X$, and such that
	\[
		\dim\left({\left( \bigcap \Fico \right)} \setminus \Lase_{\Fico} \right) \leq \max \left\{ -1, d - |\Fico| - 1 \right\} \; .
	\]
	Now let $\Lase = \bigcup_{\Fico \in \Thseco} \Lase_{\Fico}$. By 
	Lemma~\ref{lem:Engelking}\eqref{D5}, the set $\Lase$ is also an 
	$F_\sigma$-set  of dimension at most zero. By Lemma~\ref{Startlemma}, there 
	exists a precompact open subset $\Cose$ in $X$  such that  
	\[
	K\subset 
	\Cose\subset \overline{\Cose}\subset \Ause
	\; \text { and } \partial \Cose\cap 
	\Lase=\varnothing
	\, .
	\]
	To see that $\Aulaseco \cup \left\{ \partial \Cose \right\}$ is in general 
	position, it suffices to show that for any $\Fico \in \Thseco$, we have 
	$\dim \left( \partial \Cose \cap {\left( \bigcap \Fico \right)} \right) 
	\leq \max \left\{ -1, d - (|\Fico| + 1) \right\}$; this follows from the 
	equality $\partial \Cose\cap \Lase=\emptyset$, since we have 
	\[
		\partial \Cose \cap {\left( \bigcap \Fico \right)} = \partial \Cose \cap \left( {\left( \bigcap \Fico \right)} \setminus \Lase \right) \subseteq \partial \Cose \cap \left( {\left( \bigcap \Fico \right)} \setminus \Lase_{\Fico} \right) \subseteq {\left( \bigcap \Fico \right)} \setminus \Lase_{\Fico}
	\]
	and thus, using  Lemma~\ref{lem:Engelking}\eqref{D1}, we have
	\[
		\dim \left( \partial \Cose \cap {\left( \bigcap \Fico \right)} \right) 
		\leq \dim \left( {\left( \bigcap \Fico \right)} \setminus \Lase_{\Fico} \right)  \leq \max \left\{ -1, d - (|\Fico| + 1) \right\}
	\, .
	\]
\end{proof}

Next we give a strengthening of Lemma~\ref{lem:general-position} in the presence of a group action.

\begin{Lemma} \label{lem:general-position-action-verbose}
	Let $\alpha$ be an action of a discrete group $G$ on a locally compact 
	separable metric space $X$ of finite covering dimension $d$. Let $\Lo 
	\subseteq G$ be a finite subset. 
	Let $\Aulaseco$ be a countable family of closed subsets in $X$ such that 
	\[
		\left\{  \alpha^\cup_{\Lo^{-1} \Lo}  \left( \Aulase  \right) \colon \Aulase \in \Aulaseco \right\}
	\]
	is in general position. Let $\Ause$ be an open set in $X$ and let $\Ko$ be 
	a compact subset of $\Ause$. Then there exists a precompact open subset 
	$\Cose$ in $X$ such that 
	\begin{enumerate}
		\item \label{lem:general-position-action-verbose::squeeze} $\Ko \subseteq \Cose \subseteq \overline{\Cose} \subseteq \Ause$, and
		\item \label{lem:general-position-action-verbose::general-position} $\left\{  \alpha^\cup_{\Lo}  \left( \partial \Cose  \right) ,  \alpha^\cup_{\Lo}  \left( \Aulase  \right) \colon \Aulase \in \Aulaseco \right\}$ is in general position. 
	\end{enumerate}
\end{Lemma}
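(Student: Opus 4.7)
The plan is to reduce this equivariant statement to the non-equivariant Lemma~\ref{lem:general-position} applied to the enlarged family
\[
\mathcal{Z} := \left\{ \alpha^\cup_{\Lo^{-1}\Lo}(\Aulase) \colon \Aulase \in \Aulaseco \right\} \, ,
\]
which by hypothesis is a countable family of closed sets in general position. That lemma will furnish a precompact open $\Cose$ with $\Ko \subseteq \Cose \subseteq \overline{\Cose} \subseteq \Ause$ such that $\mathcal{Z} \cup \{\partial\Cose\}$ is in general position, which immediately secures item~\eqref{lem:general-position-action-verbose::squeeze}. The remaining task will be to upgrade this non-equivariant general position to the equivariant general position of $\{ \alpha^\cup_\Lo(\partial\Cose), \alpha^\cup_\Lo(\Aulase) \colon \Aulase \in \Aulaseco \}$ required in~\eqref{lem:general-position-action-verbose::general-position}.

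The upgrade will rest on one simple algebraic identity. For any $g_0, g_1, \ldots, g_{k-1} \in \Lo$ and any closed $S_0, \ldots, S_{k-1} \subseteq X$, the relation $g_0^{-1} g_i \in \Lo^{-1}\Lo$ yields
\[
\bigcap_{i=0}^{k-1} \alpha_{g_i}(S_i) = \alpha_{g_0}\Bigl( S_0 \cap \bigcap_{i=1}^{k-1} \alpha_{g_0^{-1} g_i}(S_i) \Bigr) \subseteq \alpha_{g_0}\Bigl( S_0 \cap \bigcap_{i=1}^{k-1} \alpha^\cup_{\Lo^{-1}\Lo}(S_i) \Bigr) \, ,
\]
and taking the union over $(g_0, \ldots, g_{k-1}) \in \Lo^k$ gives
\[
\bigcap_{i=0}^{k-1} \alpha^\cup_\Lo(S_i) \subseteq \alpha^\cup_\Lo\Bigl( S_0 \cap \bigcap_{i=1}^{k-1} \alpha^\cup_{\Lo^{-1}\Lo}(S_i) \Bigr) \, .
\]
Since $\alpha^\cup_\Lo$ is a finite union of homeomorphic copies, it preserves covering dimension by Lemma~\ref{lem:Engelking}\eqref{D2}; so to verify general position of the equivariant family it will suffice to bound the dimension of the bracketed set on the right.

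To finish, I would fix any finite $k$-element subfamily of $\{ \alpha^\cup_\Lo(\partial\Cose)\} \cup \{\alpha^\cup_\Lo(\Aulase) \colon \Aulase \in \Aulaseco \}$ and split into two cases. If $\alpha^\cup_\Lo(\partial\Cose)$ is not among the chosen members, I set $S_0 := \Aulase_1$ and $S_i := \Aulase_{i+1}$ for distinct $\Aulase_1, \ldots, \Aulase_k \in \Aulaseco$; using $e \in \Lo^{-1}\Lo$, the bracketed set is contained in $\bigcap_{i=1}^k \alpha^\cup_{\Lo^{-1}\Lo}(\Aulase_i)$, whose dimension is at most $d - k$ by the general position of $\mathcal{Z}$. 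If $\alpha^\cup_\Lo(\partial\Cose)$ is among them, I set $S_0 := \partial\Cose$ and let the remaining $S_i$'s range over the other $k-1$ chosen members from $\Aulaseco$; the bracketed set is then $\partial\Cose \cap \bigcap_{i=1}^{k-1} \alpha^\cup_{\Lo^{-1}\Lo}(S_i)$, whose dimension is at most $d - k$ by the general position of $\mathcal{Z} \cup \{\partial\Cose\}$ produced in the first paragraph.

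No step is individually difficult; the only delicate point is conceptual, namely recognizing that the correct buffer needed to absorb all products $g_0^{-1} g_i$ arising from pairs of elements in $\Lo$ is $\Lo^{-1}\Lo$. Once the hypothesis is stated with this buffer, as it is in the lemma, the argument reduces to a routine application of Lemma~\ref{lem:general-position} together with the equivariant algebraic manipulation above.
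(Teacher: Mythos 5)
Your proposal is correct and takes essentially the same approach as the paper: both apply Lemma~\ref{lem:general-position} to the enlarged family of $(L^{-1}L)$-saturations and then transfer the resulting dimension bounds to the $L$-saturated family by translating one factor back to the identity, absorbing the remaining factors into $(L^{-1}L)$-saturations, and invoking Lemma~\ref{lem:Engelking}\eqref{D1}, \eqref{D2} together with homeomorphism-invariance of covering dimension. The only cosmetic differences are that the paper dispatches the trivial case $L=\varnothing$ explicitly (your use of $e \in L^{-1}L$ tacitly assumes $L \neq \varnothing$) and that your stated bounds should read $\max\{-1, d-k\}$ rather than $d-k$.
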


\begin{proof}
	Without loss of generality, we may assume $\Lo \not= \varnothing$. 
	Applying Lemma~\ref{lem:general-position} with $\left\{  
	\alpha^\cup_{\Lo^{-1} \Lo}  \left( \Aulase  \right) \colon \Aulase \in 
	\Aulaseco \right\}$ in place of $\Aulase$, we obtain a precompact open 
	subset $\Cose$ in $X$ satisfying 
	condition~\eqref{lem:general-position-action-verbose::squeeze} and such 
	that $\left\{  \alpha^\cup_{\Lo^{-1} \Lo}  \left( \Aulase  \right) \colon 
	\Aulase \in \Aulaseco \right\} \cup \left\{ \partial \Cose \right\}$ is in 
	general position. Using basic properties of covering dimension listed 
	in Lemma~\ref{lem:Engelking}\eqref{D1} and~\eqref{D2}, 
	and the fact that covering dimension is invariant under homeomorphisms, we 
	see that for any finite subset $\Fico \Subset \Aulaseco$ and for any 
	element $g_0 \in \Lo$, we have 
	\begin{align*}
		\dim \left(\bigcap_{\Aulase \in \Fico} \alpha^\cup_{\Lo}  \left( \Aulase  \right) \right)  &=  \dim \left( \alpha_{g_0^{-1}} \left(\bigcap_{\Aulase \in \Fico} \alpha^\cup_{\Lo}  \left( \Aulase  \right) \right) \right) = \dim \left( \bigcap_{\Aulase \in \Fico} \alpha^\cup_{g_0^{-1} \Lo}  \left( \Aulase  \right) \right) \\
		&\leq \dim \left( \bigcap_{\Aulase \in \Fico} \alpha^\cup_{\Lo^{-1} \Lo}  \left( \Aulase  \right) \right) \leq \max \left\{ -1, d - |\Fico| \right\} 
	\end{align*}
	and 
	\begin{align*}
		\dim \left( \alpha^\cup_{\Lo}  \left( \partial \Cose  \right)  \cap \left(\bigcap_{\Aulase \in \Fico} \alpha^\cup_{\Lo}  \left( \Aulase  \right) \right) \right) &=  \dim \left( \bigcup_{g \in \Lo} \alpha_{g} \left( \partial \Cose  \cap \left(\bigcap_{\Aulase \in \Fico} \alpha^\cup_{g^{-1} \Lo}  \left( \Aulase  \right) \right) \right)  \right) \\
		&\leq \dim \left( \bigcup_{g \in \Lo} \alpha_{g} \left( \partial \Cose  \cap \left(\bigcap_{\Aulase \in \Fico} \alpha^\cup_{\Lo^{-1} \Lo}  \left( \Aulase  \right) \right) \right)  \right) \\
		&\leq \max_{g \in \Lo} \left( \dim \alpha_{g} \left( \partial \Cose  
		\cap \left(\bigcap_{\Aulase \in \Fico} \alpha^\cup_{\Lo^{-1} \Lo}  
		\left( \Aulase  \right) \right) \right) \right) \\
		&=  \dim \left( \partial \Cose  \cap 
		\left(\bigcap_{\Aulase \in \Fico} \alpha^\cup_{\Lo^{-1} \Lo}  \left( 
		\Aulase  \right) \right) \right)  \\
		&\leq \max \left\{ -1, d - (|\Fico| + 1) \right\} \; ,
	\end{align*}	
	which proves condition~\eqref{lem:general-position-action-verbose::general-position}. 
\end{proof}

The following lemma follows from the previous lemma by induction. 

\begin{Lemma} \label{lem:general-position-action}
	Let $\alpha$ be an action of a discrete group $G$ on a locally compact separable metric space $X$ of finite covering dimension $d$. Let $n$ be a nonnegative integer, let $\Ause_1 , \ldots, \Ause_n$ be open subsets in $X$, and let $\Ko_1, \ldots, \Ko_n$ be compact subsets in $X$ satisfying $\Ko_i \subseteq \Ause_i$ for any $i \in \intervalofintegers{1}{n}$. Let $\Lo$ be a finite subset of $G$. Then there are precompact open subsets $\Cose_1 , \ldots, \Cose_n$ in $X$ 
	such that 
	\begin{enumerate}
		\item \label{lem:general-position-action::squeeze} $\Ko_i  \subseteq \Cose_i \subseteq \overline{\Cose_i} \subseteq \Ause_i$ for any $i \in \intervalofintegers{1}{n}$, and 
		\item \label{lem:general-position-action::general-position} $\left\{  \alpha^\cup_{\Lo}  \left( \partial \Cose_i  \right) \colon i = 1, \ldots, n \right\}$ is in general position. 
	\end{enumerate}
\end{Lemma}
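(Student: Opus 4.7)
The plan is a straightforward induction on $n$, iteratively applying Lemma~\ref{lem:general-position-action-verbose}. The main subtlety is the asymmetry between its hypothesis (general position for $\{\alpha^\cup_{\Lo^{-1}\Lo}(\Aulase)\}$) and its conclusion (general position for $\{\alpha^\cup_{\Lo}(\partial \Cose), \alpha^\cup_{\Lo}(\Aulase)\}$). This precludes a naive induction holding $\Lo$ fixed, since after constructing $\Cose_1, \ldots, \Cose_{k-1}$ with general position for $\alpha^\cup_{\Lo}$ we would need to upgrade to general position for $\alpha^\cup_{\Lo^{-1}\Lo}$ in order to feed the family into the verbose lemma again.

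To overcome this, I would introduce an auxiliary descending sequence of finite subsets. Specifically, define $\Lo^{(0)} = \Lo$ and $\Lo^{(j+1)} = (\Lo^{(j)})^{-1}\Lo^{(j)}$ for $j \geq 0$; each $\Lo^{(j)}$ is finite, and the defining relation ensures that $\Lo^{(j+1)}$ is exactly the set one needs to check ``large'' general position to run the verbose lemma with $\Lo^{(j)}$. With this notation in place, I would prove by forward induction on $k = 1, \ldots, n$ the strengthened claim: there exist precompact open sets $\Cose_1, \ldots, \Cose_k$ satisfying the squeeze condition $\Ko_i \subseteq \Cose_i \subseteq \overline{\Cose_i} \subseteq \Ause_i$ for $i \leq k$, and such that the collection $\{\alpha^\cup_{\Lo^{(n-k)}}(\partial \Cose_i)\}_{i=1}^k$ is in general position.

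For the inductive step from $k-1$ to $k$, I would apply Lemma~\ref{lem:general-position-action-verbose} with $\Lo^{(n-k)}$ in place of $\Lo$, with $(\Ause_k, \Ko_k)$ as the open set and compact subset, and with $\Aulaseco = \{\partial \Cose_i\}_{i=1}^{k-1}$. Its hypothesis asks for $\{\alpha^\cup_{(\Lo^{(n-k)})^{-1}\Lo^{(n-k)}}(\partial \Cose_i)\}_{i=1}^{k-1} = \{\alpha^\cup_{\Lo^{(n-k+1)}}(\partial \Cose_i)\}_{i=1}^{k-1}$ to be in general position, which is precisely the inductive hypothesis at stage $k-1$. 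The lemma then yields $\Cose_k$ with the correct containment and $\{\alpha^\cup_{\Lo^{(n-k)}}(\partial \Cose_i)\}_{i=1}^k$ in general position, completing the step. At $k = n$ this gives $\{\alpha^\cup_{\Lo^{(0)}}(\partial \Cose_i)\}_{i=1}^n = \{\alpha^\cup_{\Lo}(\partial \Cose_i)\}_{i=1}^n$ in general position, as desired. The only real obstacle was identifying the right inductive invariant, parametrized by the sequence $\Lo^{(j)}$ so that each step loses exactly one level of enlargement; the substantive content is handled entirely by Lemma~\ref{lem:general-position-action-verbose}.
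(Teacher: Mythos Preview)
Your argument is correct and is essentially the same as the paper's: the paper also inducts and applies Lemma~\ref{lem:general-position-action-verbose} at each step, handling the $\Lo$-versus-$\Lo^{-1}\Lo$ mismatch by invoking the inductive hypothesis (which quantifies over all finite $\Lo$) with $\Lo' = \Lo^{-1}\Lo$ in place of $\Lo$. Your explicit tower $\Lo^{(j)}$ is exactly what one obtains by unrolling that induction; the two packagings are equivalent.
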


\begin{proof}
	We apply induction on $n$, starting with $n = 0$, in which case the statement is vacuously true. 
	
	Assuming the statement holds for some nonnegative integer $n$, we prove it also holds for $n+1$. To this end, we fix a finite subset $\Lo$ of $G$, open sets $\Ause_1 , \ldots, \Ause_{n+1}$ in $X$ and compact sets $\Ko_1, \ldots, \Ko_{n+1}$ in $X$ satisfying $\Ko_i \subseteq \Ause_i$ for any $i \in \intervalofintegers{1}{n+1}$. Let $\Lo' = \Lo^{-1} \Lo \Subset G$. By our inductive assumption, there are precompact open subsets $\Cose_1 , \ldots, \Cose_n$ in $X$ such that 
	\begin{enumerate}
		\item $\Ko_i  \subseteq \Cose_i \subseteq \overline{\Cose_i} \subseteq \Ause_i$ for any $i \in \intervalofintegers{1}{n}$, and 
		\item $\left\{  \alpha^\cup_{\Lo'}  \left( \partial \Cose_i  \right) \colon i = 1, \ldots, n \right\}$ is in general position. 
	\end{enumerate}
	Then we apply Lemma~\ref{lem:general-position-action-verbose} with $\left\{   \partial \Cose_i   \colon i = 1, \ldots, n \right\}$, $\Ause_{n+1}$ and $\Ko_{n+1}$ in place of $\Aulaseco$, $\Ause$ and $\Ko$,
	and obtain a precompact open subset $\Cose_{n+1}$ in $X$ such that $\Ko_{n+1}  \subseteq \Cose_{n+1} \subseteq \overline{\Cose_{n+1}} \subseteq \Ause_{n+1}$ and $\left\{  \alpha^\cup_{\Lo}  \left( \partial \Cose_i  \right) \colon i = 1, \ldots, n+1 \right\}$ is in general position, as desired. 
\end{proof}

In fact, the separability and metrizability conditions can be removed in the 
above lemma, at the cost of relaxing 
condition~(\ref{lem:general-position-action::general-position}) in the 
conclusion. 

\begin{Lemma} \label{lem:general-position-action-general}	
	Let $\alpha$ be an action of a discrete group $G$ on a locally compact Hausdorff space $X$. 
		Let $n$, $\Ause_1 , \ldots, \Ause_n$, $\Ko_1, \ldots, \Ko_n$ and $\Lo$ 
		be as in Lemma~\ref {lem:general-position-action}. 
	Then there are precompact open subsets $\Cose_1 , \ldots, \Cose_n$ in $X$ 
	such that 
	\begin{enumerate}
		\item \label{lem:general-position-action-general::squeeze} $\Ko_i  \subseteq \Cose_i \subseteq \overline{\Cose_i} \subseteq \Ause_i$ for any $i \in \intervalofintegers{1}{n}$, and 
		\item \label{lem:general-position-action-general::multiplicity} 
		$\operatorname{mult} \left( \left\{ \alpha^\cup_{\Lo}  \left( \partial 
		\Cose_i \right) \colon i = 1, \ldots, n \right\} \right) \leq 
		\dim(X^+)$. 
	\end{enumerate}
\end{Lemma}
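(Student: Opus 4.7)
The plan is to reduce to the separable metric case (Lemma~\ref{lem:general-position-action}) by replacing $X$ with an equivariant compact metric quotient of bounded dimension.

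Using local compactness and Urysohn's lemma, I would first choose, for each $i$, a precompact open set $\Thse_i$ with $\Ko_i \subseteq \Thse_i \subseteq \overline{\Thse_i} \subseteq \Ause_i$ and a function $f_i \in C_c(X,[0,1])$ with $f_i|_{\Ko_i} \equiv 1$ and $\supp f_i \subseteq \Thse_i$. Since only the countable subgroup $G_0 := \langle \Lo \rangle$ matters for the statement, I would work equivariantly with respect to the restriction of $\alpha$ to $G_0$, extended to the one-point compactification $X^+$ by fixing the point at infinity.

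Next I would factor the $G_0$-action through a compact metric $G_0$-space of controlled dimension: using an equivariant version of Mardesic's factorization theorem for countable group actions on compact Hausdorff spaces (obtained by iteratively applying the classical Mardesic theorem to $G_0$-orbits of finite sets of generating functions and passing to an inverse limit), there exist a compact metric space $Z$ carrying a continuous $G_0$-action $\tilde\alpha$, a continuous $G_0$-equivariant map $q \colon X^+ \to Z$ with $\dim Z \leq \dim(X^+)$, and continuous functions $\tilde f_i \colon Z \to [0,1]$ such that $f_i = \tilde f_i \circ q$ on $X^+$; equivariantly, $\alpha_g(f_i) = \tilde\alpha_g(\tilde f_i) \circ q$ for each $i$ and $g \in G_0$.

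In $Z$ I would then apply Lemma~\ref{lem:general-position-action} to the $n$ pairs $\bigl(\tilde f_i^{-1}(\{1\}),\, \tilde f_i^{-1}((0,1])\bigr)$ together with $\Lo \subseteq G_0$: this yields precompact open $\tilde\Cose_i \subseteq Z$ satisfying the sandwich condition and with $\{\tilde\alpha^\cup_{\Lo}(\partial \tilde\Cose_i)\}_i$ in general position, whence of multiplicity at most $\dim Z \leq \dim(X^+)$ by Remark~\ref{rmk:general-position-multiplicity}. Setting $\Cose_i := q^{-1}(\tilde\Cose_i) \cap X$, the inclusion $q(\Ko_i) \subseteq \tilde f_i^{-1}(\{1\}) \subseteq \tilde\Cose_i$ gives $\Ko_i \subseteq \Cose_i$, while the containment $\overline{\Cose_i} \subseteq q^{-1}(\overline{\tilde\Cose_i}) \cap X \subseteq \{f_i > 0\} \subseteq \overline{\Thse_i} \subseteq \Ause_i$ yields precompactness and the outer sandwich. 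Since $\partial \Cose_i \subseteq q^{-1}(\partial \tilde\Cose_i)$ and $q$ is $G_0$-equivariant, we have $\alpha^\cup_{\Lo}(\partial \Cose_i) \subseteq q^{-1}\bigl(\tilde\alpha^\cup_{\Lo}(\partial \tilde\Cose_i)\bigr) \cap X$, and since multiplicity does not increase under continuous preimages, the desired bound transports down.

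The main obstacle is the equivariant Mardesic step: one must produce, for the countable discrete group $G_0$ acting on the compact Hausdorff space $X^+$ by homeomorphisms, a compact metric $G_0$-equivariant factor $Z$ of $X^+$ with $\dim Z \leq \dim(X^+)$ through which all the countably many functions $\{\alpha_g(f_i) \colon g \in G_0,\, i \in [n]\}$ factor. The classical Mardesic theorem handles a single finite set of functions with the correct dimension bound, but making the factor $G_0$-invariant while preserving the dimension requires a transfinite inductive construction in which one enlarges the function set by $G_0$-translates at each stage, re-applies Mardesic, takes an inverse limit, and invokes dimension theory for inverse limits of compact metric spaces to retain the bound $\dim Z \leq \dim(X^+)$.
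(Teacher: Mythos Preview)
Your overall strategy---reduce to the separable metric case by passing to an equivariant metric quotient of bounded dimension, apply Lemma~\ref{lem:general-position-action} there, and pull back---is exactly what the paper does. The difference lies in how the quotient is produced. Where you reach for an equivariant Marde\v{s}i\'{c} factorization and flag its construction as the main obstacle, the paper uses the $C^*$-algebraic tools already in hand: Lemma~\ref{lem:separable-dimnuc} supplies an $H$-invariant separable subalgebra $B\subset C_0(X)$ containing the chosen $f_i$ with $\dimnuc(B)\leq\dimnuc(C_0(X))=\dim(X^+)$ (via Theorem~\ref{thm:dimnuc-dim-general}), and its spectrum $Y$ plays the role of your $Z$. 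Phrased dually, the iterated construction you sketch (enlarge by $G_0$-translates, re-factor, pass to a limit, control dimension) is precisely the proof of Lemma~\ref{lem:separable-dimnuc} in the commutative case---working with an increasing union of separable subalgebras makes both the invariance and the dimension bound immediate, and sidesteps the delicate inverse-limit bookkeeping you would otherwise need on the space side. So your obstacle dissolves once you recognize that the paper already packages this step; also note that no transfinite induction is needed, since $G_0=\langle L\rangle$ is countable and an $\omega$-iteration suffices.
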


\begin{proof}
	If $\dim(X^+) = \infty$ then there is nothing to prove, so we assume it is 
	finite. 	Set $d = \dim (X^+)$. 
	Let $H$ be the subgroup of $G$ generated by $\Lo$. Then $H$ is countable. 
	For $i \in \intervalofintegers{1}{n}$, we apply the Tietze extension 
	theorem to obtain continuous functions $\Po_i \colon X \to [0,1]$ such that 
	$\Po_i \in C_0(X)$, $\Po_i^{-1} (\{1\}) = \Ko_i$ and $\Po_i^{-1} (\{0\}) = 
	X \setminus \Ause_i$. We then apply Lemma~\ref{lem:separable-dimnuc} and 
	Theorem~\ref{thm:dimnuc-dim-general} to obtain an $H$-invariant separable 
	$C^*$-subalgebra $B \subset C_0(X)$ with $\dimnuc(B) \leq \dimnuc(C_0(X)) = 
	\dim(X^+) \leq d$ and $\Po_i \in B$ for any $i \in 
	\intervalofintegers{1}{n}$. Let $Y$ be the spectrum of the commutative 
	$C^*$-algebra $B$. We have $\dim(Y) \leq d$. For $i \in 
	\intervalofintegers{1}{n}$, let $\widetilde{\Po}_i \colon Y \to [0,1]$ be 
	the induced functions on $Y$.  Define an open subset 
	$\widetilde{\Ause}_i = \widetilde{\Po}_i^{-1} ((0,1])$ in $Y$ and, inside 
	it, a compact subset $\widetilde{\Ko}_i = \widetilde{\Po}_i^{-1} (\{1\})$. 
	By Lemma~\ref{lem:general-position-action}, there are precompact open 
	subsets $\Thse_1 , \ldots, \Thse_n$ in $Y$ 
	such that 
	\begin{enumerate}
		\item $\Ko_i  \subseteq \Thse_i \subseteq \overline{\Thse_i} \subseteq \Ause_i$ for any $i \in \intervalofintegers{1}{n}$, and 
		\item $\left\{  \alpha^\cup_{\Lo}  \left( \partial \Thse_i  \right) \colon i = 1, \ldots, n \right\}$ is in general position. 
	\end{enumerate}
	In particular, we have $\operatorname{mult} \left( \left\{ 
	\alpha^\cup_{\Lo}  \left( \partial \Thse_i \right) \colon i = 1, \ldots, n 
	\right\} \right) \leq \dim(Y)$. Let $J$ be the ideal in $C_0(X)$ generated 
	by $B$, which we can identify as the algebra of functions vanishing off an 
	open subset $X_0 \subseteq X$. 
	Let $\pi \colon X_0 \to Y$ be the quotient map induced from the inclusion 
	$B \subseteq C_0(X_0)$. Observe that $\partial \left( \pi^{-1} \left( 
	\Thse_i \right) \right) \subseteq \pi^{-1} \left( \partial \Thse_i \right)$ 
	for any $i \in \intervalofintegers{1}{n}$. It is clear that the open 
	subsets $\Cose_i = \pi^{-1} \left( \Thse_i \right)$ satisfy the desired 
	condition. 
\end{proof}

The following lemma shows that we can thicken the boundaries of the open sets 
we obtained in Lemma~\ref{lem:general-position-action-general} without 
increasing the multiplicity. 

\begin{Lemma} \label{lem:general-position-action-thick}
	Let $\alpha$ be an action of a discrete group $G$ on a locally compact Hausdorff space $X$. 
	Let $n$, $\Ause_1 , \ldots, \Ause_n$, $\Ko_1, \ldots, \Ko_n$ and $\Lo$ be as in Lemmas~\ref {lem:general-position-action} and~\ref {lem:general-position-action-general}. 
	Then there are open sets $\Ause'_1 , \ldots, \Ause'_n$ and compact sets $\Ko'_1, \ldots, \Ko'_n$ in $X$ such that 
	\begin{enumerate}
		\item \label{lem:general-position-action-thick::squeeze} 
		$\Ko_i  \subseteq \Ko'_i \subseteq \Ause'_i \subseteq  \Ause_i$ 
		for any $i \in \intervalofintegers{1}{n}$, and 
		\item \label{lem:general-position-action-thick::multiplicity} $\operatorname{mult} \left( \left\{ \alpha^\cup_{\Lo}  \left( \Ause'_i \setminus \Ko'_i \right) \colon i = 1, \ldots, n \right\} \right) \leq \dim(X)$. 
	\end{enumerate}
\end{Lemma}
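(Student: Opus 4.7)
The plan is to combine Lemma~\ref{lem:general-position-action-general} with Lemma~\ref{lem:thicken-multiplicity}: the former puts the boundaries $\partial \Cose_i$ into general position, and the latter lets me replace each of those boundaries by a small open neighborhood without increasing the multiplicity. First I would apply Lemma~\ref{lem:general-position-action-general} to the given data $\Ause_1,\ldots,\Ause_n$, $\Ko_1,\ldots,\Ko_n$, $\Lo$ to obtain precompact open sets $\Cose_1, \ldots, \Cose_n$ satisfying $\Ko_i \subseteq \Cose_i \subseteq \overline{\Cose_i} \subseteq \Ause_i$ and $\mult\bigl(\bigl\{\alpha^\cup_\Lo(\partial \Cose_i)\colon i=1,\ldots,n\bigr\}\bigr) \leq \dim(X^+)$. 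Each compact set $\alpha^\cup_\Lo(\partial \Cose_i)$ is then fed into Lemma~\ref{lem:thicken-multiplicity} to produce open neighborhoods $\Thse_1, \ldots, \Thse_n$ of them with the same multiplicity bound.

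For each $i$, I would then pull $\Thse_i$ back through the group action to obtain a neighborhood of $\partial \Cose_i$ whose full $\Lo$-orbit lies inside $\Thse_i$ and which is moreover disjoint from $\Ko_i$. Concretely, set
\[
\Aulase_i := \left(\bigcap_{g \in \Lo} \alpha_{g^{-1}}(\Thse_i)\right) \cap \left(\Ause_i \setminus \Ko_i\right),
\]
which is open, contains $\partial \Cose_i$ (since $\Ko_i \subseteq \Cose_i$ forces $\partial \Cose_i \subseteq \Ause_i \setminus \Ko_i$, and $\partial \Cose_i \subseteq \alpha_{g^{-1}}(\Thse_i)$ for each $g \in \Lo$ by construction of $\Thse_i$), is disjoint from $\Ko_i$, and satisfies $\alpha^\cup_\Lo(\Aulase_i) \subseteq \Thse_i$. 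I would then put
\[
\Ause'_i := \Cose_i \cup \Aulase_i, \qquad \Ko'_i := \overline{\Cose_i} \setminus \Aulase_i,
\]
the latter being a closed subset of the compact set $\overline{\Cose_i}$, hence compact.

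The remaining verification is a short set-theoretic calculation. The chain $\Ko_i \subseteq \Ko'_i \subseteq \Ause'_i \subseteq \Ause_i$ follows from $\Ko_i \subseteq \overline{\Cose_i}$ and $\Aulase_i \cap \Ko_i = \varnothing$ for the first inclusion, from $\partial \Cose_i \subseteq \Aulase_i$ (which reduces $\Ko'_i$ to $\Cose_i \setminus \Aulase_i \subseteq \Cose_i \subseteq \Ause'_i$) for the second, and from $\Cose_i \cup \Aulase_i \subseteq \Ause_i$ for the third. A direct computation yields $\Ause'_i \setminus \Ko'_i = \Aulase_i$, whence $\alpha^\cup_\Lo(\Ause'_i \setminus \Ko'_i) \subseteq \Thse_i$, and the desired multiplicity bound descends from $\{\Thse_i\}$ to $\{\alpha^\cup_\Lo(\Ause'_i \setminus \Ko'_i) \colon i=1,\ldots,n\}$. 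The only subtlety in the entire argument is arranging simultaneously that $\Aulase_i$ is a neighborhood of $\partial \Cose_i$ and is disjoint from $\Ko_i$; this is why I intersect with $\Ause_i \setminus \Ko_i$ in the definition of $\Aulase_i$, using that $\Ko_i$, being contained in the open set $\Cose_i$, is automatically separated from $\partial \Cose_i$.
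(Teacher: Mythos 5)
Your proposal is correct and follows essentially the same route as the paper's proof: apply Lemma~\ref{lem:general-position-action-general}, thicken the compact sets $\alpha^\cup_{\Lo}(\partial \Cose_i)$ via Lemma~\ref{lem:thicken-multiplicity}, and define exactly the same sets $\Aulase_i$ (the paper's $\Lase_i$), $\Ause'_i = \Cose_i \cup \Aulase_i$ and $\Ko'_i = \overline{\Cose_i}\setminus \Aulase_i$, with the identity $\Ause'_i\setminus \Ko'_i = \Aulase_i$ giving the multiplicity bound. The only cosmetic difference is that you carry the bound $\dim(X^+)$ from the general-position lemma, which dominates the stated $\dim(X)$ bound anyway, so nothing is lost.
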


\begin{proof}
	Without loss of generality, we may assume $\dim(X) < \infty$. 
	Given $n$, $\Ause_1 , \ldots, \Ause_n$, $\Ko_1, \ldots, \Ko_n$ and $\Lo$ as above, we apply Lemma~\ref{lem:general-position-action-general} to obtain precompact open subsets $\Cose_1 , \ldots, \Cose_n$ in $X$ 
	such that $\Ko_i  \subseteq \Cose_i \subseteq \overline{\Cose_i} \subseteq \Ause_i$ for any $i \in \intervalofintegers{1}{n}$ and $\operatorname{mult} \left( \left\{ \alpha^\cup_{\Lo}  \left( \partial \Cose_i  \right) \colon i = 1, \ldots, n \right\} \right) \leq \dim(X)$. 
	By Lemma~\ref{lem:thicken-multiplicity}, there are open neighborhoods $\Thse_i$ of $\alpha^\cup_{\Lo}  \left( \partial \Cose_i  \right)$, for $i = 1, \ldots, n$, such that $\mult \left( \left\{  \Thse_i \colon i = 1, \ldots, n \right\} \right) \leq \dim(X)$. For any $i \in \intervalofintegers{1}{n}$, we define the open set 
	\[
		\Lase_i = \left( \Ause_i \setminus \Ko_i \right) \cap \left( \bigcap_{g 
		\in \Lo} \alpha_{g^{-1}} \left( \Thse_i \right) \right)  \; .
	\]
	Note that $\Lase_i \supseteq \partial \Cose_i$. Define
	\[
		\Ause'_i = \Cose_i \cup \Lase_i \quad \text{and} \quad \Ko'_i = \Cose_i 
		\setminus \Lase_i = \overline{\Cose_i} \setminus \Lase_i \; .
	\]
	Observe that $\Ause'_i$ is open, $\Ko'_i$ is compact, $\Ause'_i \setminus 
	\Ko'_i = \Lase_i$, and we have  $\Ko_i  \subseteq \Ko'_i 
	\subseteq \Ause'_i \subseteq  \Ause_i$. Moreover, because 
	$\alpha^\cup_{\Lo}  \left( \Ause'_i \setminus \Ko'_i \right) = 
	\alpha^\cup_{\Lo}  \left( \Lase_i \right) \subseteq \Thse_i$, for any $i 
	\in \intervalofintegers{1}{n}$ we have 
	\[
		\mult \left( \left\{ \alpha^\cup_{\Lo}  \left( \Ause'_i \setminus \Ko'_i \right) \colon i = 1, \ldots, n \right\} \right) \leq \mult \left( \left\{  \Thse_i \colon i = 1, \ldots, n \right\} \right) \leq \dim(X) \; ,
	\]
	as desired.
\end{proof}

The next lemma provides a version of the previous lemmas which furthermore 
allows for control 
over the $(\alpha,\Bo)$-multiplicity (see 
Definition~\ref{def:multiplicity-action}). 

\begin{Lemma} \label{lem:general-position-combinatorial}
	Let $\alpha$ be an action of a discrete group $G$ on a locally compact Hausdorff space $X$. 
	Let $n$, $\Ause_1 , \ldots, \Ause_n$, $\Ko_1, \ldots, \Ko_n$  be as in Lemmas~\ref {lem:general-position-action}, \ref {lem:general-position-action-general} and~\ref {lem:general-position-action-thick}. 
	Let $\Bo$ be a symmetric finite subset of $G$ that contains $e$. 
	Then there are open sets $\Ause'_1 , \ldots, \Ause'_n$ and compact sets $\Ko'_1, \ldots, \Ko'_n$ in $X$ such that 
	\begin{enumerate}
		\item \label{lem:general-position-combinatorial::squeeze}
		$\Ko'_i \subseteq \Ause'_i \subseteq \Ause_i$
		for any $i \in \intervalofintegers{1}{n}$, 
		\item \label{lem:general-position-combinatorial::cover} 
		$\displaystyle \bigcup_{j=1}^{i} \Ko_j \subseteq \Ko'_i \cup \left( \bigcup_{j=1}^{i-1} \alpha^\cup_{\Bo} \left(\Ko'_j\right) \right)$
		for any $i \in \intervalofintegers{1}{n}$, 
		\item \label{lem:general-position-combinatorial::band} 
		$\operatorname{mult} \left( \left\{ \alpha^\cup_{\Bo}  \left( \Ause'_i 
		\setminus \Ko'_i \right) \colon i = 1, \ldots, n \right\} \right) \leq 
		\dim(X^+)$, 
		and 
		\item \label{lem:general-position-combinatorial::multiplicity} 
		$\mult_{\alpha,\Bo} \left( \left\{ \Ause'_i  \colon i = 1, \ldots, n 
		\right\} \right) \leq \dimone (X^+)$. 
	\end{enumerate}
\end{Lemma}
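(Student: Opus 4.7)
The plan is to prove this by induction on $n$, with the trivial base case $n=0$. The key structural insight is that condition~(4) is implied by condition~(3) together with pairwise disjointness of the enlarged cores $\{\alpha^\cup_\Bo(\Ko'_i)\}$. Indeed, since $\Bo$ is symmetric and contains $e$, any $(\alpha,\Bo)$-close subset of $X$ lies in a common $\alpha_\Bo(y)$, which gives
\[
\mult_{\alpha,\Bo}\bigl(\{\Ause'_i\}\bigr) \leq \mult\bigl(\{\alpha^\cup_\Bo(\Ause'_i)\}\bigr).
\]
Decomposing $\alpha^\cup_\Bo(\Ause'_i) = \alpha^\cup_\Bo(\Ko'_i) \cup \alpha^\cup_\Bo(\Ause'_i \setminus \Ko'_i)$, the first summand contributes multiplicity at most $1$ under pairwise disjointness, while the second contributes at most $\dim(X^+)$ by condition~(3), yielding the desired bound $\dimone(X^+)$.

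For the inductive step, suppose $(\Ause'_j,\Ko'_j)_{j<n}$ have been constructed satisfying (1)--(4) together with the pairwise disjointness of the $\alpha^\cup_\Bo(\Ko'_j)$'s. Set $W' := \bigcup_{j<n} \alpha^\cup_\Bo(\Ko'_j)$ and $W := \bigcup_{j<n} \alpha^\cup_{\Bo^2}(\Ko'_j) \supseteq W'$ (both compact). Condition~(2) forces $\Ko_n \setminus W' \subseteq \Ko'_n$, while pairwise $\alpha^\cup_\Bo$-disjointness forces $\Ko'_n \subseteq X \setminus W$ (using symmetry of $\Bo$). These combine consistently provided $\Ko_n \cap (W \setminus W') = \varnothing$, in which case $\Ko_n \setminus W' = \Ko_n \setminus W$ is compact. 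Using local compactness and Hausdorffness of $X$, I will then find a precompact open $V$ with $\Ko_n \setminus W \subseteq V \subseteq \overline{V} \subseteq \Ause_n \setminus W$, take $\Ko'_n := \overline{V}$, and apply Lemma~\ref{lem:general-position-action-thick} (with $\Lo = \Bo$) to the combined data to obtain the final $\Ause'_n$ so that the band multiplicity condition~(3) holds globally across all $n$ indices.

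The main obstacle is arranging $\Ko_n \cap (W \setminus W') = \varnothing$, that is, keeping $\Ko_n$ outside the corona $\alpha^\cup_{\Bo^2}(\Ko'_j) \setminus \alpha^\cup_\Bo(\Ko'_j)$ for each $j < n$. This is overcome by making the inductive construction global from the outset: start by applying Lemma~\ref{lem:general-position-action-general} (with $\Lo = \Bo^2$) to the full input $\{(\Ause_i,\Ko_i)\}$ to obtain precompact open sets $\Cose_1,\ldots,\Cose_n$ with $\Ko_i \subseteq \Cose_i \subseteq \overline{\Cose_i} \subseteq \Ause_i$ whose boundaries $\partial\Cose_i$ (together with their $\alpha^\cup_{\Bo^2}$-translates) lie in general position with respect to the $\Ko_j$'s. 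Knowing the entire sequence $\Ko_1, \ldots, \Ko_n$ in advance allows further refinement of each $\Cose_j$ via Lemmas~\ref{lem:general-position-action-general} and~\ref{lem:shrunken-cover}, placing its boundary generically so that $\alpha^\cup_{\Bo^2}(\overline{\Cose_j}) \setminus \alpha^\cup_\Bo(\overline{\Cose_j})$ avoids each $\Ko_i$ with $i > j$. One then takes $\Ko'_j := \overline{\Cose_j}$ (suitably modified) at each inductive step, and the above analysis closes the induction.
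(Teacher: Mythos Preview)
Your structural observation that condition~(4) would follow from~(3) together with pairwise disjointness of the enlarged cores $\{\alpha^\cup_\Bo(\Ko'_i)\}$ is correct, but maintaining that disjointness through the induction is the wrong invariant, and your proposed fix does not work. The set $W\setminus W' = \bigcup_{j<n}\bigl(\alpha^\cup_{\Bo^2}(\Ko'_j)\setminus\alpha^\cup_\Bo(\Ko'_j)\bigr)$ is a \emph{thick}, full-dimensional annular region, not a lower-dimensional boundary, so no general-position argument will make it avoid the given compact sets $\Ko_i$. Concretely, take $G=\Z$ acting on $X=\R$ by translation, $\Bo=\{-1,0,1\}$, $\Ko_1=[0,1]$, $\Ause_1=(-\tfrac12,\tfrac32)$, and $\Ko_2=[2.7,4]$. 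Any admissible $\Ko'_1$ satisfies $[0,1]\subseteq\Ko'_1\subseteq(-\tfrac12,\tfrac32)$, hence $\alpha^\cup_\Bo(\Ko'_1)\subseteq(-\tfrac32,\tfrac52)$ while $\alpha^\cup_{\Bo^2}(\Ko'_1)\supseteq[-2,3]$, so $W\setminus W'$ always contains $[2.5,3]$ and therefore meets $\Ko_2$. The obstruction is intrinsic.

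The paper's induction avoids this by \emph{not} demanding mutual disjointness. Instead it exploits an asymmetry between the new index and the old ones: after obtaining $(\Ause'_j,\Ko'_j)_{j\le n}$ from the inductive hypothesis, it slightly thickens each $\Ko'_j$ to $\Ko''_j=\overline{\Thse_j}\subseteq\Ause'_j$ and sets
\[
\Ause''_{n+1}=\Ause_{n+1}\setminus\bigcup_{j\le n}\alpha^\cup_\Bo(\Ko'_j),\qquad
\Ko''_{n+1}=\Ko_{n+1}\setminus\bigcup_{j\le n}\alpha^\cup_\Bo(\Thse_j),
\]
so that $\Ko''_{n+1}$ is compact (a compact set minus an open set) and contained in $\Ause''_{n+1}$. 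After applying Lemma~\ref{lem:general-position-action-thick} to the full list $(\Ause''_i,\Ko''_i)_{i\le n+1}$ to obtain~(3), condition~(4) is verified as follows: if an $(\alpha,\Bo)$-close family $\{x_i\in\Ause'''_i:i\in F\}$ contains $x_{n+1}$, then by construction $x_{n+1}\notin\alpha^\cup_\Bo(\Ko'_j)$ for every $j\le n$, which forces $x_i\in\Ause'_i\setminus\Ko'_i$ for each $i\in F\setminus\{n+1\}$; hence $x_{n+1}\in\alpha^\cup_\Bo(\Ause'_i\setminus\Ko'_i)$ for all such $i$, and the \emph{old} band condition~(3) from the inductive hypothesis bounds $|F\setminus\{n+1\}|$ by $\dim(X^+)$. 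The key point you are missing is that only the newest open set needs to be pushed away from the previous cores, and this one-sided separation suffices to leverage the inductive band estimate.
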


\begin{proof}
	We prove the claim by induction on $n$. The case where $n=0$ is vacuously 
	true. 
	
	Now assuming the statement holds for some nonnegative integer $n$, we prove it also holds for $n+1$. To this end, we fix $\Ause_1 , \ldots, \Ause_{n+1}$, $\Ko_1, \ldots, \Ko_{n+1}$, and $\Bo$ as in the statement, from which we are going to produce $\Ause'''_1 , \ldots, \Ause'''_{n+1}$ and $\Ko'''_1, \ldots, \Ko'''_{n+1}$ that satisfy \eqref{lem:general-position-combinatorial::squeeze}-\eqref{lem:general-position-combinatorial::multiplicity} in place of $\Ause'_1 , \ldots, \Ause'_{n}$ and $\Ko'_1, \ldots, \Ko'_{n}$. 
	
	Applying our inductive assumption to the first $n$ pairs of sets, we obtain open sets $\Ause'_1 , \ldots, \Ause'_n$ and compact sets $\Ko'_1, \ldots, \Ko'_n$ in $X$ satisfying \eqref{lem:general-position-combinatorial::squeeze}-\eqref{lem:general-position-combinatorial::multiplicity}. 
	Using the fact that $X$ is locally compact and Hausdorff, for each $i \in 
	\intervalofintegers{1}{n}$ we choose a precompact open subset $\Thse_i$ 
	such that $\Ko'_i \subseteq \Thse_i \subseteq \overline{\Thse_i} \subseteq 
	\Ause'_i$. 
	We now define 
	\[
		\Ause''_i = \Ause'_i   \quad \text{and} \quad \Ko''_i = 
		\overline{\Thse_i} \quad \text{for } i \in \intervalofintegers{1}{n} \; 
		.
	\]
	Those sets clearly also satisfy  
	\eqref{lem:general-position-combinatorial::squeeze}-\eqref{lem:general-position-combinatorial::multiplicity}
	 in place of $\Ause'_1 , \ldots, \Ause'_{n}$ and $\Ko'_1, \ldots, 
	\Ko'_{n}$. We  define
	\begin{equation*} 
		\Ause''_{n+1} = \Ause_{n+1} \setminus \left( \bigcup_{j=1}^{n} 
		\alpha^\cup_{\Bo} \left( \Ko'_j \right) \right)  \quad \text{and} \quad 
		\Ko''_{n+1} = \Ko_{n+1} \setminus \left( \bigcup_{j=1}^{n} 
		\alpha^\cup_{\Bo} \left( \Thse_j \right) \right) \; .
	\end{equation*}
	Notice that $\Ko''_{n+1} \subseteq \Ause''_{n+1} \subseteq \Ause_{n+1}$ and 
	\[
		\Ko_{n+1} \subseteq \Ko''_{n+1} \cup \left( \bigcup_{j=1}^{n} \alpha^\cup_{\Bo} \left( \Thse_j \right) \right) 
		\subseteq \Ko''_{n+1} \cup \left( \bigcup_{j=1}^{n} \alpha^\cup_{\Bo} 
		\left( \Ko''_j \right) \right) \; .
	\]
	Thus, by our inductive assumption, we have
	\begin{align*}
		\bigcup_{j=1}^{n+1} \Ko_j &= \Ko_{n+1} \cup \left( \bigcup_{j=1}^{n} \Ko_j \right) \\
		&\subseteq \left( \Ko''_{n+1} \cup \left( \bigcup_{j=1}^{n} \alpha^\cup_{\Bo} \left( \Ko''_j \right) \right) \right) \cup  \left( \Ko''_n \cup \left( \bigcup_{j=1}^{n-1} \alpha^\cup_{\Bo} \left(\Ko''_j\right) \right) \right) \\
		&\subseteq \Ko''_{n+1} \cup \left( \bigcup_{j=1}^{n} \alpha^\cup_{\Bo} \left(\Ko''_j\right) \right)  \; .
	\end{align*}
	This shows $\Ause''_1 , \ldots, \Ause''_{n+1}$ and $\Ko''_1, \ldots, 
	\Ko''_{n+1}$  satisfy \eqref{lem:general-position-combinatorial::squeeze} 
	and \eqref{lem:general-position-combinatorial::cover} in place of $\Ause'_1 
	, \ldots, \Ause'_{n}$ and $\Ko'_1, \ldots, \Ko'_{n}$. 
	
	We now apply Lemma~\ref{lem:general-position-action-thick} to obtain open 
	sets $\Ause'''_1 , \ldots, \Ause'''_{n+1}$ and compact sets $\Ko'''_1, 
	\ldots, \Ko'''_{n+1}$ in $X$ such that 
	\begin{itemize}
		\item 
		$\Ko''_i  \subseteq \Ko'''_i \subseteq \Ause'''_i \subseteq  \Ause''_i$ 
		for any $i \in \intervalofintegers{1}{n+1}$, and 
		\item  
		$\operatorname{mult} \left( \left\{ \alpha^\cup_{\Bo}  \left( 
		\Ause'''_i \setminus \Ko'''_i \right) \colon i = 1, \ldots, n \right\} 
		\right) \leq \dim(X^+)$. 
	\end{itemize}
	It follows 
	that $\Ause'''_1 , \ldots, \Ause'''_{n+1}$ and $\Ko'''_1, \ldots, \Ko'''_{n+1}$ satisfy \eqref{lem:general-position-combinatorial::squeeze}-\eqref{lem:general-position-combinatorial::band} in place of $\Ause'_1 , \ldots, \Ause'_{n}$ and $\Ko'_1, \ldots, \Ko'_{n}$. 
	It remains to show that 
	\[
		\mult_{\alpha,\Bo} \left( \left\{ \Ause'''_i  \colon i = 1, \ldots, n+1 
		\right\} \right) \leq \dimone (X^+) \; . 
	\]
	To this end, it suffices to show that for any subset $\Fi \subseteq 
	\intervalofintegers{1}{n+1}$, and any tuple $\left(x_i\right) \in X^{\Fi}$ 
	satisfying $x_i \in \Ause'''_i$ for any $i \in \Fi$, if $\left\{ x_i \colon 
	i \in \Fi \right\}$ is $(\alpha, \Bo)$-close, then $|\Fi| \leq \dimone 
	(X^+)$. 
	The case where $n+1 \not \in \Fi$ is covered by our inductive assumption, 
	so we may assume $n+1 \in F$. 
	Since $x_{n+1} \in \Ause'''_{n+1} \subseteq \Ause''_{n+1}$ by our assumption on $\left(x_i\right)$ and the choice of $\Ause'''_{n+1}$, 
	it follows from the construction of $\Ause''_{n+1}$ that
	$x_{n+1} \notin \alpha^\cup_{\Bo} \left( \Ko'_j \right)$ for any $j \in 
	\intervalofintegers{1}{n}$.
	Since $\left\{ x_i \colon i \in \Fi \right\}$ 
	is $(\alpha, \Bo)$-close, it follows that for any $i \in \Fi \setminus 
	\{n+1\}$, we have $x_i \notin \Ko'_i$, whence $x_{n+1} \in \alpha_{\Bo} 
	\left( x_i \right) \subseteq \alpha^\cup_{\Bo}  \left( \Ause'_i \setminus 
	\Ko'_i \right)$. Since by the inductive assumption, we have 
	$\operatorname{mult} \left( \left\{ \alpha^\cup_{\Bo}  \left( \Ause'_i 
	\setminus \Ko'_i \right) \colon i = 1, \ldots, n \right\} \right) \leq 
	\dim(X)$, it follows that $\left| \Fi \setminus \{n+1\} \right| \leq 
	\dim(X)$, whence $|\Fi| \leq \dimone (X)$ as desired. 
\end{proof}

We are finally ready to give a proof of Theorem~\ref{thm:lsp-ltc}. The 
reader may find it helpful to first focus on the case where $\dim(X) = 0$ in 
order to better understand the combinatorics of the proof.

\begin{proof}[Proof of Theorem~\ref{thm:lsp-ltc}]
	We want to show 
	\[
	\dimltc (\alpha) \leq \LSP_{d}(G) - 1 \; .
	\]
	Let $\Dimnu \geq \LSP_d(G) - 1$ be a natural number. We need to show 
 	 that for any finite subset $\Lo \Subset G$ and for any compact subset $\Ko 
	 \Subset X$ there exists a natural number $\Binu$ such that and any finite 
	 collection $\Thseco$ of open sets in $X$ 
	 covering $\alpha^\cup_{\Lo} (\Ko)$, there exist 
	 a  collection $\Coseco$ of open sets in $X$ and locally constant functions 
	 $\Ne_{\Cose} \colon \Cose \to X$ for $\Cose \in \Coseco$ satisfying the 
	 conditions \Refc{Lo}{\Lo,\Ko}, 
	 \Refc{Mu}{\Dimnu}, 
	 \Refc{Eq}{\Lo}, 
	 \Refc{Th}{\Thseco} 
	 and 
	 \Refc{Ca}{\Binu}.   
	 Furthermore, by 
	 replacing $\Lo$ with $\Lo \cup \Lo^{-1}$ if necessary, we may assume 
	 without loss of generality that $\Lo = \Lo^{-1}$. 
	
	Following Definition~\ref{def:LSP}, we choose a finite symmetric subset 
	$\Bo[LSP] \Subset G$ containing $e$ such that 
	for any positive integer $n$ and $g_1,\ldots g_n \in \Bo[LSP]\Lo$, 
	if there is no subset $\Fi \subseteq \intervalofintegers{1}{n}$ such that $|\Fi| > d+1$ and 
	$g_i^{} g_j^{-1} \in \Bo[LSP]$ for any $i,j \in \Fi$, 
	then $n \leq m$. Note that $\Bo[LSP]$ depends only on $\Lo$. We define 
	$\Binu = |\Lo \Bo[LSP]|$.
	
	By Lemma~\ref{lem:thin-neighborhood}, for any $x \in \Ko$, there is an open neighborhood $\Ause_x$ of $x$ such that for any $g, g' \in \Lo\Lo\Bo[LSP]$, we have $\alpha_g \left(\Ause_x\right) \cap \alpha_{g'} \left(\Ause_x\right) \not= \varnothing$ if and only if $\alpha_g \left( x \right) = \alpha_{g'} \left( x \right)$. 
	Therefore, we can define a locally constant function $\Ne_x \colon 
	\alpha^\cup_{\Lo\Lo\Bo[LSP]} \left( \Ause_x \right) \to X$ taking 
	$\alpha_{g} \left( \Ause_x \right)$ to $\alpha_{g} \left( x \right)$ for 
	any $g \in \Lo\Lo\Bo[LSP]$. 
	{
		We claim that the following holds:
		\begin{enumerate}
			\renewcommand{\labelenumi}{\textup{(\theenumi)}} \renewcommand{\theenumi}{Pre-Eq}\item \label{thm:lsp-ltc::pre-Eq}
			The restriction of $\Ne_x$ to $\alpha^\cup_{\Lo\Bo[LSP]} \left( \Ause_x \right)$ is $\Lo$-equivariant in the sense of \eqref{item:def:ltc-dim:Eq} in Definition~\ref{def:ltc-dim}, that is, for any $y \in \alpha^\cup_{\Lo\Bo[LSP]} \left( \Ause_x \right)$ and for any $g \in \Lo$, if $\alpha_{g} (y) \in \alpha^\cup_{\Lo\Bo[LSP]} \left( \Ause_x \right)$ then $\Ne_{x}\left( \alpha_{g} (y) \right) = g \cdot \Ne_{x}(y)$. 
		\end{enumerate}
		Indeed, if we let $h, h' \in \Lo\Bo[LSP]$ be such that $y \in \alpha_{h} \left(\Ause_x\right)$ and $\alpha_{g} (y) \in \alpha_{h'} \left(\Ause_x\right)$, then since $g h \in \Lo\Lo\Bo[LSP]$ and $\alpha_{g} (y) \in \alpha_{g h} \left(\Ause_x\right)$, it follows by our construction that $\alpha_{h'} (x) = \alpha_{g h} (x)$, whence $\Ne_{x}\left( \alpha_{g} (y) \right) = \alpha_{h'} (x) = \alpha_{g h} (x) = g \cdot \Ne_{x}(y)$. 
	}
	
	By replacing $\Ause_x$ with the open set 
	\[
		\Ause_x \cap \left( \bigcap_{g \in \Lo\Lo\Bo[LSP]} \left\{ \alpha_{g^{-1}} \left( \Thse \right) \colon \Thse \in \Thseco , \alpha_g \left( x \right) \in \Thse \right\} \right) \; , 
	\]
	we may further assume the following condition without loss of generality: 
	{
		\begin{enumerate}
			\renewcommand{\labelenumi}{\textup{(\theenumi)}} \renewcommand{\theenumi}{Pre-Th}\item \label{thm:lsp-ltc::pre-Th}
			For any $\Thse \in \Thseco$ and any $g \in \Lo\Lo\Bo[LSP]$, if $\alpha_g \left( x \right) \in \Thse$, then $\alpha_g \left( \Ause_x \right) \subseteq \Thse$. 
		\end{enumerate}
	}
	Because $\Ko$ is compact, we can choose a finite set $x_1, \ldots, x_n \in 
	X$ such that $\Ko 
	\subseteq \bigcup_{i=1}^n \Ause_{x_i}$. We denote $\Ause_i$ for 
	$\Ause_{x_i}$ to lighten notation. It follows from 
	Lemma~\ref{lem:relative-pou}, by taking the supports of a partition of 
	unity, 
	that there are compact subsets $\Ko_1, \ldots, \Ko_n$ such that $\Ko 
	\subseteq \bigcup_{i=1}^n \Ko_i$ and $\Ko_i \subseteq \Ause_i$ for any $i 
	\in \intervalofintegers{1}{n}$. 
	
	By Lemma~\ref{lem:general-position-combinatorial}, there are open sets $\Ause'_1 , \ldots, \Ause'_n$ and compact sets $\Ko'_1, \ldots, \Ko'_n$ 
	in $X$ satisfying \eqref{lem:general-position-combinatorial::squeeze}-\eqref{lem:general-position-combinatorial::multiplicity} of Lemma~\ref{lem:general-position-combinatorial}. In particular, we have 
	\begin{enumerate}
		\item \label{thm:lsp-ltc::trimmed}
		$\Ause'_i \subseteq \Ause_i$
		for any $i \in \intervalofintegers{1}{n}$, 
		\item \label{thm:lsp-ltc::cover-translates}
		$\displaystyle \Ko \subseteq \bigcup_{j=1}^{n} \alpha^\cup_{\Bo[LSP]} \left(\Ause'_j\right)$, 
		and 
		\item \label{thm:lsp-ltc::mult}
		$\mult_{\alpha,\Bo[LSP]} \left( \left\{ \Ause'_i  \colon i = 1, \ldots, 
		n \right\} \right) \leq \dimone (X^+)$. 
	\end{enumerate}
	Now, for any $i \in \intervalofintegers{1}{n}$, we define  $\Cose_i = 
	\alpha^\cup_{\Lo\Bo[LSP]} \left(\Ause'_i\right)$ and we let $\Ne_{\Cose_i} 
	\colon \Cose_i \to X$ be the restriction of the map $\Ne_{x_i} \colon 
	\alpha^\cup_{\Lo\Lo\Bo[LSP]} \left( \Ause_{i} \right) \to X$. Set 
	\[
	\Coseco 
	= \left\{ \Cose_i \colon i \in \intervalofintegers{1}{n} \right\}
	\; \text{ and } \; 
	\Bo 
	= \left(\Lo\Bo[LSP]\right) \left(\Lo\Bo[LSP]\right)^{-1}
	\, .
	 \]
	
	We claim that they satisfy 
	 \Refc{Lo}{\Lo,\Ko}, 
	\Refc{Mu}{\Dimnu}, 
	\Refc{Eq}{\Lo}, 
	\Refc{Th}{\Thseco} 
	and 
	\Refc{Ca}{\Binu}.   
	Indeed, 
	\Refc{Eq}{\Lo}
	and 
	\Refc{Th}{\Thseco}
	follow 
	from \eqref{thm:lsp-ltc::pre-Eq} and \eqref{thm:lsp-ltc::pre-Th} combined 
	with condition~\eqref{thm:lsp-ltc::trimmed} above. Condition
	\Refc{Eq}{\Lo}
	follows from 
	condition~\eqref{thm:lsp-ltc::cover-translates} above. Condition
	\Refc{Ca}{\Binu}    
	follows from the fact that the image of 
	$\Ne_{\Cose_i}$ is $\alpha_{\Lo\Bo[LSP]}(x_i)$ for any $i \in \{1, \ldots, 
	n\}$. 
	
	To prove 
	\Refc{Mu}{\Dimnu}, 
	that is, $\mult(\Coseco) \leq \Dimnu + 1$, it suffices to show that for any 
	$x \in X$, the set $\Inse_x = \left\{ i \in \intervalofintegers{1}{n} 
	\colon x \in \Cose_i \right\}$ has cardinality no more than $\Dimnu + 1$. 
	To this end, by our construction of $\Cose_i$, we may choose, for each $i 
	\in \Inse_x$, an element $g_i \in (\Lo\Bo[LSP])^{-1} = \Bo[LSP]\Lo$ such 
	that $\alpha_{g_i} (x) \in \Ause'_i$. 
	Observe that for any subset $\Fi \subseteq \Inse_x$ satisfying $g_i^{} 
	g_j^{-1} \in \Bo[LSP]$ for any $i,j \in \Fi$, the points $\alpha_{g_i} (x) 
	\in \Ause'_i$ for $i \in \Fi$ form an $(\alpha, \Bo[LSP])$-close subset in 
	the sense of Definition~\ref{def:multiplicity-action}; therefore, by 
	condition~\eqref{thm:lsp-ltc::mult} above, we have $|\Fi| \leq 
	\dimone(X^+)$. It then follows from our choice of $\Bo[LSP]$ that the 
	cardinality of $\Inse_x$ is no more than $\Dimnu + 1$, as claimed. 
\end{proof}


\section{The equivariant asymptotic dimension and relative bounds} \label{sec:BLR}
\renewcommand{\sectionlabel}{BLR}
\ref{sectionlabel=BLR}
In this section, we study the notion of equivariant asymptotic dimension and 
its relations to the dimensions we introduced in the previous sections. This 
will be needed only when we extend our results on nuclear dimension beyond the 
virtually nilpotent case (and cover, for example, various actions of hyperbolic 
groups). For actions of discrete groups on compact Hausdorff 
spaces, the notion of equivariant asymptotic dimension grew out of the work of 
Bartels, L\"{u}ck and Reich \cite{BartelsLueckReich2008Equivariant} on the 
Farrell-Jones conjecture and was subsequently studied on its own right in 
\cite{Bartels2017Coarse, GuentnerWillettYu2017Dynamic, Sawicki2017equivariant} 
(under the terms finite $\mathcal{F}$-amenability and $d$-BLR in the first two 
papers). We introduce here a natural generalization to the case of locally 
compact spaces\footnote{For non-compact spaces, our definition 
differs from that of finite $\mathcal{F}$-amenability in 
\cite[Definition~0.1]{Bartels2017Coarse}. For this reason, we chose to use 
different terminology.}.  

\def \Topic {\BLRdef}
\begin{Def} \label{def:BLR}
	Let $\alpha$ be an action of a discrete group $G$ on a locally compact 
	Hausdorff space $X$. Let $d$ be a nonnegative integer and let $\mathcal{F}$ 
	be a family of subgroups of $G$ closed under conjugation and under taking 
	subgroups. Let $\Delta$ be the diagonal action of $G$ on $G \times X$, 
	i.e., $\Delta_g(g', x) = \left( g' g^{-1}, \alpha_g (x) \right)$. The 
	\emph{equivariant asymptotic dimension of $\alpha$ relative to 
	$\mathcal{F}$}, denoted by $\eqasdim(\alpha, \mathcal{F})$, is the infimum 
	of all nonnegative integers $d$ satisfying:
	
	For any finite subset $\Lo \Subset G$ and for any compact subset $\Ko 
	\Subset X$, there exists a collection $\Auseco$ of open sets in $G \times 
	X$ 
	satisfying the following conditions: 
	{
	\begin{enumerate}
		\renewcommand{\labelenumi}{\textup{(\theenumi)}} \renewcommand{\theenumi}{Lo}\item \label{item:def:BLR:Lo}
		For any $x \in \Ko$, there exists $\Ause \in \Auseco$ such that $\Lo \times  \{x\} \subseteq \Ause$.
		
		\renewcommand{\labelenumi}{\textup{(\theenumi)}} \renewcommand{\theenumi}{Mu}\item \label{item:def:BLR:Mu}
		We have $\mult(\Auseco) \leq d+1$. 
		
		\renewcommand{\labelenumi}{\textup{(\theenumi)}} \renewcommand{\theenumi}{Eq}\item \label{item:def:BLR:Eq}
		For any $\Ause \in \Auseco$ and for any $g \in G$, the translate 
		$\Delta_g (\Ause)$ is also in $\Auseco$ and is either equal to $\Ause$ 
		or disjoint from $\Ause$. 
		
		\renewcommand{\labelenumi}{\textup{(\theenumi)}} \renewcommand{\theenumi}{St}\item \label{item:def:BLR:St}
		For any $\Ause \in \Auseco$, its \emph{set stabilizer} $\setstab_\Delta(\Ause)$, defined by 
		\[
			\setstab_\Delta(\Ause) = \left\{ g \in G \colon \Delta_g (\Ause) = \Ause \right\} \, ,
		\]
		is an element of $\mathcal{F}$. 		
	\end{enumerate}
	}
\end{Def}

The labels (Lo), (Mu), (Eq), and (St) stand for, respectively, ``long'', 
``multiplicity'', ``equivariant'', and ``stabilizer''. 
We will also use the notation 
\Refcd{Lo}, \Refcd{Mu}, \Refcd{Eq}, and \Refcd{St} 
to specify the parameters used in these conditions and the fact that they come from Definition~\ref{\Topic}.

\begin{Lemma} \label{lemma:BLR-cofinite}
	In Definition~\ref{def:BLR}, one can furthermore assume that we have the 
	following cofiniteness conditions:  
	\begin{itemize}
		\item $\Auseco$ is cofinite with regard to the $G$-action, that is, 
		there exists a finite collection $\left\{ \Ause_i \colon i \in I 
		\right\}$ such that $\Auseco = \left\{ \Delta_g \left( \Ause_i \right) 
		\colon i \in I , g \in G \right\}$; 
		\item for any $\Ause \in \Auseco$, the projection of $\Ause$ on the first coordinate, i.e., the set 
		\[
		\Aufi_{\Ause} := \left\{g \in G \colon \Ause \cap \left( \{g\} \times X \right) \not= \varnothing \right\}
		\]
		is cofinite with regard to $\setstab_\Delta \left(\Ause \right)$, that is, there is a finite set $\Fi_{\Ause} \Subset G$ such that $\Aufi_{\Ause}  \subseteq \Fi_{\Ause}  \cdot \setstab_\Delta \left(\Ause \right)$. 
	\end{itemize}
	\end{Lemma}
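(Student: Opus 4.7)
The plan is to start with a collection $\Auseco$ satisfying Definition~\ref{def:BLR} and modify it in two stages. \emph{First, for $G$-cofiniteness:} for each $x \in \Ko$, condition (Lo) yields some $\Ause_x \in \Auseco$ containing $\Lo \times \{x\}$, and the set $V_x := \{y \in X : \Lo \times \{y\} \subseteq \Ause_x\} = \bigcap_{g \in \Lo} \{y \in X : (g,y) \in \Ause_x\}$ is an open neighborhood of $x$. Compactness of $\Ko$ produces a finite subcover $V_{x_1}, \ldots, V_{x_n}$ of $\Ko$. Dropping $\Ause_{x_j}$ whose $G$-orbit coincides with that of a retained $\Ause_{x_i}$ does not spoil (Lo), since for $y \in V_{x_j}$ the set $\Ause_{x_j}$ still belongs to the retained orbit; I may therefore assume $\Ause_{x_1}, \ldots, \Ause_{x_n}$ lie in distinct $G$-orbits, and I set $\Auseco^{(1)} := \{\Delta_g(\Ause_{x_i}) : g \in G,\ i = 1, \ldots, n\}$, a $G$-cofinite subcollection of $\Auseco$ which still satisfies (Lo)--(St).

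\emph{Second, I would shrink each orbit representative.} Let $H_i := \setstab_\Delta(\Ause_{x_i}) \in \mathcal{F}$ and define the open, $H_i$-invariant set
\[
    \Ause_i^\prime := \bigcup_{h \in H_i} \Delta_h\bigl(\Ause_{x_i} \cap (\Lo \times X)\bigr) \subseteq \Ause_{x_i}.
\]
Using $\Delta_h(g', y) = (g' h^{-1}, \alpha_h(y))$, one verifies that $\Aufi_{\Ause_i^\prime} \subseteq \Lo H_i$, a union of at most $|\Lo|$ left $H_i$-cosets, and that $\Lo \times \{x\} \subseteq \Ause_i^\prime$ for every $x \in V_{x_i}$. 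The new collection $\Auseco^{\mathrm{new}} := \{\Delta_g(\Ause_i^\prime) : g \in G,\ i = 1, \ldots, n\}$ is cofinite with respect to the $G$-action, and for $\Ause = \Delta_g(\Ause_i^\prime)$ the choice $\Fi_\Ause := \Lo g^{-1}$ gives $\Aufi_\Ause \subseteq (\Lo H_i) g^{-1} = (\Lo g^{-1}) \cdot (g H_i g^{-1}) = \Fi_\Ause \cdot \setstab_\Delta(\Ause)$, as required.

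\emph{Main obstacle.} The delicate step is proving that the shrinking does not enlarge the set stabilizer: $\setstab_\Delta(\Ause_i^\prime) = H_i$. This identity is what both keeps $\Auseco^{\mathrm{new}}$ within the framework of Definition~\ref{def:BLR} and makes the natural comparison map to $\Auseco^{(1)}$ well-defined. The inclusion $H_i \subseteq \setstab_\Delta(\Ause_i^\prime)$ is immediate from the construction. For the reverse, if $g \in \setstab_\Delta(\Ause_i^\prime)$, then the non-empty set $\Ause_i^\prime$ lies in both $\Ause_{x_i}$ and $\Delta_g(\Ause_{x_i})$, so condition (Eq) for the original $\Auseco$ forces $\Delta_g(\Ause_{x_i}) = \Ause_{x_i}$, whence $g \in H_i$. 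With this identity, the map $\Delta_g(\Ause_i^\prime) \mapsto \Delta_g(\Ause_{x_i})$ is a well-defined injection $\Auseco^{\mathrm{new}} \hookrightarrow \Auseco^{(1)}$ (combining the stabilizer equality with the orbit-disjointness established in Stage 1), and since each $\Ause_i^\prime \subseteq \Ause_{x_i}$, this transfers condition (Mu) from $\Auseco^{(1)}$ to $\Auseco^{\mathrm{new}}$; conditions (Lo), (Eq), and (St) (the last via closure of $\mathcal{F}$ under conjugation) are then verified directly.
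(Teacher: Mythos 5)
You take essentially the paper's route---compactness yields finitely many $G$-orbits, and each orbit representative is then shrunk so that its first-coordinate projection meets only finitely many cosets of its set stabilizer---and several of your verifications are sound as written: the identity $\setstab_\Delta(U_i')=H_i$ via condition (Eq), the computation placing the first-coordinate projection of $\Delta_g(U_i')$ inside $(L g^{-1})\cdot\bigl(g H_i g^{-1}\bigr)$, and the transfer of (Mu) to the shrunken collection. The genuine gap is at condition (Lo). After Stage 1 you keep only one representative per orbit, so a point $y\in K$ lying only in a neighborhood $V_{x_j}$ attached to a \emph{dropped} set $U_{x_j}=\Delta_g(U_{x_i})$ is witnessed in $\mathcal{U}^{(1)}$ only by that translate, with $g$ possibly far from $H_i$. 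Your Stage 2 shrink $U_i'=\bigcup_{h\in H_i}\Delta_h\bigl(U_{x_i}\cap(L\times X)\bigr)$ keeps only the part of $U_{x_i}$ whose first coordinates lie in $L H_i$; it remembers the window adapted to $V_{x_i}$ but forgets where the dropped representatives sat inside the orbit. For $y\in V_{x_j}$ the required containment $L\times\{y\}\subseteq\Delta_g(U_i')$ is equivalent to $Lg\times\{\alpha_{g^{-1}}(y)\}\subseteq U_i'$, hence forces $Lg\subseteq L H_i$, which fails whenever $g\notin L^{-1}L H_i$; and no other member of the new collection is guaranteed to contain $L\times\{y\}$. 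So (Lo) cannot simply be ``verified directly''---it can genuinely fail. (Keeping all the sets $U_{x_1},\dots,U_{x_n}$ without merging orbits rescues (Lo), but then two distinct shrunken windows inside one original member can cover the same point, and the multiplicity bound (Mu) is no longer inherited.)

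The repair is exactly the extra device in the paper's proof: the compactness argument should produce not only finitely many orbit representatives but also a finite set $B_0\Subset G$ recording which translates of them are actually used to cover $K$ (equivalently, for each retained $i$, the finite set of elements $g$ with $\Delta_g(U_{x_i})$ among your dropped sets). One then shrinks with the larger window, keeping the part of $U_{x_i}$ whose first coordinates lie in $L B_0 H_i$ rather than $L H_i$ (in the paper this is the step where the slices are kept precisely for $g\in B_0\cdot\setstab_\Delta(U_i)$ and deleted otherwise). With this change (Lo) survives, the first-coordinate projection is still contained in the finitely many cosets $L B_0\cdot H_i$, and the remainder of your argument---the stabilizer identity, (Eq), (St), and the multiplicity transfer---goes through essentially as you wrote it.
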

	\begin{proof}
	Let  $\Lo \Subset G$ be a given finite subset and let  $\Ko \Subset 
	X$ be compact. We may assume without loss of generality that $\grpid \in 
	\Lo$. 
	We apply Definition~\ref{def:BLR} to obtain a collection $\Auseco$ that 
	satisfies \Refcd{Lo}, \Refcd{Mu}, 
	\Refcd{Eq}, and \Refcd{St}. We need to modify $\Auseco$ to 
	obtain a new collection $\Auseco'$ that satisfies the conditions in 
	Definition~\ref{def:BLR} together with the above cofiniteness conditions. 
	
	To this end, let $\left\{ \Ause_i \right\}_{i \in I}$ be a set of representatives of the $G$-action on $\Auseco$. For each $i \in I$, decompose $\Ause_i$ into $\bigsqcup_{g \in G} \{g\} \times \Ause_{i,g}$ where $\Ause_{i,g} := \left\{ x \in X \colon (g, x) \in \Ause_i \right\}$ is an open subset of $X$ for any $g \in G$. 
	For any $i \in I$ and $g \in G$, consider the open set 
	\[
	\Lase_{i,g} = \bigcap_{h \in \Lo } \Ause_{i, h g} = \left\{ x \in X \colon \Lo g \times \{ x \} \subseteq \Ause_i \right\} \subseteq \Ause_{i, g} \; .
	\]
	It follows from \Refcd{Lo} that for any $x \in \Ko$, there exists	$i \in 
	I$ and $g \in G$ such that $\Lo \times  \{x\} \subseteq \Delta_g \left( 
	\Ause_i \right)$, or equivalently, 
	\[
	\Lo g \times  \left\{ \alpha_{g^{-1}} (x) \right\} = \Delta_{g^{-1}} \left( \Lo  \times  \{x\} \right) \subseteq \Ause_i \; ,
	\]
	or equivalently, $\alpha_{g^{-1}} (x) \in \Lase_{i,g}$. Hence we have
	\[
	\Ko \subseteq \bigcup_{i \in I} \bigcup_{g \in G} \alpha_{g} \left( \Lase_{i, g} \right) \; .
	\]
	Since $\Ko$ is compact, there exist finite subsets $I_0 \subseteq I$ and 
	$\Bo_0 \subseteq G$ such that 
	\[
	\Ko \subseteq \bigcup_{i \in I_0} \bigcup_{g \in \Bo_0} \alpha_{g} \left( \Lase_{i, g} \right) \; .
	\]
	For any $i \in I_0$ and for any $g \in G$, we define 
	\[
	\Lase_{i,g}' = 
	\begin{cases}
	\Lase_{i,g} \, , & g \in \Bo_0 \cdot \setstab_\Delta \left(\Ause_i \right) \\
	\varnothing	\, , & g \notin \Bo_0 \cdot \setstab_\Delta \left(\Ause_i \right)
	\end{cases}
	\quad \text{and} \quad 
	\Ause_{i,g}' = \bigcup_{h \in \Lo} \Lase_{i, h^{-1} g}'
	\; .
	\]
	Observe that 
	\[
	\Ause_{i,g}'  \subseteq  \bigcup_{h \in \Lo} \bigcap_{h' \in \Lo} \Ause_{i, h' h^{-1} g}	\subseteq \Ause_{i, g} \quad \text{for any } g \in G
	\]
	and $\Ause_{i,g}' = \varnothing$ if $g \notin \Lo \Bo_0$. For any $i \in 
	I_0$, set
	\[
	\Ause_{i}' = \bigsqcup_{g \in G} \{g\} \times \Ause_{i,g}'  \subseteq 
	\Ause_i \; .
	\]
	Notice that:
	\begin{itemize}
		\item if $g \in \setstab_\Delta \left(\Ause_i \right)$, then $\Delta_g \left( \Ause_{i}' \right) = \Ause_{i}'$ by our construction, and 
		\item if $g \notin \setstab_\Delta \left(\Ause_i \right)$, then $\Delta_g \left( \Ause_{i}' \right) \cap \Ause_{i}' \subseteq \Delta_g \left( \Ause_{i} \right) \cap \Ause_{i} = \varnothing$. 
	\end{itemize} 
	It follows that the collection $\Auseco' := \left\{ \Delta_g \left( \Ause_i' \right) \colon i \in I_0 \right\}$ 
	satisfies  \Refcd{Lo}, \Refcd{Mu}, \Refcd{Eq}, and \Refcd{St} together with the desired cofiniteness conditions. 
\end{proof}

The following equivalent characterization of equivariant asymptotic dimension 
is reminiscent of the definition of $\dimltc(\alpha)$ 
(Definition~\ref{def:ltc-dim} and the various equivalent formulations discussed 
in that section).

\def \Topic {\BLRlem}
\begin{Lemma} \label{lem:BLR-reformu} 	
	Let $d$ and $\mathcal{F}$ be as in Definition~\ref{def:BLR}. 
	Then 
	$\eqasdim(\alpha, \mathcal{F}) \leq d$ if and only if for any finite subset 
	$\Lo \Subset G$ and for any compact subset $\Ko \Subset X$, there exist 
	\begin{itemize}
		\item a collection $\Coseco$ of open sets in $X$, 
		\item subgroups $G_{\Cose} \in \mathcal{F}$ for $\Cose \in \Coseco$, and
		\item locally constant functions $\La_{\Cose} \colon \Cose \to G / G_{\Cose}$ for $\Cose \in \Coseco$, called \emph{labeling functions}, 
	\end{itemize}
	satisfying the following conditions: 
	{
		\begin{enumerate}
			\renewcommand{\labelenumi}{\textup{(\theenumi)}} \renewcommand{\theenumi}{Lo}\item \label{item:lem:BLR-reformu:Lo}
			For any $x \in \Ko$, there exists $\Cose \in \Coseco$ such that $\alpha_{\Lo} (x) \subseteq \Cose$.
			
			\renewcommand{\labelenumi}{\textup{(\theenumi)}} \renewcommand{\theenumi}{Mu}\item \label{item:lem:BLR-reformu:Mu}
			We have $\mult(\Coseco) \leq d+1$. 
			
			\renewcommand{\labelenumi}{\textup{(\theenumi)}} \renewcommand{\theenumi}{Eq}\item \label{item:lem:BLR-reformu:Eq}
			Each $\La_{\Cose}$ is \emph{$\Lo$-equivariant} in the following sense: for any $x \in \Cose$ and for any $g \in \Lo$, if $\alpha_{g} (x) \in \Cose$ then $\La_{\Cose} \left(\alpha_{g} (x) \right) = g \cdot \La_{\Cose}(x)$. 
			
		\end{enumerate}
	}
	Moreover, this statement still holds if the following condition is added to the above list: 
	There exists a finite subset $\Bo \Subset G$ such that 	
	{
		\begin{enumerate}
			\renewcommand{\labelenumi}{\textup{(\theenumi)}} \renewcommand{\theenumi}{Bo}\item \label{item:lem:BLR-reformu:Bo}
			for any $\Cose \in \Coseco$ and for any $x, y \in \Cose$, we have $\Ne_{\Cose}(x) \in \Bo \cdot  \Ne_{\Cose}(y) $. 	
		\end{enumerate}
	}
\end{Lemma}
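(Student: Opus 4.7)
The plan is to prove this lemma by constructing explicit translations between the $G$-invariant open cover $\Auseco$ of $G \times X$ appearing in Definition~\ref{def:BLR} and the labeled cover $(\Coseco, \{G_\Cose\}_{\Cose \in \Coseco}, \{\La_\Cose\}_{\Cose \in \Coseco})$ of $X$ appearing in the lemma statement. I shall prove both directions independently.

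For the backward direction, given such a labeled cover, the plan is to assemble $\Auseco$ as the $G$-orbit under $\Delta$ of the collection $\{\Ause_\Cose \colon \Cose \in \Coseco\}$, where
\[
	\Ause_\Cose := \left\{ (g, x) \in G \times X : \alpha_g (x) \in \Cose \text{ and } \La_\Cose(\alpha_g(x)) = g G_\Cose \right\} .
\]
Using the equivariance of $\La_\Cose$ from condition~\eqref{item:lem:BLR-reformu:Eq}, one checks that the setwise stabilizer of $\Ause_\Cose$ is precisely $G_\Cose$, which lies in $\mathcal{F}$, thereby securing~\Refcd[\BLRdef]{St}. Since the coset label determines the only translate $\Delta_{g_0}(\Ause_\Cose)$ containing a given point, distinct translates are disjoint, yielding~\Refcd[\BLRdef]{Eq}. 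The multiplicity bound~\Refcd[\BLRdef]{Mu} at a point $(g,x)$ is controlled by the multiplicity of $\Coseco$ at $\alpha_g(x)$, which is at most $d+1$. For~\Refcd[\BLRdef]{Lo}, given $x \in \Ko$, choose $\Cose$ with $\alpha_\Lo(x) \subseteq \Cose$ (from~\eqref{item:lem:BLR-reformu:Lo}) and pick $g_0 \in G$ with $g_0 G_\Cose = \La_\Cose(x)$. Using~\eqref{item:lem:BLR-reformu:Eq}, one verifies $\Lo \times \{x\} \subseteq \Delta_{g_0}(\Ause_\Cose)$.

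For the forward direction, the plan is to first apply Lemma~\ref{lemma:BLR-cofinite} to assume that $\Auseco$ is $G$-cofinite. Choose orbit representatives $\{\Ause_i\}_{i \in I}$ of $\Auseco$ with stabilizers $H_i = \setstab_\Delta(\Ause_i) \in \mathcal{F}$, so that each orbit is parametrized by $G/H_i$ via $gH_i \mapsto \Delta_g(\Ause_i)$. Define
\[
	\Cose_i := \{ y \in X : (e, y) \in \Delta_g(\Ause_i) \text{ for some } g \in G \}
\]
and $\La_i \colon \Cose_i \to G/H_i$ by $\La_i(y) = gH_i$ whenever $(e, y) \in \Delta_g(\Ause_i)$. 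Well-definedness and local constancy of $\La_i$ follow directly from~\Refcd[\BLRdef]{Eq}: if $(e,y)$ lies in two translates, they intersect, hence are equal, so the cosets coincide. The multiplicity bound~\eqref{item:lem:BLR-reformu:Mu} follows because distinct orbits contribute distinct elements of $\Auseco$ containing $(e, y)$. Condition~\eqref{item:lem:BLR-reformu:Lo} is obtained by applying $\Delta_h$ for each $h \in \Lo$ to the inclusion $\Lo \times \{x\} \subseteq \Delta_{g_0}(\Ause_i)$ from~\Refcd[\BLRdef]{Lo}, which lands $(e, \alpha_h(x))$ inside $\Delta_{hg_0}(\Ause_i)$. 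The bounded condition~\eqref{item:lem:BLR-reformu:Bo} is extracted from the finiteness of the set $\Fi_\Ause$ in Lemma~\ref{lemma:BLR-cofinite}, which controls the cosets of $H_i$ that can appear as labels across a single $\Cose_i$.

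The main obstacle will be verifying the equivariance condition~\eqref{item:lem:BLR-reformu:Eq} for $\La_i$ in the forward direction. For $y \in \Cose_i$ with $\La_i(y) = g H_i$ and $h \in \Lo$ satisfying $\alpha_h(y) \in \Cose_i$, one must show $\La_i(\alpha_h(y)) = hgH_i$. The natural calculation, applying $\Delta_h$ to $(e, y) \in \Delta_g(\Ause_i)$, yields $(h^{-1}, \alpha_h(y)) \in \Delta_{hg}(\Ause_i)$ rather than the required $(e, \alpha_h(y)) \in \Delta_{hg}(\Ause_i)$; one must combine this with the existence of some valid $g'$ for $\alpha_h(y)$ and apply~\Refcd[\BLRdef]{Eq} carefully, using that distinct $\Delta$-translates of $\Ause_i$ with nonempty intersection must coincide. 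This is where the closure of $\mathcal{F}$ under conjugation plays an essential role, as the conjugated subgroup $g H_i g^{-1}$ remains in the family and governs how labels shift under the action.
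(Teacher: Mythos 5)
Your backward (``if'') direction is essentially identical to the paper's: the sets $\{(g,x)\colon \alpha_g(x)\in U,\ \Lambda_U(\alpha_g(x))=gG_U\}$ and their $\Delta$-translates are exactly the cover used there, and the verifications of (Lo), (Mu), (Eq), (St) match. The genuine gap is in the forward direction. Your pieces $C_i=\{y\colon (e,y)\in\Delta_g(A_i)\ \text{for some } g\}$ with label $gH_i$ are the \emph{unshrunken} slices, and for these the equivariance condition of the lemma simply fails in general; it is not a matter of ``applying (Eq) of Definition~\ref{def:BLR} carefully'', nor does conjugation-closure of $\mathcal{F}$ help. Knowing $(e,y)\in\Delta_g(A_i)$ and $(e,\alpha_h(y))\in\Delta_{g'}(A_i)$ forces nothing like $g'H_i=hgH_i$, because $\Delta_{hg}(A_i)$ need not contain $(e,\alpha_h(y))$ at all: the slice $A_{i,hg}=\{x\colon(hg,x)\in A_i\}$ may miss $\alpha_{g^{-1}}(y)$. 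Concretely, let $G=\Z/4$ act trivially on a one-point space $X$, let $\mathcal{F}$ be all subgroups, $L=\{1\}$, and let the collection consist of $G\times X$ together with the two translates $\{0,2\}\times X$ and $\{1,3\}\times X$. This satisfies every condition of Definition~\ref{def:BLR} (multiplicity $2$, stabilizers $G$ and $\{0,2\}$, all in $\mathcal{F}$), yet your construction applied to the representative $\{0,2\}\times X$ outputs $C=X$, $G_C=\{0,2\}$, $\Lambda_C(\mathrm{pt})=G_C$, and equivariance for $1\in L$ would force $1\cdot\Lambda_C(\mathrm{pt})=\Lambda_C(\mathrm{pt})$, i.e.\ $1\in\{0,2\}$, which is false. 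Since $G_C$ is the set stabilizer, no relabeling can fix this; so the step ``verify (Eq)'' that you flag as the main obstacle is not merely unverified, it is false for your construction.

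What the paper does instead is to invoke Definition~\ref{def:BLR} for the \emph{enlarged} finite set $(L\cup\{e\})\cdot L$ (after cofinitization via Lemma~\ref{lemma:BLR-cofinite}) and then shrink each slice before assembling: with $A_{i,g}=\{x\colon(g,x)\in A_i\}$, one sets $\widehat A_{i,g}=\bigcap_{g'\in(L\cup\{e\})g}A_{i,g'}$ and defines $C_i=\bigsqcup_{[g]\in G/H_i}\alpha_g(\widehat A_{i,g})$, with label $[g]$ on $\alpha_g(\widehat A_{i,g})$. The shrinking is exactly what rescues equivariance: if $x\in\alpha_{g'}(\widehat A_{i,g'})$ and $g\in L$ with $\alpha_g(x)\in C_i$, then $gg'\in(L\cup\{e\})g'$ gives $\alpha_g(x)\in\alpha_{gg'}(A_{i,gg'})$ automatically, and disjointness of the pieces over distinct cosets pins the label of $\alpha_g(x)$ to $[gg']=g\cdot[g']$; enlarging $L$ in the hypothesis is what guarantees that condition (Lo) survives the shrinking. (In the example above, the shrunken slices of the bad member are empty, so it contributes nothing.) Your extraction of (Bo) from the finite sets $F_{A}$ of Lemma~\ref{lemma:BLR-cofinite} is the right idea, but it too must be run for the shrunken pieces, as in the paper, with $\Bo[BLR]$ taken to be $\bigcup_i F_i F_i^{-1}$.
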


Here the labels (Lo), (Mu), (Eq), and (Bo) stand for, respectively, ``long'', ``multiplicity'', ``equivariant'', and ``bounded''. 
We will also write 
\Refcd{Lo}, \Refcd{Mu}, \Refcd{Eq}, and \Refcd{Bo} 
to specify the parameters used in these conditions and the fact that they come from Definition~\ref{\Topic}.

{
	\newcommand{\Ausehat}{{\widehat{\Ause}}}
\begin{proof}
	It suffices to prove the ``if'' direction without condition~\eqref{item:lem:BLR-reformu:Bo} and then prove the ``only if'' direction with condition~\eqref{item:lem:BLR-reformu:Bo} added. 
	
	To prove the ``if'' direction, we fix an arbitrary finite subset $\Lo 
	\Subset G$ and an arbitrary compact subset $\Ko \Subset X$. By our 
	assumption, there exist a collection $\Coseco$ of open sets in $X$ as well 
	as subgroups $G_{\Cose} \in \mathcal{F}$ and locally constant functions 
	$\La_{\Cose} \colon \Cose \to G / G_{\Cose}$ for $\Cose \in \Coseco$ 
	satisfying the conditions~\eqref{item:lem:BLR-reformu:Lo}, 
	\eqref{item:lem:BLR-reformu:Mu}, and~\eqref{item:lem:BLR-reformu:Eq} above. 
	For each $\Cose \in \Coseco$, define
	\[
		\Ause_{\Cose} = \left\{ (g, x) \in G \times X \colon \alpha_g (x) \in  
		{\La_{\Cose}}^{-1} \left( g \cdot G_{\Cose} \right) \right\} \; .
	\]
	Notice that $\Ause_{\Cose}$ is an open subset in $G \times X$, as 
	$\La_{\Cose}$ is locally constant. Now define $\Auseco = \left\{ \Delta_g 
	\left( \Ause_{\Cose} \right) \colon \Cose \in \Coseco , g \in G \right\}$. 
	Observe that for any $\Cose \in \Coseco$ and for any $g \in G$, we have
	\[
		\Delta_g \left( \Ause_{\Cose} \right) = \left\{ (g', x) \in G \times X 
		\colon \alpha_{g'} (x) \in  {\La_{\Cose}}^{-1} \left( g' g \cdot 
		G_{\Cose} \right) \right\} \; .
	\]
	Therefore:
	\begin{itemize}
		\item if $g \in G_{\Cose}$, then $\Delta_g \left( \Ause_{\Cose} \right) = \Ause_{\Cose}$, and 
		\item if $g \notin G_{\Cose}$, then $\Delta_g \left( \Ause_{\Cose} \right) \cap \Ause_{\Cose} = \varnothing$ as $g' \cdot G_{\Cose} \not= g'g \cdot G_{\Cose}$ for any $g' \in G$. 
	\end{itemize}
	It follows that $\Auseco$ satisfies the conditions~\eqref{item:def:BLR:Eq} 
	and \eqref{item:def:BLR:St} of Definition~\ref{def:BLR}. For any $x \in K$, 
	by our assumption, there exists $\Cose \in \Coseco$ such that $\alpha_{\Lo} 
	(x) \subseteq \Cose$ and $\La_{\Cose} \left( \alpha_{g} (x) \right) = g 
	\cdot \La_{\Cose} (x) $ for any $g \in \Lo$. This implies that $\Lo 
	\times  \{x\} \subseteq \Delta_{g_0} \left( \Ause_{\Cose} \right)$ for any 
	$g_0 \in G$ with $g_0 \cdot G_{\Cose} = \La_{\Cose} (x)$, which shows 
	condition~\eqref{item:def:BLR:Lo} of Definition~\ref{def:BLR}. Lastly, for 
	any $(g, x) \in G \times X$, we have 
	\begin{align*}
		&\ \left| \left\{ \Delta_{g'} \left( \Ause_{\Cose} \right) \colon \Cose \in \Coseco, g' \in G,  (g,x) \in \Delta_{g'} \left( \Ause_{\Cose} \right) \right\} \right| \\
		=&\ \left| \left\{ \Cose \in \Coseco \colon \text{there exists } g' \in 
		G \text{ such that } \alpha_{g} (x) \in  {\La_{\Cose}}^{-1} \left( g g' 
		\cdot G_{\Cose} \right)  \right\} \right| \\
		=&\ \left| \left\{ \Cose \in \Coseco \colon \alpha_{g} (x) \in \Cose \right\} \right| \\
		\leq &\ \mult\left( \Coseco \right) \leq d+1 \, .
	\end{align*}
	This establishes condition~\eqref{item:def:BLR:Mu} of 
	Definition~\ref{def:BLR}. Thus, $\eqasdim(\alpha, \mathcal{F}) \leq d$. 
	
	To prove the ``only if'' direction with 
	condition~\eqref{item:lem:BLR-reformu:Bo} added, we again fix an arbitrary 
	finite subset $\Lo \Subset G$ and an arbitrary compact subset $\Ko \Subset 
	X$. By our assumption, there exists a collection $\Auseco$ of open sets in 
	$G 
	\times X$ satisfying the conditions~\Refc[\BLRdef]{Lo}{(\Lo \cup \{\grpid\} 
	) \cdot \Lo,\Ko},
	\Refc[\BLRdef]{Mu}{d},
	\Refc[\BLRdef]{Eq}{(\Lo \cup \{\grpid\} ) \cdot \Lo},
	and~\Refc[\BLRdef]{St}{\mathcal{F}}.
	By Lemma~\ref{lemma:BLR-cofinite}, 
	we may assume that there exists a finite collection $\left\{ \Ause_i 
	\right\}_{i \in 
	I}$ of representatives of the $G$-action on $\Auseco$, and for each $i \in 
	I$, we have open subsets $\Ause_{i,g} \subseteq X$ for any $g \in G$ and  a 
	finite subset $\Fi_i \Subset G$ which give rise to a decomposition
	\[
		\Ause_i = \bigsqcup_{g \in G} \{g\} \times \Ause_{i,g} = \bigsqcup_{g 
		\in \Fi_i \cdot \setstab_\Delta \left(\Ause_i \right)} \{g\} \times 
		\Ause_{i,g}
		\, .
	\]

	Next, observe that, by our assumption, 
	for any $g , g' \in G$, we have
	\begin{itemize}
		\item $\alpha_{g} \left( \Ause_{i,g} \right) = \alpha_{g'} \left( \Ause_{i,g'} \right) $ if $g^{-1} g' \in \setstab_\Delta(\Ause_i)$, and 
		\item $\alpha_{g} \left( \Ause_{i,g} \right) \cap \alpha_{g'} \left( \Ause_{i,g'} \right) = \varnothing$ if $g^{-1} g'  \notin \setstab_\Delta(\Ause_i)$. 
	\end{itemize}
	For each $i \in I$, define
	\[
		\Ausehat_{i,g} = \bigcap_{g' \in (\Lo \cup \{\grpid\} ) \cdot g} 
		\Ause_{i,g'} \subseteq \Ause_{i,g} \, .
	\]
	Notice that if $g,h \in G$ satisfy $h^{-1}g \in \setstab_\Delta(\Ause_i)$ 
	then 
	$\alpha_{g} \left( \Ausehat_{i,g} \right) = \alpha_{h} \left( 
	\Ausehat_{i,h} \right)$, so the open set $\alpha_{g} \left( \Ausehat_{i,g} 
	\right)$ depends only on the equivalence class of $g$ in $ G / 
	\setstab_\Delta(\Ause_i)$. We can thus define
	\[
		\Cose_i = \bigsqcup_{[g] \in G / \setstab_\Delta(\Ause_i)} \alpha_{g} 
		\left( \Ausehat_{i,g} \right) \; .
	\]
	Now, define  
	\[ 
	G_{\Cose_i} = \setstab_\Delta(\Ause_i) \in \mathcal{F} \; . 
	\]
	 Finally, define locally constant functions $\La_{\Cose_i} \colon \Cose_i 
	 \to G / G_{\Cose_i}$ which take the value $g \cdot G_{\Cose_i}$ on each 
	 subset 
	 $\alpha_{g} \left( \Ausehat_{i,g} \right)$ (noting that by the above, 
	 this function is indeed well defined). We claim that the collection 
	 $\Coseco = \left\{ \Cose_i \colon i \in I \right\}$, together with the 
	 subgroups $G_{\Cose_i}$ and the functions $\La_{\Cose_i}$, for $i \in I$, 
	 satisfies the conditions in Lemma~\ref{lem:BLR-reformu}. To this end, in 
	 order to prove condition~\eqref{item:lem:BLR-reformu:Lo}, we see that for 
	 any $x \in \Ko$, by our assumption, there exist $i \in I$ and $g \in G$ 
	 such 
	 that $\left( (\Lo \cup \{\grpid\} ) \cdot \Lo \right) \times \{x\} \subset 
	 \Delta_g \left( \Ause_i \right)$, that is, 
 	$\alpha_{g^{-1}} (x) \in \Ause_{i,g'}$ for any $g' \in (\Lo \cup \{\grpid\} 
 	) \cdot \Lo \cdot g$. This implies that $\alpha_{g^{-1}} (x) \in 
 	\Ausehat_{i,g'}$ for any $g' \in \Lo \cdot g$. Hence, for any $g'' \in 
 	\Lo$, we have $\alpha_{g''} (x) \in \alpha_{g'' g} \left( \Ausehat_{i,g'' 
 	g} \right) \subseteq \Cose_i$, as desired. 
	Condition~\eqref{item:lem:BLR-reformu:Mu} in 
	Lemma~\ref{lem:BLR-reformu} follow from its counterpart in 
	Definition~\ref{def:BLR} by a computation similar to the one given in the 
	proof of the other direction. To prove 
	condition~\eqref{item:lem:BLR-reformu:Eq}, we observe that for any $i \in 
	I$, for any $x \in \Cose_i$ and for any $g \in \Lo$ satisfying $\alpha_{g} 
	(x) \in \Cose_i$, if we pick a representative $g' \in G$ for the class 
	$\La_{\Cose_i}(x)$, then, by our 
	construction,
	\[
		x \in \alpha_{g'} \left( \Ausehat_{i,g'} \right) = \bigcap_{g'' \in 
		(\Lo \cup \{\grpid\} )  \cdot  g'} \alpha_{g'} \left( \Ause_{i,g''} 
		\right) \; .
	\]
	Therefore,
	\[
		\alpha_{g} (x) \in \bigcap_{g'' \in (\Lo \cup \{\grpid\} )  \cdot g'} 
		\alpha_{g g'} \left( \Ause_{i,g''} \right) \subseteq \alpha_{g g'} 
		\left( \Ause_{i,gg'} \right) \; .
	\]
	Because our construction implies $\Cose_i \cap \alpha_{g g'} 
	\left( \Ause_{i,gg'} \right) = \alpha_{g g'} \left( \Ausehat_{i,gg'} 
	\right)$, we obtain $\La_{\Cose_i} \left(\alpha_{g} (x) \right) = [g g'] = 
	g \cdot 
	\La_{\Cose_i}(x)$, as required. Finally, 
	condition~\eqref{item:lem:BLR-reformu:Bo} obviously 
	holds with the finite set
	\[
		\Bo = \bigcup_{i \in I} \Fi_i \Fi_i ^{-1} \Subset G \; .
	\]
	Therefore the collection $\Coseco$, together with the subgroups $G_{\Cose_i}$ and the functions $\La_{\Cose_i}$, for $i \in I$, satisfies the conditions (including \eqref{item:lem:BLR-reformu:Bo}) in Lemma~\ref{lem:BLR-reformu}. 
\end{proof}
}

This characterization yields the following relation between equivariant asymptotic dimension and long thin covering dimension. 

\begin{Thm}
	Let $\mathcal{F}$ be a family of subgroups of $G$ closed under conjugation and taking subgroups. Then we have the following dichotomy: 
	\begin{enumerate}
		\item if $\mathcal{F}$ contains all finitely generated subgroups of all point stabilizers of the action $\alpha$, then $\eqasdim (\alpha, \mathcal{F})  \leq \dimltc (\alpha)$; 
		\item otherwise $\eqasdim (\alpha, \mathcal{F}) = \infty $. 
	\end{enumerate}
\end{Thm}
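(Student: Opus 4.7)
The plan is to handle the two clauses separately, with the easy direction being part (2) by a direct contradiction and part (1) requiring a construction via the characterization in Lemma~\ref{lem:BLR-reformu}.

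For part (2), assume there is a finitely generated subgroup $H$ of a point stabilizer $G_x$ with $H \notin \mathcal{F}$, and choose a finite set $\Lo$ generating $H$. If $\eqasdim(\alpha, \mathcal{F})$ were finite, Lemma~\ref{lem:BLR-reformu} applied to $\Lo$ and $\Ko = \{x\}$ would produce some $\Cose$ in the cover with $\alpha_{\Lo}(x) \subseteq \Cose$, a subgroup $G_{\Cose} \in \mathcal{F}$, and a labeling function $\La_{\Cose} \colon \Cose \to G/G_{\Cose}$ that is $\Lo$-equivariant. Since $\Lo \subseteq G_x$, we have $\alpha_{\Lo}(x) = \{x\}$, so evaluating the equivariance relation $\La_{\Cose}(\alpha_g(x)) = g \cdot \La_{\Cose}(x)$ at each $g \in \Lo$ forces $g_0^{-1} \Lo g_0 \subseteq G_{\Cose}$ for any representative $g_0$ of $\La_{\Cose}(x)$, hence $g_0^{-1} H g_0 \subseteq G_{\Cose}$. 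Closure of $\mathcal{F}$ under subgroups and conjugation then yields $H \in \mathcal{F}$, a contradiction.

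For part (1), assume $\dimltc(\alpha) \leq d$. To verify $\eqasdim(\alpha, \mathcal{F}) \leq d$, I fix $\Lo \Subset G$ and $\Ko \Subset X$ and invoke Proposition~\ref{prop:ltc-dim-new}\eqref{item:prop:ltc-dim-new:connected} (with a slightly enlarged $\Lo$, so that conditions \Refcd[\LTCextra]{Co} and \Refcd[\LTCdef]{Eq} both hold with respect to the original $\Lo$) to obtain a collection $\Coseco$ of open sets and near orbit selection functions $\Ne_{\Cose}$ satisfying \Refcd[\LTCdef]{Lo}, \Refcd[\LTCdef]{Mu}, \Refcd[\LTCdef]{Eq}, \Refcd[\LTCdef]{Ca}, and \Refcd[\LTCextra]{Co}. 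For each $\Cose \in \Coseco$, the set $\Ne_{\Cose}(\Cose)$ is $\Lo$-connected and finite, so it lies in a single orbit; pick a basepoint $y_{\Cose} \in \Ne_{\Cose}(\Cose)$ and for each connected component $C$ of $\Cose$ (where $\Ne_{\Cose}$ is constant with value $y_C$) choose $g_C \in G$ with $\alpha_{g_C}(y_{\Cose}) = y_C$. Now define the subgroup
\[
    G_{\Cose} = \left\langle \left\{ g_{C'}^{-1} g g_C : g \in \Lo,\ C,C' \text{ components of } \Cose \text{ with } g_{C'}^{-1} g g_C \in G_{y_{\Cose}} \right\} \right\rangle ;
\]
this is a finitely generated subgroup of the stabilizer $G_{y_{\Cose}}$, hence lies in $\mathcal{F}$ by the assumption on $\mathcal{F}$. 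Finally set $\La_{\Cose}(x) = g_C G_{\Cose}$ whenever $x$ lies in component $C$.

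It remains to verify the conditions of Lemma~\ref{lem:BLR-reformu}: condition \Refcd[\BLRlem]{Lo} is inherited directly from \Refcd[\LTCdef]{Lo}, condition \Refcd[\BLRlem]{Mu} is the same cover, and condition \Refcd[\BLRlem]{Eq} follows from a short computation that combines the $\Lo$-equivariance of $\Ne_{\Cose}$ (which gives $g_{C'}^{-1} g g_C \in G_{y_{\Cose}}$ whenever $x \in C$ and $\alpha_g(x) \in C' \subseteq \Cose$) with the fact that $G_{\Cose}$ was defined to contain exactly such elements. The main conceptual point, and the only real subtlety, is the construction of $G_{\Cose}$: one must use the finiteness of $\Lo$ and of the partial orbit $\Ne_{\Cose}(\Cose)$ to ensure that only finitely many relations need to be forced into $G_{\Cose}$, so that $G_{\Cose}$ remains finitely generated and thus belongs to $\mathcal{F}$; this is the step where the hypothesis on $\mathcal{F}$ is used in an essential way.
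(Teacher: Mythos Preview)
Your argument is essentially correct and follows the same strategy as the paper: for part~(2) you argue exactly as the paper does, and for part~(1) you use the characterization in Lemma~\ref{lem:BLR-reformu} by manufacturing a labeling function out of the near orbit selection function $\Ne_{\Cose}$, with $G_{\Cose}$ chosen as a finitely generated subgroup of the stabilizer of a basepoint.

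There is one imprecision you should fix. You index the choices $g_C$ and the generating set of $G_{\Cose}$ by \emph{connected components} $C$ of $\Cose$. But $\Cose$ may have infinitely many connected components (even uncountably many), so as written your generating set need not be finite, and a function that is merely constant on connected components need not be locally constant. The fix is to work with the \emph{fibers} $\Ne_{\Cose}^{-1}(y)$ instead: since $\Ne_{\Cose}(\Cose)$ is finite by \Refcd[\LTCdef]{Ca}, there are only finitely many fibers, so you may choose one $g_y$ for each $y \in \Ne_{\Cose}(\Cose)$ and set $\La_{\Cose}(x) = g_{\Ne_{\Cose}(x)} \cdot G_{\Cose}$. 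Then $\La_{\Cose}$ factors through $\Ne_{\Cose}$ and is automatically locally constant, and your generating set becomes $\{ g_{y'}^{-1} g\, g_y : g \in \Lo,\ y,y' \in \Ne_{\Cose}(\Cose),\ \alpha_g(y)=y' \}$, which is visibly finite.

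The paper achieves the same effect slightly differently: it invokes the formulation \eqref{item:prop:ltc-dim-new:bounded} of Proposition~\ref{prop:ltc-dim-new} (with the condition \Refcd[\LTCextra]{Bo}) rather than \eqref{item:prop:ltc-dim-new:connected}, fixes $x_0 \in \Ne_{\Cose}(\Cose)$, takes $G_{\Cose} = \langle G_{x_0} \cap (\Bo^{-1}(\Lo \cup \{e\})\Bo) \rangle$, defines an $\Lo$-equivariant map $f_{\Cose} \colon \alpha_{\Bo}(x_0) \to G/G_{\Cose}$ by $\alpha_g(x_0) \mapsto g G_{\Cose}$ for $g \in \Bo$, and sets $\La_{\Cose} = f_{\Cose} \circ \Ne_{\Cose}$. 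This is a bit cleaner because well-definedness and equivariance of $f_{\Cose}$ are immediate from the shape of the generating set, and local constancy of $\La_{\Cose}$ is automatic from the composition. Your route via \Refcd[\LTCextra]{Co} and explicit choices $g_y$ amounts to the same thing once the component/fiber issue is corrected.
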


\begin{proof}
	In the first case where $\mathcal{F}$ contains all finitely generated 
	subgroups of all point stabilizers of $\alpha$, it suffices to prove the 
	inequality when $\dimltc (\alpha) = d < \infty$ for some nonnegative 
	integer $d$. This follows directly from observing that, given any finite 
	subset $\Lo \Subset G$ and any compact subset $\Ko \Subset X$, the 
	conditions in Definition~\ref{def:ltc-dim}, in the equivalent formulation 
	given in 
	Proposition~\ref{prop:ltc-dim-new}(\ref{item:prop:ltc-dim-new:bounded}),   
	with $\Thseco = \{X\}$, 
	implies those in Lemma~\ref{lem:BLR-reformu} by 
	converting the near orbit selection functions of Definition~\ref{def:ltc-dim} into the labeling functions of Lemma~\ref{lem:BLR-reformu} in the following way: 
	given a near orbit selection function $\Ne_{\Cose} \colon \Cose \to X$ as 
	specified in Definition~\ref{def:ltc-dim} and fixing a point $x_0 \in 
	\Ne_{\Cose} (\Cose)$, it follows 
	that $\Ne_{\Cose} (\Cose) \subseteq \alpha_{\Bo} \left( x_0 \right)$. By 
	our assumption, the finitely generated group $G_{\Cose} := \left\langle 
	G_{x_0} \cap \left(\Bo^{-1} \left( \Lo \cup \{\grpid\} \right) \Bo\right) 
	\right\rangle$ is in $\mathcal{F}$. Also observe that the map 
	\[
		f_{\Cose} \colon \alpha_{\Bo} \left( x_0 \right) \to G / G_{\Cose} 
	\]
	given by
	\[
	f_{\Cose} ( \alpha_{g} \left( x_0 \right) ) =  g \cdot G_{\Cose} \quad 
	\text{for any } g \in \Bo
	\]
	is well-defined and $\Lo$-equivariant by our construction of $G_{\Cose}$. Hence the composition $f_{\Cose} \circ \Ne_{\Cose} \colon \Cose \to G / G_{\Cose}$ is a labeling function satisfying condition~\eqref{item:lem:BLR-reformu:Eq} of Lemma~\ref{lem:BLR-reformu}.

	In the second case, let $x \in X$ be a point whose point stabilizer $G_x$ 
	contains a finitely generated subgroup that is not in $\mathcal{F}$. Let 
	$\Lo$ be a finite generating set of that group that includes $\grpid$ and 
	let $\Ko = \{x\}$. Observe that for any subgroup $H$ of $G$, if there 
	exists an $\Lo$-equivariant function $f \colon \{x\} \to G/H$, then 
	$\langle \Lo \rangle \leq g H g^{-1}$, where $g \cdot H = f(x)$, and thus 
	$H \notin \mathcal{F}$. This precludes any collection $\Coseco$ of open 
	sets of $X$ that satisfy both conditions~\eqref{item:lem:BLR-reformu:Lo} 
	and~\eqref{item:lem:BLR-reformu:Eq} of Lemma~\ref{lem:BLR-reformu}. 
	Therefore $\eqasdim (\alpha, \mathcal{F}) = \infty $. 
\end{proof}

\begin{Rmk}
	Although we will not need this fact, we point out that using the equivalent characterization in Lemma~\ref{lem:BLR-reformu}, one can establish the equation 
	\[
		\eqasdim(\alpha, \mathcal{F}) = \eqasdim(\alpha, \mathcal{F}_{\mathrm{f.g.}}) \; ,
	\]
	where $\mathcal{F}_{\mathrm{f.g.}}$ consists of all the finitely generated groups in $\mathcal{F}$. 
	
	To see this, we first apply a compactness argument (as in Section~\ref{sec:LTC}) to shrink members of $\Coseco$ so that each $\Ne_{\Cose} (\Cose)$ is finite while \Refcd{Lo} is still satisfied (\Refcd{Mu} and \Refcd{Eq} are automatically satisfied). 
	If we let $H_{\Cose}$ be the finitely generated subgroup $\left\langle G_{\Cose} \cap \left( \left(\Lo \cdot \Ne_{\Cose} (\Cose)  \right)^{-1} \cdot \left(\Lo \cdot \Ne_{\Cose} (\Cose) \right) \right) \right\rangle$, then by Lemma~\ref{Lemma:injective-quotient} below, the quotient map $G / H_{\Cose} \to G / G_{\Cose}$ maps $\Lo \cdot \Ne_{\Cose} (\Cose) \cdot H_{\Cose}  / H_{\Cose}$ injective onto $\Lo \cdot \Ne_{\Cose} (\Cose) \cdot G_{\Cose}  / G_{\Cose}$, which allows us to define an inverse map from $\Lo \cdot \Ne_{\Cose} (\Cose) \cdot G_{\Cose}  / G_{\Cose}$ to $\Lo \cdot \Ne_{\Cose} (\Cose) \cdot H_{\Cose}  / H_{\Cose}$ that is $\Lo$-equivariant on $\Ne_{\Cose} (\Cose) \cdot G_{\Cose}  / G_{\Cose}$, and thus we may replace $G_{\Cose}$ by $H_{\Cose}$ and $\Ne_{\Cose}$ by its composition with this inverse map, so that \Refcd{Eq} still holds. 
\end{Rmk}

The following technical lemma is needed for the proof of Theorem 
\ref{thm:relative-bound-ltc} (the main result of this section), in order to construct refinements of covers. 
The reader is encouraged to jump ahead and read Theorem 
\ref{thm:relative-bound-ltc} as well as the informal discussion thereafter, before coming back to this lemma. 

\begin{Lemma} \label{lem:relative-bound-ltc-inductive}
	Let $d$ be a natural number, let $H$ be a subgroup of $G$, 
	let $\Ko \Subset X$ be a compact subset and let $\Lo, \Fi \Subset G$ be 
	finite subsets. For $g \in G$ and any subgroup $H < G$, we write  $H^{g} = 
	gH g^{-1}$.
	Assume $\dimltc \left( X, {\alpha_{|_{H^{g}}}} \right) \leq d$ for any $g 
	\in \Fi$. 
	Then there exists a 
	natural number $\Binu$ 
	such that the following holds:
	
	For 
	any compact subset $\Ko' \Subset \Ko$, for
	any finite open cover $\Thseco$ of $\Ko'$ in $X$,  
	and for any $\Lo$-equivariant (in the sense of 
	\eqref{item:lem:BLR-reformu:Eq} of Lemma~\ref{lem:BLR-reformu}) locally 
	constant function $\widetilde{\Ne} \colon \bigcup \Thseco \to (\Fi H)/H 
	\subseteq G/H$,  
	there exist 
	\begin{itemize}
		\item a finite collection $\Coseco$ of open subsets of $X$, and
		\item locally constant functions $\Ne_{\Cose} \colon \Cose \to \bigcup \Thseco$ for $\Cose \in \Coseco$, 
	\end{itemize}
	satisfying the conditions~\Refc[\LTCdef]{Lo}{\{e\},\Ko'}, \Refcd[\LTCextra]{Muplus}, \Refcd[\LTCdef]{Eq}, \Refcd[\LTCdef]{Th}, 
	\Refcd[\LTCdef]{Ca}, 
	and $\widetilde{\Ne} \circ \Ne_{\Cose} = \widetilde{\Ne}$ for any $\Cose \in \Coseco$. 
\end{Lemma}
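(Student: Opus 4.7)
The plan is to partition $\Ko'$ according to the label function $\widetilde{\Ne}$ and apply the hypothesis $\dimltc(\alpha_{|_{H^g}}) \leq d$ separately on the preimage of each label. First, fix a set of representatives $\Fi_0 \subseteq \Fi$, one per coset in $(\Fi H)/H$. For each $\hat{g} \in \Fi_0$, define the finite symmetric subset $\Lo_{\hat{g}} := (\Lo \cup \{e\} \cup \Lo^{-1})^2 \cap H^{\hat{g}}$ of $H^{\hat{g}}$; the passage to squares will be needed in the multiplicity verification below. By the hypothesis and Proposition~\ref{prop:ltc-dim-new}\eqref{item:prop:ltc-dim-new:cover-Ko} applied to $\alpha_{|_{H^{\hat{g}}}}$ with the parameters $\Lo_{\hat{g}}$ and $\Ko$, obtain a natural number $\Binu_{\hat{g}}$. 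Set $\Binu := \max_{\hat{g} \in \Fi_0} \Binu_{\hat{g}}$, which depends only on $\Lo, \Ko, \Fi, H$ as required.

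Given the input triple $(\Ko', \Thseco, \widetilde{\Ne})$, for each $\hat{g} \in \Fi_0$ set $Y_{\hat{g}} := \widetilde{\Ne}^{-1}(\hat{g}H)$ (clopen in $\bigcup \Thseco$ since $\widetilde{\Ne}$ is locally constant with finite image) and $\Ko'_{\hat{g}} := \Ko' \cap Y_{\hat{g}}$ (compact). The collection $\Thseco_{\hat{g}} := \{\Thse \cap Y_{\hat{g}} \colon \Thse \in \Thseco\}$ is an open cover of $\Ko'_{\hat{g}}$. I apply Proposition~\ref{prop:ltc-dim-new}\eqref{item:prop:ltc-dim-new:cover-Ko} to $\alpha_{|_{H^{\hat{g}}}}$ with compact subset $\Ko'_{\hat{g}}$ and cover $\Thseco_{\hat{g}}$ to obtain a finite collection $\Coseco_{\hat{g}}$ of open subsets of $X$ together with near orbit selection functions $\Ne_{\Cose}$. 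Each $\Cose \in \Coseco_{\hat{g}}$ is contained in some $\Thse \cap Y_{\hat{g}} \subseteq Y_{\hat{g}}$, and the thinness condition ensures $\Ne_\Cose(\Cose) \subseteq Y_{\hat{g}} \subseteq \bigcup \Thseco$. I then set $\Coseco := \bigcup_{\hat{g} \in \Fi_0} \Coseco_{\hat{g}}$.

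Most verifications are immediate. The covering of $\Ko'$ follows from $\Ko' = \bigsqcup_{\hat{g}} \Ko'_{\hat{g}}$; the thinness and cardinality bounds inherit from the LTC application; and $\widetilde{\Ne} \circ \Ne_\Cose = \widetilde{\Ne}$ holds because $\Ne_\Cose$ maps $\Cose \subseteq Y_{\hat{g}}$ into $Y_{\hat{g}}$, on which $\widetilde{\Ne}$ is constantly $\hat{g}H$. For $\Lo$-equivariance, given $x \in \Cose \subseteq Y_{\hat{g}}$ and $g \in \Lo$ with $\alpha_g(x) \in \Cose$, the $\Lo$-equivariance of $\widetilde{\Ne}$ forces $\hat{g}H = g \cdot \hat{g}H$, hence $g \in \Lo \cap H^{\hat{g}} \subseteq \Lo_{\hat{g}}$, so the $\Lo_{\hat{g}}$-equivariance of $\Ne_\Cose$ from the LTC application delivers the desired identity.

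The hardest step is verifying the multiplicity condition. Consider a finite subcollection $\Fi' \subseteq \Coseco$ admitting an $(\alpha, \Lo)$-close witness $(x_\Cose)_{\Cose \in \Fi'}$, and partition $\Fi' = \bigsqcup_{\hat{g}} \Fi'_{\hat{g}}$ by the unique $\hat{g} \in \Fi_0$ for which $\Cose \in \Coseco_{\hat{g}}$. Within a single label, for any two witnesses $x_1, x_2 \in Y_{\hat{g}}$ written as $x_1 = \alpha_{g_0}(x_2)$ with $g_0 \in \Lo \cup \{e\} \cup \Lo^{-1}$, the $\Lo$-equivariance of $\widetilde{\Ne}$ forces $g_0 \in H^{\hat{g}}$, hence $g_0 \in \Lo_{\hat{g}}$; thus $\{x_\Cose \colon \Cose \in \Fi'_{\hat{g}}\}$ is $(\alpha_{|_{H^{\hat{g}}}}, \Lo_{\hat{g}})$-close, and $|\Fi'_{\hat{g}}| \leq d+1$ by the LTC multiplicity bound. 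The main obstacle is promoting these per-label bounds to a global bound of $d+1$, since a priori cross-label contributions could inflate the total by a factor of the number of $\Lo$-adjacent labels. My expectation is that the argument will either shrink each $\Cose$ so that no $(\alpha,\Lo)$-closeness between different labels survives, or else merge the $\Coseco_{\hat{g}}$ across $\Lo$-orbits of labels into covers whose elements span multiple labels simultaneously, thereby reducing the global multiplicity; this combining step is the crux of the proof.
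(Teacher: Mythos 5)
You have correctly set up the easy half of the statement (per-label application of Proposition~\ref{prop:ltc-dim-new}, equivariance via the label function, thinness, cardinality, and the compatibility $\widetilde{N}\circ N_{U}=\widetilde{N}$), but the proof stops exactly at the point where the lemma has content: the multiplicity bound. Your construction only yields that each $\Fi'_{\hat g}$ has size at most $d+1$, i.e.\ a global bound of $(d+1)\cdot|(\Fi H)/H|$, and you explicitly defer the reduction to $d+1$ to an unproven ``combining step''. That step is not a technicality; it is the entire point of the lemma, and the paper devotes a multi-page induction to it. Moreover, neither of the two fixes you sketch works as stated: shrinking the sets of one label so that no $(\alpha,L)$-closeness to another label survives necessarily uncovers points of $K'$ near the ``boundary'' between labels (a point $x$ with label $\hat g$ whose $L$-translate lies in a set with label $\hat g'$ cannot be avoided while still covering $K'$), and a symmetric merging of the covers across labels has no mechanism for choosing the parameters, because the ``length'' of the cover for one label must be calibrated against the ``thinness'' of the covers for the others, which forces an asymmetric, hierarchical construction.

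Concretely, the paper argues by induction on $|\Fi|$: it singles out one coset representative $g_+$, invokes the inductive hypothesis for $\Fi_0=\Fi\smallsetminus\{g_+\}$ to get a bound $N_0$ on the cardinality of the inner near-orbits, and only then chooses the parameter for the $g_+$-level LTC application as $L^{2N_0+3}\cap g_+Hg_+^{-1}$ (note: not your $(L\cup\{e\}\cup L^{-1})^2\cap g_+Hg_+^{-1}$ — words of length up to $2N_0+3$ are needed to travel through an inner set and back). It then refines the $\Fi_0$-cover against shrunken copies of the $g_+$-sets and absorbs every inner set that is $(\alpha,L)$-connected to a $g_+$-set into an enlarged member $\widehat{U}_i$ (the ``atom capturing electrons'' picture, in the spirit of the $r$-saturated union of Bell--Dranishnikov), keeping the remaining inner sets as separate ``free'' members. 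The multiplicity verification then splits into two cases: if a witness point lies in a free set, no witness can meet a $g_+$-core, and the bound reduces to the inductive one; otherwise all witnesses transfer, via paths of length at most $N_0+1$ inside the inner near-orbits, to an $(\alpha,L^{2N_0+3}\cap g_+Hg_+^{-1})$-close family meeting the $g_+$-cover, giving $d+1$ by that cover's strengthened multiplicity. Supplying this inductive merging argument (together with the careful thinness bookkeeping that makes the near-orbit selection functions on the merged sets $\widehat{U}_i$ well defined, locally constant and $L$-equivariant) is what is missing from your proposal.
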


\begin{proof}
	We fix $d$, $H$, $\Lo$ (which we assume, without loss of generality, is symmetric and contains $\grpid$) and $\Ko$, and apply induction on the cardinality of $\Fi$. 
	The initial case of $\Fi = \varnothing$ is trivial since it forces $\bigcup \Thseco = \varnothing$ via the assumption on $\widetilde{\Ne}$. 
	
	Now given a positive integer $k$ and assuming the statement holds for any $\Fi' \Subset G$ with $|\Fi'| < k$, we would like to show it also holds for an arbitrary $\Fi_{} \Subset G$ with $|\Fi_{}| = k$. Without loss of generality, we may assume the map $G \to G / H$ is injective on $\Fi$, as otherwise we would be able to find $\Fi' \subseteq \Fi$ with $|\Fi'| < k$ but $\Fi' H = \Fi H$, which would allow us to directly apply the inductive assumption on $\Fi'$ to obtain the desired conclusion on $\Fi$. 
	
	Before delving into the technical proof, let us summarize the strategy in the next few paragraphs, in an informal style:

	We partition $\Fi$ into an arbitrary singleton $\left\{ g_+ \right\}$ and 
	its complement $\Fi_0$, and, with $\Ko'$, $\Thseco$, and $\widetilde{\Ne}$ 
	given, we split  $\Ko'$ into $\Ko'_+$ and $\Ko'_0$, and $\Thseco$ into 
	$\Thseco_+$ and $\Thseco_0$, essentially so that $\Ko'_+ \subseteq \bigcup 
	\Thseco_+ \subseteq \widetilde{\Ne}^{-1} \left( g_+ \right)$ and $\Ko'_0 
	\subseteq \bigcup \Thseco_0 \subseteq \widetilde{\Ne}^{-1} \left( \Fi_0 
	\right)$. 
	Eventually, our collection $\Coseco$ (together with the near orbit 
	selection functions $\left\{ \Ne_{\Cose} \right\}_{\Cose \in \Coseco}$) 
	will also be constructed by merging two collections $\Coseco_+ = \left\{ 
	\Cose_{+, i} \colon i \in \intervalofintegers{1}{m} \right\}$ and 
	$\Coseco_0$ that cover $\Ko'_+$ and $\Ko'_0$ respectively and, when 
	accompanied by their respective near orbit selection functions $\left\{ 
	\Ne_{+,i} \right\}_{i \in \intervalofintegers{1}{m}}$ and $\left\{ 
	\Ne_{0,\Cose} \right\}_{\Cose \in \Coseco_0}$, are ``thin'' for $\Thseco_+$ 
	and $\Thseco_0$ respectively. 
	Naturally, $\Coseco_+$ will be obtained from the assumption that $\dimltc \left( X, {\alpha_{|_{H^{g_+}}}} \right) \leq d$ (with the parameter $\Lo$ replaced by some $\Lo_+ \subseteq H^{g_+}$, whose choice will be discussed later), while $\Coseco_0$ will be obtained by applying the inductive assumption on $\Fi_0$. 
	
	However, care must be taken when merging $\Coseco_+$ and $\Coseco_0$: 
	if we were to naively define $\Coseco := \Coseco_+ \cup \Coseco_0$, it 
	would typically fail to satisfy \Refcd[\LTCextra]{Muplus}, 
	as some members of $\Coseco_+$ may be $(\alpha, \Lo)$-connected to some 
	members of $\Coseco_0$, possibly resulting in $\mult_{\alpha,\Lo} 
	(\Coseco)$ exceeding the desired bound of $d+1$ (and this excess would 
	accumulate as we continue the induction). 
	
	The approach we take to address this issue is 
	inspired by that of the $r$-saturated union of two covers in 
	\cite[Proposition~24]{BellDranishnikov2008} and involves the following 
	procedures:
	\begin{enumerate}[label=(\alph*)]
		\item \label{lem:relative-bound-ltc-inductive:proof-strategy::components} replace each member $\Cose$ of $\Coseco_0$ by its $\left(\Ne_{0,\Cose}, \Lo\right)$-connected components as in the proof of \eqref{item:prop:ltc-dim-new:orig} $\Rightarrow$ 
		\eqref{item:prop:ltc-dim-new:connected} in Lemma~\ref{prop:ltc-dim-new}; 
		\item \label{lem:relative-bound-ltc-inductive:proof-strategy::capture} 
		let each member $\Cose_{+,i}$ of $\Coseco_+$ capture those members of 
		the new $\Coseco_0$ that are ``$(\alpha, \Lo)$-connected'' to it like a 
		nucleus captures electrons, forming a larger ``atom-like'' open set 
		$\widehat{\Cose}_{i}$; 
		\item \label{lem:relative-bound-ltc-inductive:proof-strategy::Coseco} define $\Coseco$ to consist of these ``atoms'' as well as the ``free electons'', that is, the ``uncaptured'' members of $\Coseco_0$. 
	\end{enumerate}
	Now, with this setup, the aforementioned problem posed by $(\alpha, 
	\Lo)$-connections between members of $\Coseco_+$ and members of 
	$\Coseco_0$ is replaced with that posed by $(\alpha, \Lo)$-connections 
	between the ``atoms'' and the ``free electrons''. 
	The key point here is that the latter kind of $(\alpha, \Lo)$-connections 
	can be effectively controlled by tweaking the parameters controlling the 
	``lengths'' of $\Coseco_+$ and the ``thinness'' of $\Coseco_0$. 
	A bit more specifically, we want to ensure the following: For any 
	$\Cose_{+,i}, \Cose_{+,i'} \in \Coseco_+$, for any $\Cose, \Cose' \in 
	\Coseco_0$, for any $x \in \Cose_{+,i}$ for any $x' \in \Cose_{+,i'}$, for 
	any $y,z \in \Cose$, for any $y',z' \in \Cose'$, and for any $g, g', g'' 
	\in \Lo$ with $y = \alpha_g (x)$, $y' = \alpha_{g'} (x')$ and $z' = 
	\alpha_{g''} (z)$, 
	there exists $h$ in the aforementioned $\Lo_+$ such that 
	$x$ and $\alpha_{h} (x')$ are close to each other relative to the ``thinness'' of $\Coseco_+$. 
	To do this, we need to choose the parameters wisely:  
	\begin{enumerate*}
		\item after Step~\ref{lem:relative-bound-ltc-inductive:proof-strategy::components} above, it is not hard to see that $\Coseco_0$ now satisfies \Refc[\LTCextra]{Coplus}{\Lo, \Binu_0} for some natural number $\Binu_0$; 
		\item thus by making $\Coseco_0$ ``very thin'' relative to the ``thinness'' of $\Coseco_+$, we can ensure in the above that there are $\widehat{g}, \widehat{g}' \in \Lo^{\Binu_0}$ such that $z$ is sufficiently close to $\alpha_{\widehat{g}} (y)$ and $z'$ is sufficiently close to $\alpha_{\widehat{g}'} (y')$, so that $x$ and $\alpha_{ g^{-1} \widehat{g}^{-1} g'' \widehat{g}' g'} (x')$ are close to each other; 
		\item the parameter $\Lo_+$ needs to be large enough to include all such products $g^{-1} \widehat{g}^{-1} g'' \widehat{g}' g'$ (and thus depends on $\Binu_0$). 
	\end{enumerate*}
	
	We conclude this informal discussion by the following diagram that indicates the dependencies among the various parameters used in the proof, and the steps of the proofs in which they are introduced. The letter $\Ko$ with various subscripts and superscripts always refers to compact subsets of $X$, calligraphic capital letters are used to denote collections of open subsets (with the non-caligraphic counterparts, possibly with various subscripts or superscripts, denote elements of those collections), and the letter $\Ne$ denotes orbit selection functions or labeling functions (with values either in $X$ or in a coset space of the group). 
	
	\

\begin{xy}
	(0,140)*{\textrm{(Step 1)}};
	(40,140)*{\displaystyle g_+ \in F ,  F_0 = F \setminus \{g_+\} }="g_+";
	(100,140)*{\displaystyle M_0 \in \N}="M_0";
	(102,125)*{\displaystyle M_+ \in \N}="M_+";
	(70,125)*{\displaystyle L_+ \subseteq G}="L_+";
	(0,125)*{\textrm{(Step 2)}};
	{\ar@{.>} "g_+";"M_0"};
	{\ar@{.>} "M_0";"M_+"};
	{\ar@{.>} "M_0";"L_+"};
	(100,110)*{\displaystyle M \in \N}="M";
		{\ar@{.>} "L_+";"M"};
	(40,110)*{K' \subseteq K ,  \mathcal{V}  ,  \widetilde{\Ne}}="K'-etc";
	{\ar@{.>} "M_+";"M"};
	(0,110)*{\textrm{(Step 3)}};
	(20,95)*{K_+'  ,  \mathcal{V}_+}="K_+'";
	(78,95)*{K_0' , \mathcal{V}_0}="K_0'";
	(0,95)*{\textrm{(Step 5)}};
	{\ar@{.>} "g_+";"K_0'"};
	{\ar@{.>} "K'-etc";"K_+'"};
	{\ar@{.>} "g_+";"K_+'"};
			{\ar@{.>} "K'-etc";"K_0'"};
	(0,80)*{\textrm{(Step 7)}};
	(100,80)*{\mathcal{V}_0' , \mathcal{V}_0''}="V_0'";
	(20,80)*{\mathcal{W}_+}="W_+";
		{\ar@{.>} "K_0'";"V_0'"};
		{\ar@{.>} "V_0'";"W_+"};
		{\ar@{.>} "K_+'";"W_+"};
		{\ar@{.>} "M_0";"W_+"};
	(0,65)*{\textrm{(Step 8)}};
	(40,65)*{\mathcal{U}_+,\Ne_{+, i}}="U_+";
		{\ar@{.>} "K_+'";"U_+"};
		{\ar@{.>} "M_+";"U_+"};
	(0,50)*{\textrm{(Step 9)}};
	(40,50)*{\mathcal{U}_{+,i}',\mathcal{U}_{+,i}''}="U_+'";
		{\ar@{.>} "U_+";"U_+'"};
	(100,50)*{\mathcal{Y}_{i,y}}="Y_i";
			{\ar@{.>} "U_+'";"Y_i"};
	(0,35)*{\textrm{(Step 12)}};
	(100,35)*{\mathcal{Y}'}="Y'";
	(75,35)*{\mathcal{W}_0}="W_0";
		{\ar@{.>} "Y_i";"Y'"};
		{\ar@{.>} "Y'";"W_0"};
		{\ar@{.>} "V_0'";"W_0"};
	(0,20)*{\textrm{(Step 14)}};
	(60,20)*{\mathcal{U}_0,\Ne_{0, \Cose_0}}="U_0";
		{\ar@{.>} "K_0'";"U_0"};
		{\ar@{.>} "W_0";"U_0"};
		{\ar@{.>} "M_0";"U_0"};
	(0,5)*{\textrm{(Step 16)}};
	(60,5)*{\mathcal{U}_{0,\mathrm{free}} , \mathcal{U}_{0, i} ,  \mathcal{U}_{0, i}' , \widehat{\mathcal{U}}_+,}="U_0-subdivide";
		{\ar@{.>} "U_0";"U_0-subdivide"};
	(0,-10)*{\textrm{(Step 20)}};
	(60,-10)*{\Ne_{\widehat{\mathcal{U}}_i}}="NE20";
	{\ar@{.>} "U_0-subdivide";"NE20"};
\end{xy}

	We present the detailed proof in steps: 
	\begin{enumerate}[itemindent=*,leftmargin=1em,label=\textit{Step~(\arabic*).},ref=\arabic*]
		\item \label{lem:relative-bound-ltc-inductive:proof::Bo_0} We fix $g_+ \in \Fi_{}$ and let $\Fi_0 := \Fi_{} \setminus \left\{ g_+ \right\}$, to which we may apply the inductive assumption in order to obtain a 
		natural number $\Binu_0$ 
		that satisfies the condition in the lemma (with $\Fi$ and $\Binu$ replaced by $\Fi_0$ and $\Binu_0$). 
		
		\item \label{lem:relative-bound-ltc-inductive:proof::Bo_0} 
		Let
		\[
		\Lo_+ := \Lo^{2 \Binu_0 + 3} \cap H^{g_+} \; . 
		\]
		By the assumption that $\dimltc \left( X, {\alpha_{|_{H^{g_+}}}} \right) \leq d$ and in view of Lemma~\ref{prop:ltc-dim-new}\eqref{item:prop:ltc-dim-new:cover-Ko}, 
		there exists a
		natural number $\Binu_+$ such that for 
		any compact subset $\Ko_+$ of $X$ and for
		any finite open cover $\Auseco_+$ of $\Ko_+$ in $X$,  
		there exist a finite collection $\Coseco_+$ of open sets in $X$, and locally constant functions $\Ne_{+, \Cose} \colon \Cose \to X$ for $\Cose \in \Coseco_+$
		satisfying \Refc[\LTCdef]{Lo}{\{e\}, \Ko_+}, 
		\Refc[\LTCextra]{Muplus}{d, \Lo_+}, \Refc[\LTCdef]{Eq}{\Lo_+}, \Refc[\LTCdef]{Th}{\Auseco_+}, and~\Refc[\LTCextra]{Coplus}{\Lo_+,\Binu_+}.  
		
		
		\item \label{lem:relative-bound-ltc-inductive:proof::Bo} 
		Define 
		\[
			\Binu := \max \left\{\left| \Lo_+^{\Binu_+} \right| \left| \Lo^{ \Binu_0 + 1} \right| , \Binu_0 \right\}  
		\]
		We need to prove this choice of $\Binu$ satisfies the condition in the lemma. 
		To do this, we fix 
		a compact subset $\Ko' \Subset \Ko$, a finite open cover $\Thseco_{}$ of $\Ko'$ in $X$, 
		and an $\Lo$-equivariant locally constant function $\widetilde{\Ne}_{} \colon \bigcup \Thseco_{} \to (\Fi_{} H)/H \subseteq G/H$. 
		By replacing each $\Thse \in \Thseco$ by its fibers under the locally 
		constant function $\widetilde{\Ne}_{}|_{\Thse}$, we may assume without 
		loss of generality that $\widetilde{\Ne}_{} ({\Thse})$ is a singleton, 
		for any $\Thse \in \Thseco$. 
		
		\item  \label{lem:relative-bound-ltc-inductive:proof::checkers} 
		The following is an immediate consequence of the $\Lo$-equivariance of 
		$\widetilde{\Ne}_{}$, obtained by induction: for any $x \in \bigcup 
		\Thseco$ and for any $g_0, g_1, \ldots, g_n \in \Lo$, if $\alpha_{g_{j} 
		g_{j+1} \cdots g_{n}} (x) \in \bigcup \Thseco$ for $j = 0,1,\ldots, n$, 
		then $\widetilde{\Ne}_{} \left( \alpha_{g_{0} g_{1} \cdots g_{n}} (x) 
		\right) = g_{0} g_{1} \cdots g_{n} \cdot \widetilde{\Ne}_{} \left( x 
		\right) $.  
		
		\item \label{lem:relative-bound-ltc-inductive:proof::cutout}  
		Define disjoint compact subsets
		\begin{align*}
		\Ko'_+ &:= \Ko' \setminus \widetilde{\Ne}_{}^{-1} \left( (\Fi_0 H)/H \right) = \Ko' \cap \widetilde{\Ne}_{}^{-1} \left( g_+ H \right) \; , \\
		\Ko'_0 &:= \Ko' \cap \widetilde{\Ne}_{}^{-1} \left(  (\Fi_0 H)/H \right) = \Ko' \setminus \widetilde{\Ne}_{}^{-1} \left( g_+ H \right) \; .
		\end{align*}
		Also define collections of open subsets
		\begin{align*}
			\Thseco_+ &:= \left\{  \Thse \in \Thseco \colon \widetilde{\Ne}_{} ({\Thse}) = \left\{ g_+ H \right\} \right\}  \; , \\
			\Thseco_{0} &:= \left\{  \Thse \in \Thseco \colon 
			\widetilde{\Ne}_{} ({\Thse}) \not= \left\{ g_+ H \right\} 
			\right\}   
			\; .
		\end{align*} 
		It follows from our assumption in Step~\eqref{lem:relative-bound-ltc-inductive:proof::Bo} that $\Thseco =  \Thseco_+ \sqcup  \Thseco_0$. 
		We also have $\Ko' = \Ko'_+ \sqcup \Ko'_0$, $\Ko'_+ \subseteq \bigcup \Thseco_+$,  $\Ko'_0 \subseteq \bigcup \Thseco_{0}$  
		and $\bigcup \Thseco = \left(\bigcup \Thseco_+\right) \sqcup \left(\bigcup \Thseco_0\right)$. 

		\item  \label{lem:relative-bound-ltc-inductive:proof::loop-coarse} 
		We record the following fact for later use: For any $y, y' \in \bigcup 
		\Thseco$ and for any $g, g', g'' \in G$, if 
		\begin{itemize}
			\item $g \cdot \widetilde{\Ne}_{} \left( y \right) = \widetilde{\Ne}_{} \left( y' \right) $, 
			\item $\alpha_{g'} \left( y \right) \in \bigcup \Thseco_{+}$ and $\alpha_{g''} \left( y' \right) \in \bigcup \Thseco_{+}$, and 
			\item $\widetilde{\Ne}_{} \left(\alpha_{g'} \left(  y \right) \right) = {g'} \left( \widetilde{\Ne}_{} \left( y \right) \right)$ and $\widetilde{\Ne}_{} \left(\alpha_{g''} \left(  y' \right) \right) = {g''} \left( \widetilde{\Ne}_{} \left( y' \right) \right)$, 
		\end{itemize}
		then we have 
		\[
			g'' g \left(g'\right)^{-1} \in g_+ H g_+^{-1} \; . 
		\]
		
		Indeed, by the definition of $\Thseco_{+}$ in Step~\eqref{lem:relative-bound-ltc-inductive:proof::cutout}, we have  
		\[
			{g'} \left( \widetilde{\Ne}_{} \left( y \right) \right) = \widetilde{\Ne}_{} \left(\alpha_{g'} \left(  y \right) \right) = g_+ H = \widetilde{\Ne}_{} \left(\alpha_{g''} \left(  y' \right) \right) = {g''} \left( \widetilde{\Ne}_{} \left( y' \right) \right)
		\]
		The claim thus follows easily from comparing this equality with our assumption that $g \cdot \widetilde{\Ne}_{} \left( y \right) = \widetilde{\Ne}_{} \left( y' \right) $. 
		

		\item \label{lem:relative-bound-ltc-inductive:proof::Auseco_plus} 
		Applying the barycentric subdivision method of 
		Lemma~\ref{lem:cover-clumping}, we obtain a finite collection 
		$\Thseco'_0$ of open sets covering $\Ko'_0$ such that for any 
		$\Thse'_1, \ldots, \Thse'_n \in \Thseco'_0$, if $\bigcap_{i = 1}^{n} 
		\Thse'_i \not= \varnothing$, then there exists $\Thse \in \Thseco_0$ 
		such 
		that $\bigcup_{i = 1}^{n} \Thse'_i \subseteq \Thse$. 
		By Lemma~\ref{lem:shrunken-cover}, there exist a finite collection 
		$\Thseco''_0$ of open sets covering $\Ko'_0$ such that for any $\Thse'' 
		\in \Thseco''_0$, there exists $\Thse' \in \Thseco'_0$ such that 
		$\overline{\Thse''}  \subseteq \Thse'$. 
		Following Notation~\ref{notation:group-action}, we form a finite 
		collection of open sets
		\[
			\Auseco_{+} := \Thseco_{+} \wedge \alpha^\wedge_{\Lo^{\Binu_0 + 1}} \left( \Thseco'_{0} \cup \left\{ X \setminus \overline{\bigcup \Thseco''_0} \right\} \right)  \; ,
		\]
		which refines $\Thseco_{+}$. Observe that $\Thseco'_{0} \cup \left\{ X \setminus \overline{\bigcup \Thseco''_0} \right\}$ covers $X$, whence we have 
		\[
			\bigcup \Auseco_{+} = \left( \bigcup \Thseco_{+} \right) \cap \alpha^\cap_{\Lo^{\Binu_0 + 1}}  \left( \bigcup \left( \Thseco'_{0} \cup \left\{ X \setminus \overline{\bigcup \Thseco''_0} \right\} \right) \right) = \bigcup \Thseco_{+} \supseteq \Ko'_{+} \; .
		\]
		
		\item \label{lem:relative-bound-ltc-inductive:proof::Coseco_plus} By our choice of $\Binu_+$ in Step~\eqref{lem:relative-bound-ltc-inductive:proof::Bo_0}, 
		there exist a finite collection $\Coseco_+ = \left\{ \Cose_{+,1}, \ldots, \Cose_{+,m} \right\}$ of open sets in $X$, and locally constant functions $\Ne_{+, i} \colon \Cose_{+,i} \to X$ for $i \in \intervalofintegers{1}{m}$
		satisfying \Refc[\LTCdef]{Lo}{\{e\}, \Ko'_+}, 
		\Refc[\LTCextra]{Muplus}{d, \Lo_+}, \Refc[\LTCdef]{Eq}{\Lo_+}, \Refc[\LTCdef]{Th}{\Auseco_+}, and~\Refc[\LTCextra]{Coplus}{\Lo_+,\Binu_+}. 
		As $\Auseco_{+}$ refines $\Thseco_{+}$, it follows from Remark~\ref{rmk:ltc-dim-monotonous} that $\Coseco_{+}$ also satisfies \Refc[\LTCdef]{Th}{\Thseco_+}. In particular, for any $i \in \intervalofintegers{1}{m}$\vphantom{$\Cose \in \Coseco_+$}
		and for any $x \in \Cose_{+, i}$, we have 
		\[
		\widetilde{\Ne}_{} \circ 
		\Ne_{+, i} (x) = g_+ H = \widetilde{\Ne}_{} (x)
		\, .
		\]
		Note that \Refc[\LTCextra]{Coplus}{\Lo_+,\Binu_+} implies that $\Ne_{+, 
		i}  
		\left( \Cose_{+, i} \right)$ is finite.

		\item  \label{lem:relative-bound-ltc-inductive:proof::Lase_plus} 
		By Lemma~\ref{lem:shrunken-cover}, there exist precompact open subsets $\Cose'_{+, i}$ and $\Cose''_{+, i}$ satisfying $\overline{\Cose''_{+, i}} \subseteq {\Cose'_{+, i}} \subseteq \overline{\Cose'_{+, i}} \subseteq \Cose_{+, i}$ for $i \in \intervalofintegers{1}{m}$\vphantom{$\Cose \in \Coseco_+$}, such that $\Ko'_+ \subseteq \bigcup_{i = 1}^{m} \Cose''_{+, i}$. 
		Observe that for any $i \in \intervalofintegers{1}{m}$\vphantom{$\Cose 
		\in \Coseco_+$} and for any $y \in \Ne_{+, i}  \left( \Cose_{+, i} 
		\right)$, since $\Ne_{+, i}^{-1} (y)$ is relatively closed in 
		$\Cose_{+, i}$, the sets $\overline{\Cose'_{+, i}} \cap \Ne_{+, i}^{-1} 
		(y)$ and $\overline{\Cose''_{+, i}} \cap \Ne_{+, i}^{-1} (y)$ are 
		closed and thus 
		\[
			\Aulaseco_{i, y} := \left\{\Ne_{+, i}^{-1} (y) \cap {\Cose'_{+, i}}  , \,  \Ne_{+, i}^{-1} (y) \setminus \overline{\Cose''_{+, i}}  , \, X \setminus \left( \overline{\Cose'_{+, i}} \cap \Ne_{+, i}^{-1} (y) \right) \right\}
		\]
		forms an open cover of $X$. 
		
		\item  \label{lem:relative-bound-ltc-inductive:proof::refine_0} 
		Consider the finite open cover
		\[
			\Aulaseco :=  \bigwedge_{i = 1}^{m} \bigwedge_{y \in \Ne_{+, i}  \left( {\Cose_{+, i}} \right)}  \alpha^\wedge_{\Lo^{\Binu_0 + 1}} 
			\left( \Aulaseco_{i, y} \right)
		\]
		of $X$, 
		where both index sets are finite by Step~\eqref{lem:relative-bound-ltc-inductive:proof::Coseco_plus}. 
		Observe that any $\Aulase \in \Aulaseco$ satisfies that for any $i \in 
		\intervalofintegers{1}{m}$\vphantom{$\Cose \in \Coseco_+$}, for any $y 
		\in \Ne_{+, i}  \left( {\Cose_{+, i}} \right)$, and for any $g \in 
		\Lo^{\Binu_0 + 1}$, we have:  
		\begin{itemize}
			\item if $\alpha_g \left( \Aulase \right) \cap \overline{\Cose'_{+, i}} \cap \Ne_{+, i}^{-1} (y) \not= \varnothing$, then $\alpha_g \left( \Aulase \right) \subseteq \Ne_{+, i}^{-1} (y)$;
			
			\item if $\alpha_g \left( \Aulase \right) \cap \overline{\Cose''_{+, i}} \cap \Ne_{+, i}^{-1} (y) \not= \varnothing$, then $\alpha_g \left( \Aulase \right) \subseteq \Ne_{+, i}^{-1} (y) \cap \Cose'_{+, i}$. 
		\end{itemize}

		\item  \label{lem:relative-bound-ltc-inductive:proof::loop} 
		We record the following fact for later use: For any $y, y' \in \bigcup 
		\Thseco$, for any $n, n' \in \intervalofintegers{0}{\Binu_0}$, for any 
		$g, g_0, g_1, \ldots, g_{n}, g'_0, g'_1, \ldots, g'_{n'} \in \Lo$ and 
		for any $i \in \intervalofintegers{1}{m}$, if 
		\begin{itemize}
			\item there exists $\Aulase \in \Aulaseco$ such that $\left\{ 
			\alpha_g(y), y' \right\} \subseteq \Aulase$, 
			\item $g \cdot \widetilde{\Ne}_{} \left( y \right) = \widetilde{\Ne}_{} \left( y' \right) $, 
			\item $\alpha_{g_j g_{j+1} \cdots g_n} \left( y \right) \in \bigcup \Thseco$ for $j = 1, 2, \ldots, n$, 
			\item $\alpha_{g'_j g'_{j+1} \cdots g'_{n'}} \left( y' \right) \in \bigcup \Thseco$ for $j = 1, 2, \ldots, n'$, 
			\item $\alpha_{g_0 g_{1} \cdots g_n} \left( y \right) \in {\Cose_{+, i}}$,   and 
			\item $\alpha_{g'_0 g'_{1} \cdots g'_n} \left( y' \right) \in \overline{\Cose'_{+, i}}$ (note that $\overline{\Cose'_{+, i}} \subseteq \Cose_{+, i} \subseteq \bigcup \Auseco_{+} \subseteq \bigcup \Thseco$),
		\end{itemize}
		then we have 
		\begin{align*}
			& \alpha_g \circ \alpha_{\left(g_0 g_1 \cdots g_n\right)^{-1}} \left( \Ne_{+, i} \left( \alpha_{g_0 g_1 \cdots g_n} \left( y \right) \right) \right) \\
			& =  
			\alpha_{\left(g'_0 g'_1 \cdots g'_{n'}\right)^{-1}} \left( \Ne_{+, i} \left( \alpha_{g'_0 g'_1 \cdots g'_{n'}} \left( y' \right) \right) \right) \; .
		\end{align*}

		Indeed, let $z := \Ne_{+, i} \left( \alpha_{g'_0 g'_1 \cdots g'_n} \left( y' \right) \right)$. Since we have $\alpha_{g'_0 g'_{1} \cdots g'_n} \left( y' \right) \in \alpha_{g'_0 g'_{1} \cdots g'_n} \left( \Aulase \right) \cap \overline{\Cose'_{+, i}} \cap \Ne_{+, i}^{-1} (z)$, it follows from Step~\eqref{lem:relative-bound-ltc-inductive:proof::refine_0} that $\alpha_{g'_0 g'_{1} \cdots g'_n} \left( \Aulase \right) \subseteq \Ne_{+, i}^{-1} (z)$, 
		whence  
		\[
		\Ne_{+, i} \left(  \alpha_{g'_0 g'_{1} \cdots g'_{n'}  } \circ  \alpha_{g} \left( y \right) \right) = \Ne_{+, i} \left( \alpha_{g'_0 g'_{1} \cdots g'_{n'}  }  \left( y' \right) \right) = z \; . 
		\]
		

		On the other hand, 
		since $\Cose_{+, i} \subseteq \bigcup \Auseco_{+}$ by \Refc[\LTCdef]{Th}{\Auseco_+} in Step~\eqref{lem:relative-bound-ltc-inductive:proof::Coseco_plus} and thus $\Cose_{+, i} \subseteq \bigcup \Thseco_{+}$ by Step~\eqref{lem:relative-bound-ltc-inductive:proof::Auseco_plus}, 
		it follows from 
		Step~\eqref{lem:relative-bound-ltc-inductive:proof::checkers} that 
		$\widetilde{\Ne}_{} \left(  \alpha_{g_0 g_1 \cdots g_n} \left( y  
		\right) \right) = g_0 g_1 \cdots g_n \cdot \widetilde{\Ne}_{} \left( y 
		\right)$ and $\widetilde{\Ne}_{} \left(  \alpha_{g_0' g_1' \cdots 
		g_{n'}'} \left( y'  \right) \right) = g_0' g_1' \cdots g_{n'}' \cdot 
		\widetilde{\Ne} \left(  y'  \right)$. 
		Thus, using 
		Step~\eqref{lem:relative-bound-ltc-inductive:proof::loop-coarse}
		and our construction of $\Lo_+$ in 
		Step~\eqref{lem:relative-bound-ltc-inductive:proof::Bo_0}, it follows
		 that 
		\[
		\left(g_0' \cdots g_{n'}'\right) g \left(g_0 \cdots g_n\right)^{-1} \in g_+ H g_+^{-1} \cap \Lo^{2 \Binu_0 + 3} = \Lo_+ 
		\; .
		\]
		Since $\Ne_{+, i}$ is $\Lo_+$-equivariant by Steps~\eqref{lem:relative-bound-ltc-inductive:proof::Coseco_plus}, we have
		\begin{align*}
		& \alpha_g \circ \alpha_{\left(g_0 \cdots g_n\right)^{-1}} \left( \Ne_{+, i} \left( \alpha_{g_0  \cdots g_n} \left( y \right) \right) \right) \\
		=&\ \alpha_{\left(g_0'  \cdots g_{n'}' \right)^{-1}} \circ \alpha_{\left(g_0' \cdots g_{n'}'\right) g \left(g_0  \cdots g_n\right)^{-1} } \left( \Ne_{+, i} \left( \alpha_{g_0 \cdots g_n} \left( y \right) \right) \right) \\  
		=&\ \alpha_{\left(g_0' \cdots g_{n'}' \right)^{-1}} \left( \Ne_{+, i} \left(  \alpha_{\left(g_0'  \cdots g_{n'}'\right) g \left(g_0 \cdots g_n\right)^{-1} }  \circ \alpha_{g_0 \cdots g_n} \left( y \right) \right) \right) \\ 
		=&\ \alpha_{\left(g_0' \cdots g_{n'}' \right)^{-1}} \left( \Ne_{+, i} \left(  \alpha_{g_0'  \cdots g_{n'}'  } \circ \alpha_g \left( y \right) \right) \right) \\
		=&\ \alpha_{\left(g_0' \cdots g_{n'}'\right)^{-1}} \left( \Ne_{+, i} \left(  \alpha_{g_0'  \cdots g_{n'}'  } \left( y' \right) \right) \right) 
		\; ,
		\end{align*}
		as claimed.  

		\item  \label{lem:relative-bound-ltc-inductive:proof::Auseco_0} 
		By Lemma~\ref{lem:cover-clumping}, there exists a finite open cover 
		$\Aulaseco'$ of $X$ such that for any $\Aulase'_1, \ldots, \Aulase'_n 
		\in \Aulaseco'$, if $\bigcap_{i = 1}^{n} \Aulase'_i \not= \varnothing$, 
		then there exists $\Aulase \in \Aulaseco$ such that $\bigcup_{i = 
		1}^{n} 
		\Aulase'_i \subseteq \Aulase$. 
		We then define a finite collection of open sets
		\[
			\Auseco_{0} := \Thseco''_{0} \wedge \left( \alpha^\wedge_{\Lo} 
			\left( \Aulaseco' \right) \right)  \; .
		\]
		The collection $ \Auseco_{0} $ refines $\Thseco''_{0}$ (and thus also 
		$\Thseco_0$ and $\Thseco$) and $\Aulaseco'$ (and thus also 
		$\Aulaseco$), and satisfies $\bigcup \Auseco_{0} = \bigcup 
		\Thseco''_{0} \supseteq \Ko'_{0}$. 
		
		\item  \label{lem:relative-bound-ltc-inductive:proof::Auseco_0-fact} 
		We record the following fact for later use: 
		For any $g \in \Lo$, for any $\Ause_1, \Ause_2 \in \Auseco_{0}$ 
		satisfying $\Ause_1 \cap \alpha_g \left( \Ause_2 \right) \not= 
		\varnothing$, for any $g' \in \Lo^{\Binu_0 + 1}$, and for any $i \in 
		\intervalofintegers{1}{m}$\vphantom{$\Cose \in \Coseco_+$} satisfying 
		$\overline{\Cose'_{+, i}} \cap \alpha_{g'} \left( \Ause_1 \cup \alpha_g 
		\left( \Ause_2 \right)  \right) \not= \varnothing$, 
		we have 
		\[
		\alpha_{g'} \left( \Ause_1 \right) \cup \alpha_{g' g}  \left( \Ause_2 
		\right)   \subseteq \Cose_{+, i}
		\]
		 and 
		 \[
		 \Ne_{+, i} \left( \alpha_{g'} \left( \Ause_1 \right)  \cup \alpha_{g' 
		 g}  \left( \Ause_2 \right) \right)
		 \]
		  is a singleton.
		Moreover, if $\overline{\Cose''_{+, i}} \cap \alpha_{g'} \left( \Ause_1 \cup \alpha_g \left( \Ause_2 \right)  \right) \not= \varnothing$, 
		then 
		\[
		\alpha_{g'} \left( \Ause_1 \right) \cup \alpha_{g' g}  \left( \Ause_2 
		\right)   \subseteq \Cose'_{+, i}
		\, .
		\]
		Indeed, by our construction above, there exist $\Aulase'_1, \Aulase'_2 
		\in \Aulaseco'$ such that $\Ause_1 \subseteq \Aulase'_1$ and $\alpha_g 
		\left( \Ause_2 \right)  \subseteq \Aulase'_2$. Since $\Aulase'_1 \cap 
		\Aulase'_2 \supseteq \Ause_1 \cap \alpha_g \left( \Ause_2 \right) \not= 
		\varnothing$, it follows from 
		Step~\eqref{lem:relative-bound-ltc-inductive:proof::refine_0} that 
		there exists $\Aulase \in \Aulaseco$ such that 
		\[
		\Aulase \supseteq 
		\Aulase'_1 \cup \Aulase'_2 \supseteq \Ause_1 \cup \alpha_g \left( 
		\Ause_2 \right)
		\, .
		\]
		 Recall from 
		Step~\eqref{lem:relative-bound-ltc-inductive:proof::refine_0} that for 
		any $y \in \Ne_{+, i}  \left( \overline{\Cose'_{+, i}} \right)$, we 
		have a dichotomy: either $\alpha_{g'} \left( \Aulase \right) \subseteq 
		\Ne_{+, i}^{-1} (y)$ or $\alpha_{g'} \left( \Aulase \right) \cap 
		\overline{\Cose'_{+, i}} \cap \Ne_{+, i}^{-1} (y) = \varnothing$. 
		Since by our assumption, 
		\[
		\alpha_{g'} \left( \Aulase \right) \cap 
		\overline{\Cose'_{+, i}} \supseteq \alpha_{g'} \left( \Ause_1 \cup 
		\alpha_g \left( \Ause_2 \right)  \right) \cap \overline{\Cose'_{+, i}} 
		\not= \varnothing
		\, ,
		\]
		 we may choose some $y \in \Ne_{+, i}  \left( 
		\alpha_{g'} \left( \Aulase \right) \cap \overline{\Cose'_{+, i}} 
		\right) \subseteq \Ne_{+, i}  \left( \overline{\Cose'_{+, i}} \right)$. 
		Since $\alpha_{g'} \left( \Aulase \right) \cap \overline{\Cose'_{+, i}} 
		\cap \Ne_{+, i}^{-1} (y) \not= \varnothing$,  by the dichotomy above, 
		we have 
		\[
		\Ne_{+, i}^{-1} (y) \supseteq \alpha_{g'} \left( \Aulase 
		\right) \supseteq \alpha_{g'} \left( \Ause_1 \right) \cup \alpha_{g' 
		g}  \left( \Ause_2 \right)
		\, .
		\]
		 Thus, the first part of the claim 
		follows. 
		The second part is proved in the identical way, but with $\Cose_{+, i}$ and $\Cose'_{+, i}$ replaced by $\Cose'_{+, i}$ and $\Cose''_{+, i}$, respectively. 
		
		\item  \label{lem:relative-bound-ltc-inductive:proof::Coseco_0}
		By the inductive assumption and our choice of $\Binu_0$, there exist 
		a finite collection $\Coseco_{0}$ of open subsets and
		locally constant functions $\Ne_{0, \Cose} \colon \Cose \to \bigcup \Auseco_{0}$ for $\Cose \in \Coseco_{0}$, 
		satisfying the conditions~\Refc[\LTCdef]{Lo}{\{e\},\Ko'_0}, \Refcd[\LTCextra]{Muplus}, \Refcd[\LTCdef]{Eq}, \Refc[\LTCdef]{Th}{\Auseco_{0}}, 
		\Refc[\LTCdef]{Ca}{\Binu_0}, 
		and $\left(\widetilde{\Ne} \middle|_{\bigcup \Auseco_{0}}  \right) \circ \Ne_{0,\Cose} = \left(\widetilde{\Ne} \middle|_{\bigcup \Auseco_{0}}  \right)$ for any $\Cose \in \Coseco_{0}$. 
		
		\item  \label{lem:relative-bound-ltc-inductive:proof::Coseco_0-mod} 
		By replacing each $\Cose \in \Coseco_{0}$ with its $\left( \Ne_{0, 
		\Cose}, \Lo \right)$-connected components as in the proof of 
		\eqref{item:prop:ltc-dim-new:orig} $\Rightarrow$ 
		\eqref{item:prop:ltc-dim-new:connected} in Lemma~\ref{prop:ltc-dim-new} and replacing each $\Ne_{0, \Cose}$ by  the corresponding restrictions to these subsets, we may assume without loss of generality that $\Coseco_0$ and $\left(\Ne_{0, \Cose}\right)_{\Cose \in \Coseco_0}$ also satisfies \Refc[\LTCextra]{Coplus}{\Lo, \Binu_0}. 
		
		Indeed, after such a replacement, it is evident that \Refc[\LTCdef]{Lo}{\{e\},\Ko'_0} still holds, as does the equality $\left(\widetilde{\Ne} \middle|_{\bigcup \Auseco_{0}}  \right) \circ \Ne_{0,\Cose} = \left(\widetilde{\Ne} \middle|_{\bigcup \Auseco_{0}}  \right)$ for any $\Cose \in \Coseco_{0}$, while it follows from Remark~\ref{rmk:ltc-dim-monotonous} that \Refcd[\LTCdef]{Eq}, \Refc[\LTCdef]{Th}{\Auseco_{0}}, and 
		\Refc[\LTCdef]{Ca}{\Binu_0} also hold. 
		The fact that \Refcd[\LTCextra]{Muplus} is preserved follows from the observation that any $(\alpha,\Lo)$-close subset of $X$ cannot have nonempty intersections with two or more $\left( \Ne_{0, \Cose}, \Lo \right)$-connected components of a single $\Cose$ from the original $\Coseco_{0}$. 
		Finally, \Refc[\LTCextra]{Coplus}{\Lo, \Binu_0} follows from 
		\Refc[\LTCdef]{Ca}{\Binu_0} and the fact that now each $\Cose$ from the 
		new collection $\Coseco_{0}$ is $\left( \Ne_{0, \Cose}, \Lo 
		\right)$-connected.

		\item  \label{lem:relative-bound-ltc-inductive:proof::Coseco} 
		Define subcollections of $\Coseco_0$ as follows: 
		\begin{align*}
		\Coseco_{0, \operatorname{free}} &:= \left\{ \Cose' \in \Coseco_0 \colon \alpha^\cup_{\Lo} \left( \Ne_{0, \Cose'}  \left(\Cose'\right) \right) \cap \overline{\Cose'_{+, i}}  = \varnothing \text{ for any } i \in \intervalofintegers{1}{m} \right\}  \; , \\ 
		\Coseco_{0, i} &:=  \left\{ \Cose' \in \Coseco_0 \colon \alpha^\cup_{\Lo} \left( \Ne_{0, \Cose'}  \left(\Cose'\right) \right) \cap  \overline{\Cose'_{+, i}}   \not= \varnothing \right\}  \qquad  \text{for }  i \in \intervalofintegers{1}{m}  \; , \\
		\Coseco'_{0, i} &:=  \Coseco_{0, i} \setminus \left( \bigcup_{j = 1}^{i-1} \Coseco_{0, j} \right) \; .  
		\end{align*}
		Then clearly we have 
		\[
		\Coseco_0 
		= \Coseco_{0, \operatorname{free}} \sqcup \left( \bigcup_{i = 1}^m  \Coseco_{0, i} \right)  
		= \Coseco_{0, \operatorname{free}} \sqcup \left( \bigsqcup_{i = 1}^m  \Coseco'_{0, i} \right) 
		\; .
		\]
		Now for any $i \in \intervalofintegers{1}{m}$\vphantom{$\Cose \in \Coseco_+$}, define an open set 
		\[
			\widehat{\Cose}_i := \Cose''_{+, i}  \cup \left( \bigcup \Coseco'_{0, i} \right) \;,
		\]
		and observe that $\Cose'_{+, i}  \cap \left( \bigcup \Coseco'_{0, i} \right) \subseteq \left( \bigcup \Thseco_+ \right) \cap \left( \bigcup \Thseco_{0} \right) = \varnothing$. 
		Also define the following collections of open sets:  
		\[
			\widehat{\Coseco}_{+} := \left\{ \widehat{\Cose}_i  \colon i \in \intervalofintegers{1}{m} \right\} \quad \text{ and } \quad 
			\Coseco := \widehat{\Coseco}_{+} \cup \Coseco_{0, \operatorname{free}} \; .
		\]

		\item  \label{lem:relative-bound-ltc-inductive:proof::Lo} 
		Observe that 
		\begin{align*}
			\bigcup \Coseco &= \left( \bigcup \widehat{\Coseco}_{+} \right) \cup \left( \bigcup  \Coseco_{0, \operatorname{free}} \right) \\
			&= \left( \bigcup_{i = 1}^m \left(  \Cose''_{+, i}  \cup \left( \bigcup \Coseco'_{0, i} \right) \right) \right) \cup \left( \bigcup  \Coseco_{0, \operatorname{free}} \right) \\
			&= \left( \bigcup_{i = 1}^m   \Cose''_{+, i}   \right)  \cup \left( \bigcup \left(  \left( \bigcup_{i = 1}^m  \Coseco'_{0, i} \right) \cup \Coseco_{0, \operatorname{free}} \right) \right) \\
			&= \left( \bigcup_{i = 1}^m   \Cose''_{+, i}   \right)  \cup \left( \bigcup \Coseco_0 \right) \\ 
			&\supset \Ko'_+ \cup \Ko'_0 
			= \Ko' \; .
		\end{align*}
		Therefore $\Coseco$ satisfies \Refc[\LTCdef]{Lo}{\{e\}, \Ko'}.

		\item \label{lem:relative-bound-ltc-inductive:proof::Muplus} 
		To see that $\Coseco$ satisfies \Refcd[\LTCextra]{Muplus}, 
		it suffices to fix an arbitrary subcollection $\Fi$ of $\Coseco$ such that 
		there is an $\left(\alpha, \Lo\right)$-close subset $\left\{ x_{\Cose} \colon \Cose \in \Fi \right\}$ of $X$ with $x_{\Cose} \in \Cose$ for any $\Cose \in \Fi$ and show $|\Fi| \leq d+1$. To this end, 
		by our construction in Step~\eqref{lem:relative-bound-ltc-inductive:proof::Coseco}, 
		we have $\Fi = \Fi_{\operatorname{free}} \cup \left\{ \widehat{\Cose}_i  \colon i \in \Fi' \right\}$ for some $\Fi_{\operatorname{free}} \subseteq \Coseco_{0, \operatorname{free}}$ and $\Fi' \subseteq \intervalofintegers{1}{m}$. 
		
		Consider the following two complementary possibilities. 
		\begin{enumerate}
			\item If $\Fi_{\operatorname{free}} \not= \varnothing$, i.e., there is some $\Cose_0 \in \Fi_{\operatorname{free}}$, 
			then we first observe that $x_{\Cose} \notin \bigcup_{i = 1}^m   
			\Cose''_{+, i}$ for any $\Cose \in \Fi$. Indeed, if $x_{\Cose} \in 
			\Cose''_{+, i}$ for some $\Cose \in \Fi$ and $i \in 
			\intervalofintegers{1}{m}$, then by $\left(\alpha, 
			\Lo\right)$-closeness there exists $g \in \Lo$ such that $x_{\Cose} 
			= 
			\alpha_{g} \left( x_{\Cose_0} \right)$. 
			By \Refc[\LTCdef]{Th}{\Auseco_{0}} in 
			Step~\eqref{lem:relative-bound-ltc-inductive:proof::Coseco_0} and 
			the fact that $\Auseco_{0}$ refines $\Aulaseco$ by 
			Step~\eqref{lem:relative-bound-ltc-inductive:proof::Auseco_0}, 
			there exists $\Aulase \in \Aulaseco_{0}$ such that $\left\{ \Ne_{0, 
			\Cose_0} \left(x_{\Cose_0} \right), x_{\Cose_0} \right\} \subseteq 
			\Aulase$. Therefore, by 
			Step~\eqref{lem:relative-bound-ltc-inductive:proof::refine_0}, it 
			follows from $\alpha_{g} \left( x_{\Cose_0} \right) \in \Cose''_{+, 
			i}$ that $\alpha_{g} \left( \Ne_{0, \Cose_0} \left(x_{\Cose_0} 
			\right) \right) \in {\Cose'_{+, i}}$; this contradicts the 
			assumption that $\Cose_0 \in \Coseco_{0, \operatorname{free}}$ by  
			Step~\eqref{lem:relative-bound-ltc-inductive:proof::Coseco}. 
			
			It thus follows by Step~\eqref{lem:relative-bound-ltc-inductive:proof::Lo} that 
			for any $i \in \Fi'$, we have $x_{\widehat{\Cose}_i} \in \Cose_i$ for some $\Cose_i \in \Coseco'_{0, i}$. 
			Since we have shown $\Coseco_0 
			= \Coseco_{0, \operatorname{free}} \sqcup \left( \bigsqcup_{i = 
			1}^m  \Coseco'_{0, i} \right)$ in 
			Step~\eqref{lem:relative-bound-ltc-inductive:proof::Coseco}, it 
			follows that $\Fi_{\operatorname{free}} \cup \left\{ \Cose_i  
			\colon i \in \Fi' \right\}$ is a subset of $\Coseco_0$ with the 
			same cardinality as $\Fi$. By \Refcd[\LTCextra]{Muplus} in 
			Step~\eqref{lem:relative-bound-ltc-inductive:proof::Coseco_0-mod}, 
			we have $|\Fi| \leq d+1$ as claimed. 
			
			\item If $\Fi_{\operatorname{free}} = \varnothing$, then 
			for each $i \in \Fi'$, we claim 
			there exists $g_{i} \in \Lo^{\Binu_0 + 1}$ such that 
			\begin{itemize}
				\item $\alpha_{g_{i}} \left( x_{\widehat{\Cose}_i} \right) \in \Cose_{+, i}$, and 
				\item $g_{i} \cdot \widetilde{\Ne}_{} \left( x_{\widehat{\Cose}_i} \right) = \widetilde{\Ne}_{} \left( \alpha_{g_{i}} \left( x_{\widehat{\Cose}_i} \right) \right) $. 
			\end{itemize}
			Indeed, since $\widehat{\Cose}_i = \Cose''_{+, i}  \cup \left( \bigcup \Coseco'_{0, i} \right)$ by definition, there are two complementary cases: 
			\begin{enumerate}
				\item If $x_{\widehat{\Cose}_i} \in \Cose''_{+, i} \subseteq \Cose_{+, i}$, then we simply define $g_{i} = \grpid$. 
				\item If $x_{\widehat{\Cose}_i} \in \Cose'$ for some $\Cose' 
				\in \Coseco'_{0,i} \subseteq \Coseco_{0,i}$, then since 
				\[
				\alpha^\cup_{\Lo} \left( \Ne_{0, \Cose'}  \left(\Cose'\right) 
				\right) \cap  \overline{\Cose'_{+, i}}   \not= \varnothing
				\]				
				 by the definition of $\Coseco_{0,i}$ in 
				 Step~\eqref{lem:relative-bound-ltc-inductive:proof::Coseco}, 
				 there are $z_{i} \in \Ne_{0, \Cose'}  \left(\Cose'\right)$ and 
				 $g_{i,0} \in \Lo$ with 
				$\alpha_{g_{i,0}} \left( z_{i} \right) \in \overline{\Cose'_{+, i}}$. 
				Since $\Ne_{0, \Cose'}  \left(\Cose'\right)$ is $\left(\Lo, \Binu_0\right)$-bounded by \Refc[\LTCextra]{Coplus}{\Lo, \Binu_0} in  Step~\eqref{lem:relative-bound-ltc-inductive:proof::Coseco_0-mod}, 
				there are $g_{i,1}, \ldots, g_{i,\Binu_0} \in \Lo$ such that 
				\[
				\alpha_{g_{i,j}  g_{i,j+1} \cdots g_{i,\Binu_0}} \left( \Ne_{0, 
				\Cose'} \left(x_{\widehat{\Cose}_i} \right) \right) \in \Ne_{0, 
				\Cose'}  \left(\Cose'\right)
				\] 
				for $j = 1, 2, \ldots, \Binu_0$ and $\alpha_{g_{i,1}  g_{i,2} 
				\cdots g_{i,\Binu_0}} \left( \Ne_{0, \Cose'} 
				\left(x_{\widehat{\Cose}_i} \right) \right) = z_{i}$, whence 
				\[
				\alpha_{g_{i,0}  g_{i,1} \cdots g_{i,\Binu_0}} \left( \Ne_{0, 
				\Cose'} \left(x_{\widehat{\Cose}_i} \right) \right) \in 
				\overline{\Cose'_{+, i}}
				\, .
				\]
				Let 
				\[
					g_{i} = g_{i,0}  g_{i,1} \cdots g_{i,\Binu_0} \; .
				\]
				By \Refc[\LTCdef]{Th}{\Auseco_{0}} in 
				Step~\eqref{lem:relative-bound-ltc-inductive:proof::Coseco_0} 
				and the fact that $\Auseco_{0}$ refines $\Aulaseco$ by 
				Step~\eqref{lem:relative-bound-ltc-inductive:proof::Auseco_0}, 
				there exists $\Aulase_{i} \in \Aulaseco_{0}$ such that 
				\[
				\left\{ 
				\Ne_{0, \Cose'} \left(x_{\widehat{\Cose}_i} \right), 
				x_{\widehat{\Cose}_i} \right\} \subseteq \Aulase_{i}
				\, .
				 \]
				Therefore, 
				by 
				Step~\eqref{lem:relative-bound-ltc-inductive:proof::refine_0},
				 because $\alpha_{g_{i}} \left( \Ne_{0, \Cose'} 
				\left(x_{\widehat{\Cose}_i} \right) \right) \in 
				\overline{\Cose'_{+, i}}$, we have $\alpha_{g_{i}} \left( 
				x_{\widehat{\Cose}_i} \right) \in \Cose_{+, i}$. 
				Moreover, by Step~\eqref{lem:relative-bound-ltc-inductive:proof::checkers} and our inductive assumption, we have 
				\begin{align*}
					g_{i} \cdot \widetilde{\Ne}_{} \left( x_{\widehat{\Cose}_i} \right) 
					=& \left( g_{i,0}  g_{i,1} \cdots g_{i,\Binu_0} \right) \cdot \widetilde{\Ne}_{} \left( \Ne_{0, \Cose'} \left(x_{\widehat{\Cose}_i} \right) \right) \\
					=& \widetilde{\Ne}_{} \left( \alpha_{ g_{i,0}  g_{i,1} \cdots g_{i,\Binu_0} } \left(  \Ne_{0, \Cose'} \left(x_{\widehat{\Cose}_i} \right) \right) \right) \\
					=& g_{+} H \\
					=& \widetilde{\Ne}_{} \left( \alpha_{g_{i}} \left( x_{\widehat{\Cose}_i} \right) \right) \; ,
				\end{align*}
				as claimed. 
			\end{enumerate}
			
			Now with this claim established, 
			observe that $\left\{ \alpha_{g_{i}} \left( x_{\widehat{\Cose}_i} 
			\right) \colon i \in \Fi' \right\}$ is $\left(\alpha, \Lo_{+} 
			\right)$-close. To see this, note that for any $i, j \in \Fi'$, it 
			follows from the $\left(\alpha, \Lo\right)$-closeness of $\left\{ 
			x_{\widehat{\Cose}_i} \colon i \in \Fi' \right\}$ that 
			there exists $g_{i j} \in \Lo$ such that $\alpha_{g_{ij}} \left( 
			x_{\widehat{\Cose}_j} \right) = x_{\widehat{\Cose}_i}$. Thus, by 
			Step~\eqref{lem:relative-bound-ltc-inductive:proof::loop-coarse}, 
			we have 
			\[
				g_{i} g_{ij} \left(g_{j}\right)^{-1} \in \left( g_+ H g_+^{-1} \right) \cap \Lo^{ \left( \Binu_0 + 1 \right) + 1 + \left( \Binu_0 + 1 \right)} = \Lo_{+} \; .
			\]
			Hence 
			\[
				\alpha_{g_{i}} \left( x_{\widehat{\Cose}_i} \right) = \alpha_{g_{i} g_{ij} \left(g_{j}\right)^{-1}} \left( \alpha_{g_{j}} \left( x_{\widehat{\Cose}_j} \right) \right) \in \alpha^\cup_{\Lo_{+}} \left( \alpha_{g_{j}} \left( x_{\widehat{\Cose}_j} \right) \right) \; .
			\]
			It thus follows from \Refc[\LTCextra]{Muplus}{d, \Lo_+} in Step~\eqref{lem:relative-bound-ltc-inductive:proof::Coseco_plus} that $|\Fi| \leq d+1$ as desired. 
		\end{enumerate}

		\item \label{lem:relative-bound-ltc-inductive:proof::Ne-Coseco_0} 
		For each $\Cose \in \Coseco_{0, \operatorname{free}} \subseteq 
		\Coseco_0$, we define a locally constant function 
		\[
		\Ne_{\Cose} := \Ne_{0, \Cose} \colon \Cose \to \bigcup \Thseco_{0} 
		\subseteq \bigcup \Thseco
		\, .
		\]
		 By our construction in 
		 step~\eqref{lem:relative-bound-ltc-inductive:proof::Coseco_0},
		 the function $\Ne_{\Cose}$ satisfies \Refcd[\LTCdef]{Eq}, 
		 \Refc[\LTCdef]{Th}{\Auseco_0},  
		\Refc[\LTCdef]{Ca}{\Binu_0} 
		and $\widetilde{\Ne}_{} \circ \Ne_{\Cose} = 
		\widetilde{\Ne}_{}$, and hence also satisfies \Refc[\LTCdef]{Th}{\Thseco_{}}, and 
		\Refcd[\LTCdef]{Ca}
		thanks to Remark~\ref{rmk:ltc-dim-monotonous} and the facts that $\Auseco_0$ refines $\Thseco_{}$ and $\Binu_0 \leq \Binu$.
		
		\item \label{lem:relative-bound-ltc-inductive:proof::Ne-Coseco_plus} 
		For each $i \in \intervalofintegers{1}{m}$\vphantom{$\Cose \in 
		\Coseco_+$}, we follow the notation in 
		Step~\eqref{lem:relative-bound-ltc-inductive:proof::Coseco} and define 
		a function $\Ne_{\widehat{\Cose}_i} \colon \widehat{\Cose}_i = 
		\Cose''_{+, i}  \sqcup \left( \bigcup \Coseco'_{0, i} \right) \to X$ as 
		follows: 
		\begin{itemize}
			\item For any $x \in  \Cose''_{+, i} \subseteq \widehat{\Cose}_i$, since $x \in \Cose_{+, i}$, we may define $\Ne_{\widehat{\Cose}_i} (x) := \Ne_{+, i} (x)$. 
			
			\item For any $x \in \bigcup \Coseco'_{0, i}$, we choose 			
			$\Cose \in \Coseco'_{0, i}$ such that $x \in \Cose$. 
			By the definition of $\Coseco'_{0, i}$, there exist $g_0 \in \Lo$ 
			and $y_0 \in \Ne_{0, \Cose}  \left( \Cose \right)$ such that 
			$\alpha_{g_0} \left( y_0 \right) \in  \overline{\Cose'_{+, i}}$. 
			Furthermore, by the assumption that $\Coseco_0$ satisfies 
			\Refc[\LTCextra]{Coplus}{\Lo, \Binu_0} (see 
			Step~\eqref{lem:relative-bound-ltc-inductive:proof::Coseco_0-mod}), 
			there exist a natural number $n \leq \Binu_0$ and $g_1, \ldots, g_n 
			\in \Lo$ 
			and $y_1, \ldots, y_n \in \Ne_{0, \Cose}  \left( \Cose \right)$ 
			such that $y_n = \Ne_{0, \Cose}  (x)$ and $y_{i-1} = \alpha_{g_i} \left( y_{i} \right)$ for $i = 1, \ldots, n$. 
			In other words,  for $i = 1, 2, \ldots, n$ we have 
			\[
			\alpha_{g_0 g_{1} \cdots g_n} \left( \Ne_{0, \Cose}  \left( x 
			\right) \right) \in \overline{\Cose'_{+, i}}
			 \; \text{ and } \; 
			 \alpha_{g_i g_{i+1} \cdots g_n} \left( \Ne_{0, \Cose}  \left( x 
			 \right) \right) \in \Ne_{0, \Cose}   \left(\Cose\right)
			 \]
			We then define, provisionally,  
			\[
				\Ne_{\widehat{\Cose}_i} (x) := \alpha_{\left(g_0 g_1 \cdots g_n\right)^{-1}} \left( \Ne_{+, i} \left( \alpha_{g_0 g_1 \cdots g_n} \left( \Ne_{0, \Cose}  \left( x \right) \right) \right) \right) \; .
			\]
			We will show in Step~\eqref{lem:relative-bound-ltc-inductive:proof::Ne-Coseco_0-claim} that this is indeed well-defined. 
		\end{itemize}
		Once we show that $\Ne_{\Cose}$ is well defined, we can deduce that 
		since the maps $\Ne_{+, i}$ and $\Ne_{0, \Cose}$ involved in the above definitions are locally constant, 
		$\Ne_{\widehat{\Cose}_i}$ is also locally constant.

		\item \label{lem:relative-bound-ltc-inductive:proof::Ne-Coseco_0-claim} 
		To see that the provisional definition 
		in Step~\eqref{lem:relative-bound-ltc-inductive:proof::Ne-Coseco_plus}   
		does not depend on the choices of $\Cose$ and of $g_0, g_1, \ldots, 
		g_n$,
		we prove the following stronger statement, that will also help us 
		verify \Refcd[\LTCdef]{Eq} later: 
		For any $g \in \Lo$, for any $\Cose' \in \Coseco'_{0, i}$ containing 
		$\alpha_g(x)$, for any natural number $n' \leq \Binu_0$ and for any 
		$g_0', 
		g_1', \ldots, g_{n'}' \in \Lo$ that satisfy 
		\[
		\alpha_{g'_0 g'_{1} \cdots g'_{n'}} \left( \Ne_{0, \Cose'}  \left( 
		\alpha_g(x) \right) \right) \in \overline{\Cose'_{+, i}}
		\; \text{ and } \;`
		\alpha_{g'_i g'_{i+1} \cdots g'_{n'}} \left( \Ne_{0, \Cose'}  \left( 
		\alpha_g(x) \right) \right) \in {\Ne}_{0, \Cose'} \left(\Cose'\right)
		\]
		 for $i = 1, 2, \ldots, {n'}$, 
		we have 
		\begin{align*}
		& \alpha_g \circ \alpha_{\left(g_0 g_1 \cdots g_n\right)^{-1}} \left( \Ne_{+, i} \left( \alpha_{g_0 g_1 \cdots g_n} \left( \Ne_{0, \Cose}  \left( x \right) \right) \right) \right)  \\
		&= \alpha_{\left(g'_0 g'_1 \cdots g'_{n'}\right)^{-1}} \left( \Ne_{+, i} \left( \alpha_{g'_0 g'_1 \cdots g'_{n'}} \left( \Ne_{0, \Cose'}  \left( \alpha_g(x) \right) \right) \right) \right) \; .
		\end{align*}
		Note that the case when $g = \grpid$ justifies the provisional definition in Step~\eqref{lem:relative-bound-ltc-inductive:proof::Ne-Coseco_plus}.    
		
		Indeed, since $\Coseco_{0}$ satisfies \Refc[\LTCdef]{Th}{\Auseco_{0}} 
		by Step~\eqref{lem:relative-bound-ltc-inductive:proof::Coseco_0}, and 
		thus also \Refc[\LTCdef]{Th}{\Thseco} by 
		Step~\eqref{lem:relative-bound-ltc-inductive:proof::Auseco_0}, we have 
		\[
		{\Ne}_{0, \Cose} \left(\Cose\right) \cup {\Ne}_{0, \Cose'} 
		\left(\Cose'\right) \subseteq \bigcup \Thseco
		\]
		 and there exist $\Ause, \Ause' \in \Auseco_{0}$ such that 
		 \[
		 \left\{ x, {\Ne}_{0, \Cose} \left( x \right) \right\} \subseteq \Ause
		 \; \text{ and } \; 
		 \left\{ \alpha_g(x), {\Ne}_{0, \Cose'} \left( \alpha_g(x) \right) 
		 \right\} \subseteq \Ause'
		 \, .
		 \]
		Therefore there exists $\Aulase' \in \Aulaseco'$ such that
		\[
		\left\{ \alpha_g(x), \alpha_g \left ( {\Ne}_{0, \Cose} ( x ) \right ) 
		\right\} \subseteq \alpha_g(\Ause) \subseteq
		\Aulase'
		\, ,
		\]
		and thus, by 
		Step~\eqref{lem:relative-bound-ltc-inductive:proof::Auseco_0} again, 
		there exists $\Aulase \in \Aulaseco$ such that 
		\[
		\left\{ \alpha_g \left( {\Ne}_{0, \Cose} \left( x \right) \right), 
		{\Ne}_{0, \Cose'} \left( \alpha_g(x) \right) \right\} \subseteq \Aulase 
		\, .
		\]
		Note that we also have 
		\[
		\widetilde{\Ne}_{} \left( {\Ne}_{0, \Cose'} \left( \alpha_g(x) \right) \right) 
		= \widetilde{\Ne}_{} \left(  \alpha_g(x) \right) 
		= g \cdot \widetilde{\Ne}_{} \left(  x \right) 
		= g \cdot \widetilde{\Ne}_{} \left( {\Ne}_{0, \Cose} \left( x \right) \right) \; .
		\]
		The claimed statement then follows directly from applying Step~\eqref{lem:relative-bound-ltc-inductive:proof::loop} to $y = {\Ne}_{0, \Cose} \left( x \right)$ and $y' = {\Ne}_{0, \Cose'} \left( \alpha_g(x) \right)$. 

		\item \label{lem:relative-bound-ltc-inductive:proof::Ne-Coseco_0-Eq} 
		We prove that for each $i \in 
		\intervalofintegers{1}{m}$\vphantom{$\Cose \in \Coseco_+$}, the 
		function $\Ne_{\widehat{\Cose}_i}$ satisfies \Refcd[\LTCdef]{Eq}. To 
		this end, we fix $x \in \widehat{\Cose}_i$ and $g \in \Lo$ satisfying 
		$\alpha_g (x) \in \widehat{\Cose}_i$. We consider the following three 
		possibilities: 
		\begin{itemize}
			\item If both $x$ and $\alpha_g (x)$ are in $\Cose''_{+, i} \subseteq \Cose_{+, i}$, this follows from the fact that $\Ne_{+, i}$ satisfies \Refc[\LTCdef]{Eq}{\Lo_+} (see Step~\eqref{lem:relative-bound-ltc-inductive:proof::Coseco_0-mod}), 
			since 
			\[
			g g_+ H = g \widetilde{\Ne} \left( x \right) =  \widetilde{\Ne} 
			\left( \alpha_g (x) \right) = g_+ H 
			\, ,
			\]
			and thus $g \in g_+ H g_+^{-1} \cap \Lo \subseteq \Lo_+$. 
			
			\item If both $x$ and $\alpha_g (x)$ are in $\bigcup \Coseco'_{0, i}$, 
			we apply the statement in Step~\eqref{lem:relative-bound-ltc-inductive:proof::Ne-Coseco_0-claim} to conclude that 
			$\alpha_{g} \circ \Ne_{\widehat{\Cose}_i} (x) = \Ne_{\widehat{\Cose}_i} \left( \alpha_g (x) \right)$. 
			
			\item If one of $x$ and $\alpha_g (x)$ is in $\Cose''_{+, i}$ and the other is in $\bigcup \Coseco'_{0, i}$, without loss of generality, 
			we assume $x \in \bigcup \Coseco'_{0, i}$ and $\alpha_g (x) \in \Cose''_{+, i}$. 
			Choose $\Cose \in \Coseco'_{0, i}$ with $x \in \Cose$. 
			Since $\Coseco_{0}$ satisfies \Refc[\LTCdef]{Th}{\Auseco_{0}} by 
			Step~\eqref{lem:relative-bound-ltc-inductive:proof::Coseco_0}, we 
			can choose $\Ause \in \Auseco_{0}$ such that $\left\{ x, {\Ne}_{0, 
			\Cose} \left( x \right) \right\} \subseteq \Ause$. 
			Since $\alpha_g (x) \in \Cose''_{+, i} \cap \alpha_g (\Ause)$, 
			by Step~\eqref{lem:relative-bound-ltc-inductive:proof::Auseco_0-fact}, we have 
			\[
				\left\{ \alpha_g (x), \alpha_g \left( {\Ne}_{0, \Cose} \left( x \right) \right) \right\} \subseteq \alpha_g (\Ause) \subseteq \Cose'_{+, i} \; 
			\]
			and $\Ne_{+, i} \left( \alpha_g (x) \right) = \Ne_{+, i} \left( \alpha_g \left( {\Ne}_{0, \Cose} \left( x \right) \right) \right)$. 
			Hence it follows from the definitions in Step~\eqref{lem:relative-bound-ltc-inductive:proof::Ne-Coseco_plus} that 
			\begin{align*}
				\alpha_{g} \circ \Ne_{\widehat{\Cose}_i} (x) &= \alpha_{g} \circ \alpha_{ g^{-1}} \left( \Ne_{+, i} \left( \alpha_{g} \left( \Ne_{0, \Cose}  \left( x \right) \right) \right) \right) \\
				&= \Ne_{+, i} \left( \alpha_g (x) \right) = \Ne_{\widehat{\Cose}_i} \left( \alpha_g (x) \right)
			\end{align*}
		as claimed.
		\end{itemize}

		\item \label{lem:relative-bound-ltc-inductive:proof::Ne-Coseco_0-prep} 
		In preparation for the proof of \Refcd[\LTCdef]{Th}, 
		we fix some notation: 
		Fix $z \in X$ and $i \in \intervalofintegers{1}{m}$\vphantom{$\Cose \in \Coseco_+$}.  
		For any $\Cose \in \Coseco'_{0, i}$ and for any $g_0, g_1, \ldots, 
		g_{\Binu_0} \in G$, 
		set
		\begin{align*}
			\Aulase_{z, i, \Cose; \left( g_0, g_1, \ldots, g_{\Binu_0} \right)}
			& = \\
			\Bigg\{ y \in \Ne_{0, \Cose}   \left(\Cose\right) \colon 
			&\ \alpha_{g_i g_{i+1} \cdots g_{\Binu_0}} \left( y \right) \in \Ne_{0, \Cose}   \left(\Cose\right) \quad \text{ for } i = 1, 2, \ldots, g_{\Binu_0} \; , \\
			&\ \alpha_{g_0 g_{1} \cdots g_{\Binu_0}} \left( y \right) \in \overline{\Cose'_{+, i}} \; , \\
			& \text{ and } \quad \alpha_{\left(g_0 g_1 \cdots g_{\Binu_0}\right)^{-1}} \left( \Ne_{+, i} \left( \alpha_{g_0 g_1 \cdots g_{\Binu_0}} \left( y \right) \right) \right) =  z  
			\Bigg\} \; . 
		\end{align*}
		By \Refc[\LTCdef]{Ca}{\Binu_0} in 
		Step~\eqref{lem:relative-bound-ltc-inductive:proof::Coseco_0}, this is 
		a finite set. 
		Also define, for any $\Cose \in \Coseco'_{0, i}$, the finite set 
		\[
			\Aulase_{z, i, \Cose}  
			:=  \bigcup_{ g_0, g_1, \ldots, g_{\Binu_0} \in \Lo  } \Aulase_{z, i, \Cose; \left( g_0, g_1, \ldots, g_{\Binu_0} \right)} \; . 
		\]
		Comparing the construction above with the definition in Step~\eqref{lem:relative-bound-ltc-inductive:proof::Ne-Coseco_plus} and recalling that $\grpid \in \Lo$, we have the equality 
		\[
			\left( \bigcup \Coseco'_{0, i} \right) \cap \Ne_{\widehat{\Cose}_i}^{-1} \left( z \right) = 
			\bigcup_{\Cose \in \Coseco'_{0, i}}  \Ne_{0, \Cose}^{-1} \left( \Aulase_{z, i, \Cose} \right) \; .
		\]

		\item \label{lem:relative-bound-ltc-inductive:proof::Ne-Coseco_0-Th-prep} 
		We claim that for any $z \in X$ and for any $i \in 
		\intervalofintegers{1}{m}$\vphantom{$\Cose \in \Coseco_+$}, if 
		\[
			\bigcup_{\Cose \in \Coseco'_{0, i}}   \Aulase_{z, i, \Cose} \not= \varnothing \; ,
		\]
		then there exists $\Thse \in \Thseco_{0}$ such that 
		\[
			\{z\} \cup 
			\left( \bigcup_{\Cose \in \Coseco'_{0, i}} \Ne_{0, \Cose}^{-1} \left( \Aulase_{ z, i, \Cose} \right)  \right)
			\subseteq \Thse \; . 
		\]
		
		Indeed, in view of the construction of $\Thseco'_{0}$ in 
		Step~\eqref{lem:relative-bound-ltc-inductive:proof::Auseco_plus} and 
		the definition of the finite set $\Aulase_{z, i, \Cose}$ in 
		Step~\eqref{lem:relative-bound-ltc-inductive:proof::Ne-Coseco_0-prep}, 
		it suffices to show that for any $z \in X$, for any $i \in 
		\intervalofintegers{1}{m}$\vphantom{$\Cose \in \Coseco_+$}, for any 
		$\Cose \in \Coseco'_{0, i}$, for any $g_0, g_1, \ldots, g_{\Binu_0} \in 
		\Lo$, and for any $y \in \Aulase_{ z, i, \Cose; \left( g_0, g_1, 
		\ldots, g_{\Binu_0} \right)}$, 
		there exists $\Thse' \in \Thseco'_{0}$ such that 
		\[
		\{z\} \cup  \Ne_{0, \Cose}^{-1} \left( y \right)  \subseteq \Thse'
		\, .
		\]
		To this end, 
		we apply \Refc[\LTCdef]{Th}{\Auseco_0} in Step~\eqref{lem:relative-bound-ltc-inductive:proof::Coseco_0} to obtain $\Ause \in \Auseco_0$ such that 
		$\left\{ y \right\} \cup \Ne_{0, \Cose}^{-1} \left( y \right) \subseteq \Ause$. 
		Since 
		\begin{align*}
			\alpha_{g_0 g_{1} \cdots g_{\Binu_0}} \left( y \right) & \in \alpha_{g_0 g_{1} \cdots g_{\Binu_0}} \left( \Aulase_{ z, i, \Cose; \left( g_0, g_1, \ldots, g_{\Binu_0} \right)} \cap \Ause \right) \\
			& \subseteq \overline{\Cose'_{+, i}} \cap \alpha_{g_0 g_{1} \cdots 
			g_{\Binu_0}} \left( \Ause \right) \, ,
		\end{align*}
		applying Step~\eqref{lem:relative-bound-ltc-inductive:proof::Auseco_0-fact} with $g = \grpid$, 
		it follows that $\alpha_{g_0 g_{1} \cdots g_{\Binu_0}} \left( \Ause 
		\right) \subseteq \Cose_{+, i}$, and the function $\Ne_{+, i}$ maps 
		$\alpha_{g_0 g_{1} \cdots 
		g_{\Binu_0}}$ to a singleton. 
		Thus, by \Refc[\LTCdef]{Th}{\Auseco_+} in 
		Step~\eqref{lem:relative-bound-ltc-inductive:proof::Coseco_plus}, there 
		exists $\Ause' \in \Auseco_+$ such that 
		\[
		\alpha_{g_0 g_{1} \cdots g_{\Binu_0}} \left( \Ause \right) \cup \Ne_{+, 
		i} \left( \alpha_{g_0 g_{1} \cdots g_{\Binu_0}} \left( \Ause \right) 
		\right) \subseteq \Ause'
		\, .
		\]
		By Step~\eqref{lem:relative-bound-ltc-inductive:proof::Auseco_plus}, 
		there exists $\Thse' \in \Thseco'_{0} \cup \left\{ X \setminus 
		\overline{\bigcup \Thseco''_0} \right\}$ such that 
		\[
		\alpha_{\left( g_0 g_{1} \cdots g_{\Binu_0} \right)^{-1}} \left( \Ause' 
		\right) \subseteq \Thse'
		\, .
		\]
		Hence 
		\begin{align*}
			&\ \{z\} \cup  \Ne_{0, \Cose}^{-1} \left( y \right)  \\
			= &\ \alpha_{\left( g_0 g_{1} \cdots g_{\Binu_0} \right)^{-1}} \left( \left\{\Ne_{+, i} \left( \alpha_{g_0 g_{1} \cdots g_{\Binu_0}} \left( y \right) \right) \right\} \cup \alpha_{g_0 g_{1} \cdots g_{\Binu_0}} \left( \Ne_{0, \Cose}^{-1} \left( y \right) \right) \right) \\
			\subseteq &\ \alpha_{\left( g_0 g_{1} \cdots g_{\Binu_0} \right)^{-1}} \left( \Ne_{+, i} \left( \alpha_{g_0 g_{1} \cdots g_{\Binu_0}} \left( \Ause \right) \right) \cup \alpha_{g_0 g_{1} \cdots g_{\Binu_0}} \left( \Ause \right) \right) \\
			\subseteq &\ \alpha_{\left( g_0 g_{1} \cdots g_{\Binu_0} \right)^{-1}} \left( \Ause' \right) 
			\subseteq  \Thse' \; . 
		\end{align*}
		Since $\Ne_{0, \Cose}^{-1} \left( y \right) \subseteq \Cose \subseteq \bigcup \Auseco_{0} = \bigcup \Thseco''_{0}$ by \Refc[\LTCdef]{Th}{\Auseco_{0}} in Step~\eqref{lem:relative-bound-ltc-inductive:proof::Coseco_0} and by Step~\eqref{lem:relative-bound-ltc-inductive:proof::Auseco_0}, we have $\Thse' \not= X \setminus \overline{\bigcup \Thseco''_0}$ and thus $\Thse' \in \Thseco'_{0}$, which proves the claim.

		\item \label{lem:relative-bound-ltc-inductive:proof::Ne-Coseco_0-Th} 
		We prove, for any $i \in \intervalofintegers{1}{m}$\vphantom{$\Cose \in 
		\Coseco_+$}, that the function $\Ne_{\widehat{\Cose}_i}$ satisfies 
		\Refcd[\LTCdef]{Th}. This implies $\Ne_{\widehat{\Cose}_i} \left( 
		\widehat{\Cose}_i \right) \subseteq \bigcup \Thseco$ and, by our 
		assumption in Step~\eqref{lem:relative-bound-ltc-inductive:proof::Bo}, 
		$\widetilde{\Ne} \circ \Ne_{\Cose} = \widetilde{\Ne}$. 
		
		To this end, we fix $z \in \Ne_{\widehat{\Cose}_i} \left( \widehat{\Cose}_i \right)$ and consider two complementary possibilities stemming from the decomposition $\widehat{\Cose}_i = \Cose''_{+, i}  \sqcup \left( \bigcup \Coseco'_{0, i} \right)$: 
		\begin{itemize}
			\item If $\Ne_{\widehat{\Cose}_i}^{-1} \left( z \right) \subseteq 
			\Cose''_{+, i}$, then $\Ne_{\widehat{\Cose}_i}^{-1} \left( z 
			\right) = \Ne_{+, i}^{-1} \left( z \right)$ by 
			Step~\eqref{lem:relative-bound-ltc-inductive:proof::Ne-Coseco_plus},
			 so \Refcd[\LTCdef]{Th} for $\Ne_{\widehat{\Cose}_i}$ follows from 
			\Refc[\LTCdef]{Th}{\Thseco_+} for $\Ne_{+, i}$ in 
			Step~\eqref{lem:relative-bound-ltc-inductive:proof::Coseco_plus}. 
			
			\item If $ \Ne_{\widehat{\Cose}_i}^{-1} \left( z \right) \cap \left( \bigcup \Coseco'_{0, i} \right) \not= \varnothing$, then by the last equality in Step~\eqref{lem:relative-bound-ltc-inductive:proof::Ne-Coseco_0-prep} and the conclusion of Step~\eqref{lem:relative-bound-ltc-inductive:proof::Ne-Coseco_0-Th-prep}, there exists $\Thse \in \Thseco_{0}$ such that 
			\[
				\{z\} \cup 
				\left( \left( \bigcup \Coseco'_{0, i} \right) \cap \Ne_{\widehat{\Cose}_i}^{-1} \left( z \right) \right)
				\subseteq \Thse \; . 
			\]
			In particular, $z \notin \bigcup \Thseco_+$ by 
			Step~\eqref{lem:relative-bound-ltc-inductive:proof::cutout} and 
			thus 
			\[
			\Ne_{\widehat{\Cose}_i}^{-1} \left( z \right) \cap \Cose''_{+, i} = 
			\Ne_{+, i}^{-1} \left( z \right) \cap \Cose''_{+, i} = \varnothing
			\, .
			\]
			Hence $\{z\} \cup 
			\Ne_{\widehat{\Cose}_i}^{-1} \left( z \right)   
			\subseteq \Thse$, as desired. 
		\end{itemize}

		\item \label{lem:relative-bound-ltc-inductive:proof::Ne-Coseco_0-Ca} 
		We prove for any $i \in \intervalofintegers{1}{m}$\vphantom{$\Cose \in \Coseco_+$}, the function $\Ne_{\widehat{\Cose}_i}$ satisfies \Refc[\LTCdef]{Ca}{\left| \Lo_+^{\Binu_+} \right| \left| \Lo^{ \Binu_0 + 1} \right|}. 
		Indeed, by Step~\eqref{lem:relative-bound-ltc-inductive:proof::Ne-Coseco_plus}, we have 
		\[
			\Ne_{\widehat{\Cose}_i} \left( \widehat{\Cose}_i \right) 
			\subseteq \bigcup_{ g_0, g_1, \ldots, g_{\Binu_0} \in \Lo  } \alpha_{\left(g_0 g_1 \cdots g_n\right)^{-1}} \left( \Ne_{+, i} \left( \Cose_{+, i} \right)  \right) 
			\; .  
		\]
		Since $\left| \Ne_{+, i} \left( \Cose_{+, i} \right)  \right| \leq \left| \Lo_+^{\Binu_+} \right|$ by \Refc[\LTCextra]{Coplus}{\Lo_+, \Binu_+} in Step~\eqref{lem:relative-bound-ltc-inductive:proof::Coseco_plus}, we conclude that 
		$\left| \Ne_{\widehat{\Cose}_i} \left( \widehat{\Cose}_i \right)  \right| \leq \left| \Lo_+^{\Binu_+} \right| \left| \Lo^{ \Binu_0 + 1} \right|$, as desired. 
	\end{enumerate}

	Combining Steps~\eqref{lem:relative-bound-ltc-inductive:proof::Lo},~\eqref{lem:relative-bound-ltc-inductive:proof::Muplus},~\eqref{lem:relative-bound-ltc-inductive:proof::Ne-Coseco_0},~\eqref{lem:relative-bound-ltc-inductive:proof::Ne-Coseco_0-Eq},~\eqref{lem:relative-bound-ltc-inductive:proof::Ne-Coseco_0-Th}, and~\eqref{lem:relative-bound-ltc-inductive:proof::Ne-Coseco_0-Ca}, 
	we have completed the inductive proof. 
\end{proof}

We are now ready to state and prove our main theorem in this section.

\begin{Thm} \label{thm:relative-bound-ltc}
	Let $\mathcal{F}$ be a family of subgroups of $G$ closed under conjugation 
	and taking subgroups. Then 
	\[
	\dimltc \left( \alpha \right) + 1  \leq  \left( \eqasdim (\alpha, 
	\mathcal{F}) + 1 \right) 
	\cdot \sup_{H \in \mathcal{F}} \left( \dimltc \left( {\alpha_{|_H}} 
	\right) + 1  \right) 
	\; . 
	\]
\end{Thm}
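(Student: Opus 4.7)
The plan is to verify $\dimltc^{+1}(\alpha) \leq (d+1)(e+1)$ where $d := \eqasdim(\alpha, \mathcal{F})$ and $e := \sup_{H \in \mathcal{F}} \dimltc(X, \alpha|_H)$, which I assume to be finite (otherwise the inequality is trivial). I will check the equivalent characterization in Proposition~\ref{prop:ltc-dim-new}\eqref{item:prop:ltc-dim-new:Lominus}: fix finite $\Lo \Subset G$ (assumed symmetric with $\grpid \in \Lo$), compact $\Ko \Subset X$, and an arbitrary finite open cover $\Thseco$ of $X$, and produce a finite open $\Coseco$ with locally constant maps $(\Ne_{\Cose})$ satisfying \Refc[\LTCdef]{Lo}{\{\grpid\}, \Ko}, \Refc[\LTCextra]{Muplus}{(d+1)(e+1)-1, \Lo}, \Refc[\LTCdef]{Eq}{\Lo}, \Refc[\LTCdef]{Th}{\Thseco}, and \Refcd[\LTCdef]{Ca}. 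The approach is a two-layer refinement: a coarse layer coming from $\eqasdim$, and a fine layer coming from $\dimltc$ on each stabilizer, with multiplicities combining multiplicatively.

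For the coarse layer, apply Lemma~\ref{lem:BLR-reformu} with the enlarged parameter $\Lo^2$, producing an open collection $\mathcal{U}$ of $X$ with subgroups $G_U \in \mathcal{F}$ and $\Lo^2$-equivariant labels $\La_U \colon U \to G/G_U$, satisfying $\mult(\mathcal{U}) \leq d+1$, \Refc[\BLRlem]{Lo}{\Lo^2, \Ko}, and \Refc[\BLRlem]{Bo}{B} for some uniform finite $B \Subset G$. Shrink to $\mathcal{V} := \{\alpha^{\cap}_{\Lo}(U) : U \in \mathcal{U}\}$: by Remark~\ref{rmk:multiplicity-action-shrink-enlarge} we obtain $\mult_{\alpha, \Lo}(\mathcal{V}) \leq d+1$; a direct calculation shows that \Refc[\BLRlem]{Lo}{\Lo^2, \Ko} for $\mathcal{U}$ yields $\Ko \subseteq \bigcup_V V$; and the restricted labels $\La_V := \La_U|_V$ remain $\Lo$-equivariant with image of cardinality at most $|B|$. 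By compactness of $\Ko$, pass to a finite subcollection $\mathcal{V}_0 \subseteq \mathcal{V}$ still covering $\Ko$; then Lemma~\ref{lem:shrunken-cover} yields compacts $\Ko_V \Subset V$ with $\Ko \subseteq \bigcup_{V \in \mathcal{V}_0} \Ko_V$. For each $V$, pick a finite $\Fi_V \Subset G$ whose image in $G/G_V$ contains $\La_V(V)$.

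For the fine layer, apply Lemma~\ref{lem:relative-bound-ltc-inductive} to each $V \in \mathcal{V}_0$ with $H = G_V \in \mathcal{F}$ and parameters $\Lo$, $\Fi_V$, $\Ko_V$; since $\mathcal{F}$ is closed under conjugation, $\dimltc(X, \alpha|_{H^g}) \leq e$ for every $g \in \Fi_V$. This yields a bound $M_V$, and then applied to input $\Ko' = \Ko_V$, input cover $\Thseco_V := \{\Thse \cap V : \Thse \in \Thseco\}$ (covering $\Ko_V$ and contained in $V$), and labeling $\widetilde{\Ne} := \La_V|_{\bigcup \Thseco_V}$ (taking values in $\Fi_V G_V/G_V$ and $\Lo$-equivariant), produces a finite open $\Coseco_V$ together with locally constant $\Ne_\Cose \colon \Cose \to \bigcup \Thseco_V$ satisfying \Refc[\LTCdef]{Lo}{\{\grpid\}, \Ko_V}, \Refc[\LTCextra]{Muplus}{e, \Lo}, \Refc[\LTCdef]{Eq}{\Lo}, \Refc[\LTCdef]{Th}{\Thseco_V}, and \Refc[\LTCdef]{Ca}{M_V}. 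Crucially, \Refc[\LTCdef]{Th}{\Thseco_V} forces $\Cose \subseteq \bigcup \Thseco_V \subseteq V$ for each $\Cose \in \Coseco_V$.

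Assemble $\Coseco := \bigcup_{V \in \mathcal{V}_0} \Coseco_V$, indexed so that each $\Cose$ remembers its parent $V(\Cose) \in \mathcal{V}_0$ with $\Cose \subseteq V(\Cose)$. The verifications of \Refc[\LTCdef]{Lo}{\{\grpid\}, \Ko} (via $\Ko \subseteq \bigcup_V \Ko_V$), \Refc[\LTCdef]{Eq}{\Lo}, \Refc[\LTCdef]{Th}{\Thseco} (since $\Thseco_V$ refines $\Thseco$), and \Refc[\LTCdef]{Ca}{\max_V M_V} are routine transfers. The crucial \Refc[\LTCextra]{Muplus}{(d+1)(e+1)-1, \Lo} goes as follows: given a subfamily $\Fi \subseteq \Coseco$ witnessed by an $(\alpha, \Lo)$-close selection $\{x_\Cose\}_{\Cose \in \Fi}$ with $x_\Cose \in \Cose \subseteq V(\Cose)$, selecting one $\Cose$ per distinct value of $V(\Cose)$ yields an $(\alpha, \Lo)$-close selection in distinct members of $\mathcal{V}_0$, whence the number of distinct $V(\Cose)$'s is at most $d+1$ via $\mult_{\alpha, \Lo}(\mathcal{V}) \leq d+1$; and for each fixed $V$, the subfamily $\{\Cose \in \Fi : V(\Cose) = V\}$ is an $(\alpha, \Lo)$-close family inside $\Coseco_V$, of size at most $e+1$. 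Hence $|\Fi| \leq (d+1)(e+1)$, as required. The main obstacle is orchestrating the interplay between the two dimensions so that the multiplicities combine in the strong $(\alpha, \Lo)$-wide sense; this hinges on the upgrade from \Refcd[\LTCdef]{Mu} for $\mathcal{U}$ to \Refcd[\LTCextra]{Muplus} for $\mathcal{V}$ via Remark~\ref{rmk:multiplicity-action-shrink-enlarge}, and on the thinness output of Lemma~\ref{lem:relative-bound-ltc-inductive} forcing each fine $\Cose$ to remain inside its parent $V$.
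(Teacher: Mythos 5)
Your proposal is correct and follows essentially the same route as the paper's proof: a coarse equivariant cover obtained from Lemma~\ref{lem:BLR-reformu}, a fine refinement of each piece produced by Lemma~\ref{lem:relative-bound-ltc-inductive} (applied with the conjugated stabilizers, which lie in $\mathcal{F}$ by conjugation-closedness), and the same multiplicative count of the $(\alpha,L)$-multiplicity giving $(d+1)(e+1)$. The only (harmless) deviations are presentational: you get finiteness of the label images from the boundedness clause of Lemma~\ref{lem:BLR-reformu} and keep the parent sets equal to the shrunken sets $\alpha^{\cap}_{L}(U)$ (so Remark~\ref{rmk:multiplicity-action-shrink-enlarge} applies verbatim), whereas the paper passes to a finite precompact refinement and uses compactness plus local constancy for the same purpose.
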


Before presenting the proof, let us summarize the strategy:  
to bound $\dimltc \left( \alpha \right)$, we construct, after given parameters 
as in Definition~\ref{def:ltc-dim}, a desired open cover of $\Ko$ in $X$ in two 
steps: first constructing an open cover $\Coseco$ of $\Ko$ in $X$ using 
$\eqasdim (\alpha, \mathcal{F})$ and Lemma~\ref{lem:BLR-reformu} and then 
refining it to obtain the final cover. To get an intuition of the refining 
step, it is helpful to imagine the easy case when each $\Cose \in 
\Coseco$ can be decomposed as a finite disjoint union $\bigsqcup_{[g] \in (\Fi 
H)/H} \alpha_g \left( \Cose_0 \right)$
with an open subset $\Cose_0$ in $X$, $H \in \mathcal{F}$ and $\Fi \Subset G$ 
satisfying $\alpha_g \left( \Cose_0 \right) = \alpha_{g'} \left( \Cose_0 
\right)$ if $gH = g'H$,  and the labeling function $\La_{\Cose} \colon 
\bigsqcup_{[g] \in (\Fi H)/H} \alpha_g \left( \Cose_0 \right) \to G/H$ given by 
the index $[g]$ in the union. Let's suppose first, to simplify the situation 
even more, that $\mathcal{F}$ consists just of the trivial group; this 
corresponds to the case of free actions. In this case, the situation described 
above simply gives us an open set $U_0$ which gets translated to disjoint 
copies of it by group elements in some large finite subset of the group. What's 
missing for the definition of long thin covering dimension is that the set 
$U_0$ need not be of small diameter (that is, that the cover isn't `thin'). In 
this case, this is simply remedied by subdividing $U_0$, where the number of 
overlaps is controlled by the (compactly supported) covering dimension of the space; note that the 
(compactly supported) covering dimension is the same as the long thin covering dimension associated to the 
action of the trivial group by Remark~\ref{rmk:ltc-dim-trivial-group}. This illustrates why we get a product of those 
quantities in the statement. Continuing with the simplified case in question when 
$\mathcal{F}$ is not necessarily trivial, we see that those subdivisions of $U_0$ will need 
to be done more carefully, in a way which respects the actions of the subgroups 
in $\mathcal{F}$, and that is where the factor $\dimltc^{+1} \left( X, 
{\alpha_{|_H}} 
\right) $ comes in. 
The actual situation we deal with is much more delicate, because we do not 
necessarily have nice towers of open sets which get mapped exactly one into the 
other. This requires making careful refinements and choices, which is where 
Lemma \ref{lem:relative-bound-ltc-inductive} is used.

\begin{proof}
	We assume that the right hand side is finite, otherwise there's nothing to 
	prove. Denote
	\[
	\sup_{H \in \mathcal{F}} \left( \dimltc \left( X, {\alpha_{|_H}} \right)  
	\right) = d < \infty \quad \text{ and } \quad \eqasdim (\alpha, 
	\mathcal{F}) = d' < \infty
	\]
	and show $\dimltc^{+1} \left( \alpha \right) \leq (d + 1) (d' + 1)$. 
	To this end, it suffices to verify 
	Proposition~\ref{prop:ltc-dim-new}\eqref{item:prop:ltc-dim-new:Lominus}. To 
	do this, we fix arbitrary $\Lo$ and $\Ko$ 
	as in 
	Proposition~\ref{prop:ltc-dim-new}\eqref{item:prop:ltc-dim-new:Lominus}. 
	Without loss of generality, we may assume that $\Lo$ is symmetric and 
	contains 
	$\grpid$. 
	
	By Lemma~\ref{lem:BLR-reformu},  
	there exist 
	\begin{itemize}
		\item a finite subset $\Bo \Subset G$, 
		\item a collection $\Auseco$ of open sets in $X$, 
		\item subgroups $G_{\Ause} \in \mathcal{F}$ for $\Ause \in \Auseco$, and
		\item locally constant functions $\widehat{\Ne}_{\Ause} \colon \Ause 
		\to G / G_{\Ause}$ for $\Ause \in \Auseco$
	\end{itemize}
	that 
	satisfy \Refcd{Lo}, \Refc{Mu}{d'}, and \Refcd{Eq}. 
	Using \Refcd{Lo},
	we see that the collection $\Auseco' := \left\{ \alpha^\cap_{\Lo} (\Ause) 
	\colon \Ause \in \Auseco \right\}$ covers $\Ko$. Since $\Ko$ is compact, 
	there exists a finite collection $\Aulaseco$ of precompact open subsets of 
	$X$ 
	such that $\Aulaseco$ covers $\Ko$ and $\left\{ \overline{\Aulase} \colon 
	\Aulase \in \Aulaseco \right\}$ refines $\Auseco'$. 
	For each $\Aulase \in \Aulaseco$, we choose $\Ause_{\Aulase} \in \Auseco$ 
	satisfying $\overline{\Aulase} \subseteq \alpha^\cap_{\Lo} 
	(\Ause_{\Aulase})$. Define $H_{\Aulase} := G_{\Ause_{\Aulase}}$ and 
	$\widetilde{\Ne}_{\Aulase} := \widehat{\Ne}_{\Ause_{\Aulase}}$. Notice 
	that $\widetilde{\Ne}_{\Aulase} \left( \Aulase \right)$ is finite, because 
	$\overline{\Aulase}$ is compact. 
	It is immediate that $\Bo$, 
	$\Aulaseco$, $\left( H_{\Aulase} \right)_{\Aulase \in \Aulaseco}$ and 
	$\left( \widetilde{\Ne}_{\Aulase} \right)_{\Aulase \in \Aulaseco}$ satisfy 
	\Refc[\BLRlem]{Lo}{\{\grpid\}, \Ko}, \Refc[\LTCextra]{Muplus}{d',\Lo}, and 
	\Refcd[\BLRlem]{Eq}.
	Choose a collection $\left\{\Lase_{\Aulase} \colon \Aulase \in \Aulaseco 
	\right\}$ of open sets such that $\overline{\Lase_{\Aulase}} \subseteq 
	\Aulase$ for any $\Aulase \in \Aulaseco$ and $K \subseteq \bigcup_{\Aulase 
	\in \Aulaseco} \Lase_{\Aulase}$.

	Now we fix an arbitrary $\Aulase \in \Aulaseco$. 
	Since $ \widetilde{\Ne}_{\Aulase} (\Aulase)$ is finite, there exists 
	$\Fi_{\Aulase} \Subset G$ such that $ \widetilde{\Ne}_{\Aulase} (\Aulase) = 
	(\Fi_{\Aulase} H_{\Aulase}) / H_{\Aulase} \subseteq G / H_{\Aulase}$. 
	Recall that we write $H_{\Aulase}^{g} = 
	gH_{\Aulase} g^{-1}$.
	By Lemma~\ref{lem:relative-bound-ltc-inductive}, since $H_{\Aulase}^{g} \in 
	\mathcal{F}$ and thus $\dimltc \left( X, {\alpha_{|_{H_{\Aulase}^{g}}}} 
	\right) \leq d$ for any $g \in \Fi_{\Aulase}$, thus there exists a natural 
	number $\Binu_{\Aulase}$ 
	satisfying: for 
	any finite open cover $\Thseco_{\Aulaseco}$ of $\overline{\Lase_{\Aulase}}$ 
	in $X$ satisfying $\Aulase = \bigcup \Thseco_{\Aulaseco}$,  
	there exist 
	\begin{itemize}
		\item a finite collection $\Coseco_{\Aulase}$ of open subsets of $X$, 
		and
		\item locally constant functions $\Ne_{\Aulase,\Cose} \colon \Cose \to 
		\bigcup \Thseco_{\Aulaseco} = \Aulase$ for $\Cose \in 
		\Coseco_{\Aulase}$, 
	\end{itemize}
	satisfying the 
	conditions~\Refc[\LTCdef]{Lo}{\{e\},\overline{\Lase_{\Aulase}}}, 
	\Refcd[\LTCextra]{Muplus}, \Refcd[\LTCdef]{Eq}, 
	\Refc[\LTCdef]{Th}{\Thseco_{\Aulaseco}}, 
	\Refc[\LTCdef]{Ca}{\Binu_{\Aulase}}, 
	and $\widetilde{\Ne}_{\Aulase} \circ \Ne_{\Aulase,\Cose} = 
	\widetilde{\Ne}_{\Aulase}$ for any $\Cose \in \Coseco_{\Aulase}$. Note that 
	\Refc[\LTCdef]{Th}{\Thseco_{\Aulaseco}} implies that $\bigcup 
	\Coseco_{\Aulase} \subseteq \Aulase$. 
	
	Let 
	\[
	\Binu := \max_{\Aulase \in \Aulaseco} \Binu_{\Aulase} \; .
	\]
	To see that $\Binu$ witnesses 
	Proposition~\ref{prop:ltc-dim-new}\eqref{item:prop:ltc-dim-new:Lominus}, we 
	fix an arbitrary finite open cover $\Thseco$ of $X$. 
	
	For any $\Aulase \in \Aulaseco$, we let $\Thseco_{\Aulase} := \Thseco \vee 
	\{\Aulase\}$. Since $\bigcup \Thseco_{\Aulaseco} = (\bigcup \Thseco) \cap 
	\Aulase = \Aulase \supseteq \overline{\Lase_{\Aulase}}$, thus we obtain 
	$\Coseco_{\Aulase}$ and $\left( \Ne_{\Aulase,\Cose} \right)_{\Cose \in 
	\Coseco_{\Aulase}}$ as above. 
	Let 
	\[
	\Coseco := \bigcup_{\Aulase \in \Aulaseco} \Coseco_{\Aulase} \; .
	\]
	Moreover, we choose, for each $\Cose \in \Coseco$, some $\Aulase_{\Cose} 
	\in \Aulaseco$ such that $\Cose \in \Coseco_{\Aulase_{\Cose}}$, and define 
	$\Ne_{\Cose} := \Ne_{\Aulase_{\Cose}, \Cose} \colon \Cose \to 
	\Aulase_{\Cose} \subseteq X$. 
	
	It remains to verify that $\Coseco$ and $\left( \Ne_{\Cose} \right)_{\Cose 
	\in \Coseco}$ satisfy 
	\Refc[\LTCdef]{Lo}{\{\grpid\},\Ko}, 
	\Refc[\LTCextra]{Muplus}{(d + 1) (d' + 1) - 1, \Lo}, \Refcd[\LTCdef]{Eq}, 
	\Refcd[\LTCdef]{Th}, and  \Refcd[\LTCdef]{Ca}. 
	
	To verify \Refc[\LTCdef]{Lo}{\{\grpid\},\Ko}, we observe  $\bigcup \Coseco 
	= \bigcup_{\Aulase \in \Aulaseco} \left(\bigcup \Coseco_{\Aulase}\right) 
	\supseteq \bigcup_{\Aulase \in \Aulaseco} \overline{\Lase_{\Aulase}} 
	\supseteq \Ko$. 
	
	To verify \Refc[\LTCextra]{Muplus}{(d + 1) (d' + 1) - 1, \Lo}, we fix a 
	finite subcollection $\Fi$ of $\Coseco$ such that 
	there exists an $\left(\alpha, \Lo\right)$-close subset $\left\{ x_{\Cose} 
	\colon \Cose \in \Fi \right\}$ of $X$ with $x_{\Cose} \in \Cose$ for any 
	$\Cose \in \Fi$. Observe that for any $\Aulase \in \Aulaseco$, we have 
	$\left| \Fi \cap \Coseco_{\Aulase} \right| \leq d+1$ since 
	$\Coseco_{\Aulase}$ satisfies \Refcd[\LTCextra]{Muplus}. 
	On the other hand, if we let $\Fi' := \left\{ \Aulase \in \Aulaseco \colon 
	\Fi \cap \Coseco_{\Aulase} \not= \varnothing \right\}$ and, for any 
	$\Aulase \in \Fi'$, choose $y_{\Aulase} := x_{\Cose}$ for some $\Cose \in 
	\Fi \cap \Coseco_{\Aulase}$, then we see that since $\left\{ y_{\Aulase} 
	\colon \Aulase \in \Fi' \right\}$ is $\left(\alpha, \Lo\right)$-close and 
	$y_{\Aulase} \in  \bigcup \Coseco_{\Aulase} \subseteq \Aulase$ for any 
	$\Aulase \in \Fi'$, thus we have $|\Fi'| \leq d'+1$ as $\Aulaseco$ 
	satisfies \Refc[\LTCextra]{Muplus}{d',\Lo}. Combining these estimates 
	yields the desired bound 
	\[
	|\Fi| = \sum_{\Aulase \in \Fi'} \left| \Fi \cap \Coseco_{\Aulase} \right| 
	\leq (d + 1) (d' + 1) \; .
	\]
	
	To verify \Refcd[\LTCdef]{Eq}, we simply recall that each 
	$\Coseco_{\Aulase}$ satifies \Refcd[\LTCdef]{Eq}. 
	
	To verify \Refcd[\LTCdef]{Th}, we notice each $\Coseco_{\Aulase}$ refines 
	$\Thseco_{\Aulase}$, which then refines $\Thseco$. 
	
	To verify \Refcd[\LTCdef]{Ca}, we notice each $\Coseco_{\Aulase}$ satisfies 
	\Refc[\LTCdef]{Ca}{\Binu_{\Aulase}} and thus \Refcd[\LTCdef]{Ca}. 
	
	This completes the proof. 
\end{proof}

\begin{Rmk} \label{rmk:relative-bound-asdim}
	Using similar (but simpler) techniques, one can also prove 
	\[
	\asdim \left( X, \Enseco_\alpha \right) + 1 \leq \left( \eqasdim (\alpha, 
	\mathcal{F}) + 1 \right)\cdot \sup_{H \in \mathcal{F}} \left( \asdim \left( X, 
	\Enseco_{\alpha_{|_H}} \right) + 1 \right) \; .
	\]
	Since we do not make use of this (unless one wishes to improve the bound in 
	Corollary~\ref{cor:dimnuc-hyperbolic}), we leave the proof to the reader, 
	who may find this exercise helpful in order to understand the proof of 
	Theorem~\ref{thm:relative-bound-ltc}. 
\end{Rmk}

We refer the reader to \cite[Part III.H3]{bridson-haefliger-book} for a 
discussion of the boundary and compactification of hyperbolic spaces. The 
Gromov boundary of a finitely generated hyperbolic group is a special case, 
when the hyperbolic space is the group itself with the word metric.

\begin{Cor} \label{cor:hyperbolic-ltc}
	Suppose $\alpha$ is a simplicial proper cocompact action of a hyperbolic 
	group $G$ on a hyperbolic complex $X$. Denote by $\overline{X} = X \cup 
	\partial X$ the compactification, and use $\alpha$ to denote the induced 
	action of $G$ on $\overline{X}$. Then $\asdim^{+1} \left( \overline{X}, 
	\Enseco_\alpha \right)$ and $\dimltc^{+1} \left( \alpha \right)$ are both 
	finite. The same holds for the restriction of $\alpha$ to the boundary 
	$\partial X$.
\end{Cor}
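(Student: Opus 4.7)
The plan is to invoke Theorem~\ref{thm:relative-bound-ltc} and Remark~\ref{rmk:relative-bound-asdim} with $\mathcal{F} = \mathcal{VCyc}$, the family of virtually cyclic subgroups of $G$ (which is closed under conjugation and passing to subgroups). Under the hypotheses of the corollary, $\overline{X}$ is the visual compactification of a proper hyperbolic space admitting a cocompact action, hence is a compact metrizable space; moreover, $X$ is a locally finite simplicial complex of finite dimension (cocompactness), and $\partial X$ has finite topological dimension by Bestvina--Mess. Thus $\dim(\overline{X}) < \infty$, and since $\overline{X}$ is compact, $\dim(\overline{X}^+) = \dim(\overline{X})$.

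The first input is $\eqasdim(\alpha, \mathcal{VCyc}) < \infty$. This is essentially the statement that hyperbolic groups are finitely $\mathcal{VCyc}$-amenable on their visual compactification, established in Bartels--L\"{u}ck--Reich \cite{BartelsLueckReich2008Equivariant} and Bartels~\cite{Bartels2017Coarse}: they produce, for each finite $\Lo \Subset G$, a finite-dimensional $G$-equivariant open cover of $G \times \overline{X}$ whose members have set stabilizers in $\mathcal{VCyc}$ and that satisfy the $\Lo$-long condition~\eqref{item:def:BLR:Lo}. This is exactly the condition in Definition~\ref{def:BLR}; alternatively, one may check it directly against the equivalent formulation in Lemma~\ref{lem:BLR-reformu}. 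The second input is a uniform bound on $\dimltc(\overline{X}, \alpha|_H)$ and $\asdim(\overline{X}, \Enseco_{\alpha|_H})$ for $H \in \mathcal{VCyc}$. By Theorem~\ref{thm:lsp-virtually-cyclic} one has $\LSP_d(H) \leq 3(d+1)$ for every $d$ and every virtually cyclic $H$, so applying Theorems~\ref{thm:lsp-ltc} and~\ref{thm:lsp-asdim} with $d = \dim(\overline{X})$ yields $\dimltc(\overline{X}, \alpha|_H) \leq 3(\dim(\overline{X})+1)-1$ and similarly for $\asdim$, uniformly in $H$. Combining via Theorem~\ref{thm:relative-bound-ltc} and Remark~\ref{rmk:relative-bound-asdim} produces the desired finite upper bounds for $\dimltc^{+1}(\alpha)$ and $\asdim^{+1}(\overline{X}, \Enseco_\alpha)$.

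For the restriction to $\partial X$: since $\partial X$ is a closed $G$-invariant subspace of $\overline{X}$, the orbit coarse structure restricts, and Lemma~\ref{lem:asdim-passes-to-subspaces} gives $\asdim(\partial X, \Enseco_{\alpha|_{\partial X}}) \leq \asdim(\overline{X}, \Enseco_\alpha)$. For $\dimltc$, monotonicity under passage to closed invariant subspaces is verified directly from Definition~\ref{def:ltc-dim}: given a finite open cover $\Thseco$ of $\partial X$ by its relatively open sets, extend each member to an open subset of $\overline{X}$ and augment by $\overline{X} \setminus \Ko$ to obtain an open cover of $\overline{X}$; apply the definition for $\alpha$ on $\overline{X}$ and restrict the resulting collection $\Coseco$ and near orbit selection functions to $\partial X$.

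The main obstacle is the first step: the Bartels--L\"{u}ck--Reich / Bartels construction is phrased in their framework of (strong) finite $\mathcal{F}$-amenability, whereas we need its output repackaged to fit Definition~\ref{def:BLR} verbatim. This repackaging is essentially formal once the covers of $G \times \overline{X}$ with the stated properties are in hand, since Lemma~\ref{lem:BLR-reformu} provides a dynamical reformulation (via $\mathcal{F}$-valued labeling functions) that matches the conclusions of those papers almost directly; nonetheless, some care is needed to confirm that the cocompactness and finite-dimensionality hypotheses on $X$ feed through their argument, and that the minor difference between our $\eqasdim$ for locally compact spaces and their finite $\mathcal{F}$-amenability is immaterial when $\overline{X}$ is compact.
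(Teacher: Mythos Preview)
Your proposal is correct and follows essentially the same route as the paper: take $\mathcal{F}=\mathcal{VCyc}$, invoke Bartels--L\"uck--Reich for $\eqasdim(\alpha,\mathcal{F})<\infty$, bound $\dimltc(\alpha|_H)$ uniformly via Theorems~\ref{thm:lsp-virtually-cyclic} and~\ref{thm:lsp-ltc}, and conclude with Theorem~\ref{thm:relative-bound-ltc}; the boundary case follows by restriction to a closed invariant subspace. The only minor deviation is that you handle the $\asdim$ bound separately via Remark~\ref{rmk:relative-bound-asdim}, whereas the paper simply lets it fall out of $\asdim\leq\dimltc$ (Theorem~\ref{thm:dimltc-asdim}); both are fine.
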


\begin{proof}
	Let $\mathcal{F}$ be the family of all virtually cyclic subgroups of 
	$G$. In 
	\cite[Theorem 1.2]{BartelsLueckReich2008Equivariant}, it is shown in 
	particular 
	that the action $\alpha$ of $G$ on $\overline{G}=  G \cup 
	\partial G$ satisfies $\eqasdim(\alpha, \mathcal{F}) < \infty$. From 
	Theorems 
	\ref{thm:lsp-vnil} and \ref{thm:lsp-ltc}, 
	it follows that for any $H \in \mathcal{F}$ we have $\dimltc^{+1} \left( 
	\alpha|_H \right) \leq 3\dim(X) + 2$. The corollary now follows from 
	Theorem 
	\ref{thm:relative-bound-ltc}.
	
	That the same holds for the restriction to the boundary $\partial X$ 
	follows 
	from the fact that both the equivariant asymptotic dimension and the long 
	thin 
	covering dimension do not increase when restricting to closed invariant 
	subspaces.
\end{proof}

\section{Nuclear dimension for crossed products}
\label{sec:dimnuc} 
\renewcommand{\sectionlabel}{dimnuc}

The goal of this section is to prove our main theorem, Theorem 
\ref{thm:dimnuc-main}, which provides a bound on the nuclear dimension of 
crossed products. As mentioned in the introduction, we work in the setting of 
$C_0(X)$-algebras rather than just actions on commutative $C^*$-algebras. Most 
of the terminology was covered in Subsection \ref{subsection:prelim:C_0(X)}.

\begin{Notation}
	\label{Notation:G-X-algebra}
Let $X$ be a locally compact Hausdorff space, and let $A$ be a 
$C_0(X)$-algebra. 
 Let $\alpha \colon G \to \aut(A)$ be an action, 
which is compatible with an action, also denoted by $\alpha$, of $G$ on $C(X)$, 
i.e. $\alpha (f \cdot a) = \alpha(f) \cdot \alpha(a)$.  We also use $\alpha$ to 
denote the induced 
action on $X$, as before. 

For any subgroup $H \leq G$, we write $\alpha_{|_H}$ for the restriction of $\alpha$ to $H$. 
Notice that if a closed subset $Y$ in $X$ is invariant under $\alpha_{|_H}$, then 
$\alpha_{|_H}$ 
induces an action on the restriction algebra $A_Y$. We denote this action by 
$\alpha_{|_H}^Y$, and if $Y = \{x\}$ is a singleton, then we write $\alpha_{|_H}^x$ for 
short.

For $x \in 
X$, we denote the stabilizer group of the point $x$ by $G_x$. If 
$\alpha$ is an action of $G$ on a set $X$ and $x \in X$, then the map $G/G_x \to 
X$ given by $g G_x \mapsto \alpha_g(x)$ is well defined and injective.
\end{Notation}

The goal of this section is to prove the following theorem. 
\begin{Thm}
	\label{thm:dimnuc-main}
	Let  $X$ be a locally compact Hausdorff space with finite covering 
	dimension, and let $A$ be a $C_0(X)$-algebra. Let $G$ be a countable 
	discrete group. Let $\alpha \colon G \to \aut(A)$ be an action, 
	which is compatible with an action of $G$ on $C_0(X)$, 
	that is, $\alpha (f \cdot a) = \alpha(f) \cdot \alpha(a)$. Set
	\[
	\dstab := \sup \left\{ \dimnuc \left(A_x \rtimes_{\alpha_{|_H}^x} H \right) \colon x \in X, H 
	\leq G_x \right\} \; .
	\]
	Then we have
	\[
	\dimnuc(A \rtimes_{\alpha} G) +1 \leq (\asdim(X, \Enseco_\alpha) +1) 
	\cdot 
	(\dimltc(\alpha) +1) \cdot (\dstab + 1)
	\; .
	\]
\end{Thm}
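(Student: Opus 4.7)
The plan is to produce, for arbitrarily large finite subsets $F \subset A \rtimes_{\alpha} G$ and arbitrarily small $\varepsilon > 0$, a decomposable c.p.\ approximation of $F$ within tolerance $\varepsilon$, with the sum of colors bounded by $\asdim(X,\Enseco_\alpha)^{+1}\cdot\dimone(X^+)\cdot\dimltc^{+1}(\alpha)\cdot(\dstab+1)$. By Lemma~\ref{lem:separable-dimnuc} we may assume $A$ is separable, so all $C_0(X)$-structure occurs over a second-countable base (making the upper semicontinuous bundle structure tractable). We may assume the elements in $F$ have finite support in $G$, supported in some symmetric $\Lo \Subset G$ containing $\grpid$, with Fourier coefficients in a finite set $F_A \subset A$; by a standard perturbation (via the bundle structure and Lemma~\ref{lem:CX-nbhd}) we reduce the approximation problem to approximating these Fourier coefficients ``fiberwise'' with the correct equivariance.

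Next I would apply Proposition~\ref{prop:ltc-dim} to $\alpha$ with parameters $\Lo$ (enlarged so that it witnesses the support of $F$), a suitable compact $\Ko \Subset X$ covering the relevant supports, and a tolerance $\eta$ chosen small relative to $\varepsilon$ and to the continuity moduli of elements in $F_A$ (measured in terms of a cover $\Thseco$ fine enough that the fiber oscillation of each $a \in F_A$ is small). This yields open collections $\Coseconum{0},\dots,\Coseconum{\dimltc(\alpha)}$ (each disjoint), near orbit selection functions $\Ne^{(l)}$, and $(\Lo,\eta)$-invariant partitions of unity $\Ponum{\Cose}{l}$. For each $l$ and each $\Cose \in \Coseconum{l}$, the set $\Ne^{(l)}(\Cose)$ is a finite, $\Bo$-bounded ``orbit sample'' that behaves equivariantly for $\Lo$. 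The crucial point is that for each such $\Cose$ the modified stabilizer $\Gmx := G^{\tLo}_x$ for $x \in \Ne^{(l)}(\Cose)$ (where $\tLo$ is chosen large enough, depending on $\Lo$ and $\Bo$) is well-defined independent of the choice of representative via Lemma~\ref{Lemma:injective-quotient}, and $\Ne^{(l)}(\Cose)$ is naturally labeled by a finite subset of $G/\Gmx$.

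On each tower I would factor the restriction of the Fourier coefficients through a $C_0(X)$-algebra sitting over the finite set $\Ne^{(l)}(\Cose)$ with fibers $A_x$, crossed by $G$ via an equivariant Roe-type construction using the stabilizer $\Gmx$; the crossed product of each such ``thickened-fiber'' algebra by $G$ splits as a matrix amplification of $A_x \rtimes_{\alpha^x|_{\Gmx}} \Gmx$, whose nuclear dimension is at most $\dstab$ by hypothesis. Using Proposition~\ref{prop:orbit-asdim} applied to the restriction of $\Enseco_\alpha$ to the chosen orbit samples (with parameters depending on $\Lo$), I cut down the potentially infinite ``matrix direction'' of the Roe construction with a further $(\asdim(X,\Enseco_\alpha)+1)$-colored partition of unity at the group level. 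The $(\Lo,\eta)$-invariance from Proposition~\ref{prop:ltc-dim}, combined with Lemma~\ref{lem:perturb-order-zero}, makes the resulting maps (from the cut-down Roe algebra back to $A \rtimes_\alpha G$) approximately order zero. The additional $\dimone(X^+)$ factor enters via Lemma~\ref{lem:dimnuc-C(X)-crossed-products} applied to the $C_0(X)$-algebra structure of each subhomogeneous piece, accounting for how different fibers $A_x$ glue together over $X$. Multiplying the color counts yields the claimed bound.

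The main obstacle, beyond the substantial bookkeeping of combining four different dimension bounds into a single decomposition, will be verifying that the composition of these approximations factors through a finite-dimensional algebra in a genuinely c.p.\ order zero manner on each color class. Specifically, the subtle points are: (i) the $\Lo$-equivariance of $\Ne^{(l)}$ is only partial (only on points whose full $\Lo$-orbit lies in the tower), so one must carefully track when group translates ``fall off'' a tower; (ii) the modified stabilizer $\Gmx$ depends on parameters that must be chosen uniformly, which is where $\tLo$ must be taken much larger than $\Lo$; and (iii) the $C_0(X)$-structure must be threaded through the Roe-algebra picture without destroying the equivariance needed to identify the fiber crossed products. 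These difficulties are addressed by the carefully formulated conditions in Proposition~\ref{prop:ltc-dim} (especially the precompactness \Refcd[\LTCprop]{Pr} and the uniform invariance \Refcd[\LTCprop]{In}), together with the stability results for order zero maps in Lemma~\ref{lem:perturb-order-zero}.
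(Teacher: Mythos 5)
Your overall architecture is the paper's: LTC towers with near-orbit selection functions, the finitely many modified stabilizers $G_x^{L^2}$, an equivariant Roe-type algebra cut down by the partition of unity coming from the asymptotic dimension of the orbit coarse structure (Proposition~\ref{prop:orbit-asdim}), fiberwise completely positive lifts reassembled along the $(L,\delta)$-invariant LTC partition of unity of Proposition~\ref{prop:ltc-dim}, and Lemma~\ref{lem:perturb-order-zero} to repair approximate order zero maps. The gap sits at the one step where the proof is genuinely delicate, namely where the finite-dimensional approximation is inserted -- which is also exactly where the factor $\dimone(X^+)$ has to come from. You propose to factor through matrix amplifications of the single-fiber algebras $A_x\rtimes G_x^{L^2}$ sitting over the finite near-orbit samples, ``whose nuclear dimension is at most $\dstab$'', and then to recover the missing factor by citing Lemma~\ref{lem:dimnuc-C(X)-crossed-products} for ``how the fibers glue over $X$''. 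These two sentences are incompatible: over a finite (hence zero-dimensional) base that lemma yields no extra factor, so your construction as described would produce a bound with no $\dimone(X^+)$ at all; and it cannot actually be run that way, for an ordering reason you do not address.

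To lift back from fiber data one needs c.p.c.\ lifts $T_z\colon A_z\to A$ that are approximately multiplicative and approximately equivariant (for the relevant stabilizer elements) not merely at the point $z$ -- where this holds automatically -- but on a closed neighborhood $\overline{V_z}$ large enough to contain the entire fiber of the near-orbit selection map over $z$; this is what Lemma~\ref{lem:CX-nbhd} and the thinness condition in Proposition~\ref{prop:ltc-dim} are for. The elements that must behave well on $\overline{V_z}$ are precisely the coefficients of the image of the finite-dimensional approximation $\gamma$. Hence $\gamma$ must be chosen before the neighborhoods $V_z$, the $V_z$ before the LTC cover (which has to be thin relative to $\{V_z\}$), and the LTC cover before the evaluation points. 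In your plan $\gamma$ lives on the single-fiber algebras at those evaluation points, so it could only be chosen after the LTC data: the circle does not close. The paper breaks it by fixing $(B_H,\kappa_H,\gamma_H)$ once for each of the finitely many algebras $D_H\cong M_{SH/H}\bigl(A_{X^H}\rtimes_{\alpha|_H}H\bigr)$, $H\in\mathcal{G}^{L^2}$, i.e.\ over the whole fixed-point set $X^H$, and only afterwards restricting to the chosen fiber before lifting; the price is $\dimnucone(D_H)\le\dimone(X^+)\cdot(\dstab+1)$ via Lemma~\ref{lem:dimnuc-C(X)-crossed-products}, and that is the only place $\dimone(X^+)$ enters. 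Relatedly, the data from Proposition~\ref{prop:orbit-asdim} determines the cut-down set $S$ and hence $D_H$, so it too must be fixed at the outset rather than ``applied to the chosen orbit samples'' afterwards, and it is the finiteness of $\mathcal{G}^{L^2}$ that allows a uniform perturbation constant in Lemma~\ref{lem:perturb-order-zero}. The conditions of Proposition~\ref{prop:ltc-dim} do not by themselves resolve this ordering problem; what is used is that the set $S$ there depends only on $L$, $K$ and $\delta$ and not on the cover $\Thseco$, so the cover can be chosen last. (Also, the opening reduction ``we may assume $A$ is separable'' via Lemma~\ref{lem:separable-dimnuc} is not justified -- a separable invariant subalgebra need not carry a $C_0(X)$-structure with controlled $\dimltc$ and asymptotic dimension -- though nothing later in your sketch actually uses it.)
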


Note that $\asdim(X, \Enseco_\alpha) \leq \dimltc(\alpha) $ by Theorem~\ref{thm:dimltc-asdim}. 

The proof is somewhat long, and is deferred until we have established a few lemmas and constructions. 

A crucial point in the proof is that 
the collection of all stabilizer subgroups of points in $X$ may be too large 
for us to handle, so, given a finite set $\Lo \subseteq G$, we define a modified 
stabilizer group ``as seen by $\Lo$'', namely the subgroup generated by 
elements of $\Lo$ which stabilize $x$. This is a systematic way to generalize 
the short orbit - long orbit split which we 
used in \cite{Hirshberg-Wu16}. More precisely, we use the following notation.

\begin{Notation}
	\label{Notation:modified stabilizer}
	For any $x \in X$, and any finite subset $P \subseteq G$, we denote 
	\[ 
	G^{P}_x = \left\langle G_x \cap P \right\rangle \, , 
	\]
	that is, the subgroup of $G_x$ generated by $G_x \cap P$. 
	We 
	denote the collection of all such subgroups by 
	\[ 
		\short^{P } = \left 
		\{G^{P}_x \colon x 
		\in X \right \} .
	\]
	Note that $\short^{P }$ is a finite collection of subgroups. 
	We also write 
	\[
		\short^{P }_y = \left 
		\{ H \in \short^{P} \colon H \leq G_y \right \} \quad \text{ for any } y \in X  .
	\]
	Note that for any $y \in X$, any $H \in \short^{P}_y$ automatically satisfies the stronger condition $H \leq G^{P}_y$, 
	since for any $x \in X$, we have $G^{P}_x \leq G_y$ if and only if $G_x \cap P \subseteq G_y$ if and only if $G_x \cap P \subseteq G_y \cap P$ if and only if $G^{P}_x \leq G^{P}_y$. 
\end{Notation}

The following lemma follows immediately from the definition of the quotient map.
\begin{Lemma}
	\label{Lemma:injective-quotient}
	Let $G$ be a group, and let $R \subseteq G$ be a  subset. For any subset 
	$\widetilde{R}$ which contains $R^{-1}R$ and for any subgroup $H < G$, the 
	quotient map $\pi \colon G/\left < H \cap \widetilde{R} \right > \to G/H$ 
	maps $\left( R \cdot \left \langle H \cap \widetilde{R} \right \rangle \right) / \left < H 
	\cap \widetilde{R} \right >$ injectively onto $(R \cdot H) / H$. 
\end{Lemma}


The following elementary inequality is a simple application of convexity; the proof is left to the reader. 

\begin{Lemma}\label{Lemma:sqrt}
	For any $s, t \in [0,\infty)$, we have $\left| \sqrt{s} - \sqrt{t} \right| \leq \sqrt{|s - t|}$. 
	
	The same conclusion holds if we replace $s$ and $t$ by commuting positive elements in a $C^*$-algebra (and replace absolute value by operator norm). 
\end{Lemma}

Now we are ready to prove the main result of the section. 

\begin{proof}[Proof of Theorem~\ref{thm:dimnuc-main}]

We shall assume from now on that all the terms in the right hand side of the 
inequality in Theorem \ref{thm:dimnuc-main} are finite, as otherwise there is 
nothing to prove.
To simplify notation, we set throughout the rest of the proof
\begin{equation}
c = \asdim(X, \Enseco_\alpha)  \quad \text{ and } \quad
d = \dimltc(\alpha) \; .
\end{equation}

We observe that since $C_{\mathrm{c}}(X)$ is dense in $C_0(X)$ and $C_0(X) \cdot A$ is dense in $A$, elements of the form $a u_g$, where $a$ is a contraction in $C_{\mathrm{c}}(X) \cdot A$ and $g \in G$, span a dense subset of $A \rtimes G$, 
Hence in order to obtain the desired bound on $\dimnuc (A \rtimes G)$, 
it suffices to start with an arbitrary finite subset $A_0$ consisting of contractions in $C_{\mathrm{c}}(X) \cdot A$, an arbitrary finite 
subset $\Lo \subseteq G$ and an arbitrary positive number $\eps$ and find a finite dimensional 
$C^*$-algebra $B$ and an approximation
\[
\xymatrix{
A \rtimes_{\alpha} G \ar[dr]_{\Psi} \ar@{.>}[rr]^{\id}	&& A 
\rtimes_{\alpha} G  
\\
& B \ar[ur]_{\Phi} &
}
\]
such that $\Psi$ is a completely positive contraction, $\Phi$ is $(c{+1}) 
\cdot 
(d{+1}) \cdot (\dstab + 1)$-decomposable, and 
we have $\|\Phi \circ \Psi(au_g) - a u_g\| \leq \eps$ for any $a \in A_0$ and for any $g \in \Lo$.  
By enlarging $\Lo$ if necessary, we may also assume without loss of generality that 
\[
\e \in \Lo \textrm{ and } 
\Lo = \Lo^{-1} 
.
\]

Since the construction of $B$, $\Psi$ and $\Phi$ is quite involved, it will be 
presented in five stages (A through E), each further divided into a handful 
steps (labeled (A1), (A2), \ldots, (B1), (B2), etc). Before continuing, let us 
give an overview of these four stages, accompanied by the large diagram below, 
which shall serve as a roadmap to help the reader follow the argument. The 
terms in the diagram have not been defined yet, but the labels at the top of 
the items (e.g., ``\eqref{thm:dimnuc-main:proof::tally}'' above ``$= \Psi$'') 
indicate the steps in which the items are defined. 
In this diagram, we use the symbol $\circlearrowright$ to indicate the maps surrounding it commute, while the symbol $\approx$ indicates the maps ``approximately'' commute (see the corresponding step for the precise parameters). 
The solid arrows represent maps that are defined in Stage~A using explicit formulas, the dashed arrows represent maps that are explicitly derived from the inexplicitly constructed ``partitions of unity'' $\left( \Ponum{\Ause}{l} \right)_{l, \Ause}$ and $\left(\Po[LTC]^{(r)}\right)_{r}$, and the dotted arrows represent maps obtained inexplicitly. 
For brevity, the ranges of the indices under the (direct) sums are not shown; 
let us clarify them here: $l$, $r$, and $m$, respectively, range over 
$\intervalofintegers{0}{c}$, $\intervalofintegers{0}{d }$, and 
$\intervalofintegers{0}{\dstab}$, respectively, while $\Ause$ range over the 
set $\Auseconum{l,r}$, to be defined in 
Step~\eqref{thm:dimnuc-main:proof::Ause-l-r}.

\[\begin{tikzcd}
{A \rtimes_\alpha G} &&&& {A \rtimes_\alpha G} \\
\\
\\
& {\overset{\tiny \text{\eqref{thm:dimnuc-main:proof::equi-Roe}\eqref{thm:dimnuc-main:proof::H-W}}}{\displaystyle \bigoplus_{l,r,W} E_{H_W} \vphantom{\left( \ell^\infty \left(G/G_x^{\widetilde{S}} \right) \otimes A \right) \rtimes_{\lambda \otimes \alpha} G} \vphantom{\ell^\infty \left(G/G_x^{\widetilde{S}}, A \right) \rtimes_{\gamma} G}}} && {\overset{\tiny \text{\eqref{thm:dimnuc-main:proof::equi-Roe}\eqref{thm:dimnuc-main:proof::abbrev}}}{\displaystyle \bigoplus_{l,r,W} D_{W} \vphantom{ E''_H}  \vphantom{ \chi_{SH/H} \left( \ell^\infty(G/H, A) \rtimes_{\gamma^H} G \right) \chi_{SH/H}}}} \\
\\
& {\overset{\tiny \text{\eqref{thm:dimnuc-main:proof::equi-Roe}\eqref{thm:dimnuc-main:proof::abbrev}}}{\displaystyle \bigoplus_{l,r,W} D_{W} \vphantom{ E''_H} \vphantom{ \chi_{SH/H} \left( \ell^\infty(G/H, A) \rtimes_{\gamma^H} G \right) \chi_{SH/H}}}} & {\overset{\tiny \text{\eqref{thm:dimnuc-main:proof::equi-Roe}\eqref{thm:dimnuc-main:proof::abbrev}}}{\displaystyle \bigoplus_{l,r,W} D_{W} \vphantom{ E''_H} \vphantom{ \chi_{SH/H} \left( \ell^\infty(G/H, A) \rtimes_{\gamma^H} G \right) \chi_{SH/H}}}} & {\overset{\tiny \text{\eqref{thm:dimnuc-main:proof::equi-Roe-isomorphism}\eqref{thm:dimnuc-main:proof::abbrev}}}{\displaystyle \bigoplus_{l,r,W} D'_{W} \vphantom{ E'_H} \vphantom{M_{|SH/H|} \left( A \rtimes_{\alpha_{|_H}} H \right)}}} \\
\\
& {\overset{\tiny \text{\eqref{thm:dimnuc-main:proof::equi-Roe-isomorphism}\eqref{thm:dimnuc-main:proof::abbrev}}}{\displaystyle \bigoplus_{l,r,W} D'_{W} \vphantom{ E_H}  \vphantom{ \chi_{SH} \left( \ell^\infty(G, A)^{\beta} \rtimes_{\lambda} G \right) \chi_{SH}}}} & {\overset{\tiny \text{\eqref{thm:dimnuc-main:proof::equi-Roe-isomorphism}\eqref{thm:dimnuc-main:proof::abbrev}}}{\displaystyle \bigoplus_{l,r,W} D'_{W} \vphantom{ E'_H} \vphantom{M_{|SH/H|} \left( A \rtimes_{\alpha_{|_H}} H \right)}}} \\
\\
& {\overset{\tiny \text{\eqref{thm:dimnuc-main:proof::evaluation}\eqref{thm:dimnuc-main:proof::abbrev}}}{\displaystyle \bigoplus_{l,r,W} {C}_{W} \vphantom{D_{H,x}} \vphantom{ \chi_{SH} \left( \ell^\infty(G, A_x)^{\beta^x} \rtimes_{\lambda} G \right) \chi_{SH}}}} & {\overset{\tiny \text{\eqref{thm:dimnuc-main:proof::evaluation}\eqref{thm:dimnuc-main:proof::abbrev}}}{\displaystyle \bigoplus_{l,r,W} {C}_{W} \vphantom{D'_{H,x}} \vphantom{M_{|SH/H|} \left( A_x \rtimes_{\alpha^x_{|_H}} H \right)}}} \\
\\
&& {\overset{\tiny \text{\eqref{thm:dimnuc-main:proof::D-approximation}\eqref{thm:dimnuc-main:proof::abbrev}}}{\displaystyle \bigoplus_{l,r,W} {B}_{W} \vphantom{D_{H,x}} \vphantom{B_{H,x}}}}
\arrow[""{name=0, anchor=center, inner sep=0}, "{\overset{\tiny \text{\eqref{thm:dimnuc-main:proof::evaluation}\eqref{thm:dimnuc-main:proof::abbrev}}}{\underset{l,r,W}{\bigoplus} \mathrm{ev}_{W}}}", from=8-3, to=10-3]
\arrow[""{name=1, anchor=center, inner sep=0}, "{\overset{\tiny \text{\eqref{thm:dimnuc-main:proof::D-approximation}\eqref{thm:dimnuc-main:proof::abbrev}}}{\underset{l,r,W}{\bigoplus} \gamma_W}}"', dotted, from=12-3, to=10-3]
\arrow[""{name=2, anchor=center, inner sep=0}, "{\overset{\tiny \text{\eqref{thm:dimnuc-main:proof::LTC-compression}}}{\underset{l,r,W}{\bigoplus} \mathrm{compr}_{\widetilde{\mu}{'}_W^{(r)}}}}"', shift right=2, dashed, from=8-3, to=6-4]
\arrow["\cong", from=6-2, to=8-2]
\arrow[""{name=3, anchor=center, inner sep=0}, "{\mathrm{id} \vphantom{\theta_{\vec{k}}}}", from=8-2, to=8-3]
\arrow[""{name=4, anchor=center, inner sep=0}, "{\overset{\tiny \text{\eqref{thm:dimnuc-main:proof::evaluation}\eqref{thm:dimnuc-main:proof::abbrev}}}{\underset{l,r,W}{\bigoplus} \mathrm{ev}_{W}}}"', from=8-2, to=10-2]
\arrow["{\mathrm{id} \vphantom{\theta_{x,\vec{k}}}}", from=10-2, to=10-3]
\arrow["{\overset{\tiny \text{\eqref{thm:dimnuc-main:proof::D-approximation}\eqref{thm:dimnuc-main:proof::abbrev}\hphantom{Ste}}}{\underset{l,r,W}{\bigoplus} \kappa_W}}"'{pos=0.3}, shift left=4, curve={height=-6pt}, draw=none, from=10-2, to=12-3]
\arrow[""{name=5, anchor=center, inner sep=0}, "{\overset{\tiny \text{\eqref{thm:dimnuc-main:proof::compression}}}{\underset{l,r,W}{\bigoplus} \mathrm{compr}_{\widetilde{\nu}_W^{(l)}}}}"', dashed, from=4-2, to=6-2]
\arrow[""{name=6, anchor=center, inner sep=0}, "{\overset{\tiny\text{\eqref{thm:dimnuc-main:proof::Pi-Delta}}}{\underset{l,r,W}{\bigoplus} \mathrm{compr}_{\widetilde{\nu}_W^{(l)} \widetilde{\mu}_W^{(r)}}}}", dashed, from=4-2, to=4-4]
\arrow["\cong"'{pos=0.3}, from=6-4, to=4-4]
\arrow[""{name=7, anchor=center, inner sep=0}, "{\overset{\tiny \text{\eqref{thm:dimnuc-main:proof::LTC-compression}}}{\underset{l,r,W}{\bigoplus} \mathrm{compr}_{\widetilde{\mu}_W^{(r)}}}}"{pos=0.2}, shift left=3, dashed, from=6-3, to=4-4]
\arrow[""{name=8, anchor=center, inner sep=0}, "{\mathrm{id} \vphantom{\theta_{\vec{k}}}}", from=6-2, to=6-3]
\arrow[""{name=9, anchor=center, inner sep=0}, "{\overset{\tiny \text{\eqref{thm:dimnuc-main:proof::lifting}\eqref{thm:dimnuc-main:proof::abbrev}}}{\underset{l,r,W,m}{\bigoplus}  \widetilde{\gamma}_W^{(m)}}}"'{pos=0.6}, shift right=5, curve={height=40pt}, dotted, from=12-3, to=8-3]
\arrow["\cong"{pos=0.7}, from=8-3, to=6-3]
\arrow["{\overset{\tiny \text{\eqref{thm:dimnuc-main:proof::equi-Roe-isomorphism}\eqref{thm:dimnuc-main:proof::abbrev}}}{\underset{l,r,W}{\bigoplus} \Theta_{W}^{-1}}}"', from=6-2, to=8-2]
\arrow[""{name=10, anchor=center, inner sep=0}, "{\mathrm{id}_{A \rtimes G}}", curve={height=-30pt}, from=1-1, to=1-5]
\arrow[""{name=11, anchor=center, inner sep=0}, "{\overset{\tiny\text{\eqref{thm:dimnuc-main:proof::Sigma-Pi-Delta}}}{\underset{l,r,W}{\sum} \mathrm{compr}_{\sum \mu^{(r)}_{\nu^{(l,r)}}}}}"', dashed, from=1-1, to=1-5]
\arrow["{\overset{\tiny \text{\eqref{thm:dimnuc-main:proof::Roe}}}{\underset{l,r,W}{\bigoplus} \left( \mathbf{1} \otimes \mathrm{id}_A \right) \rtimes G}}"{pos=0.7}, from=1-1, to=4-2]
\arrow["{\overset{\tiny\text{\eqref{thm:dimnuc-main:proof::Sigma}\eqref{thm:dimnuc-main:proof::H-W}}}{\underset{l,r,W}{\sum}  \Sigma_{H_W}}}"{pos=0.3}, from=4-4, to=1-5]
\arrow["{\overset{\tiny\text{\eqref{thm:dimnuc-main:proof::row-vector}\eqref{thm:dimnuc-main:proof::abbrev}}}{\underset{l,r,W}{\sum} \mathrm{Ad}_{\vec{u}_{W}}}}"', shift right=2, from=6-4, to=1-5]
\arrow[""{name=12, anchor=center, inner sep=0}, "{\overset{\tiny \text{\eqref{thm:dimnuc-main:proof::tally}}}{= \ \ \Phi}}"'{pos=0.3}, shift right=7, curve={height=100pt}, dotted, from=12-3, to=1-5]
\arrow[""{name=13, anchor=center, inner sep=0}, "{\overset{\tiny \text{\eqref{thm:dimnuc-main:proof::tally}}}{= \ \ \Psi}}"'{pos=0.7}, shift right=7, curve={height=100pt}, dashed, from=1-1, to=12-3]
\arrow["{\overset{\tiny \text{\eqref{thm:dimnuc-main:proof::equi-Roe-isomorphism}\eqref{thm:dimnuc-main:proof::abbrev}}}{\underset{l,r,W}{\bigoplus} \Theta_{W}}}"'{pos=0.7}, from=8-3, to=6-3]
\arrow[""{name=14, anchor=center, inner sep=0}, "{\overset{\tiny \text{\eqref{thm:dimnuc-main:proof::equi-Roe-isomorphism}\eqref{thm:dimnuc-main:proof::abbrev}}}{\underset{l,r,W}{\bigoplus} \Theta_{W}}}"{pos=0.3}, from=6-4, to=4-4]
\arrow[""{name=15, anchor=center, inner sep=0}, shift left=3, dotted, from=10-2, to=12-3]
\arrow["\circlearrowright"{description}, draw=none, from=8, to=3]
\arrow["\circlearrowright"{description}, draw=none, from=10-3, to=9]
\arrow["{\overset{\tiny \text{\eqref{thm:dimnuc-main:proof::LTC-compression}}}{\circlearrowright}}"{description}, draw=none, from=7, to=2]
\arrow["{\overset{\tiny\text{\eqref{thm:dimnuc-main:proof::Sigma-Pi-Delta}}}{\circlearrowright}}"{description}, draw=none, from=11, to=6]
\arrow["{\overset{\tiny \text{\eqref{thm:dimnuc-main:proof::row-vector}}}{\circlearrowright}}"{description, pos=0.4}, draw=none, from=14, to=1-5]
\arrow["{\overset{\tiny\text{\eqref{thm:dimnuc-main:proof::Pi-Delta}}}{\circlearrowright}}"{description, pos=0.2}, draw=none, from=5, to=7]
\arrow["\circlearrowright"{description}, draw=none, from=4, to=0]
\arrow["{\overset{\tiny \text{\eqref{thm:dimnuc-main:proof::sum-identity}}}{\approx}}"{description}, draw=none, from=10, to=11]
\arrow["{\overset{\tiny \text{\eqref{thm:dimnuc-main:proof::Psi}}}{\underset{l,r,W}{\bigoplus}  \Psi_W^{(l)}}}"{pos=0}, draw=none, from=13, to=12-3]
\arrow["{\overset{\tiny \text{\eqref{thm:dimnuc-main:proof::approx-order-zero}}}{\underset{l,r,m,W}{\sum}  \Phi_W^{(r,m)}}}"{pos=1}, draw=none, from=12-3, to=12]
\arrow["{\overset{\tiny \text{\eqref{thm:dimnuc-main:proof::Phi-sum}}}{\approx}}"'{pos=0.3}, shift right=5, draw=none, from=8-3, to=12]
\arrow["{\overset{\tiny \text{\eqref{thm:dimnuc-main:proof::Psi}}}{\circlearrowright}}"{description}, draw=none, from=13, to=4-2]
\arrow["{\overset{\tiny \text{\eqref{thm:dimnuc-main:proof::D-approximation}}}{\approx}}"{pos=0.2}, draw=none, from=15, to=1]
\end{tikzcd}\]


\label{thm:dimnuc-main:proof:big_diagram}

In Stage~A, we construct ``prototypes'' for the algebras $E_{H_{\Ause}}$, 
$D_{\Ause}$, $D'_{\Ause}$, and $C_{\Ause}$, as well as for those maps among 
them that are represented by solid arrows above. These ``prototypes'' are 
denoted by $E_H$, $D_{H,\Fi}$, $D'_{H,\Fi}$, $C_{z, H, \Fi}$, 
$\Theta_{H,\tkrep[]}$, $\Sigma_H$, $\mathrm{Ad}_{u_{\tkrep[]}}$, and 
$\mathrm{ev}_{z, H, \Fi}$. The ``final products'' (namely $E_{H_{\Ause}}$, 
$D_{\Ause}$, etc.) will be obtained later when, in Stage~C, we make the 
parameters in the subscripts (namely $H$, $\Fi$, $z$, etc) depend on a single 
index, $\Ause$. 
Among these ``prototypes'', $E_H$ is the crossed product $\left(c_0(G/H)^+ 
\otimes A\right) \rtimes G$, and $D_{H,\Fi}$ is a corner of $E_H$ associated to 
a finite subset $\Fi$ of $G/H$. By a variant of Green's imprimitivity theorem, 
$D_{H,\Fi}$ is isomorphic (via $\Theta_{H,\tkrep[]}$) to a matrix amplification 
of $A \rtimes H$, denoted by $D'_{H,\Fi}$. This more concrete matrix form 
brings into play the nuclear dimension bound $\dstab$ in the statement 
of 
Theorem~\ref{thm:dimnuc-main}. 
Now $\Sigma_H$ is a map that basically sums up the values of a function in the tensor factor $c_0(G/H)$ (this is well-defined on the corner $D_{H,\Fi}$), and $\mathrm{Ad}_{u_{\tkrep[]}}$ is the reincarnation of $\Sigma_H$ on $D'_{H,\Fi}$. Finally, $C_{z, H, \Fi}$ is the fiber of $D'_{H,\Fi}$ at $z \in X$ and $\mathrm{ev}_{z, H, \Fi}$ is the associated quotient map. 
The construction of these ``prototypes'' set the stage for the main players (namely, compressions by ${\tPo[LTC]}^{(r)}_{\Ause}$ and $\tPonum{\Ause}{l}$, as well as maps derived from them, represented by dashed arrows in the diagram above) to enter in Stage~D and~E. 

In Stage~B, we put to use the three dimension bounds on the right-hand side of the inequality in the statement of Theorem~\ref{thm:dimnuc-main}: first in Step~\eqref{thm:dimnuc-main:proof::Auseco}, we use $\asdim(X, \Enseco_\alpha)$ to obtain a ``partition of unity'' $\left( \Ponum{\Ause}{l} \right)_{l, \Ause}$ as in Proposition~\ref{prop:orbit-asdim}, then in Step~\eqref{thm:dimnuc-main:proof::D-approximation}, we use $\dstab$ to obtain prototypes for the nuclear approximations $C_{\Ause} \xrightarrow{\kappa_{\Ause}}  B_{\Ause} \xrightarrow{\gamma_{\Ause}}  C_{\Ause}$, and finally in Step~\eqref{thm:dimnuc-main:proof::LTC}, we use $\dimltc(\alpha)$ to obtain a ``partition of unity'' $\left(\Po[LTC]^{(r)}\right)_{r}$ as in Proposition~\ref{prop:ltc-dim} \textemdash\ later in Stage~D and~E, these ``partitions of unity'' will give rise to the ``main players'' mentioned above. 
This whole process in Stage~B hinges on a series of delicate choices of 
parameters, which involve several finiteness or compactness arguments. Those 
depend, ultimately, on the finiteness of $A_0$ and $\Lo$, the compactness of 
unit balls in finite-dimensional $C^*$-algebras, and
the finiteness of the collection $\short^{P}$ (Notation~\ref{Notation:modified 
stabilizer}) for any finite subset $P$ in $G$. We remark that the construction of $\short^{P}$ reduces the intractable 
task of dealing 
with an infinite number of stabilizers to a manageable one of tracking only a 
finite number of ``nearsighted stabilizers''.

In Stage~C, we convert the ``prototypes'' from Stage~A into ``final products'' 
by first determining in Step~\eqref{thm:dimnuc-main:proof::Ause-l-r} the range 
of the index $\Ause$ \textemdash\ very roughly speaking, we let $\Ause$ range 
over all the ``partial near orbits'' relevant for our approximation 
\textemdash\ and then choosing for each $\Ause$ suitable parameters $x_{\Ause}$ 
(a chosen ``base point'' in $\Ause$), $H_{\Ause}$ (the ``nearsighted 
stabilizer'' at $x_{\Ause}$), $z_{\Ause}$ (a point at which the evaluation 
$\mathrm{ev}_{\Ause}$ is performed), etc. 
In order to handle the case of $C_0(X)$-algebras, we need to allow $x_{\Ause}$ 
to be different from (but still close to) $z_{\Ause}$ in general.

In Stage~D, we focus on the ``upward'' compressions by 
${\tPo[LTC]}^{(r)}_{\Ause}$, constructed using the ``partition of unity'' 
$\left(\Po[LTC]^{(r)}\right)_{r}$ associated with the long thin covering 
dimension from Stage~B. After constructing the compressions in 
Step~\eqref{thm:dimnuc-main:proof::LTC-compression}, the rest of Stage~D is 
devoted to verifying two crucial properties they enjoy, the first being that 
the compositions of these compressions with the other adjacent upward maps in 
the above diagram (namely, $\Sigma_{H_{\Ause}} \circ 
\operatorname{compr}_{{{\tPo[LTC]}}^{(r)}_{\Ause}} \circ \Theta_{{\Ause}} \circ 
\widetilde{\gamma}_{\Ause}^{(m)}$ or equivalently $\mathrm{Ad}_{\vkrep} \circ 
\operatorname{compr}_{{\tPo[LTC]'{}}^{(r)}_{\Ause}} \circ 
\widetilde{\gamma}_{\Ause}^{(m)}$) are approximately order zero (and thus can 
be perturbed to bona-fide order zero maps $\Phi_{\Ause}^{(r,m)}$ in 
Step~\eqref{thm:dimnuc-main:proof::approx-order-zero}) \textemdash\ this relies 
on the approximate invariance of the functions $\Po[LTC]^{(r)}$ and thus 
ultimately on the ``long'' part in the definition of the long thin covering 
dimension \textemdash\ and the second being that the compressions by 
${\tPo[LTC]}^{(r)}_{\Ause}$, in a certain approximate sense, ``factor through'' 
the evaluation maps $\mathrm{ev}_{\Ause}$ (see 
Step~\eqref{thm:dimnuc-main:proof::fine}) \textemdash\ this relies ultimately 
on the ``thin'' part in the definition of the long thin covering dimension. 

In Stage~E, we introduce the ``downward'' compressions by $\tPonum{\Ause}{l}$, constructed using the ``partition of unity'' $\left( \Ponum{\Ause}{l} \right)_{l, \Ause}$ associated with the asymptotic dimension from Stage~B. 
These compressions are needed to control the matrix sizes of the algebras $B_{\Ause}$; without this control, we would not be able to perturb the approximate order zero maps in Stage~D \textemdash\ this is why, in Stage~B, we need to fix $\left( \Ponum{\Ause}{l} \right)_{l, \Ause}$ before fixing $\left(\Po[LTC]^{(r)}\right)_{r}$. 
When we compose these ``downward'' compressions with the ``upward'' ones from Stage~D, the effect, roughly speaking, amounts to overlaying the two ``partitions of unity'' (see Step~\eqref{thm:dimnuc-main:proof::Sigma-Pi-Delta}), where the functions $\Ponum{\Ause}{l}$ are used to ``modulate'' the functions $\Po[LTC]^{(r)}$ with the help of the near orbit selection functions (made precise in Step~\eqref{thm:dimnuc-main:proof::mu_f}). 
This fact eventually allows us to show in Step~\eqref{thm:dimnuc-main:proof::tally} that $\Phi \circ \Psi$ is a sufficiently good approximation of the identity map on $A \rtimes G$. 

We now proceed with the details of the proof.

\begin{enumerate}[itemindent=*,leftmargin=1em,label=\textit{Step~(A\arabic*).},ref=A\arabic*]

	\item \label{thm:dimnuc-main:proof::Roe} 
	For any subgroup $H \leq G$, we write $\lambda_H$ for the action of $G$ on 
	$c_0(G/H)^+$ by left translation. We consider the diagonal action 
	$\lambda_H \otimes \alpha$ of $G$ on $c_0(G/H)^+ \otimes A$. Since the 
	unital embedding $\mathbf{1} \colon \mathbb{C} \hookrightarrow c_0(G/H)^+$ 
	is $\lambda_H$-equivariant, it induces a $(\lambda_H \otimes 
	\alpha)$-equivariant $*$-homomorphism 
	\[
	\mathbf{1} \otimes \id_A \colon A 
	\cong \mathbb{C} \otimes A \to c_0(G/H)^+ \otimes A
	\]
	 and thus also a 
	$*$-homomorphism 
	\[
		(\mathbf{1} \otimes \id_A) \rtimes G \colon A \rtimes_\alpha G \to \left( c_0(G/H)^+ \otimes A \right) \rtimes_{\lambda_H \otimes \alpha} G \; .
	\]



	\item \label{thm:dimnuc-main:proof::equi-Roe}\label{DMP3}
	For any subgroup $H \leq G$, 
	we fix the following notations:
	\begin{align*}
	\label{dimnuc:def-of-E_H}
	E_{H} &= \left( c_0(G/H)^+ \otimes A \right) \rtimes_{\lambda_H \otimes \alpha} G \\
	\underline{E}_{H} &= \left( c_0(G/H)^+ \otimes C_0(X) \right) \rtimes_{\lambda_H \otimes \alpha} G 
	\end{align*}
	with a canonical $*$-homomorphism $\underline{E}_{H} \to M(E_{H})$ induced by the equivariant $*$-homomorphism $C_0(X) \to M(A)$. 
	
	In addition, for any finite subset $\Fi$ of $G / H$, 
	we consider the indicator function 
	$\chi_{\Fi} \in c_0(G/H)$, viewed as an element 
	in the multiplier algebra of $c_0(G/H)^+ \otimes C_0(X)$, and therefore of $\underline{E}_{H}$ and $E_{H}$.
	This gives rise to $C^*$-subalgebras
	\begin{equation*}
	\label{dimnuc:def-of-D_H}
		D_{H, \Fi} = \chi_{\Fi \vphantom{\krep{j}}} \, E_{H} \, \chi_{\Fi \vphantom{\krep{j}}} \subseteq  E_{H}
		\quad \text{ and } \quad 
		\underline{D}_{H, \Fi} = \chi_{\Fi \vphantom{\krep{j}}} \, \underline{E}_{H} \, \chi_{\Fi \vphantom{\krep{j}}} \subseteq  \underline{E}_{H}
	\end{equation*}
	with a canonical $*$-homomorphism $\underline{D}_{H, \Fi} \to M(D_{H, \Fi})$.

	\item \label{thm:dimnuc-main:proof::equi-Roe-isomorphism}
	For any $H \leq G$ and for any finite subset $\Fi \subseteq G / H$, 
	we give a more concrete description of $D_{H, \Fi}$. 
	For the ease of our exposition, we will only focus on the case with $[\grpid] \in \Fi$, which suffices for our purpose. 
	To this end, 
	we fix a tuple 
	\[
		\vec{k} = \left( \krep[]{1},\krep[]{2},\ldots,\krep[]{|\Fi|} \right) \in G^{|\Fi|}
	\]
	of representatives 
	for $\Fi \subseteq G / H$, with $\krep[]{1} = \grpid$, that is, we have $\Fi = \left\{ [\krep[]{1}],\ldots,[\krep[]{|\Fi|}] \right\}$, 
	where we use $[\krep[]{i}]$ to denote the image of $\krep[]{i}$ in $G/H$. 
	Writing 
	$\chi_{[\krep[]{i}]}$ for the characteristic function of the singleton 
	$\left\{ [\krep[]{i}] \right\}$, viewed as a projection in $c_0(G/H)^+$, 
	and using $\{u_g : g \in G\}$ to denote the canonical unitaries in the 
	crossed product, we
	define, for $i, j \in \intervalofintegers{1}{|\Fi|} $, partial isometries 
	\[
		s_{H, \Fi}^{(i,j)} = u_{\krep[]{i}} \chi_{[\grpid\vphantom{\krep[]{j}}]} u_{\krep[]{j}}^* = \chi_{[\krep[]{i}]} \, u_{\krep[]{i}} u_{\krep[]{j}}^* \, \chi_{[\krep[]{j}]} \in M \left( E_{H} 
		\right)
	\]
	and linear subspaces
	\[
		D_{H, \Fi}^{(i,j)} = \chi_{[\krep[]{i}]} \, E_{H} \, \chi_{[\krep[]{j}]} \subseteq  E_{H} \; .
	\]
	Observe that for any $i, j ,k , l \in \intervalofintegers{1}{|\Fi|} $, we have 
	\[
		D_{H, \Fi}^{(i,j)} D_{H, \Fi}^{(k,l)} = D_{H, \Fi}^{(i,j)} s_{H, \Fi}^{(k,l)} = s_{H, \Fi}^{(i,j)} D_{H, \Fi}^{(k,l)} = 
		\begin{cases}
			D_{H, \Fi}^{(i,k)} \, , & \text{ if } j = k \\
			0  \, , & \text{ if } j \not= k
		\end{cases}
		\, , 
	\]
	\[
		s_{H, \Fi}^{(i,j)} s_{H, \Fi}^{(k,l)} = 
		\begin{cases}
		s_{H, \Fi}^{(i,k)} \, , & \text{ if } j = k \\
		0  \, , & \text{ if } j \not= k
		\end{cases}
		\, , 
	\]
	\[
		\left( D_{H, \Fi}^{(i,j)} \right)^* = D_{H, \Fi}^{(j,i)} \, , \quad \text{ and } \quad  \left( s_{H, \Fi}^{(i,j)} \right)^* = s_{H, \Fi}^{(j,i)} \, ,
	\]
	and the equation $\chi_{\Fi} = \sum_{i = 1 } ^{|\Fi|} \chi_{[\krep[]{i}]}$ leads to a vector-space isomorphism
	\[
		D_{H, \Fi} \cong \bigoplus_{i, j = 1 } ^{|\Fi|} D_{H, \Fi}^{(i,j)} = \bigoplus_{i, j = 1 } ^{|\Fi|} s_{H, \Fi}^{(i,1)} D_{H, \Fi}^{(1,1)} s_{H, \Fi}^{(1,j)}  \; .
	\]
	To identify $D_{H, \Fi}^{(1,1)}$, we observe that since 
	$\chi_{[\grpid\vphantom{\krep[]{j}}]}$ is $\lambda_{|_H}$-invariant, it 
	induces an $H$-equivariant $*$-homomorphism 
	\[
	\left( 
	\chi_{[\grpid\vphantom{\krep[]{j}}]} \otimes \id_A \right) \colon A 
	\hookrightarrow c_0(G/H)^+ \otimes A \, ,
	\] 
	which is split injective via an $H$-invariant $*$-homomorphism
	\[
		\left(\mathrm{ev}_{[\grpid\vphantom{\krep[]{j}}]} \otimes \id_A \right)  \colon  c_0(G/H)^+ \otimes A   \to A 
	\]
	induced by evaluation at $[\grpid\vphantom{\krep[]{j}}]$. 	
	Hence we have an embedding 
	\[
		\left( \chi_{[\grpid\vphantom{\krep[]{j}}]} \otimes \id_A \right) \rtimes H \colon  A \rtimes_{\alpha_{|_H}} H  \hookrightarrow \left( c_0(G/H)^+ \otimes A \right) \rtimes_{(\lambda_H \otimes \alpha)_{|_H}} H \; .
	\]	
	Since 
	\[
		\chi_{[\grpid\vphantom{\krep[]{j}}]} u_{g} \chi_{[\grpid\vphantom{\krep[]{j}}]} = 
		\begin{cases}
			0 , & g \in G \setminus H \\
			\chi_{[\grpid\vphantom{\krep[]{j}}]} u_{g} , & g \in  H
		\end{cases}
		\; ,
	\]
	it follows that 
	\begin{align*}
		D_{H, \Fi}^{(1,1)} &= \chi_{[\grpid\vphantom{\krep[]{j}}]} \, \left(\left( c_0(G/H)^+ \otimes A \right) \rtimes_{\lambda_H \otimes \alpha} G\right) \,\chi_{[\grpid\vphantom{\krep[]{j}}]} \\
		&= \overline{\left\{ \sum_{\text{finitely many $i$} } \chi_{[\grpid\vphantom{\krep[]{j}}]} \, (f_i \otimes a_i) u_{g_i} \, \chi_{[\grpid\vphantom{\krep[]{j}}]} \colon f_i \in c_0(G/H)^+ , \ a_i \in A , \ g_i \in G \right\}} \\
		&= \overline{\left\{ \sum_{\text{finitely many $i$} } (\chi_{[\grpid\vphantom{\krep[]{j}}]} \otimes a_i) u_{g_i}  \colon  a_i \in A , \ g_i \in H \right\}} \; ,
	\end{align*}
	which is precisely the image of the embedding $\left( \chi_{[\grpid\vphantom{\krep[]{j}}]} \otimes \id_A \right) \rtimes H$ above. 
	Hence we have a $*$-isomorphism 
	\[
		\left( \chi_{[\grpid\vphantom{\krep[]{j}}]} \otimes \id_A \right) 
		\rtimes H \colon  A \rtimes_{\alpha_{|_H}} H \xrightarrow{\cong} D_{H, 
		\Fi}^{(1,1)}
	\]
	given by
	\[  
		a u_h  \mapsto \left( \chi_{[\grpid\vphantom{\krep[]{j}}]} \otimes a \right)  u_h 
		\; .
	\]
	Define
	\[
	D'_{H, \Fi} := M_{|\Fi|} \left(A \rtimes_{\alpha_{|_H}} H \right) \, .
	\]
	Combining the above facts, we obtain a $*$-isomorphism
	\[
		\Theta_{H,\vec{k}} \colon D'_{H, \Fi} \xrightarrow{\cong}  D_{H, \Fi} 
	\]
	given by
	\begin{align*}
		e_{i,j} \otimes ( a u_h ) \mapsto&\ s_{H, \Fi}^{(i,1)} \left( \left( \chi_{[\grpid\vphantom{\krep[]{j}}]} \otimes a \right)  u_h \right) s_{H, \Fi}^{(1,j)} \\
		&= u_{\krep[]{i}}  \left( \chi_{[\grpid\vphantom{\krep[]{j}}]} \otimes a \right)  u_h  u_{\krep[]{j}}^* \; .
	\end{align*}

	\item \label{thm:dimnuc-main:proof::equi-Roe-isomorphism-multiplier}
	In Step~\eqref{thm:dimnuc-main:proof::equi-Roe-isomorphism}, 
	if we replace $A$ by $C_0(X)$, we define
	\[
	 \underline{D}'_{H,\Fi} := M_{|\Fi|} \left(C_0(X) \rtimes_{\alpha_{|_H}} H 
	 \right)
	\]
	and obtain an analogous $*$-isomorphism 
	\[
		\underline{\Theta}_{H,\vec{k}} \colon \underline{D}'_{H,\Fi} 
		\xrightarrow{\cong}  
		\underline{D}_{H,\Fi} 
	\]
	given by
		\begin{align*}
		e_{i,j} \otimes ( f u_h ) \mapsto&\ s_{H, \Fi}^{(i,1)} \left( \left( \chi_{[\grpid\vphantom{\krep[]{j}}]} \otimes f \right)  u_h \right) s_{H, \Fi}^{(1,j)} \\
		&= u_{\krep[]{i}}  \left( \chi_{[\grpid\vphantom{\krep[]{j}}]} \otimes 
		f \right)  u_h  u_{\krep[]{j}}^* \ ,.
	\end{align*}
	This fits into a commutative diagram 
	\[
		\xymatrix{
		\underline{D}'_{H,\Fi}
		\ar[r]^-{\underline{\Theta}_{H,\vec{k}}}_-{\cong} \ar[d] & \underline{D}_{H,\Fi}
		\ar[d] ,  \\
		M \left(D'_{H,\Fi}\right) \ar[r]^{\Theta_{H,\vec{k}}}_{\cong} & M \left(D_{H,\Fi}\right) ,  \\
		}
	\]
	where the vertical maps are induced from the $C_0(X)$-algebra structure of $A$ and the bottom map is extended to the multiplier algebras.

	\item \label{thm:dimnuc-main:proof::Sigma-pre}
	For any $H \leq G$, we view $C_c(G/H, A^+)$ as a $G$-invariant $*$-subalgebra of $c_0(G/H)^+ \otimes A^+$ 
	and 
	define a linear map 
	\[
	\Sigma_H \colon C_c(G/H, A^+) \rtimes_{\mathrm{alg}} G \to A^+ \rtimes G 
	\]
	given by
	\[
	 f u_g \mapsto \left( \sum_{[g'] \in G/H} f([g']) \right) u_g \; .
	\]
	It is straightforward to see that $\Sigma_H$ maps $C_c(G/H, A^+)$ onto 
	$A^+$ and satisfies, for any  
	$g,g' \in G $ and for any $f \in C_c(G/H, A^+)$,  
	\[
	\Sigma_H (u_g f u_{g'}) = u_g \Sigma_H (f) u_{g'} \, .
	\]
	We claim that $\Sigma_H$ is completely positive, in the sense that for any 
	element $b \in M_n \left( C_c(G/H, A^+) \rtimes_{\mathrm{alg}} G \right)$, 
	we have $\left(\id_{M_{n}} \otimes \Sigma_H\right) (b^* b) \geq 0$ in $M_n 
	(A^+ \rtimes G)$. Indeed, identifying $M_n ( C_c(G/H, A^+) 
	\rtimes_{\mathrm{alg}} G )$ with $( C_c(G/H) \odot M_n(A^+) ) 
	\rtimes_{\mathrm{alg}} G$ in the obvious way and writing $b$ as a finite 
	sum 
	\[
	\sum_{g \in F \subseteq G} \sum_{[g'] \in F' \subseteq G/H} \left( \chi_{[g']} \otimes b_{g, [g']} \right) u_g
	\]
	where 
	$b_{g, [g']} \in M_n(A^+)$, we have 
	\begin{align*}
	b^* b 
	= &\ \sum_{g_1, g_2 \in F \subseteq G} \sum_{[g'] \in F' \subseteq G/H} u_{g_1}^* \left( \chi_{[g']} \otimes b_{g_1, [g']}^* \, b_{g_2, [g']}  \right)  u_{g_2} 
	\end{align*}	
	and thus
	\begin{align*}
	\left(\id_{M_{n}} \otimes \Sigma_H\right) (b^* b) 
	&= \sum_{g_1, g_2 \in F \subseteq G} \sum_{[g'] \in F' \subseteq G/H} u_{g_1}^*  \left( b_{g_1, [g']}^* \, b_{g_2, [g']} \right)  u_{g_2} \\
	&= \sum_{[g'] \in F' \subseteq G/H} \left( \sum_{g_1 \in F \subseteq G} b_{g_1, [g']} u_{g_1}  \right)^* \left( \sum_{g_2 \in F \subseteq G} b_{g_2, [g']} u_{g_2} \right) \\
	&\geq 0
	\end{align*}
	as claimed.

	\item \label{thm:dimnuc-main:proof::Sigma}
	For any $H \leq G$ and for any finite subset $\Fi \subseteq G / H$, observe that 
	\[
	\chi_{\Fi \vphantom{\krep{j}}} \left( \left( c_0(G/H)^+ \otimes A^+ \right) \rtimes_{\lambda_H \otimes \alpha, \mathrm{alg}} G \right) \chi_{\Fi \vphantom{\krep{j}}} \subseteq C_c(G/H, A^+) \rtimes_{\mathrm{alg}} G \; .
	\]
	Since this $*$-subalgebra has a unit, namely $\chi_{\Fi}$, whose image under the completely positive map $\Sigma_H$ is $\left| \Fi \right| \cdot 1_{A^+}$, it follows that the restriction of $\Sigma_H$ to this $*$-subalgebra extends to a completely positive map of norm $\left| \Fi \right|$ on $\chi_{\Fi} \left( \left( c_0(G/H)^+ \otimes A^+ \right) \rtimes_{\lambda_H \otimes \alpha} G \right) \chi_{\Fi}$, which we still denote by $\Sigma_H$. 
	Note that the enlarged domain now contains $D_{H, \Fi}$ from Step~\eqref{thm:dimnuc-main:proof::equi-Roe}. 
	
	\item \label{thm:dimnuc-main:proof::row-vector}
	For any $H \leq G$, for any finite subset $\Fi \subseteq G / H$, and for any $|R|$-tuple $\tkrep[] = \left(\krep[]{1}, \ldots, \krep[]{|\Fi|} \right)$ of elements in $G$ satisfying $\krep[]{1} = \grpid$ and $\Fi = \left\{ [\krep[]{1}],\ldots,[\krep[]{|\Fi|}] \right\}$, 
	we calculate the composition of $\Sigma_H \colon D_{H, \Fi} \to A \rtimes G$ from Step~\eqref{thm:dimnuc-main:proof::Sigma} with the isomorphism $\Theta_{H,\tkrep[]} \colon D'_{H, \Fi} \to D_{H, \Fi}$ from Step~\eqref{thm:dimnuc-main:proof::equi-Roe-isomorphism}. 
	To this end, 
	we define a row vector 
	\[
		u_{\tkrep[]} = \left(  u_{\krep[]{j}} \right)_{j \in \intervalofintegers{1}{|\Fi|}}
	\]
	with values in $C_0(X)^+ \rtimes_{\alpha} G \subseteq M(A \rtimes_{\alpha} 
	G)$  
	and claim that 
	\begin{equation} \label{thm:dimnuc-main:proof::eq:Sigma-k}
	\Sigma_{H} \circ \Theta_{H,\tkrep[]} = \mathrm{Ad}_{u_{\tkrep[]}} \colon 
	D'_{H, \Fi} \to A \rtimes_{\alpha} G \; .
	\end{equation}
	
	Indeed, for any $i, j \in \intervalofintegers{1}{|\Fi|}$, for any $a \in A$ and for any $h \in H$, by Steps~\eqref{thm:dimnuc-main:proof::equi-Roe-isomorphism} and~\eqref{thm:dimnuc-main:proof::Sigma-pre}, we have 
	\begin{align*}
	&\ \Sigma_{H} \circ \Theta_{H,\tkrep[]} (e_{i,j} \otimes ( a u_h )) \\
	= &\ \Sigma_{H}  \left( u_{\krep[]{i}}  \left( \chi_{[\grpid\vphantom{\krep{j}}]} \otimes a \right)  u_h  u_{\krep[]{j}}^* \right)  \\
	= &\ u_{\krep[]{i}}  a  u_h  u_{\krep[]{j}}^*  \\
	= &\ u_{\tkrep[]} (e_{i,j} \otimes ( a u_h )) u_{\tkrep[]}^* 
	\; .
	\end{align*}
	This is sufficient to prove the claim since both $\Sigma_{H} \circ 
	\Theta_{H,\tkrep[]}$ and $\mathrm{Ad}_{u_{\tkrep[]}}$ are linear maps.

	\item \label{thm:dimnuc-main:proof::evaluation}
	For any $z \in X$,  
	for any $H \leq G_z$, 
	and for any finite subset $\Fi \subseteq G / H$ with $[\grpid] \in \Fi$, 
	we define the $C^*$-algebra 
	\[
		C_{z,H,\Fi} := M_{|\Fi|} \left(A_{z} \rtimes_{\alpha_{|_H}^z} H \right)  \; .
	\]
	Since $z$ is fixed by $\alpha_{|_H}$, the evaluation map $A \to A_{z}$ intertwines the actions $\alpha_{|_H}$ and $\alpha_{|_H}^z$ and thus induces 
	a surjective $*$-homomorphism
	\[
		\mathrm{ev}_{z,H,\Fi} \colon D'_{H, \Fi} \to C_{z,H,\Fi} 
	\]
	given by
	\[
	 e_{i,j} \otimes ( a u_h ) \mapsto e_{i,j} \otimes ( a_z u_h ) \; .
	\]
	Observe that since the  $\mathrm{ker}(A \to A_z) = C_0 (X \setminus \{z\}) 
	\, A$, it follows that 
	\begin{equation} \label{thm:dimnuc-main:proof::eq:ker-z}
		\mathrm{ker} \left( \mathrm{ev}_{z,H,\Fi} \right) = C_0 (X \setminus \{z\}) \, M_{|\Fi|} \left(A \rtimes_{\alpha_{|_H}} H \right)
	\end{equation}
	where we identify $M_{|\Fi|} \left(A \rtimes_{\alpha_{|_H}} H \right)$ with $M_{|\Fi|} \left(A \right) \rtimes_{\alpha_{|_H}} H$ and view $M_{|\Fi|} \left(A \right)$ as a $C_0(X)$-algebra. 
	
\end{enumerate}	
	
\begin{enumerate}[itemindent=*,leftmargin=1em,label=\textit{Step~(B\arabic*).},ref=B\arabic*]
	
	\item \label{thm:dimnuc-main:proof::setup}
	Since we have assumed at the beginning that $A_0$ is a finite subset in 
	$C_{\mathrm{c}}(X) \cdot A$, there exists a compact subset $\Ko$ of $X$ 
	such that $f \cdot a = a$ for any $a \in A_0$ and for any $f \in C_0(X)$ 
	satisfying $f|_{\Ko} = 1$. We use the local compactness of $X$ to 
	choose another compact subset $\Ko'$ of $X$ whose interior contains $\Ko$. 
	Set 
	\begin{equation*}
	\Er = \frac{ \eps }{ (c+1)(d+1)(\dstab +8) } \; .
	\end{equation*}

	\item \label{thm:dimnuc-main:proof::Auseco}
	We use  
	Proposition \ref{prop:orbit-asdim} to pick
	\begin{itemize}
		\label{dimnuc:choice-of-asdim-data}
		\item a finite subset $\Bo \subseteq G$,
		\item collections $\Auseconum{0} , \ldots , \Auseconum{c}$ of disjoint 
		finite subsets of $X$, with $\Auseco = \Auseconum{0} \cup \ldots 
		\cup \Auseconum{c}$, and
		\item finitely supported functions $\Ponum{\Ause}{l} \colon X \to 
		[0,1]$ for $l = 0, \ldots, c$ 
		and for $\Ause \in \Auseconum{l}$ 
	\end{itemize}
	satisfying the following conditions:
	{
		\renewcommand{\theenumi}{}	
		\begin{enumerate}
			
			\renewcommand{\labelenumii}{\textup{(\theenumii)}} 
			\renewcommand{\theenumii}{Bo}
			\item[\namedlabel{item:main:use:prop:orbit-asdim:pou:Bo}{(Bo-asdim)}]
			For any $\Ause \in \Auseco$ and for any $x , y \in \Ause$, we have $x 
			\in 
			\alpha_{\Bo} (y)$.

			\renewcommand{\labelenumii}{\textup{(\theenumii)}} 
			\renewcommand{\theenumii}{Su}
			\item[\namedlabel{item:main:use:prop:orbit-asdim:pou:Su}{(Su-asdim)}]
			For any $l \in \intervalofintegers{0}{d}$, for any $\Ause 
			\in 
			\Auseconum{l}$, and for any $x \in X$, 
			we have either $\Ponum{\Ause}{l}(x) = 0 $ or $\alpha_{\Lo}(x) 
			\subseteq \Ause$.  
			
			\renewcommand{\labelenumii}{\textup{(\theenumii)}} 
			\renewcommand{\theenumii}{In}
			\item[\namedlabel{item:main:use:prop:orbit-asdim:pou:Li}{(In-asdim)}]
			For any $l \in \intervalofintegers{0}{c}$ and for any $\Ause \in 
			\Auseconum{l}$, the function $\Ponum{\Ause}{l}$ is \emph{$(\Lo, 
				\Er^2)$-invariant} in the sense that 
			\[ \left| \Ponum{\Ause}{l} (x) - 
			\Ponum{\Ause}{l} \left( \alpha_{g^{-1}} (x)  \right) \right| \leq \Er^2 
			\quad \text{ for any } g \in \Lo \text{ and for any } x \in X \; .
			\] 
			
			\renewcommand{\labelenumii}{\textup{(\theenumii)}} 
			\renewcommand{\theenumii}{Un}
			\item[\namedlabel{item:main:use:prop:orbit-asdim:pou:Un}{(Un-asdim)}]
			For any $x \in \Ko'$, we have $\displaystyle \sum_{l = 0}^{c} 
			\sum_{\Ause \in \Auseconum{l}} \Ponum{\Ause}{l} (x) = 1$ while for any 
			other $x \in X$, we have $\displaystyle \sum_{l = 0}^{c} \sum_{\Ause 
				\in \Auseconum{l}} \Ponum{\Ause}{l} (x) \leq 1$. 
		\end{enumerate}
	}

	As we may enlarge $\Bo$ if need be, in order to slightly simplify notation 
	later, we assume that $\Bo = \Bo^{-1}$ and $\grpid \in \Lo \subseteq \Bo$.

	\item \label{thm:dimnuc-main:proof::D-approximation}
	For any $z \in X$, for any $H \in \short^{\tLo}_z$ as in Notation~\ref{Notation:modified stabilizer}, and for any subset $\Fi \subseteq \Bo H / H$ with $[\grpid] \in \Fi$, 
	we define a set 
	\begin{multline*}
		D_{H, \Fi, 0} := \Big\{  (f \otimes 1)^{1/2} \, ((1 \otimes a) u_g) \, (f \otimes 1)^{1/2} \in D_{H, \Fi} \colon \\ 
		a \in A_0, g 
		\in \Lo, f \in \ball  \ell^\infty(\Fi) _{+, \leq 1} \Big\} \; .
	\end{multline*}
	This is a compact subset of $\left( D_{H, \Fi} \right)_{\leq 1}$ since 
	$\ell^\infty(\Fi)$ is finite dimensional and all the elements in the 
	product above are contractive. 
	It induces the following compact sets: 
	\[
		D'_{H, \Fi, 0} := 
		\bigcup_{\left \{ \vec{k} \in \Bo^{|\Fi|} \colon \Fi = \left\{ 
		[\krep[]{1}],\ldots,[\krep[]{|\Fi|}] \right\} \right \} } 
		\Theta_{H,\vec{k}}^{-1} (D_{H,\Fi,0}) \subseteq \left( D'_{H, \Fi} 
		\right)_{\leq 1}
	\]
	and 
	\[
		C_{z,H,\Fi, 0} := \mathrm{ev}_{z,H,\Fi}  (D'_{H,\Fi,0}) \subseteq \left( C_{z,H,\Fi} \right)_{\leq 1} \; .
	\]
	(The definition of $D'_{H, \Fi, 0}$ produces a compact set which is bigger 
	than what we really need, because the definition involves taking a union 
	over 
	the images of the isomorphisms $\Theta_{H,\vec{k}}^{-1}$ under all possible 
	choices of tuples $\vec{k}$, whereas 
	we really use just one. However, the tuple $\vec{k}$ will only be fixed  
	in Step~(\ref{thm:dimnuc-main:proof::H-W}) below, and there is no harm in 
	defining this larger set.)
	
	By Proposition~\ref{prop:dimnuc-basic}, 
	we have
	\[
	\dimnuc(C_{z,H,\Fi}) =\dimnuc \left(A_{z} \rtimes_{\alpha_{|_H}^z}  H \right) \leq 
	\dstab
	\; . 
	\]
	By Definition~\ref{def:dimnuc}, we may therefore choose a finite dimensional $C^*$-algebra $B_{z,H,\Fi}$ and completely 
	positive maps
	\begin{equation*}
		\label{dimnuc:def-of-B_H-and-gamma-kappa}
	\xymatrix{
		C_{z,H,\Fi} \ar[r]^{\kappa_{z,H,\Fi}} & B_{z,H,\Fi} \ar[r]^{\gamma_{z,H,\Fi}} & C_{z,H,\Fi}
	}
	\end{equation*}
	such that 
	\begin{equation*}
	\label{dimnuc:eqn:gamma-kappa}
		\|\gamma_{z,H,\Fi} \circ \kappa_{z,H,\Fi} (a) -  a \| < \ErB \quad \text{ for any } a \in C_{z,H,\Fi, 0} \; ,
	\end{equation*}
	$\kappa_{z,H,\Fi}$ is contractive, and the map $\gamma_{z,H,\Fi}$ is 
	$(\dstab + 1)$-decomposable, that is, there is a decomposition
	\[
		\gamma_{z,H,\Fi} = \gamma_{z,H,\Fi}^{(0)} + \ldots + \gamma_{z,H,\Fi}^{(\dstab)} \colon B_{z,H,\Fi} \to C_{z,H,\Fi}
	\] 
	into $(\dstab + 1)$ completely positive order zero contractions.

	\item \label{thm:dimnuc-main:proof::lifting}
	For any $z \in X$, for any $H \in \short^{\tLo}_z$, and for any finite subset $\Fi \subseteq G$ with $\grpid \in \Fi$, 
	by Lemma~\ref{lem:lifting-order-zero}, there is a
	$(\dstab + 1)$-decomposable map 
	\[
	\widetilde{\gamma}_{z,H,\Fi} = \widetilde{\gamma}_{z,H,\Fi}^{(0)} + \ldots + \widetilde{\gamma}_{z,H,\Fi}^{(\dstab)} \colon B_{z,H,\Fi} \to D'_{H, \Fi}
	\]
	such that for any $m \in \intervalofintegers{0}{\dstab}$, we have $\mathrm{ev}_{z,H,\Fi} \circ \widetilde{\gamma}_{z,H,\Fi}^{(m)} = \gamma_{z,H,\Fi}^{(m)}$ and $\widetilde{\gamma}_{z,H,\Fi}^{(m)}$ is a completely positive order zero contraction. 
	
	It follows that for any $a \in  D'_{H,\Fi, 0}$ as defined in Step~\eqref{thm:dimnuc-main:proof::D-approximation}, we have 
	\begin{align*}
	\left\| \mathrm{ev}_{z,H,\Fi} \circ \widetilde{\gamma}_{z,H,\Fi} \circ \kappa_{z,H,\Fi} \circ \mathrm{ev}_{z,H,\Fi} (a) - \mathrm{ev}_{z,H,\Fi}  (a) \right\| < \ErB \; ,
	\end{align*}
	that is, $\left( \widetilde{\gamma}_{z,H,\Fi} \circ \kappa_{z,H,\Fi} \circ \mathrm{ev}_{z,H,\Fi} (a) - a \right)$ is in the $\ErB$-neighborhood of the kernel $\mathrm{ker} \left( \mathrm{ev}_{z,H,\Fi} \right)$. 
	By the compactness of $D'_{H,\Fi, 0}$, there is a finite subset $D'_{z, H,\Fi, \mathrm{diff}}$ of $\mathrm{ker} \left( \mathrm{ev}_{z,H,\Fi} \right)$ such that $\left( \widetilde{\gamma}_{z,H,\Fi} \circ \kappa_{z,H,\Fi} \circ \mathrm{ev}_{z,H,\Fi} (a) - a \right)$ is in the $2 \ErB$-neighborhood of $D'_{z, H,\Fi, \mathrm{diff}}$ for any $a \in  D'_{H,\Fi, 0}$.

	\item \label{thm:dimnuc-main:proof::approx-unit}
	For any $z \in X$, using the finiteness of $\short^{\tLo}_z$, $\Bo$, and 
	$D'_{z, H,\Fi, \mathrm{diff}}$ 
	as well as 
	Equation \eqref{thm:dimnuc-main:proof::eq:ker-z}, 
	we apply Lemma~\ref{lem:quasicentral-approximate-unit} to 
	\[
		\bigoplus_{H \in \short^{\tLo}_z} \bigoplus_{^{\Fi \subseteq \Bo \colon}_{\grpid \in \Fi}} \mathrm{ker} \left( \mathrm{ev}_{z,H,\Fi} \right) = C_0 (X \setminus \{z\}) \left( \bigoplus_{H \in \short^{\tLo}_z} \bigoplus_{^{\Fi \subseteq \Bo \colon}_{\grpid \in \Fi}} M_{|\Fi|} \left(A \rtimes_{\alpha_{|_H}} H \right) \right)
	\]
	to obtain $f_{z} \in C_c(X \setminus \{z\})_{+, \leq 1}$ such that 
	for any $H \in \short^{\tLo}_z$, for any $\Fi \subseteq \Bo$ with $\grpid \in \Fi$,  and for any $b\in D'_{z, H,\Fi, \mathrm{diff}}$, 
	\[
		b \left( 1 - f_{z} \right) \approx_{\ErB} 0 \approx_{\ErB} \left( 1 - f_{z} \right) b \; ,
	\] 
	and thus by Step~\eqref{thm:dimnuc-main:proof::lifting}, we have, for any $a \in  D'_{H,\Fi, 0}$,  
	\begin{align*}
		\left( \widetilde{\gamma}_{z,H,\Fi} \circ \kappa_{z,H,\Fi} \circ \mathrm{ev}_{z,H,\Fi} (a) - a \right) \left( 1 - f_{z} \right) &\approx_{3\ErB} 0  \text{ and } \\
	\left( 1 - f_{z} \right) \left( \widetilde{\gamma}_{z,H,\Fi} \circ \kappa_{z,H,\Fi} \circ \mathrm{ev}_{z,H,\Fi} (a) - a \right) &\approx_{3\ErB} 0 \; .
	\end{align*}

	\item \label{thm:dimnuc-main:proof::thin-cover}
	For any $z \in X$, since $f_z$ is compactly supported in $X \setminus \{z\}$, there is an open neighborhood $V_z$ of $z$ such that $f_z |_{\overline{ V_z }} = 0$. 
	Since $\Ko$ is contained in the interior of $\Ko'$, we may assume without loss of generality that 
	\begin{align*}
		\text{either } \quad V_z \subseteq \Ko' \quad \text{ or } \quad V_z \subseteq X \smallsetminus \Ko
	\end{align*}
	Since fixed-point subsets of the form $X^H$ are closed, we may also make the following assumption without loss of generality: 
	\[
		V_z \cap X^H = \varnothing \quad \text{ for any subgroup } H \in \short^{\tLo} \text{ satisfying } z \not \in X^H \; .
	\]

	We fix a 
	finite subset $Z \subseteq X$ such that
	$ \{ V_z \colon z \in Z \}$ covers $\alpha^\cup_{\Bo} ( 
	\Ko )$.

	\item \label{thm:dimnuc-main:proof::order-zero-stability}
	We  use Lemma~\ref{lem:perturb-order-zero} to find $\ErPreLTC > 0 $ 
	such that for any $z \in Z$, for any $H \in \short^{\tLo}_z$, and for any $\Fi \subseteq \Bo$ with $\grpid \in \Fi$, whenever $C$ is a $C^*$-algebra and 
	whenever $\varphi \colon B_{z,H,\Fi} \to C$ is a $*$-linear contraction satisfying 
	\begin{equation} \label{thm:dimnuc-main:proof::eq:approx-order-zero}
		\|\varphi(b)\varphi(b') - \varphi(bb')\varphi(1)\| \leq \ErPreLTC \quad \text{ for any } b, b' \in \left( B_{z,H,\Fi} \right)_{\leq 1} \; ,
	\end{equation}
	there exists an order zero map $\tilde{\varphi} \colon B_{z,H,\Fi} \to C$ such that 
	$\| \tilde{\varphi} - \varphi \| < \ErOrd$. 
	Indeed, as $Z$, $\short^{\tLo}$ and $\Bo$ are finite, 
	we can obtain such a positive number $\ErPreLTC$ as the minimum of the 
	ones 
	which occur for each algebra $B_{z,H,\Fi}$ as $z$ ranges over $Z$, $H$ ranges over $\short^{\tLo}_z$, and $R$ ranges over all subsets of $\Bo$ containing $\grpid$.

	\item \label{thm:dimnuc-main:proof::lift-image}
	For any $H \in \short^{\tLo}$ and for any $z \in Z$ fixed by $H$, the unit 
	ball $\left( B_{z,H,\Fi} \right)_{\leq 1}$ is compact. Since $Z$ is finite, 
	it follows that the union 
	\[
		\bigcup_{z \in Z \cap X^H} \bigcup_{m = 0}^{\dstab} \widetilde{\gamma}_{z,H,\Fi}^{(m)} \left( \left( B_{z,H,\Fi} \right)_{\leq 1} \right)
	\]
	is also a compact subset of $\left( D'_{H, \Fi} \right)_{\leq 1}$. 
	For each $z \in Z$ fixed by $H$, $A_{z} \rtimes_{\alpha_{|_H}^z} H$ is the 
	closure of $A_{z} \rtimes_{\mathrm{alg},\alpha_{|_H}^z} H$, which consists 
	of elements with finite supports in $H$. We can thus choose 
	a finite subset $D'_{H, \Fi, 1}$ of $\left( D'_{H, \Fi} \right)_{\leq 1}$ 
	that is $\frac{\ErPreLTC}{6}$-dense in the above union and consists of 
	matrices 
	with 
	entries in $A_{z} \rtimes_{\mathrm{alg},\alpha_{|_H}^z} H$. 
	Hence there is a finite symmetric subset $\Aubo_H$ of $H$ such that $D'_{H, \Fi, 1}$ consists of matrices with entries supported on $\Aubo_H$. 
	
	\item \label{thm:dimnuc-main:proof::T-eta}
	Define the finite symmetric subset 
	\[
		\Aubo = \Bo \cup \left( \bigcup_{H \in  \short^{\tLo}} \Aubo_{H} 
		\right) \subseteq G \, .
	\]
	Set 
	\[
		\ErLTC = \min \left\{ \ErB , \sqrt{\frac{\ErPreLTC}{18 |\Bo|^2 |\Aubo|}} \right\}  \; .
	\]

	\item \label{thm:dimnuc-main:proof::LTC}
	Since $ \{ X \smallsetminus \alpha^\cup_{\Bo} \left( \Ko \right), V_z \colon z \in Z \}$ is an open cover of $X$ by Step~\eqref{thm:dimnuc-main:proof::thin-cover}, 
	we use
	Proposition 
	\ref{prop:ltc-dim} 
	to pick
	\begin{itemize}
		\item collections $\Coseconum{0} , \ldots , \Coseconum{d}$ of disjoint open 
		subsets of $X$, together with $\Coseco = \Coseconum{0} \cup \ldots \cup 
		\Coseconum{d}$ (and we write $\Cose^{(r)} = \bigcup \Coseconum{r}$, for  $r \in \intervalofintegers{0}{d}$), 
		\item locally constant functions $\Ne^{(r)} \colon \Cose^{(r)} \to X$ for $r \in \intervalofintegers{0}{d}$, 
		\item  continuous functions $\Po[LTC]^{(r)} \colon X \to [0,1]$ for 
		$r 
		\in \intervalofintegers{0}{d}$, 
		and 
		\item a finite subset $\Aubo' \subseteq G$, 
	\end{itemize}
	satisfying the conditions of  Proposition 
	\ref{prop:ltc-dim}, with $\Aubo$ in place of $\Lo$ and with $\ErLTC$ in place 
	of 
	$\Er$; in particular, we have:
			{
		\begin{enumerate}
			\renewcommand{\labelenumi}{\textup{(\theenumi)}}
			\renewcommand{\theenumi}{Lo}
			\item[\namedlabel{item:prop:dimnuc:Lo}{(Lo-LTC)}]
			For any $x \in \Ko$, there exists $\Cose \in \Coseco$ such that 
			$\alpha_{\Aubo} (x) \subseteq \Cose$.

			\renewcommand{\labelenumi}{\textup{(\theenumi)}} 
			\renewcommand{\theenumi}{Eq}
			\item[\namedlabel{item:prop:dimnuc:Eq}{(Eq-LTC)}]
			For any $r \in \intervalofintegers{0}{d}$,  $\Ne^{(r)}$ is {$\Aubo$-equivariant}.		
			
			\renewcommand{\labelenumi}{\textup{(\theenumi)}} 
			\renewcommand{\theenumi}{Th}
			\item[\namedlabel{item:prop:dimnuc:Th}{(Th-LTC)}]
			For any $y \in \bigcup_{r = 0}^{d} \Ne^{(r)} \left( \Cose^{(r)} \right)$, 
			either 
			\[
			\left( \{y\} \cup \left( \bigcup_{r = 0}^{d} 
			\left(\Ne^{(r)}\right)^{-1}(y) \right) \right) \cap 
			\alpha^\cup_{\Bo} \left( \Ko \right) = \varnothing
			\] 
			or  there exists $z \in Z$ such that 
			\[
			\{y\} \cup \left( \bigcup_{r = 0}^{d} 
			\left(\Ne^{(r)}\right)^{-1}(y) \right)
			\subseteq V_z \, .
			\]
			\renewcommand{\labelenumi}{\textup{(\theenumi)}} 
			\renewcommand{\theenumi}{Bo}
			\item[\namedlabel{item:prop:dimnuc:Bo}{(Bo-LTC)}]
			For any $r \in \intervalofintegers{0}{d}$, for any $\Cose \in \Coseconum{r}$ and for any $x, y \in \Cose$, we have $\Ne^{(r)}(x) \in \alpha_{\Aubo'} \left( \Ne^{(r)}(y) \right)$. 
			
			\renewcommand{\labelenumi}{\textup{(\theenumi)}} 
			\renewcommand{\theenumi}{Fi}
			\item[\namedlabel{item:prop:dimnuc:Fi}{(Fi-LTC)}]
			The collection $\Coseco$ is finite. 
			
			\renewcommand{\labelenumi}{\textup{(\theenumi)}} 
			\renewcommand{\theenumi}{Su}
			\item[\namedlabel{item:prop:dimnuc:Su}{(Su-LTC)}]	
			For any $r \in \intervalofintegers{0}{d}$ and for any $x$ in 
			the support of 
			$\Po[LTC]^{(r)}$, 
			we have $\alpha_{\Aubo} (x) \subseteq \Cose$ for some $\Cose \in \Coseco^{(r)}$. 
			
			\renewcommand{\labelenumi}{\textup{(\theenumi)}} 
			\renewcommand{\theenumi}{In}
			\item[\namedlabel{item:prop:dimnuc:Li}{(In-LTC)}]		
			For each $r \in \intervalofintegers{0}{d}$,
			the function 
			$\Po[LTC]^{(r)}$ is 
			\emph{$(\Aubo, \ErLTC^2)$-invariant} in the sense that $\left| 
			\Po[LTC]^{(r)} 
			(x) - 
			\Po[LTC]^{(r)} \left( \alpha_{g^{-1}} (x)  \right) \right| < 
			\ErLTC^2$ 
			for 
			any 
			$g \in \Aubo$ and $x \in X$. 
			
			\renewcommand{\labelenumi}{\textup{(\theenumi)}} 
			\renewcommand{\theenumi}{Un}
			\item[\namedlabel{item:prop:dimnuc:Un}{(Un-LTC)}]		
			For any $x \in \Ko$, we have 
			\[
			\sum_{r = 0}^{d} 
			\Po[LTC]^{(r)} (x)= 
			1
			\]
			while for any other $x \in X$, we have 
			\[
			\sum_{r = 0}^{d} 
			\Po[LTC]^{(r)} (x) \leq 1
			\, .
			\]
		\end{enumerate}
	}

\end{enumerate}	

\begin{enumerate}[itemindent=*,leftmargin=1em,label=\textit{Step~(C\arabic*).},ref=C\arabic*]

	\item \label{thm:dimnuc-main:proof::Ause-l-r}
	For any $l \in \{0,1,\ldots,c\}$ and for any $r \in \{0,1,\ldots,d\}$, we define
	\begin{equation} \label{thm:dimnuc-main:proof::eq:Ause-l-r}
	\Auseco^{(l,r)} = \left\{\Ause \in  \Auseco^{(l)} \colon 
	\left. \Po[LTC]^{(r)} \right|_{\left( \Ne^{(r)} \right)^{-1} (\Ause) \cap \Ko} \not= 0
	\right\} \, .
	\end{equation}
	We claim that
	\begin{equation} \label{thm:dimnuc-main:proof::eq:Ause-l-r-property}
		\Ause \subseteq \Ne^{(r)} \left( \Cose^{(r)} \cap \alpha^\cup_{\Bo} \left( \Ko \right) \right) \quad \text{ for any } \Ause \in \Auseco^{(l,r)} 
	\end{equation}
	and that $\Auseco^{(l,r)}$ is a finite collection of disjoint finite 
	subsets of $X$. 
	
	To prove the first claim, 
	we fix some $x \in \left( \Ne^{(r)} \right)^{-1} (\Ause) \cap \Ko$ with 
	$\Po[LTC]^{(r)} (x) \not= 0$. Since $\Bo \subseteq \Aubo$ by 
	Step~\eqref{thm:dimnuc-main:proof::T-eta}, using \ref{item:prop:dimnuc:Su}, 
	there exists $\Cose \in \Coseconum{r}$ such that  $\alpha_{\Bo} (x) 
	\subseteq \Cose \cap \alpha^\cup_{\Bo} \left( \Ko \right)$. 
	Thus, by \ref{item:prop:dimnuc:Eq}, we have 
	\[
	\alpha_{\Bo} \left( \Ne^{(r)} (x) \right) \subseteq \Ne^{(r)} \left( 
	\Cose^{(r)} \cap \alpha^\cup_{\Bo} \left( \Ko \right)  \right)
	\, .
	\]
	This proves the first claim, as $\Ause \subseteq \alpha_{\Bo} \left( 
	  \Ne^{(r)} (x) \right)$ by \ref{item:main:use:prop:orbit-asdim:pou:Bo}. 
	
	Now the second claim follows from the first one  
	since $\Ne^{(r)} \left( \Cose^{(r)}  \right)$ is finite 
	by \ref{item:prop:dimnuc:Bo} and \ref{item:prop:dimnuc:Fi}, while $\Auseco^{(l)}$ is disjoint.

	\item \label{thm:dimnuc-main:proof::orbit-map}
	For any $x \in X$, 
	it follows from 
	Lemma~\ref{Lemma:injective-quotient} that the quotient map $\left( \Bo  G_{x}^{\tLo} \right) / G_{x}^{\tLo} \twoheadrightarrow \left( \Bo G_x \right) / G_x$ is bijective. 
	Since the map $\left( \Bo G_x \right) / G_x \to \alpha_{\Bo} (x)$ given by 
	$[g] \mapsto \alpha_g (x)$ is also bijective, 
	we obtain a bijection 
	\[
	\omega_{x} \colon  \left( \Bo  G_{x}^{\tLo} \right) / G_{x}^{\tLo} 
	\xrightarrow{\cong}  \alpha_{\Bo} (x) 
	\]
	given by
	\[
	\omega_{x} ( [g] ) = \alpha_g (x) \, .
	\]
	
	\item \label{thm:dimnuc-main:proof::H-W}
	For any $\Ause \in \bigcup_{l = 0}^{c} \bigcup_{r = 0}^{d} 
	\Auseco^{(l,r)}$, we fix $x_{\Ause} \in \Ause$. To lighten notation, we 
	write 
	\[
	H_{\Ause} = G_{x_{\Ause}}^{\tLo} 
	\, .
	\] 	
	It follows from condition \ref{item:main:use:prop:orbit-asdim:pou:Bo} in Step~\eqref{thm:dimnuc-main:proof::Auseco} that
	$\Ause 
	\subseteq 
	\alpha_{\Bo} (x_{\Ause})$. We can thus choose a tuple
	\[
		\tkrep = \left( \krep[\Ause]{1}, \ldots, \krep[\Ause]{|\Ause|} \right) \in \Bo^{|\Ause|}
	\]
	with $\krep[\Ause]{1} = \grpid$ such that 
	\[
		\Ause = \left\{ \alpha_{\krep[\Ause]{i}} (x_{\Ause}) \colon i \in \intervalofintegers{1}{|\Ause|} \right\} \; .
	\]
	Write $\Fi_{\Ause} = \left\{ \left[ \krep[\Ause]{i} \right] \colon i \in \intervalofintegers{1}{|\Ause|} \right\} \subseteq \Bo H_{\Ause} / H_{\Ause}$. It follows from Step~\eqref{thm:dimnuc-main:proof::orbit-map} that $\Ause = \omega_{x_{\Ause}} (\Fi_{\Ause})$. We thus write 
	\[
		\omega_{\Ause} \colon \Fi_{\Ause} \xrightarrow{\cong} \Ause
	\]
	for the restriction of the bijection $\omega_{x_{\Ause}}$ to $\Fi_{\Ause}$. 
	
	\item \label{thm:dimnuc-main:proof::z-W}
	For any $\Ause \in \bigcup_{l = 0}^{c} \bigcup_{r = 0}^{d} \Auseco^{(l,r)}$, 
	it follows from \eqref{thm:dimnuc-main:proof::eq:Ause-l-r-property} that 
	\[
		\left( \bigcup_{r = 0}^{d} \left(\Ne^{(r)}\right)^{-1} \left(x_{\Ause}\right) \right) \cap \alpha^\cup_{\Bo} \left( \Ko \right) \not= \varnothing
	\]
	and thus by \ref{item:prop:dimnuc:Th}, there exists $z_{\Ause} \in Z$ such that 
	\[
		\{x_{\Ause}\} \cup \left( \bigcup_{r = 0}^{d} \left(\Ne^{(r)}\right)^{-1}(x_{\Ause}) \right)
		\subseteq V_{z_{\Ause}}	\; .
	\]
	By Steps~\eqref{thm:dimnuc-main:proof::thin-cover} 
	and~\eqref{thm:dimnuc-main:proof::H-W}, we have $x_{\Ause} \in 
	V_{z_{\Ause}} \cap X^{H_{\Ause}}$. Thus $ z_{\Ause}$ is fixed by 
	$H_{\Ause}$, whence $H_{\Ause} \in \short^{\tLo}_{z_{\Ause}}$. 
	It follows that we may apply the constructions in Steps~\eqref{thm:dimnuc-main:proof::evaluation}, \eqref{thm:dimnuc-main:proof::D-approximation} and~\eqref{thm:dimnuc-main:proof::lifting} 
	to define $C_{z_{\Ause},H_{\Ause}}$, $B_{z_{\Ause},H_{\Ause}}$, etc. 
	
	\item \label{thm:dimnuc-main:proof::abbrev}
	For any $\Ause \in \bigcup_{l = 0}^{c} \bigcup_{r = 0}^{d} 
	\Auseco^{(l,r)}$, 
	we introduce the following abbreviations to items defined in  
	Steps~\eqref{thm:dimnuc-main:proof::equi-Roe}, 
	\eqref{thm:dimnuc-main:proof::evaluation}, 
	\eqref{thm:dimnuc-main:proof::D-approximation} 
	and~\eqref{thm:dimnuc-main:proof::lifting}: 
	\begin{align*}
		& D_{\Ause} := D_{H_{\Ause}, \Fi_{\Ause}}, && D'_{\Ause} := D'_{H_{\Ause},\Fi_{\Ause}} , && \Theta_{\Ause} = \Theta_{H_{\Ause}, \tkrep}  , \\ 
		& \underline{D}_{\Ause} := \underline{D}_{H_{\Ause}, \Fi_{\Ause}}, && \underline{D}'_{\Ause} := \underline{D}'_{H_{\Ause},\Fi_{\Ause}} , && \underline{\Theta}_{\Ause} = \underline{\Theta}_{H_{\Ause}, \tkrep}  , \\ 
		& C_{\Ause} := C_{z_{\Ause},H_{\Ause}, \Fi_{\Ause}}, && \mathrm{ev}_{\Ause} := \mathrm{ev}_{z_{\Ause},H_{\Ause}, \Fi_{\Ause}}  && \vkrep := u_{\tkrep} , \\ 
		& D_{\Ause, 0} := D_{H_{\Ause}, \Fi_{\Ause}, 0}, && D'_{\Ause,0} := D'_{H_{\Ause}, \Fi_{\Ause}, 0} , && C_{\Ause, 0} := C_{z_{\Ause},H_{\Ause}, \Fi_{\Ause}, 0} ,  && \\ 
		& B_{\Ause} := B_{z_{\Ause},H_{\Ause}, \Fi_{\Ause}} , && D'_{\Ause,\mathrm{diff}} := D'_{z_{\Ause},H_{\Ause}, \Fi_{\Ause}, \mathrm{diff}} , && D'_{\Ause, 1} := D'_{H_{\Ause}, \Fi_{\Ause}, 1} ,  \\ 
		& \kappa_{\Ause} := \kappa_{z_{\Ause},H_{\Ause}, \Fi_{\Ause}} , &&  \gamma_{\Ause} := \gamma_{z_{\Ause},H_{\Ause}, \Fi_{\Ause}} , && \widetilde{\gamma}_{\Ause} := \widetilde{\gamma}_{z_{\Ause},H_{\Ause}, \Fi_{\Ause}} ,  \\ 
		&  \gamma_{\Ause}^{(m)} := \gamma_{z_{\Ause},H_{\Ause}, \Fi_{\Ause}}^{(m)} \text{ and} && \widetilde{\gamma}_{\Ause}^{(m)}  := \widetilde{\gamma}_{z_{\Ause},H_{\Ause}, \Fi_{\Ause}}^{(m)}   && \text{ for } m \in \intervalofintegers{0}{\dstab} . \\
	\end{align*}

\end{enumerate}	

\begin{enumerate}[itemindent=*,leftmargin=1em,label=\textit{Step~(D\arabic*).},ref=D\arabic*]

	\item \label{thm:dimnuc-main:proof::mu_f}
	
	For any $r \in \intervalofintegers{0}{d}$ and for any (not necessarily continuous) function $f \colon X \to \mathbb{C}$, we define 
	\[
		\Po[LTC]^{(r)}_{f} \colon X \to \mathbb{C}
	\]
	by
	\[
		\Po[LTC]^{(r)}_{f} ( x ) = 
		\begin{cases}
			f \left( \Ne^{(r)} (x) \right) \, \Po[LTC]^{(r)} (x) \, , & \text{ for any } x \in \Cose^{(r)} \\
			0 \, , & \text{ for any } x \in X \smallsetminus \Cose^{(r)}
		\end{cases}
		\; .
	\]
	Any such function $\Po[LTC]^{(r)}_{f}$ is continuous 
	because $\Ne^{(r)}$ is locally constant and $\Po[LTC]^{(r)}$ is supported in $\Cose^{(r)}$ by \ref{item:prop:dimnuc:Su}. 
	We claim that the map 
	\[
		\Po[LTC]^{(r)}_{-} \colon \ell^\infty (X) \to C_0(X) 
	\]
	taking $f$ to $\Po[LTC]^{(r)}_{f}$ 
	is a completely positive contractive order zero map that takes $1$ to $\Po[LTC]^{(r)}$ and factors through the restriction homomorphism 
	\[
		\ell^\infty (X) \to \ell^\infty \left( \left\{ y \in X \colon \alpha_{\Aubo} (y) \subseteq \Ne^{(r)} \left( \Cose \right) \text{ for some } \Cose \in \Coseco^{(r)} \right\} \right) \; .
	\]
	Indeed, it is clear that $\Po[LTC]^{(r)}_{-}$ is linear and positive (and thus completely positive as the target is commutative), and satisfies $\Po[LTC]^{(r)}_{1} = \Po[LTC]^{(r)}$, whence it is also contractive. It is immediate from the definition that $\Po[LTC]^{(r)}_{f_1} \Po[LTC]^{(r)}_{f_2} = 0$ whenever $f_1 f_2 = 0$, which shows $\Po[LTC]^{(r)}_{-}$ is order zero. Finally, we have 
	\[
	\left\| \Po[LTC]^{(r)}_{f} \right\| \leq \max \left\{ |f (y)| \colon \alpha_{\Aubo} (y) \subseteq \Ne^{(r)} \left( \Cose \right) \text{ for some } \Cose \in \Coseco^{(r)} \right\} \leq \left\| f  \right\|_\infty 
	\] 
	by \ref{item:prop:dimnuc:Su} and \ref{item:prop:dimnuc:Eq}, 
	whence in particular $\Po[LTC]^{(r)}_{f} = \Po[LTC]^{(r)}_{f'}$ whenever $f (y) = f'(y)$ for any $y$ such that $\alpha_{\Aubo} (y) \subseteq \Ne^{(r)} \left( \Cose \right)$ for some $\Cose \in \Coseco^{(r)}$. 	
	
	\item \label{thm:dimnuc-main:proof::mu_Y}
	For any $r \in \intervalofintegers{0}{d}$ and for any subset $Y \subseteq X$, 
	we write $\Po[LTC]^{(r)}_{Y} = \Po[LTC]^{(r)}_{\chi_Y}$ and 
	\[
		\Cose_Y^{(r)} = \left(\Ne^{(r)}\right)^{-1} \left( Y \right) \; .
	\]
	This set is relatively clopen in $\Cose^{(r)}$. The function 
	$\Po[LTC]^{(r)}_{Y}$ satisfies 
	\[
		\Po[LTC]^{(r)}_{Y} = \Po[LTC]^{(r)} \, 
		\chi_{\Cose^{(r)}_{Y}}^{\vphantom{\int abcd}} \in C_0 \left(\Cose^{(r)}_{Y} \right)_{+, \leq 1} \subseteq C_0(\Cose^{(r)})_{+, \leq 1} \; .
	\]
	If $Y = \{y\}$ is a singleton, we write $\Cose_y^{(r)}$ for 
	$\Cose_{\{y\}}^{(r)}$ and $\Po[LTC]^{(r)}_{y}$ for 
	$\Po[LTC]^{(r)}_{\{y\}}$. 
	Because $\Po[LTC]^{(r)}_{-}$ is linear and order zero,
	for any $f \colon X \to \mathbb{C}$, we have an orthogonal sum 
	\begin{equation}	\label{thm:dimnuc-main:proof::mu_Y::eq:decomposition}
		\Po[LTC]^{(r)}_{f} = \sum_{y \in X} f(y) \, \Po[LTC]^{(r)}_{y} \, .
	\end{equation}

	\item \label{thm:dimnuc-main:proof::LTC-compression}
	For any $r \in \intervalofintegers{0}{d}$ and for any $\Ause \in \bigcup_{l = 0}^{c} \Auseco^{(l,r)}$, 
	using the bijection $\omega_{\Ause}$ from Step~\eqref{thm:dimnuc-main:proof::compression}, 
	we define 
	\begin{align*}
	{\tPo[LTC]}^{(r)}_{\Ause} = 
	\sum_{[g] \in \Fi_{\Ause}} \chi_{[g]}  \otimes \Po[LTC]^{(r)}_{\omega_{\Ause} \left( [g] \right)} 
	\in \left( \chi_{\Fi_{\Ause} \vphantom{\krep{j}}} \cdot c_0\left(G/H_{\Ause}\right)^+ \otimes C_0(X) \right)_{+, \leq 1} & \\
	\subseteq \underline{D}_{{\Ause}}
	\; . &
	\end{align*}
	We also define the diagonal matrix 
	\begin{align*}
	\tPo[LTC]'{}{}^{(r)}_{\Ause} = 
	\text{diag} \left (
	 \alpha_{\krep[{\Ause}]{i}}^{-1} \left({\Po[LTC]}^{(r)}_{\omega_{\Ause} 
	 \left( \left[ \krep[{\Ause}]{i} \right] \right)}\right)  \right)_{i \in 
	 \intervalofintegers{1}{|\Ause|}} 
	&\in \left( M_{|\Ause|} \left(C_0(X) \right) \right)_{+, \leq 1} \\
	&\subseteq 
	\underline{D}'_{\Ause}
	\; .
	\end{align*}
	Under the isomorphism established in Step~\eqref{thm:dimnuc-main:proof::equi-Roe-isomorphism-multiplier}, we have 
	\begin{align*}
	\Theta_{\Ause} \left( \tPo[LTC]'{}{}^{(r)}_{\Ause} \right) 
	&= \sum_{i = 1}^{|\Ause|} \Theta_{H_{\Ause}, \tkrep} \left( e_{i,i} \otimes \alpha_{\krep{i}}^{-1} \left({{\Po[LTC]}^{(r)}_{\omega_{\Ause} \left( \left[ \krep{i} \right] \right)}}\right) \right) \\
	&= \sum_{i = 1}^{|\Ause|} u_{\krep{i}}  \left( \chi_{[\grpid\vphantom{\krep{j}}]} \otimes \alpha_{\krep{i}}^{-1} \left({{\Po[LTC]}^{(r)}_{\omega_{\Ause} \left( \left[ \krep{i} \right] \right)}}\right) \right)   u_{\krep{i}}^* \\
	&= \sum_{i = 1}^{|\Ause|} \left( \chi_{[\krep{i}]} \otimes {{\Po[LTC]}^{(r)}_{\omega_{\Ause} \left( \left[ \krep{i} \right] \right)}} \right) \\
	&= {\tPo[LTC]}^{(r)}_{\Ause} \; ,
	\end{align*}
	where the last step follows from Step~\eqref{thm:dimnuc-main:proof::orbit-map}. 
	We thus have a commutative diagram involving the isomorphism established in Step~\eqref{thm:dimnuc-main:proof::equi-Roe-isomorphism}: 
	\begin{equation} \label{thm:dimnuc-main:proof::eq:mu-compressions-square}
		\xymatrix{
			D_{{\Ause}} \ar[rr]^{\operatorname{compr}_{{\tPo[LTC]}^{(r)}_{\Ause}}} \ar[d]_{\Theta_{{\Ause}}}^{\cong} && D_{{\Ause}} \ar[d]_{\Theta_{{\Ause}}}^{\cong} \\
			D'_{{\Ause}} \ar[rr]^{\operatorname{compr}_{\tPo[LTC]'{}{}^{(r)}_{\Ause}}} && D'_{{\Ause}} 
		}
	\end{equation}


	\item \label{thm:dimnuc-main:proof::Sigma-Pi}
	For any $r \in \intervalofintegers{0}{d}$ and for any $\Ause \in \bigcup_{l = 0}^{c} \Auseco^{(l,r)}$, 
	using the constructions in 
	Steps~\eqref{thm:dimnuc-main:proof::LTC-compression} 
	and~\eqref{thm:dimnuc-main:proof::Sigma}, 
	we consider the composition 
	\begin{align*}
	& \chi_{\Fi_{\Ause} \vphantom{\krep{j}}} \left( \left( c_0\left(G/H_{\Ause}\right)^+ \otimes A^+ \right) \rtimes_{\lambda_{H_{\Ause}} \otimes \alpha} G \right) \chi_{\Fi_{\Ause} \vphantom{\krep{j}}} \\
	& \quad \xrightarrow{\operatorname{compr}_{{\tPo[LTC]}^{(r)}_{\Ause}}} 
	\chi_{\Fi_{\Ause} \vphantom{\krep{j}}} \left( \left( c_0\left(G/H_{\Ause}\right)^+ \otimes A^+ \right) \rtimes_{\lambda_{H_{\Ause}} \otimes \alpha} G \right) \chi_{\Fi_{\Ause} \vphantom{\krep{j}}} \\
	& \qquad \xrightarrow{\Sigma_{H_{\Ause}}} A^+ \rtimes G \; ,
	\end{align*}
	which is completely positive. It is also contractive since it maps the unit $\chi_{\Fi_{\Ause} \vphantom{\krep{j}}}$ of the domain to 
	\[
	\Sigma_{H_{\Ause}} \left( {\tPo[LTC]}^{(r)}_{\Ause} \right) = \sum_{[g] \in \Fi_{\Ause}}  \Po[LTC]^{(r)}_{\omega_{\Ause} \left( [g] \right)} 
	= \Po[LTC]^{(r)}_{ \Ause } \quad \text{ by Steps~\eqref{thm:dimnuc-main:proof::orbit-map},  \eqref{thm:dimnuc-main:proof::mu_f} and~\eqref{thm:dimnuc-main:proof::mu_Y} } \; , 
	\]
	whose norm is no more than $1$.

	\item \label{thm:dimnuc-main:proof::adjunctions}
	
	For any $r \in \intervalofintegers{0}{d}$ and for any $\Ause \in \bigcup_{l = 0}^{c} \Auseco^{(l,r)}$, using notations from Steps~\eqref{thm:dimnuc-main:proof::row-vector}, \eqref{thm:dimnuc-main:proof::abbrev} and~\eqref{thm:dimnuc-main:proof::LTC-compression}, 
	it follows from \eqref{thm:dimnuc-main:proof::eq:mu-compressions-square} and \eqref{thm:dimnuc-main:proof::eq:Sigma-k} that 
	\begin{align*}
		\Sigma_{H_{\Ause}} \circ \operatorname{compr}_{{{\tPo[LTC]}}^{(r)}_{\Ause}} \circ \Theta_{{\Ause}} 
		& = \mathrm{Ad}_{\vkrep} \circ \operatorname{compr}_{{\tPo[LTC]'{}}^{(r)}_{\Ause}} \\
		& = \mathrm{Ad}_{\vkrep \left( {\tPo[LTC]'{}}^{(r)}_{\Ause} \right)^{1/2}}  \colon D'_{{\Ause}} \to A \rtimes G 
		\; .
	\end{align*}
	Thus, by Step~\eqref{thm:dimnuc-main:proof::Sigma-Pi}, we have
	\begin{equation*}
		\left( \vkrep \left( {\tPo[LTC]'{}}^{(r)}_{\Ause} \right)^{1/2} \right) \left( \vkrep \left( {\tPo[LTC]'{}}^{(r)}_{\Ause} \right)^{1/2} \right)^* =  \Sigma_{H_{\Ause}} \circ \operatorname{compr}_{{{\tPo[LTC]}}^{(r)}_{\Ause}} (1) = \Po[LTC]^{(r)}_{ \Ause } 
		\; .
	\end{equation*}
	Therefore, the range of the positive contraction $\mathrm{Ad}_{\vkrep} 
	\circ \operatorname{compr}_{{\tPo[LTC]'{}}^{(r)}_{\Ause}}$ is contained in 
	the hereditary subalgebra $\overline{\Po[LTC]^{(r)}_{\Ause} (A \rtimes G) 
	\Po[LTC]^{(r)}_{\Ause}}$ generated by $\Po[LTC]^{(r)}_{\Ause}$. 
	
	Using the orthogonality of the functions $\Po[LTC]^{(r)}_{\omega_{\Ause} 
	\left( \left[ \krep[{\Ause}]{j} \right] \right)}$, for $j = 1, \ldots, 
	|\Ause|$, we have 
	\begin{align*}
		&\ \left( \vkrep \left( {\tPo[LTC]'{}}^{(r)}_{\Ause} \right)^{1/2} \right)^* \left( \vkrep \left( {\tPo[LTC]'{}}^{(r)}_{\Ause} \right)^{1/2} \right) \\
		= & \  \left(  u_{\krep[{\Ause}]{i}}^* \,  \sqrt{\Po[LTC]^{(r)}_{\omega_{\Ause} \left( \left[ \krep[{\Ause}]{i} \right] \right)}} \,  \sqrt{\Po[LTC]^{(r)}_{\omega_{\Ause} \left( \left[ \krep[{\Ause}]{j} \right] \right)}} \, u_{\krep[{\Ause}]{j}}  \right)_{i,j \in \intervalofintegers{1}{|\Ause|}} \\
		= & \ \text{diag} \left(  \alpha_{\krep[{\Ause}]{i}}^{-1} 
		\left({\Po[LTC]^{(r)}_{\omega_{\Ause} \left( \left[ \krep[{\Ause}]{i} 
		\right] \right)}}\right)  \right)_{i \in 
		\intervalofintegers{1}{|\Ause|}} \\
		= & \  {\tPo[LTC]'{}}^{(r)}_{\Ause}
	\end{align*}
	in $M_{n_{x_{\Ause}}} \left(C_0(X) \right)$. In particular, $\mathrm{Ad}_{\vkrep} \circ \operatorname{compr}_{{\tPo[LTC]'{}}^{(r)}_{\Ause}}$ is a completely positive contraction.

	\item \label{thm:dimnuc-main:proof::mu-invariant}
	We claim that for any $g \in \Aubo$ and for any $f \in \ell^\infty(X)$, we have
	\begin{equation*}
	\label{eqn:main:LTC-PoU-almost-equiv}
	\left \| \alpha_g(\Po[LTC]^{(r)}_{f\vphantom{,\Cose}}) - \Po[LTC]^{(r)}_{\alpha_{g} (f) \vphantom{,\Cose}} 
	\right \| < 
	\ErLTC^2  \left\| f  \right\|_\infty \, .
	\end{equation*}
	and thus by Lemma~\ref{Lemma:sqrt}, we also have
	\begin{equation*}
	\label{eqn:main:LTC-PoU-almost-equiv-sqrt}
	\left \| \alpha_g \left ( \sqrt{ \Po[LTC]^{(r)}_{f\vphantom{,\Cose}} } \right ) - \sqrt{ 
		\Po[LTC]^{(r)}_{\alpha_{g}(f) \vphantom{,\Cose}} } \right \| 
	< \ErLTC  
	\quad \text{ if } \left\| f  \right\|_\infty  \leq 1 
	.
	\end{equation*}
	
	To check this claim, we need to show that for any $x \in X$, we have
	\[
	\left | \alpha_g(\Po[LTC]^{(r)}_{f\vphantom{,\Cose}}) (x) - \Po[LTC]^{(r)}_{\alpha_{g} (f) \vphantom{,\Cose}} (x) \right |
	< 
	\ErLTC^2 \, .
	\]
	We may assume at least one term in this difference is nonzero (or else there is nothing to prove). 
	By symmetry, we may further assume without loss of generality that $\alpha_g(\Po[LTC]^{(r)}_{f\vphantom{,\Cose}}) (x) = \Po[LTC]^{(r)}_{f\vphantom{,\Cose}} \left(\alpha_{g^{-1}}(x)\right) \not= 0$. 
	It then follows from Step~\eqref{thm:dimnuc-main:proof::mu_f} that 
	$\alpha_{g^{-1}}(x) \in \Cose^{(r)}$ and, setting $y = \Ne^{(r)} 
	(\alpha_{g^{-1}}(x))$, we have
	\[
	\Po[LTC]^{(r)}_{f\vphantom{,\Cose}} \left(\alpha_{g^{-1}}(x)\right) = f 
	\left( y \right) \Po[LTC]^{(r)} \left(\alpha_{g^{-1}}(x)\right)
	\, .
	\]
	It follows from \ref{item:prop:dimnuc:Su} that $x \in \Cose^{(r)}$ and then 
	from \ref{item:prop:dimnuc:Eq} that $\Ne^{(r)} (x) = \alpha_{g}(y)$, whence 
	we have 
	\[
	\Po[LTC]^{(r)}_{\alpha_{g} (f) \vphantom{,\Cose}} (x) =  (\alpha_{g} (f)) 
	\left( \alpha_{g}(y) \right) \Po[LTC]^{(r)} (x) = f(y) \Po[LTC]^{(r)} (x)
	\, .
	\] 
	The claimed inequality thus follows from \ref{item:prop:dimnuc:Li}.

	\item \label{thm:dimnuc-main:proof::almost-scalar}
	For any $r \in \intervalofintegers{0}{d}$ and for any $\Ause \in \bigcup_{l = 0}^{c} \Auseco^{(l,r)}$, 
	it follows from Steps~\eqref{thm:dimnuc-main:proof::mu_Y}, \eqref{thm:dimnuc-main:proof::mu-invariant}, \eqref{thm:dimnuc-main:proof::H-W}, and~\eqref{thm:dimnuc-main:proof::orbit-map} that 
	\[
	\alpha_{\krep[{\Ause}]{i}}^{-1} \left( {{\Po[LTC]}^{(r)}_{\omega_{\Ause} \left( \left[ \krep[{\Ause}]{i} \right] \right)}} \right) 
	\approx_{\ErLTC^2}
	{ {\Po[LTC]}^{(r)}_{\alpha_{\krep[{\Ause}]{i}}^{-1} \left( \alpha_{\krep[{\Ause}]{i}} \left( x_{\Ause} \right) \right)} }
	=
	{{\Po[LTC]}^{(r)}_{x_{\Ause}}}
	\]
	and similarly
	\[
	\alpha_{\krep[{\Ause}]{i}}^{-1} \left( \sqrt{{\Po[LTC]}^{(r)}_{\omega_{\Ause} \left( \left[ \krep[{\Ause}]{i} \right] \right)}} \right) 
	\approx_{\ErLTC}
	\sqrt{{\Po[LTC]}^{(r)}_{x_{\Ause}}}
	\]
	for any $i \in \intervalofintegers{1}{|\Ause|}$. Hence by Step~\eqref{thm:dimnuc-main:proof::LTC-compression}, we have  
	\[
	\tPo[LTC]'{}{}^{(r)}_{\Ause} 
	\approx_{\ErLTC^2} 
	{\Po[LTC]}^{(r)}_{x_{\Ause}}  \cdot 1_{|\Ause|}  
	\quad \text{ and } \quad 
	\left( \tPo[LTC]'{}{}^{(r)}_{\Ause} \right) ^{1/2}
	\approx_{\ErLTC} 
	\left( {\Po[LTC]}^{(r)}_{x_{\Ause}} \right)^{1/2} \cdot 1_{|\Ause|}  
	\]
	in $\left( M_{|\Ause|} \left(C_0(X) \right) \right)_{+, \leq 1}$, 
	where $1_{|\Ause|}$ denotes the identity matrix. 
	
	Since $\Aubo_{H_{\Ause}} \subseteq 
	G_{x_{\Ause}}$ by Steps~\eqref{thm:dimnuc-main:proof::lift-image} and~\eqref{thm:dimnuc-main:proof::H-W}, 
	a similar computation also yields 
	\[
	\alpha_{h} \left( {{\Po[LTC]}^{(r)}_{x_{\Ause}}} \right) 
	\approx_{\ErLTC^2}
	{ {\Po[LTC]}^{(r)}_{\alpha_{h} \left( x_{\Ause} \right)} }
	=
	{{\Po[LTC]}^{(r)}_{x_{\Ause}}} 
	\quad \text{ for any } h \in \Aubo_{H_{\Ause}} \; .
	\]
	\item \label{thm:dimnuc-main:proof::fine}
	For any $r \in \intervalofintegers{0}{d}$ and for any $\Ause \in \bigcup_{l = 0}^{c} \Auseco^{(l,r)}$, 
	we claim that  for any $ a' \in  D'_{\Ause, 0} $ we have
	\[
	\left\| \operatorname{compr}_{\tPo[LTC]'{}^{(r)}_{\Ause}} \left( \widetilde{\gamma}_{\Ause} \circ \kappa_{\Ause} \circ \mathrm{ev}_{\Ause} (a') - a' \right) \right\| \leq 5 \ErB
	\; .
	\]
	In particular, by Steps~\eqref{thm:dimnuc-main:proof::D-approximation} 
	and~\eqref{thm:dimnuc-main:proof::abbrev}, the above is true if we replace 
	$a'$ by $\Theta_{{\Ause}}^{-1} \circ \mathrm{compr}_{\tPonum{\Ause}{l}}   
	\left(\left( 1_{c_0\left(G/H_{\Ause}\right)^+} \otimes a \right) u_g 
	\right)$ for any $a \in A_0$ and for any $g \in \Lo$. 
	
	To prove this claim,  
	observe that by Steps~\eqref{thm:dimnuc-main:proof::mu_Y} and~\eqref{thm:dimnuc-main:proof::z-W}, 
	we have 
	\[
	{\Po[LTC]}^{(r)}_{x_{\Ause}} \in C_0 \left( \Cose_{x_{\Ause}}^{(r)} \right) \subseteq C_0 \left( V_{z_{\Ause}} \right) \; ,
	\]
	whence by Step~\eqref{thm:dimnuc-main:proof::thin-cover}, we have $\left( {\Po[LTC]}^{(r)}_{x_{\Ause}} \right)^{1/2} f_{z_{\Ause}} = 0$. 
	It now follows from Step~\eqref{thm:dimnuc-main:proof::almost-scalar} and then Step~\eqref{thm:dimnuc-main:proof::approx-unit} that for any $a' \in  D'_{\Ause, 0} \subseteq \left( D'_{\Ause} \right)_{\leq 1}$, we have
	\begin{align*}
	&\ \operatorname{compr}_{\tPo[LTC]'{}^{(r)}_{\Ause}} \left( \widetilde{\gamma}_{\Ause} \circ \kappa_{\Ause} \circ \mathrm{ev}_{\Ause} (a') - a' \right) \\
	= &\ \left( \tPo[LTC]'{}{}^{(r)}_{\Ause} \right) ^{1/2}   \left( \widetilde{\gamma}_{\Ause} \circ \kappa_{\Ause} \circ \mathrm{ev}_{\Ause} (a') - a' \right) \left( \tPo[LTC]'{}{}^{(r)}_{\Ause} \right) ^{1/2}   \\
	\approx_{2 \ErLTC} &\ \left( {\Po[LTC]}^{(r)}_{x_{\Ause}} \right)^{1/2} \cdot  \left( \widetilde{\gamma}_{\Ause} \circ \kappa_{\Ause} \circ \mathrm{ev}_{\Ause} (a') - a' \right) \left( \tPo[LTC]'{}{}^{(r)}_{\Ause} \right) ^{1/2}     \\
	= &\ \left( {\Po[LTC]}^{(r)}_{x_{\Ause}} \right)^{1/2} \left( 1 - f_{z_{\Ause}} \right) \cdot  \left( \widetilde{\gamma}_{\Ause} \circ \kappa_{\Ause} \circ \mathrm{ev}_{\Ause} (a') - a' \right) \left( \tPo[LTC]'{}{}^{(r)}_{\Ause} \right) ^{1/2}    \\
	\approx_{3 \ErB} &\ 0 \; .
	\end{align*}  
	This proves the claim, as $\ErLTC \leq \ErB$ by 
	Step~\eqref{thm:dimnuc-main:proof::T-eta}.

	\item \label{thm:dimnuc-main:proof::commutator}
	
	For any $r \in \intervalofintegers{0}{d}$, for any $\Ause \in \bigcup_{l = 0}^{c} \Auseco^{(l,r)}$, and for any $m \in \intervalofintegers{0}{\dstab}$, using notations from Steps~\eqref{thm:dimnuc-main:proof::LTC-compression} and ~\eqref{thm:dimnuc-main:proof::lifting}, 
	we claim that 
	\[
		\left\| \left[ {\tPo[LTC]'{}}^{(r)}_{\Ause} , \widetilde{\gamma}_{\Ause}^{(m)} (b) \right] \right\| \leq \ErPreLTC / 2 \quad \text{ for any } b \in \left( B_{\Ause} \right)_{\leq 1} \; .
	\]
	
	Indeed, by Steps~\eqref{thm:dimnuc-main:proof::lift-image} 
	and~\eqref{thm:dimnuc-main:proof::abbrev}, there exists $b' \in D'_{\Ause, 
	1} \subseteq \left( D'_{\Ause} \right)_{\leq 1}$ such that 
	\begin{align*}
		\left\| b' - \widetilde{\gamma}_{\Ause}^{(m)} (b) \right\| \leq \ErPreLTC / 6
		\; \text{ and thus } \;
		\left\| \left[ {\tPo[LTC]'{}}^{(r)}_{\Ause} , \widetilde{\gamma}_{\Ause}^{(m)} (b) \right] \right\| 
		& \leq \left\| \left[ {\tPo[LTC]'{}}^{(r)}_{\Ause} , b' \right] \right\| + 2 \ErPreLTC / 6 
	\end{align*}
	Hence it suffices to show 
	\[
		\left\| \left[ {\tPo[LTC]'{}}^{(r)}_{\Ause} , b' \right] \right\| \leq \ErPreLTC / 6 \quad \text{ for any } b' \in D'_{\Ause, 1} \; .
	\]
	To this end, 
	by our choice of $\Aubo_{H_{\Ause}}$ in Step~\eqref{thm:dimnuc-main:proof::lift-image}, each $b' \in D'_{\Ause, 1}$ can be written as a sum of $|\Ause|^2 \cdot  |\Aubo_{H_{\Ause}}|$ elements of the form $e_{i,j} \otimes \left( b'' u_h \right)$ with $i, j \in \intervalofintegers{1}{|\Ause|}$, $b'' \in A_{\leq 1}$, and $h \in \Aubo_{H_{\Ause}}$, where $e_{i,j}$ is the elementary matrix with a single nonzero entry at $(i,j)$ and $b''$ is the $h$-th Fourier coefficient of the $(i,j)$-th entry of $b'$. 
	Since by Step~\eqref{thm:dimnuc-main:proof::T-eta} and \ref{item:main:use:prop:orbit-asdim:pou:Bo}, we have 
	\[
		\ErPreLTC / 6 = {3 \ErLTC^2}{|\Bo|^2 |\Aubo|} \geq {3 \ErLTC^2}{|\Ause|^2 |\Aubo_{H_{\Ause}}|} \; , 
	\]
	the desired inequality then follows, by the (bi-)linearity of the commutator bracket, from the following computation that uses Step~\eqref{thm:dimnuc-main:proof::almost-scalar} twice: 
	\begin{align*}
		\left[ \tPo[LTC]'{}{}^{(r)}_{\Ause} , e_{i,j} \otimes \left( b'' u_h \right) \right] 
		& \approx_{2 \ErLTC^2} \left[ 1_{|\Ause|} \otimes  {\Po[LTC]}^{(r)}_{x_{\Ause}}  , e_{i,j} \otimes \left( b'' u_h \right) \right] \\
		& = e_{i,j} \otimes \left( {\Po[LTC]}^{(r)}_{x_{\Ause}} b'' u_h - b'' u_h {\Po[LTC]}^{(r)}_{x_{\Ause}} \right) \\
		& = e_{i,j} \otimes b'' u_h  \left( \alpha_{ h^{-1}} \left( {\Po[LTC]}^{(r)}_{x_{\Ause}}  \right) - {\Po[LTC]}^{(r)}_{x_{\Ause}}  \right) \\
		& \approx_{\ErLTC^2} 0 \; . 
	\end{align*}

	\item \label{thm:dimnuc-main:proof::approx-order-zero}
	For any $r \in \intervalofintegers{0}{d}$, for any $\Ause \in \bigcup_{l = 0}^{c} \Auseco^{(l,r)}$, and for any $m \in \intervalofintegers{0}{\dstab}$, we 
	claim that the completely positive contraction  
	\[
		\mathrm{Ad}_{\vkrep} \circ \operatorname{compr}_{{\tPo[LTC]'{}}^{(r)}_{\Ause}} \circ \widetilde{\gamma}_{\Ause}^{(m)} \colon B_{\Ause} \to \overline{\Po[LTC]^{(r)}_{\Ause} (A \rtimes G) \Po[LTC]^{(r)}_{\Ause}} \subseteq A \rtimes G
	\]
	satisfies \eqref{thm:dimnuc-main:proof::eq:approx-order-zero} (see also Step~\eqref{thm:dimnuc-main:proof::adjunctions} for restriction on the range). Consequently by Step~\eqref{thm:dimnuc-main:proof::order-zero-stability}, there exists an order zero map
	\[
		\Phi_{\Ause}^{(r,m)} \colon B_{\Ause} \to \overline{\Po[LTC]^{(r)}_{\Ause} (A \rtimes G) \Po[LTC]^{(r)}_{\Ause}} \subseteq A \rtimes G
	\] 
	such that 
	\[
		\left\| \Phi_{\Ause}^{(r,m)} - \mathrm{Ad}_{\vkrep} \circ \operatorname{compr}_{{\tPo[LTC]'{}}^{(r)}_{\Ause}} \circ \widetilde{\gamma}_{\Ause}^{(m)} \right\| < \ErOrd \; .
	\]
	
	To prove the claim, we apply Steps~\eqref{thm:dimnuc-main:proof::adjunctions} and~\eqref{thm:dimnuc-main:proof::commutator} as well as the order-zero-ness of $\widetilde{\gamma}_{\Ause}^{(m)}$ to see that for any $b, b' \in \left( B_{\Ause} \right)_{\leq 1}$, we have 
	\begin{align*}
		& \ \left( \mathrm{Ad}_{\vkrep} \circ \operatorname{compr}_{{\tPo[LTC]'{}}^{(r)}_{\Ause}} \circ \widetilde{\gamma}_{\Ause}^{(m)} \right) (b) \  \left( \mathrm{Ad}_{\vkrep} \circ \operatorname{compr}_{{\tPo[LTC]'{}}^{(r)}_{\Ause}} \circ \widetilde{\gamma}_{\Ause}^{(m)} \right) (b') \\
		= & \ \left( \vkrep \left( {\tPo[LTC]'{}}^{(r)}_{\Ause} \right)^{1/2} \right) \left( \widetilde{\gamma}_{\Ause}^{(m)} (b) \right) \left( {\tPo[LTC]'{}}^{(r)}_{\Ause} \right) \left( \widetilde{\gamma}_{\Ause}^{(m)}  (b') \right) \left( \vkrep \left( {\tPo[LTC]'{}}^{(r)}_{\Ause} \right)^{1/2} \right)^* \\
		\approx_{\ErPreLTC / 2} & \ \left( \vkrep \left( {\tPo[LTC]'{}}^{(r)}_{\Ause} \right)^{1/2} \right) \left( \widetilde{\gamma}_{\Ause}^{(m)} (b) \right) \left( \widetilde{\gamma}_{\Ause}^{(m)}  (b') \right) \left( {\tPo[LTC]'{}}^{(r)}_{\Ause} \right) \left( \vkrep \left( {\tPo[LTC]'{}}^{(r)}_{\Ause} \right)^{1/2} \right)^*  \\
		= & \  \left( \vkrep \left( {\tPo[LTC]'{}}^{(r)}_{\Ause} \right)^{1/2} \right) \left( \widetilde{\gamma}_{\Ause}^{(m)} (b b') \right) \left( \widetilde{\gamma}_{\Ause}^{(m)}  (1) \right) \left( {\tPo[LTC]'{}}^{(r)}_{\Ause} \right) \left( \vkrep \left( {\tPo[LTC]'{}}^{(r)}_{\Ause} \right)^{1/2} \right)^*  \\
		\approx_{\ErPreLTC / 2} & \  \left( \vkrep \left( {\tPo[LTC]'{}}^{(r)}_{\Ause} \right)^{1/2} \right) \left( \widetilde{\gamma}_{\Ause}^{(m)} (b b') \right) \left( {\tPo[LTC]'{}}^{(r)}_{\Ause} \right) \left( \widetilde{\gamma}_{\Ause}^{(m)}  (1) \right) \left( \vkrep \left( {\tPo[LTC]'{}}^{(r)}_{\Ause} \right)^{1/2} \right)^*  \\
		= & \  \left( \mathrm{Ad}_{\vkrep} \circ \operatorname{compr}_{{\tPo[LTC]'{}}^{(r)}_{\Ause}} \circ \widetilde{\gamma}_{\Ause}^{(m)} \right) (b b') \  \left( \mathrm{Ad}_{\vkrep} \circ \operatorname{compr}_{{\tPo[LTC]'{}}^{(r)}_{\Ause}} \circ \widetilde{\gamma}_{\Ause}^{(m)} \right) (1) \; , 
	\end{align*}
	as claimed.


	\item \label{thm:dimnuc-main:proof::Phi-sum}
	For any $r \in \intervalofintegers{0}{d}$ and for any $l \in \intervalofintegers{0}{c}$, 
	it follows from the disjointness of $\Auseco^{(l,r)}$ in Step~\eqref{thm:dimnuc-main:proof::Ause-l-r} and 	
	the order-zero-ness of $\Po[LTC]^{(r)}_{-}$ in Step~\eqref{thm:dimnuc-main:proof::mu_f} 
	that the functions $\Po[LTC]^{(r)}_{\Ause}$, for $\Ause \in \Auseco^{(l,r)}$, are orthogonal.   It then follows from Step~\eqref{thm:dimnuc-main:proof::adjunctions} that the maps $\mathrm{Ad}_{\vkrep} \circ \operatorname{compr}_{{\tPo[LTC]'{}}^{(r)}_{\Ause}}$, for $\Ause \in \Auseco^{(l,r)}$, have orthogonal ranges. 
	
	Hence it follows from Step~\eqref{thm:dimnuc-main:proof::fine} that 
	for any tuple $\left( a_{\Ause} \right)_{\Ause \in \Auseco^{(l,r)}}$ with 
	$a_{\Ause} \in D'_{\Ause, 0}$, we have
	\[
		\left\| \sum_{\Ause \in \Auseco^{(l,r)}} \mathrm{Ad}_{\vkrep} \circ \operatorname{compr}_{\tPo[LTC]'{}^{(r)}_{\Ause}} \left( \widetilde{\gamma}_{\Ause} \circ \kappa_{\Ause} \circ \mathrm{ev}_{\Ause} (a_{\Ause}) - a_{\Ause} \right) \right\| \leq 5 \ErB
		\; .
	\]
	
	Likewise, applying this orthogonality to the construction of Step~\eqref{thm:dimnuc-main:proof::approx-order-zero}, we see
	that for any $m \in \intervalofintegers{0}{\dstab}$, the completely positive map 
	\[
		\Phi^{(l,r,m)} := 
		\sum_{\Ause \in \Auseco^{(l,r)}}  \Phi_{\Ause}^{(r,m)} \colon \bigoplus_{\Ause \in \Auseco^{(l,r)}} B_{\Ause} \to  A \rtimes G
	\]
	is contractive and order zero and satisfies 
	\[
		\left\| \Phi^{(l,r,m)} - \left( \sum_{\Ause \in \Auseco^{(l,r)}} \mathrm{Ad}_{\vkrep} \circ \operatorname{compr}_{{\tPo[LTC]'{}}^{(r)}_{\Ause}} \circ \widetilde{\gamma}_{\Ause}^{(m)} \right) \right\| < \ErOrd \; .
	\]
	
\end{enumerate}	

\begin{enumerate}[itemindent=*,leftmargin=1em,label=\textit{Step~(E\arabic*).},ref=E\arabic*]

	\item \label{thm:dimnuc-main:proof::compression}
	For any $l \in \intervalofintegers{0}{c}$ and for any $\Ause \in \bigcup_{r = 0}^{d} \Auseco^{(l,r)}$, 
	it follows from Step~\eqref{thm:dimnuc-main:proof::H-W} that the bijection 
	$\omega_{\Ause} \colon  \Fi_{\Ause} \to \Ause$ 
	induces a $*$-isomorphism
	\[
	\omega_{\Ause}^* \colon \ell^\infty\left( \Ause \right) 
	\xrightarrow{\cong} \ell^\infty\left(\Fi_{\Ause} \right) \;.
	\]
	It follows from \ref{item:main:use:prop:orbit-asdim:pou:Su} that $\Ponum{\Ause}{l}$ in Step~\eqref{thm:dimnuc-main:proof::Auseco} is a positive contraction in $\ell^\infty\left( \Ause \right)$ and thus we may 
	define a positive contraction 
	\[
	\tPonum{\Ause}{l} = \omega_{\Ause}^* \left( \Ponum{\Ause}{l} \right) \otimes 1_{C_0(X)^+} \in \ell^\infty\left(\Fi_{\Ause} \right) \otimes C_0(X)^+ \subseteq c_0\left(G/H_{\Ause}\right)^+ \otimes C_0(X)^+ \; .
	\]
	Using notations in Steps~\eqref{thm:dimnuc-main:proof::equi-Roe} and~\eqref{thm:dimnuc-main:proof::abbrev} and the observation that $\tPonum{\Ause}{l} \chi_{\Fi_{\Ause}  \vphantom{\krep{j}}} = \tPonum{\Ause}{l}$, 
	we define a completely positive contraction
	\[
	\compr_{\tPonum{\Ause}{l}}  \colon E_{H_{\Ause}} \to \chi_{\Fi_{\Ause}  
	\vphantom{\krep{j}}} E_{H_{\Ause}} \chi_{\Fi_{\Ause}  \vphantom{\krep{j}}} 
	= D_{\Ause} \; 
	\]
	given by
	 \[ 
	 b \mapsto \left(\tPonum{\Ause}{l}\right)^{1/2} \, b \, 
	 \left(\tPonum{\Ause}{l}\right)^{1/2} \; . 
	\]
	
	\item \label{thm:dimnuc-main:proof::Pi-Delta}
	For any $l \in \{0,1,\ldots,c\}$, for any $r \in \{0,1,\ldots,d\}$, and for any $\Ause \in \Auseco^{(l,r)}$, 
	we observe that the following equations hold in $\ell^\infty\left(\Fi_{\Ause} \right) \otimes C_0(X) \subseteq c_0\left(G/H_{\Ause}\right)^+ \otimes C_0(X)^+$: 
	\[
	\tPonum{\Ause}{l} {\tPo[LTC]}^{(r)}_{\Ause} = {\tPo[LTC]}^{(r)}_{\Ause} \tPonum{\Ause}{l} 
	= \sum_{[g] \in \Fi_{\Ause}}  \left( \chi_{[g]}  \otimes \left( \Ponum{\Ause}{l} \left( \omega_{\Ause} \left( [g] \right) \right) \cdot \Po[LTC]^{(r)}_{\omega_{\Ause} \left( [g] \right)} \right)   \right)
	\]
	and the summands of the last term are mutually orthogonal. 
	It follows that $\tPonum{\Ause}{l} {\tPo[LTC]}^{(r)}_{\Ause}$ is also a positive contraction and 
	\[
	\operatorname{compr}_{{\tPo[LTC]}^{(r)}_{\Ause}} \circ \operatorname{compr}_{\tPonum{\Ause}{l}} =  \operatorname{compr}_{\tPonum{\Ause}{l} {\tPo[LTC]}^{(r)}_{\Ause}} 
	\colon E_{H_{\Ause}} \to D_{H_{\Ause}}  \; .
	\]

	\item \label{thm:dimnuc-main:proof::Sigma-Pi-Delta}
	For any $l \in \{0,1,\ldots,c\}$, for any $r \in \{0,1,\ldots,d\}$, for any $a \in A_0$ and for any $g \in \Lo$, 
	we write 
	\begin{equation} \label{thm:dimnuc-main:proof::eq:nu-l-r}
	\Ponum{}{l, r} = \sum_{\Ause \in \Auseco^{(l,r)}} \Ponum{\Ause}{l} \, .
	\end{equation}
	We claim, using notations from 
	Steps~\eqref{thm:dimnuc-main:proof::compression}, 
	\eqref{thm:dimnuc-main:proof::mu_f}, 
	\eqref{thm:dimnuc-main:proof::LTC-compression}, 
	\eqref{thm:dimnuc-main:proof::Pi-Delta} 
	and~\eqref{thm:dimnuc-main:proof::Sigma},  
	that
	\begin{align*}
	\sum_{\Ause \in \Auseco^{(l,r)}} \Sigma_{H_{\Ause}} \circ \operatorname{compr}_{\tPonum{\Ause}{l} {\tPo[LTC]}^{(r)}_{\Ause}} \left( (1_{c_0\left(G/H_{\Ause}\right)^+} \otimes a) u_g \right) 
	=  \operatorname{compr}_{\Po[LTC]^{(r)}_{ \Ponum{}{l, r} }}  \left( a u_g \right) \; .
	\end{align*}
	
	Indeed, we first observe that for any $\Ause, \Ause' \in \Auseco^{(l,r)}$ and 
	for any $y \in {\Ause}$ and $y' \in {\Ause'}$, unless $\Ause = \Ause'$ and 
	$y = \alpha_g (y')$, we have 
	\begin{equation} \label{thm:dimnuc-main:proof::Sigma-Pi-Delta::eq:orthogonal}
	\Ponum{\Ause}{l} \left( y \right) \,  \Po[LTC]^{(r)}_{y} \,  \Ponum{\Ause'}{l} \left( y' \right) \, \alpha_g \left( \Po[LTC]^{(r)}_{y'} \right) = 0 \; 
	\end{equation}
	in $C_0(X)$. To see this, note that whenever this product is nonzero at 
	some point $\alpha_g 
	(x) \in X$, we have $\Ne^{(r)} (\alpha_g (x)) = y$ and $\Ne^{(r)} (x) = y'$ 
	with $\Ponum{\Ause}{l} \left( y \right) \not= 0$ and $\Ponum{\Ause'}{l} 
	\left( y' \right) \not= 0$, 
	which implies $\Ause \ni y = \alpha_g \left( y' \right) \in \Ause'$ by \ref{item:main:use:prop:orbit-asdim:pou:Su} and \ref{item:prop:dimnuc:Eq} and thus also $\Ause = \Ause'$ by the disjointness of $\Auseco^{(l)}$, proving the observation. 
	
	It then follows that 
	\begin{align*}
	&\ \operatorname{compr}_{\Po[LTC]^{(r)}_{ \Ponum{}{l, r} }}  \left( a u_g \right) \\
	= &\ \left( \sum_{\Ause \in \Auseco^{(l,r)}} \Po[LTC]^{(r)}_{\Ponum{\Ause}{l}} \right)^{1/2}  a u_g  \left(  \sum_{\Ause' \in \Auseco^{(l,r)}} \Po[LTC]^{(r)}_{\Ponum{\Ause'}{l}} \right)^{1/2} & \text{(linearity in Step~\eqref{thm:dimnuc-main:proof::mu_f})} \\
	= &\ \mathrlap{ \left( \sum_{\Ause \in \Auseco^{(l,r)}} \sum_{y \in {\Ause} } \Ponum{\Ause}{l}(y) \, \Po[LTC]^{(r)}_{y} \right)^{1/2}  a u_g  \left(  \sum_{\Ause' \in \Auseco^{(l,r)}} \sum_{y' \in {\Ause'} } \Ponum{\Ause'}{l}(y') \, \Po[LTC]^{(r)}_{y'} \right)^{1/2} } \\&& \mathllap{\text{(\ref{item:main:use:prop:orbit-asdim:pou:Su}, \ref{item:main:use:prop:orbit-asdim:pou:Bo}, and \eqref{thm:dimnuc-main:proof::mu_Y::eq:decomposition})}} \\
	= &\ \mathrlap{ \sum_{\Ause, \Ause' \in \Auseco^{(l,r)}} \sum_{y \in {\Ause} } \sum_{y' \in {\Ause'} } \left( \Ponum{\Ause}{l}(y) \, \Po[LTC]^{(r)}_{y} \right)^{1/2}  a u_g  \left(  \Ponum{\Ause'}{l}(y') \, \Po[LTC]^{(r)}_{y'} \right)^{1/2} } \\&& \mathllap{\text{(orthogonality among the $\Po[LTC]^{(r)}_{y}$'s, by \eqref{thm:dimnuc-main:proof::mu_Y::eq:decomposition})}} \\
	= &\ \mathrlap{ \sum_{\Ause \in \Auseco^{(l,r)}} \sum_{{y \in {\Ause} \cap \alpha_g (\Ause)} }  a \left( \Ponum{\Ause}{l} \left( y \right) \,  \Po[LTC]^{(r)}_{y} \,  \Ponum{\Ause}{l} \left( \alpha_g^{-1} (y) \right) \, \alpha_g \left( \Po[LTC]^{(r)}_{\alpha_g^{-1} (y)} \right) \right)^{1/2}  u_g } \\&& \mathllap{\text{(\eqref{thm:dimnuc-main:proof::Sigma-Pi-Delta::eq:orthogonal})}} \\
	= &\ \mathrlap{ \sum_{\Ause \in \Auseco^{(l,r)}} \Sigma_{H_{\Ause}} \Bigg(  
	\Bigg( \sum_{{[g'] \in \Fi_{\Ause} \text{ satisfying }} {[g^{-1} g'] \in 
	\Fi_{\Ause}} }  \chi_{[g']} \otimes } \\ && \mathllap{  \left( a \left( 
	\Ponum{\Ause}{l} \left( \omega_{\Ause} \left( [g'] \right) \right) \,  
	\Po[LTC]^{(r)}_{\omega_{\Ause} \left( [g'] \right)} \,  \Ponum{\Ause}{l} 
	\left( \omega_{\Ause} \left( [g^{-1} g'] \right) \right) \, \alpha_g \left( 
	\Po[LTC]^{(r)}_{\omega_{\Ause} \left( [g^{-1} g'] \right)} \right) 
	\right)^{1/2} \right) \Bigg) u_g \Bigg) }  \\&& 
	\mathllap{\text{(Steps~\eqref{thm:dimnuc-main:proof::orbit-map} 
	and~\eqref{thm:dimnuc-main:proof::Sigma-pre})}} \\
	= &\ \mathrlap{ \sum_{\Ause \in \Auseco^{(l,r)}} \Sigma_{H_{\Ause}}  \Bigg( \Bigg( \sum_{[g'] \in \Fi_{\Ause}}  \chi_{[g']}  \otimes \tPonum{\Ause}{l} ([g']) \Po[LTC]^{(r)}_{\omega_{\Ause} \left( [g'] \right)} \Bigg)^{1/2} } \\ && \mathllap{   (1_{c_0\left(G/H_{\Ause}\right)^+} \otimes a) u_g    \Bigg( \sum_{[g''] \in \Fi_{\Ause}}  \chi_{[g'']}  \otimes \tPonum{\Ause}{l} ([g'']) \Po[LTC]^{(r)}_{\omega_{\Ause} \left( [g''] \right)} \Bigg)^{1/2} \Bigg) }   \\&& \mathllap{\text{(setting $[g''] = [g^{-1} g']$ and using orthogonality among the $\chi_{[g']}$'s)}} \\
	= &\ \sum_{\Ause \in \Auseco^{(l,r)}} \Sigma_{H_{\Ause}} \circ \operatorname{compr}_{\tPonum{\Ause}{l} {\tPo[LTC]}^{(r)}_{\Ause}} \left( (1_{c_0\left(G/H_{\Ause}\right)^+} \otimes a) u_g \right)  & \text{(Step~\eqref{thm:dimnuc-main:proof::Pi-Delta})} 
	\end{align*}
	as claimed. 
	
	\item \label{thm:dimnuc-main:proof::Delta-invariant}
	For any $l \in \{0,1,\ldots,c\}$, for any $r \in \{0,1,\ldots,d\}$, and for any $g \in \Lo$, 
	we claim that 
	\[
	\left\| \alpha_g \left( \Ponum{}{l, r} \right) - \Ponum{}{l, r} \right\| \leq \Er^2 \; .
	\]
	
	Indeed, this follows from the equation 
	\[
	\alpha_g \left( \Ponum{}{l, r} \right) - \Ponum{}{l, r} = \sum_{\Ause \in \Auseco^{(l,r)}} \left( \alpha_g \left( \Ponum{\Ause}{l} \right) - \Ponum{\Ause}{l} \right)
	\]
	as each summand has norm bounded above by $\Er^2$ thanks to \ref{item:main:use:prop:orbit-asdim:pou:Li}, and the summands are orthogonal, since by \ref{item:main:use:prop:orbit-asdim:pou:Su}, each function $\left( \alpha_g \left( \Ponum{\Ause}{l} \right) - \Ponum{\Ause}{l} \right)$ vanishes outside $\Ause$, while $\Auseco^{(l,r)}$ is a disjoint family.

	\item \label{thm:dimnuc-main:proof::Delta-sum}
	For any $r \in \{0,1,\ldots,d\}$ and any $x \in \Ko$, 
	we claim that  
	\[
	\sum_{l = 0}^{c} \Po[LTC]^{(r)}_{ \Ponum{}{l, r} } (x) = \Po[LTC]^{(r)} (x) 
	\]
	Indeed, since the equality holds trivially when $\Po[LTC]^{(r)} (x) = 0$, we may assume without loss of generality that $\Po[LTC]^{(r)} (x) \not= 0$, which implies that $x \in \Cose^{(r)}$ by \ref{item:prop:dimnuc:Su}. It then follows from \ref{item:prop:dimnuc:Th} that there exists $z \in Z$ such that $\left\{ x, \Ne^{(r)} (x)  \right\} \subseteq V_z$, which implies $V_z \cap \Ko \not= \varnothing$ and thus $\Ne^{(r)} (x) \in V_z \subseteq \Ko'$ by Step~\eqref{thm:dimnuc-main:proof::thin-cover}, whence 
	by \ref{item:main:use:prop:orbit-asdim:pou:Un}, we have
	\[
	\sum_{l = 0}^{c} \sum_{\Ause \in \Auseco^{(l)}} \Ponum{\Ause}{l} \left( \Ne^{(r)} (x) \right) = 1 \; .
	\]
	Now observe that for any $l \in \{0,1,\ldots,c\}$ and for any $\Ause \in \Auseco^{(l)}$, 
	if $\Ponum{\Ause}{l} \left( \Ne^{(r)} (x) \right)  \Po[LTC]^{(r)} (x)  \not= 0$, 
	then $\Ne^{(r)} (x) \in \Ause$ by \ref{item:main:use:prop:orbit-asdim:pou:Su} and thus $x$ witnesses $\left. \Po[LTC]^{(r)} \right|_{\left( \Ne^{(r)} \right)^{-1} (\Ause) \cap \Ko} \not= 0$, whence $\Ause \in \Auseco^{(l,r)}$ by \eqref{thm:dimnuc-main:proof::eq:Ause-l-r}. 
	Combining this observation with Step~\eqref{thm:dimnuc-main:proof::mu_f} and \eqref{thm:dimnuc-main:proof::eq:nu-l-r}, we see that  
	\begin{align*}
	\Po[LTC]^{(r)} (x) &= \sum_{l = 0}^{c} \sum_{\Ause \in \Auseco^{(l)}} \Ponum{\Ause}{l} \left( \Ne^{(r)} (x) \right)  \Po[LTC]^{(r)} (x) & \\
	&= \sum_{l = 0}^{c} \sum_{\Ause \in \Auseco^{(l, r)}} \Ponum{\Ause}{l} \left( \Ne^{(r)} (x) \right)  \Po[LTC]^{(r)} (x) & \\
	&= \sum_{l = 0}^{c} \sum_{\Ause \in \Auseco^{(l, r)}} \Po[LTC]^{(r)}_{ \Ponum{\Ause}{l} } (x)  
	\quad = \sum_{l = 0}^{c} \Po[LTC]^{(r)}_{ \Ponum{}{l, r} } (x)  \; ,
	\end{align*}
	which proves the claim.

	\item \label{thm:dimnuc-main:proof::sum-identity}
	For any $a \in A_0$ and for any $g \in \Lo$, we claim
	that
	\begin{align*}
	\left\| a u_g - \left( \sum_{r = 0}^d \sum_{l = 0}^{c} \operatorname{compr}_{\Po[LTC]^{(r)}_{ \Ponum{}{l, r} }}  \left( a u_g \right) \right) \right\|
	\leq 2 (c+1) (d+1) \ErB \; .
	\end{align*}
	
	Indeed, we compute, for any $l \in \{0,1,\ldots,c\}$ and for any $r \in \{0,1,\ldots,d\}$, 
	\begin{align*}
	&\ \operatorname{compr}_{\Po[LTC]^{(r)}_{ \Ponum{}{l, r} }}  \left( a u_g \right) \\
	= &\ \left( \Po[LTC]^{(r)}_{ \Ponum{}{l, r} } \right)^{1/2} \, \left( \alpha_g \left( \Po[LTC]^{(r)}_{ \Ponum{}{l, r} } \right) \right)^{1/2} \, a u_g \\
	\approx_{\ErLTC} &\ \left( \Po[LTC]^{(r)}_{ \Ponum{}{l, r} } \right)^{1/2} \, \left(   \Po[LTC]^{(r)}_{ \alpha_g \left( \Ponum{}{l, r} \right)}  \right)^{1/2} \, a u_g & \text{(Step~\eqref{thm:dimnuc-main:proof::mu-invariant})\hphantom{.}} \\
	\approx_{\ErB} &\  \Po[LTC]^{(r)}_{ \Ponum{}{l, r} }  \, a u_g & \mathllap{\text{(Steps~\eqref{thm:dimnuc-main:proof::Delta-invariant} and~\eqref{thm:dimnuc-main:proof::mu_f}, and Lemma~\ref{Lemma:sqrt}).}}
	\end{align*}
	Recalling that $a$ is supported in $\Ko$, we obtain that 
	\begin{align*}
	& \ \sum_{r = 0}^d \sum_{l = 0}^{c} \operatorname{compr}_{\Po[LTC]^{(r)}_{ \Ponum{}{l, r} }}  \left( a u_g \right) \\
	\approx_{(c+1) (d+1) ( \ErB + \ErLTC )} & \ \sum_{r = 0}^d \sum_{l = 0}^{c} \Po[LTC]^{(r)}_{ \Ponum{}{l, r} }  \, a u_g  \\
	= & \ \sum_{r = 0}^d \Po[LTC]^{(r)}  \, a u_g  & \text{(Step~\eqref{thm:dimnuc-main:proof::Delta-sum})} \\
	= & \ a u_g  & \text{(\ref{item:prop:dimnuc:Un})} 
	\end{align*}
	which proves the claim, as $\ErLTC \leq \ErB$ by Step~\eqref{thm:dimnuc-main:proof::T-eta}.

	\item \label{thm:dimnuc-main:proof::Psi}
	For any $l \in \intervalofintegers{0}{c}$ and for any $\Ause \in \bigcup_{r = 0}^{d} \Auseco^{(l,r)}$, 
	define $\Psi_{\Ause}^{(l)}$ to be the composition 
	\[
		A \rtimes G \xrightarrow{\left( \mathbf{1} \otimes \mathrm{id}_A \right) \rtimes G} E_{H_{\Ause}} \xrightarrow{\compr_{\tPonum{\Ause}{l}}} D_{\Ause} \xrightarrow{\Theta_{\Ause}^{-1}} D'_{\Ause} \xrightarrow{\mathrm{ev}_{\Ause}} C_{\Ause} \xrightarrow{\kappa_{\Ause}} B_{\Ause}
	\]
	where 
	the maps are defined in Steps~\eqref{thm:dimnuc-main:proof::Roe}, \eqref{thm:dimnuc-main:proof::compression}, \eqref{thm:dimnuc-main:proof::equi-Roe-isomorphism}, \eqref{thm:dimnuc-main:proof::evaluation}, and~\eqref{thm:dimnuc-main:proof::D-approximation}, respectively (see also Step~\eqref{thm:dimnuc-main:proof::abbrev}). 
	We also write 
	\[
		\Psi^{(l,r)} := \bigoplus_{\Ause \in \Auseco^{(l,r)}} \Psi_{\Ause}^{(l)} \colon A \rtimes G \to \bigoplus_{\Ause \in \Auseco^{(l,r)}} B_{\Ause} \; ,
	\]
	which is again a completely positive contraction. 
	
	\item \label{thm:dimnuc-main:proof::tally}
	We define completely positive maps 
	\begin{align*}
		\Psi &:= \bigoplus_{l = 0}^{c} \bigoplus_{r = 0}^{d} \Psi^{(l,r)}  \colon A \rtimes G \to \bigoplus_{l = 0}^{c} \bigoplus_{r = 0}^{d} \left( \bigoplus_{\Ause \in \Auseco^{(l,r)}} B_{\Ause} \right) \; ,\\
		\Phi &:= \sum_{l = 0}^{c} \sum_{r = 0}^{d} \left( \sum_{m = 0}^{\dstab} \Phi^{(l,r,m)} \right)
		\colon  \bigoplus_{l = 0}^{c} \bigoplus_{r = 0}^{d} \left( \bigoplus_{\Ause \in \Auseco^{(l,r)}} B_{\Ause} \right) \to A \rtimes G \; . 
	\end{align*}
	Note that $\Psi$ is contractive by Step~\eqref{thm:dimnuc-main:proof::Psi}, while $\Phi$ is $(c+1) (d+1) (\dstab +1)$-decomposable by Step~\eqref{thm:dimnuc-main:proof::Phi-sum}. 
	Finally, for any $a \in A_0$ and for any $g \in \Lo$, we write 
	\[
	\Er' = (c+1)(d+1)\Er = \frac{\eps}{ \dstab + 8 }
	\]
	 and compute that (the reader is encouraged to follow the diagram on 
	 page~\pageref{thm:dimnuc-main:proof:big_diagram})
	\begin{align*}
		&\ \Phi \circ \Psi (a u_g) \\
		=&\ \sum_{l = 0}^{c} \sum_{r = 0}^{d}  \sum_{m = 0}^{\dstab} \Phi^{(l,r,m)}  \circ \Psi^{(l,r)} (a u_g)	\\ 
		\approx_{(\dstab+1)\Er'} &\ \sum_{l = 0}^{c} \sum_{r = 0}^{d}  \sum_{m = 0}^{\dstab} \sum_{\Ause \in \Auseco^{(l,r)}} \mathrm{Ad}_{\vkrep} \circ \operatorname{compr}_{{\tPo[LTC]'{}}^{(r)}_{\Ause}} \circ \widetilde{\gamma}_{\Ause}^{(m)} \circ \Psi_{\Ause}^{(l)}  (a u_g)		\\ &&& \mathllap{\text{(Steps~\eqref{thm:dimnuc-main:proof::Phi-sum} and~\eqref{thm:dimnuc-main:proof::Psi})}} \\ 
		= &\ \sum_{l = 0}^{c} \sum_{r = 0}^{d}  \sum_{\Ause \in \Auseco^{(l,r)}} \mathrm{Ad}_{\vkrep} \circ \operatorname{compr}_{{\tPo[LTC]'{}}^{(r)}_{\Ause}} \circ \widetilde{\gamma}_{\Ause} \circ \\
		& \hphantom{\ \sum_{l = 0}^{c} \sum_{r = 0}^{d}   } \kappa_{\Ause} \circ \mathrm{ev}_{\Ause} \circ \Theta_{{\Ause}}^{-1} \circ \mathrm{compr}_{\tPonum{\Ause}{l}}   \left(\left( 1_{c_0\left(G/H_{\Ause}\right)^+} \otimes a \right) u_g \right)		\\ &&& \mathllap{\text{(Steps~\eqref{thm:dimnuc-main:proof::lifting} and~\eqref{thm:dimnuc-main:proof::Psi})}} \\ 
		\approx_{5\Er'} &\ \sum_{l = 0}^{c} \sum_{r = 0}^{d}  \sum_{\Ause \in \Auseco^{(l,r)}} \mathrm{Ad}_{\vkrep} \circ \operatorname{compr}_{{\tPo[LTC]'{}}^{(r)}_{\Ause}} \circ \\
		& \hphantom{\ \sum_{l = 0}^{c} \sum_{r = 0}^{d}  }  \Theta_{{\Ause}}^{-1} \circ \mathrm{compr}_{\tPonum{\Ause}{l}}   \left(\left( 1_{c_0\left(G/H_{\Ause}\right)^+} \otimes a \right) u_g \right)	&& \mathllap{\text{(Step~\eqref{thm:dimnuc-main:proof::fine})}} \\ 
		= &\ \sum_{l = 0}^{c} \sum_{r = 0}^{d}  \sum_{\Ause \in \Auseco^{(l,r)}} \mathrm{Ad}_{\vkrep} \circ \Theta_{{\Ause}}^{-1} \circ \\
		& \hphantom{\ \sum_{l = 0}^{c} \sum_{r = 0}^{d}   }  \operatorname{compr}_{{{\tPo[LTC]}}^{(r)}_{\Ause}} \circ \mathrm{compr}_{\tPonum{\Ause}{l}}   \left(\left( 1_{c_0\left(G/H_{\Ause}\right)^+} \otimes a \right) u_g \right)	&& \mathllap{\text{(\eqref{thm:dimnuc-main:proof::eq:mu-compressions-square})}} \\ 
		= &\ \sum_{l = 0}^{c} \sum_{r = 0}^{d} \sum_{\Ause \in \Auseco^{(l,r)}} \Sigma_{H_{\Ause}} \circ   \operatorname{compr}_{\tPonum{\Ause}{l} {{\tPo[LTC]}}^{(r)}_{\Ause} }   \left(\left( 1_{c_0\left(G/H_{\Ause}\right)^+} \otimes a \right) u_g \right)		\\ &&& \mathllap{\text{(\eqref{thm:dimnuc-main:proof::eq:Sigma-k} and Step~\eqref{thm:dimnuc-main:proof::Pi-Delta})}} \\ 
		= &\ \sum_{l = 0}^{c} \sum_{r = 0}^{d}   \operatorname{compr}_{\Po[LTC]^{(r)}_{ \Ponum{}{l, r} }}  \left( a u_g \right)		&& \mathllap{\text{(Step~\eqref{thm:dimnuc-main:proof::Sigma-Pi-Delta})}} \\ 
		\approx_{2\Er'} &\  a u_g \; ,	&& \mathllap{\text{(Step~\eqref{thm:dimnuc-main:proof::sum-identity})}}
	\end{align*}
	that is, we have $\|\Phi \circ \Psi(au_g) - a u_g\| \leq \eps$. 
\end{enumerate}
This completes the proof. 
\end{proof}

\begin{Cor} \label{cor:dimnuc-virnil-topological}
	Let $G$ be a finitely generated virtually nilpotent group and let $X$ be a locally compact Hausdorff space with finite covering dimension. 
	Then for any action $\alpha$ of $G$ on $X$, we have 
	\[
	\dimnuc(C_0(X) \rtimes_{\alpha} G) \leq 2 \cdot \hirsch(G)! \cdot 9^{d(G)}\cdot (\dim(X^+)+1)^2.
	\]
	
\end{Cor}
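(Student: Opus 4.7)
The plan is to proceed by strong induction on the Hirsch length $\hirsch(G)$. In the base case $\hirsch(G) = 0$ the group $G$ is finite, so the action is automatically proper; Propositions~\ref{prop:orbit-asdim-proper} and~\ref{prop:ltc-dim-proper} give $\asdim(X,\Enseco_\alpha) = 0$ and $\dimltc(\alpha) = \dim(X^+) < \infty$, and since all stabilizers and their subgroups are finite the quantity $\dstab$ of Theorem~\ref{thm:dimnuc-main} vanishes; that theorem then yields the conclusion. For the inductive step at $\hirsch(G) = h > 0$, Theorem~\ref{thm:lsp-vnil} together with Theorems~\ref{thm:lsp-asdim} and~\ref{thm:lsp-ltc} ensures both $\asdim(X,\Enseco_\alpha) < \infty$ and $\dimltc(\alpha) < \infty$, so it suffices to invoke Corollary~\ref{cor:dimnuc-virnil} after verifying $\dstab < \infty$. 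Since $A = C_0(X)$ has $A_x = \C$ and any $H \leq G_x$ acts trivially on this fiber, we have $A_x \rtimes_{\alpha|_H} H \cong C^*(H)$, so the remaining task reduces to producing a uniform bound on $\dimnuc(C^*(H))$ as $H$ ranges over all subgroups of stabilizers; such subgroups are automatically finitely generated virtually nilpotent with $\hirsch(H) \leq h$.

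To obtain this uniform bound I would run a second strong induction on $\hirsch(H)$, proving that $\dimnuc(C^*(H))$ is controlled by a quantity depending only on $\hirsch(H)$. The base case $\hirsch(H) = 0$ is trivial since $C^*(H)$ is then finite-dimensional. For $\hirsch(H) = k > 0$, I select a normal subgroup $N \triangleleft H$ isomorphic to $\Z^m$ with $m \geq 1$, obtained as the torsion-free part of the center of a torsion-free finite-index nilpotent normal subgroup of $H$; being characteristic there, it is normal in $H$. This exhibits $C^*(H)$ as a crossed product $C(\widehat{N}) \rtimes (H/N) \cong C(\T^m) \rtimes (H/N)$, where $H/N$ is finitely generated virtually nilpotent with $\hirsch(H/N) = k - m < k \leq h$. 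Feeding this back into the outer inductive hypothesis, applied to the action of $H/N$ on the compact torus $\T^m$, furnishes the desired finite bound on $\dimnuc(C^*(H))$ depending only on $k$.

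The main obstacle in this scheme is that the central extension $1 \to N \to H \to H/N \to 1$ need not split, so the identification of $C^*(H)$ with the above crossed product should in general be interpreted in the twisted sense. Handling this cleanly requires either a mild twisted enhancement of Theorem~\ref{thm:dimnuc-main} (as developed for related purposes in~\cite{EckhardtWu}), or a direct fiberwise analysis of $C^*(H)$ as a $C(\T^m)$-algebra whose fibers are cocycle-twisted group algebras of subgroups of $H/N$; in either approach the crucial point is that the bound produced is uniform in $H$, which is exactly what is needed for $\dstab$ to be finite and for Corollary~\ref{cor:dimnuc-virnil} to conclude.
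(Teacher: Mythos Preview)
The paper's proof is a single sentence: apply Corollary~\ref{cor:dimnuc-virnil} with $A = C_0(X)$, so that $A_x \rtimes_{\alpha|_H} H \cong C^*(H)$, and invoke the main result of~\cite{EGM} for the required uniform bound $\dstab < \infty$. Your approach instead attempts to rederive that uniform bound inductively from the machinery of the present paper, which is in spirit the strategy of the forthcoming~\cite{EckhardtWu}; but as you yourself note, the twist coming from the non-split central extension $1 \to N \to H \to H/N \to 1$ is a genuine obstruction, since $C^*(H)$ is in general only a \emph{twisted} crossed product $C(\widehat{N}) \rtimes_{\omega} (H/N)$, to which neither Theorem~\ref{thm:dimnuc-main} nor your outer inductive hypothesis applies. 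Resolving this is precisely the additional content of~\cite{EckhardtWu}, so your scheme does not actually avoid an external input --- it trades~\cite{EGM} for~\cite{EckhardtWu}.

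There is also a uniformity issue you gloss over. Your induction claims a bound on $\dimnuc(C^*(H))$ depending only on $\hirsch(H)$, but the outer hypothesis (the corollary at smaller Hirsch length) asserts only finiteness, not a quantitative bound; as $H$ ranges over all subgroups of all stabilizers, infinitely many non-isomorphic $H/N$ may arise. To make the induction close you would need to strengthen the inductive statement to a bound uniform over all finitely generated virtually nilpotent groups of a given Hirsch length acting on spaces of a given dimension, and then check that the $\LSP$ bounds from Theorem~\ref{thm:lsp-vnil} (and hence the $\dimltc$ bound from Theorem~\ref{thm:lsp-ltc}) can be so controlled. The proof of Theorem~\ref{thm:lsp-vnil} gives $\LSP_d(G) \leq 3^k(d+1)D/C$ with $C$, $D$ the polynomial-growth constants of the specific group, not merely the Hirsch length, so this step is not automatic. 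The paper sidesteps both issues by importing the finished bound from~\cite{EGM} directly.
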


\begin{proof}
	This follows from combining Theorem~\ref{thm:dimnuc-main} with \cite[Theorem 5.1]{EGM}, Theorem~\ref{thm:lsp-vnil},  Theorem~\ref{lem:thin-neighborhood} and Theorem~\ref{thm:dimltc-asdim}.
\end{proof}

\begin{Cor} \label{cor:dimnuc-hyperbolic}
	Suppose $\alpha$ is a simplicial proper cocompact action of a hyperbolic 
	group $G$ on a hyperbolic complex $X$. Denote by $\overline{X} = X \cup 
	\partial X$ the compactification, and use $\alpha$ to denote the induced 
	action of $G$ on $\overline{X}$. Then $\dimnuc(C_0(\overline{X}) 
	\rtimes_{\alpha} G) < \infty$. In particular, also $\dimnuc(C_0(\partial 
	X) 
	\rtimes_{\alpha} G) < \infty$. 
\end{Cor}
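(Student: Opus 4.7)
The plan is to apply Theorem~\ref{thm:dimnuc-main} to $A = C_0(\overline{X})$ with its canonical $C_0(\overline{X})$-algebra structure, so that the fibers are $A_x = \mathbb{C}$ for every $x \in \overline{X}$. To do so I must verify finiteness of the four quantities appearing in the bound: $\asdim(\overline{X}, \Enseco_\alpha)$, $\dim(\overline{X}^+) = \dim(\overline{X})$ (since $\overline{X}$ is compact), $\dimltc(\alpha)$, and $\dstab$. The asymptotic and long-thin covering dimensions are supplied directly by Corollary~\ref{cor:hyperbolic-ltc}.

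Next I would justify $\dim(\overline{X}) < \infty$. The complex $X$ is a finite-dimensional simplicial complex because the cocompact simplicial $G$-action produces only finitely many $G$-orbits of simplices; the boundary $\partial X$ is $G$-equivariantly homeomorphic to $\partial G$, which has finite topological dimension by classical results in hyperbolic group theory (see, e.g., \cite{bridson-haefliger-book}). A standard argument combining a compact exhaustion of $X$ with the sum theorem for covering dimension then gives $\dim(\overline{X}) < \infty$.

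The main substantive step will be bounding $\dstab$. Since every fiber of $A$ is one-dimensional, $A_x \rtimes_{\alpha|_H} H \cong C^*(H)$ for every $x \in \overline{X}$ and every subgroup $H \le G_x$. I split into cases according to whether $x$ lies in the interior or on the boundary. For $x \in X$, properness of the action forces $G_x$ to be finite, so $\dimnuc(C^*(H)) = 0$ for every $H \le G_x$. For $x \in \partial X$, a classical result in hyperbolic group theory states that the stabilizer of a boundary point is virtually cyclic, so every relevant $H$ is either finite or virtually $\mathbb{Z}$. In either case $H$ is amenable (so $C^*(H) = C^*_r(H)$) and virtually nilpotent, and $\dimnuc(C^*(H))$ is finite and uniformly bounded -- by direct computation, or by invoking \cite{EGM}. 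Hence $\dstab < \infty$.

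Combining the four finite bounds via Theorem~\ref{thm:dimnuc-main} yields $\dimnuc(C_0(\overline{X}) \rtimes_\alpha G) < \infty$; passing to the reduced crossed product, which is a quotient of the full one, preserves the bound since nuclear dimension is non-increasing under quotients. The in-particular statement for $\partial X$ then follows by observing that $C_0(\partial X) \rtimes_{\alpha,r} G$ is itself a quotient of $C_0(\overline{X}) \rtimes_{\alpha,r} G$, corresponding to the $G$-invariant ideal generated by $C_0(X) \subseteq C_0(\overline{X})$. The principal difficulty in this deduction is really packaged inside Corollary~\ref{cor:hyperbolic-ltc}, whose proof rests on the equivariant asymptotic dimension estimates of \cite{BartelsLueckReich2008Equivariant} and the family-relative machinery of Section~\ref{sec:BLR}; once that corollary is available, what remains is essentially a verification that the hypotheses of Theorem~\ref{thm:dimnuc-main} are satisfied.
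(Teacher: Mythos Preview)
Your proof is correct and follows the same approach as the paper's one-line proof (which simply cites Corollary~\ref{cor:hyperbolic-ltc} and leaves the application of Theorem~\ref{thm:dimnuc-main} and the verification of $\dstab<\infty$ implicit). Two incidental simplifications: the separate argument for $\dim(\overline{X})<\infty$ is unnecessary since Remark~\ref{rmk:ltc-dim-trivial-group} already gives $\dim(\overline{X})\leq\dimltc(\alpha)$, and for the $\partial X$ statement you can apply Theorem~\ref{thm:dimnuc-main} directly to the restricted action (Corollary~\ref{cor:hyperbolic-ltc} explicitly covers that case), which sidesteps any appeal to exactness of hyperbolic groups for the reduced-crossed-product short exact sequence.
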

\begin{proof}
	This follows from Corollary~\ref{cor:hyperbolic-ltc}.
\end{proof}

\section{Allosteric actions} \label{sec:allosteric actions}

Suppose $\Gamma$ is a discrete group acting on a compact metrizable space with an 
ergodic Borel probabilty measure $\mu$. The action is said to be 
\emph{essentially free} if the set of points with nontrivial stabilizer has 
measure zero. The action is said to be \emph{topologically free} if for any nontrivial group element, the set of points not fixed by it is dense; when $X$ is metrizable and $\Gamma$ is countable, this is equivalent to saying that the set of points with nontrivial stabilizer is meager. An action is said to be 
\emph{allosteric} if it is topologically free but not essentially free. Very 
recently, Joseph (\cite{Joseph2023}) constructed examples of allosteric 
actions of amenable groups. Actions which are almost finite, including proposed 
generalizations to the topologically free setting, have to be essentially free, 
and therefore cannot be allosteric. In this section, we generalize the 
construction of Joseph in order to produce further examples of allosteric 
actions, including ones which can be handled using our techniques. 

We recall the general construction of a profinite action. 
Suppose that $\Gamma$ is a countable, infinite residually finite group, and let 
$\Gamma_1 > 
\Gamma_2 > \Gamma_3 \ldots$ a sequence of subgroups with finite index such that 
$\bigcap_{n=1}^{\infty} \Gamma_n = \{e\}$. The group $\Gamma$ acts on each finite 
coset space $\Gamma/\Gamma_n$ by left translation. Let $X = 
\underset{\longleftarrow}{\lim} ( \Gamma/\Gamma_n )$ be the inverse limit, where the 
connecting 
maps from $\Gamma/\Gamma_{n+1}$ to $\Gamma/\Gamma_n$, whose notation is suppressed, are the 
quotient maps and thus $\Gamma$-invariant. The set $X$ is 
homeomorphic to the Cantor set, and the actions of $\Gamma$ on the sets $\Gamma/\Gamma_n$ 
define an action of $\Gamma$ on $X$.

The following is a generalization of \cite[Theorem 3.2]{Joseph2023}, which 
provides a criterion for a profinite action to be allosteric.

\begin{Thm}
	\label{thm:allosteric-condition}
	Let $\Gamma$ be a residually finite countable group, and let 
	$s \in \Gamma$ be a nontrivial element. 
	Fix $\delta \in (0,1)$. Suppose $\Gamma_1 < 
	\Gamma_2 < \ldots$ is a sequence of finite index subgroups such that 
	$\bigcap_{n=1}^{\infty} \Gamma_n = \{\e\}$. We let $\Gamma$ act on 
	$\Gamma/\Gamma_n$ by left translation, and let $\mu_n$ be the uniform 
	probability measure on $\Gamma/\Gamma_n$. Let $X = 
	\underset{\longleftarrow}{\lim} \Gamma/\Gamma_n$, with the associated 
	profinite action of $\Gamma$. Denote the induced ergodic measure by $\mu$.
	Suppose that for all $n \in \N$ we have  
	\[
	| \{ q \in \Gamma/\Gamma_n \mid sq = q \} | \geq \delta \cdot | 
	\Gamma/\Gamma_n | 
	\, .
	\]
	Then the profinite action of $\Gamma$ on $X$ is topologically free and 
	minimal but not essentially free (with regard to $\mu$).
\end{Thm}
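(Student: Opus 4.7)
The plan is to verify each of the three claimed properties separately, treating the subgroup chain as decreasing (consistent with the earlier passage in the excerpt, since a strictly increasing chain of subgroups cannot have trivial intersection). Throughout, I write $\pi_n \colon X \to \Gamma/\Gamma_n$ for the canonical projection from the inverse limit.

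\emph{Minimality} is immediate from the general fact that a profinite action is minimal as soon as the action on each finite quotient is transitive, and here $\Gamma$ acts transitively on $\Gamma/\Gamma_n$ by left translation. Concretely, any basic clopen set in $X$ has the form $U = \pi_n^{-1}(q\Gamma_n)$, and for an arbitrary $x = (q_m\Gamma_m)_m \in X$ the element $g = q q_n^{-1} \in \Gamma$ satisfies $g \cdot x \in U$, so every orbit is dense.

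\emph{Topological freeness} amounts to showing that for every $s' \in \Gamma \setminus \{e\}$, each nonempty basic clopen set $U = \pi_n^{-1}(q\Gamma_n)$ contains a point not fixed by $s'$. I would use the specific point $x_U := (q\Gamma_m)_{m \geq n} \in U$, where the same representative $q$ is used at every level. Then $s' \cdot x_U = x_U$ if and only if $q^{-1} s' q \in \Gamma_m$ for every $m \geq n$, which (since $\bigcap_m \Gamma_m = \{e\}$) forces $q^{-1} s' q = e$, contradicting $s' \neq e$.

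\emph{Failure of essential freeness} is where the quantitative hypothesis on $s$ is used. Define $F_n := \pi_n^{-1}\left(\{\,q\Gamma_n \in \Gamma/\Gamma_n \mid s q\Gamma_n = q\Gamma_n\,\}\right) \subseteq X$. Because $\mu$ projects to the uniform measure $\mu_n$ under $\pi_n$, the hypothesis gives $\mu(F_n) \geq \delta$ for every $n$. I would next observe that the sequence $(F_n)_n$ is decreasing: if $s$ fixes $q\Gamma_m$, i.e.\ $q^{-1} s q \in \Gamma_m$, then since $\Gamma_m \subseteq \Gamma_n$ for $m \geq n$ we also have $q^{-1} s q \in \Gamma_n$, so $s$ fixes $q\Gamma_n$. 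Moreover, unwinding the definition of the inverse limit gives $\bigcap_n F_n = \{x \in X \mid s \cdot x = x\}$. Continuity of measure along the decreasing intersection therefore yields $\mu(\{x \mid sx = x\}) = \lim_n \mu(F_n) \geq \delta > 0$, showing that the set of points with nontrivial stabilizer has positive $\mu$-measure.

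None of the three steps is genuinely difficult; the only place where care is needed is in the topological-freeness argument, where one must exploit residual finiteness via the specific coherent representative $(q\Gamma_m)_m$ rather than naively taking a diagonal intersection of conjugate subgroups (which would be the wrong object, since the conjugating element can depend on the level). Once this point is handled, the quantitative lower bound on fixed points at each finite level passes to the limit by the standard Kolmogorov-style continuity argument and contradicts essential freeness.
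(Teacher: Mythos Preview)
Your proof is correct and follows the same approach as the paper, which is quite terse: it defers topological freeness to \cite[Theorem~3.2]{Joseph2023} and states the measure bound $\mu(\{x \mid sx = x\}) \geq \delta$ without spelling out the continuity-of-measure step along the decreasing sets $F_n$. Your explicit argument for topological freeness via the coherent point $(q\Gamma_m)_m$ and your observation about the direction of the subgroup chain are both accurate elaborations of what the paper leaves implicit.
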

\begin{proof}
    For any action of a discrete group, the set of points with trivial stabilizer is always a $G_{\delta}$ set, because for any $g \in \Gamma$, the set $\{ x \in X \mid gx \neq x \}$ is open. Thus, to see that this action is topologically free, one needs to show that the set of points with trivial stabilizer is dense.
	Indeed element of $X$ can be written as a sequence 
	$ ( h_1 \Gamma_1 , h_2 \Gamma_2 , h_3 \Gamma_3 , \ldots )$ where $h_1,h_2,h_3, \ldots$ are elements in $\Gamma$ satisfying $h_{n+1} \Gamma_n = h_n \Gamma_n$ for all $n$. 
	We have
	\[
	g ( h_1 \Gamma_1 , h_2 \Gamma_2 , h_3 \Gamma_3 , \ldots ) = ( h_1 \Gamma_1 , h_2 \Gamma_2 , h_3 \Gamma_3 , \ldots )
	\]
	if and only if $h_n^{-1} g h_n \in \Gamma_n$ for all $n$. In particular, if we set $\bar{e} = ( e \Gamma_1 , e \Gamma_2 , e \Gamma_3 , \ldots )$ then we have $\Gamma_{\bar{e}} = \{e\}$. Because the action of $\Gamma$ on $X$ is minimal, all the points in the orbit of $\bar{e}$ have trivial stabilizer as well, so the points with trivial stabilizer are dense. 
	
	Since for any $n$ we have 
	\[
	\frac{ | \{ q \in \Gamma/\Gamma_n \mid sq = q \} | }{ | \Gamma/\Gamma_n | } \geq \delta 
	\, ,
	\]
	 it follows that 
	$\mu ( \{ x \in X \mid s \cdot x = x \} ) \geq \delta$, 
	hence the action is not essentially free.
\end{proof}

We now fix some notation for wreath products. Suppose $G$ and $H$ are groups. 
We denote by $c_{00}(H,G)$ the group of finitely supported $G$-valued functions 
from $H$, that is, for any $f \in c_{00}(H,G)$ the support $\supp(f) = \{h \in 
H 
\mid 
f(h) \neq \e \}$ is finite. (We use $\e$ to denote both the trivial element of 
$H$ and of $G$, by slight abuse of notation, as it should be clear which one is 
referred to from context.) 
This is a group with pointwise multiplication, and 
it is equipped with an action of $H$ given by left translation $(\lambda_h(f) 
(h') = f(h^{-1}h')$. The wreath product $ G \wr H $ is the semidirect product $ 
c_{00}(H,G) \rtimes_{\lambda} H$. We think of each element in $ G \wr H $  as a 
pair $(f,h)$ with $f \in c_{00}(H,G)$ and $h \in H$, with $(f,h) \cdot (f',h') 
= 
( f \lambda_h(f') , hh')$. As we consider the case in which $G$ is abelian, we 
shall use additive notation for the group operations in $G$ and in 
$c_{00}(H,G)$. 
\begin{Thm}
	\label{thm:allosteric-profinite}
	Let $G$ be a nontrivial countable residually finite abelian group and let $H$ be a 
	countable residually 
	finite group. Set $\Gamma = G \wr H \cong c_{00}(H,G) \rtimes_{\lambda} H$. 
	
	Fix 
	$\delta \in (0,1)$ and an increasing sequence $E_1 \subseteq E_2 \subseteq E_3 \subseteq \ldots$ of finite subsets of $H$ such that $H = \bigcup_{n=1}^{\infty} E_n$. 
	Suppose we have finite index subgroups 
	$H \geq H_1 \geq H_2 \geq H_3 \geq \ldots$ 
	satisfying, for each $n$, that 
	\begin{enumerate}
		\item $E_n^{-1}E_n \cap H_n \subseteq \{\e\}$, and
		\item $\left| \left(\bigcup_{j = 1}^n E_j H_j \right) / H_n \right| < (1-\delta) \cdot | H/H_n |$. 
	\end{enumerate}
	Also suppose we have finite index subgroups 
$G \geq G_1 \geq G_2 \geq G_3 \geq \ldots$ such that  $\bigcap_{n=1}^{\infty} G_n = \{\e\}$.

 	Define a sequence $N_1 \geq N_2 \geq N_3 \geq \ldots$ of subgroups in $c_{00}(H,G)$ by 
 	\[
 		N_n = \left \{ f \in c_{00}(H,G) \colon  
 		\sum_{h \in H_j} f(hq) \in G_j \text{ for any } j \in \intervalofintegers{1}{n} \text{ and }  q \in 
 		E_j \right\}
 		\, .
 	\]
 	Note that $N_n$ is invariant under the left action of $H_n$. Set $\Gamma_n 
 	= N_n \rtimes_{\lambda} H_n$. 
 	
 	Then $\Gamma_1 \geq \Gamma_2 \geq \ldots$ are 
 	finite index subgroups of $\Gamma$, we have $\bigcap_{n=1}^{\infty} 
 	\Gamma_n = 
 	\{\e\}$, and the profinite action of $\Gamma$ on 
 	$\underset{\longleftarrow}{\lim} 
 	\Gamma/\Gamma_n$ is allosteric.
\end{Thm}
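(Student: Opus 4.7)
The plan is to apply Theorem \ref{thm:allosteric-condition} to the sequence $(\Gamma_n)$, so I need to verify three things: each $\Gamma_n$ has finite index in $\Gamma$, the intersection $\bigcap_n \Gamma_n$ is trivial, and there is a nontrivial $s \in \Gamma$ whose fixed-point set in $\Gamma/\Gamma_n$ has cardinality at least $\delta \cdot |\Gamma/\Gamma_n|$ for every $n$.

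For the finite-index assertion, I would observe that $N_n$ is the intersection of the kernels of the finitely many homomorphisms $\phi_{j,q} \colon c_{00}(H,G) \to G/G_j$ defined by $\phi_{j,q}(f) = \sum_{h \in H_j} f(hq) + G_j$, indexed by $1 \leq j \leq n$ and $q \in E_j$. Each target is finite, so $N_n$ is finite-index in $c_{00}(H,G)$, and combining this with $[H:H_n] < \infty$ gives $[\Gamma:\Gamma_n] < \infty$. For triviality of the intersection, I would first argue $\bigcap_n H_n = \{e\}$: given $h \neq e$, enlarging $E_n$ if necessary to contain $e$, we have $h \in E_n$ for $n$ large, and if in addition $h \in H_n$ then $h^{-1} = h^{-1} \cdot e \in E_n^{-1}E_n \cap H_n \subseteq \{e\}$, contradiction. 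For $f \in c_{00}(H,G) \setminus \{0\}$, I would fix $q_0 \in \supp(f)$ and use $\bigcap_n H_n = \{e\}$ together with the finiteness of $\supp(f)$ to conclude that for large $n$ the only $h \in H_n$ with $hq_0 \in \supp(f)$ is $h = e$; then $\sum_{h \in H_n} f(hq_0) = f(q_0)$, which eventually lies outside $G_n$ by $\bigcap_n G_n = \{e\}$. Hence $\bigcap_n N_n = \{0\}$ and $\bigcap_n \Gamma_n = \{(0,e)\}$.

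The substantive step is producing $s$. Passing to a subsequence if necessary to ensure $G_1 \neq G$, I would pick $g \in G \setminus G_1$ (so $g \notin G_j$ for all $j \geq 1$) and set $s = (\delta_e \otimes g, e)$, where $\delta_e \otimes g$ is the function on $H$ taking value $g$ at $e$ and zero elsewhere. A direct semidirect-product computation gives
\[
(f,h)^{-1} s (f,h) = (\lambda_{h^{-1}}(\delta_e \otimes g), e),
\]
so $s$ fixes the coset $(f,h)\Gamma_n$ if and only if $\lambda_{h^{-1}}(\delta_e \otimes g) \in N_n$. Since $N_n$ is $\lambda_{H_n}$-invariant (because $H_n \leq H_j$ makes left translation by $H_n$ permute the summation ranges $H_j q$), this condition depends only on the $H_n$-coset of $h$, and the fixed-point fraction on $\Gamma/\Gamma_n$ equals the fraction of cosets $hH_n$ that satisfy it. Unpacking the definition of $N_n$ and using $g \notin G_j$, the defining condition reduces to the statement that $h$ lies outside a specific union of $H_j$-cosets determined by the $E_j$'s.

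The main obstacle is the coset bookkeeping required to identify this bad set with the one controlled by the density hypothesis $|(\bigcup_{j=1}^n E_j H_j)/H_n| < \delta \cdot |H/H_n|$; this is a routine but delicate calculation in which one must carefully track left versus right translations in the wreath-product convention, and use that each $E_j H_j$ (for $j \leq n$) is a union of $H_n$-cosets thanks to $H_n \leq H_j$. Once the bad set is shown to coincide, up to inversion or translation, with $\bigcup_{j=1}^n E_jH_j$ modulo $H_n$, the hypothesis directly yields that the good set occupies more than a $\delta$-fraction of $H/H_n$, so the fixed-point fraction of $s$ in $\Gamma/\Gamma_n$ is at least $\delta$. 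Theorem \ref{thm:allosteric-condition} then delivers topological freeness, minimality, and failure of essential freeness, i.e.\ the action is allosteric.
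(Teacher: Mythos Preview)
Your approach matches the paper's almost verbatim: the same kernel-of-finite-quotient argument for finite index, the same trivial-intersection check, and the same choice $s=(\delta_e\otimes g,\,e)$ with the conjugation computation $(f,h)^{-1}s(f,h)=(\lambda_{h^{-1}}(\delta_e\otimes g),e)$ leading to a coset-counting estimate via the density hypothesis. Two small slips to repair: you cannot ``pass to a subsequence'' of $(G_n)$, since the $\Gamma_n$ (and hence the action) are already fixed by the hypotheses; instead, as the paper does, take any nontrivial $g\in G$ and use that $g\notin G_j$ for all sufficiently large $j$, which only shrinks the bad set. Also, the hypothesis bounds the bad set by a $\delta$-fraction, so the fixed-point fraction is at least $1-\delta$, not $\delta$; you then apply Theorem~\ref{thm:allosteric-condition} with the constant $1-\delta\in(0,1)$.
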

\begin{proof}
	We first verify that indeed the subgroups $N_n$ have finite index. For any 
	$n \in \N$ and any $q \in E_n$, set 
	\[
	K_{n,q} = \left \{ f \in c_{00}(H,G) \mid 
	\sum_{h \in H_n} f(hq) \in G_n  \right  \}
	\, .
	\]
	 Note that $N_n = 
	\bigcap_{j=1}^n \bigcap_{q \in E_j} K_{j,q}$, so it suffices to show that 
	each $K_{n,q}$ has finite 
	index. Let $Z$ be a finite set of coset representatives for $G_n$ in $G$, 
	and for any $z \in Z$, let $f_z \in c_{00}(H,G)$ be the function given by 
	$f_z(q) = z$ and $f_z(h) = 0$ for any $h \neq q$. Given $f \in 
	c_{00}(H,G)$, 
	pick $z \in Z$ such that $z - \sum_{h \in H_n} f(hq)  \in G_n$, so $f_z - f 
	\in K_{n,q}$. Thus, $\{f_z\}_{z \in Z}$ is a finite set of coset 
	representatives of $K_{n,q}$ in $c_{00}(H,G)$, as required.
	
	We now verify that $\bigcap_{n=1}^{\infty} 
	\Gamma_n = 
	\{\e\}$. Indeed, let $(f,h) \in \bigcap_{n=1}^{\infty} 
	\Gamma_n$. The assumptions imply that $\bigcap_{n=1}^{\infty} 
	H_n = 	\{\e\}$, and therefore $h = e$. To show that $f = 0$, 
	let $D$ be the support of $f$ and let $R$ be the image of $f$. Pick $n$ 
	such that $D \subseteq E_n$,  $DD^{-1} \cap H_n \subseteq \{e\}$ and $R 
	\cap G_n = \{0\}$. 
	Notice that for any $q$, the condition  $DD^{-1} \cap H_n \subseteq \{e\}$ 
	implies that for any $q \in H$, the set $\{h \in H_n \mid hq \in \D\}$ has 
	at 
	most one element, so $\{ \sum_{h \in H_n} f(hq) \mid q \in E_n \} = R$. 
	However $(f,e) \in N_n$, so $\{ \sum_{h \in H_n} f(hq) \mid q \in E_n \} 
	\subseteq G_n$. Thus $R \subseteq G_n$, and therefore, by the choice of 
	$G_n$, we have $R = \{0\}$. This shows that  $\bigcap_{n=1}^{\infty} 
	\Gamma_n = 	\{\e\}$.

	To show the action is allosteric, we fix a nontrivial element $f_0 \in  
	c_{00}(H,G)$ with $\supp(f_0) = \{\e\}$ and set $s = (f_0,e)$. We  
	claim that the conditions of 
	Theorem 
	\ref{thm:allosteric-condition} 
	hold for $s$, that is, for any $n \in \N$ we have 
	\[
	| \{ q \in \Gamma/\Gamma_n \mid sq = q \} | \geq \delta \cdot | 
	\Gamma/\Gamma_n | 
	\, .
	\]
	Fix $n \in \N$. Let $q \in \Gamma/\Gamma_n$. Denote $q = [(f,h)]$ with  $f 
	\in c_{00}(H,G)$ and $h \in H$. 
	Then $sq = q$ 
	holds if and only if $(f,h)^{-1} (f_0,\e) (f,h) \in \Gamma_n$. Note that 
	\[
	(f,h)^{-1} (f_0,\e) (f,h)  = ( \lambda_{h^{-1}}(f_0),\e ) 
	\, ,
	\]
	 and $\supp 
	\lambda_{h^{-1}}(f_0) = \{ h^{-1} \}$. Thus, if $h^{-1} \not \in \bigcup_{j = 1}^n E_j H_j$ 
	then $sq = q$. So, 
	\[
	\{ [(f,h)] \in \Gamma/\Gamma_n \mid s [(f,h)] = [(f,h)] 
	\} \supseteq \{ [(f,h)] \in \Gamma/\Gamma_n \mid h \not \in \bigcup_{j = 1}^n E_j H_j \}
		\, .
	\]
	Because $\left| \left(\bigcup_{j = 1}^n E_j H_j \right) / H_n \right| < (1-\delta) \cdot | H/H_n |$, we have 
	\[
		\left| \left\{ [h] \in H/H_n \mid h \not \in \bigcup_{j = 1}^n E_j H_j \right\} \right| \geq  \delta  \cdot |H/H_n| \; .
	\]
	Since the quotient map $\Gamma / \Gamma_n \to H / H_n$, given by $[(f,h)] 
	\mapsto [h]$, pushes forward the uniform measure on $\Gamma/\Gamma_n$ to 
	the uniform measure on $H/H_n$, we have 
	\[
		\frac{| \{ [(f,h)] \in \Gamma/\Gamma_n \mid h \not \in \bigcup_{j = 1}^n E_j H_j \} | }{|\Gamma/\Gamma_n |} = \frac{| \{ [h] \in H/H_n \mid h \not \in  \bigcup_{j = 1}^n E_j H_j \} | }{|H/H_n |} \geq  \delta \; ,
	\]
	as required. 
\end{proof}

\begin{Cor}
	\label{cor:allosteric-construction}
	Let $G$ be a nontrivial countable residually finite abelian group and let $H$ be a
	countably infinite residually finite group. 
	Then $G \wr H$ has a profinite action that is allosteric. 
\end{Cor}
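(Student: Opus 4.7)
The plan is to apply Theorem~\ref{thm:allosteric-profinite} directly, which reduces the task to constructing appropriate decreasing sequences of finite-index subgroups $G_n \leq G$ and $H_n \leq H$, together with an exhausting sequence $E_n$ of finite subsets of $H$. I fix $\delta = 1/2$ at the outset. The sequence $G_n$ is obtained from the residual finiteness of $G$ in the usual way: enumerate the nontrivial elements of $G$ and inductively take $G_n$ to be the intersection of $G_{n-1}$ with a finite-index subgroup of $G$ excluding the $n$-th element. Similarly, enumerate $H = \{h_1, h_2, \ldots\}$ and let $E_n = \{h_1, \ldots, h_n\}$, so that $E_1 \subseteq E_2 \subseteq \ldots$ and $H = \bigcup_n E_n$.

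The heart of the argument is the inductive construction of the $H_n$. Having chosen $H_1 \geq \ldots \geq H_{n-1}$, I would use the residual finiteness of $H$ to pick a finite-index subgroup $K$ of $H$ satisfying $E_n^{-1} E_n \cap K \subseteq \{\e\}$ (which is possible since $E_n^{-1} E_n \setminus \{\e\}$ is finite). Because $H$ is countably infinite and residually finite, it has finite-index subgroups of arbitrarily large index, so by intersecting $K \cap H_{n-1}$ with a further finite-index subgroup if necessary, I can arrange that
\[
[H : H_n] > \frac{2^n |E_n|}{\delta},
\]
equivalently $|E_n|/[H:H_n] < \delta/2^n$, while keeping $H_n \leq H_{n-1}$ and $E_n^{-1} E_n \cap H_n \subseteq \{\e\}$. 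This verifies condition (i) of Theorem~\ref{thm:allosteric-profinite}.

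For condition (ii), I exploit the fact that for $j \leq n$ the subgroup $H_j$ contains $H_n$, so $E_j H_j$ is a union of right cosets of $H_j$ and each such coset splits into exactly $[H_j : H_n]$ right cosets of $H_n$. This gives
\[
\left| \Bigl(\bigcup_{j=1}^n E_j H_j \Bigr) \big/ H_n \right| \leq \sum_{j=1}^n |E_j| \cdot [H_j : H_n].
\]
Dividing both sides by $|H/H_n| = [H:H_j] \cdot [H_j : H_n]$ and invoking the bound $|E_j|/[H:H_j] < \delta/2^j$ from the construction yields
\[
\frac{\bigl|(\bigcup_{j=1}^n E_j H_j)/H_n\bigr|}{|H/H_n|} \leq \sum_{j=1}^n \frac{|E_j|}{[H:H_j]} < \sum_{j=1}^n \frac{\delta}{2^j} < \delta,
\]
as required. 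With all hypotheses verified, Theorem~\ref{thm:allosteric-profinite} delivers the desired allosteric profinite action of $G \wr H$.

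The only real obstacle is the construction of $H_n$ satisfying both the residual finiteness requirement (i) and the quantitative measure-theoretic bound embedded in (ii). The key insight, which makes this straightforward, is that (ii) converts, via a counting-cosets argument, into the summability condition $\sum_j |E_j|/[H:H_j] < \delta$, which can be forced term-by-term by enlarging the index at each stage -- and this is always possible in a countably infinite residually finite group.
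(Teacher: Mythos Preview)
Your proof is correct and follows essentially the same approach as the paper: both construct the $G_n$ from residual finiteness of $G$, exhaust $H$ by finite sets $E_n$, and inductively choose $H_n \leq H_{n-1}$ of large enough index so that condition~(i) and condition~(ii) of Theorem~\ref{thm:allosteric-profinite} hold. The only difference is in the bookkeeping for~(ii): the paper tracks the running density $\delta_{n-1} = \bigl|(\bigcup_{j\leq n-1} E_j H_j)/H_{n-1}\bigr| \big/ |H/H_{n-1}|$ and controls only the increment coming from $E_n \setminus E_{n-1}$ at each step, whereas you allocate a geometric-series budget $\delta/2^j$ to each $E_j H_j$ separately and sum; both arrive at the same conclusion by the same underlying mechanism.
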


\begin{proof}
	Fix a decreasing sequence 
	$G \geq G_1 \geq G_2 \geq G_3 \geq \ldots$ of finite index subgroups such that  $\bigcap_{n=1}^{\infty} G_n = \{\e\}$. 
	Fix an increasing sequence $E_1 \subseteq E_2 \subseteq E_3 \subseteq 
	\ldots$ of finite subsets of $H$ satisfying $H = \bigcup_{n=1}^{\infty} 
	E_n$. 
	Write $E_0 = \varnothing$ and $E'_n = E_n \setminus E_{n-1}$ for $n = 1, 2, 
	\ldots$. 
	
	Now 
	we fix an arbitrary $\delta \in (0,1)$. 
	Define inductively a decreasing sequence 
	$H_1 \geq H_2 \geq H_3 \geq \ldots$ of finite index subgroups of $H$
	satisfying the conditions in Theorem~\ref{thm:allosteric-profinite}, as 
	follows: 
	\begin{enumerate}
		\item For convenience, we start by defining $H_0 = H$. We 
		trivially have
		 \[
		E_0^{-1}E_0 \cap H_0 = \varnothing \subseteq \{\e\}
		\] and 
		\[
		\left| \left(\bigcup_{j = 1}^0 E_j H_j \right) / H_0 \right| = 0 < 
		(1-\delta) \cdot | H/H_0 |
		\, . 
		\]
		\item Now suppose $H_0, H_1, \ldots, H_{n-1}$ have been defined for 
		some positive integer $n$. We set 
		\[
			\varepsilon_n = \frac{\left| \left(\bigcup_{j = 1}^{n-1} E_j H_j 
			\right) / H_{n-1} \right|}{| H/H_{n-1} | }  \in \left[ 0, 1- \delta 
			\right) \; .
		\]
		Since $H$ is infinite and residually finite, we can choose a finite 
		index 
		subgroup $H_n$ of $H$ such that $E_n^{-1}E_n \cap H_n = \varnothing$ 
		and $| H/H_{n} | > \frac{|E'_n|}{1 - \delta - \varepsilon_n} $. It follows that
		\begin{align*}
			\frac{\left| \left(\bigcup_{j = 1}^n E_j H_j \right) / H_n \right|}{| H/H_n |} &\leq \frac{\left| \left(\bigcup_{j = 1}^{n-1} E_j H_j \right) / H_n \right|}{| H/H_n |} + \frac{\left| E'_n H_n / H_n \right|}{| H/H_n |} \\
			&\leq \frac{\left| \left(\bigcup_{j = 1}^{n-1} E_j H_j \right) / H_{n-1} \right|}{| H/H_{n-1} |} + \frac{\left| E'_n \right|}{| H/H_n |} \\
			&< \varepsilon_n + 1 - \delta - \varepsilon_n = 1- \delta
		\end{align*}
		as desired. 
	\end{enumerate}
	Hence we may follow the construction in Theorem~\ref{thm:allosteric-profinite} to obtain a profinite action that is allosteric. 
\end{proof}

The following proposition uses our techniques to show that profinite actions of 
wreath products of a finite group $H$ by $\Z^d$ give rise to crossed products 
with finite nuclear dimension. By Corollary~\ref{cor:allosteric-construction}, 
some 
of those examples are allosteric, if the choice of subgroups is done according 
to the conditions in  Theorem \ref{thm:allosteric-profinite}.
\begin{Prop}
	\label{prop:allosteric-finite-nuc-dim}
	Let $H$ be a finite abelian group (with more than one element). 
	Let $G = H \wr \Z^d$. 
	Suppose $G_1 > 
	G_2 > G_3 \ldots$ a sequence of subgroups with finite index such that 
	$\bigcap_{n=1}^{\infty} G_n = \{e\}$, let $X = 
	\underset{\longleftarrow}{\lim} G/G_n$ be the inverse limit. Denote by 
	$\alpha$  the action of $G$ on $X$, defined as the inverse limit of the 
	left translation actions. Then 
	$\dimnuc (C(X) \rtimes_{\alpha} G)$ is finite, with a uniform bound for all 
	such actions, depending only on $d$. 
\end{Prop}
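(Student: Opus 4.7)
The plan is to realize $C(X) \rtimes G$ as an inductive limit of crossed products coming from the finite approximating actions $G \curvearrowright G/G_n$, and to bound the nuclear dimension of each factor uniformly via the $C_0(X)$-algebra version of Corollary~\ref{cor:dimnuc-virnil} applied to the abelian quotient $\Z^d$.

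First, since $X = \varprojlim_n G/G_n$ as $G$-spaces, we have $C(X) = \varinjlim_n C(G/G_n)$ equivariantly, hence $C(X) \rtimes G = \varinjlim_n \bigl( C(G/G_n) \rtimes G \bigr)$. Each $G$-action on the finite set $G/G_n$ is transitive with isotropy $G_n$, so Green's imprimitivity theorem identifies $C(G/G_n) \rtimes G \cong M_{[G:G_n]}(C^*(G_n))$. Using that $\dimnuc$ is invariant under matrix stabilization and upper-semicontinuous under inductive limits,
\[
\dimnuc(C(X) \rtimes G) \;\leq\; \liminf_n \dimnuc(C^*(G_n)) \, .
\]
It therefore suffices to bound $\dimnuc(C^*(G_n))$ uniformly in $n$, with a bound depending only on $d$.

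For this, I would exploit the semidirect product structure $G = B \rtimes \Z^d$ with $B = \bigoplus_{\Z^d} H$. Setting $B_n := G_n \cap B$ and $\Lambda_n := G_n / B_n$ gives a short exact sequence $1 \to B_n \to G_n \to \Lambda_n \to 1$ in which $B_n$ is a countable abelian normal subgroup and $\Lambda_n$ embeds as a finite-index subgroup of $\Z^d$, hence $\Lambda_n \cong \Z^{d_n}$ with $d_n \leq d$. This gives a (possibly twisted) decomposition
\[
C^*(G_n) \cong C(\widehat{B_n}) \rtimes_{\sigma,\omega} \Lambda_n \, ,
\]
and because $\widehat B = \widehat H^{\Z^d}$ is a compact totally disconnected (Cantor) group, so is its quotient $\widehat{B_n}$; in particular $\dim(\widehat{B_n}) = 0$. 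Applying Packer-Raeburn stabilization, $C^*(G_n) \otimes \K$ becomes an honest (untwisted) crossed product
\[
\bigl( C(\widehat{B_n}) \otimes \K \bigr) \rtimes \Lambda_n \, ,
\]
which is a $\Lambda_n$-$C_0(\widehat{B_n})$-algebra with fibers all isomorphic to $\K$. Corollary~\ref{cor:dimnuc-virnil}(2) then applies and yields
\[
\dimnucone(C^*(G_n)) \;\leq\; 9^{d_n}(d_{\operatorname{stab}}+1)\bigl(\dimone(\widehat{B_n})\bigr)^{2} \;\leq\; 9^{d}(d_{\operatorname{stab}}+1) \, .
\]
The term $d_{\operatorname{stab}}$ is the supremum of $\dimnuc(\K \rtimes H)$ over subgroups $H$ of isotropy groups of the $\Lambda_n$-action on $\widehat{B_n}$; each such $H$ is a free abelian group of rank at most $d$, and $\K \rtimes H$ is Morita equivalent to a (possibly noncommutative) torus of dimension at most $d$, whose nuclear dimension is known to be finite and bounded by a function of $d$ alone. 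Combined with the first step, this produces a bound on $\dimnuc(C(X) \rtimes G)$ depending only on $d$.

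The main obstacle will be carefully handling the potential nontriviality of the $2$-cocycle $\omega$, so that the Packer-Raeburn stabilization procedure genuinely produces an action of $\Lambda_n$ on $C(\widehat{B_n}) \otimes \K$ compatible with the $C_0(\widehat{B_n})$-algebra structure in the sense demanded by Corollary~\ref{cor:dimnuc-virnil}. A secondary technical point is recording a uniform bound on $\dimnuc(\K \rtimes H)$ for free abelian $H$ of rank at most $d$, which one invokes from existing estimates for noncommutative tori of bounded rank.
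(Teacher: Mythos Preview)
Your approach is correct and closely parallels the paper's, but with the order of decomposition reversed. The paper first takes the crossed product by the normal subgroup $B = c_{00}(\Z^d,H)$, writing $C(G/G_n)\rtimes G \cong \bigl(C(G/G_n)\rtimes B\bigr)\rtimes \Z^d$, and observes that the inner factor is a $C(Y_n)$-algebra with \emph{finite-dimensional} matrix fibers over a Cantor set $Y_n$; this makes the $d_{\operatorname{stab}}$ bound particularly clean, since on a fiber $M_k$ the implementing unitaries of the inner $\Z^{\leq d}$-action have commutators that are $k$-th roots of unity (via the determinant), hence generate a finitely generated nilpotent group of Hirsch length at most $d$, and Eckhardt--McKenney gives $\dimnuc(C^*(M))\leq 10^{d-1}d!$. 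You instead apply Green imprimitivity first and then decompose $G_n$ as an extension, untwisting via Packer--Raeburn before invoking Corollary~\ref{cor:dimnuc-virnil}. Your flagged concern about compatibility of the stabilized action with the $C_0(\widehat{B_n})$-structure is not a real obstacle: the Packer--Raeburn action differs from the twisted one by inner automorphisms, hence agrees with $\sigma$ on the center. The only genuine extra ingredient you need is a uniform bound on $\dimnuc(\K\rtimes H)$ for $H\cong\Z^{\leq d}$ acting on $\K$; since the twisting cocycle here takes values in the $|H|$-torsion group $B_n$, the resulting noncommutative tori are rational, so this bound is available (and in any case bounds for arbitrary noncommutative $m$-tori exist via iterated $\Z$-crossed products and Rokhlin dimension). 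The paper's route buys a slightly more self-contained argument by keeping the fibers finite-dimensional and avoiding stabilization; yours buys a cleaner reduction to a single group $C^*$-algebra $C^*(G_n)$.
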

\begin{proof}
	Denote by $c_{00}(\Z^d,H)$ the finitely supported $H$-valued 
	functions from $\Z^d$, and recall that $H \wr \Z^d \cong c_{00}(\Z^d,H) 
	\rtimes_\lambda 
	\Z^d$, 
	where $\Z^d$ acts on $c_{00}(\Z^d,H)$ by translation. Recall that 
	\[
	C(X) \rtimes_{\alpha} G \cong \left ( C(X) \rtimes_{\alpha|_{c_{00}(\Z^d,H)}} 
	c_{00}(\Z^d,H) \right ) \rtimes \Z^d \; ,
	\]
	where $\Z^d$ acts on $C(X)$ by $\alpha|_{\Z^d}$ and on $c_{00}(\Z^d,H)$ by $\lambda$. 
	
	Recall furthermore that if $\Gamma$ is an abelian group acting on a finite 
	set $S$, then 
	\[
	C(S) \rtimes \Gamma = \bigoplus_{[x] \in S/\Gamma} 
	M_{|\Gamma 
		\cdot x|} \otimes 
	C^*(\Gamma_x) \cong \bigoplus_{x \in S/\Gamma} M_{|\Gamma 
		\cdot x|} \otimes 
	C(\widehat{\Gamma}_x)
	\]
	(where $\Gamma_x$ is the stabilizer of a point 
	$x$ in 	the 	orbit; as $\Gamma$ is abelian, the subgroup does not 
	depend on the choice 
	of $x$, and $\Gamma \cdot x$ is the orbit). 
	When $\Gamma = c_{00}(\Z^d,H)$, the group $\Gamma_x$ for any $x$ is a 
	countable infinite abelian torsion group, and therefore the Pontryagin dual 
	$\widehat{\Gamma_x}$ is homeomorphic to the Cantor set. 
	Thus, for any $n \in \N$, if we denote by $Y_n$ the disjoint union of 
	copies of the Pontryagin duals of the stabilizer groups of the action of 
	$c_{00}(\Z^d,H)$ on $G/G_n$, then 
	the crossed product $ C(G/G_n) \rtimes_{\alpha|_{c_{00}(\Z^d,H)}} 
	c_{00}(\Z^d,H)$ has 
	the structure of a $C(Y_n)$-algebra with fibers isomorphic to matrix 
	algebras. Now $\dim(Y_n) = 0$ and the fibers all have zero nuclear 
	dimension. Consider the action $\alpha|_{\Z^d}$ restricted to the 
	invariant subalgebra  
	 \[
	  B_n = C(G/G_n) 
	\rtimes_{\alpha|_{c_{00}(\Z^d,H)}} c_{00}(\Z^d,H) 
	\, .
	\]
	 Let $y \in Y_n$. 
	Let $N$ be a subgroup of the stabilizer group of $y$. If $N$ is 
	nontrivial, it is isomorphic to $\Z^m$ for some $0 < m \leq d$. The 
	fiber $B_y$ is isomorphic to a matrix algebra $M_k$ for some $k$, and 
	therefore the action of $N$ is pointwise inner. Let $u_1,u_2\ldots,u_m 
	\in U_k$ be unitaries which implement the actions of generators of $N$ 
	(for some choice of generators). Each commutator $u_ju_lu_j^*u_l^*$ 
	acts trivially on $M_k$, and therefore it is of the form $\lambda 1_k$ 
	for some scalar $\lambda$ in the unit circle. However, 
	$\det(u_ju_lu_j^*u_l^*) 
	= 1$, so $\lambda^k = 1$, and thus $\lambda$ is a $k$'th root of 
	unity. Let $M$ be the group generated by $u_1,u_2,\ldots,u_m$, then 
	$[M,M]$ is a a subgroup of the roots of unity, and $[M,M] \subseteq 
	Z(M)$. 
	It follows in particular that $M$ is nilpotent, and $B_y \rtimes N$ is 
	a quotient of $B_y \rtimes M$ (where we omit the notation for the 
	action). Because the action of $M$ on $B_y$ is inner, we have $B_y 
	\rtimes M \cong M_k \otimes C^*(M)$. As the Hirsch length of $M$ is at most 
	$d$, 
	by  \cite[Theorem 5.1]{EGM}, we have $\dimnuc ( C^*(M) ) \leq 
	2\cdot d!$, and therefore also $\dimnuc (  B_y \rtimes N ) \leq 
	10^{d-1}d!$.

	As $\dim(Y_n) = 0$, it follows that $\dimltc(\alpha|_{\Z^d})$ is finite, 
	and bounded by a constant depending only on $d$ (and not on $n$).  
	Thus, 
	for any $n$, it follows from Theorem \ref{thm:dimnuc-main} that $\dimnuc 
	(B_n \rtimes_{\alpha|_{\Z^d}} 
	\Z^d)$ is bounded by a constant depending only on $d$.   
	Now, $C(X) \rtimes_{\alpha} G$ is an inductive limit of the subalgebras 
	$B_n \rtimes_{\alpha|_{\Z^d}} \Z^d$. We thus have $\dimnuc (C(X) 
	\rtimes_{\alpha} G) < \infty$ (with a bound depending only 
	on $d$).
\end{proof}
\begin{Rmk}
	In Proposition~\ref{prop:allosteric-finite-nuc-dim}, we assumed 
	that $H$ is finite and abelian. It 
	suffices, in fact, at the cost of complicating notation somewhat, to assume 
	that $H$ is abelian, locally finite and residually finite. As the 
	modification is straightforward, we do not include further details. 
\end{Rmk}

\newcounter{sectiontotal}
\setcounter{sectiontotal}{\value{section}}

\newcounter{appendix}
\setcounter{appendix}{0}
\renewcommand{\thesection}{\Alph{appendix}}

{
	\renewcommand{\thesection}{Appendix~\Alph{appendix}} \refstepcounter{appendix}
\section{Nuclear dimension and covering dimension} \label{sec:dimc} 
}
\renewcommand{\sectionlabel}{LTC}
\ref{sectionlabel=LTC}

In this appendix, we clarify the relationship between the nuclear dimension of $C_0(X)$ (together with its close relative, the decomposition rank $\dr \left( C_0 (X) \right)$) and the covering dimension of $X$, for a general locally compact Hausdorff space $X$. Under some extra assumptions, the following equality is well-known. 

\begin{Thm}[\cite{winter-covering-II, winter-zacharias}] 
\label{thm:dimnuc-dim-cite}
	If $X$ is a locally compact metrizable space or a compact Hausdorff space, 
	then we have 
	\[
		\dimnuc \left( C_0 (X) \right) = \dr \left( C_0 (X) \right) = \dim(X) \; .
	\]
	\qed
\end{Thm}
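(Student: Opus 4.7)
The plan is to prove the two equalities $\dim(X) = \dr(C_0(X))$ and $\dr(C_0(X)) = \dimnuc(C_0(X))$ by establishing a chain of inequalities. The inequality $\dimnuc(C_0(X)) \leq \dr(C_0(X))$ is immediate from the definitions, since a decomposable approximation witnessing decomposition rank (where each coloured piece is order zero and contractive, and the sum is contractive) also witnesses the same bound for nuclear dimension. Thus it suffices to establish $\dim(X) \leq \dimnuc(C_0(X))$ and $\dr(C_0(X)) \leq \dim(X)$, which will sandwich both invariants between $\dim(X)$.

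For the bound $\dr(C_0(X)) \leq \dim(X)$, I would fix a finite set $F \Subset C_0(X)$ and $\eps > 0$, and use (i) in the metrizable case, the existence of uniformly small-diameter covers; (ii) in the compact case, the equicontinuity of $F$ on the compact space. In either setting, one obtains a finite open cover $\mathcal{U} = \mathcal{U}^{(0)} \sqcup \cdots \sqcup \mathcal{U}^{(d)}$ of $\supp(F)$ (or of $X$) with multiplicity at most $d+1 = \dim(X) + 1$, refining a cover on which each $f \in F$ oscillates by less than $\eps$. Choose a subordinate partition of unity $\{h_U\}_{U \in \mathcal{U}}$, take base points $x_U \in U$, and let $B = \bigoplus_l \bigoplus_{U \in \mathcal{U}^{(l)}} \mathbb{C}$. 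Define $\psi \colon C_0(X) \to B$ by evaluation $f \mapsto (f(x_U))_U$, and $\varphi \colon B \to C_0(X)$ by $(\lambda_U)_U \mapsto \sum_U \lambda_U h_U$. For each colour $l$, the map $\varphi^{(l)}$ is order zero because the $h_U$ for $U \in \mathcal{U}^{(l)}$ have disjoint supports, and $\|\varphi(1_B)\| \leq 1$. The approximation $\|\varphi \circ \psi(f) - f\| < \eps$ follows from the oscillation control. This proves $\dr(C_0(X)) \leq \dim(X)$.

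For the reverse inequality $\dim(X) \leq \dimnuc(C_0(X))$, I would run the converse argument: given a finite open cover $\mathcal{V}$ of $X$ (or of a compact piece thereof in the locally compact case), produce a subordinate partition of unity $\{g_V\}$, approximate each $g_V$ via a decomposable approximation $(B, \psi, \varphi = \varphi^{(0)} + \cdots + \varphi^{(d)})$ of nuclear dimension $d$, and then extract from the images of minimal projections of $B$ under each $\varphi^{(l)}$ a set of positive functions whose supports, suitably rescaled and thresholded, form an open refinement of $\mathcal{V}$ with multiplicity at most $d+1$. The key technical step is converting the approximate order-zero and approximate unitality properties (which only hold to tolerance $\eps$) into an honest open cover with controlled multiplicity; this is done by a thresholding-and-perturbation argument using semiprojectivity of cones over finite-dimensional algebras (as in Lemma~\ref{lem:perturb-order-zero}) to replace the $\varphi^{(l)}$ by genuine order-zero maps, after which minimal projections correspond to genuine orthogonal positive functions.

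The main obstacle is the second inequality in the non-metrizable compact Hausdorff setting, because the standard arguments in \cite{winter-covering-II,winter-zacharias} are often phrased using metric oscillation control of functions. To handle the compact Hausdorff case, I would replace metric oscillations by oscillation along a specified finite open cover $\mathcal{V}$: rather than working with diameters, I would work with the algebra generated by a partition of unity subordinate to $\mathcal{V}$ (which has metrizable spectrum, obtained by $C^*$-algebraic separation using Lemma~\ref{lem:separable-dimnuc}), reduce to the metrizable case, and then pull back the resulting cover to $X$. This reduction via separable subalgebras is what bridges the gap between the separable and non-separable settings and allows both equalities to be established uniformly.
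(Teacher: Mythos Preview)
The paper does not prove this theorem at all: it is stated with a citation to \cite{winter-covering-II, winter-zacharias} and a \qed, and is then used as a black box in the proof of Theorem~\ref{thm:dimnuc-dim-general}. So there is nothing to compare your argument against in the paper itself.

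That said, your sketch is the standard one and is essentially correct. The bound $\dr(C_0(X)) \leq \dim(X)$ via partitions of unity and point evaluations is exactly how it is done in the references. For $\dim(X) \leq \dimnuc(C_0(X))$, your idea of extracting a refinement from the images of minimal projections under the order-zero pieces is also the standard route; one small remark is that you do not actually need the semiprojectivity perturbation here, since the maps $\varphi^{(l)}$ coming from a nuclear-dimension approximation are already genuinely order zero by definition, so minimal projections in each summand of $B$ already go to pairwise orthogonal positive functions within each colour. The only care needed is ensuring the resulting open sets cover (which uses the approximation $\varphi\circ\psi \approx \mathrm{id}$ on a partition of unity) and refine $\mathcal{V}$ (which uses that $\psi$ is approximately multiplicative on the abelian source, forcing each image function to be concentrated near a single $V$). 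Your reduction of the non-metrizable compact case to the separable/metrizable case via a subalgebra generated by a partition of unity is a clean way to finish.
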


To deal with general locally compact and Hausdorff spaces, it is convenient to introduce the following notion. 

\begin{Def} \label{def:dimc}
	Let $X$ be a Hausdorff space. Then the \emph{compactly supported covering dimension} of $X$, denoted by $\dimc (X)$, is the infimum of all nonnegative integers $d$ such that for any finite open cover $\Thseco$ of $X$ and any compact subset $\Ko \Subset X$, there is an open cover $\Coseco$ of $\Ko$ in $X$ that refines $\Thseco$ and has multiplicity at most $d+1$. 
\end{Def}

\begin{Rmk} \label{rmk:dimc-basics}
	The following properties follow directly from Definition~\ref{def:dimc}: 
	\begin{enumerate}
		\item We have $\dimc(X) \leq \dim(X)$, with equality if $X$ is compact. 
		\item For any $Y \subseteq X$, we have $\dimc(Y) \leq \dimc(X)$. 
	\end{enumerate}
\end{Rmk}

\begin{Lemma} \label{lem:dimc-compactification}
	Let $X$ be a locally compact Hausdorff space. Then we have: 
	\begin{enumerate}
		\item $\dimc(X) = \dim(X^+)$, and 
		\item $\dimc(X) = \sup_{\Ko \Subset X} \dim(\Ko)$, where $\Ko$ ranges over all compact subsets of $X$. 
	\end{enumerate}
\end{Lemma}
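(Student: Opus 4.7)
The plan is to prove (2) first; then (1) will follow from (2) together with standard facts about the covering dimension on the compact Hausdorff space $X^+$.

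For (2), the inequality $\sup_{\Ko \Subset X} \dim(\Ko) \leq \dimc(X)$ is straightforward: given a compact $\Ko \Subset X$ and a finite open cover of $\Ko$ in $\Ko$, I would lift each cover element to an open set in $X$, adjoin $X \setminus \Ko$ to obtain a finite open cover of $X$, and apply $\dimc(X) \leq d$ to obtain an open cover of $\Ko$ in $X$ of multiplicity at most $d+1$ refining the lifted cover; restricting back to $\Ko$ gives the desired refinement, so $\dim(\Ko) \leq d$. For the converse $\dimc(X) \leq \sup_{\Ko} \dim(\Ko)$, fix a finite open cover $\Thseco$ of $X$ and a compact $\Ko \Subset X$, and set $d := \sup_{\Ko'} \dim(\Ko')$. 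Since $\dim(\Ko) \leq d$ and $\Ko$ is compact Hausdorff (hence normal), $\Thseco|_\Ko$ has a finite open refinement $\mathcal{R}$ of multiplicity at most $d+1$, and the shrinking lemma in the normal space $\Ko$ produces a family $\{R_0\}_{R \in \mathcal{R}}$ with $\overline{R_0}^\Ko \subseteq R$ still covering $\Ko$ with multiplicity at most $d+1$. Fixing chosen parents $W(R) \in \Thseco$ with $R \subseteq W(R) \cap \Ko$, the compact sets $F_R := \overline{R_0}^\Ko \subseteq R \cap W(R)$ have multiplicity at most $d+1$; I would then use Lemma~\ref{lem:thicken-multiplicity} to thicken $\{F_R\}$ to an open family $\{C_R\}$ in $X$ with the same multiplicity and intersect each $C_R$ with $W(R)$ to obtain the required cover of $\Ko$ in $X$ refining $\Thseco$ with multiplicity at most $d+1$.

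For (1), the inequality $\dimc(X) \leq \dim(X^+)$ follows from (2) and the classical subspace monotonicity of the covering dimension for closed subsets of normal spaces: $X^+$ is compact Hausdorff hence normal, and each $\Ko \Subset X$ is closed in $X^+$, so $\dim(\Ko) \leq \dim(X^+)$, and taking the supremum yields $\dimc(X) = \sup_{\Ko} \dim(\Ko) \leq \dim(X^+)$. The reverse inequality $\dim(X^+) \leq \dimc(X)$ is more delicate; given a finite open cover $\Thseco$ of $X^+$, I would first reduce to the case where a unique element $W_\infty \in \Thseco$ contains $\infty$ (by replacing the $\infty$-containing subfamily $\mathcal{W}_\infty$ with $\{W \setminus \{\infty\} : W \in \mathcal{W}_\infty\} \cup \{\bigcap \mathcal{W}_\infty\}$, which is a refinement of $\Thseco$), then use normality of $X^+$ to pick nested open neighborhoods $\infty \in V' \subseteq \overline{V'} \subseteq V \subseteq \overline{V} \subseteq W_\infty$, so that $\Ko := X^+ \setminus V$ is compact in $X$. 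Applying $\dimc(X) \leq d$ to an auxiliary cover of $X$ obtained as the common refinement of $\Thseco|_X$ with $\{W_\infty \cap X,\, X \setminus \overline{V'}\}$ and to $\Ko$ yields an open cover of $\Ko$ in $X$ with multiplicity at most $d+1$, in which each element is either contained in $W_\infty$ (``type-$W_\infty$'') or disjoint from $\overline{V'}$; I would then assemble this with $W_\infty$ by discarding the type-$W_\infty$ elements and adding $W_\infty$ back as a single piece.

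The main obstacle in the last step is keeping the multiplicity at most $d+1$ on the annular region $W_\infty \setminus \overline{V'}$ where $W_\infty$ overlaps with surviving cover elements: a naive adjunction pushes the local multiplicity to $d+2$. The argument controlling this relies on the auxiliary cover being engineered so that at every point where $W_\infty$ now occupies one slot of the multiplicity budget, a discarded type-$W_\infty$ element was already consuming a slot in the original $\dimc$-refinement; the parent-assignment and the sandwich $\overline{V'} \subseteq V \subseteq \overline{V} \subseteq W_\infty$ together ensure that discarding the type-$W_\infty$ elements frees exactly the slot now used by $W_\infty$, so the overall multiplicity bound is preserved. This bookkeeping near infinity is the technical crux of the proof and requires careful nesting of the various neighborhoods and a judicious choice of parents.
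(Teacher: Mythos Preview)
Your argument for (2) is correct, but considerably more involved than the paper's. The paper avoids the shrinking lemma and Lemma~\ref{lem:thicken-multiplicity} entirely by a simple ``swell the compact set'' trick: given $\Thseco$ and $\Ko$, use local compactness to pick a compact $\Ko'$ with $\Ko \subseteq \operatorname{int}_X(\Ko')$, apply $\dim(\Ko') \leq d$ to $\Thseco|_{\Ko'}$ to get a refinement $\Coseco$ of multiplicity at most $d+1$ inside $\Ko'$, and then intersect each member of $\Coseco$ with $\operatorname{int}_X(\Ko')$. Since $\operatorname{int}_X(\Ko')$ is open in $X$, these intersections are open in $X$, cover $\Ko$, and still refine $\Thseco$ with multiplicity at most $d+1$. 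This is a one-line replacement for your shrink--thicken maneuver.

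For (1), the inequality $\dimc(X) \leq \dim(X^+)$ is fine. However, your argument for $\dim(X^+) \leq \dimc(X)$ has a genuine gap at exactly the point you flag as the crux. After applying $\dimc(X) \leq d$ to the auxiliary cover and $\Ko = X^+ \setminus V$, you obtain $\Coseco$ with multiplicity at most $d+1$, each member either contained in $W_\infty$ (``type-$W_\infty$'') or disjoint from $\overline{V'}$. You then discard the type-$W_\infty$ members and add $W_\infty$. The problem is at a point $x \in W_\infty \setminus V$ (so $x \in \Ko$): nothing in the definition of $\dimc$ forces any element of $\Coseco$ containing $x$ to be of type-$W_\infty$. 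All $d+1$ members of $\Coseco$ through $x$ may have parents of the form $W \cap (X \setminus \overline{V'})$ with $W \neq W_\infty$; such members can perfectly well meet $W_\infty$ at $x$ without being contained in it. In that situation none are discarded, and adding $W_\infty$ pushes the multiplicity at $x$ to $d+2$. The asserted mechanism --- that a discarded type-$W_\infty$ element always frees the slot that $W_\infty$ occupies --- simply does not follow from the construction, and no choice of parent-assignment can manufacture a type-$W_\infty$ element at $x$ if none of the sets through $x$ is actually contained in $W_\infty$.

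The paper sidesteps this difficulty entirely: it proves (1) by invoking a result from Pears' dimension theory text (Chapter~3, 5.6) applied to the closed subset $\{\infty\} \subseteq X^+$, which handles precisely this kind of ``relative dimension at a single point'' issue in normal spaces. A from-scratch proof of $\dim(X^+) \leq \dimc(X)$ is possible but requires a more delicate argument than the one you have sketched.
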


\begin{proof}
	These statements are trivial if $X$ is compact; thus we may assume $X$ is noncompact. 
	Since $X^+$ is compact Hausdorff and thus normal, the first equality follows from applying \cite[Ch.~3, 5.6]{Pears75} to the closed subset $\{ \infty \}$ in $X^+$. 
	To prove the second equality, we first note that $\dimc(X) \geq \sup_{\Ko \Subset X} \dim(\Ko)$ by Remark~\ref{rmk:dimc-basics}. For the other direction, we observe that for any finite open cover $\Thseco$ of $X$ and any compact subset $\Ko \Subset X$, by the local compactness of $X$, we may choose a compact set $\Ko' \Subset X$ such that $\Ko$ is contained in the interior of $\Ko'$. Restricting $\Thseco$ to $\Ko'$, we may use the definition of $\dim(\Ko')$ to find an open cover $\Coseco$ of $\Ko'$ that refines $\Thseco$ and has multiplicity at most $\dim(\Ko')+1$. Restricting $\Coseco$ to the interior of $\Ko'$, we obtain an open cover of $\Ko$ in $X$ that has multiplicity at most $\dim(\Ko')+1$. Therefore we also have $\dimc(X) \leq \sup_{\Ko \Subset X} \dim(\Ko)$. 
\end{proof}

\begin{Thm} \label{thm:dimnuc-dim-general}
	If $X$ is a locally compact Hausdorff space, then we have 
	\[
		\dimnuc \left( C_0 (X) \right) = \dr \left( C_0 (X) \right) = \dimc(X) = \dim(X^+) \; .
	\]
\end{Thm}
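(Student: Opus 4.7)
The plan is to reduce the general locally compact Hausdorff case to the compact Hausdorff case handled by Theorem~\ref{thm:dimnuc-dim-cite}, using the one-point compactification $X^+$. Since Lemma~\ref{lem:dimc-compactification} already gives $\dimc(X) = \dim(X^+)$, what remains is the equality $\dimnuc(C_0(X)) = \dr(C_0(X)) = \dim(X^+)$. When $X$ is already compact, $X^+ = X \sqcup \{*\}$ has the same covering dimension as $X$ and $C_0(X) = C(X)$, so Theorem~\ref{thm:dimnuc-dim-cite} applies directly. For non-compact $X$, the same theorem applied to $X^+$ gives $\dimnuc(C(X^+)) = \dr(C(X^+)) = \dim(X^+)$, and the upper bound $\dimnuc(C_0(X)) \leq \dim(X^+)$ (and likewise for $\dr$) then follows from the standard monotonicity of nuclear dimension and decomposition rank under passage to closed two-sided ideals, applied to $C_0(X) \triangleleft C(X^+)$.

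For the lower bound, I would use Lemma~\ref{lem:dimc-compactification} to write $\dim(X^+) = \sup_{K \Subset X} \dim(K)$ and reduce to showing $\dim(K) \leq \dimnuc(C_0(X))$ for each compact $K \subset X$. Local compactness combined with Urysohn/Tietze extension produces a bump function $h \in C_c(X)$ with $h|_K \equiv 1$, from which the restriction map $\pi \colon C_0(X) \to C(K)$, $\tilde f \mapsto (h \cdot \tilde f)|_K$, is seen to be surjective. Theorem~\ref{thm:dimnuc-dim-cite} applied to the compact space $K$ then gives $\dim(K) = \dimnuc(C(K))$, so the task becomes bounding $\dimnuc(C(K)) \leq \dimnuc(C_0(X))$ and similarly for $\dr$.

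The main obstacle is that the Choi--Effros lifting theorem, which one would naturally use to promote a decomposable approximation of $C_0(X)$ to one of the quotient $C(K)$, requires separability --- and $C(K)$ need not be separable for non-metrizable $K$. To circumvent this I would work locally: given a finite $\bar F \subset C(K)$, lift elementwise to a finite $F \subset C_0(X)$, and apply Lemma~\ref{lem:separable-dimnuc} to enclose $F$ in a separable subalgebra $B \subset C_0(X)$ with $\dimnuc(B) \leq \dimnuc(C_0(X))$; then $\pi(B) \subset C(K)$ is separable and nuclear, so Choi--Effros supplies a c.p.\ contractive section $\ell \colon \pi(B) \to B$. Any $d$-decomposable approximation $\varphi \circ \psi$ of $\ell(\bar F) \subset B$ then yields the $d$-decomposable approximation $(\pi \circ \varphi) \circ (\psi \circ \ell)$ of $\bar F \subset C(K)$, using that composing a c.p.\ order-zero map with a $*$-homomorphism again gives a c.p.\ order-zero map. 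Taking the supremum over compact $K$ completes the nuclear dimension case, and the argument for $\dr$ is identical since every map in sight is contractive.
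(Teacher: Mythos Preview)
Your argument is correct, but it differs from the paper's in both directions of the inequality. For the upper bound $\dimnuc(C_0(X)) \leq \dim(X^+)$, you use the cleanest route: $C_0(X)$ is an ideal in $C(X^+)$, so ideal permanence plus Theorem~\ref{thm:dimnuc-dim-cite} for the compact space $X^+$ finishes immediately. The paper instead writes $C_0(X)$ as the direct limit of $C_0(U)$ over precompact open $U$, bounds each $\dimnuc(C_0(U))$ by $\dimnuc(C(\overline{U}))$ via ideal permanence, and then takes the supremum over compact $\overline{U}$; your version avoids the direct limit entirely. For the lower bound, both you and the paper use that $C(K)$ is a quotient of $C_0(X)$, but the paper simply invokes permanence of nuclear dimension under quotients as a known fact, whereas you essentially reprove it in this setting via the separable reduction of Lemma~\ref{lem:separable-dimnuc} and Choi--Effros. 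Your caution about separability is well-placed if one is worried about the scope of the quotient permanence result, but in practice that permanence is standard and holds without separability assumptions (indeed by exactly the separable-reduction trick you spell out), so the paper's one-line appeal is legitimate. Net effect: your upper bound is more direct, your lower bound is more explicit; both assemble the same ingredients.
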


\begin{proof}
	While it is possible to give a direct proof along the same lines as Theorem~\ref{thm:dimnuc-dim-cite}, we opt to give a short proof that makes use of Theorem~\ref{thm:dimnuc-dim-cite} instead. 
	In view of Lemma~\ref{lem:dimc-compactification}, 
	it suffices to show that $\dimnuc \left( C_0 (X) \right) = \sup_{\Ko 
	\Subset X} \dimnuc(C(\Ko))$ and $\dr \left( C_0 (X) \right) = \sup_{\Ko 
	\Subset X} \dr(C(\Ko))$, where $\Ko$ ranges over all compact subsets of 
	$X$. We prove the first equality; the second can be proved in the 
	same way. To this end, observe that $\dimnuc \left( C_0 (X) \right) \geq 
	\dimnuc(C(\Ko))$ for any compact subset $\Ko$ of $X$, since $C(\Ko)$ is a 
	quotient of $C_0(X)$. On the other hand, since 
	\[
		C_0(X) = \varinjlim_{	^{\hphantom{open and } U \subset X}  _{\text{open and precompact}}} C_0(U)
	\]
	with the indices directed by set containment, 
	and each $C_0(U)$ is an ideal in $C(\overline{U})$, we have 
	\begin{align*}
		\dimnuc \left( C_0 (X) \right) &\leq \liminf_{	^{\hphantom{open and } U \subset X}  _{\text{open and precompact}}} \dimnuc \left( C_0 (U) \right) & \\
		& \leq  \liminf_{	^{\hphantom{open and } U \subset X}  _{\text{open and precompact}}} \dimnuc(C(\overline{U})) &\leq  \sup_{\Ko \Subset X} \dimnuc(C(\Ko)) \; ,
	\end{align*}
	as desired. 
\end{proof}

Finally, we remark that the conclusion of Theorem~\ref{thm:dimnuc-dim-cite} breaks down for general locally compact Hausdorff spaces. 

\begin{Exl}
	Let $Y$ be the Tychonoff plank, i.e., $Y = [0, \omega_0] \times [ 0, \omega_1]$, where $\omega_0$ is the first inifnite ordinal, $\omega_1$ is the first uncountable ordinal, the intervals are given the order topology (and thus compact and Hausdorff), and $Y$ is endowed with the product topology. Let $X = Y \setminus \left\{ \left( \omega_0, \omega_1 \right) \right\}$, i.e., the \emph{deleted Tychonoff plank}. Then $Y$ is the one-point compactification of $X$. 
	By \cite[Ch.~4, 3.1]{Pears75}, we have 
	\[
		\dim(X) = 1 > 0 = \dim(X^+) = \dimc(X) = \dimnuc \left( C_0 (X) \right) = \dr \left( C_0 (X) \right) \; .
	\]
\end{Exl}

{
	\renewcommand{\thesection}{Appendix~\Alph{appendix}} \refstepcounter{appendix}
\section{Asymptotic dimension via finite subspaces} \label{sec:asdim-local} 
}
\renewcommand{\sectionlabel}{OCS}
\ref{sectionlabel=OCS}
In this appendix, we establish the fact that the asymptotic dimension of a 
coarse space $(X, \Enseco)$ can be detected in a finitary way, a fact that is 
needed in showing $\asdim \left( X, \Enseco_\alpha \right) \leq 
\dimltc(\alpha)$. More precisely, $\asdim(X, \Enseco)$ is equal to the 
asymptotic dimension of the coarse disjoint union of all finite subspaces of 
$X$, a quantity which is also known as the uniform asymptotic dimension of all 
finite subspaces of $X$. In fact, a more general statement holds where one 
replaces the collection of all finite subspaces of $X$ by any net of subspaces 
that exhausts $X$. This is probably known to experts (see 
\cite[Lemma~2.7]{DelabieTointon2017}), but as we did not find in the literature 
the exact statements we need, we spell out the details here. 
Logically speaking, the content of this appendix may be inserted right before Lemma~\ref{lem:asdim-finitary}. 
Throughout the appendix, $(X, \Enseco)$ denotes a coarse space. 

To get started, we first make precise what we mean by coarse disjoint union of subspaces. 

\begin{Def} \label{def:coarse-disjoint-union}
	Let $\left( X_i \right)_{i \in I}$ be a collection of subsets of $X$. Let us write $X_{\bigsqcup I}$ for the disjoint union $\bigsqcup_{i \in I} X_i$, whose elements are denoted by $(i, x)$ with $i \in I$ and $x \in X_i$. 
	For any $\Ense \subseteq X \times X$, we write 
	\[
		\Ense_{\bigsqcup I} := \bigsqcup_{i \in I} \Ense \cap \left( X_i \times X_i \right) \subseteq 
		\bigsqcup_{i \in I} X_i \times X_i  \subseteq
		X_{\bigsqcup I} \times X_{\bigsqcup I} \; . 
	\]
	We observe that 
	\begin{itemize}
		\item $\left(\Delta_X\right)_{\bigsqcup I} = \Delta_{X_{\bigsqcup I}}$;
		\item $\Ense_{\bigsqcup I} \subseteq \Auense_{\bigsqcup I}$ if $\Ense 
		\subseteq \Auense$;
		\item $\Ense_{\bigsqcup I}^{-1} = \left( \Ense^{-1} \right)_{\bigsqcup 
		I}$;
		\item $\Ense_{\bigsqcup I} \circ \Auense_{\bigsqcup I} \subseteq \left( 
		\Ense \circ \Auense \right)_{\bigsqcup I}$; 
		\item $\Ense_{\bigsqcup I} \cup \Auense_{\bigsqcup I} = \left( \Ense 
		\cup \Auense \right)_{\bigsqcup I}$ for any $\Ense, \Auense \subseteq X 
		\times X$.
	\end{itemize}
	Define a coarse structure 
	\[
		\Enseco_{\bigsqcup I} := \left\{ 
		\Auense \colon \Auense \subseteq 
		\Ense_{\bigsqcup I}
		\text{ for some } \Ense \in \Enseco \right\}
	\]
	on $X_{\bigsqcup I}$. The coarse space $\left( X_{\bigsqcup I}, \Enseco_{\bigsqcup I} \right)$ is called the \emph{coarse disjoint union} of the collection $\left( X_i \right)_{i \in I}$ of subsets of $X$. 
\end{Def}

This construction has appeared widely in the coarse geometry literature, at least in the case of metric coarse geometry: 

\begin{Exl}
	In the context of Definition~\ref{def:coarse-disjoint-union}, if $\Enseco$ is the metric coarse structure induced from an extended metric $d$ on $X$ as in Example~\ref{exl:metric-coarse}, then it induces an extended metric $d_{\bigsqcup I}$ on $X_{\bigsqcup I}$ such that for any $i, j \in I$, $x \in X_i$ and $y \in X_j$, we have 
	\[
		d_{\bigsqcup I} \left( (i, x), (j, y) \right) = 
		\begin{cases}
			d(x,y) \, , & \text{if } i = j \\
			\infty \, , & \text{if } i \not= j
		\end{cases}
		\; .
	\]
	It is clear that the metric coarse structure induced by $d_{\bigsqcup I}$ agrees with $\Enseco_{\bigsqcup I}$. 
\end{Exl}

\begin{Prop} \label{prop:asdim-directed-limit}
	Let $\left( X_i \right)_{i \in I}$ be a directed net of subsets of $X$ 
	ordered by inclusion such that $X = \bigcup_{i \in I} X_i$. 
	Then we have 
	\[
		\asdim \left( X , \Enseco\right) = \asdim \left(  X_{\bigsqcup I} ,  \Enseco_{\bigsqcup I}  \right) \; .
	\]
\end{Prop}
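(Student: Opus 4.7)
The proof splits into two inequalities. The inequality $\asdim(X_{\bigsqcup I}, \Enseco_{\bigsqcup I}) \leq \asdim(X, \Enseco)$ is straightforward from the definitions: given a cover $\Auseco = \Auseconum{0} \cup \ldots \cup \Auseconum{d}$ of $X$ that witnesses $\asdim(X, \Enseco) \leq d$ for a controlled set $\Ense$, the collection $\widehat{\Auseconum{l}} := \left\{ \{i\} \times (\Ause \cap X_i) \colon \Ause \in \Auseconum{l},\, i \in I \right\}$ gives a cover of $X_{\bigsqcup I}$ witnessing the same bound for $\Ense_{\bigsqcup I}$. The key point is that points from distinct components of $X_{\bigsqcup I}$ are never $\Ense_{\bigsqcup I}$-related, so $\Ense$-separation in $X$ automatically transfers to $\Ense_{\bigsqcup I}$-separation once the components are embedded disjointly.

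For the reverse inequality, I would use the characterization from Lemma~\ref{lem:asdim-connected-components}. Fix a symmetric $\Ense \in \Enseco$. By hypothesis there is a cover $\left\{ \Ause^{(0)}, \ldots, \Ause^{(d)} \right\}$ of $X_{\bigsqcup I}$ whose $\Ense_{\bigsqcup I}$-connected components are uniformly $\Auense_{\bigsqcup I}$-bounded for some $\Auense \in \Enseco$. Restricting this cover along each inclusion $\{i\} \times X_i \hookrightarrow X_{\bigsqcup I}$ produces, for each $i \in I$, a coloring $c_i \colon X_i \to \{0, 1, \ldots, d\}$ (choose, for each $x \in X_i$, some $l$ with $(i,x) \in \Ause^{(l)}$) whose fibers have $\Ense$-connected components in $X_i$ that are $\Auense$-bounded. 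The task then reduces to gluing these local colorings into a single coloring $c \colon X \to \{0, 1, \ldots, d\}$ with the same property on all of $X$.

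This gluing will be carried out by a Tychonoff-style compactness argument in the compact product space $\{0, 1, \ldots, d\}^X$. For each $i \in I$, set
\[
	K_i := \left\{ c \in \{0, 1, \ldots, d\}^X \colon \text{the } \Ense\text{-connected components of } c^{-1}(l) \cap X_i \text{ in } X_i \text{ are } \Auense\text{-bounded for every } l \right\} .
\]
Each $K_i$ is nonempty (extend $c_i$ arbitrarily to $X$) and closed: its complement equals the union, over all tuples $(l, x_0, \ldots, x_n)$ with $x_k \in X_i$, $(x_{k-1}, x_k) \in \Ense \cup \Ense^{-1}$, and $(x_0, x_n) \notin \Auense$, of the clopen cylinders $\{ c \colon c(x_0) = \cdots = c(x_n) = l \}$. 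The family $\{K_i\}_{i \in I}$ has the finite intersection property: given $i_1, \ldots, i_n$, use directedness to pick $j \in I$ dominating each $i_k$; then any $c \in K_j$ also lies in each $K_{i_k}$, since a finite $\Ense$-path in $X_{i_k}$ is also a finite $\Ense$-path in $X_j$. By compactness, select $c \in \bigcap_{i \in I} K_i$. Finally, any finite $\Ense$-path in $X$ has its finitely many vertices contained in a single $X_j$ by directedness, so validity of $c$ on each $X_j$ upgrades to validity on $X$, witnessing $\asdim(X, \Enseco) \leq d$.

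The main obstacle will be establishing the closedness of each $K_i$, which requires recasting the unbounded-components condition as a conjunction over individual finite paths so that the complement becomes an honest union of clopen cylinders. Directedness of the net plays a decisive role at two points: in verifying the finite intersection property, and in promoting fiberwise validity to global validity via the fact that every finite $\Ense$-path lies in a single member of the net.
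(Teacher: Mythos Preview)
Your proof is correct, and it takes a genuinely different route from the paper's. The paper proceeds by constructing an ultraproduct coarse space $X_{\mathcal{F}}$ of the $X_i$'s along a cofinal ultrafilter: a separate lemma shows $\asdim(X_{\mathcal{F}}) \leq \asdim(X_{\bigsqcup I})$ by forming products of covers of the $X_i$'s, and then one checks that $X$ embeds into $X_{\mathcal{F}}$ as a coarse subspace, so the subspace monotonicity lemma closes the loop. Your argument instead works directly with $(d+1)$-colorings via the connected-components characterization of Lemma~\ref{lem:asdim-connected-components}, and replaces the ultraproduct by a Tychonoff compactness argument in $\{0,\ldots,d\}^X$: the sets $K_i$ of colorings valid on $X_i$ are closed, nonempty, and directed downward under inclusion, so a limit coloring exists and directedness of the net promotes its validity on each $X_i$ to all of $X$. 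Both proofs ultimately invoke a choice/compactness principle, but yours is more self-contained (no need to introduce ultraproducts of coarse spaces or verify that the induced coarse structure on $\iota(X) \subseteq X_{\mathcal{F}}$ matches $\Enseco$), while the paper's is more structural and yields the reusable intermediate object $X_{\mathcal{F}}$. One small comment: in your closedness argument for $K_i$, you may wish to note explicitly that $\Auense$ can be taken symmetric, so that the failure of $\Auense$-boundedness is witnessed by a single pair $(x_0,x_n) \notin \Auense$ rather than requiring both $(x_0,x_n)$ and $(x_n,x_0)$ to be checked.
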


We deduce this proposition from the following more general fact concerning 
ultraproducts of coarse spaces. Let $(\Lambda,\leq)$ be a directed set, that 
is, for any two elements $i,j \in \Lambda$, we require that there 
exists $l \in \Lambda$ such that $i \leq l$ and $j \leq 
l$. 
Recall that a \emph{filter} $\mathcal{F}$ on the set $\Lambda$ is a nonempty collection of nonempty subsets of $\Lambda$ that is closed under taking finite intersections and taking supersets. It is an \emph{ultrafilter} if for any $J \subseteq \Lambda$, we have either $J \in \mathcal{F}$ or $\Lambda \setminus J \in \mathcal{F}$, or equivalently, for any partition $\Lambda = \Lambda_1 \sqcup \ldots \sqcup \Lambda_n$, exactly one of the sets $\Lambda_1 , \ldots , \Lambda_n$ is in $\mathcal{F}$.

We say that a filter $\mathcal{F} \subseteq P(\Lambda)$ is 
\emph{cofinal} if for any $i \in \Lambda$ we have $\{j \mid i 
\leq j \} \in \mathcal{F}$. 
It is clear that $\mathcal{F}$ is {cofinal} if and only if it contains the \emph{filter of eventually upward closed sets}
\[
\left\{ J \subseteq \Lambda \colon \text{there is } i \in \Lambda \text{ such that } j \in J \text{ for any } j \geq i  \right\} \; .
\]
By Tarski's result on the existence of ultrafilters (using Zorn's lemma), there exists a cofinal ultrafilter on $(\Lambda, \leq)$. 

Now, 
fix a cofinal ultrafilter $\mathcal{F}$ on $(\Lambda,\leq)$ and
suppose we are given a coarse space $(X,\Enseco)$, and for any $i \in \Lambda$ 
we are we are given a subset $X_i \subseteq X$. Set $\Enseco_i = 
\Enseco|_{X_i}$. We define a coarse space $(X_{\mathcal{F}} , 
\Enseco_{\mathcal{F}} )$ as follows. The elements of $X_{\mathcal{F}}$ are 
equivalence classes of elements of the product $\prod_{i \in \Lambda} X_i$, 
with $(x_i)_{i \in \Lambda} \sim (x'_i)_{i \in \Lambda}$ if $\{i \mid x_i = 
x'_i \} \in \mathcal{F}$. Let $\pi \colon \prod_{i \in \Lambda} X_i \to 
X_{\mathcal{F}}$ be the quotient map.
  For any $E \in \Enseco$ we define $\Enseco_{\mathcal{F}}^{(E)}$ to be 
defined as the images of all products $\prod_{i \in \Lambda} (E_i)$ such that 
for all $i \in \Lambda$ we have $E_i \subseteq E$ inside $X_{\mathcal{F}} 
\times X_{\mathcal{F}}$ under the 
map $\pi \times \pi$. Set $\Enseco_{\mathcal{F}} = \bigcup_{E\in \Enseco} 
\Enseco_{\mathcal{F}}^{(E)}$.
It is immediate to check that this is indeed a coarse 
space. This construction is somewhat more general than the ultraproduct 
construction of metric spaces discussed in \cite[Section 7.4]{Roe2003Lectures}.

\begin{Lemma} \label{lem:asdim-ultraproducts}
	Let $(X,\Enseco)$ be a coarse space. Set $d = \asdim(X,\Enseco)$. Suppose 
	$(\Lambda,\leq)$ is a directed set, and for each $i \in \Lambda$ we 
	are given a coarse subset $(X_i,\Enseco_i)$ of $(X,\Enseco)$ as above. Let 
	$\mathcal{F}$ be a cofinal 
	ultrafilter on $\Lambda$. Then 
	$\asdim(X_{\mathcal{F}} , 
	\Enseco_{\mathcal{F}} ) \leq d$.
\end{Lemma}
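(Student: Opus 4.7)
The plan is to transfer a cover of $X$ witnessing $\asdim(X,\Enseco) \leq d$ to an analogous cover of $X_{\mathcal{F}}$ via an ultraproduct-type construction. To verify $\asdim(X_{\mathcal{F}}, \Enseco_{\mathcal{F}}) \leq d$, I would fix an arbitrary member of $\Enseco_{\mathcal{F}}$; by Definition, it lies inside $(\pi\times\pi)(\prod_i E_i)$ for some $E \in \Enseco$ with $E_i \subseteq E$, and after enlarging we may assume $E_i = E$ for all $i$ and that $E = E^{-1} \supseteq \Delta_X$. Using $\asdim(X,\Enseco) \leq d$, I would fix a cover $\mathcal{A} = \mathcal{A}^{(0)} \sqcup \cdots \sqcup \mathcal{A}^{(d)}$ of $X$ with each $\mathcal{A}^{(l)}$ $E$-separated (so in particular pairwise disjoint, thanks to $\Delta_X \subseteq E$), and such that $F := \bigcup_{A \in \mathcal{A}} A \times A \in \Enseco$.

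For each color $l$ and each ``choice function'' $s \colon \Lambda \to \mathcal{A}^{(l)}$, I would define a subset of $X_{\mathcal{F}}$ by
\[
[s] := \bigl\{ \pi((x_i)) \in X_{\mathcal{F}} : \{ i \in \Lambda : x_i \in s(i) \cap X_i \} \in \mathcal{F} \bigr\},
\]
and let $\mathcal{A}_{\mathcal{F}}^{(l)}$ consist of all non-empty such sets. Uniform boundedness is then immediate: for any $[x],[y] \in [s]$, intersecting two $\mathcal{F}$-large subsets of $\Lambda$ shows $(x_i, y_i) \in s(i) \times s(i) \subseteq F$ for $\mathcal{F}$-many $i$, so $([x],[y]) \in (\pi\times\pi)(\prod_i F) \in \Enseco_{\mathcal{F}}^{(F)}$. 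To see $\bigcup_l \mathcal{A}_{\mathcal{F}}^{(l)}$ covers $X_{\mathcal{F}}$, for each $[x] = \pi((x_i))$ I would use that $\mathcal{A}$ covers $X$ and the color classes are pairwise disjoint to pick, for each $i$, the unique color $l(i)$ such that $x_i \in \bigcup \mathcal{A}^{(l(i))}$, then use cofinality of $\mathcal{F}$ and finiteness of $\{0,\ldots,d\}$ (pigeonhole over an ultrafilter partition) to obtain a single $l^*$ with $\{i : l(i) = l^*\} \in \mathcal{F}$, from which one reads off a witness $s$ with $[x] \in [s] \in \mathcal{A}_{\mathcal{F}}^{(l^*)}$.

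The crux is verifying that each $\mathcal{A}_{\mathcal{F}}^{(l)}$ is $E_{\mathcal{F}}$-separated. If $[s] \ne [t]$ as subsets of $X_{\mathcal{F}}$, choosing a witness $[x]$ in the symmetric difference (say $[x] \in [s] \setminus [t]$) and using that $\mathcal{F}$ is an ultrafilter, one extracts $\{i : x_i \in s(i), x_i \notin t(i)\} \in \mathcal{F}$, forcing $\{i : s(i) \ne t(i)\} \in \mathcal{F}$. Given any $[y] \in [s]$, $[z] \in [t]$ with $([y],[z]) \in E_{\mathcal{F}}$, I would unpack the definition of $\Enseco_{\mathcal{F}}$ to find representatives $(y'_i),(z'_i)$ with $[y']=[y]$, $[z']=[z]$ and $(y'_i, z'_i) \in E$ for every $i$; intersecting four sets in $\mathcal{F}$ (that $y'_i = y_i$, $z'_i = z_i$, $y_i \in s(i)$, $z_i \in t(i)$) with $\{i : s(i) \neq t(i)\}$ produces an index $i$ where $(y_i,z_i) \in E$ with $y_i \in s(i)$, $z_i \in t(i)$ and $s(i) \neq t(i)$, contradicting $E$-separation of $\mathcal{A}^{(l)}$.

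The main obstacle I anticipate is precisely this bookkeeping step: the representing sequences for ultrafilter equivalence classes are only defined up to $\mathcal{F}$-equivalence, so one must be careful to translate set-theoretic inequality $[s] \ne [t]$ into the relation $\{i : s(i) \ne t(i)\} \in \mathcal{F}$, and to handle the quantifier switch in $\Enseco_{\mathcal{F}}$'s definition (``for all $i$'' on the nose, but only after changing representatives). Assuming $\Delta_X \subseteq E$ at the outset makes these changes harmless and is what allows one to freely patch representatives on $\mathcal{F}$-small sets.
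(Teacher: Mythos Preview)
Your argument is correct and follows the same ultraproduct-of-covers strategy as the paper's proof; in fact, your choice to fix a single cover $\mathcal{A}$ of $X$ and restrict (rather than choosing separate covers $\mathcal{A}_i$ of each $X_i$ as the paper does) makes the uniform-boundedness step cleaner, since the bound $F$ is then automatically independent of $i$. Two minor slips to fix: the color $l(i)$ need not be \emph{unique} (a point can lie in $\bigcup\mathcal{A}^{(l)}$ for several $l$), so just pick one; and what you invoke in the pigeonhole step is the ultrafilter property, not cofinality.
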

\begin{proof}
	Let $F \in E_{\mathcal{F}}$. Choose $E \in \Enseco$ and $E \supseteq E_i 
	\in \Enseco_i$ for all $i$ such 
	that $F$ is the image of $\prod_{i \in \Lambda} E_i$ in  $X_{\mathcal{F}} 
	\times X_{\mathcal{F}}$ under the equivalence relation defining the 
	ultraproduct. For each $i$, we choose a cover $\Auseco_i$ of $X_i$ as in 
	Definition \ref{def:asdim} such that 
	 $\Auseco_i$ is $\Enseco_i$-uniformly bounded and
	$\Auseco_i$ can be written as a union $\Auseco_i^{(0)} \cup \ldots 
		\cup \Auseco_i^{(d)}$ with each $\Auseco^{(l)}$ being 
		\emph{$E_i$-separated}.
	
	For $l = 0,1,\ldots d$, we define $\Auseco_\mathcal{F}^{(l)}$ to be the 
	image under the equivalence relation of all the products of the form 
	$\prod_{i \in \Lambda} \Ause_i$ for $\Ause_i \in \Auseco_i^{(0)}$ for all 
	$i$. Note that if $\prod_{i \in \Lambda} \Ause_i$ and $\prod_{i \in 
	\Lambda} \Ause'_i$ are two such sets, then $\{ i \mid \Ause_i \neq \Ause'_i 
	\}$ coincides with the set of $i$ for which $\Ause_i$ and $\Ause'_i$ are 
	$E_i$-separated, and therefore if two such sets are distinct in the 
	ultraproduct, then they are $F$-separated. The fact each set in this 
	collection is uniformly bounded is immediate from the construction. To see 
	that this is a cover, suppose  we are given $(x_i)_{i \in \Lambda}$, then 
	for each $i$ we can choose $l(i) \in \{0,1,\ldots,d\}$ such that 
	$\Ause_{i,l(i)} \in \Auseco_i^{(l(i))}$ with $x_i \in \Ause_{i,l(i)}$. 
	Since $\mathcal{F}$ is an ultrafilter, there exists exactly one $m \in 
	\{0,1,\ldots,d\}$ such that $\{i \mid l(i) = m\} \in \mathcal{F}$. It 
	follows that the image of $\prod_i  \Auseco_i^{(l(i))}$, which contains the 
	point define by $(x_i)_{i \in \Lambda}$ is in $\Auseco_\mathcal{F}^{(m)}$, 
	and thus this is indeed a cover.
\end{proof}

\begin{proof}[Proof of Proposition~\ref{prop:asdim-directed-limit}]
	The ``$\geq$'' 
	direction can be easily verified through Definition~\ref{def:asdim} and 
	inducing covers of $X_{\bigsqcup I}$ by restricting covers of $X$ to each 
	$X_i$, as in Lemma \ref{lem:asdim-passes-to-subspaces}. It remains to prove 
	the ``$\leq$'' direction.
	
	Let $\mathcal{F}$ be a cofinal ultrafilter on $I$. By Lemma 
	\ref{lem:asdim-passes-to-subspaces}, we have $\asdim(X) \geq \asdim(X_i)$ 
	for all $i$. Notice that we can view each $X_i$ as a subset both of $X$ and 
	of $X_{\bigsqcup I}$, and we obtain the same ultrafilter, where we replace 
	each $E \in \Enseco$ by $\bigsqcup_{i \in I} E|_{X_i} \in 
	\Enseco_{\bigsqcup I}$ for the definition of $\Enseco^{(E)}_{\mathcal{F}}$.
	
	Therefore, by Lemma \ref{lem:asdim-ultraproducts}, we have 
	$\asdim(X) \geq \asdim(X_{\mathcal{F}})$ and also 	$\asdim(X_{\bigsqcup 
	I}) \geq \asdim(X_{\mathcal{F}})$.  Because $X = \bigcup_{i \in I} 
	X_i$, for any $x \in X$ there exists $i \in \Lambda$ such that $x \in X_j$ 
	for any $j \geq i_x$. We can thus define an inclusion $\iota \colon X \to 
	X_{\mathcal{F}}$, such that $\iota(x)$ is the equivalence class of any 
	element in $\prod_{i \in \Lambda} X_i$ such that the $j$'th coordinate is 
	$x$ for all $j$ greater than some index $i$. (Any two such elements in the 
	product are equivalent, and thus this map is well defined.) Obviously this 
	map is one-to-one. Thinking of $X$ is a subset of $X_{\mathcal{F}}$, we 
	claim now that $\Enseco = \Enseco_{\mathcal{F}}|_{X}$.  Indeed, if $F \in 
	\Enseco_{\mathcal{F}}$ then let $E \in \Enseco$ such that  $F \in 
	\Enseco^{(E)}_{\mathcal{F}}$, and then $F \cap X \times X \subseteq E$ and 
	thus $F \cap X \times X \in \Enseco$. Conversely, for $E \in \Enseco$, we 
	have $E = \iota \times \iota (E) \cap X \times X$, and $\iota \times \iota 
	(E) \in \Enseco_{\mathcal{F}}$. Thus, by Lemma 
	\ref{lem:asdim-passes-to-subspaces}, we have 
	$\asdim(X_{\mathcal{F}}) \geq \asdim(X)$, hence $\asdim(X_{\bigsqcup 
		I}) \geq \asdim(X)$ as required.
\end{proof}

\begin{Cor} \label{cor:asdim-finitary}
	Let $\left( X_{\operatorname{Fin}} , \Enseco_{\operatorname{Fin}} \right)$ be the coarse disjoint union of all the finite subsets of $X$. 
	Then we have 
	\[
		\asdim \left( X , \Enseco\right) = \asdim \left( X_{\operatorname{Fin}} , \Enseco_{\operatorname{Fin}}  \right) \; .
	\]
	More concretely, $\asdim \left( X , \Enseco\right)$ is the infimum of natural numbers $d$ such that for any controlled set $\Ense$, there is a controlled set $\Auense$ such that for any finite subset $Y \subseteq X$, there is a cover $\Auseco$ of $Y$ such that 
	\begin{enumerate}
		\item $\bigcup_{\Ause \in \Auseco} \Ause \times \Ause \subseteq \Auense$,and
		\item the $\Ense$-multiplicity of $\Auseco$ is at most $d+1$. 
	\end{enumerate}
\end{Cor}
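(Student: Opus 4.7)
The plan is to deduce Corollary~\ref{cor:asdim-finitary} essentially as a direct consequence of Proposition~\ref{prop:asdim-directed-limit} combined with Lemma~\ref{lem:asdim-multiplicity-wide}. First I would observe that the collection $I$ of all finite subsets of $X$, ordered by inclusion, forms a directed net, and clearly $X = \bigcup_{Y \in I} Y$. Thus Proposition~\ref{prop:asdim-directed-limit} immediately yields the first assertion: $\asdim(X, \Enseco) = \asdim(X_{\operatorname{Fin}}, \Enseco_{\operatorname{Fin}})$, where $(X_{\operatorname{Fin}}, \Enseco_{\operatorname{Fin}}) = (X_{\bigsqcup I}, \Enseco_{\bigsqcup I})$ in the notation of Definition~\ref{def:coarse-disjoint-union}.

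Next I would unpack the concrete characterization by applying Lemma~\ref{lem:asdim-multiplicity-wide} to the coarse space $(X_{\operatorname{Fin}}, \Enseco_{\operatorname{Fin}})$. By that lemma, $\asdim(X_{\operatorname{Fin}}, \Enseco_{\operatorname{Fin}}) \leq d$ holds if and only if for every $\Ense' \in \Enseco_{\operatorname{Fin}}$ there exists a cover $\Auseco'$ of $X_{\operatorname{Fin}}$ with $\bigcup_{\Ause' \in \Auseco'} \Ause' \times \Ause' \in \Enseco_{\operatorname{Fin}}$ and $\Ense'$-multiplicity at most $d+1$. By Definition~\ref{def:coarse-disjoint-union}, any controlled set in $\Enseco_{\operatorname{Fin}}$ is contained in one of the form $\Ense_{\bigsqcup I}$ with $\Ense \in \Enseco$, and a subset of $X_{\operatorname{Fin}}$ cannot meet two different ``fibers'' $\{Y\} \times Y$ simultaneously; hence any member of $\Auseco'$ is contained in a single finite subspace $Y \subseteq X$.

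The translation is then mechanical: a cover $\Auseco'$ of $X_{\operatorname{Fin}}$ decomposes as $\bigsqcup_{Y \in I} \Auseco_Y$ where each $\Auseco_Y$ is a cover of $Y$, and the uniform boundedness condition $\bigcup \Ause' \times \Ause' \subseteq \Auense_{\bigsqcup I}$ with $\Auense \in \Enseco$ is exactly the condition $\bigcup_{\Ause \in \Auseco_Y} \Ause \times \Ause \subseteq \Auense$ for every $Y$. Similarly, the $\Ense_{\bigsqcup I}$-multiplicity bound for $\Auseco'$ corresponds precisely to a uniform $\Ense$-multiplicity bound on each $\Auseco_Y$, since any $(\Ense_{\bigsqcup I})$-close subset of $X_{\operatorname{Fin}}$ lies in a single $Y$. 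This gives the ``only if'' direction of the concrete characterization.

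For the ``if'' direction, given the uniform finitary data, one defines $\Auseco' := \bigsqcup_{Y \in I} \Auseco_Y$; this is a cover of $X_{\operatorname{Fin}}$ satisfying both conditions of Lemma~\ref{lem:asdim-multiplicity-wide} relative to $\Ense_{\bigsqcup I}$ and $\Auense_{\bigsqcup I}$. No serious obstacle is expected here: all the hard work has been packaged into Proposition~\ref{prop:asdim-directed-limit} (and, further back, Lemma~\ref{lem:asdim-ultraproducts}), and into Lemma~\ref{lem:asdim-multiplicity-wide}. The only mild care needed is to verify that the natural bijection between covers of $X_{\operatorname{Fin}}$ whose members lie in single fibers and families of covers of the finite subspaces of $X$ correctly translates the uniform boundedness and multiplicity conditions, which is straightforward from Definition~\ref{def:coarse-disjoint-union}.
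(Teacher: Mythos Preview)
Your proposal is correct and follows exactly the paper's approach: apply Proposition~\ref{prop:asdim-directed-limit} to the directed net of finite subsets for the first statement, then unpack Lemma~\ref{lem:asdim-multiplicity-wide} applied to $(X_{\operatorname{Fin}}, \Enseco_{\operatorname{Fin}})$ for the concrete characterization. The paper's proof is a two-sentence sketch of precisely these two steps, while you have helpfully spelled out the translation between covers of $X_{\operatorname{Fin}}$ and uniform families of covers of finite subspaces (using that the boundedness condition forces each member of $\Auseco'$ into a single fiber).
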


\begin{proof}
	The first statement follows directly from applying Proposition~\ref{prop:asdim-directed-limit} to the net of all finite subsets of $X$. 
	The second statement follows by 
	applying Lemma~\ref{lem:asdim-multiplicity-wide} to 
	$\asdim \left( X_{\operatorname{Fin}} , \Enseco_{\operatorname{Fin}}  \right)$. 
\end{proof}

Compared to the original definition of asymptotic dimension (Definition~\ref{def:asdim}), the characterization in Corollary~\ref{cor:asdim-finitary} is of a more finitary nature and is sometimes easier to verify, at the cost of an added layer of a universal quantifer.

\bibliographystyle{alpha}
\bibliography{nilpotent-group-actions-bib}
\end{document}